\pdfoutput=1
\documentclass[11pt]{amsart}
\usepackage[margin=1.15in]{geometry}
\usepackage{amsmath,amssymb,amsthm,mathtools}
\usepackage{enumitem}
\usepackage{array}
\usepackage[T1]{fontenc}
\usepackage{xcolor}
\usepackage[colorlinks=true,linkcolor=blue!50!black,citecolor=green!35!black,urlcolor=blue!50!black]{hyperref}
\usepackage[capitalise,nameinlink]{cleveref}

\newcommand{\N}{\mathbb{N}}
\newcommand{\Z}{\mathbb{Z}}
\newcommand{\Q}{\mathbb{Q}}
\newcommand{\R}{\mathbb{R}}
\newcommand{\T}{\mathbb{T}}

\renewcommand{\P}{\mathbb{P}}            
\newcommand{\one}{\mathbf{1}}
\newcommand{\dens}{\mathsf{d}}           
\newcommand{\udens}{\overline{\mathsf{d}}}
\newcommand{\ldens}{\underline{\mathsf{d}}}
\newcommand{\ubd}{\mathsf{d}^*}          
\newcommand{\udlog}{\overline{\mathsf{d}}_{\log}}
\newcommand{\dlog}{\mathsf{d}_{\log}}
\newcommand{\FS}{\mathrm{FS}}

\newcommand{\eps}{\varepsilon}
\newcommand{\e}{e}                        
\newcommand{\Fol}{F{\o}lner}
\DeclareMathOperator{\supp}{supp}

\newcommand{\leanfile}[1]{\nolinkurl{Kmrr25/#1.lean}}
\newcommand{\leanname}[1]{\nolinkurl{#1}}
\newcommand{\formalization}[1]{\par\smallskip\noindent\textit{Formalization.} #1\par\smallskip}

\theoremstyle{plain}
\newtheorem{theorem}{Theorem}[section]
\newtheorem{proposition}[theorem]{Proposition}
\newtheorem{lemma}[theorem]{Lemma}
\newtheorem{corollary}[theorem]{Corollary}
\newtheorem*{theorem*}{Theorem}
\newtheorem*{mainthm}{Main Theorem}
\theoremstyle{definition}
\newtheorem{definition}[theorem]{Definition}
\newtheorem{surveyqinner}{Question}
\newtheorem{surveyconjinner}{Conjecture}
\newenvironment{question}[1]{\renewcommand{\thesurveyqinner}{#1}%
  \begin{surveyqinner}[{\cite{kra2023problems}}]}{\end{surveyqinner}}
\newenvironment{surveyconj}[1]{\renewcommand{\thesurveyconjinner}{#1}%
  \begin{surveyconjinner}[{\cite{kra2023problems}}]}{\end{surveyconjinner}}
\newtheorem{example}[theorem]{Example}
\crefname{surveyqinner}{Question}{Questions}
\crefname{surveyconjinner}{Conjecture}{Conjectures}
\theoremstyle{remark}
\newtheorem{remark}[theorem]{Remark}

\title[Solutions to problems on infinite sumset configurations]{Solutions to several problems on\\ infinite sumset configurations}

\author{Shisheng Li}
\author{Yong Su}

\date{\today}

\subjclass[2020]{Primary 11B13, 37A45; Secondary 05D10, 11B05, 03E15}
\keywords{Infinite sumsets, upper Banach density, Erd\H{o}s sumset conjecture, Kronecker factor, Ramsey theory, sums and products}

\begin{document}

\begin{abstract}
We resolve a number of the problems collected by Kra, Moreira, Richter and Robertson in their survey
\emph{Problems on infinite sumset configurations in the integers and beyond}.
Our main result is that in every countably infinite abelian group, every set of positive upper Banach
density contains a sumset $B_1+\dots+B_k$ of $k$ infinite sets, for every $k$; the proof uses the
Kronecker factor, but no higher-order Host--Kra structure theory, nilsystems of step greater than one,
or nilspaces.
We also answer, positively or negatively, questions on monochromatic sets $B+mB$, on sumsets in
sets of logarithmic density, on polynomial sumset configurations, on sums of the form
$B_{i_1}+\dots+B_{i_m}$ with $B_i$ drawn from a density family, on random sets, on ordered sumsets in
shifted primes, on sums and products in $\N$ and $\Q$, and on sumsets in abelian groups. Several of
these answers concern the questions as literally stated, and two follow directly from known
examples; in each case we say so and discuss the natural corrected question.
Finally, answering a question of Kousek and Radi\'c, we show that the supremum of the lower Banach
densities of sets $A\subset\N$ such that $B+B+t\not\subset A$ for every infinite $B\subset\N$ and
every $t\ge0$ (respectively $B+B\not\subset A$ for every infinite $B$) is $1/2$ (respectively $3/4$).
The main theorem and most of the other results are formalized in Lean~4, with some published
theorems entering as explicit hypotheses.
\end{abstract}

\maketitle
\setcounter{tocdepth}{1}
\tableofcontents

\section{Introduction}\label{I:sec:intro}

A well-known theorem of Moreira, Richter and Robertson~\cite{moreira2019proof}, answering a question of
Erd\H{o}s, says that every set $A\subset\N$ of positive upper Banach density contains a sumset
$B+C=\{b+c:b\in B,\ c\in C\}$ with $B$ and $C$ infinite. In the survey~\cite{kra2023problems},
Kra, Moreira, Richter and Robertson collected a large number of open problems around this theorem:
which other infinite configurations must occur in sets of positive density, which density
hypotheses are needed, and what happens in the primes, in random sets, in multiplicative settings and
in other groups.

In this paper we treat twenty of the questions and conjectures posed in~\cite{kra2023problems}
(nineteen questions and Conjecture~4.5); for two of them the answer follows directly from known
results, as explained below. Our main result
concerns sums of more than two infinite sets in abelian groups.

\begin{mainthm}
Let $\Gamma$ be a countably infinite abelian group and let $k\ge1$. Every set $A\subset\Gamma$ of
positive upper Banach density contains a sumset $B_1+\dots+B_k$, where $B_1,\dots,B_k\subset\Gamma$
are infinite.
\end{mainthm}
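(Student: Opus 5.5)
We will prove the statement through the standard route: Furstenberg correspondence, then a dynamical criterion, then a Kronecker-factor argument in the spirit of the ``Erd\H{o}s progressions'' approach of Kra, Moreira, Richter and Robertson, but organised so that only the Kronecker factor is needed.

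\emph{Correspondence.} Fix a \Fol{} sequence $\Phi=(\Phi_N)$ with $\ubd(A)=\lim_N|A\cap\Phi_N|/|\Phi_N|>0$. Pass to an ergodic $\Gamma$-system $(X,\mu,T)$, a point $a\in X$ generic along $\Phi$ (after a subsequence), and a clopen set $E=\{x:x(0)=1\}$ with $\mu(E)>0$, such that $A=\{g: T^g a\in E\}$.

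\emph{Dynamical criterion.} The target is a sequence of points $a=x_0,x_1,\dots,x_k\in X$ with $x_k\in E$, such that each consecutive pair is a ``progressive'' pair. By this we mean that there are distinct $g_n\in\Gamma$ with $T^{g_n}x_{i}\to x_{i+1}$, with genericity conditions strong enough to iterate. By an induction/diagonal construction one then chooses elements of $B_1,\dots,B_k$ alternately so that every new sum lands in $A$. Concretely, choose $b^{(1)}_n$ with $T^{b^{(1)}_n}a\to x_1$, and so on, using openness of $E$ and continuity to keep all finite partial sums $b^{(1)}_{n_1}+\dots+b^{(k)}_{n_k}$ in $A$. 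This is the $k$-fold analogue of the KMRR criterion for $B+C$. We expect it to go through by a careful but routine generalisation. The distinctness of the chosen elements is guaranteed because $\Gamma$ is infinite and the $g_n$ can be taken to leave every finite set.

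\emph{Producing the chain via the Kronecker factor.} Let $\pi:X\to Z$ be the factor map onto the Kronecker factor, a compact abelian group $Z$ with a rotation $g\mapsto g+\alpha(g)$. We may replace $X$ by a suitable topological model in which $\pi$ is continuous and $a$ is generic for $\mu$. The key measure is the disintegration $\mu=\int\mu_z\,dm(z)$ over $Z$. The plan is to use the ``$\lambda$-type'' measure $\int \mu_z\otimes\mu_{z+t}\,dm(z)$ and its iterated versions
\[
\sigma=\int_{Z}\int_{Z^{k}}\mu_{z_0}\otimes\mu_{z_1}\otimes\dots\otimes\mu_{z_k}\,dm^{k}\,dm(z_0),
\]
arranged along arithmetic progressions $z_i=z_0+it$. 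The point is that, for an ergodic system, pairs $(x,T^g x)$ equidistribute relative to the Kronecker factor, since the orthogonal complement of $L^2(Z)$ consists of weak mixing functions. A van der Corput / mean ergodic argument along $\Phi$ then shows that $\sigma$-almost every chain consists of progressive pairs. Finally, $\sigma(E_0\times\dots\times E_k)>0$ whenever the relevant fibres meet $E$, which follows from $\mu(E)>0$ together with a positivity estimate $\int 1_E\cdot \mathbb{E}(1_E\mid Z)\circ R_t\,d\mu>0$ for $t$ near $0$.

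\emph{Main obstacle.} The hardest step will be guaranteeing that the chain starts at the specific point $a$, rather than at a $\mu$-typical point, and that progressiveness holds for each link simultaneously. Here I would first try the KMRR trick: replace $a$ by a generic point of a modified measure. That is, show that $\mu$-almost every $x_0$ works and that $a$ is generic for $\mu$ along $\Phi$. Then either translate $A$, which is harmless since sumsets are translation invariant up to shifting $B_1$, or use that the set of good $x_0$ has full measure, and hence is visited by the orbit of $a$ along $\Phi$. The fact that only eigenfunctions matter (second-order, not higher) is what keeps us in the Kronecker factor. The reason is that each link involves only a single shift, i.e.\ a two-point correlation.
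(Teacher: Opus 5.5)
\textbf{The dynamical criterion.} A chain in which each link $T^{g_n}x_i\to x_{i+1}$ holds along its own sequence does not support the greedy construction. Once you have put $b$ into $B_1$, every later element $b'$ of $B_2$ must send the point $T^{b}a$ into the appropriate open set. The link $x_1\to x_2$ only tells you where $T^{b'}$ sends $x_1$ itself. Since $T^{b}a$ is merely close to $x_1$, and the maps $T^{b'}$ with $b'\to\infty$ are not equicontinuous, nothing transfers.

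Such chains are essentially free. For a set of positive density, a recurrent point of $E\cap\supp\mu$, repeated $k$ times, already gives one. They also do not force sumsets. Take $S=\{0\}\cup\FS(\{10^i\})$, whose indicator is recurrent, and $A=\bigcup_n\big(5\cdot10^n+S\cap[0,n]\big)\subset\Z$. Then $T^{5\cdot10^n}1_A\to1_S$ and $T^{10^n}1_S\to1_S\in E=\{x:x(0)=1\}$. Yet $A$ contains no $B+C$ with $B,C$ infinite. Indeed, if $b+c_1$ and $b_1+c$ lie in blocks $k<m$ with $k$ large, then $b+c\in A$ would produce two elements of $S$ differing by $5\cdot10^k+O(k)$. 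But $S-S$ only contains numbers whose decimal digits lie in $\{0,\pm1\}$.

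If you instead demand one sequence with $T^{g_n}(x_0,\dots,x_{k-1})\to(x_1,\dots,x_k)$, the greedy argument yields all sums of $k$ distinct elements of a single infinite set inside $A$, with no shift. For $k\ge2$ this fails for $A=k\Z+1$, so such a chain starting at $a$ need not exist. The object that actually works is an Erd\H{o}s cube in $X^{\{0,1\}^k}$ with $2^k$ vertices, where in each direction the whole lower face is moved \emph{jointly} onto the upper face (\cref{K:def:cube,K:lem:cube-sumset}). The extra vertices record the constraints imposed by elements already chosen.

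\textbf{The other two parts of your plan.} Once the criterion is corrected, these also fail.

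First, ``the good starting points have full measure, hence are visited by the orbit of $a$'' is false. Genericity only controls open sets, and a full-measure set can miss the countable orbit of $a$ entirely. This prescribed-point problem is what led to the continuous pronilfactor models in~\cite{kra2022infinite}.

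Second, ``only two-point correlations, hence only the Kronecker factor of $\mu$'' is an artefact of the chain. In direction $i$ of a cube one moves a face of $2^{k-1}$ points. The Kronecker factor of the face measure on $X^{\{0,1\}^{k-1}}$ is not determined by that of $\mu$: for the skew product $(s,u)\mapsto(s+\theta,u+s)$, pair measures already have eigenfunctions $e(u_1-u_0)$. Moreover, these eigenfunctions need not be continuous in the model that contains $a$.

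The paper overcomes both obstacles by building the cube one level at a time (\cref{K:prop:step}). At each level it:
\begin{enumerate}
\item re-bases on the Kronecker factor of the current measure $\lambda_m$, not of $\mu$ (\cref{K:lem:rebase});
\item adds a random top face distributed by a relative product $J^{\lambda_m}_s$;
\item carries forward only a topological support condition on the upper faces, which genericity at $a$ can detect (\cref{K:lem:stab,K:lem:PS}).
\end{enumerate}
The Kronecker part of each face is then controlled by phase returns (\cref{K:lem:phase}) rather than by continuity of eigenfunctions.
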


This is \cref{K:thm:main}; it answers Question~5.14 of~\cite{kra2023problems}. The proof uses the
Kronecker factor, but no higher-order Host--Kra structure theory, nilsystems of step greater than
one, or nilspaces: it extends the short proof of the case $k=2$
by Kra, Moreira, Richter and Robertson~\cite{kra2026short} to an arbitrary number of summands,
building the required configuration one level at a time and keeping track only of the supports of
suitable measures on the Kronecker factor.

The other answers are listed in \cref{I:tab:results}, in the order in which they appear in the
paper. They are of very different depth. Several are short, and some rely in an essential way on
recent work of other authors: Question~5.16 on a theorem of Charamaras and
Mountakis~\cite{charamaras2024finding}, the positive part of Question~\S3.3(a) on a theorem of
Kousek~\cite{kousek2025asymmetric}, Questions~3.31 and~\S3.6(a) on the dynamical part of the density
finite sums theorem of Kra, Moreira, Richter and Robertson~\cite{kra2025density}, and Conjecture~4.5
on a theorem of Tao and Ziegler~\cite{tao2023infinite}. The negative answer to Question~\S3.3(e)
follows directly from known counterexamples (one from~\cite{kra2023problems} itself and one due to
Ackelsberg~\cite{ackelsberg2024counterexamples}), and Question~3.24 is already answered by an example
of Kousek and Radi\'c~\cite{kousek2024infinite}, as we check; we record these for completeness. In
each case we say precisely what is new.

In addition, in \cref{KR:sec:main} we answer a question of Kousek and
Radi\'c~\cite{kousek2024infinite} that is not in~\cite{kra2023problems}: the supremum of the lower
Banach densities of sets $A\subset\N$ such that $B+B+t\not\subset A$ for every infinite $B\subset\N$
and every $t\ge0$ is $1/2$, and the supremum for sets such that $B+B\not\subset A$ for every
infinite $B$ is $3/4$.

\subsection*{Conventions}
We use the numbering of the second arXiv version of~\cite{kra2023problems}. Some questions in the
survey are posed in the running text without a number. We refer to them by the subsection in which
they appear followed by a letter, in order of appearance: for instance, Question~\S3.2(a) is the
first unnumbered question in Subsection~3.2 of~\cite{kra2023problems}. Before answering each
question we restate it in our notation.

Throughout, $\N=\{1,2,3,\dots\}$, $\P$ denotes the set of primes, and for $A\subset\N$
\[
\udens(A)=\limsup_{N\to\infty}\frac{|A\cap[N]|}{N},\qquad
\ubd(A)=\lim_{N\to\infty}\sup_{M\ge0}\frac{|A\cap[M+1,M+N]|}{N},
\]
\[
\udlog(A)=\limsup_{N\to\infty}\frac{1}{\log N}\sum_{n\in A\cap[N]}\frac1n,
\]
where $[N]=\{1,\dots,N\}$. The lower densities $\ldens$ and $\underline{\mathsf d}_{\log}$ are defined
with $\liminf$ in place of $\limsup$, and the natural and logarithmic densities $\dens$ and $\dlog$
are the common values when upper and lower densities agree. The lower Banach density is defined in
\cref{KR:sec:main}. For a \Fol{} sequence $\Phi=(\Phi_N)$ in a countable abelian group we write
$\udens_\Phi(A)=\limsup_N|A\cap\Phi_N|/|\Phi_N|$, and the upper Banach density $\ubd(A)$ is the
supremum of $\udens_\Phi(A)$ over all \Fol{} sequences $\Phi$.
A set is \emph{thick} if it contains arbitrarily long intervals (in an abelian group: translates of
every finite set). Conventions for nonabelian groups are given where they are used. For sets $B,C$ we write $B+C=\{b+c:b\in B,c\in C\}$, $B+t=\{b+t:b\in B\}$,
$mB=\{mb:b\in B\}$ and $B\oplus B=\{b+b':b,b'\in B,\ b\ne b'\}$; thus $B+B$ is the unrestricted
sumset, which contains the elements $2b$.

\subsection*{Questions whose answer depends on the formulation}
For several questions the answer we give concerns the question exactly as it is stated in the survey,
and the literal statement differs from what the authors of the survey probably intended. We include
these answers because the difference is not always obvious and the literal statements have been
repeated in the literature, but in each case we point out the issue explicitly, give the natural
corrected formulation, and say whether it remains open. This applies to
Questions~5.23 and~\S5.4(b) (where the fibre conditions constrain only one coordinate),
Question~\S3.2(a) (where the density condition can be read in two ways; we answer both readings),
Question~3.20 (where $m=1$ is coprime to every $q$; we also answer the question for $m\ge2$),
Question~3.23 (where $B+mB$ contains the diagonal terms $(m+1)b$) and Question~5.6 (where the
sign of $x$ gives an obstruction at the real place).

\begin{table}[ht]
\centering
\small
\renewcommand{\arraystretch}{1.25}
\begin{tabular}{@{}>{\raggedright\arraybackslash}p{0.17\textwidth}>{\raggedright\arraybackslash}p{0.6\textwidth}>{\raggedright\arraybackslash}p{0.12\textwidth}@{}}
\hline
Question & Answer & Section\\
\hline
5.14 & Yes: $B_1+\dots+B_k$ in every countably infinite abelian group & \ref{K:sec:main}\\
5.16, \S5.3(a) & Exactly which sets of translates suffice when $[G:2G]<\infty$ & \ref{G:sec:translates}\\
5.10 & No: fibrewise lower density does not suffice in $\N^2$ & \ref{G:sec:q510}\\
5.23, \S5.4(b) & No, as stated, and also for a repaired version in $\Z^d$; a second repair is open & \ref{G:sec:q523}\\
\S3.2(a) & No, for both readings & \ref{IA:sec:u32a}\\
\S3.3(a) & Ordered polynomial sums: exactly the linear case & \ref{IA:sec:u33a}\\
\S3.3(e) & No (direct consequence of known examples) & \ref{IA:sec:u33e}\\
3.20 & No, for every number $r\ge2$ of colours, also when only $m\ge2$ is considered & \ref{IA:sec:q320}\\
3.23 & No, as stated & \ref{IA:sec:q323}\\
3.24 & No (already implied by Kousek--Radi\'c) & \ref{IA:sec:q324}\\
3.31, \S3.6(a) & Yes, in the strongest $k$-fold form & \ref{IB:sec:q331}\\
4.13 & For $\gamma>0$: yes iff $\gamma\notin\Z$ (for Lebesgue-almost every $x\in\R$) & \ref{IB:sec:q413}\\
\S4.4(a) & Random sets: criterion for monotone $p_n$; in general the set of good $p$ is not Borel & \ref{IB:sec:u44a}\\
4.5 & Yes (a consequence of a theorem of Tao and Ziegler) & \ref{IB:sec:c45}\\
\S5.1(a) & Yes for both examples & \ref{SP:sec:u51a}\\
5.4 & No & \ref{SP:sec:q54}\\
5.6 & No, as stated & \ref{SP:sec:q56}\\
\hline
\end{tabular}
\caption{Questions from~\cite{kra2023problems} answered in this paper.}
\label{I:tab:results}
\end{table}

\subsection*{Formalization}
The main theorem and most of the other results have been formalized in Lean~4 with Mathlib; the
formalization is available at \url{https://github.com/daizisheng/kmrr25}, in the directory
\nolinkurl{lean/Kmrr25}. No file contains \texttt{sorry} or declares an axiom; every main theorem
depends only on the standard axioms of Lean. Where one of our proofs relies on a published theorem
that is not formalized, that theorem is stated as a proposition and appears as an explicit
hypothesis of the formal statement, so that what is formalized is the deduction from it. \Cref{I:tab:lean} gives an overview, and
after each formalized result a paragraph headed \emph{Formalization} names the file and the
main formal statements and says what is not covered.

\begin{table}[ht]
\centering
\small
\renewcommand{\arraystretch}{1.2}
\begin{tabular}{@{}>{\raggedright\arraybackslash}p{0.16\textwidth}>{\raggedright\arraybackslash}p{0.22\textwidth}>{\raggedright\arraybackslash}p{0.54\textwidth}@{}}
\hline
Question & File & Status\\
\hline
5.14 & \nolinkurl{ThmK*} (14 files) & conditional on four classical theorems (see the end of \cref{K:ssec:proof})\\
5.16, \S5.3(a) & \nolinkurl{Q516} & conditional on \cite[Corollary~1.13]{charamaras2024finding}\\
5.10 & \nolinkurl{Q510} & complete\\
5.23, \S5.4(b) & \nolinkurl{Q523} & answer to the questions as stated: complete; repaired version
(\cref{G:thm:repairA}): not formalized\\
\S3.2(a) & \nolinkurl{S32a} & complete\\
KR, Q.~5.2 & \nolinkurl{KR52} & \cref{KR:thm:main} and the lower bounds in \cref{KR:cor:main}: complete;
upper bounds (\cite[Theorems~1.2 and~1.3]{kousek2024infinite}): not formalized\\
\S3.3(a) & \nolinkurl{S33a} & negative part: complete; positive part: conditional on
\cite[Theorem~1.2]{kousek2025asymmetric}\\
\S3.3(e) & \nolinkurl{S33e} & complete\\
3.20 & \nolinkurl{Q320} & complete\\
3.23 & \nolinkurl{Q323} & complete\\
3.24 & \nolinkurl{Q324} & complete\\
3.31, \S3.6(a) & \nolinkurl{Q331} & conditional on the dynamical input from~\cite{kra2025density}
(\cref{IB:prop:dyn})\\
4.13 & \nolinkurl{Q413}, \nolinkurl{Q413a}, \nolinkurl{Q413SecondMoment} & complete\\
\S4.4(a) & \nolinkurl{S44a}, \nolinkurl{S44b} & criteria and examples: complete; non-Borel results:
conditional on \cite[Theorems~14.2 and~27.1]{kechris1995classical}\\
4.5 & \nolinkurl{C45} & conditional on \cite[Theorem~1.5]{tao2023infinite}\\
\S5.1(a) & \nolinkurl{S51a} & complete\\
5.4 & \nolinkurl{Q54} & complete\\
5.6 & \nolinkurl{Q56} & complete\\
\hline
\end{tabular}
\caption{Lean formalization. ``Complete'' means that the answer to the question proved in this
paper is formalized without hypotheses beyond the standard axioms; the file names refer to
\nolinkurl{lean/Kmrr25/*.lean}.}
\label{I:tab:lean}
\end{table}

\subsection*{Organisation}
The paper has three parts, organised by the setting. \Cref{K:sec:main}, at the beginning of the first
part, contains the proof of the main theorem; the other sections of that part concern sumsets in
abelian groups and in $\N^2$. The second part concerns sumsets in the integers: density thresholds
for $B+B$ and $B+B+t$ (\cref{IA:sec:u32a,KR:sec:main}), polynomial and dilated sumsets
(\cref{IA:sec:ordered,IA:sec:BmB}), density Ramsey families (\cref{IB:sec:q331}), sparse and random
sets (\cref{IB:sec:q413,IB:sec:u44a}) and the primes (\cref{IB:sec:c45}). The third part concerns
configurations that mix sums and products. Each section begins by restating the questions it
answers, and the sections can be read independently.

\part{Sumsets in groups}
\section{Sums of \texorpdfstring{$k$}{k} infinite sets in countable abelian groups}
\label{K:sec:main}

This section answers Question~5.14 of~\cite{kra2023problems}.
We show (\cref{K:thm:main}) that in every countably infinite abelian group, every set of
positive upper Banach density contains a sumset $B_1+\dots+B_k$ of $k$ infinite sets, for
every $k$.
For finite groups the question has a trivially negative answer (\cref{K:rem:finite}), so
``countably infinite'' is the right setting.
The proof does not use higher-order Host--Kra theory, nilsystems or nilspaces, or any
equidistribution theorem for nilsystems; besides the Kronecker factor, it uses only standard
facts of ergodic and spectral theory.
It extends to $k$ summands, and to arbitrary countable abelian groups, the short proof of
the case $k=2$ for subsets of $\N$ given by Kra, Moreira, Richter and
Robertson~\cite{kra2026short}.
The cube is built one level at a time.
At each level we re-base on the Kronecker factor of the current measure, and from one level
to the next we keep only a condition on topological supports.

\subsection{The question and the result}\label{K:ssec:result}

Let $\Gamma$ be a countable abelian group, written additively.
A \emph{\Fol{} sequence} in $\Gamma$ is a sequence $\Phi=(\Phi_N)_{N\in\N}$ of nonempty finite
subsets of $\Gamma$ such that $|(\Phi_N+\gamma)\triangle\Phi_N|/|\Phi_N|\to0$ for every
$\gamma\in\Gamma$.
The \emph{upper Banach density} of $A\subset\Gamma$ is
\[
\ubd(A)=\sup_{\Phi}\ \limsup_{N\to\infty}\frac{|A\cap\Phi_N|}{|\Phi_N|},
\]
where the supremum is over all \Fol{} sequences $\Phi$ in $\Gamma$.

\begin{question}{5.14}\label{K:q:514}
Is it true in every countable abelian group $(G,+)$ that every set $A$ of positive upper
Banach density contains $B_1+\dots+B_k$ for infinite sets $B_1,\dots,B_k\subset G$?
\end{question}

Here $k\in\N$ is arbitrary; the case $k=1$ is trivial.

\begin{theorem}\label{K:thm:main}
Let $\Gamma$ be a countably infinite abelian group and let $k\ge1$.
If $A\subset\Gamma$ satisfies $\ubd(A)>0$, then there are infinite sets
$B_1,\dots,B_k\subset\Gamma$ with
\[
B_1+\dots+B_k=\{b_1+\dots+b_k:\ b_i\in B_i\}\subset A .
\]
\end{theorem}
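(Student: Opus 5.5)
We follow the strategy of the short proof of Kra, Moreira, Richter and Robertson~\cite{kra2026short}, extended from two summands to $k$ and from $\N$ to an arbitrary countably infinite abelian group $\Gamma$.

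\emph{Step 1: correspondence.} By the Furstenberg correspondence principle for $\Gamma$, there are an ergodic measure-preserving $\Gamma$-system $(X,\mu,T)$ and a clopen set $E\subset X$ such that $\mu(E)>0$ and
\[
\{\gamma: T^{\gamma}x_0\in E\}\subset A
\]
for a point $x_0$ that is generic for $\mu$ along a \Fol{} sequence $\Phi$. Here $X$ is taken to be the orbit closure of $\one_A$ in $\{0,1\}^\Gamma$. Ergodicity is obtained by ergodic decomposition.

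\emph{Step 2: reduction to a dynamical statement.} The combinatorial statement should follow from the existence of \emph{Erd\H{o}s cubes}: points $(x_{\omega})_{\omega}$ indexed by subsets $\omega\subset\{1,\dots,k\}$, with $x_\emptyset=x_0$, such that there are sequences $\gamma^{(i)}_n$, each tending to infinity in $\Gamma$, with
\[
T^{\gamma^{(i)}_n}x_\omega\to x_{\omega\cup\{i\}}\qquad (i\notin\omega),
\]
and with $x_{\{1,\dots,k\}}\in E$. Given such a cube, we pick $b^{(1)}_1,b^{(2)}_1,\dots$ inductively, in the manner of the classical diagonal argument. Openness of $E$ and continuity then ensure that every finite sum $b^{(1)}_{j_1}+\dots+b^{(k)}_{j_k}$ with all indices chosen so far lands in $A$. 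In general position one may have to pass to a subsequence of each $\gamma^{(i)}_n$ and choose the new elements distinct, using $\gamma\to\infty$. This is the same bookkeeping as in the $k=2$ case and is routine.

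\emph{Step 3: building the cube level by level via the Kronecker factor.} Let $\pi:X\to Z$ be the factor map to the Kronecker factor, a compact abelian group rotation. For $k=2$ the proof works with the measure $\lambda=\int \mu_z\times\mu_{z}\,dm(z)$ and uses that $x_0$ is generic for the progressive measure $\int\delta_{x}\times\mu_{\pi x}\,d\mu$. To handle $k$ summands we iterate. Given a level-$j$ measure $\sigma_j$ on $X^{2^{j}}$ that is ergodic, or ergodic after passing to a component, and progressive from the previous level, we take the Kronecker factor of $(X^{2^j},\sigma_j)$ under the diagonal action. We then form the next measure by the same relatively-independent construction over that Kronecker factor. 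The one property carried from level to level is a \emph{support condition}: the product of the $E$-coordinates has positive measure, and for $\sigma_j$-almost every point the fibre measure's topological support contains points whose remaining coordinates lie in $E$. Positivity of the top-level integral should come from the van der Corput/spectral fact that, modulo the Kronecker factor, the averages $\frac1{|\Phi_N|}\sum_{\gamma\in\Phi_N} f\circ T^\gamma\otimes g\circ T^\gamma$ converge to the projection onto compact functions. An application of Cauchy--Schwarz on the Kronecker group then gives a positive lower bound.

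\emph{Main obstacle.} The hard step is the transfer from level $j$ to level $j+1$. The level-$j$ measure is no longer a product of copies of $\mu$, so its Kronecker factor is not simply a diagonal of $Z$, and genericity of the base point along $\Phi$ must be re-established. We would first try to avoid genericity at higher levels altogether. The idea is to work with the topological statement that $\supp$ of the conditional measures varies upper semicontinuously along orbits in $Z$. This should suffice to produce the convergences $T^{\gamma_n}x_\omega\to x_{\omega\cup\{i\}}$ from membership in supports, after passing to a subsequence of $\Phi$ where needed. Finiteness of the $k$ levels keeps the subsequence extractions harmless, and countability of $\Gamma$ lets us send $\gamma_n\to\infty$ and keep the $B_i$ infinite.
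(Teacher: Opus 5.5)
Steps~1 and~2 are fine, and your overall architecture matches the paper: correspondence principle, Erd\H{o}s cubes with one sequence per direction acting diagonally on faces, and a level-by-level construction over the Kronecker factor of the current cube measure. The gap is the inductive step, which is the whole difficulty. The mechanism you propose for it does not work as stated, for three reasons.

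\emph{The support condition is almost-everywhere.} It is stated ``for $\sigma_j$-almost every point'', but the bottom face containing $x_0$ is a prescribed point, not a typical point of $\sigma_j$. An a.e.\ statement says nothing about it.

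\emph{Semicontinuity of supports is not available.} The family $z\mapsto\mu_z$ is only measurable and a.e.-defined. The prescribed point also has no well-defined Kronecker coordinate, since the Kronecker map of a face measure is only measurable. This is exactly why \cite{kra2022infinite} passes to an extension with continuous factor maps. What is genuinely upper semicontinuous is the fibre structure of the closed set $\supp(\mathrm{id},\kappa)_*\lambda$. To use it you must first attach a Kronecker coordinate to the prescribed point, and then show that a support condition \emph{at that point} survives one more level.

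\emph{The positivity argument is only an average.} Cauchy--Schwarz on the Kronecker group gives positivity of $\lambda(W_i(V,\beta))$ on average over $\beta$. You need it for $\beta$ in the specific return set $\{\beta: T^\beta L_i(w)\approx U_i(w)\}$, and the Kronecker phase $\alpha_i(\beta)$ may correlate badly with that set. In \cite{kra2026short} this coupling is handled by continuous eigenfunctions. For $k\ge3$ the face measures acquire new eigenfunctions that need not be continuous in the model.

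The paper supplies three ingredients that you would need in some form.

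First, re-basing (\cref{K:lem:rebase}) makes $(w,d)$ generic for the graph measure along a new \Fol{} sequence, for some $d$ in the Kronecker group. The stabiliser argument (\cref{K:lem:stab}) then puts the decorated upper face into $\supp\rho_{\nu_i}$. By \ref{K:G1}, this yields a positive-density set of phase returns: $\beta$ with $T^\beta L_i(w)$ near $U_i(w)$ and $\alpha_i(\beta)$ near $r_i=\iota_i\circ U_i-\iota_i\circ L_i$ (\cref{K:lem:phase}). On these $\beta$ the Kronecker part is controlled by continuity of translation in $L^1$. The weak-mixing part is negligible along every \Fol{} sequence (\cref{K:prop:criterion}).

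Second, genericity is not avoided but rebuilt. The new top face $x$ is a typical point chosen after re-basing, so that $(w,x)$ is generic for an ergodic relative product $J^\lambda_{s(x)}$ (\cref{K:lem:pairs}). Genericity is what supplies both the returns in the new direction and the positive density of phase returns. Dropping it, as you suggest, would lose exactly what the criterion uses.

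Third, the condition carried between levels is pointwise, $U_i(w)\in\supp\nu_i$. Its persistence is a separate convolution argument that holds at every support point (\cref{K:lem:PS}).

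Without replacements for these steps, the proposal describes the strategy but does not prove the theorem.
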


\begin{remark}[Finite groups]\label{K:rem:finite}
Read literally, \cref{K:q:514} includes finite groups, and for them the answer is no.
A finite group has no infinite subsets, yet every nonempty $A\subset G$ has
$\ubd(A)\ge|A|/|G|>0$.
(For finite $G$, the Følner condition forces $\Phi_N=G$ for all large $N$.)
So the meaningful version of the question concerns countably \emph{infinite} groups, and
\cref{K:thm:main} answers that version positively for every $k$.
\end{remark}

\begin{remark}[Previously known cases]\label{K:rem:prior}
\leavevmode
\begin{enumerate}[label=(\alph*)]
\item For $k=2$ the conclusion holds in every countably infinite amenable group, with products in
place of sums, for sets of positive density along a two-sided \Fol{} sequence. This is due
to Moreira, Richter and Robertson~\cite[Theorem~1.3]{moreira2019proof}. In an abelian group
every \Fol{} sequence is two-sided.
\item For $\Gamma=\Z$, or rather for $A\subset\N$, the case of general $k$ is
\cite[Theorem~1.1]{kra2022infinite} (Theorem~2.8 of~\cite{kra2023problems}). Here and below,
theorem numbers in~\cite{kra2022infinite} refer to arXiv:2206.01786v2.
Its proof relies on pronilfactors: Kra, Moreira, Richter and Robertson construct Erdős
cubes in an extension of the system with continuous factor maps to its pronilfactors
\cite[Section~5]{kra2022infinite}.
\item For $k=3$ and $[\Gamma:6\Gamma]<\infty$, the conclusion follows from a theorem of Ackelsberg
and Jamneshan~\cite[Theorem~1.9]{ackelsberg2025equidistribution}. They find $t\in\Gamma$ and an
infinite $B\subset\Gamma$ such that $A-t$ contains every sum of three distinct elements of
$B$. Splitting $B$ into three disjoint infinite sets $B'_1,B'_2,B'_3$ then gives
$(B'_1+t)+B'_2+B'_3\subset A$.
\end{enumerate}
After stating \cref{K:q:514}, the survey~\cite{kra2023problems} remarks that the techniques
of~\cite{kra2022infinite}, together with work of Griesmer~\cite{griesmer2009ergodic}, may
suffice for $\Gamma=\Z^d$. In other groups, it says, ``one cannot
proceed in the same way, because the analogous ergodic structure theory is not currently
available.''
The proof below uses no such structure theory.
\end{remark}

\begin{remark}[The integers]\label{K:rem:N}
Our proof specialises to a proof of \cite[Theorem~1.1]{kra2022infinite} that uses no
pronilfactors.
To see this, let $A\subset\N$ have positive upper Banach density.
Choose intervals $I_n=[M_n+1,M_n+N_n]$ with $N_n\to\infty$ and
$|A\cap I_n|/N_n\to\ubd(A)$; they form a \Fol{} sequence in $\Z$, so $\ubd(A)>0$ also when $A$
is viewed as a subset of $\Z$.
\cref{K:thm:main} then gives infinite $B_i\subset\Z$ with $B_1+\dots+B_k\subset A\subset\N$.
Fix $b_j\in B_j$ for $j\neq i$. Then every element of $B_i$ is at least
$1-\sum_{j\ne i}b_j$, so each $B_i$ is bounded below.
Hence $B_i\cap\N$ is still infinite, and $(B_1\cap\N)+\dots+(B_k\cap\N)\subset A$.
\end{remark}

\subsection{Strategy of the proof}\label{K:ssec:strategy}

\subsubsection*{Erdős cubes}
Following \cite{kra2022infinite}, we look for a dynamical configuration called an Erdős cube.
The correspondence principle (\cref{K:thm:corr}) gives the following data:
\begin{itemize}
\item an ergodic system $(X,\mu,T)$;
\item an open set $E\subset X$ with $\mu(E)>0$;
\item a point $a\in X$, generic for $\mu$, such that $A=\{\gamma:T^\gamma a\in E\}$.
\end{itemize}
An \emph{Erdős cube} is a tuple $y=(y_\eps)_{\eps\in\{0,1\}^k}$ of points of $X$ with the
following property. For every direction $i$, the lower $i$-face
$(y_\eps)_{\eps_i=0}$ is moved arbitrarily close to the upper $i$-face
$(y_{\eps+e_i})_{\eps_i=0}$ by infinitely many group elements $\beta$, acting diagonally;
here $\eps+e_i$ is $\eps$ with its $i$-th digit changed to $1$.
If $y_{0\cdots0}=a$ and $y_{1\cdots1}\in E$, a greedy argument produces
$B_1+\dots+B_k\subset A$ (\cref{K:lem:cube-sumset}).
So the task is to construct such a cube.

\subsubsection*{The case $k=2$}
The argument of~\cite{kra2026short} can be read as constructing a $2$-dimensional cube
whose bottom face is $(a,y)$ and whose top face $(x,x')$ is random; we use our own
notation. A point $y\in\supp\mu$ is chosen so that $(a,y)$
is generic for some joining $\lambda$ of $\mu$ with itself.
The key claim (Claim~1 in the proof of \cite[Theorem~3.1]{kra2026short}) produces
infinitely many $\beta$ with two properties:
\begin{itemize}
\item $T^\beta a$ is close to $y$;
\item the top face $(x,x')$, which is distributed according to $\lambda$, still has positive
measure after the direction-$1$ constraint imposed by $\beta$.
\end{itemize}
The proof decomposes $1_E=f_c+f_{wm}$ into its Kronecker and weak mixing parts.
\begin{itemize}
\item The weak mixing part averages out along every \Fol{} sequence.
\item The Kronecker part is handled by approximating $f_c$ by \emph{continuous}
eigenfunctions. This uses an enhanced correspondence principle in which every measurable
eigenfunction is topological \cite[Theorem~2.5(4)]{kra2026short}. Continuity gives the
equicontinuity estimate~(4.2) of~\cite{kra2026short}, which turns ``$T^\beta a$ is close to
$y$'' into ``$f_c\circ T^\beta$ is close to $f_c$ along the orbit''.
\item The positive frequency of the $\beta$ with $T^\beta a$ near $y$ comes from
$y\in\supp\mu$.
\end{itemize}

\subsubsection*{Where the difficulty lies for $k\ge3$}
Suppose a $(k-1)$-dimensional bottom face $w=(w_\eps)$ with $w_{0\cdots0}=a$ has been
built, generic for an ergodic measure $\lambda$ on $X^{\{0,1\}^{k-1}}$. We try to add a
random top face distributed according to $\lambda$.
\begin{itemize}
\item The direction $k$ (bottom face to top face) behaves exactly as in the case $k=2$.
\item In each of the other directions $i$, the relevant system is the face measure $\nu_i$,
the distribution under $\lambda$ of the upper $i$-face. Its Kronecker factor carries
information about $\mu$ beyond the Kronecker factor of $\mu$. For instance, for the skew
product $(s,u)\mapsto(s+\theta,u+s)$ on $\T^2$, the measures on pairs that arise already
have eigenfunctions of the form $e(u_1-u_0)$, with eigenvalues that depend on the
parameter of the pair measure. For a general system there is no reason for such
eigenfunctions to be continuous in the topological model in which $a$ lives.
\end{itemize}
So the equicontinuity step of~\cite{kra2026short} is not available on faces.
The proof of~\cite{kra2022infinite} for $\Z$ proceeds differently. It disintegrates the
Host--Kra cubic measure $\mu^{[k]}$ over the first coordinate and needs the conditional
measure at the prescribed point $a$. To define that conditional measure at every point,
it passes to an extension in which the factor maps onto the pronilfactors are continuous,
and it proves that the ergodic decomposition of $\mu^{[k]}$ is then continuous.

\subsubsection*{The three ideas}
\begin{enumerate}[label=(\roman*)]
\item \textbf{Phase returns replace continuity} (\cref{K:prop:criterion}).
Let $\iota_i$ be the Kronecker map of $\nu_i$, with rotation $\alpha_i$.
\begin{itemize}
\item By ergodicity, $\iota_i(\text{upper face})-\iota_i(\text{lower face})$ is a
$\lambda$-a.e.\ constant $r_i$.
\item By continuity of translation in $L^1$ of the Kronecker group, the Kronecker part
is controlled for all $\beta$ whose ``phase'' $\alpha_i(\beta)$ is close to $r_i$.
\item So it suffices to have many $\beta$ for which $T^\beta(\text{lower face of }w)$ is close
to the upper face of $w$ \emph{and} $\alpha_i(\beta)$ is close to $r_i$. Here ``many'' means
a set of positive upper Banach density.
\end{itemize}
\item \textbf{Phase returns come from supports} (\cref{K:lem:phase}).
Such phase returns exist as soon as the upper face of $w$ lies in the topological support
of $\nu_i$. The proof re-bases $w$: we change the \Fol{} sequence so that $w$, decorated
with a point of the Kronecker group, becomes generic for the graph measure. A stabiliser
argument for the support of the graph measure then forces the decorated upper face into
that support (\cref{K:lem:stab}).
\item \textbf{The support condition survives one more level} (\cref{K:prop:step}).
We build the cube level by level. At level $m$ we have a point $w\in X^{\{0,1\}^m}$ that
is generic for an ergodic, face-symmetric measure $\lambda_m$ and whose upper faces lie in
the supports of the face measures. To pass to level $m+1$:
\begin{itemize}
\item re-base $w$ on the Kronecker factor of $\lambda_m$;
\item choose the new top face at random, so that the new cube is generic for an ergodic
component of the relative product of $\lambda_m$ with itself over its Kronecker factor;
\item check that the support condition persists. This is the content of an elementary
convolution lemma (\cref{K:lem:PS}). The lemma holds at \emph{every} support point, not
only at typical ones.
\end{itemize}
\end{enumerate}
From one level to the next, the induction carries only genericity, ergodicity, face
symmetry, the vertex marginals and the support condition; the Kronecker data are rebuilt at
each level.
The only prescribed point is $a$. Every vertex added at an induction step is a typical
point for the current measure, chosen after all the data it must avoid have been fixed, and
the final top face is constructed by nested boxes. Apart from genericity, no statement
about typical points is ever applied at a prescribed point, and this is why no
equidistribution at a prescribed point is needed.

\subsection{Notation and standard facts}\label{K:ssec:prelim}

Throughout, $\Gamma$ is a countably infinite abelian group.
For a finite set $F$ and a function $u$ on $F$ we write
$\mathop{\mathbb E}_{\gamma\in F}u(\gamma)=|F|^{-1}\sum_{\gamma\in F}u(\gamma)$.

\begin{lemma}\label{K:lem:folner-grow}
If $\Phi$ is a \Fol{} sequence in $\Gamma$, then $|\Phi_N|\to\infty$.
\end{lemma}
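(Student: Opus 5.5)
The plan is to argue by contradiction, using only that $\Gamma$ is infinite and the \Fol{} condition. Suppose $|\Phi_N|\not\to\infty$. Then there are a constant $C$ and a subsequence $(N_j)$ with $|\Phi_{N_j}|\le C$ for all $j$. Because $\Gamma$ is infinite, I fix a finite set $S\subset\Gamma$ with $|S|>C$.

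The key observation is that for any nonempty finite $F$ and any $\gamma\in\Gamma$, the symmetric difference $(F+\gamma)\triangle F$ has cardinality $0$ or at least $1$. Hence
\[
\frac{|(F+\gamma)\triangle F|}{|F|}
\]
is either $0$ or at least $1/|F|\ge 1/C$ when $|F|\le C$. The \Fol{} condition forces this ratio to tend to $0$ along $N_j$ for each $\gamma\in S$. Since $S$ is finite, for all large $j$ we get $\Phi_{N_j}+\gamma=\Phi_{N_j}$ simultaneously for every $\gamma\in S$.

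Fix such a $j$ and some $x\in\Phi_{N_j}$. Then $x+\gamma\in\Phi_{N_j}$ for all $\gamma\in S$, and the map $\gamma\mapsto x+\gamma$ is injective. This gives $|\Phi_{N_j}|\ge|S|>C$, a contradiction.

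There is no real obstacle here. The only point needing care is to combine the finitely many translates of $S$ before passing to a single large index $j$, which the finiteness of $S$ allows. This argument also shows why infiniteness of $\Gamma$ is essential, in line with \cref{K:rem:finite}: for finite $\Gamma$ one can only take $S=\Gamma$, which yields $\Phi_N=\Gamma$ eventually.
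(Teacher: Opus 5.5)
Your proof is correct, but it is not the argument the paper gives for this lemma. The paper double counts. For a finite $F\subset\Gamma$ with $|F|=n$, the number of pairs $(x,g)\in\Phi_N\times F$ with $x+g\in\Phi_N$ is at most $|\Phi_N|^2$. It also equals $\sum_{g\in F}|\Phi_N\cap(\Phi_N-g)|=n|\Phi_N|(1-o(1))$, and comparing the two gives $|\Phi_N|\ge n(1-o(1))$.

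What that route buys is directness and a quantitative bound. It needs no contradiction and no subsequence, and it gives $\liminf_N|\Phi_N|\ge|F|$ for every finite $F$, with the error controlled by the \Fol{} ratios of the elements of $F$.

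Your route uses discreteness instead. Along a subsequence of bounded size, a relative symmetric difference below $1/C$ must be exactly zero. So approximate invariance becomes exact invariance $\Phi_{N_j}+\gamma=\Phi_{N_j}$ for all $\gamma\in S$, and then a single translate $x+S$ already exceeds the size bound. This is the same argument the paper uses later for \cref{G:lem:folnergrow}, for left \Fol{} sequences in arbitrary infinite groups. It also explains directly the remark in \cref{K:rem:finite} that for finite $\Gamma$ the \Fol{} sets eventually equal $\Gamma$.
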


\begin{proof}
Let $F\subset\Gamma$ be finite with $|F|=n$. The number of pairs $(x,g)\in\Phi_N\times F$
with $x+g\in\Phi_N$ is at most $|\Phi_N|^2$. It also equals
$\sum_{g\in F}|\Phi_N\cap(\Phi_N-g)|$, which is $n|\Phi_N|(1-o(1))$ by the \Fol{}
property. Hence $|\Phi_N|\ge n(1-o(1))$. Since $\Gamma$ is infinite, $n$ is arbitrary.
\end{proof}

\subsubsection*{Topological systems and generic points}
A \emph{topological $\Gamma$-system} $(Y,T)$ consists of a compact metric space $Y$ and an
action of $\Gamma$ by homeomorphisms $T^\gamma$, with $T^{\gamma+\delta}=T^\gamma T^\delta$.
All measures are Borel probability measures.
A measure $\nu$ is \emph{invariant} if $T^\gamma_*\nu=\nu$ for all $\gamma$, and
\emph{ergodic} if moreover every invariant Borel set has measure $0$ or $1$.
Since $\Gamma$ is countable, this is equivalent to asking that every Borel set $S$ with
$\nu(S\triangle T^{-\gamma}S)=0$ for all $\gamma$ has measure $0$ or $1$ (replace $S$ by
$\bigcap_{\gamma}T^{-\gamma}S$); in particular a Borel map into a compact metric space
which is $\nu$-a.e.\ invariant under each $T^\gamma$ is $\nu$-a.e.\ constant.
A point $y\in Y$ is \emph{generic for $\nu$ along $\Phi$}, written $y\in\mathrm{gen}(\nu,\Phi)$,
if $\mathop{\mathbb E}_{\gamma\in\Phi_N}f(T^\gamma y)\to\int f\,d\nu$ for every $f\in C(Y)$.
The (topological) support $\supp\nu$ is the smallest closed set of full measure. It is
invariant whenever $\nu$ is.
We use the following facts freely.
\begin{enumerate}[label=(G\arabic*)]
\item\label{K:G1} If $y\in\mathrm{gen}(\nu,\Phi)$ and $O\subset Y$ is open, then
$\liminf_N|\{\gamma\in\Phi_N:T^\gamma y\in O\}|/|\Phi_N|\ge\nu(O)$.
This follows by approximating $1_O$ from below by continuous functions.
In particular, if $O$ meets $\supp\nu$, then the set of return times
$\{\gamma:T^\gamma y\in O\}$ has positive upper Banach density. By \cref{K:lem:folner-grow}
it is then infinite.
\item\label{K:G2} If $y\in\mathrm{gen}(\nu,\Phi)$, then $\supp\nu$ is contained in the orbit closure
$\overline{\{T^\gamma y:\gamma\in\Gamma\}}$. This is immediate from \ref{K:G1}.
\item\label{K:G3} If $P\colon Y\to Y'$ is continuous and equivariant and
$y\in\mathrm{gen}(\nu,\Phi)$, then $P(y)\in\mathrm{gen}(P_*\nu,\Phi)$.
\item\label{K:G4} If $\nu$ is ergodic, then for every \Fol{} sequence $\Phi$ there is a
subsequence $\Psi$ such that $\nu$-almost every point is generic for $\nu$ along $\Psi$
\cite[Lemma~2.7]{charamaras2024finding}. Since only a subsequence is required, the mean ergodic
theorem suffices: for $f$ in a countable dense subset of $C(Y)$ the averages of $f$ along $\Phi$
converge to $\int f\,d\nu$ in $L^2(\nu)$, hence $\nu$-almost everywhere along a subsequence,
and a diagonal argument gives one subsequence $\Psi$ that works for all of them.
(\cite{charamaras2024finding} uses Lindenstrauss' pointwise ergodic theorem instead.)
\end{enumerate}

\subsubsection*{Cubes}
For $m\ge0$ let $[m]=\{0,1\}^m$, with elements written $\eps=\eps_1\cdots\eps_m$.
Let $\vec0=0\cdots0$ and $\vec1=1\cdots1$.
$\Gamma$ acts on $X^{[m]}$ diagonally, $(T^\gamma x)_\eps=T^\gamma x_\eps$; we use the same
letter $T$ for this action.
We write $e_i\in[m]$ for the vertex with a single digit $1$, in position $i$, and
$\eps+e_i$ for the vertex obtained from $\eps$ (with $\eps_i=0$) by changing its $i$-th
digit to $1$.
For $1\le i\le m$, $\eta\in[m-1]$ and $j\in\{0,1\}$, let $\eta^{(i,j)}\in[m]$ be the vertex
obtained by inserting the digit $j$ at position $i$.
The \emph{lower} and \emph{upper $i$-faces} are the continuous equivariant maps
$L_i,U_i\colon X^{[m]}\to X^{[m-1]}$ given by
\[
(L_ix)_\eta=x_{\eta^{(i,0)}},\qquad (U_ix)_\eta=x_{\eta^{(i,1)}} .
\]
We identify $X^{[m+1]}$ with $X^{[m]}\times X^{[m]}$ by $y\mapsto(y',y'')$, where
$y'_\eps=y_{\eps0}$ and $y''_\eps=y_{\eps1}$ (the last digit is split off).
Under this identification:
\begin{itemize}
\item $L_i(y)=(L_iy',L_iy'')$ and $U_i(y)=(U_iy',U_iy'')$ for $i\le m$;
\item $L_{m+1}(y)=y'$ and $U_{m+1}(y)=y''$;
\item $y_{\vec0}=y'_{\vec0}$ and $y_{\vec1}=y''_{\vec1}$.
\end{itemize}
For a measure $\lambda$ on $X^{[m]}$ and $\eps\in[m]$, the \emph{vertex marginal} at $\eps$ is
the image of $\lambda$ under $x\mapsto x_\eps$.

\subsubsection*{Spectral measures and Wiener's lemma}
Let $\widehat\Gamma$ be the (compact metrizable) dual group of $\Gamma$; its elements are
homomorphisms $\psi\colon\Gamma\to S^1=\{z\in\mathbb C:|z|=1\}$.
For an invariant measure $\nu$, write $\mathcal U_\gamma f=f\circ T^\gamma$ for the Koopman
representation on $L^2(\nu)$.
For $f\in L^2(\nu)$, the spectral measure $\sigma_f$ is the finite positive measure on
$\widehat\Gamma$ with $\langle\mathcal U_\gamma f,f\rangle=\int\psi(\gamma)\,d\sigma_f(\psi)$
(Bochner--Herglotz); note $\sigma_f(\widehat\Gamma)=\|f\|_2^2$.
An \emph{eigenfunction with eigenvalue $\psi\in\widehat\Gamma$} is a nonzero $f$ with
$\mathcal U_\gamma f=\psi(\gamma)f$ for all $\gamma$. Write $H_\psi$ for the corresponding
eigenspace (including $0$).

\begin{lemma}\label{K:lem:wiener}
Let $\nu$ be invariant and let $\Phi$, $\Theta$ be \Fol{} sequences.
\begin{enumerate}[label=(\alph*)]
\item\label{K:wa} For $\psi\in\widehat\Gamma$, $\mathop{\mathbb E}_{\gamma\in\Phi_N}\psi(\gamma)\to1$ if
$\psi=1$, and $\to0$ otherwise.
\item\label{K:wb} For every unitary representation $V$ of $\Gamma$ on a Hilbert space and every
vector $v$, $\mathop{\mathbb E}_{\gamma\in\Phi_N}V_\gamma v$ converges to the orthogonal
projection of $v$ onto the $V$-invariant vectors.
\item\label{K:wc} For $f\in L^2(\nu)$ and $\psi\in\widehat\Gamma$, $\sigma_f(\{\psi\})=\|P_\psi f\|^2$,
where $P_\psi$ is the orthogonal projection onto $H_\psi$. In particular, $\sigma_f$ has no
atoms if and only if $f$ is orthogonal to every eigenfunction.
\item\label{K:wd} If $\sigma_f$ has no atoms, then for every $g\in L^2(\nu)$,
$\mathop{\mathbb E}_{\beta\in\Theta_M}|\langle\mathcal U_\beta f,g\rangle|\to0$ as $M\to\infty$.
\end{enumerate}
\end{lemma}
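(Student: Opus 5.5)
I will prove the four parts in order, with (a) and (b) feeding into (c) and (d).

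\emph{Part \ref{K:wa}.} If $\psi=1$ there is nothing to prove. Otherwise I pick $\delta\in\Gamma$ with $\psi(\delta)\ne1$. The Følner property gives
\[
\bigl|\psi(\delta)\mathop{\mathbb E}_{\gamma\in\Phi_N}\psi(\gamma)-\mathop{\mathbb E}_{\gamma\in\Phi_N}\psi(\gamma)\bigr|\le\frac{|(\Phi_N+\delta)\triangle\Phi_N|}{|\Phi_N|}\to0 .
\]
Dividing by $|\psi(\delta)-1|$ shows that the averages tend to $0$.

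\emph{Part \ref{K:wb}.} This is the mean ergodic theorem, via the decomposition $\mathcal H=\mathcal H^{\mathrm{inv}}\oplus\overline{\operatorname{span}}\{V_\delta u-u\}$. On invariant vectors the averages are constant. For a coboundary $V_\delta u-u$, the average equals $|\Phi_N|^{-1}\bigl(\sum_{\Phi_N+\delta}-\sum_{\Phi_N}\bigr)V_\gamma u$, whose norm is at most $\|u\|\,|(\Phi_N+\delta)\triangle\Phi_N|/|\Phi_N|\to0$. The averaging operators have norm at most $1$, so convergence extends to the closure of the span of coboundaries. Finally, the orthogonal complement of the coboundaries consists of the invariant vectors: if $\langle V_\delta u-u,v\rangle=0$ for all $u,\delta$, then $V_\delta^*v=v$, and hence $V_\delta v=v$.

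\emph{Part \ref{K:wc}.} I will apply \ref{K:wb} to the twisted representation $V_\gamma=\overline{\psi(\gamma)}\,\mathcal U_\gamma$. Its invariant vectors are exactly $H_\psi$, so $A_Nf:=\mathop{\mathbb E}_{\Phi_N}\overline{\psi(\gamma)}\mathcal U_\gamma f\to P_\psi f$. Taking inner products with $f$ gives
\[
\langle A_Nf,f\rangle=\int\mathop{\mathbb E}_{\gamma\in\Phi_N}(\phi\overline\psi)(\gamma)\,d\sigma_f(\phi).
\]
By \ref{K:wa} the integrand tends pointwise to $1_{\{\phi=\psi\}}$, and it is bounded by $1$. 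Dominated convergence then gives $\sigma_f(\{\psi\})=\langle P_\psi f,f\rangle=\|P_\psi f\|^2$. The ``in particular'' follows because $f\perp H_\psi$ for all $\psi$ is the same as $P_\psi f=0$ for all $\psi$.

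\emph{Part \ref{K:wd}.} This is the Wiener-lemma step, and I expect it to be the only real obstacle: I need to pass from $\langle\mathcal U_\beta f,g\rangle$ to a spectral integral that \ref{K:wa} controls. I will first reduce to $g$ in the cyclic subspace $Z(f)=\overline{\operatorname{span}}\{\mathcal U_\gamma f\}$, since replacing $g$ by its projection onto $Z(f)$ does not change any of the inner products. Then I reduce to $g=\mathcal U_\delta f$: the estimate is linear in $g$, and the bound $|\langle\mathcal U_\beta f,g-g'\rangle|\le\|f\|\|g-g'\|$ is uniform in $\beta$. For such $g$ the quantity is $|\hat\sigma_f(\beta-\delta)|$, and the Følner property of $\Theta$ lets me shift away the $\delta$, so it suffices to show $\mathop{\mathbb E}_{\beta\in\Theta_M}|\hat\sigma_f(\beta)|\to0$. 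By Cauchy--Schwarz it is enough to bound
\[
\mathop{\mathbb E}_{\beta\in\Theta_M}|\hat\sigma_f(\beta)|^2=\iint\mathop{\mathbb E}_{\beta\in\Theta_M}(\phi\overline{\phi'})(\beta)\,d\sigma_f(\phi)\,d\sigma_f(\phi').
\]
By \ref{K:wa} and dominated convergence this tends to $(\sigma_f\times\sigma_f)(\{\phi=\phi'\})=\sum_\psi\sigma_f(\{\psi\})^2$, which is $0$ because $\sigma_f$ has no atoms.
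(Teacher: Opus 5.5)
Your proof is correct and follows the paper's argument essentially verbatim in (a)--(c), including the twisted representation $\overline{\psi(\gamma)}\,\mathcal U_\gamma$ and dominated convergence in (c). In (d) the only difference is the reduction. You approximate the projection of $g$ onto $Z(f)$ by linear combinations of the $\mathcal U_\delta f$ and shift the \Fol{} sequence, which reduces everything to the Wiener computation for the positive measure $\sigma_f$. The paper instead uses the unitary isomorphism $Z(f)\cong L^2(\sigma_f)$ to write $\langle\mathcal U_\beta f,g\rangle$ directly as the Fourier transform of the atomless complex measure $\overline{\phi}\,\sigma_f$, and runs the same computation there. Both routes are sound.
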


\begin{proof}
\ref{K:wa}: If $\psi(g)\ne1$, then
$|1-\psi(g)|\cdot|\mathop{\mathbb E}_{\Phi_N}\psi|\le|\Phi_N\triangle(\Phi_N+g)|/|\Phi_N|\to0$.

\ref{K:wb}: The space splits orthogonally into the invariant vectors and the closed span of
$\{u-V_gu\}$. On a vector $u-V_gu$ the averages are bounded by
$2\|u\|\,|\Phi_N\triangle(\Phi_N+g)|/|\Phi_N|$.

\ref{K:wc}: Apply \ref{K:wb} to $V_\gamma=\overline{\psi(\gamma)}\mathcal U_\gamma$, whose invariant
vectors form $H_\psi$. Then
\[
\|P_\psi f\|^2=\lim_N\mathop{\mathbb E}_{\gamma\in\Phi_N}\overline{\psi(\gamma)}\langle\mathcal U_\gamma f,f\rangle
=\lim_N\int\mathop{\mathbb E}_{\gamma\in\Phi_N}(\psi'\overline\psi)(\gamma)\,d\sigma_f(\psi')
=\sigma_f(\{\psi\})
\]
by \ref{K:wa} and dominated convergence.

\ref{K:wd}: Let $Q$ be the orthogonal projection onto the cyclic subspace $Z(f)$, the closed
span of $\{\mathcal U_\gamma f\}$.
The map $\mathcal U_\gamma f\mapsto(\psi\mapsto\psi(\gamma))$ extends to a unitary isomorphism
$Z(f)\to L^2(\sigma_f)$. Let $\phi$ be the image of $Qg$.
Then $\langle\mathcal U_\beta f,g\rangle=\langle\mathcal U_\beta f,Qg\rangle=\int\psi(\beta)\,d\tau(\psi)$, where
$\tau=\overline\phi\,\sigma_f$ is a complex measure without atoms.
By Fubini and \ref{K:wa},
\[
\mathop{\mathbb E}_{\beta\in\Theta_M}\Big|\int\psi(\beta)\,d\tau\Big|^2
=\iint\mathop{\mathbb E}_{\beta\in\Theta_M}(\psi\overline{\psi'})(\beta)\,d\tau(\psi)\,d\overline\tau(\psi')
\longrightarrow\sum_{\psi}|\tau(\{\psi\})|^2=0 .
\]
The Cauchy--Schwarz inequality finishes the proof.
\end{proof}

\subsubsection*{Kronecker maps}
Let $(Y,T)$ be a topological $\Gamma$-system and $\lambda$ an ergodic measure on $Y$.
The \emph{Kronecker subspace} $H_c(\lambda)\subset L^2(\lambda)$ is the closed linear span of
all eigenfunctions.
A \emph{rotation} is a pair $(K,\alpha)$, where $K$ is a compact metrizable abelian group
(written additively, with Haar measure $m_K$) and $\alpha\colon\Gamma\to K$ is a homomorphism
with dense image; $\Gamma$ acts on $K$ by $z\mapsto z+\alpha(\gamma)$.

\begin{definition}\label{K:def:kron}
A \emph{Kronecker map} for $(Y,\lambda)$ is a Borel map $\kappa\colon Y\to K$ to a rotation
$(K,\alpha)$ with the following properties:
\begin{itemize}
\item $\kappa_*\lambda=m_K$;
\item for every $\gamma$, $\kappa(T^\gamma y)=\kappa(y)+\alpha(\gamma)$ for $\lambda$-a.e.\ $y$;
\item $L^2(Y,\sigma(\kappa),\lambda)=H_c(\lambda)$.
\end{itemize}
We write $\mathbb E_\lambda(f\mid\kappa)$ for the orthogonal projection onto $H_c(\lambda)$,
that is, the conditional expectation onto $\sigma(\kappa)$.
\end{definition}

\begin{lemma}\label{K:lem:kron}
Let $\lambda$ be ergodic.
\begin{enumerate}[label=(\alph*)]
\item\label{K:ka} A Kronecker map for $(Y,\lambda)$ exists.
\item\label{K:kb} Let $\kappa\colon Y\to K$ be any Kronecker map, with rotation $\alpha$. Every
eigenfunction is a.e.\ equal to $c\cdot(\varphi\circ\kappa)$ for a constant $c$ and a
character $\varphi\in\widehat K$. Here $\varphi$ is the unique character with
$\varphi\circ\alpha$ equal to the eigenvalue.
\item\label{K:kc} For $f\in L^2(\lambda)$: $\mathbb E_\lambda(f\mid\kappa)=0$ if and only if
$\sigma_f$ has no atoms.
\end{enumerate}
\end{lemma}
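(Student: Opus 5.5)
The plan is to construct the Kronecker map from the eigenfunctions, prove (b) by a character argument on $K$, and get (c) from (b) together with \cref{K:lem:wiener}\ref{K:wc}.

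\textbf{Part \ref{K:ka}.} Let $\Lambda\subset\widehat\Gamma$ be the set of eigenvalues of $\lambda$.

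First, by ergodicity each eigenspace $H_\psi$ is one-dimensional. If $f,g\in H_\psi$ with $g\neq0$, then $|g|$ is invariant, hence a nonzero constant, and $f/g$ is invariant, hence constant. For the same reason we may normalise each eigenfunction so that $|f_\psi|=1$ a.e.

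Next, $\Lambda$ is a subgroup: products and conjugates of unimodular eigenfunctions are eigenfunctions. It is countable, since the $H_\psi$ are mutually orthogonal and $L^2(\lambda)$ is separable ($Y$ is compact metric).

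Give $\Lambda$ the discrete topology and let $K=\widehat\Lambda$, a compact metrizable abelian group. Define $\alpha\colon\Gamma\to K$ by $\alpha(\gamma)(\psi)=\psi(\gamma)$. Its image is dense: a character of $K$, which is an element $\psi\in\Lambda$, that vanishes on $\alpha(\Gamma)$ satisfies $\psi(\gamma)=1$ for all $\gamma$, so $\psi=1$.

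To define $\kappa$ we need to fix phases consistently. The obstacle is that $f_{\psi}f_{\psi'}$ and $f_{\psi\psi'}$ agree only up to a unimodular constant $c(\psi,\psi')$. We avoid choosing phases by hand, as follows.
\begin{itemize}
\item Choose Borel representatives of the $f_\psi$.
\item By countability there is a full-measure invariant-under-$\Gamma$ Borel set $Y_0$ on which all the countably many identities $f_\psi\circ T^\gamma=\psi(\gamma)f_\psi$ and $f_\psi f_{\psi'}=c(\psi,\psi')f_{\psi\psi'}$ hold pointwise.
\item Fix a point $y_0\in Y_0$ and set $\kappa(y)(\psi)=f_\psi(y)/f_\psi(y_0)$. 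The multiplicativity identities then give $\kappa(y)\in K$ for $y\in Y_0$. Off $Y_0$, set $\kappa$ to be $0$.
\end{itemize}
Then $\kappa(T^\gamma y)=\kappa(y)+\alpha(\gamma)$ on $Y_0$, and $\kappa$ is Borel because each coordinate is.

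Now we check the three properties in \cref{K:def:kron}.
\begin{itemize}
\item $\kappa_*\lambda$ is invariant under translation by $\alpha(\Gamma)$. Since $\alpha(\Gamma)$ is dense and translation acts continuously on measures, $\kappa_*\lambda$ is invariant under all of $K$, so it equals $m_K$.
\item The $\sigma$-algebra $\sigma(\kappa)$ is generated by the functions $\varphi\circ\kappa$ with $\varphi\in\widehat K=\Lambda$. These are multiples of the $f_\psi$, so $L^2(\sigma(\kappa))$ is the closed span of the characters composed with $\kappa$ (Peter--Weyl/Stone--Weierstrass on $K$), which is $H_c(\lambda)$.
\end{itemize}

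\textbf{Part \ref{K:kb}.} Let $\kappa$ be any Kronecker map, with rotation $\alpha$. Because $m_K=\kappa_*\lambda$ and $L^2(m_K)$ has the characters as an orthonormal basis, the functions $\varphi\circ\kappa$ with $\varphi\in\widehat K$ form an orthonormal basis of $H_c(\lambda)$. Each $\varphi\circ\kappa$ is an eigenfunction with eigenvalue $\varphi\circ\alpha$.

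The map $\varphi\mapsto\varphi\circ\alpha$ is injective, by density of $\alpha(\Gamma)$. Hence distinct characters give distinct eigenvalues, and each basis vector spans a distinct eigenspace.

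Now let $f$ be an eigenfunction with eigenvalue $\psi$. It lies in $H_c(\lambda)$, so we can expand it in this basis. It is orthogonal to every eigenspace with eigenvalue different from $\psi$, so only the coefficient of the $\varphi$ with $\varphi\circ\alpha=\psi$ survives. This gives $f=c\cdot(\varphi\circ\kappa)$, and it also shows that such a $\varphi$ exists and is unique.

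\textbf{Part \ref{K:kc}.} By \cref{K:lem:wiener}\ref{K:wc}, $\sigma_f$ has no atoms if and only if $f\perp H_\psi$ for every $\psi$. This holds if and only if $f\perp H_c(\lambda)$, which is the same as $\mathbb E_\lambda(f\mid\kappa)=0$.

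The main obstacle is the consistent phase choice in \ref{K:ka}. The base-point normalisation sidesteps the cocycle, at the cost of checking that the countable null-set bookkeeping leaves a Borel, a.e.-equivariant map.
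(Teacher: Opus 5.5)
Your proof is correct and follows the paper's argument essentially step by step: $K$ is the dual of the countable eigenvalue group, $\kappa_*\lambda=m_K$ follows from density of $\alpha(\Gamma)$, (b) comes from the character expansion (which you phrase as orthogonality of distinct eigenspaces), and (c) from \cref{K:lem:wiener}\ref{K:wc}. The only difference is cosmetic. The paper shows by ergodicity that $\Theta=(e_\psi)_\psi$ lands a.e.\ in a single coset $Ku_0$ of $K$ in $(S^1)^{\mathcal E}$ and divides by $u_0$; you take $u_0=\Theta(y_0)$ at a base point where all the countably many cocycle identities hold, which pins down the coset directly.
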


\begin{proof}
\ref{K:ka}: This is the discrete spectrum theorem (Halmos--von Neumann; for amenable groups
see \cite[Proposition~2.12]{charamaras2024finding}). For completeness we sketch the
abelian case.
\begin{itemize}
\item The modulus of an eigenfunction is invariant, hence constant. Products of
eigenfunctions are eigenfunctions. So the set $\mathcal E\subset\widehat\Gamma$ of
eigenvalues is a subgroup.
\item Eigenfunctions with distinct eigenvalues are orthogonal and $L^2(\lambda)$ is
separable, so $\mathcal E$ is countable.
\item Let $K$ be the dual of the discrete group $\mathcal E$, which is compact and metrizable,
and let $\alpha(\gamma)=(\psi\mapsto\psi(\gamma))$. The image $\alpha(\Gamma)$ is dense, since
its annihilator in $\mathcal E$ is trivial.
\item Choose unimodular eigenfunctions $e_\psi$ for $\psi\in\mathcal E$, and set
$\Theta(y)=(e_\psi(y))_{\psi}\in(S^1)^{\mathcal E}$. Then
$\Theta(T^\gamma y)=\alpha(\gamma)\cdot\Theta(y)$ a.e., where $K$ is viewed as a closed subgroup
of $(S^1)^{\mathcal E}$.
\item Hence $\Theta_*\lambda$ is invariant under $\alpha(\Gamma)$, and so under $K$.
\item The image of $\Theta$ in $(S^1)^{\mathcal E}/K$ is invariant, so by ergodicity it is
a.e.\ a constant coset $Ku_0$.
\item Put $\kappa=\Theta\cdot u_0^{-1}$, redefined on a null set to take values in $K$.
Then $\kappa_*\lambda=m_K$ and $\kappa$ is a.e.\ equivariant.
\item By ergodicity each eigenspace is one-dimensional, spanned by $e_\psi$. The characters
of $K$ are the evaluations at elements of $\mathcal E$, so the functions $\chi\circ\kappa$,
$\chi\in\widehat K$, are constant multiples of the $e_\psi$. Hence $L^2(\sigma(\kappa))$, the
closed span of these functions, equals $H_c(\lambda)$.
\end{itemize}

\ref{K:kb}: Let $h$ be an eigenfunction with eigenvalue $\psi$. Then $h\in H_c(\lambda)$, so
$h=\tilde h\circ\kappa$ for some $\tilde h\in L^2(m_K)$, and
$\tilde h(z+\alpha(\gamma))=\psi(\gamma)\tilde h(z)$ for $m_K$-a.e.\ $z$.
Expanding $\tilde h=\sum_\varphi a_\varphi\varphi$ in characters gives
$a_\varphi(\varphi(\alpha(\gamma))-\psi(\gamma))=0$ for all $\varphi$ and $\gamma$.
Since $\alpha(\Gamma)$ is dense, at most one $\varphi$ satisfies $\varphi\circ\alpha=\psi$.

\ref{K:kc}: $H_c(\lambda)$ is the closed span of the eigenspaces $H_\psi$, so this follows
from \cref{K:lem:wiener}\ref{K:wc}.
\end{proof}

\subsubsection*{Disintegrations}
Let $\kappa\colon Y\to K$ be a Kronecker map for $(Y,\lambda)$.
There is a Borel family $z\mapsto\lambda_z$ of probability measures on $Y$ with the following
properties \cite[Theorem~2.2 and (2.1)]{charamaras2024finding}:
\begin{itemize}
\item $\lambda=\int_K\lambda_z\,dm_K(z)$;
\item $\lambda_z(\kappa^{-1}(z))=1$ for $m_K$-a.e.\ $z$;
\item for bounded Borel $f$ on $Y$,
$\mathbb E_\lambda(f\mid\kappa)=\tilde f\circ\kappa$ $\lambda$-a.e., where
\[
\tilde f(z):=\int f\,d\lambda_z .
\]
\end{itemize}
The family is unique up to $m_K$-null sets. It is a priori defined only for $m_K$-a.e.\
$z$; we extend it to all $z\in K$ by setting $\lambda_z:=\lambda$ on the exceptional null
set, which affects none of the statements below.
If $0\le f\le 1$, then $0\le\tilde f\le1$ a.e.
Since $\sigma(\kappa)$ is invariant modulo null sets, $\mathbb E_\lambda(\cdot\mid\kappa)$
commutes with $\mathcal U_\gamma$. Hence $\widetilde{f\circ T^\gamma}(z)=\tilde f(z+\alpha(\gamma))$
for $m_K$-a.e.\ $z$.

\subsection{The correspondence principle and Erdős cubes}\label{K:ssec:corr}

The following is Furstenberg's correspondence principle~\cite{furstenberg1977ergodic}, in the
form given by Charamaras and Mountakis.

\begin{theorem}[{\cite[Theorem~2.15]{charamaras2024finding}}]\label{K:thm:corr}
Let $A\subset\Gamma$, and let $\Phi$ be a \Fol{} sequence such that
$\dens_\Phi(A)=\lim_N|A\cap\Phi_N|/|\Phi_N|$ exists.
Then there exist an ergodic system $(X,\mu,T)$ on a compact metric space $X$, a clopen set
$E\subset X$, a \Fol{} sequence $\Psi$ and a point $a\in\mathrm{gen}(\mu,\Psi)$ such that
$\mu(E)\ge\dens_\Phi(A)$ and $A=\{\gamma\in\Gamma:T^\gamma a\in E\}$.
\end{theorem}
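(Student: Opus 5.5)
We have to propose a proof of the correspondence principle (Theorem K:thm:corr), which the paper cites from Charamaras--Mountakis. The plan is the standard Furstenberg construction: symbolic system, a generic point along a subsequence, then an ergodic decomposition together with a change of base point.

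\emph{Step 1: the symbolic system.} Let $X_0=\{0,1\}^\Gamma$ with the product topology. It is compact and metrizable because $\Gamma$ is countable. Let $\Gamma$ act by shifts, $(T^\gamma x)(\delta)=x(\delta+\gamma)$. Put $a_0=1_A$ and $E_0=\{x:x(0)=1\}$, which is clopen. Then $T^\gamma a_0\in E_0$ if and only if $\gamma\in A$, so $A=\{\gamma:T^\gamma a_0\in E_0\}$.

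\emph{Step 2: a limit measure.} Pass to a subsequence of $\Phi$ along which the empirical measures $\mathop{\mathbb E}_{\gamma\in\Phi_N}\delta_{T^\gamma a_0}$ converge weak$^*$ to some $\nu$. The Følner property makes $\nu$ invariant. Since $E_0$ is clopen, $\nu(E_0)=\dens_\Phi(A)$, and $a_0\in\mathrm{gen}(\nu,\Phi')$ along this subsequence $\Phi'$. This $\nu$ need not be ergodic.

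\emph{Step 3: ergodicity.} Take the ergodic decomposition $\nu=\int\nu_\omega$. Then $\nu(E_0)=\int\nu_\omega(E_0)$, so some ergodic component $\mu'$ satisfies $\mu'(E_0)\ge\dens_\Phi(A)$, and $\supp\mu'\subset\supp\nu\subset\overline{\{T^\gamma a_0\}}$.

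\emph{Step 4: a generic point without changing $A$.} This is the main obstacle. The point $a_0$ is generic for $\nu$, not for $\mu'$, and we cannot replace $a_0$ by another point, because that would change the set $A$ of return times. The fix is to change the Følner sequence instead, which the statement allows since $\Psi$ is new.

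\begin{itemize}
\item By \ref{K:G4} applied to the ergodic measure $\mu'$, choose a point $x\in\supp\mu'$ that is generic for $\mu'$ along some Følner sequence $\Theta$.
\item Since $x$ lies in the orbit closure of $a_0$, each finite set $\Theta_N$ can be approximated by a translate. Fix finitely many test functions $f_1,\dots,f_N$ from a countable dense subset of $C(X_0)$. By uniform continuity of the $f_j$ on the finite window $\Theta_N$, there is $\gamma_N$ with $T^{\gamma_N}a_0$ so close to $x$ that
\[
\Big|\mathop{\mathbb E}_{\gamma\in\Theta_N}f_j(T^{\gamma}T^{\gamma_N}a_0)-\mathop{\mathbb E}_{\gamma\in\Theta_N}f_j(T^\gamma x)\Big|<1/N
\]
for $j\le N$.
\item Set $\Psi_N=\Theta_N+\gamma_N$. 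Translates of a Følner sequence form a Følner sequence, since $\Gamma$ is abelian.
\item It follows that $a_0\in\mathrm{gen}(\mu',\Psi)$.
\end{itemize}

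\emph{Step 5: conclusion.} Take $X=X_0$ (or $\supp\mu'$), $\mu=\mu'$, $E=E_0$ and $a=a_0$. Then:
\begin{itemize}
\item $A$ is unchanged, because $a$ and $E$ are unchanged;
\item $E$ is clopen;
\item $\mu(E)\ge\dens_\Phi(A)$;
\item $a\in\mathrm{gen}(\mu,\Psi)$.
\end{itemize}
If one insists that $X=\supp\mu$, the point $a$ need not lie in it. In that case we simply keep $X=X_0$, which is all the statement requires.
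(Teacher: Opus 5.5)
Your proposal is correct and is essentially the standard argument behind the cited theorem of Charamaras--Mountakis. The paper does not reprove that theorem; it only notes that $X=\{0,1\}^\Gamma$ with the shift and $a=1_A$. Your route takes a weak$^*$ limit of empirical measures along a subsequence of $\Phi$, chooses an ergodic component $\mu$ with $\mu(E)\ge\dens_\Phi(A)$ supported in the orbit closure of $1_A$, and translates a \Fol{} sequence so that $1_A$ becomes generic.

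Your Step~4 is exactly the paper's re-basing argument (\cref{K:lem:rebase}) with trivial group $K$. The one detail to state in Step~3 is that you pick the component from the positive-measure set where $\nu_\omega(E_0)\ge\nu(E_0)$, intersected with the conull set of ergodic components with $\nu_\omega(\supp\nu)=1$.
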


In~\cite{charamaras2024finding} this is stated for countable amenable groups. In the proof,
$X=\{0,1\}^\Gamma$ with the shift action and $a=1_A$.
We do \emph{not} need the strengthened form in which measurable eigenfunctions are
topological (property~(4) of \cite[Theorem~2.5]{kra2026short}). The re-basing lemma
(\cref{K:lem:rebase}) takes its place.

\begin{definition}[Erdős cubes; cf.\ {\cite[Definition~4.1]{kra2022infinite}}]\label{K:def:cube}
Let $(X,T)$ be a topological $\Gamma$-system and $k\ge1$.
A point $y\in X^{[k]}$ is a \emph{$k$-dimensional Erdős cube} if, for each $1\le i\le k$,
there are pairwise distinct $\beta_1,\beta_2,\ldots\in\Gamma$ with
$T^{\beta_n}L_i(y)\to U_i(y)$ in $X^{[k-1]}$.
\end{definition}

The next lemma is \cite[Lemma~4.5 and Corollary~4.6]{kra2022infinite}, transferred to
$\Gamma$. We formulate the acceptability condition so that it is meaningful while some of
the sets are still empty.

\begin{lemma}\label{K:lem:cube-sumset}
Let $y\in X^{[k]}$ be an Erdős cube, and let $E\subset X$ be open with $y_{\vec1}\in E$.
Then there are infinite sets $B_1,\dots,B_k\subset\Gamma$ such that
$T^{b_1+\dots+b_k}y_{\vec0}\in E$ for all $b_i\in B_i$.
\end{lemma}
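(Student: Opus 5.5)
The plan is a greedy construction. We maintain an \emph{acceptability} invariant that is meaningful while some of the $B_i$ are still empty, and we add elements to the sets $B_1,\dots,B_k$ one at a time, in round-robin order $i=1,2,\dots,k,1,2,\dots$. Each $B_i$ then receives infinitely many pairwise distinct elements.

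\emph{The invariant.} Call finite sets $B_1,\dots,B_k\subset\Gamma$ \emph{acceptable} if the following holds. For every vertex $\eps\in[k]$ and every choice of $b_j\in B_j$ for the indices $j$ with $\eps_j=0$, we have
\[
T^{\sum_{j:\eps_j=0}b_j}\,y_\eps\in E .
\]
If some $B_j$ with $\eps_j=0$ is empty, there are no choices, so the condition for that $\eps$ is vacuous. In particular the empty sets are acceptable. The only condition that is not vacuous at the start is the one for $\eps=\vec1$, which reads $y_{\vec1}\in E$, and this is our hypothesis. If every finite stage is acceptable, then the vertex $\eps=\vec0$ gives $T^{b_1+\dots+b_k}y_{\vec0}\in E$ for all $b_i\in B_i$, since any such tuple already lies in a finite stage. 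This is the conclusion.

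\emph{The inductive step.} Suppose $B_1,\dots,B_k$ are finite and acceptable, and we want to add a new element $\beta\notin B_i$ to $B_i$. The new conditions are exactly those in which $b_i=\beta$. They concern vertices $\eps$ with $\eps_i=0$, together with a choice of $b_j\in B_j$ for $j\ne i$ with $\eps_j=0$; write $S$ for the sum of these $b_j$. There are only finitely many such pairs $(\eps,S)$. For each of them, consider the vertex $\eps+e_i$. Its digit $i$ is $1$ and its other digits agree with those of $\eps$, so acceptability of the current sets gives $T^S y_{\eps+e_i}\in E$.

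Now use the cube property in direction $i$. It provides pairwise distinct $\beta_n$ with $T^{\beta_n}L_i(y)\to U_i(y)$, that is, $T^{\beta_n}y_\eps\to y_{\eps+e_i}$ simultaneously for all $\eps$ with $\eps_i=0$. Since $\Gamma$ is abelian, $T^{\beta_n+S}y_\eps=T^S(T^{\beta_n}y_\eps)$. Because $T^S$ is continuous, this converges to $T^S y_{\eps+e_i}$, which lies in the open set $E$. Hence each of the finitely many new conditions holds for all sufficiently large $n$. Since the $\beta_n$ are distinct, we can also take $n$ large enough that $\beta_n\notin B_i$. Setting $\beta=\beta_n$ keeps the sets acceptable and makes $B_i$ strictly larger.

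\emph{Where the care lies.} I expect no real obstacle. The one point needing care is the bookkeeping: the conditions must be stated so that they are vacuous for empty sets, and the new requirements at a step must reduce to finitely many open conditions, each already witnessed at the upper face. The first point is exactly why the invariant quantifies only over the $b_j$ with $\eps_j=0$. The second uses that $L_i$ and $U_i$ move all vertices of a face together, so a single $\beta_n$ handles every $\eps$ with $\eps_i=0$ at once.
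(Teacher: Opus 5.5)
Your proof is correct and follows the paper's argument essentially step for step. It uses the same acceptability invariant, vacuous while some $B_j$ is empty, and the same round-robin greedy extension, in which a single $\beta_n$ from the direction-$i$ cube property handles all $\eps$ with $\eps_i=0$ at once, with commutativity giving $T^{\beta_n+S}=T^ST^{\beta_n}$. The only cosmetic difference is that the paper packages the finitely many open conditions into an open set $H_\eps$ and picks $b^*$ outside $B_1\cup\dots\cup B_k$, which also makes the $B_i$ pairwise disjoint. Your choice $\beta\notin B_i$ is enough for the stated conclusion.
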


\begin{proof}
For $\eps\in[k]$ let $Z(\eps)=\{j:\eps_j=0\}$.
Call a tuple $(B_1,\dots,B_k)$ of finite sets \emph{acceptable} if, for every $\eps\in[k]$
and every choice of $b_j\in B_j$ for $j\in Z(\eps)$,
\begin{equation}\label{K:eq:accept}
T^{\sum_{j\in Z(\eps)}b_j}\,y_\eps\in E .
\end{equation}
For $\eps=\vec1$ the sum is empty, and the condition reads $y_{\vec1}\in E$.
Hence $(\emptyset,\dots,\emptyset)$ is acceptable.

Let $(B_1,\dots,B_k)$ be acceptable, and fix $i$.
For $\eps$ with $\eps_i=0$, let
\[
H_\eps=\bigcap T^{-\sum_{j\in Z(\eps)\setminus\{i\}}b_j}E ,
\]
the intersection over all choices $b_j\in B_j$, $j\in Z(\eps)\setminus\{i\}$. (If some of
these $B_j$ is empty there is no choice and $H_\eps=X$; if $Z(\eps)\setminus\{i\}=\emptyset$
there is exactly one, empty, choice and $H_\eps=E$.)
This is a finite intersection of open sets, so $H_\eps$ is open.
Since $Z(\eps+e_i)=Z(\eps)\setminus\{i\}$, acceptability at $\eps+e_i$ says that
$y_{\eps+e_i}\in H_\eps$.

Because $y$ is an Erdős cube, there are pairwise distinct $\beta_n$ with
$T^{\beta_n}y_\eps\to y_{\eps+e_i}$ for all $\eps$ with $\eps_i=0$.
Hence for all large $n$ we have $T^{\beta_n}y_\eps\in H_\eps$ for every such $\eps$.
Choose such a $b^*=\beta_n$ with $b^*\notin B_1\cup\dots\cup B_k$, and put $B'_i=B_i\cup\{b^*\}$ and $B'_j=B_j$
for $j\ne i$.

We check~\eqref{K:eq:accept} for the new tuple. Let $\eps\in[k]$, and choose $b_j\in B'_j$
for $j\in Z(\eps)$. There are two cases.
\begin{itemize}
\item If $i\notin Z(\eps)$, or if $b_i\in B_i$, then~\eqref{K:eq:accept} is an old condition.
\item If $i\in Z(\eps)$ and $b_i=b^*$, then
$T^{\sum_{j\in Z(\eps)}b_j}y_\eps=T^{\sum_{j\in Z(\eps)\setminus\{i\}}b_j}(T^{b^*}y_\eps)\in E$,
because $T^{b^*}y_\eps\in H_\eps$. This step uses commutativity of $\Gamma$.
\end{itemize}

Now add elements in the order $i=1,2,\dots,k,1,2,\dots$.
This produces increasing acceptable tuples whose unions $B_1,\dots,B_k$ are infinite.
Each finite subfamily is contained in an acceptable stage, so~\eqref{K:eq:accept} holds for
the unions. Taking $\eps=\vec0$ gives the claim. (Since $b^*$ is always chosen outside
$B_1\cup\dots\cup B_k$, the sets $B_i$ are in addition pairwise disjoint.)
\end{proof}

\subsection{Relative products over the Kronecker factor}\label{K:ssec:relprod}

Throughout this subsection, $(Y,T)$ is a topological $\Gamma$-system, $\lambda$ is an ergodic
measure on $Y$, and $\kappa\colon Y\to K$ is a Kronecker map with rotation $\alpha$ and
disintegration $(\lambda_z)$.

\begin{definition}\label{K:def:J}
For $t\in K$, the \emph{relative product with parameter $t$} is the measure on $Y\times Y$
\[
J^\lambda_t:=\int_K\lambda_z\otimes\lambda_{z+t}\,dm_K(z).
\]
The \emph{graph measure} is $\rho_\lambda:=(\mathrm{id},\kappa)_*\lambda$ on $Y\times K$.
Here $\Gamma$ acts on $Y\times K$ by $T^\gamma\times(\cdot+\alpha(\gamma))$.
We write $\Sigma_\lambda:=\supp\rho_\lambda$.
\end{definition}

For $\Gamma=\Z$ these measures appear in \cite[(3.10) and Proposition~3.11]{kra2022infinite},
and for amenable groups in \cite[Definition~4.7 and Theorem~4.8]{charamaras2024finding}, where
$(y,y')\mapsto J^\lambda_{\kappa(y')-\kappa(y)}$ is shown to be a continuous ergodic
decomposition of $\lambda\times\lambda$ when $\kappa$ is continuous. \cref{K:lem:J} below is
the version we need; we include its short proof.

Translations preserve $m_K$-null sets. Hence $J^\lambda_t$ does not depend on the version
of the disintegration, and it is defined for \emph{every} $t\in K$.
The graph measure $\rho_\lambda$ is invariant and concentrated on the graph of $\kappa$.
The projection onto $Y$ is therefore a measure-theoretic isomorphism
$(Y\times K,\rho_\lambda)\to(Y,\lambda)$, and $\rho_\lambda$ is ergodic.
For Borel sets $S\subset Y$ and $O\subset K$ we have
\begin{equation}\label{K:eq:rho}
\rho_\lambda(S\times O)=\int_O\lambda_z(S)\,dm_K(z).
\end{equation}

\begin{lemma}\label{K:lem:J}
\leavevmode
\begin{enumerate}[label=(\alph*)]
\item\label{K:Ja} For bounded Borel $f,g$ on $Y$ and every $t\in K$,
$\int f\otimes g\,dJ^\lambda_t=\int_K\tilde f(z)\,\tilde g(z+t)\,dm_K(z)$.
\item\label{K:Jb} Each $J^\lambda_t$ is invariant under the diagonal action, has both marginals
equal to $\lambda$, and is concentrated on $\{(y,y'):\kappa(y')-\kappa(y)=t\}$.
The map $t\mapsto J^\lambda_t$ is weak$^*$ continuous.
\item\label{K:Jc} $J^\lambda_t$ is ergodic for $m_K$-almost every $t$.
\end{enumerate}
\end{lemma}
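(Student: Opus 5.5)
For \ref{K:Ja} the plan is to use the definition of $J^\lambda_t$ directly: $\int f\otimes g\,d(\lambda_z\otimes\lambda_{z+t})=\int f\,d\lambda_z\int g\,d\lambda_{z+t}=\tilde f(z)\tilde g(z+t)$. Then we integrate in $z$, which is justified by Fubini since everything is bounded and $z\mapsto\lambda_z$ is Borel. Because $m_K$ is translation invariant, the value does not depend on the version of the disintegration.

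For \ref{K:Jb} we handle the four claims in turn.

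\begin{enumerate}[label=(\roman*)]
\item \emph{Marginals.} Put $g=1$ in \ref{K:Ja}. This gives $\int\tilde f\,dm_K=\int f\,d\lambda$. The second marginal follows in the same way, using translation invariance of $m_K$.
\item \emph{Invariance.} By \ref{K:Ja} it suffices to test on products $f\otimes g$, since these determine a measure on $Y\times Y$. Use $\widetilde{f\circ T^\gamma}(z)=\tilde f(z+\alpha(\gamma))$ a.e. Then
\[
\int (f\circ T^\gamma)\otimes(g\circ T^\gamma)\,dJ^\lambda_t=\int\tilde f(z+\alpha(\gamma))\,\tilde g(z+t+\alpha(\gamma))\,dm_K(z),
\]
and we substitute $z\mapsto z-\alpha(\gamma)$.
\item \emph{Concentration.} For a.e.\ $z$, $\lambda_z$ lives on $\kappa^{-1}(z)$ and $\lambda_{z+t}$ lives on $\kappa^{-1}(z+t)$. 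Hence $\lambda_z\otimes\lambda_{z+t}$ gives full mass to $\{\kappa(y')-\kappa(y)=t\}$. This set is Borel, so integrating in $z$ gives the claim.
\item \emph{Continuity.} For $f,g\in C(Y)$ we have $\tilde f,\tilde g\in L^\infty\subset L^2(m_K)$. Translation $t\mapsto\tilde g(\cdot+t)$ is continuous in $L^2(m_K)$, so $t\mapsto\int f\otimes g\,dJ^\lambda_t$ is continuous. Linear combinations of the $f\otimes g$ are uniformly dense in $C(Y\times Y)$ by Stone--Weierstrass, and all $J^\lambda_t$ are probability measures. This gives weak$^*$ continuity.
\end{enumerate}

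For \ref{K:Jc}, the key observation is that $\int J^\lambda_t\,dm_K(t)=\lambda\otimes\lambda$. This follows from \ref{K:Ja} by Fubini: $\int\tilde f\,dm_K\int\tilde g\,dm_K=\int f\,d\lambda\int g\,d\lambda$.

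The plan is to show that every $(\lambda\otimes\lambda)$-invariant function factors through $\kappa(y')-\kappa(y)$. Let $F\in L^2(\lambda\otimes\lambda)$ be invariant under the diagonal action. Its spectral measure is trivial (only the atom at $1$). The spectral theory of $\lambda\otimes\lambda$ then gives the following standard fact: the invariant subspace of $L^2(\lambda\otimes\lambda)$ is spanned by functions $h\otimes\bar h'$ with $h,h'$ eigenfunctions of the same eigenvalue. Indeed, the Koopman representation on $L^2(\lambda)\otimes L^2(\lambda)$ splits as (discrete part)$\,\otimes\,$(discrete part) plus pieces involving a continuous spectral measure. By \cref{K:lem:wiener}\ref{K:wb} and \ref{K:wd}, the latter pieces have no invariant vectors, because the spectral measure of $u\otimes v$ is the convolution $\sigma_u*\sigma_v$, which is nonatomic if either factor is.

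By \cref{K:lem:kron}\ref{K:kb}, such products equal $c\,\overline{\varphi(\kappa(y))}\varphi(\kappa(y'))=c\,\varphi(\kappa(y')-\kappa(y))$. Thus every invariant $F$ is a.e.\ of the form $G(\kappa(y')-\kappa(y))$.

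Now take a diagonally invariant Borel set $S$, and let $S_t$ be its restriction to the fibre $\{\kappa(y')-\kappa(y)=t\}$. We want $J^\lambda_t(S)\in\{0,1\}$ for a.e.\ $t$, for each $S$. A single null set is needed for uncountably many $S$; this is the main technical obstacle. To handle it, we work with a countable algebra generating the Borel sets and use the ergodic decomposition of $\lambda\otimes\lambda$. Since $\lambda\otimes\lambda=\int J^\lambda_t\,dm_K(t)$ and the invariant $\sigma$-algebra is generated mod null sets by $\kappa(y')-\kappa(y)$, the measures $J^\lambda_t$ are conditional measures on that invariant $\sigma$-algebra. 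Uniqueness of the ergodic decomposition then identifies $J^\lambda_t$ with the ergodic components for a.e.\ $t$. Concretely, we use the mean ergodic theorem (\cref{K:lem:wiener}\ref{K:wb}) along a \Fol{} sequence for a countable dense family of continuous $f\otimes g$. This shows that a.e.\ $J^\lambda_t$ has these averages converging in $L^2(J^\lambda_t)$ to constants, which implies ergodicity.
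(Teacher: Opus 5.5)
Your proposal is correct and follows the paper's proof in all essentials. Parts (a) and (b) are handled exactly as there. For (c) you use the same three ingredients: that $(J^\lambda_t)$ disintegrates $\lambda\times\lambda$ over $D(y,y')=\kappa(y')-\kappa(y)$, which does follow from $\int J^\lambda_t\,dm_K(t)=\lambda\times\lambda$ together with the concentration of $J^\lambda_t$ on $D^{-1}(t)$; that invariant functions are $\sigma(D)$-measurable, via the splitting $H_c\oplus H_w$ and $\sigma_{u\otimes v}=\sigma_u*\sigma_v$; and the mean ergodic theorem on a countable dense subset of $C(Y\times Y)$.

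The one step you leave implicit, and the paper makes explicit, is the passage to almost every $t$. With $A_NF$ the \Fol{} averages of $F$, concentration on fibres gives $\int_K\|A_NF-J^\lambda_t(F)\|^2_{L^2(J^\lambda_t)}\,dm_K(t)=\|A_NF-J^\lambda_{D(\cdot)}(F)\|^2_{L^2(\lambda\times\lambda)}\to0$. Fatou's lemma, or passing to a subsequence, then shows that for a.e.\ $t$ the invariant projection of $F$ in $L^2(J^\lambda_t)$ is the constant $J^\lambda_t(F)$.
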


\begin{proof}
\ref{K:Ja} is the definition.

\ref{K:Jb}: The marginals are $\int\lambda_z\,dm_K=\lambda$ and $\int\lambda_{z+t}\,dm_K=\lambda$.
Concentration follows from $\lambda_z(\kappa^{-1}(z))=1$ for a.e.\ $z$.
For invariance, use \ref{K:Ja} and $\widetilde{f\circ T^\gamma}=\tilde f(\cdot+\alpha(\gamma))$:
\[
J^\lambda_t\big((f\circ T^\gamma)\otimes(g\circ T^\gamma)\big)
=\int\tilde f(z+\alpha(\gamma))\,\tilde g(z+t+\alpha(\gamma))\,dm_K(z)
=J^\lambda_t(f\otimes g).
\]
For continuity, note that $t\mapsto\int\tilde f(z)\tilde g(z+t)\,dm_K$ is continuous, by
continuity of translation in $L^2(m_K)$. Linear combinations of functions $f\otimes g$ with
$f,g\in C(Y)$ are uniformly dense in $C(Y\times Y)$.

\ref{K:Jc}: Let $D(y,y')=\kappa(y')-\kappa(y)$ and $\Lambda=\lambda\times\lambda$.

\emph{Step 1: $J$ disintegrates $\Lambda$ over $D$.}
By \ref{K:Ja} and Fubini, for bounded Borel $f,g$ on $Y$ and $h$ on $K$,
\begin{equation}\label{K:eq:disD}
\int f\otimes g\cdot(h\circ D)\,d\Lambda=\iint\tilde f(z)\,\tilde g(z')\,h(z'-z)\,dm_K\,dm_K
=\int_K h(t)\,J^\lambda_t(f\otimes g)\,dm_K(t).
\end{equation}
By a monotone class argument, \eqref{K:eq:disD} holds with $f\otimes g$ replaced by any bounded
Borel $F$ on $Y\times Y$, and $t\mapsto J^\lambda_t(F)$ is Borel.
Together with \ref{K:Jb}, this shows that $t\mapsto J^\lambda_t$ is the disintegration of
$\Lambda$ over $D$. In particular $\mathbb E_\Lambda(F\mid\sigma(D))(y,y')=J^\lambda_{D(y,y')}(F)$
for $\Lambda$-a.e.\ $(y,y')$ (recall $D_*\Lambda=m_K$).

\emph{Step 2: invariant functions of $\Lambda$ are $\sigma(D)$-measurable.}
Write $L^2(\lambda)=H_c\oplus H_w$, where $H_c=L^2(\sigma(\kappa))$ has orthonormal basis
$\{\varphi\circ\kappa:\varphi\in\widehat K\}$. Let $P_1$ be the projection onto the
invariant functions in $L^2(\Lambda)$.
\begin{itemize}
\item For $f\in H_w$ and $g\in L^2(\lambda)$, we have
$\langle\mathcal U_\gamma(f\otimes g),f\otimes g\rangle=\widehat{\sigma_f}(\gamma)\widehat{\sigma_g}(\gamma)$,
so $\sigma_{f\otimes g}=\sigma_f*\sigma_g$. This has no atoms because $\sigma_f$ has none
(\cref{K:lem:kron}\ref{K:kc}). Hence $P_1(f\otimes g)=0$ by \cref{K:lem:wiener}\ref{K:wc}.
The same holds for $g\otimes f$.
\item The function $\varphi\circ\kappa\otimes\varphi'\circ\kappa$ is an eigenfunction with
eigenvalue $(\varphi\varphi')\circ\alpha$. Since $\alpha(\Gamma)$ is dense, this eigenvalue
is trivial only if $\varphi'=\overline\varphi$.
\end{itemize}
Since $L^2(\Lambda)$ is the orthogonal sum of $H_c\otimes H_c$, $H_w\otimes L^2(\lambda)$ and
$H_c\otimes H_w$, and $P_1$ maps an eigenfunction with nontrivial eigenvalue to $0$,
$P_1L^2(\Lambda)$ is the closed span of the functions
$\varphi(\kappa(y))\overline{\varphi(\kappa(y'))}=\overline{\varphi}(D(y,y'))$, all of
which are $\sigma(D)$-measurable. Conversely $D$ is invariant.
So $P_1F=\mathbb E_\Lambda(F\mid\sigma(D))$, which is the function
$(y,y')\mapsto J^\lambda_{D(y,y')}(F)$ by Step~1.

\emph{Step 3: ergodicity of almost every $J^\lambda_t$.}
Fix a \Fol{} sequence $\Phi$, write
$A_NF=\mathop{\mathbb E}_{\gamma\in\Phi_N}F\circ T^\gamma$, and let $F\in C(Y\times Y)$.
The function $t\mapsto\|A_NF-J^\lambda_t(F)\|^2_{L^2(J^\lambda_t)}$ is continuous by \ref{K:Jb}.
Since $J^\lambda_t$ is concentrated on $D^{-1}(t)$,
\[
\int_K\big\|A_NF-J^\lambda_t(F)\big\|^2_{L^2(J^\lambda_t)}\,dm_K(t)
=\big\|A_NF-J^\lambda_{D(\cdot)}(F)\big\|^2_{L^2(\Lambda)}\xrightarrow[N\to\infty]{}0
\]
by \cref{K:lem:wiener}\ref{K:wb} and Step~2.
For each $t$, the integrand converges to $\|P^{(t)}_1F-J^\lambda_t(F)\|^2_{L^2(J^\lambda_t)}$,
where $P^{(t)}_1$ is the invariant projection in $L^2(J^\lambda_t)$.
By Fatou's lemma this limit vanishes for a.e.\ $t$.
Applying this to a countable dense subset of $C(Y\times Y)$ shows the following. For a.e.\
$t$, $P^{(t)}_1$ maps a dense subset of $L^2(J^\lambda_t)$, and hence all of it, to
constants. That is, $J^\lambda_t$ is ergodic.
\end{proof}

\begin{lemma}[Kronecker factors of factors]\label{K:lem:factor}
Let $\lambda$ be ergodic on $Y$ with Kronecker map $\kappa\colon Y\to K$ (rotation $\alpha$).
Let $P\colon Y\to Z$ be a Borel map into a topological $\Gamma$-system with
$P_*\lambda=\nu$ and, for each $\gamma$, $P\circ T^\gamma=T^\gamma\circ P$ $\lambda$-a.e., and let $\iota\colon Z\to K'$ be a Kronecker map for $\nu$ (rotation $\alpha'$).
Then:
\begin{enumerate}[label=(\alph*)]
\item\label{K:Fa} $\mathbb E_\lambda(f\circ P\mid\kappa)=\mathbb E_\nu(f\mid\iota)\circ P$ for $f\in L^2(\nu)$;
\item\label{K:Fb} there is a unique continuous homomorphism $\bar\chi\colon K\to K'$ with
$\bar\chi\circ\alpha=\alpha'$. It is surjective, and there is $c\in K'$ with
$\iota\circ P=\bar\chi\circ\kappa+c$ $\lambda$-a.e.;
\item\label{K:Fc} for \emph{every} $s\in K$, $(P\times P)_*J^\lambda_s=J^\nu_{\bar\chi(s)}$.
\end{enumerate}
\end{lemma}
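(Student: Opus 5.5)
We prove the three parts in order; (b) uses (a), and (c) uses (a) and (b).

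\emph{Part (a).} The map $f\mapsto f\circ P$ is an isometric embedding $L^2(\nu)\to L^2(\lambda)$ that intertwines the Koopman representations, because $P$ is a.e.\ equivariant. It therefore sends eigenfunctions of $\nu$ to eigenfunctions of $\lambda$ with the same eigenvalue, so $H_c(\nu)\circ P\subset H_c(\lambda)$. For the orthogonal complement we use spectral measures. If $f\perp H_c(\nu)$, then $\sigma_{f\circ P}=\sigma_f$, since the correlations $\langle\mathcal U_\gamma f\circ P,f\circ P\rangle$ equal $\langle\mathcal U_\gamma f,f\rangle$. This measure has no atoms by \cref{K:lem:kron}\ref{K:kc}, so $f\circ P\perp H_c(\lambda)$ by \cref{K:lem:wiener}\ref{K:wc}. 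Now split $f=\mathbb E_\nu(f\mid\iota)+(f-\mathbb E_\nu(f\mid\iota))$; composing with $P$ gives the orthogonal decomposition of $f\circ P$, which proves (a).

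\emph{Part (b).} We use the construction in \cref{K:lem:kron}\ref{K:ka}: the dual of $K$ is the eigenvalue group $\mathcal E(\lambda)$, identified via $\varphi\mapsto\varphi\circ\alpha$, and similarly $\widehat{K'}\cong\mathcal E(\nu)$. This identification holds for any Kronecker map, because $\alpha$ has dense image and by \cref{K:lem:kron}\ref{K:kb}. By (a), $\mathcal E(\nu)\subset\mathcal E(\lambda)$. Dualizing this inclusion gives a continuous homomorphism $\bar\chi\colon K\to K'$ with $\varphi'\circ\bar\chi=\varphi$ whenever $\varphi'\circ\alpha'=\varphi\circ\alpha$; in particular $\bar\chi\circ\alpha=\alpha'$. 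Because the dual map is injective, $\bar\chi$ is surjective. Uniqueness holds because $\alpha(\Gamma)$ is dense. For the relation $\iota\circ P=\bar\chi\circ\kappa+c$, fix $\varphi'\in\widehat{K'}$. Then $\varphi'\circ\iota\circ P$ is an eigenfunction of $\lambda$ with eigenvalue $\varphi'\circ\alpha'=(\varphi'\circ\bar\chi)\circ\alpha$. By \cref{K:lem:kron}\ref{K:kb} it equals $c_{\varphi'}\cdot(\varphi'\circ\bar\chi\circ\kappa)$ a.e., with $|c_{\varphi'}|=1$. Hence the function
\[
y\mapsto\iota(P(y))-\bar\chi(\kappa(y))\in K'
\]
has $\varphi'$-value a.e.\ constant for every $\varphi'$. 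Since $\widehat{K'}$ is countable and separates points, this function is a.e.\ equal to a constant $c$. Equivalently, the function is a.e.\ invariant, and ergodicity gives the same conclusion.

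\emph{Part (c).} Test on $f\otimes g$ with $f,g\in C(Z)$; these functions span a dense subspace of $C(Z\times Z)$. By \cref{K:lem:J}\ref{K:Ja},
\[
\int (f\circ P)\otimes(g\circ P)\,dJ^\lambda_s=\int_K\widetilde{f\circ P}(z)\,\widetilde{g\circ P}(z+s)\,dm_K(z).
\]
By (a) and (b), $\widetilde{f\circ P}\circ\kappa=\tilde f^\nu\circ\iota\circ P=\tilde f^\nu(\bar\chi\circ\kappa+c)$ holds $\lambda$-a.e. Since $\kappa_*\lambda=m_K$, this gives $\widetilde{f\circ P}(z)=\tilde f^\nu(\bar\chi(z)+c)$ for $m_K$-a.e.\ $z$, and similarly for $g$. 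The integral therefore becomes
\[
\int_K\tilde f^\nu(\bar\chi(z)+c)\,\tilde g^\nu(\bar\chi(z)+\bar\chi(s)+c)\,dm_K(z).
\]
A surjective continuous homomorphism pushes $m_K$ forward to $m_{K'}$, and $m_{K'}$ is translation invariant, so this equals $\int\tilde f^\nu(w)\tilde g^\nu(w+\bar\chi(s))\,dm_{K'}(w)=J^\nu_{\bar\chi(s)}(f\otimes g)$. This holds for every $s$, because the a.e.\ identities are used only inside integrals against translates of Haar measure, and translates of $m_K$ have the same null sets as $m_K$.

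The main obstacle is the bookkeeping in (b): we must check that the identification of $\widehat K$ with $\mathcal E(\lambda)$ holds for an arbitrary Kronecker map and not only for the constructed one, and that the constant $c$ exists. Both points follow from \cref{K:lem:kron}\ref{K:kb} and the density of $\alpha(\Gamma)$.
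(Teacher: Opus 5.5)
Your proof is correct and follows essentially the same route as the paper: (a) pulls back eigenfunctions and uses the atomless spectral measure of the part orthogonal to $H_c(\nu)$, (b) dualizes the inclusion of eigenvalue groups (the paper's map $\psi\mapsto\varphi_\psi$), and (c) is the same Haar push-forward computation. The differences are minor: you get surjectivity of $\bar\chi$ from injectivity of the dual map rather than from density of $\alpha'(\Gamma)$; you get the constant $c$ from separation of points by the countably many characters of $K'$ (the paper uses a.e.\ invariance and ergodicity, which you also mention); and you test (c) on continuous rather than bounded Borel $f,g$.
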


\begin{proof}
\ref{K:Fa}: Write $f=f_c+f_w$ with $f_c\in H_c(\nu)$ and $f_w\perp H_c(\nu)$.
Eigenfunctions of $\nu$ pull back under $P$ to eigenfunctions of $\lambda$, so
$f_c\circ P\in H_c(\lambda)$.
Since $P$ is a factor map, $\sigma_{f_w\circ P}=\sigma_{f_w}$, which has no atoms. Hence
$f_w\circ P\perp H_c(\lambda)$ by \cref{K:lem:kron}\ref{K:kc}.

\ref{K:Fb}: Uniqueness holds because $\alpha(\Gamma)$ is dense.
For existence, take $\psi\in\widehat{K'}$. Then $\psi\circ\iota\circ P$ is an eigenfunction of
$\lambda$ with eigenvalue $\psi\circ\alpha'$.
By \cref{K:lem:kron}\ref{K:kb} there is a unique $\varphi_\psi\in\widehat K$ with
$\varphi_\psi\circ\alpha=\psi\circ\alpha'$. By uniqueness, $\psi\mapsto\varphi_\psi$ is a
homomorphism $\widehat{K'}\to\widehat K$.
By Pontryagin duality there is a continuous homomorphism $\bar\chi\colon K\to K'$ with
$\psi\circ\bar\chi=\varphi_\psi$ for all $\psi$. Then $\bar\chi\circ\alpha=\alpha'$.
The image $\bar\chi(K)$ is closed and contains the dense set $\alpha'(\Gamma)$, so $\bar\chi$
is onto.
Finally, $\iota\circ P-\bar\chi\circ\kappa$ is $\lambda$-a.e.\ invariant, since both terms
are shifted by $\alpha'(\gamma)$ under $T^\gamma$. By ergodicity it is a.e.\ equal to a
constant $c$.

\ref{K:Fc}: Let $f,g$ be bounded Borel on $Z$.
By \ref{K:Fa} and \ref{K:Fb},
$\mathbb E_\lambda(f\circ P\mid\kappa)=\tilde f(\iota\circ P)=\tilde f(\bar\chi\circ\kappa+c)$,
so $\widetilde{f\circ P}(z)=\tilde f(\bar\chi z+c)$ for $m_K$-a.e.\ $z$. Here
$\tilde f$ is formed with respect to $(\nu,\iota)$.
The surjective homomorphism $\bar\chi$ maps $m_K$ to $m_{K'}$. Hence, by \cref{K:lem:J}\ref{K:Ja},
\[
\int f\otimes g\,d(P\times P)_*J^\lambda_s=\int_K\tilde f(\bar\chi z+c)\,\tilde g(\bar\chi z+\bar\chi s+c)\,dm_K(z)
=\int_{K'}\tilde f(u)\,\tilde g(u+\bar\chi s)\,dm_{K'}(u),
\]
and the right-hand side is $J^\nu_{\bar\chi(s)}(f\otimes g)$.
Both sides are probability measures, so they agree.
\end{proof}

The next lemma is \cite[Lemma~3.18]{kra2022infinite} for $\Gamma=\Z$, and the Claim in the
proof of \cite[Theorem~4.10]{charamaras2024finding} for amenable groups. We include a short
proof, based on the van der Corput inequality rather than on the continuity of the ergodic
decomposition. In the argument of~\cite{kra2026short} its role is played by the choice of a
generic $y$ such that $(x,y)$ is generic for some (not necessarily ergodic) joining.

\begin{lemma}[Generic pairs]\label{K:lem:pairs}
Suppose in addition that $\pi\colon Y\to K$ is a \emph{continuous} Kronecker map with
$\pi(T^\gamma y)=\pi(y)+\alpha(\gamma)$ for \emph{all} $y$ and $\gamma$. Let $(J_t)$ be the
relative products formed with respect to $\pi$.
Let $y_0\in\mathrm{gen}(\lambda,\Phi)$.
Then there are a subsequence $\Phi'$ of $\Phi$ and a Borel set $Y_0\subset Y$ with
$\lambda(Y_0)=1$ such that
\[
(y_0,y)\in\mathrm{gen}\big(J_{\pi(y)-\pi(y_0)},\Phi'\big)\qquad\text{for every }y\in Y_0 .
\]
\end{lemma}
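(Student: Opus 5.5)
Since $C(Y\times Y)$ is separable and the linear span of functions $f\otimes g$ with $f,g\in C(Y)$ is uniformly dense, it suffices to handle countably many products $f\otimes g$ with $f,g$ from a countable dense set $\mathcal D\subset C(Y)$. After passing to one diagonal subsequence we may then take a single full-measure set $Y_0$ for all of them; the limit must be a measure, so genericity follows. For fixed $f,g$ I would split $f=f_c+f_w$ with $f_c=\mathbb E_\lambda(f\mid\pi)=\tilde f\circ\pi$ and $f_w\perp H_c(\lambda)$, and study the averages
\[
S_N(y)=\mathop{\mathbb E}_{\gamma\in\Phi_N}f(T^\gamma y_0)\,g(T^\gamma y)
\]
as elements of $L^2(\lambda)$ in the variable $y$. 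The target is $\int\tilde f(z)\tilde g(z+t)\,dm_K$ with $t=\pi(y)-\pi(y_0)$, which by \cref{K:lem:J}\ref{K:Ja} is $J_t(f\otimes g)$. It is enough to show $S_N\to J_{\pi(\cdot)-\pi(y_0)}(f\otimes g)$ in $L^2(\lambda)$. Then a subsequence gives a.e.\ convergence, and diagonalizing over the countable family gives $\Phi'$ and $Y_0$.

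\emph{Compact part.} Write $g=g_c+g_w$ as well. For $y$ fixed, $\mathcal U_\gamma g_w$ and $\mathcal U_\gamma g_c$ are functions of $y$. The function $\gamma\mapsto f(T^\gamma y_0)$ is a bounded sequence, so I would treat
\[
S_N=\mathop{\mathbb E}_{\gamma\in\Phi_N}c_\gamma\,\mathcal U_\gamma g,\qquad c_\gamma=f(T^\gamma y_0).
\]
For $g_c=\tilde g\circ\pi$, approximate $\tilde g$ in $L^2(m_K)$ by trigonometric polynomials $\sum a_\varphi\varphi$. Then $\mathcal U_\gamma(\varphi\circ\pi)=\varphi(\alpha(\gamma))\,\varphi\circ\pi$, so the average becomes
\[
\sum_\varphi a_\varphi\,(\varphi\circ\pi)\cdot\mathop{\mathbb E}_{\gamma\in\Phi_N}f(T^\gamma y_0)\,\varphi(\alpha(\gamma)).
\]
Because $\pi$ is continuous and everywhere equivariant, $\varphi(\alpha(\gamma))=\varphi(\pi(T^\gamma y_0))\overline{\varphi(\pi(y_0))}$. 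So the inner average is an ergodic average of the continuous function $f\cdot(\varphi\circ\pi)$ along the orbit of $y_0$. By genericity of $y_0$ it converges to
\[
\overline{\varphi(\pi(y_0))}\int f\,\varphi\circ\pi\,d\lambda=\overline{\varphi(\pi(y_0))}\int\tilde f\varphi\,dm_K .
\]
Summing over $\varphi$ gives exactly $\int\tilde f(z)\tilde g(z+\pi(y)-\pi(y_0))\,dm_K$ in $L^2(\lambda)$. The approximation error is uniform in $N$ since $|c_\gamma|\le\|f\|_\infty$. This is the step where continuity of $\pi$ is essential.

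\emph{Weak mixing part.} I expect the main obstacle to be showing $\|\mathop{\mathbb E}_{\gamma\in\Phi_N}c_\gamma\mathcal U_\gamma g_w\|_{L^2(\lambda)}\to0$ for an arbitrary bounded sequence $c_\gamma$. That claim is false in general, so the coefficients must be used. I would apply the van der Corput inequality along the Følner sequence. The squared norm is controlled by averages over $h$ in a large finite set $H$ of
\[
\mathop{\mathbb E}_{\gamma}c_{\gamma+h}\overline{c_\gamma}\,\langle\mathcal U_h g_w,g_w\rangle .
\]
By genericity of $y_0$, the inner average of $c_{\gamma+h}\overline{c_\gamma}$ converges to $\int f\circ T^h\,\bar f\,d\lambda=\int\psi(h)\,d\sigma_f$. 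So the bound becomes
\[
\limsup_{H}\ \mathop{\mathbb E}_{h\in H}\Big|\widehat{\sigma_f}(h)\,\langle\mathcal U_hg_w,g_w\rangle\Big|,
\]
with $H$ ranging over another Følner sequence $\Theta$. I would bound this by $\|f\|^2_2\,\mathop{\mathbb E}_{h\in\Theta_M}|\langle\mathcal U_hg_w,g_w\rangle|$, which tends to $0$ by \cref{K:lem:wiener}\ref{K:wd}, since $\sigma_{g_w}$ has no atoms by \cref{K:lem:kron}\ref{K:kc}. The delicate points are the order of limits and the Følner form of van der Corput: the error term must be $O(|H|^{-1})$ plus Følner boundary terms. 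I would carry these out carefully, but the structure should go through.
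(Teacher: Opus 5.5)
Your proof is correct. It splits the other function from the one the paper splits.

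\emph{The paper's route.} The paper decomposes $f$, the function sampled along the orbit of the prescribed point $y_0$. It writes $f=f'+f^\perp+f^\sharp$, where $f'=\tilde f'\circ\pi$ is a continuous trigonometric polynomial in $\pi$.
\begin{itemize}
\item The part $f'$ is handled by the mean ergodic theorem for the twisted averages $\mathop{\mathbb E}_{\gamma}\varphi_j(\alpha(\gamma))\,\mathcal U_\gamma g$.
\item The part $f''=f-f'$ is handled by the spectral theorem for $g$, together with a scalar van der Corput estimate for $u(\gamma)\psi(\gamma)$ that is uniform in $\psi\in\widehat\Gamma$.
\item Continuity of $\pi$ is needed to make $f''$ continuous, so that genericity of $y_0$ controls its correlations.
\item The small piece $f^\sharp$ produces the $\tau$-errors that must be tracked.
\end{itemize}

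\emph{Your route.} You decompose $g$, the function evaluated at the random point $y$.
\begin{itemize}
\item The compact part becomes a scalar ergodic average of the continuous function $f\cdot(\varphi\circ\pi)$ along the orbit of $y_0$. This is exactly where continuity and everywhere-equivariance of $\pi$ enter. Once $\tilde g$ is replaced by a trigonometric polynomial, this part even converges for every $y$.
\item The weak mixing part is a Hilbert-space van der Corput estimate for $c_\gamma\,\mathcal U_\gamma g_w$. The correlations of $c_\gamma=f(T^\gamma y_0)$ converge to $\langle\mathcal U_{h-h'}f,f\rangle$ by genericity, which uses only continuity of $f$.
\item Since $g_w$ is exactly orthogonal to $H_c(\lambda)$, you need neither the spectral theorem for $g$ nor any $\tau$-bookkeeping.
\end{itemize}
The paper notes that its Lean formalization is organised this way, splitting $g$ rather than using the spectral theorem for $g$.

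\emph{One detail to fix.} In the \Fol{} form, van der Corput does not give a single average over $h$. It gives the double average $\frac1{|\Theta_M|^2}\sum_{h,h'\in\Theta_M}|\langle\mathcal U_{h-h'}f,f\rangle|\,|\langle\mathcal U_{h-h'}g_w,g_w\rangle|$. The diagonal terms are included, so no separate $O(|H|^{-1})$ term arises.

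After bounding the first factor by $\|f\|_2^2$, you need $\frac1{|\Theta_M|^2}\sum_{h,h'\in\Theta_M}|\langle\mathcal U_{h-h'}g_w,g_w\rangle|\to0$. This is not literally \cref{K:lem:wiener}\ref{K:wd}, but it follows in either of two ways:
\begin{itemize}
\item from the Fubini and Cauchy--Schwarz computation in the proof of \cref{K:lem:wiener}\ref{K:wd}, as the paper does;
\item from \ref{K:wd} applied to the translated \Fol{} sequences $(\Theta_M-h'_M)$, which gives uniformity in $h'$.
\end{itemize}
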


\begin{proof}
Let $f,g\in C(Y)$.
We show that, in $L^2(\lambda)$ (as functions of $y$),
\begin{equation}\label{K:eq:L2conv}
\mathop{\mathbb E}_{\gamma\in\Phi_N}f(T^\gamma y_0)\,g(T^\gamma y)\ \longrightarrow\
\Lambda_{f,g}(y):=J_{\pi(y)-\pi(y_0)}(f\otimes g)=\int\tilde f(z)\,\tilde g(z+\pi(y)-\pi(y_0))\,dm_K(z).
\end{equation}
Fix $\tau>0$. Choose a trigonometric polynomial $\tilde f'=\sum_jc_j\varphi_j$ on $K$
with $\|\tilde f-\tilde f'\|_{L^2(m_K)}<\tau$, and put $f'=\tilde f'\circ\pi$.
The function $f'$ is continuous because $\pi$ is.
Let $f''=f-f'=f^\perp+f^\sharp$, with $f^\perp=f-\mathbb E_\lambda(f\mid\pi)$ and
$f^\sharp=(\tilde f-\tilde f')\circ\pi$. Then $\|f^\sharp\|_2<\tau$, and $\sigma_{f^\perp}$ has no atoms
(\cref{K:lem:kron}\ref{K:kc}).

\emph{The part $f'$.}
We have $f'(T^\gamma y_0)=\sum_jc_j\varphi_j(\pi y_0)\,\varphi_j(\alpha(\gamma))$.
By \cref{K:lem:wiener}\ref{K:wb} applied to $V_\gamma=\varphi_j(\alpha(\gamma))\mathcal U_\gamma$, the
average $\mathop{\mathbb E}_{\Phi_N}\varphi_j(\alpha(\gamma))\mathcal U_\gamma g$ converges to the
projection of $g$ onto $H_{\overline{\varphi_j\circ\alpha}}$. That eigenspace is spanned by
$\overline{\varphi_j}\circ\pi$, so the projection is
$\big(\int\tilde g\,\varphi_j\,dm_K\big)\,\overline{\varphi_j}\circ\pi$.
Hence the averages for $f'$ converge in $L^2(\lambda)$ to
\[
\Lambda'_{f,g}(y)=\sum_jc_j\,\varphi_j(\pi y_0)\,\overline{\varphi_j(\pi y)}\int\tilde g\,\varphi_j\,dm_K .
\]
A change of variables shows that $\Lambda'_{f,g}(y)=\int\tilde f'(z)\tilde g(z+\pi(y)-\pi(y_0))\,dm_K(z)$.
It follows that $|\Lambda_{f,g}-\Lambda'_{f,g}|\le\tau\|g\|_\infty$ everywhere.

\emph{The part $f''$.}
Put $u(\gamma)=f''(T^\gamma y_0)$. By the spectral theorem,
\[
\Big\|\mathop{\mathbb E}_{\gamma\in\Phi_N}u(\gamma)\mathcal U_\gamma g\Big\|_2^2=\int_{\widehat\Gamma}\Big|\mathop{\mathbb E}_{\gamma\in\Phi_N}u(\gamma)\psi(\gamma)\Big|^2d\sigma_g(\psi).
\]
We bound the integrand uniformly in $\psi$ by a van der Corput argument.
Fix a finite $H\subset\Gamma$. By the \Fol{} property,
$\mathop{\mathbb E}_{\Phi_N}u\psi=\mathop{\mathbb E}_{\gamma\in\Phi_N}\mathop{\mathbb E}_{h\in H}(u\psi)(\gamma+h)+o(1)$.
By Cauchy--Schwarz,
\[
\Big|\mathop{\mathbb E}_{\Phi_N}u\psi\Big|^2\le\frac1{|H|^2}\sum_{h,h'\in H}\Big|\mathop{\mathbb E}_{\gamma\in\Phi_N}f''(T^{\gamma+h}y_0)\overline{f''(T^{\gamma+h'}y_0)}\Big|+o(1).
\]
The function $f''$ is continuous and $y_0$ is generic, so the inner averages converge to
$\langle\mathcal U_{h-h'}f'',f''\rangle$.
Moreover,
$|\langle\mathcal U_\delta f'',f''\rangle-\langle\mathcal U_\delta f^\perp,f^\perp\rangle|\le\tau(\|f^\perp\|_2+\|f''\|_2)\le\tau(2\|f\|_\infty+\tau)$.
Take $H=\Theta_M$ for a \Fol{} sequence $\Theta$. By Cauchy--Schwarz and the computation
in \cref{K:lem:wiener}\ref{K:wd},
\[
\frac1{|\Theta_M|^2}\sum_{h,h'\in\Theta_M}|\langle\mathcal U_{h-h'}f^\perp,f^\perp\rangle|\le\Big(\iint\Big|\mathop{\mathbb E}_{h\in\Theta_M}(\psi\overline{\psi'})(h)\Big|^2d\sigma_{f^\perp}\,d\sigma_{f^\perp}\Big)^{1/2}\to0 .
\]
Hence $\sup_\psi\limsup_N|\mathop{\mathbb E}_{\Phi_N}u\psi|^2\le\tau(2\|f\|_\infty+\tau)$.
The integrands are bounded and $\sigma_g$ is finite, so by reverse Fatou
\[
\limsup_N\Big\|\mathop{\mathbb E}_{\Phi_N}u(\gamma)\mathcal U_\gamma g\Big\|_2^2\le\tau(2\|f\|_\infty+\tau)\|g\|_2^2 .
\]
Combining the two parts,
\[
\limsup_N\Big\|\mathop{\mathbb E}_{\gamma\in\Phi_N}f(T^\gamma y_0)\,\mathcal U_\gamma g-\Lambda_{f,g}\Big\|_2
\le\tau\|g\|_\infty+\big(\tau(2\|f\|_\infty+\tau)\big)^{1/2}\|g\|_2 ,
\]
and letting $\tau\to0$ proves~\eqref{K:eq:L2conv}.

Now let $\mathcal D$ be a countable dense subset of $C(Y)$.
Convergence in $L^2$ implies a.e.\ convergence along a subsequence. A diagonal argument
gives a subsequence $\Phi'$ and a conull Borel set $Y_0$ such that the convergence
in~\eqref{K:eq:L2conv} holds at every $y\in Y_0$ along $\Phi'$, for all $f,g\in\mathcal D$.
Both sides of~\eqref{K:eq:L2conv} are bounded bilinear in $(f,g)$ with respect to the
uniform norm. The span of $\{f\otimes g\}$ is dense in $C(Y\times Y)$.
Hence $(y_0,y)\in\mathrm{gen}(J_{\pi(y)-\pi(y_0)},\Phi')$ for $y\in Y_0$.
\end{proof}

\subsection{Re-basing and two support lemmas}\label{K:ssec:support}

The following lemma is the rotation case of \cite[Lemma~3.5]{charamaras2024finding} (see
the remark following that lemma, which allows any topologically distal factor in place of
the Kronecker factor). Charamaras and Mountakis prove it for amenable groups, extending a
result of Host and Kra for $\Z$-actions \cite[Proposition~6.1]{host2009uniformity} (see also
\cite[Proposition~3.19]{kra2022infinite}); their proof is in
\cite[Appendix~A]{charamaras2024finding}.
For rotations the Ellis-semigroup step of that proof reduces to compactness of $K$, so we
include the short argument.

\begin{lemma}[Re-basing]\label{K:lem:rebase}
Let $(W,T)$ be a topological $\Gamma$-system and let $w\in W$ have dense orbit.
Let $\lambda$ be an ergodic measure on $W$.
Let $\kappa\colon W\to K$ be a Borel map into a compact metrizable abelian group, and
$\alpha\colon\Gamma\to K$ a homomorphism, such that for each $\gamma$,
$\kappa(T^\gamma x)=\kappa(x)+\alpha(\gamma)$ for $\lambda$-a.e.\ $x$.
Let $\hat\lambda=(\mathrm{id},\kappa)_*\lambda$ on $W\times K$, with the action
$T^\gamma\times(\cdot+\alpha(\gamma))$.
Then there are $d\in K$ and a \Fol{} sequence $\Psi$ such that $(w,d)\in\mathrm{gen}(\hat\lambda,\Psi)$.
\end{lemma}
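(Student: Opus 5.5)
Since $w$ has dense orbit, I will first find \emph{some} \Fol{} sequence along which $w$ is generic for $\lambda$, then lift to $W\times K$ and use compactness of $K$ to pick the offset $d$. The lift is automatically invariant; what needs work is checking it is the graph measure $\hat\lambda$. The plan has three steps.

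\textbf{Step 1: a \Fol{} sequence along which $w$ is generic for $\lambda$.} Fix any \Fol{} sequence $\Phi$. By \ref{K:G4} there is a subsequence along which $\lambda$-a.e.\ point $x$ is generic. Fix such a generic $x$, and a countable dense family $\mathcal D\subset C(W)$. For each $N$, the averages over $\Phi_N$ of $f\in\mathcal D$ at $x$ depend continuously on the base point. Since the orbit of $w$ is dense, there is $\gamma_N$ with $T^{\gamma_N}w$ so close to $x$ that the averages of the first $N$ functions of $\mathcal D$ at $T^{\gamma_N}w$ and at $x$ differ by at most $1/N$. Then $\Psi^0_N:=\Phi_N+\gamma_N$ is a \Fol{} sequence, since translates of \Fol{} sets are \Fol{} in an abelian group, and $w\in\mathrm{gen}(\lambda,\Psi^0)$.

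\textbf{Step 2: choose $d$ by compactness.} Consider the measures
\[
\mu_N:=\mathop{\mathbb E}_{\gamma\in\Psi^0_N}\delta_{(T^\gamma w,\,\alpha(\gamma))}
\]
on $W\times K$. Pass to a subsequence along which they converge weak$^*$ to some $\hat\mu$. This $\hat\mu$ is invariant under $T^\gamma\times(\cdot+\alpha(\gamma))$ by the \Fol{} property, and its $W$-marginal is $\lambda$ by Step 1. For $d\in K$, translation in the second coordinate gives genericity of $(w,d)$ for $(\mathrm{id}\times(\cdot+d))_*\hat\mu$. So it suffices to show that $\hat\mu$ is a translate of $\hat\lambda$ in the $K$-coordinate; then $d$ is chosen to undo that translate, and $\Psi$ is the subsequence.

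\textbf{Step 3 (main obstacle): identify $\hat\mu$.} Let $\hat\mu$ be any invariant measure on $W\times K$ with $W$-marginal $\lambda$. The function $(x,z)\mapsto z-\kappa(x)$ is $\hat\mu$-a.e.\ invariant, because both terms shift by $\alpha(\gamma)$. Here the a.e.\ equivariance of $\kappa$ is only $\lambda$-a.e., but that suffices, since the $W$-marginal of $\hat\mu$ is $\lambda$. Ergodicity is not available for $\hat\mu$, so I disintegrate instead. Set $\theta:=$ the image of $\hat\mu$ under $(x,z)\mapsto z-\kappa(x)$. Then $\hat\mu=\int(\mathrm{id},\kappa+c)_*\lambda\,d\theta(c)$, because, conditioned on $c$, the measure is concentrated on the graph of $\kappa+c$ and has $W$-marginal $\lambda$. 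I expect this conditioning argument to need care: one can argue with invariant test functions $F(x)h(z-\kappa(x))$ and use ergodicity of $\lambda$, or note that the conditional $W$-marginals $\lambda^c$ are invariant and average to $\lambda$, hence equal $\lambda$ for $\theta$-a.e.\ $c$ by ergodicity (extremality).

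This gives $\hat\mu$ as a mixture of translates of $\hat\lambda$, not a single translate. To fix this, I would return to Step 2 and instead work with the points $(w,d)$ for all $d\in K$ simultaneously. Since $\lambda$ is ergodic and $\hat\lambda$ is ergodic (the graph projection is an isomorphism), each translate of $\hat\lambda$ is ergodic. The mixture $\hat\mu=\int\hat\lambda^{(c)}\,d\theta(c)$, where $\hat\lambda^{(c)}$ denotes the translate by $c$, is a limit of empirical measures. I would then refine the \Fol{} sets to sub-\Fol{} pieces: partition $\Psi^0_N$ into large \Fol{}-shaped tiles (the Ornstein--Weiss style, or simply finite-index cosets/translates of a smaller \Fol{} set $F_M$). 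By pigeonhole over tiles, one can find tiles $F_M+g$ whose empirical measure of $(T^\gamma w,\alpha(\gamma))$ is close to a single ergodic component $\hat\lambda^{(c)}$. This uses that the ergodic decomposition of $\hat\mu$ is supported on these translates and that averaging over small \Fol{} tiles of a generic point concentrates near ergodic components. Choosing $d=-c$ along such tiles, the translated tiles $F_{M}+g_M$ form the required $\Psi$. As an alternative, if tiling is awkward, I would use the fact that $K$ is compact to pick, via a further diagonal argument, $d$ with $(w,d)$ quasi-generic for an ergodic measure in the closed convex hull, which must be one of the translates $\hat\lambda^{(c)}$.
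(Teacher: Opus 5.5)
\textbf{The gap.} Your Step~3 correctly shows that every limit $\hat\mu$ of the lifted empirical measures is a mixture $\int\hat\lambda^{(c)}\,d\theta(c)$ of translates of $\hat\lambda$. Nothing in Steps~1--2 forces $\theta$ to be a point mass. In Step~1 you chose $x$ generic only for $\lambda$, and you transported only averages of finitely many functions on $W$, so all information about the joint behaviour of $(T^\gamma w,\alpha(\gamma))$ was discarded.

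Neither of your repairs closes this. The ``alternative'' cannot work. Along a fixed \Fol{} sequence, the empirical measures of $(w,d)$ are exactly the $d$-translates of those of $(w,0)$, so every limit is a translate of $\hat\mu$, which is still a mixture; changing $d$ alone never yields an ergodic limit.

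The tiling argument changes the \Fol{} sequence, but its central claim is asserted rather than proved. That claim is that for suitable $g$ the empirical measure of $(w,0)$ along $F_M+g$ is close to a single $\hat\lambda^{(c)}$. Making it rigorous requires several things:
\begin{itemize}
\item the mean ergodic theorem for the non-ergodic $\hat\mu$, with convergence to the conditional expectation on the invariant $\sigma$-algebra;
\item a transfer from $\hat\mu$-typical points to tiles along the quasi-generic orbit of $(w,0)$;
\item identification of the ergodic components through your disintegration;
\item a compactness argument to stabilise the offsets $c=c_M$.
\end{itemize}
Exact tilings of a general countable abelian group are also not elementary; one would rather average over all translates $g$. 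As written, the decisive step is missing.

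\textbf{The missing idea: lift first, then transport.} With this order, Step~3 becomes unnecessary.
\begin{itemize}
\item Since $\hat\lambda$ is ergodic (the projection is an isomorphism onto $\lambda$), \ref{K:G4} gives a point $(x_1,z_1)$ generic for $\hat\lambda$ along some \Fol{} sequence $\Phi$.
\item For each $N$, the maps $T^\gamma$, $\gamma\in\Phi_N$, are finitely many and continuous, and the orbit of $w$ is dense. So there is $h_N$ with $d_W(T^{\gamma+h_N}w,T^\gamma x_1)<1/N$ for \emph{all} $\gamma\in\Phi_N$. This uniform pointwise closeness is needed; closeness of averages of functions on $W$ would not control joint averages on $W\times K$.
\item Pass to a subsequence with $\alpha(h_{N_j})\to u_*$, and put $d=z_1-u_*$ and $\Psi_j=\Phi_{N_j}+h_{N_j}$.
\item Then $d+\alpha(\gamma+h_{N_j})=z_1+\alpha(\gamma)+(\alpha(h_{N_j})-u_*)$. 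Hence $(T^{\gamma+h_{N_j}}w,\,d+\alpha(\gamma+h_{N_j}))$ is uniformly within $o(1)$ of $(T^\gamma x_1,\,z_1+\alpha(\gamma))$ for $\gamma\in\Phi_{N_j}$.
\item Uniform continuity of test functions now gives $(w,d)\in\mathrm{gen}(\hat\lambda,\Psi)$.
\end{itemize}
Compactness of $K$ is applied only to the single sequence $\alpha(h_N)$. The $K$-offset is constant across each \Fol{} set, so no mixture can arise, and no disintegration or ergodic decomposition is needed.
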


\begin{proof}
The measure $\hat\lambda$ is invariant and ergodic, since it is isomorphic to $\lambda$ via the
projection.
By \ref{K:G4} there are a point $(x_1,z_1)$ and a \Fol{} sequence $\Phi$ with
$(x_1,z_1)\in\mathrm{gen}(\hat\lambda,\Phi)$.

For each $N$, the finitely many maps $T^\gamma$, $\gamma\in\Phi_N$, are continuous at $x_1$,
and the orbit of $w$ is dense. So there is $h_N\in\Gamma$ with
$\sup_{\gamma\in\Phi_N}d_W(T^{\gamma+h_N}w,T^\gamma x_1)<1/N$.

By compactness, $\alpha(h_{N_j})\to u_*$ in $K$ along a subsequence. Put $d=z_1-u_*$ and
$\Psi_j=\Phi_{N_j}+h_{N_j}$; this is again a \Fol{} sequence.
For $F\in C(W\times K)$,
\[
\mathop{\mathbb E}_{\gamma\in\Psi_j}F(T^\gamma w,d+\alpha(\gamma))
=\mathop{\mathbb E}_{\gamma\in\Phi_{N_j}}F\big(T^{\gamma+h_{N_j}}w,\ z_1+\alpha(\gamma)+(\alpha(h_{N_j})-u_*)\big).
\]
The arguments are uniformly within $o(1)$ of $(T^\gamma x_1,z_1+\alpha(\gamma))$: in the first
coordinate by the choice of $h_N$, and in the second by uniform continuity of addition on
$K$. Uniform continuity of $F$ and genericity of $(x_1,z_1)$ give the claim.
\end{proof}

\begin{lemma}[Support of the graph measure]\label{K:lem:stab}
Let $(Z,T)$ be a topological $\Gamma$-system, $\nu$ ergodic on $Z$, and $\iota\colon Z\to K$ a
Kronecker map with rotation $\alpha$. Let $\rho=\rho_\nu$ and $\Sigma=\supp\rho\subset Z\times K$.
Suppose $(p,\theta)\in Z\times K$ is generic for $\rho$ along some \Fol{} sequence, and
$p\in\supp\nu$. Then $(p,\theta)\in\Sigma$.
\end{lemma}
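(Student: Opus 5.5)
The plan is to compare the orbit closure of $(p,\theta)$ with that of a point $(p,t)$ already known to lie in $\Sigma$, and to use the fact that vertical translations $R_s(x,z)=(x,z+s)$ on $Z\times K$ commute with the action $T^\gamma\times(\cdot+\alpha(\gamma))$. Two things are needed first. Since $\rho$ is invariant, $\Sigma$ is a closed invariant set. Moreover, the first-coordinate projection $\mathrm{pr}_Z$ maps $\rho$ to $\nu$, so $\mathrm{pr}_Z(\Sigma)$ is compact, hence closed, and has full $\nu$-measure. Therefore $\mathrm{pr}_Z(\Sigma)\supseteq\supp\nu$. Because $p\in\supp\nu$, there is some $t\in K$ with $(p,t)\in\Sigma$.

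Next I use genericity. By \ref{K:G2}, $\Sigma=\supp\rho$ is contained in the orbit closure $\mathcal O:=\overline{\{(T^\gamma p,\theta+\alpha(\gamma)):\gamma\in\Gamma\}}$. On the other hand, $R_s$ is a homeomorphism commuting with the action. Put $s=t-\theta$. Then $R_s(\mathcal O)$ is the orbit closure of $(p,t)$, which lies in $\Sigma$ because $(p,t)\in\Sigma$ and $\Sigma$ is closed and invariant. Combining the two inclusions gives
\[
R_s(\Sigma)\subseteq R_s(\mathcal O)\subseteq\Sigma .
\]

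It remains to invert $R_s$. Iterating gives $R_{ns}(\Sigma)\subseteq\Sigma$ for all $n\ge0$. Let $S=\{u\in K:R_u(\Sigma)\subseteq\Sigma\}$. This set is closed, because $\Sigma$ is closed and addition is continuous. It is also a subsemigroup. So $S$ contains the closure $C$ of $\{ns:n\ge0\}$. In a compact group, the closure of a cyclic subsemigroup is a subgroup: some subsequence $n_js$ converges, so differences $(n_{j+1}-n_j)s\to0$, and hence $(n_{j+1}-n_j-1)s\to-s$. Thus $-s\in C\subseteq S$, and
\[
(p,\theta)=R_{-s}(p,t)\in\Sigma .
\]

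The only step that needs care is this last one, passing from the one-sided inclusion $R_s\Sigma\subseteq\Sigma$ to invariance under $-s$. The compactness argument above handles it, so I expect no real obstacle. The rest is a formal consequence of \ref{K:G2} and the equivariance of $R_s$.
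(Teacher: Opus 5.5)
Your proof is correct and follows the paper's argument essentially step for step. You pick a point $(p,t)\in\Sigma$ over $p$, combine genericity with the fact that vertical translations commute with the action to get the one-sided inclusion $R_s\Sigma\subseteq\Sigma$, and then use compactness of $K$ to show that $R_{-s}$ also preserves $\Sigma$. The only cosmetic difference is in this last step: you phrase it via the closed subsemigroup $\{u:R_u\Sigma\subseteq\Sigma\}$ (allowing $n=0$), whereas the paper extracts an explicit subsequence $n_l\zeta$ with gaps at least $2$.
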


\begin{proof}
The projection of $\Sigma$ to $Z$ is compact and has full $\nu$-measure, so it contains
$\supp\nu$. Pick $\theta_0$ with $(p,\theta_0)\in\Sigma$ and put $\zeta=\theta-\theta_0$.

For $u\in K$ let $S_u(x,t)=(x,t+u)$. These maps commute with the action, so $S_\zeta\Sigma$
is closed and invariant. It contains $S_\zeta(p,\theta_0)=(p,\theta)$, and hence the orbit closure
of $(p,\theta)$, which contains $\Sigma$ by~\ref{K:G2}.
So $\Sigma\subset S_\zeta\Sigma$, that is, $S_{-\zeta}\Sigma\subset\Sigma$, and hence
$S_{-n\zeta}\Sigma\subset\Sigma$ for all $n\ge1$.

By compactness there are $1\le n_1<n_2<\cdots$ with $n_{l+1}-n_l\ge2$ and $n_l\zeta\to\zeta_*$.
Then $(n_{l+1}-n_l)\zeta\to0$, so with $q_l=n_{l+1}-n_l-1\ge1$ we get $-q_l\zeta\to\zeta$.
Since $\Sigma$ is closed, $S_\zeta\Sigma\subset\Sigma$, and so
$(p,\theta)=S_\zeta(p,\theta_0)\in\Sigma$.
\end{proof}

\begin{lemma}[Phase returns]\label{K:lem:phase}
Let $(W,T)$ and $(Z,T)$ be topological $\Gamma$-systems, $\lambda$ ergodic on $W$, and
$w\in\mathrm{gen}(\lambda,\Psi)$ for some \Fol{} sequence $\Psi$.
Let $L,U\colon W\to Z$ be continuous equivariant maps with $L_*\lambda=U_*\lambda=:\nu$.
Let $\iota\colon Z\to K$ be a Kronecker map for $\nu$ with rotation $\alpha$.
\begin{enumerate}[label=(\alph*)]
\item\label{K:Pa} There is $r\in K$ with $\iota\circ U=\iota\circ L+r$ $\lambda$-a.e.
\item\label{K:Pb} If $U(w)\in\supp\nu$, then for every open $V\ni U(w)$ and every open $O\ni0$ in $K$,
there is a \Fol{} sequence $\Omega$ along which the set
\[
\{\gamma\in\Gamma:\ T^\gamma L(w)\in V,\ \alpha(\gamma)\in r+O\}
\]
has positive lower density; in particular this set has positive upper Banach density.
\end{enumerate}
\end{lemma}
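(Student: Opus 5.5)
Part~\ref{K:Pa} is a direct application of \cref{K:lem:factor}: both $L$ and $U$ are factor maps from $(W,\lambda)$ onto $(Z,\nu)$. Fix a Kronecker map $\kappa$ for $\lambda$ with rotation $\alpha_W$. By \cref{K:lem:factor}\ref{K:Fb}, applied to $P=L$ and to $P=U$, there is a unique continuous homomorphism $\bar\chi$ with $\bar\chi\circ\alpha_W=\alpha$, the same for both maps. We get $\iota\circ L=\bar\chi\circ\kappa+c_L$ and $\iota\circ U=\bar\chi\circ\kappa+c_U$ almost everywhere, so $r=c_U-c_L$. Alternatively, one can avoid $\kappa$ altogether: $\iota\circ U-\iota\circ L$ is a.e.\ invariant, hence a.e.\ constant by ergodicity.

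For part~\ref{K:Pb}, the plan is to re-base and then use the support lemma. Apply \cref{K:lem:rebase} to the orbit closure $W_0=\overline{\{T^\gamma w\}}$, which contains $\supp\lambda$ by \ref{K:G2}, with the Borel map $\iota\circ L\colon W\to K$. This is equivariant $\lambda$-a.e. We obtain $d\in K$ and a \Fol{} sequence $\Omega$ with $(w,d)\in\mathrm{gen}(\hat\lambda,\Omega)$, where $\hat\lambda=(\mathrm{id},\iota\circ L)_*\lambda$. By \ref{K:G3}, applied to the continuous equivariant maps $(x,t)\mapsto(L x,t)$ and $(x,t)\mapsto(Ux,t+r)$, we get two conclusions:
\begin{itemize}
\item $(L(w),d)$ is generic along $\Omega$ for $(L,\iota\circ L)_*\lambda=\rho_\nu$;
\item $(U(w),d+r)$ is generic along $\Omega$ for $(U,\iota\circ L+r)_*\lambda=(U,\iota\circ U)_*\lambda=\rho_\nu$, using part~\ref{K:Pa}.
\end{itemize}
Since $U(w)\in\supp\nu$, \cref{K:lem:stab} gives $(U(w),d+r)\in\Sigma=\supp\rho_\nu$.

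Next take an open neighbourhood of $(U(w),d+r)$ of the form $V\times(d+r+O')$, where $O'\ni0$ is open and $O'-O'\subset O$. It meets $\Sigma$, so it has positive $\rho_\nu$-measure. By \ref{K:G1} applied to the generic point $(L(w),d)$ along $\Omega$, the set
\[
\{\gamma:\ T^\gamma L(w)\in V,\ d+\alpha(\gamma)\in d+r+O'\}
\]
has lower density along $\Omega$ at least $\rho_\nu(V\times(d+r+O'))>0$. The second condition says exactly $\alpha(\gamma)\in r+O'\subset r+O$, so this set lies inside the set in the statement. Hence the set in the statement has positive lower density along $\Omega$, and therefore positive upper Banach density.

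The main subtlety is the re-basing step. \cref{K:lem:rebase} needs $w$ to have dense orbit in the space where it is applied, which is why we restrict to the orbit closure $W_0$; $\lambda$ is supported there by \ref{K:G2}. It also needs $\iota\circ L$ to be a.e.\ equivariant, which holds because $L$ is equivariant everywhere and $\iota$ is a.e.\ equivariant for $\nu=L_*\lambda$. The other point to check is that the two pushforwards of $\hat\lambda$ both equal $\rho_\nu$. This is precisely where part~\ref{K:Pa} is used, and it is what lets \cref{K:lem:stab} place the decorated upper face, rather than merely $U(w)$ itself, in the support.
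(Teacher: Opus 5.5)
Your proof is correct and follows the paper's argument essentially step for step. You re-base $w$ on its orbit closure using $\iota\circ L$, push $\hat\lambda$ forward along $(x,t)\mapsto(Lx,t)$ and $(x,t)\mapsto(Ux,t+r)$ (using part (a)) to get two generic points for $\rho_\nu$, place $(U(w),d+r)$ in $\supp\rho_\nu$ by \cref{K:lem:stab}, and conclude with \ref{K:G1} at $(L(w),d)$. The paper proves (a) by your second, direct invariance argument. Your auxiliary $O'$ with $O'-O'\subset O$ is unnecessary, since $d+\alpha(\gamma)\in d+r+O$ already means $\alpha(\gamma)\in r+O$.
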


\begin{proof}
\ref{K:Pa}: Both $\iota\circ U$ and $\iota\circ L$ are shifted by $\alpha(\gamma)$ under
$T^\gamma$ (a.e.), so their difference is $\lambda$-a.e.\ invariant, hence a.e.\ constant.

\ref{K:Pb}: Let $W'$ be the orbit closure of $w$. Then $\lambda(W')=1$ by~\ref{K:G2}, and $w$
has dense orbit in $W'$.
The map $\kappa'=\iota\circ L$ satisfies the hypotheses of \cref{K:lem:rebase} on $W'$.
That lemma gives $z\in K$ and a \Fol{} sequence $\Omega$ with $(w,z)$ generic along $\Omega$
for $\hat\lambda=(\mathrm{id},\iota\circ L)_*\lambda$.
Consider the continuous equivariant maps $(x,t)\mapsto(Lx,t)$ and $(x,t)\mapsto(Ux,t+r)$
from $W'\times K$ to $Z\times K$.
They push $\hat\lambda$ forward to $(L,\iota\circ L)_*\lambda=\rho_\nu$ and to
$(U,\iota\circ L+r)_*\lambda=(U,\iota\circ U)_*\lambda=\rho_\nu$, respectively.
By~\ref{K:G3}, both $(L(w),z)$ and $(U(w),z+r)$ are generic for $\rho_\nu$ along $\Omega$.
By \cref{K:lem:stab}, $(U(w),z+r)\in\supp\rho_\nu$.
So $V\times(z+r+O)$ is an open set meeting $\supp\rho_\nu$, and by~\ref{K:G1} the set
\[
\{\gamma:(T^\gamma L(w),z+\alpha(\gamma))\in V\times(z+r+O)\}=\{\gamma:T^\gamma L(w)\in V,\ \alpha(\gamma)\in r+O\}
\]
has positive lower density along $\Omega$.
\end{proof}

\begin{lemma}[Pointwise support]\label{K:lem:PS}
Let $\nu$, $\iota\colon Z\to K$, $\rho=\rho_\nu$, $\Sigma=\supp\rho$ and $J_t=J^\nu_t$ be as above.
If $(p,\theta)\in\Sigma$, then for $\rho$-almost every $(q,\varphi)\in Z\times K$,
\[
(p,q)\in\supp J_{\varphi-\theta}.
\]
More precisely, the set of $(q,\varphi)$ with $(p,q)\notin\supp J_{\varphi-\theta}$ is
contained in a $\rho$-null Borel set.
\end{lemma}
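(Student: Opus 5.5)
The plan is to reduce the statement to a convolution inequality on $K$, handling one pair of basic open sets at a time and then taking a countable union of null sets. Fix countable bases: $\{V_n\}$ a neighbourhood base at $p$, and $\{W_m\}$ a countable base of the topology of $Z$. It suffices to show the following for each pair $(n,m)$. The Borel set
\[
B_{n,m}=\{(q,\varphi)\in Z\times K:\ q\in W_m,\ J_{\varphi-\theta}(V_n\times W_m)=0\}
\]
is $\rho$-null. Indeed, suppose $(q,\varphi)$ lies outside $\bigcup_{n,m}B_{n,m}$. Then every basic open box $V_n\times W_m$ containing $(p,q)$ has positive $J_{\varphi-\theta}$-measure, so $(p,q)\in\supp J_{\varphi-\theta}$.

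\emph{Step 1: rewrite the measure of a box.} Fix $V=V_n$ and $W=W_m$, and set $g(u)=\nu_{\theta+u}(V)$ and $h(z)=\nu_z(W)$. Both are bounded Borel functions on $K$ with values in $[0,1]$. By \cref{K:lem:J}\ref{K:Ja} applied to indicators, and the substitution $z=\theta+u$,
\[
F(\varphi):=J_{\varphi-\theta}(V\times W)=\int_K g(u)\,h(\varphi+u)\,dm_K(u).
\]
By continuity of translation in $L^1(m_K)$, $F$ is continuous, so the zero set $Z_0=\{F=0\}$ is closed and $B_{n,m}=W\times Z_0$ is Borel. By \eqref{K:eq:rho},
\[
\rho(B_{n,m})=\int_{Z_0}h\,dm_K .
\]
Also by \eqref{K:eq:rho}, the hypothesis $(p,\theta)\in\Sigma$ means that $\rho(V\times(\theta+O))=\int_O g\,dm_K>0$ for every open $O\ni0$. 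So the set $S=\{g>0\}$ meets every neighbourhood of $0$ in positive Haar measure.

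\emph{Step 2: the convolution argument.} This is the key step. Suppose, for contradiction, that $\int_{Z_0}h\,dm_K>0$. Consider
\[
H(u)=\int_{Z_0}h(\varphi+u)\,dm_K(\varphi).
\]
It is continuous in $u$, again by continuity of translation in $L^1$. Since $H(0)>0$, there is an open $O\ni0$ with $H>0$ on $O$.

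On the other hand, for each $\varphi\in Z_0$ we have $\int g(u)h(\varphi+u)\,du=0$. Since $g,h\ge0$, this gives $h(\varphi+u)=0$ for a.e.\ $u\in S$. By Fubini,
\[
\int_S H\,dm_K=\int_{Z_0}\int_S h(\varphi+u)\,du\,d\varphi=0,
\]
so $H=0$ a.e.\ on $S$. But $S\cap O$ has positive measure and $H>0$ on it, a contradiction. Hence $\rho(B_{n,m})=0$.

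The only delicate points are measurability and the fact that the disintegration is only defined a.e. The set $B_{n,m}$ is Borel because $F$ is continuous. The a.e.\ ambiguity in $(\nu_z)$ does not affect $g$, $h$ or $F$ up to null sets, since every quantity above is an integral against $m_K$.
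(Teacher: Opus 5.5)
Your proof is correct and follows essentially the same route as the paper. You fix a basic box, write $J_{\varphi-\theta}(V\times W)$ as a continuous convolution, and use continuity of translation in $L^1(m_K)$ together with Fubini to show that the zero set carries no $\nu_z(W)$-mass. You then take a countable union of null sets. The only difference is cosmetic: you integrate the unweighted translate $H$ over $\{g>0\}$, whereas the paper integrates the weighted correlation $R(k)=\int_C g(\varphi)g(k+\varphi-\theta)\,dm_K(\varphi)$ against $f$.
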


\begin{proof}
Fix open sets $O_1\ni p$ and $O_2\subset Z$.
Define Borel functions $f(\xi)=\nu_\xi(O_1)$ and $g(\xi)=\nu_\xi(O_2)$ on $K$, with values in
$[0,1]$, and let $F(t)=J_t(O_1\times O_2)=\int f(\xi)g(\xi+t)\,dm_K(\xi)$.
The function $F$ is continuous.

By~\eqref{K:eq:rho}, $\int_Nf\,dm_K=\rho(O_1\times N)>0$ for every open $N\ni\theta$, because
$O_1\times N$ is a neighbourhood of $(p,\theta)\in\Sigma$.

Let $C=\{\varphi\in K:F(\varphi-\theta)=0\}$, a closed set. We claim $\int_Cg\,dm_K=0$.
Suppose not.
Put $R(k)=\int_Cg(\varphi)\,g(k+\varphi-\theta)\,dm_K(\varphi)$.
Then $R\ge0$, and $R$ is continuous since
$|R(k)-R(k')|\le\|g(\cdot+k)-g(\cdot+k')\|_{L^1(m_K)}$.
Moreover $R(\theta)=\int_Cg^2\,dm_K>0$. So $R>R(\theta)/2$ on some open $N\ni\theta$, and
\[
\int f\,R\,dm_K\ \ge\ \tfrac12R(\theta)\int_Nf\,dm_K>0 .
\]
On the other hand, by Fubini,
\[
\begin{aligned}
\int f(k)R(k)\,dm_K(k)&=\int_Cg(\varphi)\int f(k)\,g(k+\varphi-\theta)\,dm_K(k)\,dm_K(\varphi)\\
&=\int_Cg(\varphi)F(\varphi-\theta)\,dm_K(\varphi)=0,
\end{aligned}
\]
a contradiction.

By~\eqref{K:eq:rho}, $\int_Cg\,dm_K=\rho\big(O_2\times C\big)$. So
\[
\mathcal N_{O_1,O_2}:=\{(q,\varphi):\ q\in O_2,\ J_{\varphi-\theta}(O_1\times O_2)=0\}=O_2\times C
\]
is a $\rho$-null Borel set.
Let $\mathcal B$ be a countable base of $Z$.
If $(p,q)\notin\supp J_{\varphi-\theta}$, then some $O_1\times O_2$ with $O_1,O_2\in\mathcal B$,
$p\in O_1$ and $q\in O_2$ satisfies $J_{\varphi-\theta}(O_1\times O_2)=0$, so
$(q,\varphi)\in\mathcal N_{O_1,O_2}$. The union of these countably many $\rho$-null sets is
$\rho$-null.
\end{proof}

\begin{remark}\label{K:rem:PS}
For a fixed point $p$, the proof uses only that $\theta\in\supp(f_{O_1}m_K)$ for every open
$O_1\ni p$, where $f_{O_1}(\xi)=\nu_\xi(O_1)$. By~\eqref{K:eq:rho} this is equivalent to
$(p,\theta)\in\Sigma$.
\end{remark}

\subsection{Cube data and the cube criterion}\label{K:ssec:criterion}

From now on, $(X,\mu,T)$ is an ergodic system on a compact metric space, $a\in X$, and
$E\subset X$ is open with $\mu(E)>0$.

\begin{definition}\label{K:def:datum}
Let $m\ge0$. An \emph{$m$-dimensional cube datum} is a triple $(w,\lambda,\Psi)$ with the
following properties:
\begin{enumerate}[label=(D\arabic*)]
\item\label{K:D1} $\lambda$ is an ergodic measure on $X^{[m]}$ (for the diagonal action),
$\Psi$ is a \Fol{} sequence, $w\in\mathrm{gen}(\lambda,\Psi)$, and $w_{\vec0}=a$;
\item\label{K:D2} every vertex marginal of $\lambda$ equals $\mu$;
\item\label{K:D3} (face symmetry) $(L_i)_*\lambda=(U_i)_*\lambda=:\nu_i$ for $1\le i\le m$;
\item\label{K:D4} (upper faces in the support) $U_i(w)\in\supp\nu_i$ for $1\le i\le m$.
\end{enumerate}
\end{definition}

If $a\in\mathrm{gen}(\mu,\Psi)$, then $(a,\mu,\Psi)$ is a $0$-dimensional cube datum, since
\ref{K:D3} and \ref{K:D4} are empty.
Each face measure $\nu_i$ is ergodic, being a factor of $\lambda$. In what follows,
$\iota_i\colon X^{[m-1]}\to K_i$ denotes a Kronecker map for $\nu_i$, with rotation $\alpha_i$.

The next proposition is our version of Claim~1 in the proof of
\cite[Theorem~3.1]{kra2026short}, together with the iteration that follows it.
The structure of the argument comes from there:
\begin{itemize}
\item the splitting into a Kronecker part and a weak mixing part;
\item the positivity remark \cite[Remark~2.2]{kra2026short};
\item the averaging of the weak mixing part along \emph{every} \Fol{} sequence.
\end{itemize}
What is new is the treatment of the Kronecker part. We use continuity of translation on
the Kronecker group of the face measure and the phase returns of \cref{K:lem:phase}. No
continuity of eigenfunctions is needed.

\begin{proposition}[Cube criterion]\label{K:prop:criterion}
If an $m$-dimensional cube datum $(w,\lambda,\Psi)$ exists, then there is $x\in X^{[m]}$ such
that $y=(w,x)\in X^{[m+1]}$ is an $(m+1)$-dimensional Erdős cube with $y_{\vec0}=a$ and
$y_{\vec1}=x_{\vec1}\in E$.
\end{proposition}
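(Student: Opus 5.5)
The plan is to build the new top face $x$ by a nested-box construction: a decreasing sequence of open sets $V_0\supset \overline{V_1}\supset V_1\supset\overline{V_2}\supset\cdots$ in $X^{[m]}$, each with $\lambda(V_n)>0$ and $\operatorname{diam}V_n\to0$. We then let $x$ be the unique point of $\bigcap_n\overline{V_n}$. Start with $V_0=\{x:x_{\vec1}\in E\}$, which is open with $\lambda(V_0)=\mu(E)>0$ by \ref{K:D2}. Since $x\in\overline{V_1}\subset V_0$, this gives $x_{\vec1}\in E$, and $y_{\vec0}=w_{\vec0}=a$ holds by \ref{K:D1}. The stages run through the directions $1,\dots,m+1$ cyclically. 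At the stage devoted to direction $i$ we choose a fresh $\beta$, distinct from all earlier choices, and a tolerance $\eps_n\to0$. We require that every point of $\overline{V_{n+1}}$ satisfy the direction-$i$ approximation for $y=(w,x)$ up to $\eps_n$. Each direction is visited infinitely often, so in the limit $y$ is an Erdős cube.

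\emph{Direction $m+1$.} Here we need $T^\beta w$ close to $x$. We pick a point $v\in V_n\cap\supp\lambda$ and a small ball $B\ni v$ with $\overline B\subset V_n$ and $\operatorname{diam}B<\eps_n$. By \ref{K:G1}, the set of $\beta$ with $T^\beta w\in B$ has positive upper Banach density, so it is infinite and a fresh $\beta$ exists. We set $V_{n+1}=B$. This set has positive measure because $v\in\supp\lambda$, and every point of it is within $\eps_n$ of $T^\beta w$.

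\emph{Direction $i\le m$.} Here we need $\beta$ with $T^\beta L_i w$ close to $U_iw$ and $T^\beta L_i x$ close to $U_i x$ simultaneously.
\begin{itemize}
\item Cover $X^{[m-1]}$ by finitely many open sets $O_j$ of diameter $<\eps_n$, choose $j$ with $S:=V_n\cap U_i^{-1}O_j$ of positive measure, and shrink $S$ so that $\overline S\subset V_n$.
\item It suffices to find a fresh $\beta$ with $T^\beta L_iw\in V$, where $V$ is a fixed $\eps_n$-neighbourhood of $U_iw$, and with $\lambda(S\cap T^{-\beta}L_i^{-1}O_j)>0$. Then $V_{n+1}$ is a small open piece of that set, of positive measure.
\item The measure in question is $\langle\mathcal U_\beta(1_{O_j}\circ L_i),1_S\rangle_{L^2(\lambda)}$. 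Split $1_{O_j}=f_c+f_w$ with respect to the Kronecker map $\iota_i$ of $\nu_i$, where $f_c=\tilde f\circ\iota_i$.
\item \emph{Kronecker part.} By \cref{K:lem:phase}\ref{K:Pa}, $\tilde f(\iota_iL_ix+\alpha_i(\beta))$ differs from $\tilde f(\iota_iU_ix)$ in $L^1(\lambda)$ by at most $\|\tilde f(\cdot+\alpha_i(\beta)-r_i)-\tilde f\|_{L^1(m_{K_i})}$. This is $<\delta$ once $\alpha_i(\beta)\in r_i+O$ for a small neighbourhood $O$ of $0$.
\item Since $1_S\le 1_{O_j}\circ U_i$, the positivity remark gives
\[
\int 1_S\,\mathbb E(1_{O_j}\circ U_i\mid\iota_i\circ U_i)\,d\lambda\ \ge\ \big\|\mathbb E(1_S\mid\iota_i\circ U_i)\big\|_2^2\ \ge\ \lambda(S)^2 .
\]
Here we use \cref{K:lem:factor}\ref{K:Fa} to identify this conditional expectation with $\tilde f\circ\iota_i\circ U_i$. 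So the Kronecker contribution is at least $\lambda(S)^2-\delta$.
\item \emph{Weak mixing part.} The function $f_w\circ L_i$ has atomless spectral measure. By \cref{K:lem:wiener}\ref{K:wd}, $\mathop{\mathbb E}_{\beta\in\Theta_M}|\langle\mathcal U_\beta(f_w\circ L_i),1_S\rangle|\to0$ along every \Fol{} sequence $\Theta$.
\item Now use \ref{K:D4} and \cref{K:lem:phase}\ref{K:Pb}. The set $R=\{\beta:T^\beta L_iw\in V,\ \alpha_i(\beta)\in r_i+O\}$ has positive lower density $c$ along some \Fol{} sequence $\Omega$.
\item Averaging along $\Omega_M$ shows that the $\beta\in R$ with weak mixing term $<\lambda(S)^2/4$ have density at least $c$ minus a quantity tending to $0$. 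Hence they form an infinite set. Taking $\delta<\lambda(S)^2/4$, any fresh $\beta$ from it works.
\end{itemize}

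\emph{Passing to the limit.} Every point of $\overline{V_{n+1}}$ meets the constraint of its stage up to $2\eps_n$, and $x$ lies in all these closures. For direction $i$ we obtain pairwise distinct $\beta$ along which $T^\beta L_i(w,x)=(T^\beta L_iw,T^\beta L_ix)$ converges to $(U_iw,U_ix)$. For direction $m+1$ we obtain distinct $\beta$ with $T^\beta w\to x$. So $y=(w,x)$ is an Erdős cube.

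I expect the main obstacle to be the Kronecker part, namely getting a lower bound uniform in $\beta$ over the phase-return set without any continuity of eigenfunctions. The route I would try first is the combination above: continuity of translation in $L^1(m_{K_i})$, the constant shift $r_i$ from \cref{K:lem:phase}\ref{K:Pa}, and the inequality $\int 1_S\,\mathbb E(1_S\mid\cdot)\ge\lambda(S)^2$. The point needing care is that the $L^1$ approximation is uniform over $\beta$ with $\alpha_i(\beta)\in r_i+O$, so that a single $O$ serves for all such $\beta$ at once.
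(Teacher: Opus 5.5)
Your proposal is correct and follows the paper's proof essentially step for step. You build nested sets cyclically over the directions and treat direction $m+1$ by \ref{K:G1}. For $i\le m$ you split an indicator on $X^{[m-1]}$ into Kronecker and weak-mixing parts, control the Kronecker part uniformly over the phase-return set of \cref{K:lem:phase}\ref{K:Pb} by continuity of translation in $L^1(m_{K_i})$ and the constant $r_i$, and dispose of the weak-mixing part with \cref{K:lem:wiener}\ref{K:wd}.

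The differences from the paper are cosmetic. You localise with a finite cover by small open sets $O_j$, where the paper uses open boxes and the target $U_i(V)$. You obtain positivity from the quantitative bound $\int 1_S\,\tilde f(\iota_iU_ix)\,d\lambda\ge\lambda(S)^2$, whereas the paper observes that $F_c>0$ holds $\nu_i$-a.e.\ on $U_i(V)$. Finally, identifying $\mathbb E(1_{O_j}\circ U_i\mid\iota_i\circ U_i)$ with $\tilde f\circ\iota_i\circ U_i$ needs only $(U_i)_*\lambda=\nu_i$, not \cref{K:lem:factor}.
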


\begin{proof}
Fix a metric on $X$ and use the maximum metric on products.
An \emph{open box} is a set $V=\prod_{\eps\in[m]}V_\eps\subset X^{[m]}$ with each $V_\eps$
open.
For $1\le i\le m$ and $\beta\in\Gamma$ put $U_i(V)=\prod_{\eta\in[m-1]}V_{\eta^{(i,1)}}$ and
\[
W_i(V,\beta)=\{x\in V:\ T^\beta L_i(x)\in U_i(V)\}
=\{x\in V:\ T^\beta x_{\eta^{(i,0)}}\in V_{\eta^{(i,1)}}\ \forall\eta\in[m-1]\}.
\]
This set is open, and $V\subset U_i^{-1}(U_i(V))$.

\smallskip\noindent\emph{Claim A (direction $m+1$).} If $V$ is open with $\lambda(V)>0$, there are
infinitely many $\beta$ with $T^\beta w\in V$.

This is~\ref{K:G1} together with~\ref{K:D1}.

\smallskip\noindent\emph{Claim B (directions $i\le m$).} If $V$ is an open box with $\lambda(V)>0$,
$1\le i\le m$ and $\delta>0$, then there are infinitely many $\beta$ such that
\[
d\big(T^\beta L_i(w),U_i(w)\big)<\delta\qquad\text{and}\qquad\lambda\big(W_i(V,\beta)\big)>0 .
\]

\emph{Proof of Claim B.}
Let $F=1_{U_i(V)}$ on $X^{[m-1]}$. Since $L_i$ is equivariant,
\begin{equation}\label{K:eq:Wi}
\lambda(W_i(V,\beta))=\int 1_V(x)\,F(L_iT^\beta x)\,d\lambda(x)=\langle F\circ L_i\circ T^\beta,1_V\rangle_\lambda .
\end{equation}
In $L^2(\nu_i)$, write $F=F_c+F_w$, where $F_c=\mathbb E_{\nu_i}(F\mid\iota_i)=G\circ\iota_i$
with $G\colon K_i\to[0,1]$ Borel, and $F_w\perp H_c(\nu_i)$.
Let $r_i$ be the constant of \cref{K:lem:phase}\ref{K:Pa} for $L=L_i$ and $U=U_i$, so that
$\iota_i\circ U_i=\iota_i\circ L_i+r_i$ $\lambda$-a.e.

\emph{Positivity.}
Let $\Delta=\{F_c>0\}$, a $\sigma(\iota_i)$-measurable set. Then
$\int_{\Delta^c}F\,d\nu_i=\int_{\Delta^c}F_c\,d\nu_i=0$, so $\nu_i(U_i(V)\setminus\Delta)=0$
\cite[Remark~2.2]{kra2026short}.
Since $(U_i)_*\lambda=\nu_i$ and $V\subset U_i^{-1}(U_i(V))$, we get $F_c(U_ix)>0$ for
$\lambda$-a.e.\ $x\in V$. Hence
\[
\eta_V:=\int_VF_c(U_ix)\,d\lambda(x)>0 .
\]

\emph{The Kronecker part.}
For every $\beta$ and $\lambda$-a.e.\ $x$ we have
$F_c(T^\beta L_ix)=G(\iota_i(L_ix)+\alpha_i(\beta))$ and
$F_c(U_ix)=G(\iota_i(L_ix)+r_i)$. Here we use that $L_i$ and $U_i$ push $\lambda$ to $\nu_i$,
that $\iota_i$ is a.e.\ equivariant, and the choice of $r_i$.
Since $(\iota_i\circ L_i)_*\lambda=m_{K_i}$,
\[
\big|\langle F_c\circ L_i\circ T^\beta,1_V\rangle_\lambda-\eta_V\big|
\le\int_{K_i}\big|G(z+\alpha_i(\beta))-G(z+r_i)\big|\,dm_{K_i}(z)
=\big\|G(\cdot+\alpha_i(\beta)-r_i)-G\big\|_{L^1(m_{K_i})}.
\]
By continuity of translation in $L^1(m_{K_i})$, there is an open $O\ni0$ in $K_i$ such that
the right-hand side is $<\eta_V/4$ whenever $\alpha_i(\beta)\in r_i+O$.

\emph{The weak mixing part.}
Let $h=F_w\circ L_i\in L^2(\lambda)$, so that $F_w\circ L_i\circ T^\beta=\mathcal U_\beta h$.
Since $L_i$ is a factor map,
$\langle\mathcal U_\beta h,h\rangle_\lambda=\langle\mathcal U_\beta F_w,F_w\rangle_{\nu_i}$, so
$\sigma_h=\sigma_{F_w}$, which has no atoms by \cref{K:lem:kron}\ref{K:kc}.
By \cref{K:lem:wiener}\ref{K:wd}, for every \Fol{} sequence $\Theta$,
\[
\mathop{\mathbb E}_{\beta\in\Theta_M}|\langle\mathcal U_\beta h,1_V\rangle_\lambda|\to0 .
\]

\emph{Assembly.}
Let $\mathcal N_\delta$ be the open $\delta$-ball around $U_i(w)$, and let
\[
R_\delta=\{\beta:\ T^\beta L_i(w)\in\mathcal N_\delta,\ \alpha_i(\beta)\in r_i+O\}.
\]
By~\ref{K:D4} and \cref{K:lem:phase}\ref{K:Pb}, applied with $L=L_i$, $U=U_i$ and
$\nu=\nu_i$, there are a \Fol{} sequence $\Omega$ and $c_0>0$ with
$|R_\delta\cap\Omega_M|\ge c_0|\Omega_M|$ for all large $M$.
Let $R'_\delta=\{\beta\in R_\delta:|\langle\mathcal U_\beta h,1_V\rangle|\ge\eta_V/4\}$. By the weak
mixing part,
$|R'_\delta\cap\Omega_M|\le(4/\eta_V)\sum_{\beta\in\Omega_M}|\langle\mathcal U_\beta h,1_V\rangle|=o(|\Omega_M|)$.
Hence $|(R_\delta\setminus R'_\delta)\cap\Omega_M|\ge(c_0/2)|\Omega_M|$ for all large $M$, and
$R_\delta\setminus R'_\delta$ is infinite by \cref{K:lem:folner-grow}.
For $\beta\in R_\delta\setminus R'_\delta$, by~\eqref{K:eq:Wi},
\[
\lambda(W_i(V,\beta))=\langle F_c\circ L_i\circ T^\beta,1_V\rangle+\langle\mathcal U_\beta h,1_V\rangle
\ge\eta_V-\tfrac{\eta_V}4-\tfrac{\eta_V}4>0 .
\]
This proves Claim~B.

\smallskip\noindent\emph{Nested boxes.}
Every open subset of $X^{[m]}$ is a countable union of open boxes whose closures lie in it
and whose coordinates have arbitrarily small diameter. So every open set of positive
$\lambda$-measure contains such a box of positive measure.
Let $V_0=\{x\in X^{[m]}:x_{\vec1}\in E\}$. This is an open box, and
$\lambda(V_0)=\mu(E)>0$ by~\ref{K:D2}.
Let $i_n\in\{1,\dots,m+1\}$ with $i_n\equiv n\pmod{m+1}$.

Suppose an open box $V_n$ with $\lambda(V_n)>0$ has been chosen. Choose
$\beta_n\notin\{\beta_0,\dots,\beta_{n-1}\}$ as follows.
\begin{itemize}
\item If $i_n=m+1$, let $\beta_n$ satisfy $T^{\beta_n}w\in V_n$ (Claim~A), and put $W^*=V_n$.
\item If $i_n=i\le m$, let $\beta_n$ satisfy $d(T^{\beta_n}L_i(w),U_i(w))<1/(n+1)$ and
$\lambda(W_i(V_n,\beta_n))>0$ (Claim~B), and put $W^*=W_i(V_n,\beta_n)$.
\end{itemize}
Then choose an open box $V_{n+1}$ with $\overline{V_{n+1}}\subset W^*$, $\lambda(V_{n+1})>0$, and all
coordinate diameters $<1/(n+1)$.

The compact sets $\overline{V_n}$ are nested and their diameters tend to $0$. So
$\bigcap_n\overline{V_n}=\{x\}$, and $x\in V_n$ for all $n$; in particular $x_{\vec1}\in E$.
Let $y=(w,x)$.
\begin{itemize}
\item If $i_n=m+1$, then $T^{\beta_n}w$ and $x$ both lie in $V_n$, so
$d(T^{\beta_n}L_{m+1}(y),U_{m+1}(y))=d(T^{\beta_n}w,x)\le\operatorname{diam}V_n$.
\item If $i_n=i\le m$, then $x\in W_i(V_n,\beta_n)$. So for every $\eta\in[m-1]$,
both $T^{\beta_n}x_{\eta^{(i,0)}}$ and $x_{\eta^{(i,1)}}$ lie in $V_{n,\eta^{(i,1)}}$, and
\[
d\big(T^{\beta_n}L_i(x),U_i(x)\big)\le\operatorname{diam}V_n .
\]
Together with $d(T^{\beta_n}L_i(w),U_i(w))<1/(n+1)$, and $L_i(y)=(L_iw,L_ix)$,
$U_i(y)=(U_iw,U_ix)$, this gives $T^{\beta_n}L_i(y)\to U_i(y)$ along these $n$.
\end{itemize}
The $\beta_n$ are pairwise distinct, so $y$ is an Erdős cube. Moreover $y_{\vec0}=w_{\vec0}=a$
and $y_{\vec1}=x_{\vec1}\in E$.
\end{proof}

\subsection{Building the cube level by level}\label{K:ssec:induction}

\begin{proposition}[Induction step]\label{K:prop:step}
If an $m$-dimensional cube datum exists, then an $(m+1)$-dimensional cube datum exists.
\end{proposition}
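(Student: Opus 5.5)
Starting from an $m$-dimensional cube datum $(w,\lambda,\Psi)$, the plan follows the outline in \cref{K:ssec:strategy}: re-base on the Kronecker factor of $\lambda$, then adjoin a random top face so that the new cube is generic for an ergodic relative product. Let $\kappa\colon X^{[m]}\to K$ be a Kronecker map for $\lambda$ (\cref{K:lem:kron}), with rotation $\alpha$. By \cref{K:lem:rebase} (on the orbit closure of $w$, which has full $\lambda$-measure by \ref{K:G2}) there are $d\in K$ and a \Fol{} sequence $\Psi_1$ with $(w,d)\in\mathrm{gen}(\rho_\lambda,\Psi_1)$. On the orbit closure $\Sigma'$ of $(w,d)$ in $X^{[m]}\times K$ we then have a genuinely continuous, everywhere-equivariant Kronecker map, namely the projection $\pi(x,t)=t$. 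For the measure $\rho_\lambda$, which is isomorphic to $\lambda$, the relative products with respect to $\pi$ are $\hat J_t=\int\rho_z\otimes\rho_{z+t}\,dm_K$, and these project to $J^\lambda_t$.

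\textbf{Adjoining the top face.} I will apply \cref{K:lem:pairs} with $y_0=(w,d)$. This gives a subsequence $\Psi'$ and a $\rho_\lambda$-conull set of points $(x,\varphi)$ such that $((w,d),(x,\varphi))$ is generic for $\hat J_{\varphi-d}$ along $\Psi'$. By \ref{K:G3}, $(w,x)\in\mathrm{gen}(J^\lambda_{\varphi-d},\Psi')$. Since $\varphi=\kappa(x)$ is $m_K$-distributed under $\rho_\lambda$, \cref{K:lem:J}\ref{K:Jc} allows me to additionally require that $J^\lambda_{\varphi-d}$ be ergodic. I set $\lambda'=J^\lambda_s$ with $s=\varphi-d$, and $y=(w,x)$, $\Psi'$ as the new data.

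\textbf{Checking \ref{K:D1}--\ref{K:D3}.} Genericity and ergodicity are arranged above, and $y_{\vec0}=w_{\vec0}=a$. The vertex marginals are those of $\lambda$ by \cref{K:lem:J}\ref{K:Jb}, which gives \ref{K:D2}. For \ref{K:D3} in direction $m+1$, both marginals of $J^\lambda_s$ equal $\lambda$. For $i\le m$, $L_i$ and $U_i$ act coordinatewise on $X^{[m]}\times X^{[m]}$, so \cref{K:lem:factor}\ref{K:Fc} gives $(L_i\times L_i)_*J^\lambda_s=J^{\nu_i}_{\bar\chi_L(s)}$ and similarly for $U_i$. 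The equality of these measures needs $\bar\chi_L=\bar\chi_U$, where these are the homomorphisms $K\to K_i$ attached to the factor maps $L_i$ and $U_i$. This holds by the uniqueness in \cref{K:lem:factor}\ref{K:Fb}, since both satisfy $\bar\chi\circ\alpha=\alpha_i$. Note that \cref{K:lem:factor} needs no continuity of $\iota_i$.

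\textbf{Checking \ref{K:D4}, the main obstacle.} For direction $m+1$, the condition $U_{m+1}(y)=x\in\supp\lambda$ holds for a.e.\ choice of $x$. For $i\le m$, I need $(U_iw,U_ix)\in\supp J^{\nu_i}_{\bar\chi(s)}$, and this is exactly what \cref{K:lem:PS} is built for. First, $U_i(w)\in\supp\nu_i$ by \ref{K:D4}. Next, $(U_iw,\ \bar\chi(d)+c_U)$ is generic for $\rho_{\nu_i}$, as the image of $(w,d)$ under the continuous equivariant map $(x,t)\mapsto(U_ix,\bar\chi t+c_U)$, which pushes $\rho_\lambda$ to $\rho_{\nu_i}$ by \cref{K:lem:factor}\ref{K:Fb}. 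So \cref{K:lem:stab} gives $(p,\theta):=(U_iw,\bar\chi(d)+c_U)\in\Sigma_{\nu_i}$. Then \cref{K:lem:PS} yields, for $\rho_{\nu_i}$-a.e.\ $(q,\psi)$, that $(p,q)\in\supp J^{\nu_i}_{\psi-\theta}$. Taking $(q,\psi)=(U_ix,\bar\chi\kappa(x)+c_U)$, which is $\rho_{\nu_i}$-distributed when $x\sim\lambda$, I get $\psi-\theta=\bar\chi(\varphi-d)=\bar\chi(s)$, as required. Each of these countably many conditions excludes only a null set of $x$. Since the exceptional set in \cref{K:lem:PS} is merely contained in a Borel null set, I can intersect all the conull sets and choose $x$ afterwards. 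The delicate points are the bookkeeping of the constants $c_U$ and $\bar\chi$ and making sure that $w$, the only prescribed point, is used only through genericity and \cref{K:lem:stab}.
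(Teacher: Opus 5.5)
Your proposal is correct and follows the paper's proof essentially step for step. You re-base via \cref{K:lem:rebase} to obtain the continuous Kronecker map $\pi(x,t)=t$, apply \cref{K:lem:pairs} at $(w,d)$ so that the new top face is generic for an ergodic $J^\lambda_{\kappa(x)-d}$, verify \ref{K:D3} through \cref{K:lem:factor}\ref{K:Fc} and the uniqueness of $\bar\chi_i$, and obtain \ref{K:D4} from \cref{K:lem:stab} followed by \cref{K:lem:PS}; the one cosmetic difference is that you use $(U_ix,\bar\chi_i\kappa(x)+c_i^U)$ where the paper uses $(U_ix,\iota_i(U_ix))$, and these agree $\lambda$-a.e.\ by \eqref{K:eq:affine}.
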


\begin{proof}
Let $(w,\lambda,\Psi)$ be an $m$-dimensional cube datum, with face measures $\nu_i$ and
Kronecker maps $\iota_i\colon X^{[m-1]}\to K_i$ (rotations $\alpha_i$), $1\le i\le m$.
Let $\kappa\colon X^{[m]}\to K$ be a Kronecker map for $\lambda$, with rotation $\alpha$.
For $i\le m$, apply \cref{K:lem:factor} to $P=U_i$ and to $P=L_i$. Both push $\lambda$ to
$\nu_i$, so they give the same homomorphism $\bar\chi_i\colon K\to K_i$, the unique one with
$\bar\chi_i\circ\alpha=\alpha_i$, together with constants $c_i^U,c_i^L\in K_i$ such that
\begin{equation}\label{K:eq:affine}
\iota_i\circ U_i=\bar\chi_i\circ\kappa+c_i^U,\qquad\iota_i\circ L_i=\bar\chi_i\circ\kappa+c_i^L\qquad\lambda\text{-a.e.}
\end{equation}
Moreover, for every $s\in K$,
\begin{equation}\label{K:eq:facesym}
(U_i\times U_i)_*J^\lambda_s=(L_i\times L_i)_*J^\lambda_s=J^{\nu_i}_{\bar\chi_i(s)} .
\end{equation}

\emph{Step 1: re-basing.}
This is the graph construction of \cite[Proposition~3.20]{kra2022infinite} and
\cite[Proposition~3.7]{charamaras2024finding}, applied to the current measure $\lambda$.
Let $W$ be the orbit closure of $w$ in $X^{[m]}$, so that $\lambda(W)=1$ by~\ref{K:G2}.
By \cref{K:lem:rebase} there are $d\in K$ and a \Fol{} sequence $\Psi'$ with
$(w,d)\in\mathrm{gen}(\hat\lambda,\Psi')$, where $\hat\lambda=(\mathrm{id},\kappa)_*\lambda$.
Let $\widehat W\subset W\times K$ be the orbit closure of $(w,d)$; then $\hat\lambda(\widehat W)=1$.
The projection $\mathrm{pr}\colon\widehat W\to X^{[m]}$ is a measure-theoretic isomorphism
from $\hat\lambda$ to $\lambda$, with inverse $x\mapsto(x,\kappa(x))$.
So $\pi(x,t)=t$ is a \emph{continuous} Kronecker map for $\hat\lambda$, it is equivariant
everywhere, and $\kappa\circ\mathrm{pr}=\pi$ $\hat\lambda$-a.e.
By \cref{K:lem:factor}\ref{K:Fc} (with $\bar\chi=\mathrm{id}$ and $c=0$),
\begin{equation}\label{K:eq:prJ}
(\mathrm{pr}\times\mathrm{pr})_*J^{\hat\lambda}_s=J^\lambda_s\qquad\text{for every }s\in K .
\end{equation}

\emph{Step 2: the new top face.}
Apply \cref{K:lem:pairs} to $(\widehat W,\hat\lambda,\pi)$ and $y_0=(w,d)$.
This gives a subsequence $\Psi''$ of $\Psi'$ and a $\hat\lambda$-conull set $\widehat W_0$
such that, for $\hat x\in\widehat W_0$,
\[
((w,d),\hat x)\in\mathrm{gen}\big(J^{\hat\lambda}_{\pi(\hat x)-d},\Psi''\big).
\]
For $x\in X^{[m]}$ put $\hat x=(x,\kappa(x))$ and $s(x)=\kappa(x)-d$.
Since $\hat\lambda=(\mathrm{id},\kappa)_*\lambda$, we have $\hat x\in\widehat W_0$ for
$\lambda$-a.e.\ $x$.
For such $x$, projecting with~\ref{K:G3} and~\eqref{K:eq:prJ} gives
\begin{equation}\label{K:eq:newgen}
w':=(w,x)\in\mathrm{gen}\big(\lambda':=J^\lambda_{s(x)},\ \Psi''\big)\qquad\text{in }X^{[m]}\times X^{[m]}=X^{[m+1]} .
\end{equation}
Note that $s(x)$ is $m_K$-distributed when $x\sim\lambda$.
By \cref{K:lem:J}\ref{K:Jc}, $\lambda'=J^\lambda_{s(x)}$ is ergodic for $\lambda$-a.e.\ $x$.

\emph{Step 3: \ref{K:D1}--\ref{K:D3} for $(w',\lambda',\Psi'')$.}
\begin{itemize}
\item \ref{K:D1}: $\lambda'$ is invariant and (for a.e.\ $x$) ergodic, $w'$ is generic by
\eqref{K:eq:newgen}, and $w'_{\vec0}=w_{\vec0}=a$.
\item \ref{K:D2}: by \cref{K:lem:J}\ref{K:Jb} both marginals of $\lambda'$ on $X^{[m]}$ equal
$\lambda$. So the vertex marginals of $\lambda'$ are those of $\lambda$, all equal to $\mu$.
\item \ref{K:D3}, direction $m+1$: $(L_{m+1})_*\lambda'$ and $(U_{m+1})_*\lambda'$ are the two
marginals of $\lambda'$, both equal to $\lambda$.
\item \ref{K:D3}, direction $i\le m$: $L_i(y',y'')=(L_iy',L_iy'')$, and similarly for $U_i$.
By~\eqref{K:eq:facesym}, $(L_i)_*\lambda'=(U_i)_*\lambda'=J^{\nu_i}_{t_i}$ with
$t_i:=\bar\chi_i(s(x))$.
\end{itemize}

\emph{Step 4: \ref{K:D4} for $(w',\lambda',\Psi'')$.}

\emph{Direction $m+1$.} We need $U_{m+1}(w')=x\in\supp\lambda$, which holds for
$\lambda$-a.e.\ $x$.

\emph{Direction $i\le m$.}
Let $p=U_i(w)$ and $\theta=\bar\chi_i(d)+c_i^U$.
The map $(z,u)\mapsto(U_iz,\bar\chi_i(u)+c_i^U)$ from $X^{[m]}\times K$ to $X^{[m-1]}\times K_i$
is continuous. It is equivariant because $\bar\chi_i\circ\alpha=\alpha_i$.
By~\eqref{K:eq:affine} it pushes $\hat\lambda$ to
$(U_i,\iota_i\circ U_i)_*\lambda=(\mathrm{id},\iota_i)_*\nu_i=\rho_{\nu_i}$.
By~\ref{K:G3}, $(p,\theta)$ is generic for $\rho_{\nu_i}$ along $\Psi'$.
Since $p\in\supp\nu_i$ by~\ref{K:D4}, \cref{K:lem:stab} gives $(p,\theta)\in\supp\rho_{\nu_i}$.
Now let
\[
(q,\varphi):=\big(U_ix,\ \iota_i(U_ix)\big).
\]
When $x\sim\lambda$, the pair $(q,\varphi)$ is distributed according to $\rho_{\nu_i}$.
By~\eqref{K:eq:affine}, for $\lambda$-a.e.\ $x$,
\[
\varphi-\theta=\bar\chi_i(\kappa(x))-\bar\chi_i(d)=\bar\chi_i(s(x))=t_i .
\]
By \cref{K:lem:PS}, applied to $(p,\theta)\in\supp\rho_{\nu_i}$, for $\lambda$-a.e.\ $x$ we have
\[
U_i(w')=(U_iw,U_ix)=(p,q)\in\supp J^{\nu_i}_{\varphi-\theta}=\supp J^{\nu_i}_{t_i}=\supp(U_i)_*\lambda' .
\]

\emph{Conclusion.}
The requirements on $x$ in Steps 2--4 are finitely many: $\hat x\in\widehat W_0$, ergodicity of
$J^\lambda_{s(x)}$, $x\in\supp\lambda$, and the $m$ conditions from \cref{K:lem:PS}.
Each holds for $\lambda$-almost every $x$.
The data they depend on, namely $d$, $\Psi'$, $\Psi''$, $\widehat W_0$ and the points
$(p,\theta)$, were all fixed before $x$ was chosen.
So some $x$ satisfies all of them, and $(w',\lambda',\Psi'')$ is an $(m+1)$-dimensional
cube datum.
\end{proof}

\subsection{Proof of \texorpdfstring{\cref{K:thm:main}}{the main theorem}}\label{K:ssec:proof}

Let $A\subset\Gamma$ with $\ubd(A)>0$, and let $k\ge1$.
Choose a \Fol{} sequence along which $|A\cap\Phi_N|/|\Phi_N|$ has a positive limsup, and pass
to a subsequence along which the limit $\dens_\Phi(A)>0$ exists.
\cref{K:thm:corr} gives an ergodic $(X,\mu,T)$, a clopen $E$ with $\mu(E)>0$, a \Fol{}
sequence $\Psi$, and $a\in\mathrm{gen}(\mu,\Psi)$ with $A=\{\gamma:T^\gamma a\in E\}$.
\begin{enumerate}
\item $(a,\mu,\Psi)$ is a $0$-dimensional cube datum.
\item By \cref{K:prop:step}, applied $k-1$ times, there is a $(k-1)$-dimensional cube datum.
\item By \cref{K:prop:criterion}, there is a $k$-dimensional Erdős cube $y$ with $y_{\vec0}=a$
and $y_{\vec1}\in E$.
\item By \cref{K:lem:cube-sumset}, there are infinite $B_1,\dots,B_k$ with
$T^{b_1+\dots+b_k}a\in E$, that is, $b_1+\dots+b_k\in A$, for all $b_i\in B_i$.
\end{enumerate}
\qed

\formalization{\Cref{K:thm:main} is formalized in the fourteen files
\nolinkurl{Kmrr25/ThmK*.lean}; the main statement is \leanname{thmK} in \leanfile{ThmK}. All
lemmas and propositions of this section are proved there, with four classical results stated as
explicit hypotheses (the structure \leanname{ClassicalInputs} in \leanfile{ThmKDefs}): the
correspondence principle (\cref{K:thm:corr}); the existence of generic points~\ref{K:G4}; the
discrete spectrum theorem (\cref{K:lem:kron}\ref{K:ka}); and, for vectors orthogonal to all
eigenfunctions, the decay of averaged correlations given by the spectral theorem and Wiener's
lemma (\cref{K:lem:wiener}\ref{K:wc},\ref{K:wd}). Disintegrations are taken from Mathlib. Some
arguments are organised differently in the formal proof: for instance, the Kronecker group is
realised as a closed subgroup of $(\R/\Z)^\N$, which avoids Pontryagin duality, and
\cref{K:lem:pairs} splits $g$ into its Kronecker and weak mixing parts instead of using the
spectral theorem for $g$.}

\subsection{Comments on the proof}\label{K:ssec:comments}

\subsubsection*{Structure of the induction}
The only information passed from one level to the next is the cube datum
\ref{K:D1}--\ref{K:D4}. The decoration $d$ and the Kronecker maps are used within a single
step and are rebuilt for the new measure. The face measures $J^{\nu_i}_{t_i}$ of the new
measure may have much larger Kronecker factors than the $\nu_i$, but nothing about them is
needed: at the next level they enter only through \ref{K:D3} and \ref{K:D4}.
The measures $\lambda_m$ need not be invariant under face transformations or permutations
of digits, since the cube criterion treats each direction separately. Each direction
produces its own sequence of return times, so the method gives $k$ unrelated sets $B_i$.
It says nothing about sumsets $B\oplus B+t$ built from a single set, which can fail when
$\{g+g:g\in G\}$ has infinite index (see the remark after Conjecture~5.15
in~\cite{kra2023problems}).

\subsubsection*{Ingredients}
Besides elementary facts about \Fol{} sequences, the proof uses:
\begin{itemize}
\item Furstenberg's correspondence principle;
\item the existence of generic points for ergodic measures along a subsequence (via the mean
ergodic theorem);
\item the mean ergodic theorem, Wiener's lemma and the van der Corput inequality;
\item the discrete spectrum theorem for the Kronecker factor and Pontryagin duality;
\item disintegration of measures;
\item re-basing for rotations.
\end{itemize}
It uses no characteristic factors beyond the Kronecker factor, no Host--Kra or
Conze--Lesigne structure theory, no nilsystems, and no equidistribution theorem for
nilsystems. Nor does
it use finite generation, or any assumption on the subgroups $n\Gamma$.
Commutativity of $\Gamma$ is used throughout: in the spectral theory on $\widehat\Gamma$, in
describing the Kronecker factor as a rotation of a compact abelian group, and in the fact
that translates $\Phi_N+h_N$ of a \Fol{} sequence are again \Fol{}. It enters the
combinatorics in \cref{K:lem:cube-sumset} and in the diagonal action on faces.
Infiniteness of $\Gamma$ is used to obtain infinitely many distinct return times
(\cref{K:lem:folner-grow}).

\subsubsection*{Relation to earlier work}
The relative products, their ergodicity, the generic-pairs lemma and the graph construction
used for re-basing are known in this context
\cite{kra2022infinite,charamaras2024finding}; see the references given with
\cref{K:def:J}, \cref{K:lem:pairs,K:lem:rebase} and Step~1 of \cref{K:prop:step}.
For $k=2$ our argument differs from that of~\cite{kra2026short} in three respects:
re-basing replaces topological eigenfunctions, phase returns
(\cref{K:lem:phase,K:lem:stab}) replace the equicontinuity estimate~(4.2)
of~\cite{kra2026short}, and an ergodic relative product $J^\mu_s$ replaces an unspecified
joining.
The ingredients that are new relative to these arguments are the treatment of the Kronecker part of the face measures through
phase returns (\cref{K:prop:criterion}), the support lemmas
(\cref{K:lem:stab,K:lem:phase,K:lem:PS}), and the level-by-level induction
(\cref{K:prop:step}).

\section{Admissible translates for \texorpdfstring{$B\oplus B+t$}{B⊕B+t} in abelian groups}
\label{G:sec:translates}

In this section we answer Question~5.16 of~\cite{kra2023problems} and the related
Question~\S5.3(a). For a countably infinite abelian group $G$ in which $2G=\{g+g:g\in G\}$
has finite index (by work of Charamaras--Mountakis and Ackelsberg these are exactly the
groups in which Conjecture~5.15 of~\cite{kra2023problems} holds), we show that a set
$T\subset G$ of translates suffices if and only if $T$ meets every coset of $2G$. In
particular, the transversals of $2G$ are exactly the minimal sets of translates that
suffice. We also determine exactly which sets arise as the set of admissible translates
of some $A$ of positive upper Banach density, and we give two-sided bounds for the set of
admissible translates of each individual $A$.
The new input is small: the existence of some translate is a theorem of
Charamaras--Mountakis~\cite{charamaras2024finding}, and the two elementary observations
we use have already appeared in the literature in closely related settings; see
\cref{G:rem:credit}.

\subsection{Setting and the survey's questions}
Let $(G,+)$ be a countable abelian group. A \emph{\Fol{} sequence} in $G$ is a sequence
$\Phi=(\Phi_N)_{N\in\N}$ of finite nonempty subsets of $G$ such that
$|\Phi_N\cap(g+\Phi_N)|/|\Phi_N|\to1$ for every $g\in G$. (Since $G$ is abelian, left,
right and two-sided \Fol{} sequences coincide.) Every countable abelian group is amenable,
so \Fol{} sequences exist. For $A\subset G$ we write
$\udens_\Phi(A)=\limsup_{N\to\infty}|A\cap\Phi_N|/|\Phi_N|$, and we define the
\emph{upper Banach density} as
\[
\ubd(A)=\sup_\Phi\udens_\Phi(A),
\]
the supremum being taken over all \Fol{} sequences in $G$. Thus $\ubd(A)>0$ if and only if
$\udens_\Phi(A)>0$ for some \Fol{} sequence $\Phi$; this is the notion of positive upper
Banach density used in~\cite{charamaras2024finding}. For $B\subset G$ let
$B\oplus B=\{b+b':b,b'\in B,\ b\neq b'\}$.

Conjecture~5.15 of~\cite{kra2023problems} asserts that in every countable abelian group
$G$, every set $A\subset G$ with $\ubd(A)>0$ contains $B\oplus B+t$ for some infinite
$B\subset G$ and some $t\in G$. The survey then asks:

\begin{question}{5.16}\label{G:q:516}
Assuming that Conjecture~5.15 holds, what values of the shift $t$ suffice?
\end{question}

\begin{question}{\S5.3(a)}\label{G:q:53a}
The authors write: ``We suspect that there is a direct relation between coset
representatives of the subgroup $\{g+g:g\in G\}$ and the restrictions that can be placed
on the shift $t$.'' Make this relation precise.
\end{question}

The sentence ``We would like to know if similar restrictions can be placed in other
groups'', which precedes Question~5.16 in~\cite{kra2023problems}, is the motivation for
Question~5.16 itself. By Question~\S5.3(a) we mean the suspicion quoted above, which the
survey states right after Question~5.16.

As recorded in~\cite{kra2023problems}, among countably infinite abelian groups,
Conjecture~5.15 is known to hold precisely for those with $[G:2G]<\infty$. We recall the two results behind this statement.

\begin{theorem}[{Charamaras--Mountakis~\cite[Corollary~1.13]{charamaras2024finding}}]
\label{G:thm:CM}
Let $(G,+)$ be a countably infinite abelian group such that $2G$ is a subgroup of finite index, and
let $A\subset G$ have positive upper Banach density. Then there exist an infinite set
$B\subset A$ and some $t\in G$ such that $B\oplus B\subset A-t$.
\end{theorem}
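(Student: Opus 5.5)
The plan is to follow the dynamical route of \cref{K:sec:main}, with the cube replaced by a three-term \emph{Erdős progression}. Start as in \cref{K:ssec:proof}. Pass to a \Fol{} sequence along which $\dens_\Phi(A)>0$ exists. \Cref{K:thm:corr} then gives an ergodic $(X,\mu,T)$, a clopen $E$, and a point $a\in\mathrm{gen}(\mu,\Psi)$ with $A=\{\gamma:T^\gamma a\in E\}$.

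\emph{Reduction to a configuration.} The dynamical target is a triple $(x_0,x_1,x_2)$ and a $t\in G$ with the following properties:
\begin{itemize}
\item $x_0=a$, $x_1\in E$ and $x_2\in T^{-t}E$;
\item there are pairwise distinct $\beta_n$ with $T^{\beta_n}(x_0,x_1)\to(x_1,x_2)$.
\end{itemize}
From such a triple, a greedy argument in the style of \cref{K:lem:cube-sumset} should produce an infinite $B$ with $T^b a\in E$ and $T^{b+b'+t}a\in E$ for all $b\neq b'$ in $B$. Along the way one maintains open neighbourhoods $H_0\ni x_1$ and $H_1\ni x_2$, obtained as finite intersections of translates of $E$ and $T^{-t}E$. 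Each new $b$ is taken to be some $\beta_n$ with $T^{b}x_0\in H_0$ and $T^bx_1\in H_1$. Commutativity of $G$ lets the shifts by earlier elements be absorbed, exactly as in \cref{K:lem:cube-sumset}. This gives $B\subset A$ and $B\oplus B\subset A-t$.

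\emph{Constructing the progression.} The construction follows \cref{K:prop:step,K:prop:criterion}.
\begin{enumerate}
\item Re-base $a$ on the Kronecker factor of $\mu$ using \cref{K:lem:rebase}, obtaining a decoration $d$.
\item Use \cref{K:lem:pairs} to choose $x_1$ typical, with $x_1\in E$ and $(a,x_1)$ generic for the relative product $J^\mu_s$, where $s=\kappa(x_1)-d$.
\item Choose $x_2$ typical for the fibre measure at Kronecker coordinate $\kappa(x_1)+s$, so that $(x_1,x_2)$ is again distributed like $J^\mu_s$.
\item Produce the return times $\beta$ as in Claim~B of \cref{K:prop:criterion}. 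Decompose $1_E$ into its Kronecker and weak mixing parts; the weak mixing part averages out by \cref{K:lem:wiener}\ref{K:wd}. The Kronecker part is controlled once $\alpha(\beta)$ is close to $s$ and $T^\beta a$ is close to $x_1$. Such phase returns are supplied by \cref{K:lem:phase} and \cref{K:lem:stab}, using that $(a,x_1)$ is generic for $J^\mu_s$ and that $x_1\in\supp\mu$.
\end{enumerate}

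\emph{Main obstacle.} The difficulty is that the \emph{same} set $B$ is used in both directions. In Kronecker coordinates, $x_2$ must therefore sit at $\kappa(a)+2s$, while the requirement $x_2\in T^{-t}E$ must hold with positive probability. Concretely, one needs $\mu\big(E\cap\kappa^{-1}(\kappa(a)+2s+\alpha(t)+\cdot)\big)>0$ for a positive-measure set of $s$ with $\kappa^{-1}(\kappa(a)+s)\cap E$ of positive measure. As $s$ ranges over a positive-measure set, $2s$ only ranges over the subgroup $2K$. This is where the hypothesis $[G:2G]<\infty$ enters: it should give $[K:2K]<\infty$, because $\alpha(2G)$ is dense in $2K$ and $\alpha$ maps coset representatives of $2G$ onto representatives of $K/\overline{2K}$.

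I would then choose $t$ among finitely many coset representatives of $2G$ so that $\kappa(a)+2s+\alpha(t)$ falls into the coset of $2K$ where $\tilde{1}_E$ has positive mass. Averaging over the finitely many cosets shows that some $t$ works. A final pigeonhole, together with the pointwise support lemma \cref{K:lem:PS}, should upgrade this to the support condition $(x_1,x_2)\in\supp J^\mu_s$ needed for the phase returns.

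I expect this coset bookkeeping, rather than the analytic part, to be where care is required. In particular, one must make sure that the choice of $t$ is made before the typical points $x_1,x_2$ are selected.
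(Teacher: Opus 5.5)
The paper does not prove this statement. It quotes it from Charamaras--Mountakis and uses it as a black box. Their argument, like Kra--Moreira--Richter--Robertson's for $\Z$, does run through Erd\H{o}s progressions $(a,x_1,x_2)$ with $x_1\in E$ and $x_2\in T^{-t}E$, and your greedy reduction to such a progression is correct. The gap is in how you produce the progression.

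\textbf{The support condition is not established.} In step~3 the pair $(x_1,x_2)$ is not distributed like $J^\mu_s$. Its law is $\sigma=\int_K\mu_{d+s}\otimes\mu_{d+2s}\,dm_K(s)$, where $(\mu_z)$ is the disintegration of $\mu$ over $\kappa$. For fixed $s$ the first coordinate lies in the single fibre $\kappa^{-1}(d+s)$, which is $J^\mu_s$-null.

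What a progression needs is $(a,x_1)\in\mathrm{gen}(J^\mu_s,\Psi'')$, which \cref{K:lem:pairs} gives, together with $(x_1,x_2)\in\supp J^\mu_s$ for $s=\kappa(x_1)-d$. Then \ref{K:G1} produces the $\beta_n$ directly, with no Kronecker/weak-mixing splitting.

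Step~4 does not supply this support condition. Claim~B of \cref{K:prop:criterion} gives $\lambda(W_i(V,\beta))>0$ for boxes of \emph{free} top faces. Run on the datum $((a,x_1),J^\mu_s)$, it yields a cube $((a,x_1),(x',x''))$ in which nothing forces $x'=x_1$; this is why the paper says its method gives no information on $B\oplus B+t$.

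Nor does \cref{K:lem:PS} apply. It concerns $(q,\varphi)$ drawn from $\rho_\mu$, that is, $x_2$ ranging over all of $X$ with its own Kronecker coordinate. Your $x_2$ is instead confined to the $\mu$-null fibre $\kappa^{-1}(2\kappa(x_1)-d)$, and no pigeonhole turns ``for almost every $\varphi$'' into ``for this $\varphi$''.

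\textbf{Where $[G:2G]<\infty$ actually enters.} The hypothesis is needed before any choice of $t$. Since $K=\bigcup_i(\alpha(g_i)+2K)$ for coset representatives $g_i$ of $2G$, the subgroup $2K$ is open. So doubling pushes $m_K$ to a measure at most $[K:2K]\,m_K$. Only then are $\mu_{d+2s}$, $\sigma$ and $\tilde 1_E(d+2s+\alpha(t))$ independent of the version of the disintegration.

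The missing step is a diagonal version of \cref{K:lem:PS}. For basic open sets $O_1,O_2$ put $f(z)=\mu_z(O_1)$, $g(z)=\mu_z(O_2)$ and $C=\{u:J^\mu_u(O_1\times O_2)=0\}$, and let
\[
R(k)=\int_Cf(d+s+k)\,g(d+2s+k)\,dm_K(s).
\]
Then $\int_KR\,dm_K=\int_CJ^\mu_u(O_1\times O_2)\,dm_K(u)=0$. Moreover $R$ is continuous, because
\[
\int_K|h(2s+k)-h(2s+k')|\,dm_K(s)\le[K:2K]\,\|h(\cdot+k)-h(\cdot+k')\|_{L^1(m_K)}.
\]
Hence $R(0)=0$. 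Taken over a countable base, this says that $\sigma$-almost every $(x_1,x_2)$ lies in $\supp J^\mu_{\kappa(x_1)-d}$.

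\textbf{Choosing $t$.} Landing in the right coset of $2K$ is not enough; you need
\[
\sigma(E\times T^{-t}E)=\int_K\tilde 1_E(d+s)\,\tilde 1_E(d+2s+\alpha(t))\,dm_K(s)>0.
\]
This holds for some $t$: the right-hand side is a continuous function of $\alpha(t)$ by the same estimate, its average over $K$ is $\mu(E)^2$, and $\alpha(G)$ is dense.

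With these two facts, a $\sigma$-typical pair in $E\times T^{-t}E$ completes your argument; without them, steps~3--4 do not go through. Also, $\kappa(a)$ is undefined, since $\kappa$ is only defined almost everywhere; use the decoration $d$ instead.
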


(For finite groups the statement is false, since then $G$ has no infinite subsets. The result is cited as Corollary~1.12 in~\cite{kra2023problems}
and in~\cite[Theorem~1.1]{ackelsberg2024counterexamples}, following an earlier arXiv
version of~\cite{charamaras2024finding}.)

\begin{theorem}[{Ackelsberg~\cite[Theorem~1.2]{ackelsberg2024counterexamples}}]
\label{G:thm:Ack}
Suppose $\Gamma$ is a countable abelian group and $2\Gamma$ is a subgroup of infinite
index. Then for any $\eps>0$ there exists $A\subset\Gamma$ such that $\ubd(A)>1-\eps$
and such that if $B\subset\Gamma$ and $t\in\Gamma$ satisfy $B\oplus B+t\subset A$, then
$B$ is finite.
\end{theorem}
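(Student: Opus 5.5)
Since this result is quoted from Ackelsberg, I only sketch how I would reconstruct it, and several steps below are unchecked. The plan is to pass to the quotient $V=\Gamma/2\Gamma$ with projection $p\colon\Gamma\to V$. Every element of $V$ has order at most $2$, so $V$ is an $\mathbb F_2$-vector space. It is infinite, hence has a basis $(v_j)_{j\in\N}$, and every $\mathbb F_2$-subspace of $V$ is a subgroup. I would build $A$ as the complement of a set $D$ with $\ubd(D)<\eps$ that meets $B\oplus B+t$ for every $t\in\Gamma$ and every infinite $B\subset\Gamma$. Then $\ubd(A)\ge1-\ubd(D)>1-\eps$, since the lower density of $A$ along every F\o lner sequence is at least $1-\ubd(D)$.

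\emph{Two cases for $B$.} Let $B$ be infinite. By pigeonhole, either infinitely many elements of $B$ lie in a single coset $c+2\Gamma$, or $p$ is injective on an infinite subset of $B$; in both cases we may shrink $B$. In the first case $b+b'\in 2c+2\Gamma=2\Gamma$, so $B\oplus B+t\subset t+2\Gamma$. Writing $b=c+2u_b$, the sums are $t+2c+2(u_b+u_{b'})$, where $2c\in2\Gamma$ is a fixed element. In the second case the images $p(b)$ are distinct, and after a further Ramsey refinement $p(B)$ is an infinite set whose pairwise sums $p(b)+p(b')$ are all distinct and nonzero. These sums are shifted by $p(t)$, and $p(B\oplus B+t)=p(B)\oplus p(B)+p(t)$.

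\emph{Construction of $D$.} I would take $D=D_1\cup D_2$, one piece for each case.

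For the second case, I would let $D_2=p^{-1}(S)$ for a set $S\subset V$ that meets every translate $C\oplus C+v$ of a pairwise sumset of an infinite $C\subset V$, and whose preimage has small Banach density. A natural choice is
\[
S=\{v\in V:\ v\text{ has a coordinate }1\text{ at some }v_j\text{ with }j\in I\}^{c}
\]
for a sparse set $I$ of indices, or more generally a union of cosets of the finite-codimension subspaces $\langle v_j:j\notin[n]\rangle$. This uses that an infinite $C$ has two elements agreeing on any finitely many coordinates, so their sum lies in a prescribed finite-codimension subspace. I still need to adjust this so that the translation by $v$ is absorbed. To estimate densities I would use that a subgroup of $\Gamma$ of infinite index has Banach density $0$. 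Hence any union of finitely many of its cosets is small, while the union of cosets of a finite-index subgroup $H\supset2\Gamma$ has density exactly (number of cosets)$/[\Gamma:H]$, which can be made $<\eps/2$.

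For the first case I need $D_1\cap(t+2\Gamma)$ to meet every set $t+2(U\oplus U)$ with $U$ infinite, because $2c$ can be absorbed into $t$. I would treat this case like the second one, applied inside the structure of $2\Gamma$. If $[2\Gamma:4\Gamma]=\infty$ the same quotient trick works one level down. Otherwise I would use the pigeonhole directly: two elements $u_b,u_{b'}$ in the same coset of a finite-index subgroup force $2(u_b+u_{b'})$ into a controlled coset.

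\emph{Main obstacle.} The main obstacle is the uniformity over all shifts $t$. The forbidden set has to be hit by \emph{every} translate $B\oplus B+t$, yet a union of cosets of one finite-index subgroup $H$ meets every translate only if it is all of $\Gamma$. The fix I would try first is a diagonal, multi-scale construction. Choose subgroups $H_1\supset H_2\supset\cdots\supset2\Gamma$ of finite index with $[H_n:H_{n+1}]\ge 2$, and put in $D$ a single coset of $H_n$ inside each coset of $H_{n-1}$ that is nearest $t$ (with respect to an enumeration of $\Gamma$), with total density $\sum_n2^{-n}\eps$. The pigeonhole then has to be run so that the pairwise sums of $B$ land in one of these chosen cosets at some level $n$ depending on $t$. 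Making this work for \emph{all} $t$ at once, while keeping the Banach density, not just the density along one F\o lner sequence, below $\eps$, is the step I expect to require the real idea from Ackelsberg's paper.
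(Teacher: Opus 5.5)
There is a genuine gap, and it goes beyond the step you flag: the reduction you commit to cannot work. You look for a set $D$ with $\ubd(D)<\eps$ that meets every $B\oplus B+t$. In fact no set $D$ with $\ubd(D)<1$ meets every $B\oplus B+t$. To see this, suppose $\ubd(D)<1$. Then $D$ is not thick: a thick set contains translates $\Phi_N+x_N$ of the sets of any \Fol{} sequence $(\Phi_N)$, and $(\Phi_N+x_N)$ is again a \Fol{} sequence, along which $D$ would have density $1$. Hence $A=\Gamma\setminus D$ is syndetic, that is, $\Gamma=\bigcup_{f\in F}(f+A)$ for some finite $F$. Now apply Hindman's theorem, using a non-principal idempotent ultrafilter on $\Gamma$ so that the terms can be taken pairwise distinct. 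It gives $f\in F$ and distinct $x_1,x_2,\dots$ with $\FS((x_n))\subset f+A$. Then $B=\{x_n:n\in\N\}$ is infinite and $B\oplus B-f\subset A$. So every set $A$ as in the theorem has thick complement and lower Banach density $0$. Its density can only be made large along one fixed \Fol{} sequence, and your multi-scale scheme, which aims at controlling $\ubd(D)$, has an unattainable target.

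The missing idea is to handle one shift at a time and glue the pieces along a single \Fol{} sequence $\Phi$. Enumerate $\Gamma=\{t_1,t_2,\dots\}$. Since $\Gamma/2\Gamma$ is an infinite $\mathbb F_2$-vector space, there are subgroups $H_n\supset2\Gamma$ of index $2^{k_n}$ with $\sum_n2^{-k_n}<\eps$. Your case split is then unnecessary. Pigeonhole an infinite $B$ into the finitely many cosets of $H_n$, rather than the infinitely many cosets of $2\Gamma$, to get an infinite $B'$ inside one coset. For $b,b'\in B'$ we have $b+b'=2b+(b'-b)\in H_n$, so $(B\oplus B+t_n)\cap E_n$ is infinite, where $E_n=t_n+H_n$. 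By \cref{G:lem:density}(i), $|E_n\cap\Phi_N|/|\Phi_N|\to2^{-k_n}$. Choose $M_1<M_2<\cdots$ such that $|E_i\cap\Phi_N|\le(2^{-k_i}+2^{-J})|\Phi_N|$ for all $i\le J$ and $N\ge M_J$. Let $D=\bigcup_n\big(E_n\setminus(\Phi_1\cup\dots\cup\Phi_{M_n})\big)$. For $M_J\le N<M_{J+1}$, only $E_1,\dots,E_J$ contribute to $D\cap\Phi_N$, so $\udens_\Phi(D)\le\sum_n2^{-k_n}<\eps$. Therefore $\ubd(\Gamma\setminus D)\ge1-\udens_\Phi(D)>1-\eps$. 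On the other hand, $(\Gamma\setminus D)\cap E_n$ is finite, so $\Gamma\setminus D$ cannot contain the infinite set $(B\oplus B+t_n)\cap E_n$. This finite-deletion step is Ackelsberg's Lemma~3.1; the paper only quotes the theorem, but uses the natural-density version of that lemma as \cref{IA:lem:subadd}. It is exactly what gets around your correct observation that cosets of a single $H$ cannot serve all shifts.
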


Ackelsberg also records~\cite[footnote~2]{ackelsberg2024counterexamples} that T.~Tao
observed independently that the condition $[\Gamma:2\Gamma]<\infty$ in \cref{G:thm:CM}
is necessary.

Conjecture~5.15 fails trivially for finite groups. Hence ``assuming that Conjecture~5.15
holds'' in \cref{G:q:516} means that $G$ is a countably infinite abelian group with
$[G:2G]<\infty$. We fix such a group for the rest of this section. For $A\subset G$ we let
\[
T(A)=\{t\in G:\ B\oplus B+t\subset A\ \text{for some infinite } B\subset G\}
\]
be the set of \emph{admissible translates} for $A$. We say that a set $T\subset G$
\emph{suffices} if $T(A)\cap T\neq\emptyset$ for every $A\subset G$ with $\ubd(A)>0$.

\subsection{Three elementary lemmas}

\begin{lemma}\label{G:lem:density}
Let $G$ be a countably infinite abelian group, and let $H\le G$ be a subgroup of finite
index $\ell$.
\begin{enumerate}[label=\textup{(\roman*)}]
\item For every \Fol{} sequence $\Phi$ and every $r\in G$ we have
$|(r+H)\cap\Phi_N|/|\Phi_N|\to1/\ell$. In particular $\ubd(r+H)=1/\ell>0$.
\item For all $X,Y\subset G$ we have $\ubd(X\cup Y)\le\ubd(X)+\ubd(Y)$.
\end{enumerate}
\end{lemma}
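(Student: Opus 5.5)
For part (i) the plan is to use invariance of the Følner averages under translation together with the fact that the cosets of $H$ partition $G$. Let $r_1+H,\dots,r_\ell+H$ be the cosets of $H$ and put $c_j(N)=|(r_j+H)\cap\Phi_N|/|\Phi_N|$, so that $\sum_j c_j(N)=1$ for every $N$. The key observation is that translation by $g=r_j-r_1$ maps $r_1+H$ bijectively onto $r_j+H$. Hence
\[
|(r_j+H)\cap\Phi_N|=|(r_1+H)\cap(\Phi_N-g)|,
\]
and this differs from $|(r_1+H)\cap\Phi_N|$ by at most $|\Phi_N\triangle(\Phi_N-g)|=o(|\Phi_N|)$. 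It follows that $c_j(N)-c_1(N)\to0$ for every $j$. Combining this with $\sum_jc_j(N)=1$ gives $c_j(N)\to1/\ell$ for each $j$. Since $r$ lies in one of the cosets, we get $|(r+H)\cap\Phi_N|/|\Phi_N|\to1/\ell$ along every Følner sequence. Taking the supremum over $\Phi$ then yields $\ubd(r+H)=1/\ell$, which is positive because $\ell<\infty$. The Følner condition as stated in this section, $|\Phi_N\cap(g+\Phi_N)|/|\Phi_N|\to1$, is equivalent to $|\Phi_N\triangle(\Phi_N+g)|=o(|\Phi_N|)$, so the estimate above is justified.

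For part (ii), the natural route is to fix a single Følner sequence $\Phi$ and use subadditivity along it. We have
\[
|(X\cup Y)\cap\Phi_N|\le|X\cap\Phi_N|+|Y\cap\Phi_N|,
\]
and $\limsup(a_N+b_N)\le\limsup a_N+\limsup b_N$. This gives $\udens_\Phi(X\cup Y)\le\udens_\Phi(X)+\udens_\Phi(Y)\le\ubd(X)+\ubd(Y)$. Since $\Phi$ was arbitrary, taking the supremum over $\Phi$ on the left yields the claim.

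Neither part presents a real obstacle. The only points that need care are translating the Følner condition into the symmetric-difference estimate used in (i), and remembering in (ii) to bound each $\udens_\Phi$ by $\ubd$ before taking the supremum on the left.
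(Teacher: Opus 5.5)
Your proposal is correct and follows essentially the same route as the paper. In (i) you translate one coset onto another, bound the discrepancy by $|\Phi_N\triangle(\Phi_N-g)|=o(|\Phi_N|)$, and use that the $\ell$ coset counts sum to $|\Phi_N|$; in (ii) you use subadditivity along a fixed \Fol{} sequence and then take the supremum.
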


\begin{proof}
(i) Let $C\subset G$ and $x\in G$. Since $|(x+C)\cap\Phi_N|=|C\cap(\Phi_N-x)|$, we get
\[
\big||(x+C)\cap\Phi_N|-|C\cap\Phi_N|\big|\le|(\Phi_N-x)\,\triangle\,\Phi_N|
=2\big(|\Phi_N|-|\Phi_N\cap(\Phi_N-x)|\big)=o(|\Phi_N|).
\]
Let $r_1+H,\dots,r_\ell+H$ be the cosets of $H$. Applying this with $C=r_1+H$ and
$x=r_i-r_1$ shows that the $\ell$ quantities $|(r_i+H)\cap\Phi_N|$ differ pairwise by
$o(|\Phi_N|)$. They add up to $|\Phi_N|$, so each of them equals
$|\Phi_N|/\ell+o(|\Phi_N|)$.

(ii) For each \Fol{} sequence $\Phi$ we have
$\udens_\Phi(X\cup Y)\le\udens_\Phi(X)+\udens_\Phi(Y)\le\ubd(X)+\ubd(Y)$. Now take
the supremum over $\Phi$.
\end{proof}

\begin{lemma}[Pigeonhole]\label{G:lem:pigeon}
If $[G:2G]<\infty$ and $B\subset G$ is infinite, then $(B\oplus B)\cap2G\neq\emptyset$.
Consequently, if $B\oplus B+t\subset A$, then $t\in A+2G$. Moreover, if
$B\oplus B+t$ is contained in a single coset $r+2G$, then $t\in r+2G$.
\end{lemma}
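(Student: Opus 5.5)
The plan is a pigeonhole argument on the finite quotient $G/2G$.

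\emph{First assertion.} Since $[G:2G]=\ell<\infty$ and $B$ is infinite, some coset $r+2G$ contains infinitely many elements of $B$; in fact three distinct elements $b\ne b'$ in the same coset $r+2G$ suffice. Write $b=r+2g$ and $b'=r+2g'$. Then
\[
b+b'=2r+2g+2g'\in2G,
\]
and $b+b'\in B\oplus B$ because $b\ne b'$. So $(B\oplus B)\cap2G\neq\emptyset$.

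\emph{Second assertion.} If $B\oplus B+t\subset A$, pick $s\in(B\oplus B)\cap2G$. Then $s+t\in A$, so
\[
t=(s+t)-s\in A-2G=A+2G,
\]
using that $2G$ is a subgroup.

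\emph{Third assertion.} If $B\oplus B+t\subset r+2G$, take the same $s\in(B\oplus B)\cap2G$. Then $s+t\in r+2G$, hence $t\in r+2G$.

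Every step is elementary, and none is a real obstacle. The only point that needs care is choosing two \emph{distinct} elements of $B$ in one coset, so that their sum lies in the restricted sumset $B\oplus B$; this is exactly where infinitude of $B$ and finiteness of the index are used.
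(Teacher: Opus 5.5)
Your proof is correct and follows the paper's argument essentially verbatim: pigeonhole two distinct elements of $B$ into one coset of $2G$ so that their sum lies in $(B\oplus B)\cap 2G$, then translate by $t$ for the second and third assertions. One small slip: you write ``three distinct elements $b\ne b'$'' where you mean two.
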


\begin{proof}
Since $B$ is infinite and $2G$ has finitely many cosets, there are distinct $b,b'\in B$ in
the same coset $x+2G$, say $b=x+2u$ and $b'=x+2v$. Then $b+b'=2(x+u+v)\in2G$ and
$b+b'\in B\oplus B$. If $B\oplus B+t\subset A$, then $t+b+b'\in A$, so
$t\in A-(b+b')\subset A+2G$. If $B\oplus B+t\subset r+2G$, then $t+b+b'\in r+2G$, so
$t\in r+2G$.
\end{proof}

\begin{lemma}[Moving the translate]\label{G:lem:move}
For $B\subset G$ and $s\in G$ we have $(B-s)\oplus(B-s)=B\oplus B-2s$. Consequently
$T(A)+2G=T(A)$ for every $A\subset G$, i.e.\ $T(A)$ is a union of cosets of $2G$.
\end{lemma}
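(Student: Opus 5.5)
The identity is a direct computation, and the consequence follows by applying it with a shifted set $B$.

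First, the identity $(B-s)\oplus(B-s)=B\oplus B-2s$. An element of the left-hand side has the form $(b-s)+(b'-s)$ with $b-s\neq b'-s$. Since translation is injective, $b-s\neq b'-s$ holds exactly when $b\neq b'$. So the element equals $(b+b')-2s$ with $b,b'\in B$ distinct. Conversely, every $(b+b')-2s$ with $b\neq b'$ arises this way. This uses only commutativity of $G$ to regroup the sum, and it gives equality of the two sets.

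Next, $T(A)+2G=T(A)$. The inclusion $T(A)\subset T(A)+2G$ is trivial, since $0\in 2G$. For the reverse inclusion, let $t\in T(A)$, witnessed by an infinite $B$ with $B\oplus B+t\subset A$, and let $g\in 2G$, say $g=2s$. Put $B'=B+s$, which is still infinite. Applying the identity with $-s$ in place of $s$ gives $B'\oplus B'=B\oplus B+2s$. Hence
\[
B'\oplus B'+(t-2s)=B\oplus B+t\subset A,
\]
so $t-2s\in T(A)$. Because $s$ ranges over all of $G$, this shows $t+2G\subset T(A)$. Therefore $T(A)$ is a union of cosets of $2G$.

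There is no real obstacle here. The only point to check is that the distinctness condition in $\oplus$ is preserved under translation, which is the injectivity observation above.
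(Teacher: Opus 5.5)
Your proof is correct and follows essentially the same route as the paper: you verify the identity via injectivity of translation, then translate a witness $B$ by $\pm s$ to move the admissible translate by $\mp 2s$. The only difference is cosmetic: you use $B+s$ and obtain $t-2s\in T(A)$, while the paper uses $B-s$ and obtains $t+2s$. Either version covers all of $t+2G$ as $s$ ranges over $G$.
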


\begin{proof}
The map $b\mapsto b-s$ is a bijection $B\to B-s$, and $b-s\neq b'-s$ if and only if
$b\neq b'$; this gives the identity. If $B\oplus B+t\subset A$ with $B$ infinite, then
$B-s$ is infinite and $(B-s)\oplus(B-s)+(t+2s)=B\oplus B+t\subset A$. Thus
$t+2s\in T(A)$ for every $s\in G$.
\end{proof}

\subsection{The admissible translates}

\begin{theorem}\label{G:thm:translates}
Let $G$ be a countably infinite abelian group with $[G:2G]<\infty$. For $A\subset G$ put
$R(A)=\{r\in G:\ \ubd(A\cap(r+2G))>0\}$.
\begin{enumerate}[label=\textup{(\alph*)}]
\item\label{G:it:union} $T(A)$ is a union of cosets of $2G$.
\item\label{G:it:sandwich} $R(A)\subset T(A)\subset A+2G$. If $\ubd(A)>0$, then
$R(A)\neq\emptyset$, and hence $T(A)$ contains a full coset of $2G$.
\item\label{G:it:realise} For every nonempty union $U$ of cosets of $2G$ we have
$\ubd(U)>0$ and $T(U)=U$. Consequently, the family
$\{T(A):A\subset G,\ \ubd(A)>0\}$ is exactly the family of nonempty unions of cosets of
$2G$.
\item\label{G:it:suffice} A set $T\subset G$ suffices if and only if $T$ meets every coset of
$2G$. The minimal sets that suffice are exactly the transversals of $2G$ in $G$; they all
have exactly $[G:2G]$ elements.
\end{enumerate}
\end{theorem}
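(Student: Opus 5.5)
We prove the four parts in order; each reduces to the three elementary lemmas together with \cref{G:thm:CM}.

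Part~\ref{G:it:union} is exactly \cref{G:lem:move}.

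For part~\ref{G:it:sandwich}, the inclusion $T(A)\subset A+2G$ is the second assertion of \cref{G:lem:pigeon}. For $R(A)\subset T(A)$, fix $r\in R(A)$ and put $A_r=A\cap(r+2G)$, so that $\ubd(A_r)>0$. \Cref{G:thm:CM} gives an infinite $B$ and some $t$ with $B\oplus B+t\subset A_r\subset r+2G$. The last assertion of \cref{G:lem:pigeon} then forces $t\in r+2G$. Since $t\in T(A_r)\subset T(A)$ and $T(A)$ is a union of cosets by~\ref{G:it:union}, we get $r+2G\subset T(A)$. If $\ubd(A)>0$, write $A$ as the finite union of the sets $A\cap(r_j+2G)$ over a transversal $\{r_j\}$. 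Iterating \cref{G:lem:density}(ii), some $r_j$ satisfies $\ubd(A\cap(r_j+2G))>0$, so $R(A)\neq\emptyset$.

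For part~\ref{G:it:realise}, let $U$ be a nonempty union of cosets. Then $\ubd(U)>0$ by \cref{G:lem:density}(i), applied to any coset it contains. For every $r\in U$ we have $U\cap(r+2G)=r+2G$, which has positive density, so $U\subset R(U)\subset T(U)\subset U+2G=U$ by~\ref{G:it:sandwich}. Combining this with~\ref{G:it:union} and~\ref{G:it:sandwich} shows that the family $\{T(A):\ubd(A)>0\}$ consists exactly of the nonempty unions of cosets.

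For part~\ref{G:it:suffice}, suppose first that $T$ meets every coset. For any $A$ with $\ubd(A)>0$, part~\ref{G:it:sandwich} gives a full coset inside $T(A)$, and $T$ meets that coset. Conversely, if $T$ misses some coset $r+2G$, take $A=r+2G$. Then $T(A)=r+2G$ by~\ref{G:it:realise}, which is disjoint from $T$, so $T$ does not suffice. Hence the sufficient sets are exactly those meeting every coset. The minimal such sets are those meeting each coset in exactly one point, that is, the transversals, and these have $[G:2G]$ elements.

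The only step that is not purely formal is the inclusion $R(A)\subset T(A)$. There one has to apply \cref{G:thm:CM} to the restricted set $A_r$ rather than to $A$, so that the pigeonhole lemma pins the translate to the coset $r+2G$. Everything else is bookkeeping.
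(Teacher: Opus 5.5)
Your proposal is correct and follows the paper's proof essentially step for step, including the key move of applying \cref{G:thm:CM} to $A\cap(r+2G)$ rather than to $A$, so that \cref{G:lem:pigeon} forces the translate into $r+2G$. The only step you state more tersely than the paper is the minimality claim in part~(d). There you should say explicitly that removing a point from a transversal leaves some coset unmet, while a sufficient set that is not a transversal contains two points of one coset and remains sufficient after one of them is deleted.
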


\begin{proof}
\ref{G:it:union} This is \cref{G:lem:move}.

\ref{G:it:sandwich} The inclusion $T(A)\subset A+2G$ is \cref{G:lem:pigeon}. Let
$r\in R(A)$ and put $A'=A\cap(r+2G)$. By \cref{G:thm:CM} applied to $A'$, there are an
infinite $B\subset G$ and $t\in G$ with $B\oplus B+t\subset A'\subset A$. Thus
$t\in T(A)$. Since $B\oplus B+t\subset r+2G$, \cref{G:lem:pigeon} gives $t\in r+2G$. By
part~\ref{G:it:union}, $r\in t+2G\subset T(A)$. Hence $R(A)\subset T(A)$.
If $\ubd(A)>0$, then since $A$ is the union of the finitely many sets $A\cap(r+2G)$,
\cref{G:lem:density}(ii) shows that $\ubd(A\cap(r+2G))>0$ for some $r$, i.e.\
$R(A)\neq\emptyset$. Since $R(A)$ is a union of cosets of $2G$, the last assertion follows.

\ref{G:it:realise} By \cref{G:lem:density}(i), $\ubd(U)\ge1/[G:2G]>0$. For every
$r\in U$ we have $U\cap(r+2G)=r+2G$, which has positive upper Banach density; hence
$U\subset R(U)$. By part~\ref{G:it:sandwich},
$U\subset R(U)\subset T(U)\subset U+2G=U$. The final assertion follows from this and
parts~\ref{G:it:union} and~\ref{G:it:sandwich}.

\ref{G:it:suffice} Suppose $T$ meets every coset of $2G$, and let $\ubd(A)>0$. By
part~\ref{G:it:sandwich}, $T(A)$ contains a coset of $2G$, and this coset meets $T$.
Conversely, suppose $T\cap(r+2G)=\emptyset$ for some $r$. Let $A=r+2G$. Then
$\ubd(A)>0$, and by part~\ref{G:it:realise}, $T(A)=r+2G$, which is disjoint from $T$.
So $T$ does not suffice.
A transversal meets every coset, so it suffices, and removing any of its points leaves
a coset unmet; hence transversals are minimal. If a sufficient set $T$ is not a
transversal, then it contains two points of one coset, and removing one of them leaves a
sufficient set. Hence $T$ is not minimal.
\end{proof}

\Cref{G:thm:translates}\ref{G:it:suffice} answers \cref{G:q:516}, and it confirms the
suspicion in \cref{G:q:53a} in the strongest possible form: the translates can be
restricted to any prescribed set of coset representatives of $2G$, and no smaller set
of translates works. Parts~\ref{G:it:union}--\ref{G:it:realise} concern an individual
set $A$. The admissible translates form a union of cosets of $2G$. This union contains
every coset in which $A$ has positive upper Banach density, and it is contained in the
union of the cosets that $A$ meets. Every nonempty union of cosets arises in this way.
In general neither bound is attained, as the next proposition shows. One always has the
following exact, though essentially tautological, reduction to a single coset:
$r\in T(A)$ if and only if $A\cap(r+2G)\supset B\oplus B+t$ for some infinite $B$ and
some $t$. (If $B\oplus B+r\subset A$, pass to an infinite $B'\subset B$ inside one coset
of $2G$. Then $B'\oplus B'\subset 2G$, so $B'\oplus B'+r\subset A\cap(r+2G)$. The converse
follows from \cref{G:lem:pigeon,G:lem:move}.)

\begin{proposition}\label{G:prop:strict}
Let $G=\Z$.
\begin{enumerate}[label=\textup{(\roman*)}]
\item If $A=2\Z\cup\{3^n:n\in\N\}$, then $T(A)=2\Z$, while $A+2\Z=\Z$.
\item If $A=2\Z\cup X$ with $X=\{2^i+2^j+1:\ 1\le i<j\}$, then $R(A)=2\Z$, while
$T(A)=\Z$.
\end{enumerate}
So both inclusions in \cref{G:thm:translates}\ref{G:it:sandwich} can be strict.
\end{proposition}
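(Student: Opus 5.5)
Both parts rest on \cref{G:thm:translates}: $R(A)\subset T(A)$, and $T(A)$ is a union of cosets of $2\Z$, so $T(A)$ is either $2\Z$ or $\Z$. In both examples $A\supset 2\Z$, so $0\in R(A)$ and $2\Z\subset T(A)$. The whole proof therefore comes down to deciding whether an odd translate is admissible, and, for (ii), computing the odd part of $R(A)$.

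For (i), $A+2\Z=\Z$ is immediate, since $3\in A$ is odd. It remains to show that no odd $t$ lies in $T(A)$. Suppose $B\oplus B+t\subset A$ with $B$ infinite and $t$ odd. I pass to an infinite $B'\subset B$ lying in a single parity class. Then $B'\oplus B'\subset 2\Z$, so $B'\oplus B'+t$ consists of odd elements of $A$, and hence $B'\oplus B'+t\subset\{3^n:n\in\N\}$. Since powers of $3$ are positive, $B'$ cannot be unbounded below: fixing $b_0\in B'$, the sums $b_0+b+t$ with $b\to-\infty$ would be negative. So $B'$ is unbounded above. Fix $b_0<b_1$ in $B'$. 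For every large $b\in B'$, both $b_0+b+t$ and $b_1+b+t$ are powers of $3$, they tend to infinity, and they differ by the fixed positive amount $b_1-b_0$. This is impossible, because two distinct powers $3^n<3^m$ satisfy $3^m-3^n\ge 2\cdot 3^n\to\infty$. Hence $T(A)=2\Z$.

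For (ii), admissibility of an odd translate comes from an explicit construction. Take $B=\{2^i:i\ge1\}$. Then $B\oplus B=\{2^i+2^j:1\le i<j\}$, so $B\oplus B+1=X\subset A$. Thus $1\in T(A)$, and by \cref{G:lem:move} (equivalently part \ref{G:it:union}) $T(A)=\Z$.

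For $R(A)=2\Z$ it remains to show that the odd part $A\cap(1+2\Z)=X$ has upper Banach density $0$, i.e.\ $\ubd(X)=0$. This counting step is the only one needing care. I count the elements of $X$ in an arbitrary interval $I=[M+1,M+N]$. An element $2^i+2^j+1$ with $i<j$ lies in $(2^j,2^{j+1}]$. So only indices $j$ with $(2^j,2^{j+1}]\cap I\neq\emptyset$ contribute.
\begin{itemize}
\item[$\circ$] Among these, at most one $j$ has $2^j\ge N$, namely the one whose dyadic block contains $M+1$ (the next block would require $2^{j+1}\le 2^j+N-1$). The other contributing $j$ satisfy $2^j<N$, so there are at most $\log_2N+2$ of them.
\item[$\circ$] For a fixed $j$, the values $2^i$ with $2^i+2^j+1\in I$ lie in an interval of length $N$. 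Since the powers of $2$ form a lacunary sequence, at most $\log_2N+2$ such $i$ occur.
\end{itemize}
Hence $|X\cap I|=O(\log^2N)=o(N)$ uniformly in $M$, so $\ubd(X)=0$, and $R(A)=2\Z\subsetneq T(A)=\Z$. Together with (i), this shows both inclusions in \cref{G:thm:translates}\ref{G:it:sandwich} can be strict.
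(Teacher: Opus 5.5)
Your proof is correct and follows the paper's argument. For (i) it uses a parity pigeonhole plus the growth of gaps between powers of $3$, and for (ii) it uses $B=\{2^i:i\ge1\}$ together with an $O(\log^2N)$ count of $X$ in windows of length $N$.

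Two small repairs are needed in (ii). First, up to two (not one) indices $j$ with $2^j\ge N$ can contribute when $I$ straddles a power of $2$. For example, $N=8$ and $I=[13,20]$ contain $13=2^2+2^3+1$ and $19=2^1+2^4+1$. This changes only a constant in your bound. Second, in this section $\ubd$ is a supremum over arbitrary \Fol{} sequences in $\Z$, not over intervals. So you should add the averaging step the paper uses,
$\sum_{x=0}^{L-1}|X\cap(\Phi_N+x)|=\sum_{y\in\Phi_N}|X\cap\{y,\dots,y+L-1\}|\le c|\Phi_N|$,
to pass from your uniform window bound to $\ubd(X)=0$.
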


\begin{proof}
(i) By \cref{G:thm:translates}\ref{G:it:sandwich}, $2\Z\subset R(A)\subset T(A)$. Since
$3\in A$, we have $A+2\Z=\Z$. Suppose that $t$ is odd and $B\oplus B+t\subset A$ with $B$
infinite. Choose an infinite $B'\subset B$ whose elements all have the same parity, and fix
$b_1<b_2$ in $B'$. For every $b\in B'\setminus\{b_1,b_2\}$ the numbers $b_1+b+t$ and
$b_2+b+t$ are odd elements of $A$, hence powers of $3$, and they differ by $b_2-b_1$. The
equation $3^m-3^n=b_2-b_1$ with $m>n\ge1$ has only finitely many solutions, because
$3^n\le b_2-b_1$ and $n$ determines $m$. But $b_1+b+t$ takes infinitely many values as $b$
ranges over $B'$. This is a contradiction, so $T(A)=2\Z$.

(ii) For $B=\{2^i:i\in\N\}$ we have $B\oplus B+1=X\subset A$, so $1\in T(A)$, and
$1+2\Z\subset T(A)$ by \cref{G:lem:move}. Clearly $2\Z\subset R(A)\subset T(A)$, so $T(A)=\Z$. It remains to show that
$\ubd(X)=0$. We claim that if every interval of $L$ consecutive integers contains at most
$c$ elements of a set $Y\subset\Z$, then $\ubd(Y)\le c/L$. Indeed, for a \Fol{} sequence
$\Phi$ we have
\[
\sum_{x=0}^{L-1}|Y\cap(\Phi_N+x)|=\sum_{y\in\Phi_N}|Y\cap\{y,\dots,y+L-1\}|\le c|\Phi_N|,
\]
and $|Y\cap(\Phi_N+x)|\ge|Y\cap\Phi_N|-|\Phi_N\setminus(\Phi_N+x)|=|Y\cap\Phi_N|-o(|\Phi_N|)$.
Hence $L\,\udens_\Phi(Y)\le c$.

Now let $I$ be an interval of $L\ge2$ consecutive integers. An element $2^i+2^j+1$ of
$X$ with $i<j$ lies in $(2^j,2^{j+1})$. The ranges $[2^j,2^{j+1})$ with $2^j\ge L$ have
length at least $L$, so at most two of them meet $I$. There are at most $\log_2L+1$ ranges
with $2^j<L$. For fixed $j$, the elements of $X\cap I$ correspond to distinct powers $2^i$
in the interval $I-2^j-1$ of length $L$. Such an interval contains at most one power of $2$
that is $\ge L$, and at most $\log_2L+1$ powers of $2$ that are $<L$. Hence
$|X\cap I|\le(\log_2L+3)^2$, and so $\ubd(X)\le(\log_2L+3)^2/L$ for every $L\ge2$. Thus
$\ubd(X)=0$ and $R(A)=2\Z$.
\end{proof}

\formalization{All four parts of \cref{G:thm:translates} are formalized in the file
\leanfile{Q516} as \leanname{thm_translates}, with \cref{G:thm:CM} stated as an explicit
hypothesis. The statements that do not use it, and \cref{G:prop:strict}
(\leanname{prop_strict_i}, \leanname{prop_strict_ii}), are proved unconditionally.}

\begin{remark}\label{G:rem:examples}
We compare \cref{G:thm:translates} with the examples in~\cite[\S5.3]{kra2023problems}.
\begin{enumerate}[label=\textup{(\arabic*)}]
\item For $G=\Z$ we have $2G=2\Z$, and the transversal $\{0,1\}$ suffices. This matches
the case $k=2$ of~\cite[Proposition~2.6]{kra2023problems}, which is stated in $\N$.
Taking $r=0$ in part~\ref{G:it:sandwich}, we get that $t=0$ is admissible whenever
$\ubd(A\cap2G)>0$. This is the group analogue of item~3
of~\cite[Proposition~2.6]{kra2023problems} for $k=2$. That proposition is an equivalence
between statements for all $k$, but its proof works for each $k$ separately. So for $k=2$
its items hold unconditionally, by~\cite[Theorem~2.2]{kra2023problems}.
\item If $G=\bigoplus_{i\in\N}\Z/3\Z$, then $2G=G$, so $T=\{0\}$ suffices, as observed
in~\cite{kra2023problems}.
\item If $G=\bigoplus_{i\in\N}\Z/2\Z$, then $2G=\{0\}$ has infinite index. This group lies
outside the scope of \cref{G:q:516}, because by \cref{G:thm:Ack} Conjecture~5.15 fails
there. In groups with $1<[G:2G]<\infty$, the phenomenon pointed out
in~\cite{kra2023problems} for this group (one cannot always take $t=0$) is explained by
cosets: for $A=r+2G$ with $r\notin2G$, every admissible translate lies in $r+2G$.
\end{enumerate}
\end{remark}

\begin{remark}[Credit]\label{G:rem:credit}
The only deep ingredient is \cref{G:thm:CM}. The two elementary devices in
\cref{G:lem:pigeon,G:lem:move} are not new. For unrestricted sumsets $t+B+B$,
Charamaras, Kousek, Mountakis and Radi\'c~\cite[Lemmas~2.1 and~2.2]{charamaras2025infinite}
use pigeonholing into cosets of $2G$ and write $t=2s+g_i$ with $g_i$ a coset
representative, replacing $B$ by $B+s$. Ackelsberg and Jamneshan~\cite[discussion after
Theorem~1.9, and Corollary~1.10]{ackelsberg2025equidistribution} use the same two
arguments for $B\oplus B\oplus B$ and the cosets of $3\Gamma$, under the assumption
$[\Gamma:6\Gamma]<\infty$. For $G=\Z$ the reduction
appears in~\cite[Proposition~2.6]{kra2023problems}. Our contribution consists of the exact
characterisation of the sets of translates that suffice, the minimality of transversals,
the description of the possible sets $T(A)$, and the two-sided bounds for $T(A)$ for
$B\oplus B$.
\end{remark}


\section{Cartesian products in \texorpdfstring{$\N^2$}{N2}}
\label{G:sec:q510}

In this section we answer Question~5.10 of~\cite{kra2023problems} negatively. We
construct sets $A\subset\N^2$ all of whose fibres have positive density along a fixed
\Fol{} sequence (in one version, every fibre has natural density exactly $1/4$), such that
no translate of $A$ contains the Ramsey configuration
$\{(b_1,b_2):b_1,b_2\in B,\ b_1<b_2\}$ for an infinite $B\subset\N$.

\subsection{The question}
Throughout, $\N=\{1,2,\dots\}$. Following~\cite{kra2023problems}, a \emph{\Fol{} sequence
in $\N$} is a sequence $\Phi=(\Phi_N)$ of finite subsets of $\N$ with
$|(\Phi_N+t)\cap\Phi_N|/|\Phi_N|\to1$ for every $t\in\N$. The upper and lower densities
along $\Phi$ are $\udens_\Phi$ and $\ldens_\Phi$. When they are equal, their common value
is denoted $\dens_\Phi$. For $A\subset\N^2$ and $n\in\N$ the \emph{fibre} of $A$ over $n$ is
$A_n=\{m\in\N:(n,m)\in A\}$. The configuration in question is
\begin{equation}\label{G:eq:BBcart}
\{(b_1,b_2):\ b_1,b_2\in B,\ b_1<b_2\}\subset A-t,
\end{equation}
which is equation~(21) of~\cite{kra2023problems}. It says that
$(b_1+t_1,b_2+t_2)\in A$ for all $b_1<b_2$ in $B$, where $t=(t_1,t_2)$.

\begin{question}{5.10}\label{G:q:510}
Suppose $A\subset\N^2$ satisfies
\begin{equation}\label{G:eq:22}
\liminf_{n\in\N}\dens_\Phi(A_n)>0
\end{equation}
for some \Fol{} sequence $\Phi$ on $\N$. Does there exist an infinite set $B\subset\N$ and
an element $t\in\N^2$ satisfying~\eqref{G:eq:BBcart}?
\end{question}

The survey points out that \eqref{G:eq:BBcart} can only hold if infinitely many fibres
$A_n$ have the property that every finite subfamily has infinite intersection. Example~5.11
of~\cite{kra2023problems}, due to F.~Hern\'andez Castro, takes $A=X\times Y$ with
$X=\bigcup_n[4^n,c4^n)$ and $Y=\bigcup_n[c'4^n,c''4^n)$ (intersected with $\N$), where
$1<c<c'<c''<2$. Then $X\cap(Y-s)$ is finite for every $s\in\Z$, because the blocks of $X$ and
$Y$ are separated by gaps that tend to infinity. Hence no translate of $A$ contains a
configuration~\eqref{G:eq:BBcart}: an element $b$ of $B$ other than the smallest satisfies $b+t_1\in X$ and $b+t_2\in Y$ (compare \cref{G:lem:criterion} below). In that example the fibre $A_n$ is $Y$
for $n\in X$ and empty for $n\notin X$. So it shows that the $\liminf$ in~\eqref{G:eq:22}
cannot be replaced by a $\limsup$. It does not satisfy~\eqref{G:eq:22}. We modify the
example so that \emph{every} fibre has positive density. Two products of this type are
glued along a partition of the rows, and the second coordinate of each product is
offset from its own row set.

\subsection{A criterion}

\begin{lemma}\label{G:lem:criterion}
Let $\N=P_0\sqcup P_1$ be a partition, and let $S_0,S_1\subset\N$ be sets such that
\begin{equation}\label{G:eq:finite}
S_c\cap(P_c+s)\ \text{is finite for every } s\in\Z\text{ and } c\in\{0,1\}.
\end{equation}
Let $A=(P_0\times S_0)\cup(P_1\times S_1)$. Then $A_n=S_c$ for every $n\in P_c$. Moreover,
there is no infinite set $B\subset\N$ together with $t=(t_1,t_2)\in\Z^2$ such that
$(b_1+t_1,b_2+t_2)\in A$ for all $b_1<b_2$ in $B$. In particular, $A$ satisfies
\eqref{G:eq:BBcart} for no infinite $B\subset\N$ and no $t\in\N^2$.
\end{lemma}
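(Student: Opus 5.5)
The fibre statement is immediate from the definition: for $n\in P_c$, the point $(n,m)$ lies in $A$ exactly when $m\in S_c$, because the rows $P_0$ and $P_1$ are disjoint. So all the work is in the second assertion, which I would prove by contradiction, using only pigeonhole and the finiteness hypothesis~\eqref{G:eq:finite}.

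Suppose $B\subset\N$ is infinite, $t=(t_1,t_2)\in\Z^2$, and $(b_1+t_1,b_2+t_2)\in A$ for all $b_1<b_2$ in $B$. Fix $b^*=\min B$, and let $b$ range over $B\setminus\{b^*\}$.

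First, the pair $(b,b')$ with $b'>b$ any larger element of $B$ gives $(b+t_1,b'+t_2)\in A$. In particular $b+t_1\in\N$, so $b+t_1$ lies in $P_0$ or in $P_1$. By pigeonhole there are a colour $c$ and an infinite $B'\subset B\setminus\{b^*\}$ with $b+t_1\in P_c$ for every $b\in B'$.

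Second, take any $b\in B'$ and use the pair $(b^*,b)$. It gives $(b^*+t_1,b+t_2)\in A$. The row $b^*+t_1$ lies in some fixed $P_{c'}$, independent of $b$, so $b+t_2\in S_{c'}$ for all $b\in B'$. This alone does not give a contradiction, because the colour $c'$ of the row through $b^*$ need not equal $c$.

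The main obstacle is therefore to make the column condition and the row condition refer to the same colour. I would fix this by choosing the anchor inside $B'$ instead of using $\min B$. Let $b_0=\min B'$ and let $b\in B'$ with $b>b_0$. Since $b_0<b$ are both in $B$, we have $(b_0+t_1,b+t_2)\in A$. The row $b_0+t_1$ lies in $P_c$ by the choice of $B'$, so $b+t_2\in S_c$. We also have $b+t_1\in P_c$. Therefore
\[
b+t_2\in S_c\cap\bigl(P_c+(t_2-t_1)\bigr)\qquad\text{for every } b\in B'\setminus\{b_0\}.
\]
As $b$ varies over this infinite set, the values $b+t_2$ are pairwise distinct. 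So the intersection on the right is infinite, which contradicts~\eqref{G:eq:finite} with $s=t_2-t_1$.

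The final clause of the lemma follows at once, since $\N^2\subset\Z^2$.
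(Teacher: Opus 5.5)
Your proof is correct and is essentially the paper's argument: pigeonhole on the colour of the rows $b+t_1$, anchor at $b_0=\min B'$ so that the row and column conditions refer to the same colour $c$, and conclude that the distinct numbers $b+t_2$, $b\in B'\setminus\{b_0\}$, lie in $S_c\cap\bigl(P_c+(t_2-t_1)\bigr)$, contradicting the finiteness hypothesis. Your preliminary attempt with $\min B$ is unnecessary but harmless, since you discard it before the final argument.
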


\begin{proof}
The identity $A_n=S_c$ for $n\in P_c$ holds because $P_0\cap P_1=\emptyset$. Suppose that
$B$ and $t$ exist. Every $b\in B$ is the smaller element of some pair in $B$, so
$b+t_1\in\N$. By the pigeonhole principle there are $c\in\{0,1\}$ and an infinite
$B'\subset B$ with $b+t_1\in P_c$ for all $b\in B'$. Let $b_0=\min B'$. For every
$b\in B'\setminus\{b_0\}$ we have $(b_0+t_1,b+t_2)\in A$ and $b_0+t_1\in P_c$, so
$b+t_2\in A_{b_0+t_1}=S_c$. Also $b+t_1\in P_c$, so
$b+t_2\in P_c+(t_2-t_1)$. Hence the infinitely many distinct numbers $b+t_2$, for
$b\in B'\setminus\{b_0\}$, lie in $S_c\cap(P_c+(t_2-t_1))$. This contradicts
\eqref{G:eq:finite}.
\end{proof}

\subsection{Two examples}

\begin{example}[Scale $4^k$]\label{G:ex:4k}
Let
\[
P_0=\bigcup_{k\ge0}[4^k,2\cdot4^k),\qquad P_1=\bigcup_{k\ge0}[2\cdot4^k,4^{k+1}),
\]
\[
S_0=\bigcup_{k\ge1}[9\cdot4^{k-1},15\cdot4^{k-1}),\qquad
S_1=\bigcup_{k\ge1}[5\cdot4^{k-1},7\cdot4^{k-1}),
\]
all intervals being intervals of integers. So $\N=P_0\sqcup P_1$. Each block of $S_0$
lies in the middle of the block $[2\cdot4^k,4^{k+1})$ of $P_1$, and each block of $S_1$
lies in the middle of the block $[4^k,2\cdot4^k)$ of $P_0$. Put
$A=(P_0\times S_0)\cup(P_1\times S_1)$.
\end{example}

\begin{example}[Square blocks]\label{G:ex:blocks}
For $k\in\N$ let $J_k=[k^2,(k+1)^2)\cap\N$, so that $\N=\bigsqcup_{k\ge1}J_k$ and
$|J_k|=2k+1$. Let $P'_c=\bigcup_{k\equiv c\ (\mathrm{mod}\ 2)}J_k$ for $c\in\{0,1\}$, and let
$S'_c$ consist of the \emph{middle parts} of the blocks of the opposite parity:
\[
S'_c=\{k^2+r:\ k\in\N,\ k\equiv1-c\ (\mathrm{mod}\ 2),\ r\in\Z,\ k\le2r<3k\}.
\]
Since $k\le 2r<3k$ forces $0\le r\le 2k$, we have $S'_c\cap J_k=\{k^2+r:k\le2r<3k\}$ when
$k\equiv1-c$, and this set has exactly $k$ elements. Put
$A'=(P'_0\times S'_0)\cup(P'_1\times S'_1)$.
\end{example}

\begin{proposition}\label{G:prop:examples}
\begin{enumerate}[label=\textup{(\roman*)}]
\item The sets of \cref{G:ex:4k} satisfy \eqref{G:eq:finite}. More precisely, every element
of $S_c\cap(P_c+s)$ is smaller than $15|s|$. Along the \Fol{} sequence
$\Phi_N=[1,4^N]$ we have $\dens_\Phi(S_0)=1/2$ and $\dens_\Phi(S_1)=1/6$. Hence every fibre
of $A$ has density $1/2$ or $1/6$ along $\Phi$, and $A$ satisfies~\eqref{G:eq:22}.
\item The sets of \cref{G:ex:blocks} satisfy \eqref{G:eq:finite}. More precisely, every
element of $S'_c\cap(P'_c+s)$ is smaller than $(2|s|+1)^2$. Both $S'_0$ and $S'_1$ have
natural density $1/4$. Hence every fibre of $A'$ has natural density $1/4$, and $A'$
satisfies \eqref{G:eq:22} along $\Phi_N=[1,N]$.
\end{enumerate}
\end{proposition}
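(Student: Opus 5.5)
Both parts come down to bookkeeping about the relative position of the blocks of $S_c$ and $P_c$. In each part I would check three things in turn: that $S_c$ sits strictly inside the blocks of $P_{1-c}$ with margins that grow with the block; that this gives the stated bound on $S_c\cap(P_c+s)$, hence \eqref{G:eq:finite}; and the densities. The non-existence of configurations \eqref{G:eq:BBcart} then follows from \cref{G:lem:criterion}, which also gives $A_n=S_c$ for $n\in P_c$. So every fibre is $S_0$ or $S_1$, and \eqref{G:eq:22} follows from the density computation. For $s=0$ the intersection is empty because $S_c\subset P_{1-c}$, so I may assume $s\neq0$.

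\emph{Part (i).} Write the blocks in units of $4^{k-1}$. The $k$-th block of $S_0$ is $[9\cdot4^{k-1},15\cdot4^{k-1})$. It lies in the $P_1$-block $[8\cdot4^{k-1},16\cdot4^{k-1})$ with a margin of at least $4^{k-1}$ on each side. The $k$-th block of $S_1$ is $[5\cdot4^{k-1},7\cdot4^{k-1})$. It lies in the $P_0$-block $[4\cdot4^{k-1},8\cdot4^{k-1})$, again with margins at least $4^{k-1}$. Let $x\in S_c\cap(P_c+s)$ lie in the $k$-th block. Then $x-s\in P_c$ lies outside the enclosing $P_{1-c}$-block, so $|s|\ge4^{k-1}$. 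Since $x<15\cdot4^{k-1}$, this gives $x<15|s|$.

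For the densities along $\Phi_N=[1,4^N]$, note first that the $N$-th blocks of $S_0$ and $S_1$ start at $9\cdot4^{N-1}$ and $5\cdot4^{N-1}$, both beyond $4^N$. So only the blocks with $k\le N-1$ count. This gives
\[
|S_0\cap\Phi_N|=\sum_{k=1}^{N-1}6\cdot4^{k-1}=2(4^{N-1}-1),\qquad |S_1\cap\Phi_N|=\tfrac23(4^{N-1}-1),
\]
and the ratios tend to $1/2$ and $1/6$. Finally, $\Phi_N$ is a \Fol{} sequence in $\N$ because these are intervals whose lengths tend to infinity.

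\emph{Part (ii).} Let $x=k^2+r\in S'_c$ with $k\equiv1-c$ and $k\le2r<3k$. Its distance to the lower end of $J_k$ is $r\ge k/2$. Its distance to the upper end $(k+1)^2-1$ is $2k-r>k/2$. If $x-s\in P'_c$, then $x-s\notin J_k$, so $|s|>k/2$ up to rounding, that is, $k\le2|s|$. Hence $x<(k+1)^2\le(2|s|+1)^2$.

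For the natural density, I would first evaluate at $N=(K+1)^2$. The count is $\sum k$ over $k\le K$ of a fixed parity, which is $K^2/4+O(K)$, against $N=K^2+O(K)$, so the ratio tends to $1/4$. For a general $N$ between consecutive squares, the error is at most one block, of size $O(\sqrt N)$, so the ratio still tends to $1/4$.

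The only step needing care is the rounding in the margin estimates, namely getting the strict inequalities and exact constants $15|s|$ and $(2|s|+1)^2$ right. Nothing there is conceptually hard.
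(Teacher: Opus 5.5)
Your proposal is correct and follows the paper's proof essentially step for step. You use the same margin argument (a shift with $|s|<4^{k-1}$, resp.\ $|s|\le k/2$, keeps $x-s$ inside the enclosing block of $P_{1-c}$, resp.\ $J_k$) and the same block counts for the densities; the only implicit points are harmless, namely that $|s|>k/2$ holds strictly with no rounding needed, and that the blocks with $k\le N-1$ lie entirely in $[1,4^N]$ because $15\cdot4^{N-2}<4^N$.
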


\begin{proof}
(i) Let $m\in S_0$, say $9\cdot4^{k-1}\le m<15\cdot4^{k-1}$, and suppose $|s|<4^{k-1}$.
Then $2\cdot4^k=8\cdot4^{k-1}<m-s<16\cdot4^{k-1}=4^{k+1}$, so $m-s\in P_1$ and
$m-s\notin P_0$. Similarly, if $m\in S_1$, say $5\cdot4^{k-1}\le m<7\cdot4^{k-1}$, and
$|s|<4^{k-1}$, then $4^k<m-s<2\cdot4^k$, so $m-s\in P_0$ and $m-s\notin P_1$. Hence
every $m\in S_c\cap(P_c+s)$ lies in a block with $4^{k-1}\le|s|$, and so
$m<15\cdot4^{k-1}\le15|s|$. (For $s=0$ the intersection is empty.)

The sets $[1,4^N]$ form a \Fol{} sequence in $\N$, since $|([1,M]+t)\cap[1,M]|=M-t$ for
$0\le t\le M$. For $N\ge2$ the blocks of $S_0$ and $S_1$ with $k\le N-1$ lie in
$[1,4^N]$, since $15\cdot4^{N-2}<4^N$. The blocks with $k\ge N$ start at
$9\cdot4^{N-1}$ and $5\cdot4^{N-1}$, both larger than $4^N$, so they are disjoint from
$[1,4^N]$. Therefore
\[
|S_0\cap[1,4^N]|=\sum_{k=1}^{N-1}6\cdot4^{k-1}=2(4^{N-1}-1),\qquad
|S_1\cap[1,4^N]|=\sum_{k=1}^{N-1}2\cdot4^{k-1}=\tfrac23(4^{N-1}-1),
\]
and dividing by $4^N$ gives the limits $1/2$ and $1/6$. Every fibre $A_n$ equals $S_0$ or
$S_1$ by \cref{G:lem:criterion}, so $\inf_n\dens_\Phi(A_n)=1/6>0$.

(ii) Let $m=k^2+r\in S'_c$ with $k\le2r<3k$ and $k\equiv1-c\pmod2$. Suppose $|s|<k/2$.
Then $r-s\ge r-|s|>r-k/2\ge0$ and $r-s\le r+|s|<3k/2+k/2=2k$. Hence
$m-s=k^2+(r-s)\in J_k\subset P'_{1-c}$, so $m-s\notin P'_c$. Consequently every element
of $S'_c\cap(P'_c+s)$ lies in a block $J_k$ with $k\le2|s|$, and hence is smaller than
$(2|s|+1)^2$.

For the density, let $N\ge1$ and let $K\ge1$ be such that $K^2\le N<(K+1)^2$. The blocks
$J_k$ with $k<K$ lie in $[1,N]$, and $J_K$ is the only block that meets $[1,N]$ without
lying inside it. Hence
\[
|S'_c\cap[1,N]|=\sum_{\substack{1\le k<K\\ k\equiv1-c\ (2)}}k+O(K)
=\frac{K^2}{4}+O(K)=\frac N4+O(\sqrt N).
\]
So $\dens(S'_c)=1/4$. Every fibre of $A'$ is $S'_0$ or $S'_1$, and $[1,N]$ is a \Fol{}
sequence.
\end{proof}

\begin{theorem}\label{G:thm:q510}
The sets $A$ of \cref{G:ex:4k} and $A'$ of \cref{G:ex:blocks} satisfy~\eqref{G:eq:22}. In
fact every fibre has a density along the indicated \Fol{} sequence: $1/2$ or $1/6$
for $A$, and natural density exactly $1/4$ for $A'$. Nevertheless, there is no infinite
$B\subset\N$ and no $t\in\Z^2$ such that $(b_1+t_1,b_2+t_2)$ lies in $A$ (respectively
$A'$) for all $b_1<b_2$ in $B$. In particular, the answer to \cref{G:q:510} is negative.
\end{theorem}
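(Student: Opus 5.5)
The theorem is essentially an assembly of \cref{G:lem:criterion} and \cref{G:prop:examples}, so the plan is to combine them and then check that the literal hypotheses of \cref{G:q:510} are met.

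First, apply \cref{G:prop:examples}(i) to the partition $\N=P_0\sqcup P_1$ and the sets $S_0,S_1$ of \cref{G:ex:4k}. That proposition gives condition~\eqref{G:eq:finite}: every element of $S_c\cap(P_c+s)$ is smaller than $15|s|$, so the intersection is finite (and empty for $s=0$). It also gives $\dens_\Phi(S_0)=1/2$ and $\dens_\Phi(S_1)=1/6$ along $\Phi_N=[1,4^N]$. Likewise, \cref{G:prop:examples}(ii) gives~\eqref{G:eq:finite} for $P'_c,S'_c$, via the bound $(2|s|+1)^2$, together with $\dens(S'_c)=1/4$.

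Second, invoke \cref{G:lem:criterion} for each example. Its first assertion says that every fibre $A_n$ (respectively $A'_n$) equals $S_0$ or $S_1$ (respectively $S'_0$ or $S'_1$), according to the class of $n$. So every fibre has density $1/2$ or $1/6$ along $[1,4^N]$, or natural density exactly $1/4$. Hence $\liminf_n\dens_\Phi(A_n)\ge 1/6>0$ (respectively $=1/4$), which is~\eqref{G:eq:22}. We also note that $[1,4^N]$ and $[1,N]$ are \Fol{} sequences in $\N$, as already observed in the proof of \cref{G:prop:examples}. The second assertion of \cref{G:lem:criterion} then rules out any infinite $B\subset\N$ and any $t\in\Z^2$ with $(b_1+t_1,b_2+t_2)\in A$ for all $b_1<b_2$ in $B$. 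Since $\N^2\subset\Z^2$, this in particular excludes~\eqref{G:eq:BBcart} for every $t\in\N^2$, so the answer to \cref{G:q:510} is negative.

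There is no real obstacle, since the substantive work is already done in the lemma and the proposition. The one point to check is that the density statement in~\eqref{G:eq:22} refers to a single \Fol{} sequence for all fibres at once. This holds because each example has only two distinct fibre sets, each with an actual limit density along the same sequence.
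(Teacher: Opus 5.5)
Your proposal is correct and matches the paper's proof, which likewise obtains the theorem by combining \cref{G:prop:examples} (condition~\eqref{G:eq:finite} and the fibre densities) with \cref{G:lem:criterion} (fibres equal $S_c$, and no configuration exists for any $t\in\Z^2$). Your remark that all fibres use the same \Fol{} sequence is a correct and harmless addition.
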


\begin{proof}
Combine \cref{G:prop:examples} with \cref{G:lem:criterion}.
\end{proof}

\formalization{\Cref{G:lem:criterion}, \cref{G:prop:examples} and \cref{G:thm:q510} are
formalized in \leanfile{Q510} (\leanname{criterion}, \leanname{q510_example_4k},
\leanname{q510_example_blocks}, \leanname{q510_negative}).}

\begin{remark}\label{G:rem:q510}
\begin{enumerate}[label=\textup{(\arabic*)}]
\item The set $A'$ shows that the answer is negative already for the standard \Fol{}
sequence $[1,N]$, that is, with natural densities: every fibre has natural density $1/4$.
It does not satisfy~\eqref{G:eq:22} along every \Fol{} sequence. Each $S'_c$ misses the
blocks $J_k$ with $k\equiv c$, and these have length $2k+1\to\infty$. So along
$\Phi_N=J_{2N}$, for instance, the fibres $S'_0$ have density $0$. The hypothesis that
\eqref{G:eq:22} holds along every \Fol{} sequence, and the hypothesis that the fibres have
uniformly positive lower Banach density, $\liminf_n\mathsf d_*(A_n)>0$ (with $\mathsf d_*$ as in
\cref{KR:sec:main}), are not addressed here; compare~(4). The set $A$ of \cref{G:ex:4k} has fibres without natural density. Its fibres have
lower natural densities $2/9$ and $2/15$, which are approached at the integers just
below the left endpoints of the blocks of $S_0$ and $S_1$, respectively.
\item The survey explains that a positive answer to \cref{G:q:510} would imply
Theorem~2.2 and Conjecture~3.10 of~\cite{kra2023problems}. A negative answer has no
bearing on either statement.
\item The survey's Theorem~5.12 shows that a weaker hypothesis gives infinite $B,C$ with
$\{(b,c):b\in B,\ c\in C,\ b<c\}\subset A$. This is consistent with
\cref{G:thm:q510}: our sets contain even the full products $P_0\times S_0$ and
$P'_0\times S'_0$. What fails is the diagonal coupling in~\eqref{G:eq:BBcart}: every
element of $B$, except the smallest, must serve both as a row index and as a column index.
\item The construction in \cref{G:lem:criterion} cannot work if every fibre is required
to be syndetic with a gap bound that does not depend on the fibre. Indeed, suppose that for each $c$ every
$n\in\N$ satisfies $n+s\in S_c$ for some $0\le s\le L$. Then
$P_c\subset\bigcup_{0\le s\le L}(S_c-s)$, and \eqref{G:eq:finite} fails for the class $c$
with $P_c$ infinite. In fact, under this stronger hypothesis the answer to \cref{G:q:510} is
positive, by Ramsey's theorem. Suppose that every $n,q\in\N$ admit $0\le r\le L$ with
$q+r\in A_n$. Colour each pair $x<y$ in $\N$ by the least such $r$ for $n=x+1$ and $q=y+1$. An
infinite set $B$ on which the colouring is constant, say equal to $r$, satisfies
$(b_1+1,b_2+1+r)\in A$ for all $b_1<b_2$ in $B$, that is, \eqref{G:eq:BBcart} with
$t=(1,1+r)\in\N^2$.
\end{enumerate}
\end{remark}


\section{Corners in \texorpdfstring{$G^2$}{G2} and \texorpdfstring{$G^d$}{Gd}}
\label{G:sec:q523}

In this section we answer Question~5.23 of~\cite{kra2023problems} negatively, and also its
$G^d$ version, Question~\S5.4(b), in the two natural formulations given below (the survey does
not write this version out). The negative answer to the question as stated is elementary.
The hypothesis controls only the fibres in the last coordinate, while the conclusion
requires infinitely many points of $A$ on a single line in the first coordinate
direction. So this part of the answer depends on the formulation. We therefore also
treat the natural repaired version in which these points are dropped. Using
\cref{G:sec:q510}, we show that it fails as well for $G=\Z$ (and for $G=\N$). The
repair that adds a positive-density hypothesis on the columns remains open.

\subsection{The question}
Let $(G,\cdot)$ be a group. Following~\cite[\S5.4]{kra2023problems}, a \emph{left \Fol{}
sequence} is a sequence $(\Phi_N)$ of finite nonempty subsets of $G$ with
$|\Phi_N\cap g\Phi_N|/|\Phi_N|\to1$ for every $g\in G$. Austin's theorem on corners in
amenable groups~\cite{austin2016nonconventional}, quoted
in~\cite[Theorem~5.22]{kra2023problems}, motivates the following question.

\begin{question}{5.23}\label{G:q:523}
Let $(G,\cdot)$ be an amenable group and let $N\mapsto\Phi_N$ be a left \Fol{} sequence in
$G$. If $A\subset G^2$ satisfies
\begin{equation}\label{G:eq:23}
\liminf_{g\to\infty}\ \liminf_{M\to\infty}\ \frac1{|\Phi_M|}\sum_{h\in\Phi_M}\one_A(g,h)>0,
\end{equation}
where $g\to\infty$ means leaving all finite sets, do there exist $(t,s)\in G^2$ and an
injective sequence $b\colon\N\to G$ such that
\begin{equation}\label{G:eq:corner}
\{(t,s),\ (b(i)t,s),\ (b(i)t,b(j)s):\ i<j\}\subset A\,?
\end{equation}
\end{question}

The survey adds that if $G=\N$, an affirmative answer to \cref{G:q:523} implies an
affirmative answer to \cref{G:q:510}. Two remarks on the setting are in order. First, a
group with a left \Fol{} sequence is countable. Indeed, let $H$ be the subgroup generated by
$\bigcup_N\Phi_N$, which is countable. If $g\notin H$, then $g\Phi_N\subset gH$ is disjoint
from $\Phi_N\subset H$, so the \Fol{} condition fails for $g$. Second, if $G$ is finite,
there is no injective $b\colon\N\to G$. So we assume, as the survey intends, that $G$ is
countably infinite.

For $A\subset G^2$ and $g\in G$ write $A_g=\{h:(g,h)\in A\}$ for the \emph{row fibre}
and $A^h=\{g:(g,h)\in A\}$ for the \emph{column}. Condition~\eqref{G:eq:23} concerns only
the row fibres. The points $(b(i)t,s)$, $i\in\N$, in \eqref{G:eq:corner} are pairwise
distinct (since $b$ is injective) and lie in the single column $A^s$.

\begin{lemma}\label{G:lem:folnergrow}
If $G$ is an infinite group and $(\Phi_M)$ is a left \Fol{} sequence, then
$|\Phi_M|\to\infty$.
\end{lemma}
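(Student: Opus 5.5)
The plan is to copy the argument of \cref{K:lem:folner-grow}, replacing sums by left translates and using the left \Fol{} condition $|\Phi_M\cap g\Phi_M|/|\Phi_M|\to1$ in place of the abelian one. No structure of $G$ beyond infiniteness is needed. The idea is that a fixed finite set $F\subset G$ of size $n$ can only be almost-invariant for $\Phi_M$ if $\Phi_M$ has at least about $n$ elements.

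First fix a finite set $F\subset G$ with $|F|=n$. Then count the pairs $(x,g)\in\Phi_M\times F$ with $gx\in\Phi_M$.
\begin{itemize}
\item \emph{Upper bound.} For each $x\in\Phi_M$, the map $g\mapsto gx$ is injective on $G$, so at most $|\Phi_M|$ elements $g\in F$ satisfy $gx\in\Phi_M$. Hence the number of pairs is at most $|\Phi_M|^2$.
\item \emph{Exact count.} For fixed $g$, the number of $x\in\Phi_M$ with $gx\in\Phi_M$ equals $|\Phi_M\cap g^{-1}\Phi_M|=|g\Phi_M\cap\Phi_M|$, since left multiplication by $g$ is a bijection. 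So the number of pairs is $\sum_{g\in F}|\Phi_M\cap g\Phi_M|$.
\end{itemize}
By the \Fol{} property this sum equals $n|\Phi_M|(1-o(1))$ as $M\to\infty$, because $F$ is finite.

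Comparing the two gives $|\Phi_M|\ge n(1-o(1))$, so $\liminf_M|\Phi_M|\ge n$. Since $G$ is infinite, $n$ is arbitrary, and therefore $|\Phi_M|\to\infty$.

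The only step needing care is the left/right bookkeeping. We must check that the counting set is $\Phi_M\cap g^{-1}\Phi_M$, and that its cardinality equals $|g\Phi_M\cap\Phi_M|$; the latter is exactly the quantity the left \Fol{} condition controls. Once that identity is confirmed, the remaining steps are immediate.
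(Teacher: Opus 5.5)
Your proof is correct. The double count of pairs $(x,g)\in\Phi_M\times F$ with $gx\in\Phi_M$ carries over to the nonabelian setting without change. The identity $|\Phi_M\cap g^{-1}\Phi_M|=|g\Phi_M\cap\Phi_M|$ holds because $y\mapsto gy$ is a bijection, and it is exactly what lets the left \Fol{} condition enter.

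It is not, however, the argument the paper gives for this lemma. The paper argues by contradiction using integrality:
\begin{itemize}
\item Suppose $|\Phi_M|\le K$ along a subsequence, and take a finite set $S$ with $|S|>K$.
\item For each $x\in S$, $|\Phi_M\setminus x\Phi_M|$ is a nonnegative integer and $|\Phi_M\setminus x\Phi_M|/|\Phi_M|\to0$. So along that subsequence it vanishes for all large $M$ and all $x\in S$.
\item Hence $x\Phi_M=\Phi_M$ for every $x\in S$. Picking $y\in\Phi_M$ gives $Sy\subset\Phi_M$, so $|\Phi_M|\ge|S|>K$, a contradiction.
\end{itemize}

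That route uses discreteness to upgrade approximate invariance to exact invariance on the bounded subsequence, and then needs only a single orbit, with no counting at all. Your route is a direct quantitative estimate, $\liminf_M|\Phi_M|\ge|F|$ for every finite $F\subset G$. Its advantage is that it is literally the proof of \cref{K:lem:folner-grow} with $x+g$ replaced by $gx$, so a single argument covers both the abelian lemma and this left-\Fol{} version.
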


\begin{proof}
Suppose that $|\Phi_M|\le K$ for infinitely many $M$. Choose a finite $S\subset G$ with
$|S|>K$. For each $x\in S$ the number $|\Phi_M\setminus x\Phi_M|$ is a nonnegative
integer, and $|\Phi_M\setminus x\Phi_M|/|\Phi_M|\to0$. Along the $M$ with $|\Phi_M|\le K$
we therefore have $|\Phi_M\setminus x\Phi_M|=0$ for all $x\in S$ once $M$ is large. Then
$\Phi_M\subset x\Phi_M$, and since $|x\Phi_M|=|\Phi_M|$, we get $x\Phi_M=\Phi_M$. Pick
such an $M$ and some $y\in\Phi_M$. Then $Sy\subset\Phi_M$, so $|\Phi_M|\ge|S|>K$, a
contradiction.
\end{proof}

\subsection{The question as stated, in \texorpdfstring{$G^2$}{G2} and
\texorpdfstring{$G^d$}{Gd}}

The survey states that \cref{G:q:523} ``can be generalized to $G^d$ for $d>2$ in a
straightforward manner'', but does not write out the generalisation. Modelled on
Austin's $d$-dimensional corners~\cite[Theorem~5.22]{kra2023problems}, the natural
conclusion asks for $t\in G^d$ and an injective $b\colon\N\to G$ such that, for all
$i_1<\dots<i_d$, the $d+1$ points
\begin{equation}\label{G:eq:cornerd}
(t_1,\dots,t_d),\ (b(i_1)t_1,t_2,\dots,t_d),\ (b(i_1)t_1,b(i_2)t_2,t_3,\dots,t_d),\ \dots,\
(b(i_1)t_1,\dots,b(i_d)t_d)
\end{equation}
lie in $A$. The natural analogues of~\eqref{G:eq:23} constrain the fibres in the last
coordinate (here $(x_1,\dots,x_{d-1})\to\infty$ means leaving every finite subset of $G^{d-1}$),
\begin{equation}\label{G:eq:I}
\liminf_{(x_1,\dots,x_{d-1})\to\infty}\ \liminf_{M\to\infty}\
\frac{|\{h\in\Phi_M:(x_1,\dots,x_{d-1},h)\in A\}|}{|\Phi_M|}>0,
\end{equation}
or the row fibres over the first coordinate,
\begin{equation}\label{G:eq:II}
\liminf_{x_1\to\infty}\ \liminf_{M\to\infty}\
\frac{|\{y\in\Phi_M^{d-1}:(x_1,y)\in A\}|}{|\Phi_M|^{d-1}}>0.
\end{equation}

\begin{question}{\S5.4(b)}\label{G:q:54b}
Let $d\ge3$, let $G$ be a countably infinite amenable group with a left \Fol{} sequence
$\Phi$, and suppose $A\subset G^d$ satisfies \eqref{G:eq:I} or \eqref{G:eq:II}. Do there
exist $t\in G^d$ and an injective $b\colon\N\to G$ such that the points
\eqref{G:eq:cornerd} lie in $A$ for all $i_1<\dots<i_d$?
\end{question}

For $d=2$ both \eqref{G:eq:I} and \eqref{G:eq:II} reduce to \eqref{G:eq:23}, and
\cref{G:q:54b} reduces to \cref{G:q:523}.

\begin{theorem}\label{G:thm:q523literal}
Let $G$ be a countably infinite group, enumerated as $G=\{g_1,g_2,\dots\}$ without
repetitions, and let $d\ge2$. Put
\[
E_d=\{x\in G^d:\ x_1=g_n,\ x_d=g_m\ \text{for some } n<m\}.
\]
Let $\Phi$ be any left \Fol{} sequence in $G$. Then:
\begin{enumerate}[label=\textup{(\roman*)}]
\item for every $(x_1,\dots,x_{d-1})\in G^{d-1}$, the last-coordinate fibre
$\{h:(x_1,\dots,x_{d-1},h)\in E_d\}$ is cofinite, and its density along $\Phi$ equals $1$;
\item for every $x_1\in G$, the set $\{y\in G^{d-1}:(x_1,y)\in E_d\}$ has density $1$
along $\Phi^{d-1}=(\Phi_M^{d-1})_M$;
\item $|E_d\cap\Phi_M^d|/|\Phi_M|^d\to1/2$;
\item for every $t\in G^d$, the set $\{g\in G:(gt_1,t_2,\dots,t_d)\in E_d\}$ is finite.
\end{enumerate}
In particular $E_d$ satisfies~\eqref{G:eq:I} and~\eqref{G:eq:II}, and
\eqref{G:eq:23} when $d=2$, with all the fibre-density limits occurring there equal to $1$.
But there is no $t\in G^d$
and no injective $b$ such that $(b(i)t_1,t_2,\dots,t_d)\in E_d$ for infinitely many $i$.
Hence the answer to \cref{G:q:523} is negative, and so is the answer to \cref{G:q:54b} in both
formulations, for every countably infinite amenable group $G$ and every left \Fol{} sequence.
\end{theorem}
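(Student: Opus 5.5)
The whole proof rests on one device: the enumeration index $\mathrm{ind}(g_n)=n$. Membership $x\in E_d$ depends only on the pair of indices of $x_1$ and $x_d$, namely it means $\mathrm{ind}(x_1)<\mathrm{ind}(x_d)$. I would prove (i)--(iv) in order and then read off the consequences.

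\emph{Part (i).} Fix $(x_1,\dots,x_{d-1})$ with $x_1=g_n$. The fibre consists of the $h=g_m$ with $m>n$, so its complement is $\{g_1,\dots,g_n\}$, which is finite. By \cref{G:lem:folnergrow}, $|\Phi_M|\to\infty$. Hence $|\Phi_M\cap\{g_1,\dots,g_n\}|/|\Phi_M|\le n/|\Phi_M|\to0$, and the density is $1$.

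\emph{Part (ii).} Fix $x_1=g_n$. The set in question is $G^{d-2}\times\{h:\mathrm{ind}(h)>n\}$. Its proportion in $\Phi_M^{d-1}$ therefore equals the proportion in $\Phi_M$ from part~(i), which tends to $1$.

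\emph{Part (iii).} Count pairs $(x_1,x_d)\in\Phi_M^2$ with $\mathrm{ind}(x_1)<\mathrm{ind}(x_d)$. Since the indices are distinct, the pairs with $x_1\ne x_d$ split evenly between the two orders. There are $|\Phi_M|(|\Phi_M|-1)/2$ such pairs. The middle coordinates contribute the factor $|\Phi_M|^{d-2}$. So the ratio is $(1-1/|\Phi_M|)/2\to1/2$, again by \cref{G:lem:folnergrow}.

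\emph{Part (iv).} Fix $t$ with $t_d=g_m$. The condition $(gt_1,t_2,\dots,t_d)\in E_d$ forces $\mathrm{ind}(gt_1)<m$, so $gt_1\in\{g_1,\dots,g_{m-1}\}$. Because $g\mapsto gt_1$ is injective, $g$ lies in a set of at most $m-1$ elements.

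\emph{Consequences.} Parts (i) and (ii) show that the inner $\liminf$ in \eqref{G:eq:I} and \eqref{G:eq:II} is an actual limit equal to $1$ for every point. Hence the outer $\liminf$ is $1$, and for $d=2$ this is \eqref{G:eq:23}. Part (iv) and injectivity of $b$ show that $(b(i)t_1,t_2,\dots,t_d)\in E_d$ for only finitely many $i$. In \eqref{G:eq:cornerd} the second point $(b(i_1)t_1,t_2,\dots,t_d)$ must lie in $A$ for every $i_1$, since each $i_1$ extends to some $i_1<\dots<i_d$. Likewise in \eqref{G:eq:corner} the points $(b(i)t,s)$ must lie in $A$ for all $i$. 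This gives the negative answers.

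No step is a real obstacle. The only place needing care is the appeal to \cref{G:lem:folnergrow}, which uses that $G$ is infinite, so that finite sets have $\Phi$-density zero.
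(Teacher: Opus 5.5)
Your proposal is correct and takes essentially the same route as the paper. Membership in $E_d$ is reduced to comparing the enumeration indices of $x_1$ and $x_d$; \cref{G:lem:folnergrow} makes finite sets $\Phi$-null in (i)--(ii); the symmetric count of ordered pairs of distinct elements gives $1/2$ in (iii); and injectivity of $g\mapsto gt_1$ gives (iv). Your remark that every $i_1$ extends to an increasing $d$-tuple, so that all the lower points $(b(i)t_1,t_2,\dots,t_d)$ must lie in $A$, is a step the paper leaves implicit.
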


\begin{proof}
If $x_1=g_n$, then the last-coordinate fibre is $G\setminus\{g_1,\dots,g_n\}$, whatever
$x_2,\dots,x_{d-1}$ are. Its density along $\Phi$ is at least $1-n/|\Phi_M|$, which tends
to $1$ by \cref{G:lem:folnergrow}. This proves (i). Part (ii) follows similarly: the set in
question is $G^{d-2}\times(G\setminus\{g_1,\dots,g_n\})$, and its relative density in
$\Phi_M^{d-1}$ is at least $1-n/|\Phi_M|$. For (iii), note that among the
$|\Phi_M|^2-|\Phi_M|$ pairs $(x,y)\in\Phi_M^2$ with $x\neq y$, exactly one of $(x,y)$ and
$(y,x)$ has the first entry earlier in the enumeration. Hence
$|E_d\cap\Phi_M^d|=\frac12(|\Phi_M|^2-|\Phi_M|)\,|\Phi_M|^{d-2}$, and (iii) follows from
\cref{G:lem:folnergrow}. For (iv), let $t_d=g_m$. If $(gt_1,t_2,\dots,t_d)\in E_d$, then
$gt_1\in\{g_1,\dots,g_{m-1}\}$, so $g$ lies in the finite set
$\{g_1,\dots,g_{m-1}\}t_1^{-1}$. Since $b$ is injective, $(b(i)t_1,t_2,\dots,t_d)\in E_d$
can hold for at most $m-1$ values of $i$.
\end{proof}

\formalization{\Cref{G:thm:q523literal} and the resulting negative answers to
\cref{G:q:523,G:q:54b} are formalized in \leanfile{Q523} (\leanname{q523literal},
\leanname{q523_negative}). \Cref{G:thm:repairA} below is not formalized.}

For $G=\Z$ with $\Phi_M=[1,M]$, the simpler set $\{(g,h)\in\Z^2:h>|g|\}$ also works. For
$G=\N$ one can take $\{(n,m)\in\N^2:n<m\}$. In the semigroup $\N$, where the survey's
implication to \cref{G:q:510} is stated, this means that the answer to the literal
question is negative for a trivial reason.

\begin{remark}[The answer depends on the formulation]\label{G:rem:formulation}
\Cref{G:thm:q523literal} has nothing to do with sumsets: the conclusion
\eqref{G:eq:corner} contains infinitely many distinct points of a single column, and
\eqref{G:eq:23} does not even force columns to be infinite. The survey's comment that, for
$G=\N$, \cref{G:q:523} implies \cref{G:q:510} shows that the authors are mainly
interested in the points $(b(i)t,b(j)s)$. The points $(t,s)$ and $(b(i)t,s)$ come from
the shape of Austin's corners. There are two natural repairs:
\begin{enumerate}[label=\textup{(\alph*)}]
\item drop the points $(t,s)$ and $(b(i)t,s)$, and ask only for
$\{(b(i)t,b(j)s):i<j\}\subset A$ (and, in $G^d$, only for the top points
$(b(i_1)t_1,\dots,b(i_d)t_d)$ with $i_1<\dots<i_d$);
\item keep \eqref{G:eq:corner} but add to \eqref{G:eq:23} the analogous condition on
the columns, that is,
\[
\liminf_{h\to\infty}\ \liminf_{M\to\infty}\ \frac{|A^h\cap\Phi_M|}{|\Phi_M|}>0 .
\]
\end{enumerate}
In \cref{G:thm:repairA} below we show, using \cref{G:sec:q510}, that repair~(a) still has
a negative answer for $G=\Z$. Repair~(b), with positive-density hypotheses on both the
row fibres and the columns, is to our knowledge open. Neither the
examples of \cref{G:thm:q523literal} nor those of \cref{G:thm:repairA} satisfy it: for
instance, the columns of $\tilde A$ below are empty for $|h|\notin S'_0\cup S'_1$.
\end{remark}

\subsection{The repaired question (a)}

For $G=\N$, repair~(a) of \cref{G:q:523} is answered by \cref{G:thm:q510} directly.
Suppose that $t,s\in\N$ and an injective $b\colon\N\to\N$ satisfy
$(b(i)+t,b(j)+s)\in A'$ for all $i<j$. Since $b$ is injective, it is unbounded, so it has
a strictly increasing subsequence. Its range $B$ then satisfies \eqref{G:eq:BBcart} with
the translate $(t,s)$. This is impossible. Also, $A'$ satisfies \eqref{G:eq:23} with
$\Phi_M=[1,M]$, because every fibre of $A'$ has natural density $1/4$. To treat the group
$\Z$ and the $d$-dimensional version, we transfer $A'$ to $\Z^2$. For $x\in\Z$ let
$|x|_+=\max\{|x|,1\}$, and let
\[
\tilde A=\{(g,h)\in\Z^2:\ h\neq0,\ (|g|_+,|h|)\in A'\},
\]
where $A'=(P'_0\times S'_0)\cup(P'_1\times S'_1)$ is the set of \cref{G:ex:blocks}.

\begin{lemma}\label{G:lem:Ztransfer}
For every $g\in\Z$, the row fibre $\{h:(g,h)\in \tilde A\}$ meets $[1,M]$ in
$|S'_c\cap[1,M]|$ points, where $c$ is the class with $|g|_+\in P'_c$. Hence it has
density $1/4$ along $\Phi_M=[1,M]$. There are no $(t,s)\in\Z^2$ and injective
$b\colon\N\to\Z$ with $(b(i)+t,b(j)+s)\in \tilde A$ for all $i<j$.
\end{lemma}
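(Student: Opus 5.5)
The plan is to treat the two assertions separately, reducing the first to \cref{G:prop:examples}(ii) and the second to \cref{G:lem:criterion}, applied to the partition $P'_0\sqcup P'_1$ and the sets $S'_0,S'_1$ of \cref{G:ex:blocks}.

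\emph{Row fibres.} Fix $g\in\Z$ and let $c$ be the class with $|g|_+\in P'_c$; such a class exists since $|g|_+\in\N=P'_0\sqcup P'_1$. By the identity $A'_n=S'_c$ for $n\in P'_c$ from \cref{G:lem:criterion}, the row fibre of $\tilde A$ over $g$ is $\{h\neq0:|h|\in S'_c\}$. For $h\in[1,M]$ we have $|h|=h$ and $h\ne0$, so the fibre meets $[1,M]$ exactly in $S'_c\cap[1,M]$. Since $\dens(S'_c)=1/4$ by \cref{G:prop:examples}(ii), the fibre has density $1/4$ along $\Phi_M=[1,M]$.

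\emph{No configuration.} Suppose $(t,s)\in\Z^2$ and an injective $b\colon\N\to\Z$ satisfy $(b(i)+t,b(j)+s)\in\tilde A$ for all $i<j$. Since $b$ is injective, it has a strictly monotone subsequence $b(i_1),b(i_2),\dots$ tending to $+\infty$ or to $-\infty$. Passing to this subsequence preserves the order $i<j$, so we may assume $b$ itself is strictly monotone. Discarding finitely many terms, we may also assume that all $b(i)+t$ and $b(i)+s$ are nonzero and have the same sign as the limit of $b$, so that $|\cdot|_+=|\cdot|$ on them.

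We then split into two cases according to the direction of $b$.
\begin{itemize}
\item If $b$ increases, then $|b(i)+t|=b(i)+t$ and $|b(j)+s|=b(j)+s$. Choose an integer $C$ with $b(i)+C\in\N$ for all $i$, and put $B=\{b(i)+C\}\subset\N$ and $(t_1,t_2)=(t-C,s-C)$. Since $b$ is increasing, $i<j$ corresponds exactly to $b_1<b_2$ in $B$. The hypothesis then gives $(b_1+t_1,b_2+t_2)\in A'$ for all $b_1<b_2$ in $B$.
\item If $b$ decreases, then $|b(i)+t|=-b(i)-t$, and the sequence $-b(i)$ is increasing. Apply the same construction to $B=\{-b(i)+C\}$ with translate $(-t-C,-s-C)$.
\end{itemize}
In either case we obtain an infinite $B\subset\N$ and a translate in $\Z^2$ of exactly the form excluded by \cref{G:lem:criterion}. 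The finiteness hypothesis \eqref{G:eq:finite} for $S'_c,P'_c$ holds by \cref{G:prop:examples}(ii), so this is a contradiction.

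The only delicate point is the sign reduction in the second part. One must check that the order $i<j$ is converted into $b_1<b_2$ in the decreasing case, which works because negation reverses the monotonicity. One must also check that $|\cdot|_+$ and the exclusion $h\ne0$ cause no trouble, which holds after discarding finitely many terms.
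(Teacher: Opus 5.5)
Your proof is correct and follows the paper's argument. The fibre computation is identical, and the second part reduces, after normalising signs, to the configuration excluded by \cref{G:lem:criterion} together with \cref{G:prop:examples}(ii); the paper cites \cref{G:thm:q510}, which is exactly that combination. The only difference is cosmetic: the paper merges your two monotone cases by using a sign $\epsilon\in\{1,-1\}$, and it chooses $\beta_1>|t|+|s|+1$ so that no auxiliary shift $C$ is needed.
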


\begin{proof}
The fibre over $g$ is $\{h\neq0:|h|\in S'_c\}$, and its intersection with $[1,M]$ is
$S'_c\cap[1,M]$. The density statement now follows from
\cref{G:prop:examples}(ii).

Suppose that $t,s,b$ exist. Since $b$ is injective, for every $L$ only finitely many $i$
satisfy $|b(i)|\le L$. Choose $\epsilon\in\{1,-1\}$ such that $\epsilon b(i)>0$ for
infinitely many $i$. Then choose $i_1<i_2<\cdots$ inductively so that the numbers
$\beta_k=\epsilon b(i_k)$ are positive and strictly increasing, with
$\beta_1>|t|+|s|+1$. For $k<l$ we have $(b(i_k)+t,b(i_l)+s)\in \tilde A$. Now
$|b(i_k)+t|_+=|\epsilon\beta_k+t|=\beta_k+\epsilon t$, because $\beta_k>|t|+1$, and similarly
$|b(i_l)+s|=\beta_l+\epsilon s$. So $(\beta_k+\epsilon t,\beta_l+\epsilon s)\in A'$ for all
$k<l$. Then the infinite set $B=\{\beta_k\}\subset\N$ and the translate
$(\epsilon t,\epsilon s)\in\Z^2$ contradict \cref{G:thm:q510}.
\end{proof}

\begin{theorem}\label{G:thm:repairA}
Let $d\ge2$ and $\tilde A^{(d)}=\{x\in\Z^d:(x_1,x_d)\in \tilde A\}$, and let $\Phi_M=[1,M]$.
\begin{enumerate}[label=\textup{(\roman*)}]
\item For every $(x_1,\dots,x_{d-1})\in\Z^{d-1}$, the last-coordinate fibre of $\tilde A^{(d)}$ has
density $1/4$ along $\Phi$. For every $x_1\in\Z$, the fibre
$\{y\in\Z^{d-1}:(x_1,y)\in \tilde A^{(d)}\}$ has density $1/4$ along $\Phi^{d-1}$. In particular
$\tilde A^{(d)}$ satisfies \eqref{G:eq:I} and \eqref{G:eq:II}, and $\tilde A^{(2)}=\tilde A$ satisfies
\eqref{G:eq:23}.
\item There are no $t\in\Z^d$ and injective $b\colon\N\to\Z$ such that
$(b(i_1)+t_1,\dots,b(i_d)+t_d)\in \tilde A^{(d)}$ for all $i_1<\dots<i_d$.
\end{enumerate}
Consequently repair~(a) of \cref{G:q:523} has a negative answer for $G=\Z$, and so does
its analogue for \cref{G:q:54b}, in which only the top points of \eqref{G:eq:cornerd}
are required.
\end{theorem}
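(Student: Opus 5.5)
The plan is to reduce both parts to \cref{G:lem:Ztransfer}, since $\tilde A^{(d)}$ depends only on the first and last coordinates.

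\emph{Part (i).} The last-coordinate fibre of $\tilde A^{(d)}$ over $(x_1,\dots,x_{d-1})$ is $\{h:(x_1,h)\in\tilde A\}$. This is the row fibre of $\tilde A$ over $x_1$, and by \cref{G:lem:Ztransfer} it meets $[1,M]$ in $|S'_c\cap[1,M]|$ points, where $|x_1|_+\in P'_c$. By \cref{G:prop:examples}(ii) its density along $\Phi$ is therefore $1/4$. For the second fibre condition, observe that
\[
\{y\in\Z^{d-1}:(x_1,y)\in\tilde A^{(d)}\}=\Z^{d-2}\times\{h:(x_1,h)\in\tilde A\}.
\]
Its intersection with $[1,M]^{d-1}$ has exactly $M^{d-2}\,|S'_c\cap[1,M]|$ elements. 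Dividing by $M^{d-1}$ gives the same limit $1/4$. Both limits are exactly $1/4$ for every fibre, so the liminf conditions \eqref{G:eq:I} and \eqref{G:eq:II} hold. For $d=2$ the two conditions coincide with \eqref{G:eq:23}.

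\emph{Part (ii).} Suppose, for contradiction, that $t\in\Z^d$ and an injective $b$ exist with $(b(i_1)+t_1,\dots,b(i_d)+t_d)\in\tilde A^{(d)}$ for all $i_1<\dots<i_d$. Membership in $\tilde A^{(d)}$ depends only on the first and last coordinates, so this says $(b(i_1)+t_1,\,b(i_d)+t_d)\in\tilde A$ for all $i_1<\dots<i_d$. Given any $i<j$, I only need to produce indices between them, so I first thin the sequence: set $b'(k)=b((d-1)k)$, which is still injective. For $k<l$ choose $i_1=(d-1)k$ and $i_d=(d-1)l$, and fill the $d-2$ intermediate slots with the indices $(d-1)k+1,\dots,(d-1)k+d-2$. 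These are strictly below $(d-1)l$ because $(d-1)l\ge(d-1)k+d-1$. This yields $(b'(k)+t_1,\,b'(l)+t_d)\in\tilde A$ for all $k<l$, which contradicts \cref{G:lem:Ztransfer} with $(t,s)=(t_1,t_d)$.

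The consequences for repair~(a) then follow directly: its hypotheses are exactly those verified in (i), and its conclusion is exactly what (ii) excludes. The only delicate point is the reindexing that supplies the intermediate indices; everything else is immediate from \cref{G:lem:Ztransfer} and \cref{G:prop:examples}.
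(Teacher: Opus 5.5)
Your proposal is correct and follows essentially the same route as the paper. For (i) you reduce both fibre conditions to the row fibres of $\tilde A$ via \cref{G:lem:Ztransfer} and the product structure $\Z^{d-2}\times\{h:(x_1,h)\in\tilde A\}$. For (ii) you thin the sequence to $b'(k)=b((d-1)k)$, so that every pair $k<l$ has room for the $d-2$ intermediate indices, and then contradict \cref{G:lem:Ztransfer} with $(t,s)=(t_1,t_d)$.
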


\begin{proof}
(i) The last-coordinate fibre over $(x_1,\dots,x_{d-1})$ is the row fibre of $\tilde A$ over
$x_1$. The fibre over $x_1$ is $\Z^{d-2}\times\{h:(x_1,h)\in \tilde A\}$, and its relative
density in $[1,M]^{d-1}$ equals that of the row fibre in $[1,M]$. Now apply
\cref{G:lem:Ztransfer}.

(ii) Suppose that $t$ and $b$ exist. If $i<j$ and $j-i\ge d-1$, then (for $d\ge3$) there are
indices $i<i_2<\dots<i_{d-1}<j$, and so $(b(i)+t_1,b(j)+t_d)\in \tilde A$; for $d=2$ this is
the hypothesis itself. Let
$b'(k)=b((d-1)k)$. Then $b'$ is injective, and for $k<l$ we have
$(d-1)l-(d-1)k\ge d-1$. Hence $(b'(k)+t_1,b'(l)+t_d)\in \tilde A$ for all $k<l$, which
contradicts \cref{G:lem:Ztransfer}.
\end{proof}

To summarise: the configurations~\eqref{G:eq:corner} and~\eqref{G:eq:cornerd} cannot be
forced by the fibre-density hypotheses \eqref{G:eq:23}, \eqref{G:eq:I} or \eqref{G:eq:II}.
For $G=\Z$ (and for $G=\N$) this remains true after the lower corner points are dropped.
A uniform hypothesis on the row fibres does help: for $G=\N$, if the row fibres are syndetic with
a bound on the gaps that does not depend on the fibre, the Ramsey argument of
\cref{G:rem:q510}(4) gives a positive answer to repair~(a). Repair~(b) of
\cref{G:rem:formulation}, which also requires the columns to have positive density, is to our
knowledge open.

\part{Sumsets in the integers}
%

\section{Unrestricted sums \texorpdfstring{$B+B$}{B+B} at the lower density threshold}\label{IA:sec:u32a}

In this section we answer Question~\S3.2(a) of~\cite{kra2023problems} negatively, both as it is
stated and in a natural stronger variant. Throughout this part of the paper $B+B=\{b+b':b,b'\in B\}$ denotes the
\emph{unrestricted} sumset, which contains the diagonal elements $2b$, and
$\N_0=\N\cup\{0\}$.

Kousek and Radi\'c~\cite[Theorem~1.2]{kousek2024infinite} proved that a set $A\subset\N$ contains
$B+B$ for some infinite $B\subset\N$ as soon as $\udens(A)>5/6$ or $\ldens(A)>3/4$, and showed
that both thresholds are sharp~\cite[Propositions~4.1 and~4.2]{kousek2024infinite}. Right after
reporting this, the survey asks the following.

\begin{question}{\S3.2(a)}\label{IA:q:u32a}
What occurs if we only assume, for example, that $|A\cap[N]|>\tfrac34N+\log N$ for infinitely many
$N$? That is: must such a set $A\subset\N$ contain $B+B$ for some infinite set $B\subset\N$?
\end{question}

Read literally (``for infinitely many $N$''), the hypothesis is implied by $\udens(A)>3/4$, so it
lies far below the upper density threshold $5/6$. Motivated by the lower density threshold, we also
answer the stronger variant in which ``for infinitely many $N$'' is replaced by ``for all
sufficiently large $N$''; this is presumably the boundary case the survey has in mind.

\begin{theorem}\label{IA:thm:u32a}
\begin{enumerate}[label=\textup{(\alph*)}]
\item\label{IA:it:u32a-io} There is a set $A\subset\N$ such that for every $C>0$ we have
$|A\cap[N]|>\tfrac34N+C\log N$ for infinitely many $N$, and such that $B+B\not\subset A$ for every
infinite $B\subset\N$.
\item\label{IA:it:u32a-all} There are a set $A\subset\N$ and a non-decreasing unbounded function
$\psi\colon\N\to[0,\infty)$ such that
\[
|A\cap[N]|\ \ge\ \tfrac34N+\psi(N)\log N\qquad\text{for all sufficiently large }N,
\]
and $B+B\not\subset A$ for every infinite $B\subset\N$.
\end{enumerate}
\end{theorem}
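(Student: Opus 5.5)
For part~\ref{IA:it:u32a-io} I would not build anything new. Kousek and Radi\'c~\cite[Proposition~4.1]{kousek2024infinite} give a set $A\subset\N$ with $\udens(A)=5/6$ such that $B+B\not\subset A$ for every infinite $B$. Since $5/6>3/4$, along a sequence $N_j\to\infty$ with $|A\cap[N_j]|\ge(5/6-1/24)N_j$ we have $|A\cap[N_j]|-\tfrac34N_j\ge N_j/24$. This exceeds $C\log N_j$ for every fixed $C$ once $j$ is large. So \ref{IA:it:u32a-io} follows at once, and I would record that this reading of the question lies below the upper density threshold $5/6$.

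For part~\ref{IA:it:u32a-all} I would start from the sharpness example for lower density, $A_0$ with $\ldens(A_0)=3/4$ and no infinite $B$ with $B+B\subset A_0$~\cite[Proposition~4.2]{kousek2024infinite}. I would write $D_0=\N\setminus A_0$, and then modify $A_0$ in two ways.
\begin{enumerate}[label=(\arabic*)]
\item I would make the count explicit: I expect $D_0$ to be a union of long structured blocks, for example residue classes mod a power of $2$ on dyadic-type ranges, arranged so that $|D_0\cap[N]|\le N/4+O(1)$ up to a controlled error. I would verify this by a direct block count.
\item I would add to $A_0$ a sparse set $S\subset D_0$ with $|S\cap[N]|\ge\psi(N)\log N+O(1)$ for an unbounded non-decreasing $\psi$ that grows slowly, say $\psi(N)\approx\log\log N$. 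I would then set $A=A_0\cup S$, so that the lower bound for $|A\cap[N]|$ follows from (1) and the size of $S$.
\end{enumerate}

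The heart of the argument is a robustness version of the Kousek--Radi\'c obstruction: for every infinite $B$, the sumset $B+B$ meets $D_0$ not just once but in a rich set. Concretely, I would aim to show that there is $b_0\in B$ such that $b_0+b\in D_0$ for infinitely many $b\in B$, or that $2b\in D_0$ for infinitely many $b\in B$. Either statement should come from a pigeonhole on residues and on the block in which each $b$ lies, which is how the original proof should go.

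Given robustness, I would choose $S$ to be a Sidon-like, very sparse subset of $D_0$: at most boundedly many points per dyadic block, with gaps tending to infinity in every block. For a fixed $b_0$, the translate $S-b_0$ then meets any set whose consecutive-difference structure matches $B$ only finitely often. Hence infinitely many of the forced elements $b_0+b$ (or $2b$) still lie in $D_0\setminus S=\N\setminus A$, so $B+B\not\subset A$. The density of $S$ needed is only logarithmic, so this sparsity is compatible with $|S\cap[N]|\approx\psi(N)\log N$.

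The main obstacle is exactly this robustness step. I must make sure that adding even a $\log N$-sized set cannot repair the obstruction, since $S$ could a priori supply exactly the missing sums for a cleverly chosen lacunary $B$. The first thing I would try is to exploit the freedom in choosing $B$'s elements inside one block: the $2b$ for $b$ in a fixed residue class form an infinite subset of $D_0$ with growing gaps. I would place $S$ only in residue classes mod a large power of $2$ that are disjoint from the classes used by the doubled elements $2b$, so that $S$ cannot hit them at all. If this disjointness cannot be arranged while keeping $S\subset D_0$, I would fall back on building $A$ from scratch with blocks $[2^k,2^{k+1})$ and a per-block surplus of about $\psi(2^k)\cdot k$ elements, redoing the Kousek--Radi\'c pigeonhole argument directly for the enlarged set.
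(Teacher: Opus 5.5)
Part~(a) is fine. It is exactly the observation the paper makes in \cref{IA:rem:u32a}(i): the Kousek--Radi\'c set of upper density $5/6$ already satisfies the literal hypothesis. The paper only adds an explicit variant with an exact count.

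Part~(b) has a genuine gap, and it sits exactly where the difficulty of the problem lies.

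\emph{Step~(1) is not a routine block count, and it is false for your starting set.} Lower density $3/4$ only gives $|A_0\cap[N]|\ge\frac34N-o(N)$. The Kousek--Radi\'c lower-density examples contain arbitrarily long gaps (compare the windows $[(2-\frac1n)4^n,(2+\frac1n)4^n]$ in \cref{IA:prop:KR}), so their counts drop below $\frac34N$ by amounts of the order of those gap lengths.

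More fundamentally, a deficit is built into this kind of obstruction.
\begin{itemize}
\item The contradiction comes from comparing $2b+t$ with $b_0+b+t$, for fixed $b_0\in B$ and large $b$.
\item It breaks down when $b_0+b+t$ crosses the power of $2$ just above $b$.
\item Since $b_0=\min B$ can be as large as the adversary likes, you must delete windows $[2^i-w_i,2^i+w_i)$ with $w_i\to\infty$ (steps~(v)--(vi) of the proof of \cref{IA:prop:rot}).
\item A block set whose admitted parity alternates between consecutive dyadic blocks has count $\frac N2+O(1)$ at the level of $E$ in \cref{IA:lem:parity}. After the deletions it sits at $\frac N2-\sum_{2^i<N}w_i$, and this sum eventually exceeds $C\log N$ for every constant $C$.
\end{itemize}
So adding $\psi(N)\log N$ points to a set at $\frac34N-O(1)$ is not the task. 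You need a surplus that beats $\sum_{2^i<N}w_i+\psi(N)\log N$ at \emph{every} $N$, including in the middle of blocks, and that is compatible with the obstruction.

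\emph{Step~(2) does not supply that surplus, and your proposed protection fails.} The adversary chooses $B$, and can make the doubled elements be your added points. No residue class for $S$ is disjoint from the classes used by $2b$, because as $B$ varies the numbers $2b$ run through all even residues.

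Here is a concrete failure at the level of $E$ in the model of \cref{IA:lem:parity}.
\begin{itemize}
\item Suppose the added points have the non-admitted parity of their dyadic block, lie in one class modulo~$4$, and occur well inside infinitely many blocks.
\item Take a very sparse selection $s_1\ll s_2\ll\cdots$, one per block, and put $D=\{(s_j-t)/2\}$ with $t\in\{0,1\}$, $t\equiv s_j\pmod 2$.
\item The diagonal sums are $2d+t=s_j$, which are added points.
\item For $j<k$, the sum $d_j+d_k+t=\tfrac12(s_j+s_k)$ has the parity of $s_k$. It lies inside the block preceding that of $s_k$, where that parity is the admitted one, since admitted parities alternate.
\item Hence $D+D+t\subset E\cup S$, and $B=2D+t$ puts $B+B$ inside the enlarged set.
\end{itemize}
Your fallback, a per-block surplus of about $\psi(2^k)k$ elements, states the goal but gives no mechanism.

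The paper produces the surplus from inside the obstruction.
\begin{itemize}
\item It replaces the parity coupling by a rotation, $R=\{n:\ n\alpha\bmod1\in J_{\lambda(n)\bmod2}\}$ with complementary half-circles $J_0,J_1$ (\cref{IA:def:rot}).
\item It shows the obstruction survives for every $\alpha$ once windows with $w_i\to\infty$ are removed (\cref{IA:prop:rot}).
\item It writes the count as $\frac{M-1}2$ plus the twisted discrepancy $S_{\alpha,\beta}(M)$, minus the window cost (\cref{IA:lem:excess}).
\item It constructs an irrational $\alpha$ with $S_{\alpha,1/6}(M)\ge i\log M$ for all $M\ge N_i$, as a limit of rationals $a/(3\cdot2^p)$ whose gain per dyadic block tends to infinity (\cref{IA:prop:TD}).
\end{itemize}
Nothing in your proposal plays the role of this discrepancy estimate.
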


Since $\psi\to\infty$, part~\ref{IA:it:u32a-all} gives $|A\cap[N]|>\frac34N+C\log N$ for all large
$N$, for every constant $C$ and every base of the logarithm.

\begin{remark}\label{IA:rem:u32a}
\begin{enumerate}[label=(\roman*)]
\item Part~\ref{IA:it:u32a-io} is essentially contained in~\cite{kousek2024infinite}: the set $A'$
of~\cite[Proposition~4.1]{kousek2024infinite} has upper density $5/6>3/4$ and contains no $B+B$,
so it satisfies the literal hypothesis of \cref{IA:q:u32a}. We give an explicit variant with an exact
count, to make the size of the excess visible. The literal reading is therefore answered by the
known sharpness examples, and the natural question is the one in part~\ref{IA:it:u32a-all}.
\item The set in part~\ref{IA:it:u32a-all} has lower density exactly $3/4$: at least $3/4$ by
construction, and at most $3/4$ by~\cite[Theorem~1.2]{kousek2024infinite}, since it contains no
$B+B$. The proof uses the diagonal elements $2b\in B+B$ in an
essential way (see Step~(iv) in the proof of \cref{IA:prop:rot}).
\item We do not know which functions $g(N)=o(N)$ have the property that $|A\cap[N]|\ge \frac34N+g(N)$
for all large $N$ forces $B+B\subset A$ for some infinite $B$; part~\ref{IA:it:u32a-all} shows that
$g(N)=\psi(N)\log N$ does not, for a suitable $\psi\to\infty$. (Replacing $\psi$ by
$\min(\psi(N),\log N)$, which is still non-decreasing and unbounded, we may assume that this $g$ is
$o(N)$.)
\end{enumerate}
\end{remark}

\subsection{The literal reading}

\begin{proof}[Proof of \cref{IA:thm:u32a}\ref{IA:it:u32a-io}]
Let $S=\bigcup_{n\ge1}\big[4^n,\,2\cdot4^n-2n\big)\cap\Z$ and $A=(2\N-1)\cup(S\cap2\N)$.

\emph{$S$ contains no $D+D$ with $D$ infinite.} Suppose $D+D\subset S$, let $d'=\min D$ and let
$d\in D$ be large. Since $2d\in S$, there is $m$ with $4^m\le 2d<2\cdot4^m-2m$, and $m\to\infty$ as
$d\to\infty$; take $d$ so large that $m\ge d'$. Then $2\cdot 4^{m-1}\le d\le d+d'<4^m-m+d'\le 4^m$.
But $S\cap[2\cdot4^{m-1},4^m)=\varnothing$, because the block of index $m-1$ ends below
$2\cdot4^{m-1}$ and the block of index $m$ starts at $4^m$. This contradicts $d+d'\in S$.

\emph{$A$ contains no $B+B$ with $B$ infinite.} If $B+B\subset A$, choose an infinite $B'\subset B$
whose elements all have the same parity. Then $B'+B'$ consists of even numbers, so
$B'+B'\subset A\cap2\N\subset S$, which is impossible.

\emph{Counting.} Let $N_n=2\cdot4^n-2n-1$. The odd numbers in $[N_n]$ number $(N_n+1)/2=4^n-n$. The
block $[4^m,2\cdot4^m-2m)$ has even length and starts at an even number, so it contains
$4^m/2-m$ even numbers; the blocks with $m\le n$ lie in $[N_n]$ and the block with $m=n+1$ does not
meet it. Hence
\[
|A\cap[N_n]|=4^n-n+\sum_{m=1}^n\Big(\frac{4^m}2-m\Big)=4^n-n+\frac{4^{n+1}-4}{6}-\frac{n(n+1)}2,
\]
and subtracting $\frac34N_n=\frac32 4^n-\frac32n-\frac34$ gives
\[
|A\cap[N_n]|-\tfrac34N_n=\frac{4^n}{6}-\frac{n^2}{2}+\frac1{12}.
\]
This exceeds $C\log N_n=O(n)$ for every fixed $C$ once $n$ is large (for $C=1$ and the natural
logarithm already for $n\ge3$).
\end{proof}

\subsection{Rotation blocks}
For the second reading we first reduce to a statement about a set $E$ of density $1/2$. For
$E\subset\N$ write $2E=\{2e:e\in E\}$.

\begin{lemma}\label{IA:lem:parity}
Let $E\subset\N$ and $A=(2\N-1)\cup2E$. If $B+B\subset A$ for some infinite $B\subset\N$, then there
are an infinite $D\subset\N_0$ and $t\in\{0,1\}$ with $D+D+t\subset E$. Moreover
$|A\cap[N]|=\lceil N/2\rceil+|E\cap[1,\lfloor N/2\rfloor]|$ for every $N\in\N$.
\end{lemma}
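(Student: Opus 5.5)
The plan is to mimic the parity argument from the proof of part~\ref{IA:it:u32a-io}, keeping track of the halving, and then to count directly.

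\emph{Sumset statement.} Let $B\subset\N$ be infinite with $B+B\subset A$. By pigeonhole choose an infinite $B'\subset B$ whose elements all have the same parity $p\in\{0,1\}$. Every element of $B'+B'$ is even, hence lies in $A\cap2\N=2E$. Write each $b\in B'$ as $b=2d+p$ with $d\in\N_0$; we must allow $d=0$ because $b=1$ is possible when $p=1$, and this is why $D\subset\N_0$. Put $D=\{(b-p)/2:b\in B'\}$, which is infinite. For $b=2d+p$ and $b'=2d'+p$ we get $b+b'=2(d+d'+p)\in2E$, so $d+d'+p\in E$. Thus $D+D+t\subset E$ with $t=p$. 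Since $B+B$ is the unrestricted sumset, the diagonal terms $d=d'$ are included, which matches the unrestricted $D+D$ in the conclusion.

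\emph{Counting.} The sets $2\N-1$ and $2E$ are disjoint, so
\[
|A\cap[N]|=|(2\N-1)\cap[N]|+|2E\cap[N]|.
\]
There are $\lceil N/2\rceil$ odd numbers in $[N]$. Moreover $2e\in[N]$ exactly when $1\le e\le\lfloor N/2\rfloor$, because $E\subset\N$. This gives the formula.

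Both parts are routine. The only point that needs care is the bookkeeping with $\N_0$, namely the case $p=1$ with $1\in B'$.
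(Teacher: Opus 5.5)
Your proposal is correct and follows the paper's proof essentially verbatim: pass to an infinite constant-parity $B'\subset B$, note $B'+B'\subset A\cap2\N=2E$, halve (after subtracting the parity $p$) to get $D+D+p\subset E$ with $D\subset\N_0$, and count odd numbers and elements $2e\le N$ separately. The paper treats the two parities as separate cases, whereas you handle them uniformly with $t=p$.
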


\begin{proof}
Choose an infinite $B'\subset B$ of constant parity; then $B'+B'\subset A\cap2\N=2E$. If $B'\subset
2\N$, put $D=\{b/2:b\in B'\}$, so that $2(D+D)=B'+B'$ and $D+D\subset E$. If $B'$ consists of odd
numbers, put $D=\{(b-1)/2:b\in B'\}\subset\N_0$, so that $B'+B'=2(D+D+1)$ and $D+D+1\subset E$. The
counting formula is clear.
\end{proof}

Let $\rho(x)=\one_{[0,1/2)}(\{x\})-\frac12$ for $x\in\R$, so that $\rho$ is $1$-periodic,
right-continuous, has jumps exactly at $\frac12\Z$, and satisfies $\rho(x+\frac12)=-\rho(x)$. For
$n\in\N$ let $\lambda(n)=\lfloor\log_2n\rfloor$ and $\varepsilon(n)=(-1)^{\lambda(n)}$. Note that
\begin{equation}\label{IA:eq:lambda}
\lambda(2n+t)=\lambda(n)+1\qquad(n\in\N,\ t\in\{0,1\}).
\end{equation}

\begin{definition}\label{IA:def:rot}
Let $\alpha,\beta\in\R$ and let $(w_i)_{i\ge1}$ be positive integers. Let
$J_0=\{x\in\T:\{x-\beta\}\in[0,\tfrac12)\}$ and $J_1=\T\setminus J_0$, and put
\[
R=\{n\in\N:\ n\alpha\bmod 1\in J_{\lambda(n)\bmod 2}\},\qquad
G=\bigcup_{i\ge1}\,[2^i-w_i,\,2^i+w_i),\qquad E=R\setminus G .
\]
\end{definition}

So $R$ is a Bohr set on each dyadic block, using the half-circle $J_0$ on blocks
$[2^j,2^{j+1})$ with $j$ even and the complementary half-circle on blocks with $j$ odd; $E$ is
obtained by removing slowly growing neighbourhoods of the powers of~$2$. For $\alpha\in\{0,\frac12\}$
these are block sets of the kind used in~\cite[Section~4]{kousek2024infinite}.

\begin{proposition}\label{IA:prop:rot}
In \cref{IA:def:rot}, assume $w_i\to\infty$. Then there are no infinite $D\subset\N_0$ and
$t\in\{0,1\}$ with $D+D+t\subset E$.
\end{proposition}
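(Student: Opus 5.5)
The plan is to turn membership in $R$ into a statement about a single phase and then find a combination of elements of $D+D+t$ in which the $\alpha$-terms and $\beta$-terms cancel but the dyadic parity does not. For $n\in\N$ put $\phi(n)=n\alpha-\beta+\lambda(n)/2$. Since $J_1$ is the complementary half-circle, $n\in R$ if and only if $\{\phi(n)\}\in[0,\tfrac12)$. Write $u(n)=\{\phi(n)\}$.

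Suppose $D\subset\N_0$ is infinite and $D+D+t\subset E$. First I will control $\lambda$ on the sums. For fixed $d\in D$ and large $d'\in D$, both $d+d'+t$ and $2d'+t$ lie in $E$, hence outside $G$. By \eqref{IA:eq:lambda}, $\lambda(2d'+t)=\lambda(d')+1$. Since $2d'+t\notin[2^i-w_i,2^i+w_i)$ and $w_i\to\infty$, the number $d'$ is at distance at least $(w_i-1)/2\to\infty$ from the relevant power of $2$. Hence $\lambda(d+d'+t)=\lambda(d')$ once $d'$ is large compared with $d$. This is the only place where $G$ is used; it makes $\lambda$ behave additively along $D$. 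Then, by pigeonhole, I pass to an infinite subset of $D$ on which $\lambda(d)\bmod 2$ is constant.

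The key identity uses coefficients $\pm1$. Doubling must be avoided, since $2\phi$ loses exactly the $\lambda/2$ information. For $d<d'<d''$ in $D$, chosen sparse enough that the previous paragraph applies,
\[
(d+d'+t)-(d+d''+t)+(2d''+t)-(d'+d''+t)=0 .
\]
In the corresponding signed sum of $\phi$-values the $\alpha$-terms cancel exactly, and so do the four $\beta$-terms. The $\lambda$-contributions are $\lambda(d')-\lambda(d'')+(\lambda(d'')+1)-\lambda(d'')$, halved, which equals $(\lambda(d')-\lambda(d'')+1)/2\equiv\tfrac12\pmod 1$ because $\lambda(d')\equiv\lambda(d'')\pmod 2$. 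Therefore
\[
u(d+d'+t)-u(d+d''+t)+u(2d''+t)-u(d'+d''+t)\equiv\tfrac12\pmod1 .
\]
As a sanity check, in the case $\alpha=0$ this is exactly the block obstruction: $2d''+t$ and $d'+d''+t$ lie in dyadic blocks of opposite parity.

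To conclude I need a contradiction, and this is the step I expect to be the main obstacle. Each $u$ lies in $[0,\tfrac12)$, so the left-hand side lies in $(-1,1)$ and must equal $\pm\tfrac12$. This alone is not absurd, so the plan is to exploit the freedom in $d,d',d''$ by a Ramsey and compactness argument. Colour triples by the approximate values of the four $u$'s, pass to an infinite subset on which the $u$-values of each pattern converge, and compare the identity for $(d,d',d'')$ with the identities for $(d,d',d''')$ and $(d',d'',d''')$. The differences telescope: $u(d+d''+t)-u(d'+d''+t)\equiv(d-d')\alpha$ is independent of $\lambda$. The aim is to force the limit phases to satisfy incompatible inequalities, namely one sum of two of them at least $\tfrac12$ and another strictly less than $\tfrac12$. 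Right-continuity of $\rho$, that is, the half-open interval $[0,\tfrac12)$, should break the tie at the boundary. If this bookkeeping fails, my fallback is to add the diagonal relation $2\cdot(2d''+t)$ against $(d+d''+t)+(d''+d'''+t)$ for suitably nested indices, again in $\pm1$ form, to pin down one limit phase exactly.
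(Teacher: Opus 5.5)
Your first two steps are sound. The equivalence $n\in R\iff\{\phi(n)\}\in[0,\tfrac12)$ is correct. Your use of the gap is also correct: $2d'+t\notin G$ gives $2d'+t<2^{j+2}-w_{j+2}$ with $j=\lambda(d')$, hence $\lambda(d+d'+t)=\lambda(d')$ once $w_{j+2}>2d+t$.

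The proof, however, stops exactly where the contradiction has to be produced, and the four-term identity cannot supply it. It is a formal consequence of the definitions, and the four memberships for a single triple are jointly consistent. To see this, let $\sigma$ be the common parity of $\lambda$ on your subset and put $y_d=d\alpha+\tfrac{t\alpha}{2}-\tfrac{\beta}{2}+\tfrac{\sigma}{4}\bmod 1$. Then $u(2d+t)=\{2y_d+\tfrac12\}$, and for sparse $d<d'$ also $u(d+d'+t)=\{y_d+y_{d'}\}$. The phases $y_d=y_{d'}=0.1$, $y_{d''}=0.3$ give the four values $0.2,\,0.4,\,0.1,\,0.4$, all in $[0,\tfrac12)$, with alternating sum $-\tfrac12$. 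What rules this out is the diagonal constraint at the smaller indices $d,d'$; your identity never uses it, since its only diagonal term sits at the largest index.

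The Ramsey and compactness plan does not repair this by itself. Passing to limits only forces $2\lim y_d\in\{0,\tfrac12\}$ modulo $1$, that is, a boundary point. At that point everything depends on comparing the signs of the deviations of $y_d$ and $y_{d'}$ from the limit, and that comparison is the real content. It is absent from the proposal.

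The missing idea is to pigeonhole the rotation phase into quarter circles. The condition $u(2d+t)<\tfrac12$ says $\{2y_d\}\in[\tfrac12,1)$, that is, $\{y_d\}\in[\tfrac14,\tfrac12)\cup[\tfrac34,1)$. Keep infinitely many $d$ with $\{y_d\}$ in the same one of these two arcs. For $d<d'$ among them with $d'$ large, $\{y_d+y_{d'}\}\in[\tfrac12,1)$. Hence $d+d'+t\notin R$, which is a contradiction.

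This is the paper's argument: its arcs $Q_k$, taken relative to $\beta/2$, are chosen so that $z_d+z_{d'}$ and $2z_d$ lie in the same half-circle. The only difference is the order of the steps. The paper first uses the phases to force $\lambda(d_0+d'+t)\not\equiv\lambda(d')\pmod 2$ and then invokes $G$; you invoke $G$ first.

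This also shows that your remark that doubling must be avoided is a misdiagnosis. Since $\lambda(2d+t)=\lambda(d)+1$ is known exactly, the doubled rotation phase at the smaller indices is precisely the information the argument needs.
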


\begin{proof}
Suppose $D$ is infinite, $t\in\{0,1\}$ and $D+D+t\subset E$; removing $0$ from $D$ we may assume
$D\subset\N$.
\begin{enumerate}[label=(\roman*),leftmargin=*]
\item For $d\in D$ put $z_d=d\alpha+t\alpha/2\bmod1$. Then $(d+d'+t)\alpha\equiv z_d+z_{d'}$ and
$(2d+t)\alpha\equiv2z_d\pmod 1$.
\item For $k=0,1,2,3$ let $Q_k=\{z\in\T:\{z-\beta/2\}\in[\frac k4,\frac{k+1}4)\}$. If $z,z'\in
Q_k$, write $z\equiv\beta/2+x$, $z'\equiv\beta/2+x'$ with $x,x'\in[\frac k4,\frac{k+1}4)$; then
$z+z'\equiv\beta+(x+x')$ with $x+x'\in[\frac k2,\frac{k+1}2)$, so $z+z'\in J_{k\bmod2}$.
In particular $2z\in J_{k\bmod 2}$ as well.
\item By the pigeonhole principle there are $\sigma\in\{0,1\}$, $k\in\{0,1,2,3\}$ and an infinite
$D'\subset D$ with $\lambda(d)\equiv\sigma\pmod 2$ and $z_d\in Q_k$ for all $d\in D'$.
\item Let $d\in D'$. Since $2d+t\in E\subset R$ and $\lambda(2d+t)=\lambda(d)+1$
by~\eqref{IA:eq:lambda}, we have $(2d+t)\alpha\in J_{1-\sigma}$. By (i) and (ii),
$(2d+t)\alpha\equiv2z_d\in J_{k\bmod 2}$. As $J_0\cap J_1=\varnothing$, this forces $k\equiv1-\sigma
\pmod 2$. (This is the only place where a diagonal element is used.)
\item Let $d_0=\min D'$ and let $d'\in D'$, $d'>d_0$. Then $n:=d_0+d'+t\in E\subset R$ and, by (i)
and (ii), $n\alpha\equiv z_{d_0}+z_{d'}\in J_{k\bmod2}=J_{1-\sigma}$. Since $n\in R$, this means
$\lambda(n)\equiv1-\sigma\not\equiv\lambda(d')\pmod 2$. As $n>d'$, some power $2^i$ lies in
$(d',n]$. Since $n\notin G$ and $n\ge 2^i$, we get $n\ge2^i+w_i$, so
$w_i\le n-2^i<n-d'=d_0+t$.
\item Letting $d'\to\infty$ through $D'$, the index $i$ in (v) tends to infinity (because $2^i>d'$),
while $w_i<d_0+t$ stays bounded. This contradicts $w_i\to\infty$.\qedhere
\end{enumerate}
\end{proof}

\begin{lemma}\label{IA:lem:excess}
In \cref{IA:def:rot}, for every $M\in\N$,
\[
|E\cap[1,M)|\ \ge\ \frac{M-1}{2}+S_{\alpha,\beta}(M)-\sum_{i\ge1:\ 2^i-w_i<M}2w_i,
\]
where $S_{\alpha,\beta}(M)=\sum_{1\le n<M}\varepsilon(n)\,\rho(n\alpha-\beta)$.
\end{lemma}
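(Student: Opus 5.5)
Write $E=R\setminus G$, so that $|E\cap[1,M)|\ge|R\cap[1,M)|-|G\cap[1,M)|$. The two resulting terms are treated separately.

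\emph{The set $R$.} Fix $n\in\N$ and put $j=\lambda(n)\bmod 2$. By definition, $n\in R$ exactly when $n\alpha\bmod1\in J_j$.

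If $j=0$, this says that $\{n\alpha-\beta\}\in[0,\tfrac12)$. Its indicator is $\rho(n\alpha-\beta)+\tfrac12$.

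If $j=1$, the condition is the complementary one, with indicator $\tfrac12-\rho(n\alpha-\beta)$.

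In both cases
\[
\one_R(n)=\tfrac12+\varepsilon(n)\,\rho(n\alpha-\beta),
\]
because $\varepsilon(n)=(-1)^{\lambda(n)}$ equals $+1$ for even $\lambda(n)$ and $-1$ for odd $\lambda(n)$. Summing over $1\le n<M$ gives the exact identity
\[
|R\cap[1,M)|=\frac{M-1}{2}+S_{\alpha,\beta}(M).
\]
This uses only the definition of $\rho$ as $\one_{[0,1/2)}(\{x\})-\tfrac12$ and the fact that $J_1=\T\setminus J_0$.

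\emph{The set $G$.} The interval $[2^i-w_i,2^i+w_i)$ contains at most $2w_i$ integers. It meets $[1,M)$ only if its left endpoint satisfies $2^i-w_i<M$. A union bound therefore gives
\[
|G\cap[1,M)|\le\sum_{i\ge1:\ 2^i-w_i<M}2w_i.
\]
Combining this with the identity for $R$ yields the stated inequality.

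No step is a real obstacle. The only point needing care is the sign bookkeeping in the identity for $\one_R$: check that $J_1$ corresponds exactly to $\{n\alpha-\beta\}\in[\tfrac12,1)$, where the indicator is $\tfrac12-\rho$. The reduction of $\{x-\beta\}$ with $x=n\alpha\bmod1$ to $\{n\alpha-\beta\}$ is immediate.
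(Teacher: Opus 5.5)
Your proof is correct and matches the paper's argument: the pointwise identity $\one_R(n)=\frac12+\varepsilon(n)\rho(n\alpha-\beta)$ is summed over $1\le n<M$, and $|G\cap[1,M)|$ is bounded by the total length $2w_i$ of the intervals meeting $[1,M)$. The paper adds only one remark, that the inequality is trivial if this sum of lengths is infinite, and your union bound covers that case implicitly.
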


\begin{proof}
If $\lambda(n)$ is even, then $\one_R(n)=\one_{[0,1/2)}(\{n\alpha-\beta\})$, so
$\one_R(n)-\frac12=\rho(n\alpha-\beta)$; if $\lambda(n)$ is odd, then
$\one_R(n)-\frac12=-\rho(n\alpha-\beta)$. Thus $\one_R(n)-\frac12=\varepsilon(n)\rho(n\alpha-\beta)$
and $|R\cap[1,M)|=\frac{M-1}2+S_{\alpha,\beta}(M)$. Finally $|G\cap[1,M)|$ is at most the sum of the
lengths $2w_i$ of the intervals $[2^i-w_i,2^i+w_i)$ that meet $[1,M)$. (If this sum is infinite,
the bound is trivial; for the $w_i$ used below it is finite.)
\end{proof}

Thus everything hinges on making the twisted discrepancy $S_{\alpha,\beta}(M)$ grow faster than
$\log M$ for \emph{all} large $M$: the gaps cost $\sum 2w_i$, which grows faster than $\log M$
because $w_i\to\infty$.

\begin{proposition}\label{IA:prop:TD}
There are an irrational $\alpha$ and integers $1\le N_1\le N_2\le\cdots$ such that
\[
S_{\alpha,1/6}(M)\ \ge\ i\log M\qquad\text{for all } i\ge1 \text{ and all } M\ge N_i .
\]
\end{proposition}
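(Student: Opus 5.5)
The approach is a nested-interval construction of $\alpha$, driven by rational approximations whose denominators are coprime to $6$. The sign of the per-period bias of $\rho(\cdot-1/6)$ is then steered block by block, so that it always agrees with $\varepsilon$.

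\emph{Basic computation.} Let $q\ge5$ with $\gcd(q,6)=1$, let $p$ be coprime to $q$, and let $x\in\T$. Over any $q$ consecutive $n$, the points $np/q+x$ run through the shifted lattice $\frac1q\Z+x$. The number of these points in a half-circle $[\beta,\beta+\tfrac12)$ is $(q+1)/2$ or $(q-1)/2$. Hence
\[
\sum_{n\in I}\rho(np/q+x-\tfrac16)=\pm\tfrac12\qquad\text{for every block } I \text{ of } q \text{ consecutive integers}.
\]
As a function $f_q(x)\in\{\pm\tfrac12\}$ of the offset, the sign flips each time $x$ crosses one of the $2q$ points at which a lattice point meets $\tfrac16$ or $\tfrac23$. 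It is therefore constant on arcs of length $\tfrac1{2q}$ and alternates from one arc to the next. For $x=0$ the lattice points stay at distance at least $\tfrac1{6q}$ from both jumps, since $\gcd(q,6)=1$.

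\emph{Handling $\alpha=p/q+\delta$.} Write $\alpha=p/q+\delta$ and group the $n$ in a block $[2^j,2^{j+1})$ into consecutive runs of length $q$. On the run starting at $n_0$ the points are $np/q+n\delta$. If $q|\delta|$ is much smaller than $\tfrac1{6q}$, all points of the run lie within $q|\delta|$ of the shifted lattice with offset $n_0\delta$. The run sum is then $f_q(n_0\delta)$, unless $n_0\delta$ lies within $q|\delta|$ of a sign change.

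\emph{Choice of drift.} We choose $\delta$ so that $n\delta$, for $n$ in the dyadic block(s) where $p/q$ is "active", stays in arcs where $f_q$ has the sign $\varepsilon(n)$. The natural choice is $|\delta|\asymp 1/(q\,2^{j})$, since the offset intervals $[2^j\delta,2^{j+1}\delta)$ double in length. Then one arc of length $\tfrac1{2q}$ covers block $j$, and block $j+1$ falls into the next arc, which has the opposite sign, exactly as $\varepsilon$ requires. With the sign of $\delta$ chosen correctly, each block contributes about $+\frac{2^j}{2q}-O(q)$ to $S_{\alpha,1/6}$.

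Two consecutive blocks is all that one approximant can serve. After that the offset intervals no longer align with the alternating arcs. We therefore pass to a new approximant $p'/q'$, which is the next step of a continued-fraction-type construction. The new approximant must be so close to $\alpha$ that, on the next two blocks, the same analysis applies with $q'$ in place of $q$. The error terms from runs near arc boundaries, from partial runs, and from the transition between approximants are each $O(q)$ per block, with the sign boundaries contributing $O(q\cdot q|\delta|2^{j})=O(q)$.

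\emph{Assembling the bound.} If $S$ stays positive at each block boundary, then at every $M$ in block $j$ we should get $S_{\alpha,1/6}(M)\gtrsim 2^j/q_j-O(\sum_{l\le j}q_l)$. For $M$ in the middle of a block we use that every completed run contributes $+\tfrac12$. This exceeds $i\log M\asymp ij$ for $M\ge N_i$, provided $q_j$ grows so slowly that $2^j/(j\,q_j)\to\infty$. For instance, $q_j\le 2^{j/2}$ would be ample. Irrationality of $\alpha$ follows once $q_j\to\infty$ along the construction, because each approximant is used on only finitely many blocks.

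\emph{Main obstacle.} The difficulty is compatibility of consecutive approximants. Two distinct fractions satisfy $|p/q-p'/q'|\ge 1/(qq')$, while the drift condition needs $|\alpha-p/q|\asymp 1/(q2^j)$. Together these force $q'\gtrsim 2^j/(\text{const})$, which threatens the requirement $2^j/q\gg j$.

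To resolve this, I would first let a single denominator $q$ govern many blocks rather than two. The idea is to exploit that $f_q$ alternates on $2q$ arcs and that $n\delta$ sweeps the whole circle slowly. Then I would choose $\delta$, together with $\beta$-dependent residue information, so that the offset at scale $2^j$ lies in an arc of sign $(-1)^j$ for all $j$ in a long range $J_k\le j<J_{k+1}$.

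The doubling $x\mapsto 2x$ of the offset suggests the following concrete attempt. Pick the offset $x_0$ so that $2^{j}x_0\bmod\frac1q$ has binary digits forcing the arcs to alternate; this is possible because "alternate arc" under doubling is a single binary-digit condition. Then switch to a new $q$ only when the accumulated bias $\sum 2^j/q$ has made the $O(q')$ transition cost negligible. I expect this digit-by-digit choice, and the bookkeeping showing the transition costs stay $o(\text{bias})$, to be the heart of the proof.
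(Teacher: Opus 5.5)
There is a genuine gap: the construction of $\alpha$ is never carried out. The scheme in your ``Choice of drift'' and ``Assembling'' paragraphs (one approximant per pair of blocks, with gain $\asymp 2^j/q_j$ per block) is refuted by your own ``Main obstacle'' computation, and the repair is only announced.

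The repair as formulated also cannot work. Take $q$ coprime to $6$ and $\alpha=p/q+\delta$ with $q^2|\delta|\ll1$. There are two regimes.
\begin{itemize}
\item For $n<1/(6q|\delta|)$, your observation that $\frac1q\Z$ stays $\frac1{6q}$ away from $\frac16$ and $\frac23$ gives $r_\alpha(n)=r_{p/q}(n)$ exactly. So every period contributes the same $f_q(0)=\pm\frac12$. After twisting by $\varepsilon$, the values $S_{\alpha,1/6}(2^j)$ therefore alternate in sign with amplitude $\asymp 2^j/q$ (once $2^j\gg jq^2$).
\item Once $2^j|\delta|\gg1/q$, the run sums in block $j$ sample the square wave $f_q$ at spacing $q|\delta|$ across $\asymp q2^j|\delta|$ of its periods. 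Hence the block sum is at most about $\frac1{q|\delta|}\cdot\frac1{4q}+O(q2^j|\delta|+q)$. This is $o(2^j/q)$ when $q^2|\delta|\to0$ and $q^2/2^j\to0$.
\end{itemize}
So no single $q$ yields a gain of order $2^j/q$ on a long range of blocks. With the endpoint (``digit'') effects you describe, one approximant controls only a gain of order $1/(q^2|\delta|)$ per block, independent of $j$. That is growth linear in $\log M$, and your accounting ``transition cost $=o(\sum 2^j/q)$'' has nothing to rest on.

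The missing idea is that growth linear in $\log M$ suffices, provided the slope can be raised without bound from one approximant to the next while the values already fixed are kept.
\begin{itemize}
\item The paper's carrier approximants are $\theta=a/q$ with $q=3\cdot2^p$, deliberately \emph{not} coprime to $6$. Then $r_\theta$ sums to $0$ over every period, so $|S_\theta(M)-S_\theta(2^j)|\le q/2$ for $2^j\le M\le2^{j+1}$.
\item For $j\ge p$, $2^j\bmod q$ alternates between $2^p$ and $2^{p+1}$. So the block sums are exactly $(-1)^{j-p}\Delta(\theta)$, which $\varepsilon$ turns into the constant $G(\theta)$. Hence $S_\theta(2^j)=S_\theta(2^p)+(j-p)G(\theta)$ (\cref{IA:lem:TD0}).
\item Your drift mechanism appears only inside \cref{IA:lem:TD3}. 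There $\theta'=\tau/U+1/(Uq')$ with $U\equiv1\pmod 6$ and $q'=3\cdot2^{p+d}$. In your notation, $\delta=1/(3U2^{p+d})$; this is what the simplest form of your digit condition, $2^jq\delta\equiv\frac13,\frac23\pmod1$ for all $j\ge J$, amounts to.
\item This drift yields a bias $\approx2^{p+d}/(2U)$ on the single block $[q'/3,2q'/3)$. The $q'$-periodicity then repeats it with the correct sign on every later block, so $G(\theta')$ can be made as large as desired.
\end{itemize}
The remaining ingredients are the following. First, $\theta'$ agrees with $\theta$ on $[0,2^{d-\ell})$ (\cref{IA:lem:TD2}). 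Second, the loss on the $p+\ell$ transitional blocks is at most $(p+\ell)q(b+1)$, independently of how late the switch is made (\cref{IA:lem:TD1,IA:lem:TD4}). Third, the limit $\alpha$ is irrational and $r_{\theta_k}(n)\to r_\alpha(n)$ for every $n$.
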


We prove \cref{IA:prop:TD} in \cref{IA:ssec:TD}, and first deduce part~\ref{IA:it:u32a-all} of the
theorem.

\begin{proof}[Proof of \cref{IA:thm:u32a}\ref{IA:it:u32a-all}]
Let $\alpha$ and $(N_i)$ be as in \cref{IA:prop:TD}; we may assume $N_1\ge2$. Since
$|S_{\alpha,1/6}(M)|\le(M-1)/2$, for each $M\ge2$ the inequality $S_{\alpha,1/6}(M)\ge i\log M$ fails for large
$i$, so $N_i\to\infty$. Let $\psi_0(M)=\max\{i:N_i\le M\}$ for $M\ge N_1$ and $\psi_0(M)=0$ for
$M<N_1$. Then $\psi_0$ is non-decreasing and unbounded, and
$S_{\alpha,1/6}(M)\ge\psi_0(M)\log M$ for $M\ge N_1$. Take $\beta=\frac16$ and
\[
w_i=\max\Big(1,\ \min\big(\lfloor 2^i/4\rfloor,\ \lfloor\psi_0(2^{i-1})/16\rfloor\big)\Big)\qquad(i\ge1),
\]
so that $w_i\to\infty$, and let $E$ be as in \cref{IA:def:rot}.

We bound the gap term of \cref{IA:lem:excess}. We have $w_1=1$. For $i\ge2$ we have $w_i\le2^i/4$, so
$2^i-w_i<M$ implies $\frac34 2^i<M$, hence $2^{i-1}<M$, hence $\psi_0(2^{i-1})\le\psi_0(M)$ and
$w_i\le\max(1,\psi_0(M)/16)$. The indices $i\ge 2$ with $\frac342^i<M$ satisfy
$i<\log_2M+1$, so there are at most $\log_2M+2$ indices in the sum. Hence, once $\psi_0(M)\ge16$,
\[
\sum_{2^i-w_i<M}2w_i\le\frac{\psi_0(M)}{8}\,(\log_2M+2)=\psi_0(M)\Big(\frac{\log M}{8\log 2}+\frac14\Big)
\le\psi_0(M)\big(0.19\log M+0.25\big).
\]
By \cref{IA:lem:excess}, for all large $M$,
\[
|E\cap[1,M)|-\frac{M-1}2\ \ge\ \psi_0(M)\big(0.81\log M-0.25\big)\ \ge\ \tfrac12\psi_0(M)\log M+1 .
\]
Let $A=(2\N-1)\cup2E$. By \cref{IA:lem:parity}, with $M=\lfloor N/2\rfloor+1$,
\[
|A\cap[N]|=\lceil N/2\rceil+|E\cap[1,M)|\ \ge\ \frac N2+\frac{N-1}4+\tfrac12\psi_0(M)\log M+1
\ \ge\ \frac34N+\tfrac12\psi_0(M)\log(N/2)
\]
for all large $N$. With $\psi(N)=\frac14\psi_0(\lfloor N/2\rfloor+1)$, which is non-decreasing and
unbounded, we get $|A\cap[N]|\ge\frac34N+\psi(N)\log N$ for all large $N$. Finally, if $B+B\subset A$
with $B$ infinite, \cref{IA:lem:parity} produces an infinite $D\subset\N_0$ and $t\in\{0,1\}$ with
$D+D+t\subset E$, contradicting \cref{IA:prop:rot}.
\end{proof}

\subsection{A twisted discrepancy estimate}\label{IA:ssec:TD}
For $\theta\in\R$ and $n\in\Z$ let $r_\theta(n)=\rho(n\theta-\frac16)$ and
$S_\theta(M)=\sum_{1\le n<M}\varepsilon(n)r_\theta(n)$, so that $S_\theta=S_{\theta,1/6}$. Let
$\mathcal Q$ be the set of rationals $\theta=a/q$ with
\[
q=3\cdot2^p\ (p\ge1),\qquad 0<a<q,\qquad\gcd(a,q)=1 .
\]
For $\theta=a/q\in\mathcal Q$ (always written in lowest terms) put
\[
\Delta(\theta)=\sum_{2^p\le n<2^{p+1}}r_\theta(n),\qquad G(\theta)=(-1)^p\Delta(\theta),\qquad
\mathcal D(\theta)=\sup_{u\le v}\Big|\sum_{u\le n<v}r_\theta(n)\Big| .
\]
Note that $q$ is even and $a$ is odd, and that $2^p=q/3$.

\begin{lemma}\label{IA:lem:TD0}
Let $\theta=a/q\in\mathcal Q$.
\begin{enumerate}[label=\textup{(\roman*)}]
\item $\sum_{n=u}^{u+q-1}r_\theta(n)=0$ for every $u\in\Z$; in particular $\mathcal D(\theta)\le q/2$.
\item For every $j\ge p$, $\sum_{2^j\le n<2^{j+1}}\varepsilon(n)r_\theta(n)=G(\theta)$.
\item $S_\theta(2^j)=S_\theta(2^p)+(j-p)G(\theta)$ for $j\ge p$, and
$|S_\theta(M)-S_\theta(2^j)|\le\mathcal D(\theta)$ whenever $2^j\le M\le 2^{j+1}$.
\end{enumerate}
\end{lemma}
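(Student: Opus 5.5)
Three parts; I will take them in order, since (iii) uses (i) and (ii).

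\emph{Part (i).} The function $r_\theta(n)=\rho(na/q-\tfrac16)$ depends only on $n\bmod q$, so a sum over $q$ consecutive integers equals $\sum_{n\bmod q}\rho(na/q-\tfrac16)$. Since $\gcd(a,q)=1$, the map $n\mapsto na\bmod q$ permutes $\Z/q\Z$, and the sum becomes $\sum_{m=0}^{q-1}\rho(m/q-\tfrac16)$. Because $q$ is even, $m\mapsto m+q/2$ is an involution on the residues, and $\rho(x+\tfrac12)=-\rho(x)$ shows the terms cancel in pairs. The sum is therefore $0$. For the bound on $\mathcal D(\theta)$, reduce any window $[u,v)$ by whole periods to a window of length less than $q$. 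One can do slightly better by also removing half-periods: $r_\theta(n+q/2)=-r_\theta(n)$, so any $q/2$ consecutive terms sum to minus the next $q/2$. So every window reduces, up to sign, to one of length less than $q/2$. Each term has modulus $\tfrac12$, which gives $\mathcal D(\theta)\le q/4\le q/2$. Even the cruder reduction to a window of length less than $q$ gives $<q/2$ directly.

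\emph{Part (ii).} On $[2^j,2^{j+1})$ the sign $\varepsilon(n)=(-1)^j$ is constant, so it suffices to show that $\sum_{2^j\le n<2^{j+1}}r_\theta(n)=(-1)^{j-p}\Delta(\theta)$. Multiplying by $(-1)^j$ then gives $(-1)^p\Delta=G$.

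\begin{itemize}
\item The block has length $2^j=2^{j-p}\cdot 2^p=2^{j-p}q/3$.
\item It splits into $2^{j-p}$ consecutive sub-blocks of length $q/3$, starting at $2^j+\ell q/3$ for $0\le\ell<2^{j-p}$.
\item Since $3$ sub-blocks make a full period, which sums to $0$ by (i), the sum is determined by $2^{j-p}\bmod 3$ and by the sums $s_c$ over $[2^p+cq/3,\,2^p+(c+1)q/3)$, $c\in\Z/3$, after reducing the starting points $2^j\equiv 2^{j-p}\cdot q/3\pmod q$.
\item The sub-block starting at $x q/3$ depends only on $x\bmod 3$. Since $2^p=q/3$, we have $2^j=2^{j-p}q/3$, so the block $[2^j,2^{j+1})$ is the union of sub-blocks with starting index $x$ running over $2^{j-p}\le x<2^{j-p+1}$.
\item Let $s_c$ be the sum over the sub-block $[cq/3,(c+1)q/3)$. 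Then $s_0+s_1+s_2=0$, and $\Delta=s_1$ because $2^p=q/3$.
\item Over the indices $x\in[N,2N)$ with $N=2^{j-p}$: if $N\equiv1\pmod3$, the count of residues beyond full triples leaves $x\equiv1$ once more, giving $s_1$. If $N\equiv2\pmod3$, it leaves $x\equiv2$ and $x\equiv0$, giving $s_2+s_0=-s_1$.
\item Since $2^{j-p}\equiv(-1)^{j-p}\pmod3$, the block sum is $(-1)^{j-p}s_1$, as needed.
\end{itemize}

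The one delicate point in this part is checking this residue bookkeeping carefully, so that the sign comes out as $(-1)^{j-p}$ and not something else.

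\emph{Part (iii).} The first identity follows by summing (ii) over $j'=p,\dots,j-1$, since $S_\theta(2^{j})-S_\theta(2^{p})$ is exactly the sum of those block sums. For $2^j\le M\le2^{j+1}$, the difference $S_\theta(M)-S_\theta(2^j)$ is $(-1)^j\sum_{2^j\le n<M}r_\theta(n)$, because $\varepsilon$ is constant on that range. Its absolute value is therefore at most $\mathcal D(\theta)$ by definition.
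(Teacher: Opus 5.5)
Your proof is correct and follows essentially the paper's route. Part (i) goes through the permutation $n\mapsto na \bmod q$: you pair residues using $\rho(x+\frac12)=-\rho(x)$, whereas the paper counts the $q/2$ residues with $\rho=\frac12$. Part (ii) reduces $2^j$ modulo $q=3\cdot 2^p$ using $2^{j-p}\equiv(-1)^{j-p}\pmod 3$; the paper phrases this through the $q$-periodic prefix sum $D(N)$, and you through sub-blocks of length $q/3$.

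There are two cosmetic slips. Your bullets define $s_c$ with two different offsets, and only the second one, $[cq/3,(c+1)q/3)$, is actually used. Also, the half-period reduction leaves a window of length at most $q/2$, not less than $q/2$; this still gives $\mathcal D(\theta)\le q/4$.
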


\begin{proof}
(i) We have $n\theta-\frac16=(na-2^{p-1})/q$ with $2^{p-1}\in\Z$. As $n$ runs through $q$
consecutive integers, $na-2^{p-1}$ runs through all residues modulo $q$, and $\rho(k/q)=\frac12$ for
exactly $q/2$ residues $k$ and $-\frac12$ for the others. Hence the sum vanishes, the function
$D(N)=\sum_{0\le n<N}r_\theta(n)$ is $q$-periodic, every interval sum equals a sum over fewer than $q$
consecutive integers, and $\mathcal D(\theta)\le q/2$.

(ii) For $j\ge p$ let $u_j\in\{1,2\}$ with $u_j\equiv2^{j-p}\pmod 3$; thus $u_j=1$ if $j-p$ is even
and $u_j=2$ otherwise. Since $q=3\cdot2^p$ divides $2^p(2^{j-p}-u_j)$, periodicity gives
$D(2^j)=D(2^pu_j)$. Therefore
$\sum_{2^j\le n<2^{j+1}}r_\theta(n)=D(2^pu_{j+1})-D(2^pu_j)=(-1)^{j-p}\big(D(2^{p+1})-D(2^p)\big)
=(-1)^{j-p}\Delta(\theta)$. On $[2^j,2^{j+1})$ we have $\varepsilon=(-1)^j$, which gives (ii).

(iii) The first claim follows from (ii). If $2^j\le M\le2^{j+1}$, then
\[
S_\theta(M)-S_\theta(2^j)=(-1)^j\sum_{2^j\le n<M}r_\theta(n).
\]
\end{proof}

\begin{lemma}\label{IA:lem:TD1}
Let $\theta=a/q\in\mathcal Q$, let $d\ge1$, $L=2^d$, let $b$ be an odd integer with $0<b<L$, and
suppose that $\theta'=\theta+\frac{b}{qL}=\frac{aL+b}{qL}$ belongs to $\mathcal Q$ (with denominator
$qL$). Then for every set $I$ of at most $qL$ consecutive integers,
\[
\Big|\sum_{n\in I}r_{\theta'}(n)\Big|\le\frac{|I|\,b}{L}+q,\qquad\text{and consequently}\qquad
\mathcal D(\theta')\le q(b+1).
\]
\end{lemma}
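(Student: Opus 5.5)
The plan is to compare $r_{\theta'}$ with the exactly balanced function $r_\theta$ of \cref{IA:lem:TD0}, treating $\theta'$ as a slowly drifting perturbation of $\theta$. Write $\theta'=\theta+\frac{b}{qL}$, so that
\[
n\theta'-\tfrac16=\big(n\theta-\tfrac16\big)+\frac{nb}{qL}.
\]
Here $y_n:=n\theta-\frac16=(na-2^{p-1})/q$ lies on the grid $\frac1q\Z$. The jumps of $\rho$ at $0$ and $\frac12$ are also grid points, since $q$ is even. The perturbation $\varphi_n=nb/(qL)$ increases by exactly $b/L$ over any $q$ consecutive integers. Split $I$ into consecutive blocks of length $q$ and a remainder of length $<q$. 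Since $|I|\le qL$, there are at most $L$ full blocks.

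\emph{Step 1: the remainder and the frozen block sums.} The remainder contributes at most $q/2$ in absolute value, because $|\rho|\le\frac12$. On a full block $\{n_0,\dots,n_0+q-1\}$, first freeze the perturbation at $\varphi_{n_0}$. As in the proof of \cref{IA:lem:TD0}(i), the points $y_n$ run over a complete residue system of $\frac1q\Z/\Z$. Any translate of that grid by a constant $c$ has exactly $q/2$ points in a half-open arc of length $\frac12$, so $\sum_n\rho(y_n+c)=0$. Hence each block sum equals
\[
\sum_{s=0}^{q-1}\big[\rho(y_{n_0+s}+\varphi_{n_0}+\delta_s)-\rho(y_{n_0+s}+\varphi_{n_0})\big],\qquad \delta_s=\frac{sb}{qL}\in\Big[0,\frac bL\Big).
\]
A summand is nonzero only if the shifted grid point lies within distance $\delta_s<b/L$ to the left of a jump. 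Crossing $0$ contributes $+1$ and crossing $\frac12$ contributes $-1$. The shifted grid is $\frac1q$-spaced, so each jump has at most $qb/L+1$ candidates. This gives the crude bound $|\text{block}|\le qb/L+1$.

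\emph{Step 2, the main obstacle: removing the additive $+1$ per block.} Summing the crude bound over at most $L$ blocks yields $|I|b/L+L+q/2$, which is too weak when $L>q$. I would remove the constant by pairing the two jumps. Translation by $\frac12=\frac{q/2}{q}$ is a bijection of the shifted grid, and it carries the arc to the left of $0$ onto the arc to the left of $\frac12$. So the $+1$ and $-1$ contributions pair up, except where the perturbations $\delta_s$ and $\delta_{s'}$ of paired points differ, with $s'\equiv s+\frac q2\,a^{-1}\pmod q$.

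If this pairing gives a per-block bound of $qb/L$ with no additive constant, the total is at most $|I|b/L+q/2$, which suffices. If the per-block pairing leaves a residual error, I would redo Step 1 globally instead of blockwise. The idea is to treat $n\mapsto\rho(y_n+\varphi_n)$ over all of $I$ at once as counting grid points that pass through the jumps while $\varphi$ increases monotonically by at most $b$ in total. Then each of the $b$ full turns contributes net zero crossings, and only boundary effects of size $O(q)$ remain. Exact crossing counting with floor functions, $\rho(x)=\lfloor x\rfloor-\lfloor x+\frac12\rfloor+\frac12$, should make this bookkeeping precise.

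\emph{Step 3: the consequence.} For $u\le v$, the function $D'(N)=\sum_{0\le n<N}r_{\theta'}(n)$ is $qL$-periodic by \cref{IA:lem:TD0}(i) applied to $\theta'\in\mathcal Q$. Hence every interval sum reduces to a sum over fewer than $qL$ consecutive integers, and the main bound gives $\mathcal D(\theta')\le qL\cdot b/L+q=q(b+1)$.
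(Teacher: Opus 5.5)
Your Step~2 is where the entire content of the lemma lies, and neither branch of it goes through.

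\emph{The per-block branch is false.} The pairing $s'\equiv s+\frac q2a^{-1}\pmod q$ is simply $s'=s+\frac q2$, since $a^{-1}$ is odd modulo the even number $q$. The perturbations of paired points therefore \emph{always} differ, by exactly $c=\frac{b}{2L}$. Because $a$ is odd, $\frac q2\theta'\equiv\frac12+c\pmod 1$, so $r_{\theta'}(n)+r_{\theta'}(n+\frac q2)=\rho(x)-\rho(x+c)$ with $x=n\theta'-\frac16$. Such a term is $\pm1$ whenever $(x,x+c]$ meets $\frac12\Z$, and a single block can contain one even when $qb<L$. Take $\theta=\frac16$, $L=32$, $b=3$, so $\theta'=\frac{35}{192}\in\mathcal Q$. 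The block $\{53,\dots,58\}$ gives $\sum r_{\theta'}(n)=1>\frac{qb}{L}=\frac9{16}$. So a per-block bound without an additive constant fails.

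\emph{The global fallback is only a heuristic.} It does not say which quantity is compared with which, and it does not explain where the term $\frac{|I|b}{L}$ comes from. Read literally (full turns cancel, only $O(q)$ boundary effects remain), it would bound the sum by $O(q)$. That is false: \cref{IA:lem:TD3} exhibits, for fixed $\theta$, sums of $r_{\theta'}$ over $qL/3$ consecutive integers that grow linearly in $b$.

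The missing idea is to interchange the order of summation. After pairing, fix $0\le s<\frac q2$ and let the block index $k$ run over $0\le k<K=\lfloor|I|/q\rfloor$. For $n=n_0+kq+s$ the point $x_k=n\theta'-\frac16$ satisfies $x_k=x_0+k(a+2c)$. By periodicity, $\rho(x_k)-\rho(x_k+c)=\rho(y_k)-\rho(y_k+c)$ with $y_k=x_0+2ck$. The windows $(y_k,y_k+c]$ are pairwise disjoint and lie in an interval of length $2cK$, which contains at most $4cK+1$ points of $\frac12\Z$. Hence at most $4cK+1$ of these $K$ terms are nonzero. Summing over the $\frac q2$ values of $s$ gives $\frac q2(4cK+1)=\frac{qKb}{L}+\frac q2\le\frac{|I|b}{L}+\frac q2$, and the remainder adds at most $\frac q2$. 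The rounding error is thus paid once per residue pair rather than once per block.

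You could also transpose your frozen comparison from Step~1 in the same way, without the pairing. That removes the per-block constant too, but the direct count then gives only $\frac{|I|b}{L}+\frac32q$; the pairing is what yields the stated constant. Your Step~3 is correct as written.
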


\begin{proof}
Let $n_0=\min I$ and $K=\lfloor|I|/q\rfloor$. Split $I$ into the $K$ blocks
$\{n_0+kq+s:0\le s<q\}$, $0\le k<K$, and a remainder of fewer than $q$ integers, which contributes at
most $q/2$ in absolute value. Put $c=b/(2L)\in(0,\frac12)$. Since $a$ is odd,
$\frac q2\theta'=\frac a2+c\equiv\frac12+c\pmod1$, so for every $n$,
\[
r_{\theta'}(n)+r_{\theta'}(n+\tfrac q2)=\rho(x)+\rho(x+\tfrac12+c)=\rho(x)-\rho(x+c),\qquad
x=n\theta'-\tfrac16 .
\]
Within each block pair $s$ with $s+\frac q2$, $0\le s<\frac q2$. Fix $s$, and let
$x_k=(n_0+kq+s)\theta'-\frac16$. Since $q\theta'=a+2c$ with $a\in\Z$, we have
$\rho(x_k)-\rho(x_k+c)=\rho(y_k)-\rho(y_k+c)$ with $y_k=x_0+2ck$. This difference is $0$ unless the
interval $(y_k,y_k+c]$ contains a point of $\frac12\Z$, in which case it is $\pm1$. The intervals
$(y_k,y_k+c]$, $0\le k<K$, are pairwise disjoint and contained in an interval of length $2cK$, which
contains at most $4cK+1$ points of $\frac12\Z$. Summing over the $q/2$ values of $s$, the blocks
contribute at most $\frac q2(4cK+1)=\frac{qKb}{L}+\frac q2\le\frac{|I|b}{L}+\frac q2$. This proves
the first bound. By \cref{IA:lem:TD0}(i) applied to $\theta'$, every interval sum of $r_{\theta'}$
equals a sum over fewer than $qL$ consecutive integers, so $\mathcal D(\theta')\le qL\cdot b/L+q$.
\end{proof}

\begin{lemma}\label{IA:lem:TD2}
Let $\theta=a/q\in\mathcal Q$, $\theta'\ge\theta$ and $n\ge0$ with $n(\theta'-\theta)<1/q$. Then
$r_{\theta'}(n)=r_\theta(n)$.
\end{lemma}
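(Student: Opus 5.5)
The plan is to reduce the claim to a statement about the fractional parts of $n\theta-\frac16$ and $n\theta'-\frac16$. Write $x=n\theta-\frac16$ and $x'=n\theta'-\frac16$. Then
\[
x'-x=n(\theta'-\theta)\in[0,1/q).
\]
Since $\rho$ is $1$-periodic, right-continuous, and jumps only at $\frac12\Z$, the values $\rho(x')$ and $\rho(x)$ agree provided the half-open interval $(x,x']$ contains no point of $\frac12\Z$. Note that $\rho$ is constant on each interval $[j/2,(j+1)/2)$. So it suffices to show that $x$ and $x'$ lie in the same interval $[j/2,(j+1)/2)$.

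Next we locate $x$ on the lattice $\frac1q\Z$. As in the proof of \cref{IA:lem:TD0}(i), we have $x=(na-2^{p-1})/q$ with $2^{p-1}\in\Z$, so $x\in\frac1q\Z$. Since $q=3\cdot 2^p$ is even, the points of $\frac12\Z$ are also in $\frac1q\Z$. Let $j/2$ be the largest half-integer with $j/2\le x$. The next half-integer is $(j+1)/2$, and it lies in $\frac1q\Z$ strictly above $x$. Hence $(j+1)/2\ge x+\frac1q>x'$, using the strict inequality $x'-x<1/q$. Together with $x'\ge x\ge j/2$ (from $\theta'\ge\theta$ and $n\ge0$), this gives $x'\in[j/2,(j+1)/2)$, so $r_{\theta'}(n)=r_\theta(n)$.

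The only step needing care is the lattice observation that $\frac12\Z\subset\frac1q\Z$ and $x\in\frac1q\Z$. This is exactly where evenness of $q$ and integrality of $q/6=2^{p-1}$ (which uses $p\ge1$) enter. The remaining steps are immediate.
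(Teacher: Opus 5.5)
Your proof is correct and follows essentially the same route as the paper's: both rest on the observation that $n\theta-\frac16\in\frac1q\Z$ and $\frac12\Z\subset\frac1q\Z$, so a shift by less than $1/q$ in the nonnegative direction cannot cross a jump of $\rho$. The only cosmetic difference is that the paper places $x$ and $x'$ in the common lattice cell $[\frac kq,\frac{k+1}q)$, whereas you place them in the common half-integer cell $[\frac j2,\frac{j+1}2)$.
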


\begin{proof}
$n\theta-\frac16$ lies in $\frac1q\Z$, and $\rho$ is constant on each interval
$[\frac kq,\frac{k+1}q)$ because its jumps lie in $\frac12\Z\subset\frac1q\Z$. The point
$n\theta'-\frac16$ lies in the same interval as $n\theta-\frac16$.
\end{proof}

The next lemma produces, from $\theta\in\mathcal Q$, a nearby $\theta'\in\mathcal Q$ with a much
larger gain $G(\theta')$ per dyadic block.

\begin{lemma}\label{IA:lem:TD3}
Let $\theta=a/q\in\mathcal Q$ with $q=3\cdot2^p$, and put $h=q-a\ge1$. Let $m\ge p$ with $m\equiv
p\pmod 2$, let $b=h2^m+1$ and let $R\ge1$ be the order of $2$ modulo $3b$. For $t\in\N$ put
$d=m+2Rt$, $L=2^d$ and $\theta'=\frac{aL+b}{qL}$. Then $0<b<L$, $\theta'\in\mathcal Q$ with
denominator $qL=3\cdot2^{p+d}$, $p+d$ is even, and
\[
G(\theta')=\Delta(\theta')\ \ge\ \frac{q2^m}{12}-h-q-2 .
\]
\end{lemma}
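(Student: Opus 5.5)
The plan is to check the arithmetic claims first and then to evaluate $\Delta(\theta')$ by cutting the dyadic block into runs of length $q$, in the spirit of the proof of \cref{IA:lem:TD1}.

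\emph{Step 1: arithmetic.} Since $m\ge p\ge1$, $b=h2^m+1$ is odd, so $aL+b$ is odd. For the factor $3$, use $a\equiv-h\pmod 3$ (because $3\mid q$) and $L=2^m\cdot2^{2Rt}\equiv2^m\pmod 3$ (because $2^2\equiv1$). This gives
\[
aL+b\equiv-h2^m+h2^m+1\equiv1\pmod 3 .
\]
Hence $\gcd(aL+b,qL)=1$, and $0<aL+b<qL$ because $a<q$ and $b<L$. For $b<L$: by the definition of $R$ we have $2^R\equiv1\pmod{3b}$, so $2^R>3b>b$, and therefore $L=2^{m+2Rt}\ge2^{2R}>b$. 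Finally $p+d=p+m+2Rt$ is even since $m\equiv p\pmod 2$, so $G(\theta')=(-1)^{p+d}\Delta(\theta')=\Delta(\theta')$.

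\emph{Step 2: reduction to a single sum.} Put $Q=qL$, so that $2^{p+d}=Q/3$. Write $n=Q/3+j$ with $0\le j<Q/3$. Then $\tfrac Q3\theta'=\tfrac{aL+b}{3}\equiv\tfrac13\pmod1$ by Step~1, and so
\[
\Delta(\theta')=\sum_{0\le j<Q/3}\rho\big(\tfrac13-\tfrac16+j\theta'\big)=\sum_{0\le j<Q/3}\rho\big(\tfrac16+j\theta'\big).
\]
Next use $\theta'\equiv-\tfrac hq+\tfrac{b}{Q}\pmod 1$, and cut $[0,Q/3)$ into runs of $q$ consecutive $j$. Within a run the phase $-jh/q$ visits every residue of $\frac1q\Z$ exactly once, because $\gcd(h,q)=\gcd(a,q)=1$. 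From one run to the next the phase drifts by $qb/Q=b/L$. As in \cref{IA:lem:TD1}, pair $s$ with $s+q/2$ and use $\rho(x+\tfrac12)=-\rho(x)$. The sum over one run then becomes a signed count of the points of $\tfrac12\Z$ swept by short intervals whose lengths are proportional to the drift.

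\emph{Step 3: the main term (the hard step).} A naive crossing count only shows that each of the $q/2$ paired residue classes contributes $O(1)$, since integer and half-integer crossings alternate in sign. The gain of order $q2^m$ must therefore come from exact arithmetic, not from size. My plan is to exploit
\[
\theta'\equiv-\frac{h(L-2^m)}{qL}+\frac1{qL}\pmod 1,
\]
together with $3b\mid 2^m(2^{2Rt}-1)=L-2^m$, which is where the choice of $R$ enters. This shows that, up to the tiny shift $1/(qL)$, $\theta'$ is a rational whose denominator is $q\cdot2^m$ times a factor controlled by $b$. I would then compare $\rho(\tfrac16+j\theta')$ with $\rho(\tfrac16+j\theta'')$ for this simpler rational $\theta''$, using \cref{IA:lem:TD2} to show that the two agree except on $O(q)$ indices. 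After that I would evaluate the resulting periodic sum exactly by counting residues in $[0,\tfrac12)$, as in \cref{IA:lem:TD0}(i). The offset $\tfrac16$, which is not a multiple of $\tfrac1{2q}$-type symmetric points, should produce a one-signed bias of about $\tfrac1{12}$ per unit of $q2^m$. The error terms $-h-q-2$ absorb the boundary run, the remainder of fewer than $q$ terms, and the indices where \cref{IA:lem:TD2} fails.

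This third step is the one I expect to be the main obstacle. It requires the exact identification of $\theta''$ and a precise count, rather than an estimate, of the residues of $\tfrac16+j\theta''$ lying in $[0,\tfrac12)$ over $j<Q/3$.
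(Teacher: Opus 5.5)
Your Steps~1 and~2 are correct. The congruence $aL+b\equiv1\pmod3$ is a tidy way to get both $\gcd(aL+b,qL)=1$ and the phase $\frac13$ at $n=2^{p+d}$. However, the lower bound for $\Delta(\theta')$, which is the whole content of the lemma, is not proved, and the route you sketch in Step~3 cannot work.

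The rational your identity exhibits is $\theta''=-h(L-2^m)/(qL)=-h\,\frac{2^{2Rt}-1}{3}\big/2^{p+2Rt}$. Its numerator is odd, so its denominator is the power of two $2^{p+2Rt}$. It is therefore not in $\mathcal Q$, the offset $\frac16$ is not on its grid, and \cref{IA:lem:TD2} does not apply. More importantly, the discarded term is not tiny where it matters. It adds $j/(qL)$ to the phase, and this grows to $\frac13$ over $0\le j<Q/3$, far more than one grid step $2^{-p-2Rt}$. So $\rho(\frac16+j\theta')$ and $\rho(\frac16+j\theta'')$ disagree on a positive proportion of indices, not on $O(q)$ of them.

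Your preliminary remark about crossing counts is also wrong. In the pairing of \cref{IA:lem:TD1}, the intervals $(y_k,y_k+c]$ cover only every other piece of length $c$, so the captured points of $\frac12\Z$ need not alternate between integers and half-integers. Indeed, if every paired class contributed $O(1)$, the total would be $O(q)$, which contradicts the bound you are proving once $2^m$ is large.

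The missing idea is to approximate $\theta'$ by a rational $\tau/U$ so close that the error accumulated over the whole block $[N,2N)$, $N=2^{p+d}$, is less than one step $1/U$. Take $U=(hL+1)/b$. This is an integer because $L\equiv2^m\pmod{3b}$ by the choice of $R$, so $hL+1\equiv h2^m+1=b\equiv0\pmod b$. Moreover $U-1=h(2^d-2^m)/b$ is divisible by $3\cdot2^m$, hence $U=6v+1$. With $\tau=(aU+h)/q\in\Z$ one checks $(aL+b)U=\tau qL+1$, so $\gcd(\tau,U)=1$ and $\theta'=\tau/U+1/(UqL)$.

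For $N\le n<2N$, write $n\theta'=n\tau/U+z/U$ with $z=n/(qL)\in[\frac13,\frac23)$. Then $r_{\theta'}(n)=\frac12$ exactly when the residue of $n\tau$ modulo $U$ lies in $\{v,\dots,4v\}$. Each run of $U$ consecutive $n$ therefore contributes exactly $\frac12$: there are $3v+1$ plus signs against $3v$ minus signs, and this is the source of the bias. There are $\lfloor N/U\rfloor$ runs, with $N/(2U)=qLb/(6(hL+1))\ge q2^m/12$. The fewer than $U\le qL$ leftover terms are bounded by \cref{IA:lem:TD1} by $Ub/L+q=h+\frac1L+q$, which gives the stated inequality.
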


\begin{proof}
Since $2^R\equiv1\pmod{3b}$ we have $2^R>3b$, hence $L\ge2^{2R}>b$. Moreover
$2^d=2^m(2^R)^{2t}\equiv2^m\pmod{3b}$.
\begin{enumerate}[label=(\arabic*),leftmargin=*]
\item $hL+1\equiv h2^m+1=b\equiv0\pmod b$, so $U=(hL+1)/b$ is an integer.
\item $U-1=h(2^d-2^m)/b$. Here $(2^d-2^m)/b$ is an integer divisible by $3$ (as $3b\mid2^d-2^m$)
and by $2^m$ (as $b$ is odd). Hence $q=3\cdot2^p$ divides $U-1$; in particular $U\equiv1\pmod 6$, say
$U=6v+1$.
\item Put $a'=aL+b$ and $q'=qL$. Since $aU+h\equiv a+h=q\equiv0\pmod q$, the number
$\tau=(aU+h)/q$ is an integer, and
\[
a'U=aLU+bU=aLU+hL+1=L(aU+h)+1=\tau q'+1 .
\]
Hence $\gcd(a',q')=1$ and $\gcd(\tau,U)=1$. Also $0<a'<aL+hL=q'$, so $\theta'=a'/q'\in\mathcal Q$;
the exponent of $2$ in $q'$ is $p+d=p+m+2Rt$, which is even. Moreover
$\theta'=\frac{\tau}{U}+\frac{1}{Uq'}$.
\item Let $N=q'/3=2^{p+d}$, so that $\Delta(\theta')=\sum_{N\le n<2N}r_{\theta'}(n)$. For $n$ in this
range put $z=n/q'\in[\frac13,\frac23)$ and let $m'\in\{0,\dots,U-1\}$ be the residue of $n\tau$
modulo $U$. Then $n\theta'-\frac16\equiv\frac{m'+z}{U}-\frac16\pmod 1$ with $\frac{m'+z}U\in[0,1)$,
so $r_{\theta'}(n)=\frac12$ if and only if $\frac{m'+z}{U}\in[\frac16,\frac23)$, that is,
$m'+z\in[v+\frac16,4v+\frac23)$. Since $z\in[\frac13,\frac23)$, this holds if and only if
$v\le m'\le 4v$. Hence on any $U$ consecutive integers in $[N,2N)$, on which $m'$ takes every value
once (as $\gcd(\tau,U)=1$), there are $3v+1$ values $+\frac12$ and $3v$ values $-\frac12$, and the
sum is $\frac12$.
\item Split $[N,2N)$ into $\lfloor N/U\rfloor$ blocks of $U$ consecutive integers and a remainder of
fewer than $U\le hL+1\le q'$ integers. By \cref{IA:lem:TD1}, the remainder contributes at most
$Ub/L+q=h+\frac1L+q$ in absolute value. Hence
\[
\Delta(\theta')\ge\frac12\Big(\frac NU-1\Big)-h-1-q .
\]
Finally $\frac N{2U}=\frac{qLb}{6(hL+1)}\ge\frac{qb}{6(h+1)}\ge\frac{q2^m}{12}$, using
$b=h2^m+1\ge(h+1)2^{m-1}$. As $p+d$ is even, $G(\theta')=\Delta(\theta')$.\qedhere
\end{enumerate}
\end{proof}

\begin{lemma}\label{IA:lem:TD4}
Let $\theta=a/q\in\mathcal Q$ with $q=3\cdot2^p$, let $c\ge0$, and let $N_0,X_0\in\N$. Suppose that
$G(\theta)>c\log2$ and $S_\theta(M)\ge c\log M$ for all $M\ge N_0$. Let $m,b,R$ be as in
\cref{IA:lem:TD3}, chosen so that $\frac{q2^m}{12}-h-q-2\ge c\log 2$, and let $\theta'_t$ denote
the number $\theta'$ of \cref{IA:lem:TD3} for the parameter $t$. Then for all sufficiently large $t$:
\begin{enumerate}[label=\textup{(\roman*)}]
\item $r_{\theta'_t}(n)=r_\theta(n)$ for $0\le n<X_0$;
\item $S_{\theta'_t}(M)\ge c\log M$ for all $M\ge N_0$.
\end{enumerate}
\end{lemma}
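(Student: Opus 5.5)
The plan is to compare $S_{\theta'_t}$ with $S_\theta$ on three ranges of $M$. Throughout, $m$, $h$, $b$ and $R$ stay fixed, and only $d=m+2Rt$, and hence $L=2^d$, grows with $t$. Put $P=p+d$, so that $\theta'_t$ has denominator $3\cdot2^P$ and $2^P=2^pL$.

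\emph{Part (i).} We have $\theta'_t-\theta=b/(qL)$. So $n(\theta'_t-\theta)<1/q$ exactly when $n<L/b$. Once $t$ is large enough that $L/b>X_0$, \cref{IA:lem:TD2} gives $r_{\theta'_t}(n)=r_\theta(n)$ for $0\le n<X_0$.

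\emph{Range 1: $M\le L/b$.} By the same argument, $r_{\theta'_t}(n)=r_\theta(n)$ for all $n<M$, so $S_{\theta'_t}(M)=S_\theta(M)\ge c\log M$ whenever $M\ge N_0$.

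For the other ranges I first record the surplus of $S_\theta$. By \cref{IA:lem:TD0}(iii), $S_\theta(2^j)=S_\theta(2^p)+(j-p)G(\theta)$. Since $G(\theta)>c\log 2$, the surplus $S_\theta(2^j)-c\log 2^{j+1}$ tends to $\infty$ linearly in $j$. Interpolating with $\mathcal D(\theta)\le q/2$, the quantity
\[
\sigma(M_0):=S_\theta(M_0)-c\log M_0
\]
tends to $\infty$ as $M_0\to\infty$. Take $M_0=\lfloor L/b\rfloor$, so that $\sigma(M_0)\to\infty$ as $t\to\infty$.

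\emph{Range 2: $L/b<M\le 2^P$.} Write
\[
S_{\theta'_t}(M)=S_{\theta'_t}(M_0)+\sum_{M_0\le n<M}\varepsilon(n)r_{\theta'_t}(n),
\]
and note $S_{\theta'_t}(M_0)=S_\theta(M_0)$ by Range~1. Split $[M_0,M)$ along the dyadic blocks $[2^j,2^{j+1})$. On each block $\varepsilon$ is constant, and the piece has at most $2^j\le 2^P\le qL$ consecutive integers. So \cref{IA:lem:TD1} bounds its contribution by $2^jb/L+q$. The blocks involved satisfy $\log_2(L/b)-1\le j\le P$, and there are at most $p+\log_2 b+2$ of them. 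Hence the total error is at most
\[
C_1=2^{P+1}b/L+q(p+\log_2b+2)=2^{p+1}b+q(p+\log_2b+2),
\]
which does not depend on $t$. Also $M\le 2^P=2^pL\le 2^{p+1}b\,M_0$, so $c\log M\le c\log M_0+C_2$ with $C_2$ independent of $t$. Therefore
\[
S_{\theta'_t}(M)-c\log M\ge\sigma(M_0)-C_1-C_2,
\]
which is positive for $t$ large.

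\emph{Range 3: $M>2^P$.} Apply \cref{IA:lem:TD0}(iii) to $\theta'_t$:
\[
S_{\theta'_t}(2^j)=S_{\theta'_t}(2^P)+(j-P)G(\theta'_t)\qquad\text{for } j\ge P.
\]
By \cref{IA:lem:TD3} and the choice of $m$, $G(\theta'_t)\ge c\log2$, so $S_{\theta'_t}(2^j)-c\log 2^j\ge S_{\theta'_t}(2^P)-c\log2^P$. The right-hand side is at least $\sigma(M_0)-C_1-C_2$ by Range~2. Within a block $2^j\le M\le2^{j+1}$, the error $|S_{\theta'_t}(M)-S_{\theta'_t}(2^j)|$ is at most $\mathcal D(\theta'_t)\le q(b+1)$ by \cref{IA:lem:TD1}, and $c\log M-c\log2^j\le c\log 2$. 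Both bounds are independent of $t$, so once $\sigma(M_0)$ exceeds all these constants we get (ii) for every $M\ge N_0$.

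The main obstacle is Range~2, where $r_{\theta'_t}$ no longer agrees with $r_\theta$. The key point is that \cref{IA:lem:TD1} makes the error there bounded uniformly in $t$, because the range spans only $O(p+\log b)$ dyadic blocks. That bounded error is then beaten by the unbounded surplus $\sigma(M_0)$ inherited from $\theta$.
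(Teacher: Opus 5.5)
Your proposal is correct and follows essentially the same route as the paper. It splits the range of $M$ into three parts: agreement of $r_{\theta'_t}$ with $r_\theta$ on an initial segment via \cref{IA:lem:TD2}; a middle range spanning a $t$-independent number of dyadic blocks, whose error is bounded by \cref{IA:lem:TD1}; and linear growth beyond $2^{p+d}$ from \cref{IA:lem:TD0}(iii) together with $G(\theta'_t)\ge c\log 2$, with the unbounded surplus from $G(\theta)>c\log 2$ absorbing all constants. The differences are only cosmetic: the paper cuts at $2^{d-\lceil\log_2 b\rceil-1}$ instead of $L/b$, bounds each middle block by $\mathcal D(\theta'_t)\le q(b+1)$, and writes the surplus as an explicit quantity linear in $d$.
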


\begin{proof}
Write $\theta'=\theta'_t$, $d=m+2Rt$, $P'=p+d$, and let $\ell=\lceil\log_2b\rceil+1$ and
$J=d-\ell$. For $0\le n<2^J$ we have $n(\theta'-\theta)=\frac{nb}{q2^d}<\frac{b2^{-\ell}}{q}\le
\frac1{2q}$, so $r_{\theta'}(n)=r_\theta(n)$ by \cref{IA:lem:TD2}. In particular (i) holds once
$2^J\ge X_0$, and $S_{\theta'}(M)=S_\theta(M)$ for $M\le2^J$.

Let $B_0=q(b+1)$; by \cref{IA:lem:TD1} (with $\theta'\in\mathcal Q$ by \cref{IA:lem:TD3}),
$\mathcal D(\theta')\le B_0$, and $B_0$ does not depend on $t$. By \cref{IA:lem:TD0}(iii) for
$\theta$,
\[
\Phi(t):=S_\theta(2^p)+(J-p)G(\theta)-(p+\ell)B_0-c(P'+1)\log2-B_0
= d\,\big(G(\theta)-c\log 2\big)+C_0,
\]
where $C_0$ does not depend on $t$. Since $G(\theta)>c\log2$ and $d\to\infty$ as $t\to\infty$, we may
take $t$ so large that $\Phi(t)\ge0$, $J\ge p$ and $2^J\ge\max(X_0,N_0)$. We check (ii) for this $t$.

If $N_0\le M\le 2^J$, then $S_{\theta'}(M)=S_\theta(M)\ge c\log M$.

If $2^J\le M\le 2^{P'}$, the interval $[2^J,M)$ meets at most $P'-J=p+\ell$ dyadic blocks, on each of
which $\varepsilon$ is constant; hence
$S_{\theta'}(M)\ge S_{\theta'}(2^J)-(p+\ell)B_0=S_\theta(2^J)-(p+\ell)B_0
\ge c(P'+1)\log2+B_0\ge c\log M$, using $S_\theta(2^J)=S_\theta(2^p)+(J-p)G(\theta)$ and
$\Phi(t)\ge0$. In particular $S_{\theta'}(2^{P'})\ge c(P'+1)\log2+B_0$.

If $M>2^{P'}$, let $2^j\le M<2^{j+1}$ with $j\ge P'$. By \cref{IA:lem:TD0}(iii) for $\theta'$ (whose
denominator is $3\cdot2^{P'}$) and $G(\theta')\ge c\log2$,
\[
S_{\theta'}(M)\ge S_{\theta'}(2^{P'})+(j-P')G(\theta')-B_0\ge c(P'+1)\log2+(j-P')c\log2=c(j+1)\log2
\ge c\log M .\qedhere
\]
\end{proof}

\begin{proof}[Proof of \cref{IA:prop:TD}]
Let $\theta_1=\frac1{12}$, so $q=12$, $p=2$. For $4\le n<8$ we have
$\frac n{12}-\frac16=\frac{n-2}{12}\in[0,\frac12)$, so $\Delta(\theta_1)=2$ and
$G(\theta_1)=2>2\log2$. By \cref{IA:lem:TD0}(iii), $S_{\theta_1}(M)\ge S_{\theta_1}(4)+2(j-2)-6$ for
$2^j\le M<2^{j+1}$, $j\ge2$, so there is $N_1$ with $S_{\theta_1}(M)\ge\log M$ for $M\ge N_1$.

Suppose that for some $k\ge1$ we have constructed $\theta_1,\dots,\theta_k$ with
$\theta_i=a_i/q_i\in\mathcal Q$, and $N_1\le\dots\le N_k$, such that
\begin{enumerate}[label=(H\arabic*),leftmargin=*]
\item\label{IA:H1} $G(\theta_k)>(k+1)\log2$, and
\item\label{IA:H2} $S_{\theta_k}(M)\ge i\log M$ for all $1\le i\le k$ and all $M\ge N_i$.
\end{enumerate}
(For $k=1$ this was just checked.) Choose $m\ge p_k$ with $m\equiv p_k\pmod2$ and
$\frac{q_k2^m}{12}-h_k-q_k-2>(k+2)\log2$, where $q_k=3\cdot2^{p_k}$, $h_k=q_k-a_k$. Apply
\cref{IA:lem:TD4} with $\theta=\theta_k$, $c=k$ and $N_0=X_0=N_k$; its hypotheses hold by \ref{IA:H1}
and \ref{IA:H2}. Taking $t$ large enough we obtain $\theta_{k+1}=\theta'_t\in\mathcal Q$ such that
$r_{\theta_{k+1}}=r_{\theta_k}$ on $[0,N_k)$, $S_{\theta_{k+1}}(M)\ge k\log M$ for $M\ge N_k$, and in
addition
\begin{equation}\label{IA:eq:eta}
\eta_k:=\theta_{k+1}-\theta_k<\min\Big(\frac1{8q_k^2},\frac{\eta_{k-1}}4\Big)\qquad(\eta_0:=1),
\end{equation}
which is possible because $\theta'_t-\theta_k=b/(q_k2^{m+2Rt})\to0$. By \cref{IA:lem:TD3},
$G(\theta_{k+1})>(k+2)\log2$. Property \ref{IA:H2} for $\theta_{k+1}$ and $i\le k$ holds: for
$N_i\le M\le N_k$ we have $S_{\theta_{k+1}}(M)=S_{\theta_k}(M)\ge i\log M$, and for $M\ge N_k$ we have
$S_{\theta_{k+1}}(M)\ge k\log M\ge i\log M$. For $i=k+1$, \cref{IA:lem:TD0}(iii) and
$G(\theta_{k+1})>(k+2)\log2$ give some $N_{k+1}\ge N_k$ with $S_{\theta_{k+1}}(M)\ge(k+1)\log M$ for
all $M\ge N_{k+1}$.

The sequence $(\theta_k)$ is increasing, and by~\eqref{IA:eq:eta} its limit $\alpha$ satisfies
$0<\alpha-\theta_k<\frac43\eta_k<\frac{1}{6q_k^2}$. Since $q_{k+1}=q_k2^{d}>q_k$, we have
$q_k\to\infty$. If $\alpha=c'/s$ were rational, then $\frac1{sq_k}\le|\alpha-\frac{a_k}{q_k}|
<\frac1{6q_k^2}$ for all $k$, which is impossible for large $k$; so $\alpha$ is irrational. For each
$n\ge1$ the point $n\alpha-\frac16$ is irrational, hence not in $\frac12\Z$, so $\rho$ is continuous
there and $r_{\theta_k}(n)\to r_\alpha(n)$. Consequently $S_{\theta_k}(M)\to S_\alpha(M)$ for each $M$.
Given $i$ and $M\ge N_i$, we have $S_{\theta_k}(M)\ge i\log M$ for all $k\ge i$ by \ref{IA:H2}, and
letting $k\to\infty$ gives $S_\alpha(M)\ge i\log M$.
\end{proof}

\formalization{Both parts of \cref{IA:thm:u32a}, together with
\cref{IA:prop:rot,IA:prop:TD} and the lemmas of \cref{IA:ssec:TD}, are formalized in
\leanfile{S32a} (\leanname{thm_u32a_a}, \leanname{thm_u32a_b}).}


\section{Unrestricted sums \texorpdfstring{$B+B$}{B+B} and lower Banach density}\label{KR:sec:main}

In this section we answer, negatively, both parts of a question of Kousek and
Radi\'c~\cite[Question~5.2]{kousek2024infinite} about lower Banach density thresholds for unrestricted
sumsets $B+B=\{b+b':b,b'\in B\}$; this question is not from~\cite{kra2023problems}. We show that the
lower Banach density thresholds are exactly $\frac12$ (with a shift) and $\frac34$ (without a shift),
the same as the lower density thresholds found in~\cite{kousek2024infinite}. Together with
\cref{IA:sec:u32a}, which concerns what happens exactly at the lower density threshold $\frac34$,
this describes the role of the density hypotheses for unrestricted sumsets $B+B$.

For $A\subset\N$ let
\[
\mathsf d_*(A)=\lim_{H\to\infty}\ \inf_{M\ge1}\frac{|A\cap[M,M+H)|}{H}
\]
denote the lower Banach density (the limit exists by superadditivity; this agrees with the
normalisation $\liminf_{N-M\to\infty}|A\cap[M,N]|/(N-M)$ of~\cite{kousek2024infinite}); $\N_0=\N\cup\{0\}$.
Kousek and Radi\'c proved that $\ldens(A)>\frac34$ implies $B+B\subset A$ for some infinite
$B\subset\N$~\cite[Theorem~1.2]{kousek2024infinite}, and that $\ldens(A)>\frac12$ implies
$B+B+t\subset A$ for some infinite $B$ and some $t\in\{0,1\}$~\cite[Theorem~1.3]{kousek2024infinite}.
They also constructed a syndetic set of lower Banach density $\frac13$ containing no $B+B+t$, and, by
adding the odd numbers, a set of lower Banach density $\frac23$ containing no $B+B$
\cite[Proposition~5.1 and the discussion after it]{kousek2024infinite}, and asked:

\medskip
\noindent\textbf{Question} (Kousek--Radi\'c~\cite[Question~5.2]{kousek2024infinite}). \emph{Let
$A\subset\N$.
\begin{enumerate}[label=\textup{(\arabic*)},leftmargin=*]
\item If $\mathsf d_*(A)>1/3$, does there exist an infinite set $B\subset\N$ and a number $t\in\N$ so
that $B+B+t\subset A$?
\item If $\mathsf d_*(A)>2/3$, does there exist an infinite set $B\subset\N$ so that $B+B\subset A$?
\end{enumerate}}
\medskip

\begin{theorem}\label{KR:thm:main}
Let $\alpha\in\R\setminus\Q$, let $\kappa>0$ satisfy $\kappa\log2\equiv\frac12\pmod1$ (for instance
$\kappa=1/(2\log 2)$), let $0<\delta<\frac14$ and $c=\frac12-2\delta$. Put $\varphi(n)=n\alpha-\kappa\log n$
and
\[
E=\{n\in\N:\ \{\varphi(n)\}\in[0,c)\},\qquad A'=(2\N-1)\cup2E .
\]
\begin{enumerate}[label=\textup{(\alph*)}]
\item\label{KR:it:dens} $E$ has uniform density $c$: for every $\eps>0$ there is $H_0$ such that
$\big||E\cap[M,M+H)|-cH\big|\le\eps H$ for all $M\ge1$ and $H\ge H_0$. In particular
$\mathsf d_*(E)=\ubd(E)=\frac12-2\delta$, and similarly $\mathsf d_*(A')=\ubd(A')=\frac34-\delta$.
\item\label{KR:it:E} For every $t\in\Z$ and every infinite $B\subset\N_0$, $B+B+t\not\subset E$.
\item\label{KR:it:A} For every infinite $B\subset\N$, $B+B\not\subset A'$.
\end{enumerate}
\end{theorem}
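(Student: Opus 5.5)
The plan is to prove \ref{KR:it:dens} by a slowly-varying-phase argument, to get \ref{KR:it:E} by pigeonholing the phases of $B$ and using the condition $\kappa\log2\equiv\frac12$, and to deduce \ref{KR:it:A} from \ref{KR:it:E} via \cref{IA:lem:parity}.

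\emph{Part \ref{KR:it:dens}.} On a window $[M,M+H)$ with $M\ge H/\eta$, the phase $\kappa\log n$ changes by at most $\kappa\log(1+\eta)\le\kappa\eta$. So on such a window, $\{\varphi(n)\}\in[0,c)$ differs from $\{n\alpha-\kappa\log M\}\in[0,c)$ only for $n$ whose $n\alpha-\kappa\log M$ lies within $\kappa\eta$ of an endpoint of $[0,c)$.
\begin{itemize}
\item By Weyl's theorem, $n\alpha$ is equidistributed uniformly over all windows of length $H\ge H_1(\eta)$ and uniformly over all shifts of the target interval.
\item Hence on such windows the count is $cH+O((\eta+\eps)H)$.
\item For a general window $[M,M+H)$ with $H$ large, first split it into subwindows of length $L=H_1(\eta)$.
\item The subwindows that start below $L/\eta$ cover at most $L/\eta+L$ integers, which is $\le\eta H$ once $H\ge L/\eta^2$.
\item Every other subwindow starts at some $M'\ge L/\eta$, so the previous estimate applies to it.
\end{itemize}
This gives uniform density $c$, hence $\mathsf d_*(E)=\ubd(E)=c$. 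For $A'$, the count formula of \cref{IA:lem:parity} together with the uniform density of $E$ gives density $\frac12+\frac c2=\frac34-\delta$, uniformly in windows.

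\emph{Part \ref{KR:it:E}.} Suppose $B\subset\N_0$ is infinite and $B+B+t\subset E$. For $b\in B$ put $z_b=b\alpha+t\alpha/2$ and $w_b=\kappa\log b$, both modulo $1$. By compactness of $\T^2$, there is an infinite $B'\subset B$ on which $(z_b,w_b)$ lies within $\eta$ of a fixed point $(z^*,w^*)$, where $\eta$ is small compared with $\delta$. Now take $b\in B'$ large, and then $b'\in B'$ with $b'/b$ huge.

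For the diagonal element $2b+t$ (this is where the diagonal term of $B+B$ is essential), using $\log(2b+t)=\log2+\log b+o(1)$ and $\kappa\log2\equiv\frac12$:
\[
\varphi(2b+t)=2z_b-\kappa\log2-w_b+o(1)\equiv 2z^*-w^*-\tfrac12+O(\eta)+o(1).
\]
For the mixed element, $\log(b+b'+t)=\log b'+O(b/b')$, so
\[
\varphi(b+b'+t)=z_b+z_{b'}-w_{b'}+o(1)\equiv 2z^*-w^*+O(\eta)+o(1).
\]
Both values must lie in $[0,c)$ modulo $1$. But they differ by $\frac12+O(\eta)+o(1)$ modulo $1$, and $c=\frac12-2\delta$. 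Two points of an arc of length less than $\frac12-2\delta$ cannot be at circular distance $\frac12-O(\eta)$ apart, so this is a contradiction once $\eta$ and the $o(1)$ errors are below $\delta$. The case $b=0\in B$ causes no trouble, since we only use large elements of $B'$, and negative $t$ is absorbed in the $o(1)$ terms.

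\emph{Part \ref{KR:it:A}.} Apply \cref{IA:lem:parity} to $A'=(2\N-1)\cup2E$. It yields an infinite $D\subset\N_0$ and $t\in\{0,1\}$ with $D+D+t\subset E$, which contradicts \ref{KR:it:E}.

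The main obstacle is the bookkeeping in \ref{KR:it:E}: the error terms $\kappa\log(1+t/2b)$, $\kappa\log(1+(b+t)/b')$ and the pigeonhole width $\eta$ must all be made smaller than $\delta$ simultaneously. This is achieved by choosing, in order, $\eta$, then $B'$, then $b$ large, then $b'\gg b$. A secondary point is making the uniformity in $M$ in \ref{KR:it:dens} fully rigorous near $M=1$.
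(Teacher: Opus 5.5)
Your proposal is correct. Parts (a) and (c) match the paper. In (a) you use the same block decomposition, with Weyl equidistribution uniform in the starting point and in the shift of the arc (this is \cref{KR:lem:rot}). In (c) you use the parity reduction of \cref{IA:lem:parity}, which the paper repeats inline. One small slip in (a): the initial short subwindows cover up to $L/\eta+L$ integers, so you need $H\ge L/\eta^2+L/\eta$, and there is also a final partial subwindow of length $<L$; both are harmless.

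Part (b) takes a genuinely different route.

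\emph{The paper's argument.} It passes to $B_1\subset B$ with $b\alpha\to\beta$ in $\T$, fixes one $b'\in B_1$ near $\beta$, and lets $b\to\infty$. Then $2b+t$ and $b+b'+t$ are both dominated by the same $b$. Their logarithmic phases differ by $\kappa\log\frac{2b+t}{b+b'+t}\to\kappa\log2\equiv\frac12$ automatically. Only the rotation difference $(b-b')\alpha$ needs controlling, so compactness of a single circle suffices.

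\emph{Your argument.} You reverse the order and take $b'\gg b$, so $\log(b+b'+t)$ is dominated by $b'$. The two logarithmic phases become $\kappa\log2+\kappa\log b$ and $\kappa\log b'$, which are unrelated a priori. You compensate by pigeonholing $\kappa\log b \bmod 1$ together with $b\alpha+t\alpha/2 \bmod 1$ in $\T^2$. With $(z_b,w_b)$ and $(z_{b'},w_{b'})$ both $\eta$-close to $(z^*,w^*)$, the phases of $2b+t$ and $b+b'+t$ differ by $\frac12+O(\eta)+o(1)$ modulo $1$. The contradiction with both lying in $[0,c)$, where $\frac12-c=2\delta$, is then the same as in the paper.

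\emph{Comparison.} Your version costs an extra compactness coordinate and one more error term to track. The paper's version is more economical: it never pigeonholes the logarithmic phase and makes clear that only the ratio $(2b+t)/(b+b'+t)\to2$ matters. Both use the diagonal element $2b+t$ in the same essential way.
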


\begin{corollary}\label{KR:cor:main}
Both parts of Question~5.2 of~\cite{kousek2024infinite} have negative answers (take
$0<\delta<\frac1{12}$). Moreover
\begin{align*}
\sup\{\mathsf d_*(A):\ B+B+t\not\subset A\ \text{for all } t\ge0 \text{ and all infinite } B\}&=\tfrac12,\\
\sup\{\mathsf d_*(A):\ B+B\not\subset A\ \text{for all infinite } B\}&=\tfrac34 .
\end{align*}
\end{corollary}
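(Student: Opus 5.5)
The corollary follows from \cref{KR:thm:main} together with the two density theorems of Kousek and Radi\'c, once we compare lower Banach density with lower density. The plan has three parts.

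\emph{Negative answers to Question~5.2.} Fix $0<\delta<\frac1{12}$ and take $E$ and $A'$ as in \cref{KR:thm:main}.
\begin{enumerate}[label=(\roman*)]
\item By part~\ref{KR:it:dens}, $\mathsf d_*(E)=\frac12-2\delta>\frac12-\frac16=\frac13$.
\item By part~\ref{KR:it:E}, applied to an infinite $B\subset\N\subset\N_0$ and any $t\in\N\subset\Z$, we get $B+B+t\not\subset E$. So $E$ is a counterexample to Question~5.2(1).
\item By part~\ref{KR:it:dens}, $\mathsf d_*(A')=\frac34-\delta>\frac34-\frac1{12}=\frac23$.
\item By part~\ref{KR:it:A}, $A'$ contains no $B+B$ with $B$ infinite. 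So $A'$ is a counterexample to Question~5.2(2).
\end{enumerate}

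\emph{Lower bounds for the suprema.} Now let $\delta\in(0,\frac14)$ be arbitrary.
\begin{enumerate}[label=(\roman*)]
\item The set $E$ qualifies for the first supremum. The condition there ranges over $t\ge0$, and part~\ref{KR:it:E} covers every $t\in\Z$, in particular $t=0$. Hence the first supremum is at least $\frac12-2\delta$.
\item By part~\ref{KR:it:A}, the set $A'$ qualifies for the second supremum. Hence it is at least $\frac34-\delta$.
\end{enumerate}
Letting $\delta\to0$ gives the lower bounds $\frac12$ and $\frac34$.

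\emph{Upper bounds for the suprema.} These reduce to the Kousek--Radi\'c lower density theorems via the elementary inequality $\mathsf d_*(A)\le\ldens(A)$. To prove it, note that for each $H$,
\[
\inf_{M\ge1}\frac{|A\cap[M,M+H)|}{H}\le\frac{|A\cap[1,H+1)|}{H}=\frac{|A\cap[H]|}{H}.
\]
Taking $\liminf_{H\to\infty}$ on the right and the limit on the left gives $\mathsf d_*(A)\le\ldens(A)$.
\begin{enumerate}[label=(\roman*)]
\item If $\mathsf d_*(A)>\frac12$, then $\ldens(A)>\frac12$. By \cite[Theorem~1.3]{kousek2024infinite} there are an infinite $B$ and some $t\in\{0,1\}$, hence some $t\ge0$, with $B+B+t\subset A$. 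So no set of lower Banach density greater than $\frac12$ qualifies for the first supremum, and it is at most $\frac12$.
\item Likewise, $\mathsf d_*(A)>\frac34$ gives $\ldens(A)>\frac34$. By \cite[Theorem~1.2]{kousek2024infinite} this yields an infinite $B$ with $B+B\subset A$, so the second supremum is at most $\frac34$.
\end{enumerate}

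All the real work is in \cref{KR:thm:main}, which we are assuming here. The only points needing care in the corollary are the inequality $\mathsf d_*\le\ldens$ and the bookkeeping on the range of $t$ ($t\in\N$, $t\ge0$, and $t\in\Z$ in the theorem). Neither is a real obstacle.
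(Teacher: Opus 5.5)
Your proof is correct and follows the paper's argument: the lower bounds come from \cref{KR:thm:main}, and the upper bounds come from $\mathsf d_*(A)\le\ldens(A)$ (take $M=1$) combined with Theorems~1.3 and~1.2 of Kousek--Radi\'c. The paper's only extra remark is that the first supremum does not change if $t$ ranges over $\N$ rather than $t\ge0$, using $C=\{b-1:b\in B,\ b\ge2\}$; this is not needed for the statement as displayed.
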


\begin{proof}
The lower bounds come from \cref{KR:thm:main}. For the upper bounds note that
$\ldens(A)\ge\mathsf d_*(A)$ (take $M=1$ in the definition). Hence $\mathsf d_*(A)>\frac12$ gives
$B+B+t\subset A$ for some infinite $B$ and $t\in\{0,1\}$ by~\cite[Theorem~1.3]{kousek2024infinite},
and $\mathsf d_*(A)>\frac34$ gives $B+B\subset A$ by~\cite[Theorem~1.2]{kousek2024infinite}.
Since $B+B\subset A$ implies $C+C+2\subset A$ for the infinite set $C=\{b-1:b\in B,\ b\ge2\}$, the
first supremum is the same whether $t$ ranges over $t\ge0$ or over $t\in\N$, as in the question.
\end{proof}

\begin{remark}\label{KR:rem:credit}
The obstruction is the logarithmic-scale mechanism used in~\cite[Example~3.6]{kra2023problems} and
in~\cite[Proposition~5.1]{kousek2024infinite}: for $b'$ fixed and $b\to\infty$, the numbers $2b+t$ and
$b+b'+t$ differ by a factor close to $2$, so the phase $\kappa\log n$ of the two differs by about
$\frac12$. We combine it with an irrational rotation $n\alpha$, which makes the set uniformly dense
(the logarithmic obstruction alone has lower Banach density $0$), while the rotation phases of $2b+t$ and
$b+b'+t$ still nearly agree along a subsequence of $B$ on which $b\alpha$ converges, because both have
total coefficient $2$ on $B$. The proof uses the diagonal element $2b+t$. The sharpness examples
of~\cite[Proposition~4.2]{kousek2024infinite} for lower density contain arbitrarily long gaps, so they
have lower Banach density $0$; \cref{KR:cor:main} shows that lower Banach density gives no improvement
over lower density. We do not know whether the boundary values are attained, that is, whether some
$A$ with $\mathsf d_*(A)=\frac12$ contains no $B+B+t$, or some $A$ with $\mathsf d_*(A)=\frac34$ contains
no $B+B$.
\end{remark}

\begin{lemma}\label{KR:lem:rot}
For $L\in\N$ let
\[
D_L=\sup_{\gamma\in\R}\ \sup_{0\le a\le b\le1}\ \frac1L\Big|\#\{0\le j<L:\ \{j\alpha+\gamma\}\in[a,b)\}-(b-a)L\Big| .
\]
Then $D_L\to0$ as $L\to\infty$.
\end{lemma}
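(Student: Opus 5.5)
The claim is a uniform discrepancy statement for the Kronecker sequence $(j\alpha)$ with $\alpha$ irrational. I would deduce it from Weyl equidistribution plus a compactness argument in the shift $\gamma$.

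\emph{Step 1: reduce to $\gamma\in[0,1)$.} The count depends on $\gamma$ only modulo $1$, so we may take $\gamma\in[0,1)$. Then
\[
\{j\alpha+\gamma\}\in[a,b) \iff \{j\alpha\}\in[a-\gamma,b-\gamma)\pmod 1 .
\]
The right-hand side is an arc of length $b-a$ in $\T$; it is either an interval in $[0,1)$ or a union of two intervals. So it suffices to show that the star discrepancy
\[
D^*_L=\sup_{0\le x\le1}\frac1L\big|\#\{0\le j<L:\{j\alpha\}<x\}-xL\big|
\]
tends to $0$. Every interval count is a difference of two such counts, and an arc is a union of at most two intervals. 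This gives $D_L\le 4D^*_L$.

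\emph{Step 2: $D^*_L\to0$.} By Weyl's criterion, $\frac1L\sum_{j<L}e(hj\alpha)\to0$ for each $h\ne0$, since $h\alpha\notin\Z$ and the geometric sum is bounded by $1/|\sin(\pi h\alpha)|$. Hence $\frac1L\sum_{j<L}f(j\alpha)\to\int_\T f$ for every continuous $f$: approximate $f$ uniformly by trigonometric polynomials.

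Approximating $\one_{[0,x)}$ from above and below by continuous functions gives $\frac1L\#\{j<L:\{j\alpha\}<x\}\to x$ for each fixed $x$. To get uniformity in $x$, fix $\eps>0$ and the finite grid $x_i=i/K$ with $K>1/\eps$. Let $L_0$ be such that for $L\ge L_0$ the normalized counts are within $\eps$ of $x_i$ for every grid point $x_i$. For general $x\in[x_i,x_{i+1}]$, monotonicity of the count sandwiches the normalized count between $x_i-\eps$ and $x_{i+1}+\eps$. So the error is at most $2\eps$, uniformly in $x$, and therefore uniformly in $\gamma,a,b$ by Step 1.

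The only point needing care is this uniformity in $\gamma,a,b$. The grid-and-monotonicity argument handles it once the problem is reduced to the unshifted sequence $\{j\alpha\}$, so I do not expect a real obstacle. Alternatively, one could quote the Erdős--Turán inequality directly; it gives $D_L\ll \frac1H+\sum_{h\le H}\frac1h\big|\frac1L\sum_{j<L}e(hj\alpha)\big|$, which is independent of $\gamma$ because the shift only multiplies each exponential sum by a unimodular factor.
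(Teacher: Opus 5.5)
Your proof is correct and is essentially the paper's argument: you apply Weyl equidistribution on a fixed finite grid, then use a sandwich/monotonicity step to get uniformity in $\gamma,a,b$. The only cosmetic difference is that you first reduce to the star discrepancy of $\{j\alpha\}$ (getting $D_L\le 4D_L^*$). The paper instead directly traps an arbitrary shifted arc between unions of the $K$ arcs $[\frac iK,\frac{i+1}K)$, which gives $D_L\le 4/K$.
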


\begin{proof}
Let $K\ge2$. By Weyl's equidistribution theorem, for each of the $K$ arcs $I_i=[\frac iK,\frac{i+1}K)$
of $\T$ we have $\#\{0\le j<L:j\alpha\bmod1\in I_i\}=L/K+o(L)$, so there is $L_0$ such that for $L\ge
L_0$ each of these counts lies in $[(1-\frac1K)\frac LK,(1+\frac1K)\frac LK]$. Now $\{j\alpha+\gamma\}\in
[a,b)$ if and only if $j\alpha\bmod1$ lies in the arc $J=[a-\gamma,b-\gamma)$ of length $b-a$. The arc
$J$ contains at least $K(b-a)-2$ and is covered by at most $K(b-a)+2$ of the arcs $I_i$. Hence for
$L\ge L_0$ the count lies between $(K(b-a)-2)(1-\frac1K)\frac LK$ and $(K(b-a)+2)(1+\frac1K)\frac LK$,
so $D_L\le 4/K$.
\end{proof}

\begin{proof}[Proof of \cref{KR:thm:main}\ref{KR:it:dens}]
Let $0<\eps<\min(c,1-c)$, choose $L$ with $D_L\le\eps/4$, put $K=\lceil4\kappa/\eps\rceil$, and let
$H\ge H_0=8(K+1)L/\eps$. Let $W=[M,M+H)$ with $M\ge1$. At most $KL\le\eps H/8$ integers of $W$ are
smaller than $KL$. Split the rest of $W$ into consecutive blocks $\{m,\dots,m+L-1\}$, $m\ge KL$, and a
remainder of fewer than $L\le\eps H/8$ integers. For $n=m+j$, $0\le j<L$, we have
\[
\varphi(n)=j\alpha+\gamma_m-\eta_n,\qquad \gamma_m=m\alpha-\kappa\log m,\qquad
\eta_n=\kappa\log\frac{m+j}m\in\Big[0,\frac{\kappa L}m\Big)\subset\Big[0,\frac\eps4\Big).
\]
For $x\in\R$ and $0\le\eta<\eps/4$ we have, since $c+\eps/4<1$,
\[
\{x\}\in[\tfrac\eps4,c)\ \Longrightarrow\ \{x-\eta\}\in[0,c)\ \Longrightarrow\ \{x\}\in[0,c+\tfrac\eps4).
\]
Applying this with $x=j\alpha+\gamma_m$ and using \cref{KR:lem:rot}, each block contains between
$L(c-\frac\eps2)$ and $L(c+\frac\eps2)$ elements of $E$. If $P$ denotes the number of integers of $W$ lying in
blocks, then $(1-\frac\eps4)H\le P\le H$, and therefore
\[
(c-\tfrac\eps2)(1-\tfrac\eps4)H\ \le\ |E\cap W|\ \le\ (c+\tfrac\eps2)H+\tfrac\eps4H,
\]
so $\big||E\cap W|-cH\big|\le\eps H$. For $A'$: a window $[M,M+H)$ contains $\frac H2+O(1)$ odd numbers,
and its even elements of $A'$ are the numbers $2n$ with $n$ in $E\cap[\frac M2,\frac{M+H}2)$, a window of
length $\frac H2+O(1)$; so by the above they number $\frac{cH}2+o(H)$, uniformly in $M$. Hence $A'$ has
uniform density $\frac{1+c}2=\frac34-\delta$.
\end{proof}

\begin{proof}[Proof of \cref{KR:thm:main}\ref{KR:it:E}]
Let $\eta=\frac12-c=2\delta$, and suppose that $t\in\Z$ and $B\subset\N_0$ is infinite with
$B+B+t\subset E$. By compactness of $\T$ there are an infinite $B_1\subset B$ and $\beta\in\T$ with
$b\alpha\to\beta$ in $\T$ as $b\to\infty$ in $B_1$. Choose $b'\in B_1$ with $\|b'\alpha-\beta\|<\eta/4$, where
$\|\cdot\|$ is the distance to the nearest integer. Since $(2b+t)/(b+b'+t)\to2$ as $b\to\infty$, we can
choose $b\in B_1$, $b>b'$, with
\[
\|b\alpha-\beta\|<\frac\eta4\qquad\text{and}\qquad
|r|<\frac\eta2,\quad\text{where } r=\kappa\Big(\log\frac{2b+t}{b+b'+t}-\log2\Big).
\]
The numbers $m_1=2b+t=b+b+t$ and $m_2=b+b'+t$ belong to $B+B+t\subset E$ (here the diagonal element
$b+b$ is used). Let $x=\{\varphi(m_1)\}$ and $y=\{\varphi(m_2)\}$. Then $x,y\in[0,c)$, so
$x-y\in(-c,c)$ and
\[
\operatorname{dist}\big(x-y,\tfrac12+\Z\big)\ \ge\ \tfrac12-c\ =\ \eta .
\]
On the other hand,
\[
\varphi(m_1)-\varphi(m_2)=(b-b')\alpha-\kappa\log\frac{2b+t}{b+b'+t}=(b-b')\alpha-\kappa\log2-r,
\]
and $\kappa\log2\in\frac12+\Z$. Since $x-y\equiv\varphi(m_1)-\varphi(m_2)\pmod1$, we get
\[
\operatorname{dist}\big(x-y,\tfrac12+\Z\big)\le\|(b-b')\alpha\|+|r|
\le\|b\alpha-\beta\|+\|\beta-b'\alpha\|+|r|<\eta,
\]
a contradiction.
\end{proof}

\begin{proof}[Proof of \cref{KR:thm:main}\ref{KR:it:A}]
Suppose $B\subset\N$ is infinite and $B+B\subset A'$. Choose $\sigma\in\{0,1\}$ such that
$B_\sigma=B\cap(2\Z+\sigma)$ is infinite, and let $C=\{(b-\sigma)/2:b\in B_\sigma\}\subset\N_0$. For
$c_1,c_2\in C$ the number $(2c_1+\sigma)+(2c_2+\sigma)=2(c_1+c_2+\sigma)$ is an even element of $A'$,
so $c_1+c_2+\sigma\in E$. Thus $C+C+\sigma\subset E$, contradicting part~\ref{KR:it:E}.
\end{proof}

\formalization{\Cref{KR:thm:main} and the lower bounds in \cref{KR:cor:main} are formalized in
\leanfile{KR52} (\leanname{main}, \leanname{q52_negative}, \leanname{sup_lower_bounds}). The
upper bounds in \cref{KR:cor:main} are Theorems~1.2 and~1.3 of~\cite{kousek2024infinite} and
are not formalized.}

%

\section{Ordered polynomial sumsets}\label{IA:sec:ordered}

In this section we answer two unnumbered questions from~\cite[\S3.3]{kra2023problems}. In
\cref{IA:sec:u33a} we determine exactly which ordered polynomial sums
$\{f(b_1)+g(b_2):b_1<b_2\}$ appear, up to a shift, in every set of positive density: those with
$\deg f\le1$ and $\deg g\le1$. In \cref{IA:sec:u33e} we observe that the natural generalisations of
Question~3.18 of~\cite{kra2023problems} have negative answers. Both answers rely on work of
Ackelsberg~\cite{ackelsberg2024counterexamples} and Kousek~\cite{kousek2025asymmetric}; our
contribution is small and we say precisely what it is.

\subsection{Which ordered polynomial sums are unavoidable}\label{IA:sec:u33a}
For functions $f,g$ and $B\subset\N$ write
\[
f(B)\oplus_<g(B)=\{f(b_1)+g(b_2):\ b_1,b_2\in B,\ b_1<b_2\}.
\]
The survey considers these ``ordered sums'' for polynomials $f,g$ ``over $\N$''
\cite[(5)]{kra2023problems}, notes that they are partition regular by Ramsey's theorem, and asks:

\begin{question}{\S3.3(a)}\label{IA:q:u33a}
Let $f$ and $g$ be polynomials over $\N$. Can instances of $f(B)\oplus_<g(B)$ be found, up to a
shift, in every set with positive density? That is, does every $A\subset\N$ of positive density
contain $f(B)\oplus_<g(B)+t$ for some infinite $B\subset\N$ and some $t\ge0$?
\end{question}

The survey does not specify the density; its default is upper Banach density. Special cases are
Theorem~2.2 of~\cite{kra2023problems} ($f=g=x$, positive, in the restricted form), Conjecture~3.10
($f,g$ linear), Question~3.13 ($f=x^2$, $g=x$) and Example~3.14 ($g=x^2$). We interpret ``polynomials
over $\N$'' in the widest sensible way and let
\[
\mathcal P=\{f\in\Q[x]:\ f(\N)\subset\N_0\};
\]
this contains all polynomials with coefficients in $\N_0$. We use the convention
$\deg0=-\infty$, so that ``$\deg f\le1$'' means that $f$ is constant or linear.

\begin{theorem}\label{IA:thm:u33a}
Let $f,g\in\mathcal P$.
\begin{enumerate}[label=\textup{(\alph*)}]
\item\label{IA:it:u33a-pos} If $\deg f\le1$ and $\deg g\le1$, then for every $A\subset\N$ with
$\ubd(A)>0$ there are an infinite $B\subset\N$ and $t\ge0$ with $f(B)\oplus_<g(B)+t\subset A$.
\item\label{IA:it:u33a-neg} If $\max(\deg f,\deg g)\ge2$, then for every $\eps>0$ there is a set
$A\subset\N$ with $\ldens(A)>1-\eps$ such that $f(B)\oplus_<g(B)+t\not\subset A$ for every
infinite $B\subset\N$ and every $t\in\Z$. Moreover, one set $A$ works simultaneously for all such
pairs $(f,g)$.
\end{enumerate}
\end{theorem}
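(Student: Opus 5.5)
For part~\ref{IA:it:u33a-pos} I would reduce to a single published theorem on ordered linear sums. Write $f(x)=a_1x+a_0$ and $g(x)=c_1x+c_0$ with $a_1,c_1\in\N_0$. If a leading coefficient were negative, then $f(\N)\subset\N_0$ would fail, and the same integrality argument applies since $f\in\Q[x]$. So $a_1,c_1$ are nonnegative rationals. Multiplying through by a common denominator and absorbing constants into the shift, the configuration $f(B)\oplus_<g(B)+t$ becomes $\{a_1b_1+c_1b_2:b_1<b_2\}+t'$.

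I would split into cases. If $a_1=c_1=0$, the set is a single point, which is trivial. If exactly one of $a_1,c_1$ is zero, I need $c_1B'+t\subset A$ (up to a finite initial segment) for infinite $B'$. This follows from pigeonholing $A$ into a residue class mod $c_1$ of positive density, where the class contains an infinite arithmetic shift. For $a_1,c_1>0$ I would invoke Kousek's theorem on asymmetric sumsets \cite[Theorem~1.2]{kousek2025asymmetric}. To my understanding, it gives, for every $A$ of positive upper Banach density and positive integers $a,c$, an infinite $B$ and a shift $t$ with $\{ab_1+cb_2:b_1<b_2\}+t\subset A$, i.e.\ Conjecture~3.10 of the survey.

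The rational-coefficient reduction needs care. Elements of $\mathcal P$ of degree one have the form $f(x)=(ux+v)/w$ with integer values on $\N$. I would pass to a subset $B$ inside a fixed residue class $x\equiv r\pmod w$, write $x=wy+r$, and obtain an integer linear polynomial in $y$. This change of variable preserves order and infinitude. The main point to verify here is the exact form of Kousek's hypothesis (density notion, sign of $t$); I expect this to be bookkeeping rather than an obstacle.

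For part~\ref{IA:it:u33a-neg} I would follow Ackelsberg's counterexample construction \cite{ackelsberg2024counterexamples} (and the survey's Example~3.14). The idea is a set $A$ of lower density $>1-\eps$ that avoids, in each dyadic-type scale $[N_k,N_k^{1+\cdot})$, a sparse union of arithmetic structures hitting every configuration. The key structural fact is the following. If, say, $\deg g\ge2$, fix $b_1<b_2$ and let $b_3\to\infty$. The two elements $f(b_1)+g(b_3)+t$ and $f(b_2)+g(b_3)+t$ differ by the fixed amount $f(b_2)-f(b_1)$. Meanwhile, consecutive values $g(b_3)$ along $B$ become spread out with gaps tending to infinity. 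The case $\deg f\ge2>\deg g$ is handled symmetrically using fixed $b_2$ and varying $b_1<b_2$, which is less clean since $b_1<b_2$. So I would rather exploit the quadratic growth differently: choose $A$ to be the complement of $\bigcup_k(X_k)$, where $X_k$ is a union of intervals of length $h_k\to\infty$ placed at sparse positions of density $<\eps 2^{-k}$. Then an infinite family of elements with bounded pairwise differences $D$ but unboundedly spread locations cannot avoid all $X_k$.

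The obstacle is making one $A$ work for all pairs and all $t\in\Z$ simultaneously, since a single "bounded difference" argument seems to force $A$ to have long gaps and hence lower density. The remedy I would try first is the one from Ackelsberg. Use multiplicative/logarithmic obstructions at the scale of $g(b)$: when $\deg g=d\ge2$, the ratio $g(b_3)/g(b_2)$ for fixed $b_2$ and large $b_3$ is unbounded and the values are polynomially sparse. Then put $A=\N\setminus\bigcup_j\{n:\ \|\,\theta_j\sqrt[d]{n}\,\|<\eps_j\}$ for a countable family (indexed by $d$, leading coefficients and $t$), with $\sum\eps_j<\eps$. Such a set has lower density $>1-\eps$ by equidistribution of $\theta n^{1/d}$. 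A configuration whose top entries lie in $A$ would force $g(b)^{1/d}$ along $B$ to avoid a Bohr-type set in $\sqrt[d]{\cdot}$-coordinates while shifts by $f(b_1)$ are negligible at that scale. A pigeonhole/compactness step along $B$ would then produce a contradiction. Verifying that this avoidance genuinely fails for every infinite $B$ is the step I expect to be hardest; if it fails, I would fall back to citing Ackelsberg's examples directly and checking they cover all degree patterns.
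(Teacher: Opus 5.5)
Your part~(a) follows the paper: Kousek's theorem~\cite[Theorem~1.2]{kousek2025asymmetric} when both slopes are positive, and the elementary residue-class and single-point arguments otherwise. Two pieces of bookkeeping are simpler than you suggest. First, a linear $f\in\mathcal P$ already has integer coefficients ($m=f(2)-f(1)$ and $c=f(1)-m$), so no change of variables is needed. Multiplying $f,g$ by a common denominator would change the configuration, so it is not a valid reduction anyway. Second, the sign of the shift is handled by translating $B$: replacing Kousek's $B$ by $B_j=\{b-j:b\in B,\ b>j\}$ turns the required shift into $t-c-c'+(m+\ell)j$, which is $\ge0$ for large $j$.

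Part~(b) has a genuine gap. Your root-phase construction can only handle the case $\deg g\ge2$, and even there the frequency must be matched to $g$. If $g(x)=ax^d+a_{d-1}x^{d-1}+\cdots$, then $(g(b)/a)^{1/d}=b+\phi+O(1/b)$ with $\phi=a_{d-1}/(da)\in\Q$, so you should remove $\{n:\|q(n/a)^{1/d}\|<\eps'\}$ with $q\phi\in\Z$. For example, with $g(x)=x^2+x$ and $\theta=1$ one gets $\|\sqrt{g(b)+C}\|\to\frac12$, so these elements are never removed. With the matched frequency no compactness is needed: for $b_1=\min B$ fixed, $f(b_1)+g(b)+t$ lies in the removed set for all large $b\in B$. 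You also need a diagonal lemma such as \cref{IA:lem:subadd}, because upper density is not countably subadditive and $\sum_j\eps_j<\eps$ alone does not give $\ldens(A)>1-\eps$. With these repairs you get a valid and more elementary proof when $\deg g\ge2$.

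Nothing in your proposal handles $\deg f\ge2$ with $g$ linear, for instance $\{b_1^2+b_2:b_1<b_2\}$ (Question~3.13 of the survey). The ``symmetric'' argument with $b_2$ fixed is empty, since only finitely many $b_1<b_2$ exist. Moreover, for a sufficiently lacunary $B$ each element $f(b_1)+g(b_2)+t$ equals $g(b_2)$ up to a perturbation negligible at that scale. So a condition on an archimedean phase of $n$ only restricts where individual elements of $B$ may lie, and an infinite $B$ can accommodate that.

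The missing idea is a congruence obstruction. Put $h=f+g$. Nonconstant elements of $\mathcal P$ have positive leading coefficients, so $\deg h\ge2$. By Ackelsberg's Lemma~4.3 in~\cite{ackelsberg2024counterexamples}, $h$ is not onto modulo a set of primes with divergent reciprocal sum. Taking $k$ to be a product of such primes makes $|R_k(h)|/k$ arbitrarily small. If $Df,Dg\in\Z[x]$ and $B'=B\cap(n+Dk\Z)$ is infinite, then $f(b_1)+g(b_2)\equiv h(n)\pmod k$ for all $b_1,b_2\in B'$. Hence the infinite set $f(B')\oplus_<g(B')+t$ lies in $E=\{x:(x-t)\bmod k\in R_k(h)\}$, which has density $|R_k(h)|/k$. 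Now enumerate the countably many triples $(f_j,g_j,t_j)$ and choose $k_j$ with $|R_{k_j}(h_j)|/k_j<\eps2^{-j}$. Let $A=\N\setminus S$, where $S$ contains every $E_j$ up to a finite set and $\udens(S)\le\sum_j|R_{k_j}(h_j)|/k_j<\eps$; this single set works for all pairs. Your fallback of citing Ackelsberg directly does not close the gap. His Theorem~1.5 is stated for $P(b_1)+b_2$, i.e.\ $g(x)=x$. The cases $\deg f\ge2$, $g(x)=mx+c$ with $m\ne1$, and the simultaneous statement all require rerunning his argument with $f+g$ in place of $P(x)+x$, and that is the step you have not supplied.
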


The hypothesis in~\ref{IA:it:u33a-pos} is positive upper Banach density, and the set in
\ref{IA:it:u33a-neg} has lower density close to $1$, so its upper and lower natural densities, its
upper and lower logarithmic densities and its upper Banach density are all positive. Hence the classification
is the same for each of these notions of density in \cref{IA:q:u33a}.

\begin{remark}[Attribution]\label{IA:rem:u33a-credit}
Part~\ref{IA:it:u33a-pos} is a direct consequence of Kousek's theorem~\cite[Theorem~1.2]{kousek2025asymmetric}
(which proved Conjecture~3.10 of~\cite{kra2023problems}); we only add the bookkeeping for constant
terms and zero coefficients. Part~\ref{IA:it:u33a-neg} was known for $\deg g\ge2$ in the reversed or
sparse forms of~\cite[Example~3.14]{kra2023problems}, and for $g(x)=x$ by Ackelsberg's
theorem~\cite[Theorem~1.5]{ackelsberg2024counterexamples}. Our proof of~\ref{IA:it:u33a-neg} is
Ackelsberg's argument with $P(x)+x$ replaced by $f(x)+g(x)$; to our knowledge, the cases with
$\deg f\ge2$ and $g(x)=mx+c$, $m\ne1$, were not covered explicitly in the literature.
\end{remark}

\begin{remark}[Scope]\label{IA:rem:u33a-scope}
\begin{enumerate}[label=(\roman*)]
\item The argument for~\ref{IA:it:u33a-neg} does not use the order $b_1<b_2$: the same set $A$
also avoids $\{f(b_1)+g(b_2):b_1>b_2\}+t$ and $\{f(b_1)+g(b_2):b_1\neq b_2\}+t$.
\item Integer polynomials that are not $\N_0$-valued on $\N$ are outside \cref{IA:q:u33a}, and the
classification does not extend to them. For instance, for $f(x)=-x$ and $g(x)=x$ the configuration is
the infinite set $\{b_2-b_1:b_1<b_2\}$ and $f+g=0$; since $|R_k(0)|=1$, the proof
of~\ref{IA:it:u33a-neg} below shows that it can be avoided by sets of lower density $>1-\eps$, although
both polynomials are linear.
\item The survey remarks that it is immaterial whether one uses $f(B)\oplus_<g(B)$ or
$\{r+s:r\in f(B),s\in g(B),r<s\}$. This is not accurate in general. If $g$ is non-constant and
$f(b)\ge g(b)$ for all large $b$, the two existence problems are equivalent, after passing to an
infinite subset of $B$: refining $B$ we may assume
$f(b_1)<g(b_2)$ for all $b_1<b_2$ in $B$, and also that $f$ is non-decreasing and $f\ge g$ on $B$, so
that $f(b_1)<g(b_2)$ forces $b_1<b_2$. If instead $f(b)<g(b)$ for all large $b$, the second set
contains the diagonal elements $f(b)+g(b)$, and the answer can change even for linear $f,g$: for
$f(x)=x$, $g(x)=2x$ the second set contains $\{b_1+2b_2:b_1\le b_2\}$, and
Kousek~\cite[Proposition~5.1]{kousek2025asymmetric} (with $m=1$, $\ell=2$) gives a set of positive
upper density containing no shift $\{b_1+2b_2:b_1\le b_2\}+t$, $t\in\N$, with $B$ infinite (the
case $t=0$ reduces to $t=3$ by replacing $B$ with $\{b-1:b\in B,\ b\ge2\}$). If $g\equiv c$ and $\deg f\ge2$, the
second set is finite, while $f(B)\oplus_<g(B)=f(B)+c$ is infinite (every element of $B$ has a
larger one).
\Cref{IA:thm:u33a} uses the formulation~\cite[(5)]{kra2023problems}.
\item In $\Q$ the situation is different: Ackelsberg~\cite[Theorem~1.4]{ackelsberg2025infinite}
proved that if $P\in\Q[x]$ and $P(x)+x$ are non-constant, then every $A\subset\Q$ of positive upper
Banach density contains $\{b_i,\ P(b_i)+b_j:i<j\}+t$ for some infinite $\{b_n\}\subset\Q$ and some
$t\in\Q$.
\end{enumerate}
\end{remark}

We collect elementary facts about $\mathcal P$.

\begin{lemma}\label{IA:lem:P}
Let $f,g\in\mathcal P$.
\begin{enumerate}[label=\textup{(\roman*)}]
\item $f(\Z)\subset\Z$, and there is $D\in\N$ with $Df\in\Z[x]$. For such $D$ and every $k\in\N$,
$n\equiv n'\pmod{Dk}$ implies $f(n)\equiv f(n')\pmod k$.
\item If $f$ is non-constant, its leading coefficient is positive. Consequently
$\deg(f+g)=\max(\deg f,\deg g)$.
\item If $\deg f\le1$, then $f(x)=mx+c$ with $m\in\N_0$ and $c\in\Z$.
\end{enumerate}
\end{lemma}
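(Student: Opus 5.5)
The plan is to deduce all three parts from the classical description of integer-valued polynomials by binomial coefficients. The main input is the finite-difference fact: if $f\in\Q[x]$ has degree $d$ and $f(n)\in\Z$ for the $d+1$ consecutive integers $n=1,\dots,d+1$, then
\[
f(x)=\sum_{j=0}^d c_j\binom{x-1}{j},\qquad c_j=(\Delta^jf)(1)\in\Z .
\]
This follows from Newton's forward-difference formula, since the $c_j$ are integer combinations of $f(1),\dots,f(d+1)$. As $f(\N)\subset\N_0$, this gives $f(\Z)\subset\Z$, because each $\binom{x-1}{j}$ is integer-valued on $\Z$.

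For the rest of (i), we take $D=d!$. Then $D\binom{x-1}{j}\in\Z[x]$ for each $j\le d$, so $Df\in\Z[x]$. Now suppose $n\equiv n'\pmod{Dk}$. Since $Df\in\Z[x]$, we get $Df(n)\equiv Df(n')\pmod{Dk}$. Dividing by $D$ gives $f(n)\equiv f(n')\pmod k$, which is legitimate because $f(n)$ and $f(n')$ are integers.

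For (ii), a non-constant $f$ with negative leading coefficient satisfies $f(n)\to-\infty$, which contradicts $f(\N)\subset\N_0$. So any non-constant element of $\mathcal P$ has positive leading coefficient.
\begin{itemize}
\item If $\deg f\ne\deg g$, the claim $\deg(f+g)=\max(\deg f,\deg g)$ is immediate.
\item If $\deg f=\deg g\ge1$, the two leading coefficients are positive, so they cannot cancel.
\item If both are constants, they are nonnegative integers, and the convention $\deg0=-\infty$ handles the case $f=g=0$. A constant $f$ equals $f(1)\in\N_0$.
\end{itemize}

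For (iii), write $f(x)=mx+c$ with $m,c\in\Q$. Then $c=f(0)\in\Z$ by (i), and $m=f(1)-f(0)\in\Z$. By (ii), $m\ge0$ if $f$ is non-constant, and $m=0$ otherwise.

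There is no real obstacle here. The only point needing care is the integrality of the $c_j$, which we justify by citing Newton's forward-difference formula.
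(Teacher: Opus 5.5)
Your proof is correct and is essentially the paper's argument: Newton's forward-difference formula with integer coefficients $\Delta^jf(1)$ for (i), the positive leading coefficient forced by $f(\N)\subset\N_0$ for (ii), and reading off $m$ and $c$ from values of $f$ for (iii). Make explicit that your congruence step uses only $Df\in\Z[x]$, so it holds for every such $D$ as the statement requires, not only for $D=d!$; the later application uses a common $D$ for $f_j$ and $g_j$.
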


\begin{proof}
(i) Write $f(x)=\sum_{i}\Delta^if(1)\binom{x-1}{i}$ (Newton's forward difference formula); the
numbers $\Delta^if(1)$ are integers because $f(\N)\subset\Z$, and $\binom{x-1}i\in\Z$ for $x\in\Z$.
A common denominator $D$ of the coefficients exists since $f\in\Q[x]$. If $n-n'=Dks$, then
$D(f(n)-f(n'))=(n-n')Q$ for some $Q\in\Z$ (as $Df\in\Z[x]$), so $f(n)-f(n')=ksQ$.
(ii) A non-constant polynomial with negative leading coefficient tends to $-\infty$. If
$\deg f=\deg g$, the leading coefficients are both positive and do not cancel.
(iii) $m=f(2)-f(1)\in\Z$, $c=f(1)-m\in\Z$, and $m\ge0$ by (ii).
\end{proof}

For an integer-valued $h\in\Q[x]$ and $k\in\N$ let $R_k(h)=\{h(n)\bmod k:n\in\Z\}\subset\Z/k\Z$.

\begin{lemma}[Ackelsberg]\label{IA:lem:Rk}
Let $h$ be a polynomial in $\Q[x]$ of degree at least $2$ such that $h(\Z)\subset\Z$. Then
$\inf_{k\in\N}|R_k(h)|/k=0$.
\end{lemma}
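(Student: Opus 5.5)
We will show that the density of the value set modulo prime powers tends to zero. Write $h=g/D$ with $g\in\Z[x]$ primitive up to the factor $D$, say $g(x)=\sum_{i\le d}c_ix^i$ with $d=\deg h\ge2$ and $c_d\neq0$.

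\emph{Reduction.} Pick a prime $p\nmid Dc_d$; there are infinitely many. Take $k=p^e$. Since $D$ is invertible modulo $p^e$, the map $n\mapsto h(n)\bmod p^e$ equals $D^{-1}g(n)\bmod p^e$. Hence $|R_{p^e}(h)|=|R_{p^e}(g)|$, and it suffices to bound the number of residues of $g$ modulo $p^e$.

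\emph{Counting via derivatives and Hensel.} Consider $g'$. Its leading coefficient $dc_d$ is nonzero modulo $p$ once also $p\nmid d$. The plan is to pick $p$ such that $g'$ has a root modulo $p$; if needed, adjust by a shift $x\mapsto x+a$, which does not change $R_k$. The simplest route is to compare fibres instead.

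For $y\in\Z/p\Z$ let $N(y)$ be the number of solutions of $g(x)\equiv y\pmod p$. Then $|R_p(g)|\le p-\#\{\text{collisions}\}$, so it suffices to get a deficiency at a single prime and then amplify along powers.

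\emph{Amplification.} Fix one prime $p\nmid Dc_d d$ and a residue $x_0$ with $g'(x_0)\equiv0\pmod p$; such a root exists for infinitely many $p$ by Chebotarev, or more elementarily because a nonconstant polynomial has prime divisors of its values. Write $x=x_0+pt$. Then
\[
g(x_0+pt)\equiv g(x_0)+p^2(\cdots)\pmod{p^2},
\]
so the $p$ lifts of $x_0$ hit only one class modulo $p^2$ in the fibre above $g(x_0)\bmod p$. This shows
\[
|R_{p^2}(g)|\le p^2-(p-1)\le p^2(1-c/p)
\]
for some $c>0$.

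For general $e$ we run the standard iteration. Group residues modulo $p^e$ by their residue modulo $p$. Above a nonsingular residue the values behave bijectively by Hensel, while a critical residue contributes at most about $p^{e-1}\cdot\theta$ values for some fixed $\theta<1$. Iterating the critical branch, via the recurrence $|R_{p^{e+1}}|\le p|R_{p^e}|-(\text{loss})$, this gives $|R_{p^e}(g)|/p^e\le 1-\delta_e$ with $\delta_e$ increasing.

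\emph{Expected obstacle: the naive estimate is too weak.} Proving that the density actually tends to zero, rather than only staying below $1$, is the main difficulty. A single prime with one critical point loses only a fixed proportion. To fix this we instead use many primes via the Chinese remainder theorem, which is the step we expect to be hardest. Take $k=p_1\cdots p_r$ for distinct primes $p_j\nmid Dc_d$ at which $g$ fails to be a permutation polynomial modulo $p_j$. Since $d\ge2$, a polynomial of degree $d<p$ that is a permutation modulo $p$ for infinitely many $p$ would be exceptional, and Dickson/Schur results exclude the relevant cases only partially. A cleaner route is the following. For $p\equiv1\pmod d$, or more generally whenever $g$ is nonbijective modulo $p$, some value has at least $2$ preimages, so $|R_p(g)|\le p-1$. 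This is not enough either, so we need a quantitative bound $|R_p(g)|\le(1-c_d)p+O(\sqrt p)$ with $c_d>0$, which follows from the Birch–Swinnerton-Dyer / Weil estimate that the value set has size $\mu p+O(\sqrt p)$ with $\mu\le 1-1/d!+\dots<1$ for non-permutation $g$.

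By CRT, $|R_k(g)|/k=\prod_j|R_{p_j}(g)|/p_j\le\prod_j(1-c_d/2)\to0$ as $r\to\infty$. This needs infinitely many primes where $g$ is not a permutation polynomial, which holds by Schur's theorem, or elementarily because for $p\equiv1\pmod{\mathrm{lcm}(1,\dots,d)}$ and $p$ large, Hermite's criterion fails. Ackelsberg's original argument likely uses a simpler choice of $k$. If the Weil bound feels too heavy, the fallback is the prime-power amplification, strengthened by choosing the critical point so that $g(x_0+pt)\bmod p^e$ only depends on $p^2t^2$-type terms and then recursing, making the critical fraction shrink geometrically.
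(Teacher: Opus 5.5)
The step you call the main difficulty rests on a misjudgment: the bound $|R_p(g)|\le p-1$ at non-surjective primes \emph{is} enough, and this is exactly the paper's proof. Take distinct primes $p_1,\dots,p_s$ at which $h$ is not onto $\Z/p_i\Z$. The Chinese remainder theorem gives
\[
\frac{|R_{p_1\cdots p_s}(h)|}{p_1\cdots p_s}\le\prod_i\Big(1-\frac1{p_i}\Big),
\]
and this product tends to $0$ as soon as $\sum 1/p$ over the non-surjective primes diverges. You do not need infinitely many primes with a \emph{uniform} loss, only a divergent sum of $1/p$. The paper takes this divergence from Ackelsberg's Lemma~4.3, but your own Hermite observation already supplies it. If $p\equiv1\pmod d$ and $p\nmid Dc_d$, then $g\bmod p$ has degree $d$ with $d\mid p-1$. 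So $g^{(p-1)/d}$ has degree $p-1$, and Hermite's criterion shows that $g$ does not permute $\Z/p\Z$. Moreover $\sum_{p\equiv1\ (d)}1/p=\infty$ by Dirichlet's theorem; this is how the Lean formalization proves Lemma~4.3.

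Your final route, CRT with a constant loss $c_d$ at infinitely many primes, can also be made to work. However, it needs a deep value-set theorem, stated precisely. Examples are Wan's bound $|R_p(g)|\le p-(p-1)/d$ for non-permutation $g$ of degree $d$, or Cohen's monodromy version of Birch--Swinnerton-Dyer combined with the fact that exceptional polynomials are permutations. As written (``$\mu\le1-1/d!+\dots$''), it is an assertion rather than a proof step, and it buys nothing over the elementary argument above.

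Two other ingredients should be dropped.
\begin{itemize}
\item Schur's conjecture (Fried's theorem) does not by itself give infinitely many non-permutation primes. Polynomials of degree at least $2$ such as $x^3$ permute $\Z/p\Z$ for infinitely many $p$. Hermite's criterion is the right tool here.
\item The prime-power fallback cannot succeed. For $p>d$ with $p\nmid c_d$, Hensel lifting over the at least $p-d+1$ residues where $g'\not\equiv0$ gives $|R_{p^e}(g)|\ge p^{e-1}(p-d+1)/d$. So the density modulo powers of a single prime stays bounded away from $0$; for $g=x^2$ it tends to $p/(2(p+1))$.
\end{itemize}
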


\begin{proof}
Let $\P_h$ be the set of primes $p$ such that $h(\Z)$ meets every residue class modulo $p$. By
Ackelsberg's Lemma~4.3 in~\cite{ackelsberg2024counterexamples}, $\sum_{p\in\P\setminus\P_h}1/p=\infty$,
so $\prod_{p\in\P\setminus\P_h}(1-1/p)=0$. If $p_1,\dots,p_s\in\P\setminus\P_h$ are distinct and
$k=p_1\cdots p_s$, then by the Chinese remainder theorem every element of $R_k(h)$ reduces modulo
$p_i$ to an element of $R_{p_i}(h)$, so $|R_k(h)|\le\prod_i|R_{p_i}(h)|\le\prod_i(p_i-1)$ and
$|R_k(h)|/k\le\prod_i(1-1/p_i)$. This is the argument of the proof
of~\cite[Theorem~1.5]{ackelsberg2024counterexamples}, applied to $h$ instead of $P(x)+x$.
\end{proof}

\begin{lemma}\label{IA:lem:subadd}
Let $E_1,E_2,\ldots\subset\N$ have natural densities $\delta_1,\delta_2,\dots$. There is
$S\subset\N$ such that $E_j\setminus S$ is finite for every $j$ and $\udens(S)\le\sum_j\delta_j$.
\end{lemma}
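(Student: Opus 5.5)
The plan is a diagonal construction: remove from each $E_j$ only a finite initial segment, choosing the cut-off points so that at every scale only finitely many of the $E_j$ are "active". If $\sum_j\delta_j=\infty$ we take $S=\N$ and there is nothing to prove, so we assume $\sigma:=\sum_j\delta_j<\infty$.

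\textbf{Choice of thresholds.} For each $j$ we choose $M_j$ so that
\[
|E_j\cap[N]|\le(\delta_j+2^{-j}\eta)N\qquad\text{for all }N\ge M_j,
\]
where $\eta>0$ is a fixed small constant; any $\eta$ will do, since we finish by a limiting argument. This is possible because $E_j$ has natural density $\delta_j$. We then enlarge the $M_j$ so that they are strictly increasing, and put
\[
S=\bigcup_{j\ge1}\big(E_j\cap[M_j,\infty)\big).
\]
By construction $E_j\setminus S\subset[1,M_j)$ is finite for every $j$.

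\textbf{Estimating the density.} Fix $N$ and let $J=J(N)=\max\{j:M_j\le N\}$, or $J=0$ if there is no such $j$. For $j>J$ we have $M_j>N$, so the $j$-th piece does not meet $[N]$. Hence
\[
|S\cap[N]|\le\sum_{j\le J}|E_j\cap[N]|\le\sum_{j\le J}(\delta_j+2^{-j}\eta)N\le(\sigma+\eta)N,
\]
where the middle inequality uses $N\ge M_j$ for every $j\le J$. It follows that $\udens(S)\le\sigma+\eta$.

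\textbf{Removing the $\eta$.} To get the exact bound $\udens(S)\le\sigma$ without the $\eta$, we let the error shrink with $j$: replace $2^{-j}\eta$ by $2^{-j}\eta_j$ with $\eta_j\to0$. One way is to require, for each $j$ and each $i\le j$, that $|E_i\cap[N]|\le(\delta_i+2^{-i}/j)N$ for all $N\ge M_j$; since this is only finitely many conditions for each $j$, such $M_j$ exist. Then for $N\ge M_J$ the same computation gives $|S\cap[N]|\le(\sigma+1/J)N$. Since $J(N)\to\infty$ as $N\to\infty$, this yields $\udens(S)\le\sigma$.

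No step is a real obstacle. The only point needing care is arranging the thresholds so that the error term tends to zero, which the uniform-in-$i\le j$ choice of $M_j$ handles.
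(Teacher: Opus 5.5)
Your proof is correct and is essentially the paper's argument: you choose increasing thresholds $M_j$ that control $E_1,\dots,E_j$ simultaneously with an error vanishing as $j\to\infty$, set $S=\bigcup_j E_j\cap[M_j,\infty)$, and bound $|S\cap[N]|$ using only the pieces with $M_j\le N$. The only cosmetic difference is that the paper allows error $2^{-J}$ for each $i\le J$ (total $J2^{-J}$), while you allow $2^{-i}/J$ (total $1/J$).
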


\begin{proof}
This is the natural-density version of~\cite[Lemma~3.1]{ackelsberg2024counterexamples}. Choose
$M_1<M_2<\cdots$ such that $|E_i\cap[N]|\le(\delta_i+2^{-J})N$ for all $J$, all $i\le J$ and all
$N\ge M_J$, and let $S=\bigcup_j\big(E_j\cap(M_j,\infty)\big)$. Then $E_j\setminus S\subset[M_j]$.
If $M_J\le N<M_{J+1}$, then $S\cap[N]\subset\bigcup_{j\le J}(E_j\cap[N])$, so
$|S\cap[N]|\le(\sum_j\delta_j+J2^{-J})N$.
\end{proof}

\begin{proof}[Proof of \cref{IA:thm:u33a}\ref{IA:it:u33a-neg}]
The set of pairs $(f,g)\in\mathcal P^2$ with $\max(\deg f,\deg g)\ge2$ is countable; enumerate the
triples $(f,g,t)$ with such $(f,g)$ and $t\in\Z$ as $(f_j,g_j,t_j)_{j\ge1}$. Put $h_j=f_j+g_j$. By
\cref{IA:lem:P}, $h_j$ is integer-valued on $\Z$ and $\deg h_j\ge2$, so by \cref{IA:lem:Rk} there is
$k_j$ with $|R_{k_j}(h_j)|/k_j<\eps2^{-j}$. Let
\[
E_j=\{x\in\N:\ (x-t_j)\bmod k_j\in R_{k_j}(h_j)\},
\]
a union of residue classes of natural density $\delta_j=|R_{k_j}(h_j)|/k_j$. Let $S$ be given by
\cref{IA:lem:subadd} and $A=\N\setminus S$. Then $\ldens(A)=1-\udens(S)\ge1-\sum_j\delta_j>1-\eps$.

Suppose that $f_j(B)\oplus_<g_j(B)+t_j\subset A$ for some $j$ and some infinite $B$. Let $k=k_j$ and
let $D$ be such that $Df_j,Dg_j\in\Z[x]$. By the pigeonhole principle there is $n$ such that
$B'=B\cap(n+Dk\Z)$ is infinite. By \cref{IA:lem:P}(i), $f_j(b_1)+g_j(b_2)\equiv f_j(n)+g_j(n)=h_j(n)
\pmod k$ for all $b_1,b_2\in B'$, so $f_j(B')\oplus_<g_j(B')+t_j\subset A\cap E_j\subset E_j\setminus
S$, which is finite. But $f_j(B')\oplus_<g_j(B')$ is infinite: if $g_j$ is non-constant, fix
$b_1=\min B'$ and let $b_2\to\infty$; if $g_j$ is constant, then $\deg f_j\ge2$ and $f_j(b_1)\to\infty$
as $b_1\to\infty$ in $B'$. This contradiction proves~\ref{IA:it:u33a-neg} for every $(f,g)$ and every
$t\in\Z$ simultaneously.
\end{proof}

\begin{proof}[Proof of \cref{IA:thm:u33a}\ref{IA:it:u33a-pos}]
By \cref{IA:lem:P}(iii), $f(x)=mx+c$ and $g(x)=\ell x+c'$ with $m,\ell\in\N_0$ and $c,c'\in\Z$. Let
$\ubd(A)>0$; in particular $A$ is infinite.

If $m,\ell\ge1$, Kousek's theorem~\cite[Theorem~1.2]{kousek2025asymmetric} gives an infinite $B$ and
$t\in\N$ with $\{mb_1+\ell b_2:b_1,b_2\in B,\,b_1<b_2\}+t\subset A$. For $j\in\N$ let
$B_j=\{b-j:b\in B,\,b>j\}$. Then
\[
f(B_j)\oplus_<g(B_j)+t_j'=\{mb_1+\ell b_2:\ b_1,b_2\in B,\ j<b_1<b_2\}+t\subset A,
\qquad t_j'=t-c-c'+(m+\ell)j,
\]
and $t'_j\ge0$ for $j$ large.

If $m=0<\ell$, then $f(B)\oplus_<g(B)=\{\ell b+c+c':b\in B\setminus\{\min B\}\}$. Choose $r$ such
that $A\cap(r+\ell\Z)$ is infinite, then $r'\equiv r\pmod\ell$ with $r'\ge c+c'$, and put $t=r'-c-c'$
and $B=\{(x-r')/\ell:\ x\in A,\ x\equiv r\ (\mathrm{mod}\ \ell),\ x>r'\}$. Then $\ell b+c+c'+t\in A$ for
all $b\in B$. The case $\ell=0<m$ is the same, since then
$f(B)\oplus_<g(B)=\{mb+c+c':b\in B\}$ (every element of an infinite $B$ has a larger one). If
$m=\ell=0$, the configuration is the single point $c+c'$; take $t=x-c-c'$ for some $x\in A$ with
$x\ge c+c'$.
\end{proof}

\formalization{\Cref{IA:thm:u33a} is formalized in \leanfile{S33a}. Part~\ref{IA:it:u33a-neg}
(\leanname{thm_u33a_neg}) is unconditional; there Ackelsberg's Lemma~4.3, used in the proof of
\cref{IA:lem:Rk}, is proved by a different argument (Hermite's criterion and Dirichlet's
theorem). Part~\ref{IA:it:u33a-pos} (\leanname{thm_u33a_pos}) takes Kousek's
theorem~\cite[Theorem~1.2]{kousek2025asymmetric} as an explicit hypothesis.}

\subsection{Generalisations of Question~3.18}\label{IA:sec:u33e}
The survey asks whether shifts of
$\{b_i^2+b_j:i<j\}$ appear in every set of positive upper Banach density
\cite[Question~3.13]{kra2023problems}, and generalises this by adding the base point $b_j$
\cite[Question~3.15]{kra2023problems} and by replacing $x^2$ with arbitrary polynomials
$p_1,\dots,p_k\in\Z[x]$ with $p_\ell(0)=0$ \cite[Question~3.16]{kra2023problems}. Its
Remark~3.17 records that the answers to Questions~3.15 and~3.16 are negative by Ackelsberg's work:
the configuration of Question~3.15 contains that of Question~3.13, and Question~3.16 specialises to
it for the one-element family $\{x^2\}$; Ackelsberg showed that this configuration can be
avoided~\cite[Theorem~1.5]{ackelsberg2024counterexamples}. Then it asks:

\begin{question}{3.18}\label{IA:q:318}
If $A\subset\N$ has positive upper Banach density, must there exist an infinite set
$B=\{b_1<b_2<\cdots\}$ and $t\ge0$ such that $\{b_ib_j+b_k:\ i<j<k\}$ is contained in $A-t$?
\end{question}

\begin{question}{\S3.3(e)}\label{IA:q:u33e}
Similar to the way Question~3.13 is generalised in Questions~3.15 and~3.16, one can inquire about
analogous generalisations of Question~3.18.
\end{question}

Two natural literal interpretations are:
\begin{itemize}
\item[(F1)] (base point, as in Question~3.15) must every $A$ with $\ubd(A)>0$ contain
$\{b_k,\ b_ib_j+b_k:\ i<j<k\}+t$ for some infinite $B$ and $t\ge0$?
\item[(F2)] (polynomial family, as in Question~3.16) given $p_1,\dots,p_r\in\Z[x,y]$ with
$p_\ell(0,0)=0$, must every $A$ with $\ubd(A)>0$ contain
$\{p_\ell(b_i,b_j)+b_k:\ 1\le\ell\le r,\ i<j<k\}+t$ for some infinite $B$ and $t\ge0$?
\end{itemize}
As in Remark~3.17 of~\cite{kra2023problems}, (F2) is a question about all such families.

\begin{proposition}\label{IA:prop:u33e}
Both (F1) and (F2) have negative answers. More precisely, for every $\eps>0$ there is $A\subset\N$
with $\ubd(A)>1-\eps$ such that for no $t\in\Z$ and no infinite $B=\{b_1<b_2<\cdots\}\subset\N$ does
$A-t$ contain the configuration in (F1), or the configuration in (F2) for the one-element family
$p_1(x,y)=xy$.
\end{proposition}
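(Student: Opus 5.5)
The plan is to reuse the residue-class obstruction from the proof of \cref{IA:thm:u33a}\ref{IA:it:u33a-neg}, with the one-variable polynomial $h(x)=x^2+x$ in place of $f+g$. It suffices to treat (F2) for the family $p_1(x,y)=xy$. The configuration in (F1) contains $\{b_ib_j+b_k:i<j<k\}$, so a set $A$ for which no $A-t$ contains the latter also rules out (F1). Moreover, a set of lower density $>1-\eps$ has $\ubd(A)\ge\ldens(A)>1-\eps$, so I will aim for lower density.

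The key observation is the following. Suppose $B$ is infinite and $k\in\N$. By pigeonhole there is an infinite $B'\subset B$ contained in a single residue class $n+k\Z$. For $b_i,b_j,b_k\in B'$ we then have $b_ib_j+b_k\equiv n^2+n=h(n)\pmod k$. Hence every element of $\{b_ib_j+b_k:i<j<k\}+t$ formed from elements of $B'$ lies in the periodic set $\{x:(x-t)\bmod k\in R_k(h)\}$. Here the indices are taken relative to the increasing enumeration of $B'$; since $B'$ is a subset of $B$, the ordering $i<j<k$ is inherited, so these are still elements of the original configuration for $B$. Since $h$ has degree $2$ and is integer valued, \cref{IA:lem:Rk} gives, for every $\delta>0$, a modulus $k$ with $|R_k(h)|/k<\delta$.

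The construction then mirrors the earlier proof. Enumerate the shifts $t\in\Z$ as $t_1,t_2,\dots$. For each $j$ choose $k_j$ with $|R_{k_j}(h)|/k_j<\eps2^{-j}$, and put
\[
E_j=\{x\in\N:(x-t_j)\bmod k_j\in R_{k_j}(h)\},
\]
which has natural density $<\eps2^{-j}$. \Cref{IA:lem:subadd} produces $S$ with $\udens(S)<\eps$ and each $E_j\setminus S$ finite. Set $A=\N\setminus S$, so that $\ldens(A)>1-\eps$.

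Now suppose $\{b_ib_j+b_k:i<j<k\}+t_j\subset A$ for some infinite $B$. By the observation above, passing to $B'\subset B\cap(n+k_j\Z)$ puts infinitely many elements of the configuration into $A\cap E_j\subset E_j\setminus S$, which is finite. Infinitude is clear: fix the two smallest elements $b_1<b_2$ of $B'$ and let the third index run, so that $b_1b_2+b_k$ takes infinitely many values. This gives the contradiction.

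I expect no real obstacle, since \cref{IA:lem:Rk} (Ackelsberg's lemma) carries all the arithmetic. The only points to check are that restricting to $B'$ preserves the order constraint $i<j<k$, which holds because $B'\subset B$ inherits the ordering, and that the (F1) configuration indeed contains the (F2) configuration for $p_1=xy$.
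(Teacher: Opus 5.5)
Your argument is correct, but it takes a different route from the paper. The paper's proof is a two-line reduction. It quotes Ackelsberg's Theorem~1.7 from \cite{ackelsberg2024counterexamples}, which already supplies a set $A$ with $\ubd(A)>1-\eps$ avoiding every translate $\{b_1b_2+b_3:b_1<b_2<b_3\}+t$. It then notes that the (F1) configuration and the (F2) configuration for $p_1=xy$ both contain this one. You instead build such a set directly, by running the proof of \cref{IA:thm:u33a}\ref{IA:it:u33a-neg} with $h(x)=x^2+x$. The key fact is that $b_ib_j+b_k\equiv h(n)\pmod k$ once all three summands lie in $n+k\Z$. You handle the inheritance of the order $i<j<k$ to $B'\subset B$, the density bookkeeping via \cref{IA:lem:subadd}, and the infinitude of $\{b_1b_2+b:b\in B'\}$ correctly. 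Your route has three advantages. First, it is self-contained given \cref{IA:lem:Rk,IA:lem:subadd}. For this particular $h$ even \cref{IA:lem:Rk} is elementary: $|R_p(h)|=(p+1)/2$ for odd primes $p$, so products of many distinct odd primes give $|R_k(h)|/k\le(2/3)^s\to0$. Second, it yields the stronger conclusion $\ldens(A)>1-\eps$. Third, it extends verbatim to $\{p(b_i,b_j)+b_k:i<j<k\}$ for any $p\in\Z[x,y]$ with $\deg(p(x,x)+x)\ge2$, with one set serving all such $p$ and all $t$. It also explains why the example $p(x,y)=x(y-x)$ left open in \cref{IA:rem:u33e} is out of reach of this obstruction: there $p(x,x)+x=x$ is linear. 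The paper's route buys brevity by deferring to the published theorem. One cosmetic point: you use $j$ and $k$ both as configuration indices and as the shift index and modulus, so rename one pair.
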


\begin{proof}
Ackelsberg~\cite[Theorem~1.7]{ackelsberg2024counterexamples} proved that for every $\eps>0$ there is
$A\subset\N$ with $\ubd(A)>1-\eps$ such that whenever $t\in\Z$ and
$\{b_1b_2+b_3+t:\ b_1,b_2,b_3\in B,\ b_1<b_2<b_3\}\subset A$, the set $B$ is finite; this answers
\cref{IA:q:318} negatively. The configuration in (F1), and the configuration in (F2) for $p_1=xy$,
both contain $\{b_ib_j+b_k:i<j<k\}$, so this set $A$ has the required property.
\end{proof}

\formalization{\Cref{IA:prop:u33e} and the remark following it are formalized in
\leanfile{S33e} (\leanname{prop_u33e}, \leanname{F2_sq}), including proofs of the two quoted
theorems of Ackelsberg in the form used here.}

For (F2) one does not even need Question~3.18: for the one-element family $p_1(x,y)=x^2$, if
$\{b_i^2+b_k:i<j<k\}+t\subset A$, then $C=\{b_1,b_3,b_5,\dots\}$ satisfies
$\{c^2+c':c,c'\in C,\ c<c'\}+t\subset A$ (between $b_{2i-1}$ and $b_{2i'-1}$, $i<i'$, lies $b_{2i}$),
which is the configuration of Question~3.13 of~\cite{kra2023problems}; by Ackelsberg's
theorem~\cite[Theorem~1.5]{ackelsberg2024counterexamples}, recorded in~\cite{kra2023problems},
there are sets of upper Banach density arbitrarily close to $1$ in which no translate contains it.

\begin{remark}\label{IA:rem:u33e}
This answer is easy: the generalisations contain the configuration of Question~3.18 itself, which
was already answered negatively by Ackelsberg~\cite[Theorem~1.7]{ackelsberg2024counterexamples}
(the second arXiv version of~\cite{kra2023problems} does not record this), in exactly the way that
Questions~3.15 and~3.16 are answered by Remark~3.17 of~\cite{kra2023problems}. For (F2) the survey
itself already suffices, through the family $\{x^2\}$. We include it for completeness. A more
interesting question is which single polynomials $p\in\Z[x,y]$ have the property that every set of
positive upper Banach density contains a translate of $\{p(b_i,b_j)+b_k:i<j<k\}$ for some infinite $B$;
we do not know the answer, for instance, for $p(x,y)=x(y-x)$.
\end{remark}

\section{Sumsets of the form \texorpdfstring{$B+mB$}{B+mB}}\label{IA:sec:BmB}

This section concerns Subsection~3.4 of~\cite{kra2023problems}. For $B\subset\N$ and $m\in\N$ the
survey writes $B+mB=\{b+mb':b,b'\in B\}$; this unrestricted sumset contains the diagonal elements
$(m+1)b$. We write
\[
B\oplus mB=\{b+mb':\ b,b'\in B,\ b\ne b'\}
\]
for the restricted version. We answer Question~3.20 negatively, for two and for three or more
colours, even after excluding $m=1$ (\cref{IA:sec:q320}); Question~3.23 negatively as stated, the
answer depending on the diagonal (\cref{IA:sec:q323}); and Question~3.24 negatively
(\cref{IA:sec:q324}), where a counterexample is in fact already implicit in the work of Kousek and
Radi\'c~\cite{kousek2024infinite}.

\subsection{Colourings and Question~3.20}\label{IA:sec:q320}
A finite colouring of $\N$ is a map $\chi\colon\N\to\{0,1,\dots,r-1\}$. The survey gives, for each
$m$, a $3$-colouring with no monochromatic $B+mB$, $B$ infinite~\cite[Example~3.9]{kra2023problems}.
Two colours already suffice: Hindman~\cite[Theorem~2.11]{hindman1979partitions} proved that for every
$m\ge2$ the $2$-colouring $n\mapsto\lfloor\log_mn\rfloor\bmod2$ admits no infinite $B$ with
$\{mx+y:x,y\in B,\,x\ne y\}$ monochromatic (see also~\cite[Remark~1.5]{huang2026affirmative}).
Observing that such colourings depend heavily on $m$, the survey asks:

\begin{question}{3.20}\label{IA:q:320}
Is it true that for any finite colouring of $\N$ there exists $q\in\N$ such that for all $m\in\N$
with $\gcd(m,q)=1$ there is an infinite set $B\subset\N$ such that $B+mB$ is monochromatic?
\end{question}

The motivation given is that ``the set of problematic $m$'' might be ``small in a multiplicative
sense''. Since $\gcd(1,q)=1$ for every $q$, a positive answer for a colouring $\chi$ would require an
infinite $B$ with $B+B$ monochromatic. Hindman~\cite[Theorem~2.4]{hindman1979partitions} constructed
a $3$-colouring of $\N$ with no such $B$, so \cref{IA:q:320} has a negative answer as stated, for
three or more colours. This hinges on the literal formulation: for two colours, $m=1$ is harmless by
the recent solution of Owings' problem~\cite{owings1974problem} by Huang, Lian, Shao, Xiao, Xu and
Zhang~\cite[Theorem~1.1]{huang2026affirmative}, and the heuristic behind the question concerns
$m\ge2$. We show that the answer is negative also for $m\ge2$ and for two colours (for two colours,
with the unrestricted $B+mB$ of the survey). To our knowledge, what is new is that a \emph{single} colouring is bad for
infinitely many $m$ simultaneously: for three or more colours a set of bad $m$ of positive
logarithmic density containing infinitely many primes, and for two colours infinitely many bad
primes; for each individual $m$, Hindman's theorem already provides a $2$-colouring for which $m$ is
strongly bad.

For a colouring $\chi$ call $m\in\N$ \emph{bad} if no infinite $B\subset\N$ has $B+mB$
monochromatic, and \emph{strongly bad} if no infinite $B$ has $B\oplus mB$ monochromatic.
Strongly bad integers are bad.

\begin{theorem}\label{IA:thm:320A}
Let $r\ge3$ and $\chi_r(n)=\lfloor\log_2n\rfloor\bmod r$. Every integer $m$ with $2^k<m<2^{k+1}$ for
some $k\ge1$ with $k\not\equiv0,-1\pmod r$ is strongly bad for $\chi_r$. The set of these $m$ has
logarithmic density $1-2/r$ and contains infinitely many primes.
\end{theorem}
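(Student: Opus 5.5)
The plan is to compare two specific elements of $B\oplus mB$ whose binary lengths differ by exactly $k$ or $k+1$. Write $\lambda(n)=\lfloor\log_2 n\rfloor$, so that $\chi_r(n)=\lambda(n)\bmod r$. Suppose $B\subset\N$ is infinite and $B\oplus mB$ is monochromatic, and let $b=\min B$. For every $b'\in B$ with $b'>b$, both
\[
u=b'+mb\qquad\text{and}\qquad v=b+mb'
\]
lie in $B\oplus mB$. Hence $\lambda(v)\equiv\lambda(u)\pmod r$.

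As $b'\to\infty$ with $b$ fixed, $v/u=(mb'+b)/(b'+mb)\to m$, so $\log_2 v-\log_2 u\to\log_2 m$. Since $2^k<m<2^{k+1}$, we have $\log_2 m\in(k,k+1)$, an open interval. So for all sufficiently large $b'\in B$, the difference $\log_2 v-\log_2 u$ also lies in $(k,k+1)$. For real numbers $x<y$ with $y-x\in(k,k+1)$ one has $\lfloor y\rfloor-\lfloor x\rfloor\in\{k,k+1\}$. Hence $\lambda(v)-\lambda(u)\in\{k,k+1\}$. By hypothesis $k\not\equiv0$ and $k+1\not\equiv0\pmod r$, so $\chi_r(v)\ne\chi_r(u)$, a contradiction. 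Therefore $m$ is strongly bad.

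The argument only uses pairs with $b\neq b'$, so it indeed concerns the restricted sumset. Nothing here seems delicate: the essential point is to take $b'/b\to\infty$, which avoids having to control the mantissas of elements of $B$.

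For the density claim, let $M$ be the set of such $m$. The dyadic block $(2^k,2^{k+1})$ contributes
\[
\sum_{2^k<m<2^{k+1}}\frac1m=\log 2+O(2^{-k})
\]
to the logarithmic sum. For $r\ge3$, the admissible residues of $k$ modulo $r$ are exactly $r-2$ of the $r$ classes. Summing over $k\le K$, the sum over $M\cap[2^{K}]$ is therefore $\frac{r-2}{r}K\log2+O(1)$. A general $N$ lies in a single block, which changes the sum by at most $\log 2+o(1)$. Dividing by $\log N=K\log2+O(1)$ gives logarithmic density $1-2/r$.

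For the primes, note that there are infinitely many admissible $k$. By Bertrand's postulate, each interval $(2^k,2^{k+1})$ contains a prime. Hence $M$ contains infinitely many primes.
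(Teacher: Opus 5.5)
Your proof is correct and follows the paper's argument essentially step for step. The paper also compares $b'+mb$ and $b+mb'$ with $b$ fixed and $b'\in B$ large, and shows that their base-$2$ logarithms differ by an amount in $(k,k+1)$; it does this with the explicit bound $0<\delta'<\delta$ rather than your limit $v/u\to m$, and then uses the same dyadic-block computation of the logarithmic density and Bertrand's postulate.
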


\begin{theorem}\label{IA:thm:320B}
For $\rho>1$ let $\chi^{(\rho)}(n)=\lfloor\log_\rho n\rfloor\bmod 2$.
\begin{enumerate}[label=\textup{(\roman*)}]
\item\label{IA:it:lucas} Let $\varphi=\frac{1+\sqrt5}2$ and let $L_1=1$, $L_2=3$,
$L_{n+1}=L_n+L_{n-1}$ be the Lucas numbers. For every odd $n\ge3$, $L_n$ is bad for
$\chi^{(\varphi)}$. For every $q\in\N$ there are infinitely many odd $n\ge3$ with
$L_n\equiv1\pmod q$.
\item\label{IA:it:primes} There are $\rho\in(\frac32,2)$ and infinitely many primes that are bad for
$\chi^{(\rho)}$.
\end{enumerate}
\end{theorem}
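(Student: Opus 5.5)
The plan is to prove both parts from one criterion for badness, in the spirit of Hindman's argument and of the logarithmic mechanism of \cref{KR:rem:credit}:

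\emph{Criterion.} Let $\rho>1$ and $m\ge2$. Suppose $\log_\rho m=k-\eps'$ and $\log_\rho(m+1)=k+\eps$ with $k$ odd and $\eps,\eps'>0$, i.e.\ $m<\rho^k<m+1$ for an odd $k$. Then $m$ is bad for $\chi^{(\rho)}$.

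To prove the criterion, suppose $B$ is infinite and $B+mB$ has a single colour $c$. Fix $b_0=\min B$. For large $b\in B$, write $u(b)=\{\log_\rho b\}$ and pass to an infinite subset on which $u(b)\to u_*\in[0,1]$. I will compare three elements of $B+mB$:
\[
x=b+mb_0,\qquad y=mb+b_0,\qquad z=(m+1)b .
\]
Here $z$ is a diagonal element, which is allowed because the survey's $B+mB$ is unrestricted. Their logarithms are, up to errors $o(1)$ that are nonnegative, approximately
\[
\log_\rho b,\qquad \log_\rho b+k-\eps',\qquad \log_\rho b+k+\eps .
\]
Since $k$ is odd, the colour of $y$ equals that of $b$ only if the floor drops by one, which forces $u_*\le\eps'$. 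The colour of $z$ equals that of $b$ only if the floor rises by one, which forces $u_*\ge1-\eps$. These are incompatible as soon as $\eps+\eps'<1$, which holds here.

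I expect the boundary cases to be the main obstacle: $u_*$ equal to $0$, $\eps'$, or $1-\eps$, where the $o(1)$ errors decide which floor is taken. I would handle them in two ways. First, the errors in $x$ and $y$ are strictly positive and tend to $0$. Second, I would add the further elements $b+mb'$ and $b'+mb$ for a second fixed $b'\in B$, which allows the small positive perturbation to be tuned. If that is too delicate, I would instead pass to a subsequence on which $u(b)$ converges monotonically from one side, so that every floor is eventually constant.

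For (i), take $\rho=\varphi$ and $m=L_n$ with $n\ge3$ odd. Then $L_n=\varphi^n-\varphi^{-n}<\varphi^n<L_n+1$, so the criterion applies with $k=n$. For the congruence, the sequence $(L_j\bmod q)$ is purely periodic, say with period $P$. Then every $n=1+2tP$ with $t\ge1$ is odd, at least $3$, and satisfies $L_n\equiv L_1=1\pmod q$.

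For (ii), I would build $\rho$ by nested closed intervals, in the manner of Mills' constant. Start with $I_0=[\frac32+\delta,2-\delta]$. Given $I_j=[a,b]$ with $a<b$, choose an odd $k_{j+1}>k_j$ so large that $b^{k}/a^{k}>4$ and $a^{k}$ exceeds the previously chosen primes. The interval $(a^k,b^k)$ then contains $[N,2N+2]$ for some $N$, and by Bertrand's postulate it contains a prime $p$ with $[p,p+1]\subset(a^k,b^k)$. Let $I_{j+1}$ be a closed subinterval of $\{\rho\in I_j:p<\rho^{k}<p+1\}$ with nonempty interior, kept away from the endpoints. The limit $\rho\in(\frac32,2)$ satisfies $p_j<\rho^{k_j}<p_j+1$ for every $j$, with $k_j$ odd and the primes $p_j$ distinct. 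By the criterion, each $p_j$ is bad for $\chi^{(\rho)}$.
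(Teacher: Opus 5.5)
Your route is the paper's. Your $x,y,z$ are the elements $X=b'+mb$, $Y=b+mb'$, $Z=(m+1)b'$ of \cref{IA:prop:320B}, with the fixed and the large element renamed. Parts (i) and (ii) are argued as in the paper, the latter with the same nested intervals and Bertrand's postulate as in \cref{IA:lem:nested}. Your criterion, however, must carry the hypothesis $\rho>1+\frac1m$, which is exactly the condition $\eps+\eps'=\log_\rho(1+\frac1m)<1$ that your incompatibility step uses. State it as a hypothesis and check it in both applications: $\varphi>\frac54\ge1+\frac1{L_n}$ because $L_n\ge4$, and $\rho>\frac32\ge1+\frac1p$ for every prime $p$.

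The boundary cases you worry about ($u_*=0,\eps',1-\eps$) are harmless. Your two necessary conditions $u_*\le\eps'$ and $u_*\ge1-\eps$ are closed and already incompatible. The actual soft spot is that you compare $y$ and $z$ with ``the colour of $b$'', i.e.\ with $\lfloor\log_\rho b\rfloor\bmod2$, although $b$ need not lie in $B+mB$. The element you actually have is $x$, and $\lfloor\log_\rho x\rfloor=\lfloor\log_\rho b\rfloor+1$ whenever $\{\log_\rho b\}\ge1-\log_\rho(1+mb_0/b)$. This can happen for infinitely many $b$, and then $u_*=1$, a case you did not list. For such large $b$, $y$ has the colour of $x$ with no drop of the floor, so your claim that this forces $u_*\le\eps'$ is false there. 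The contradiction must then come from $z$, and neither one-sided convergence of $u(b)$ nor the extra elements $b+mb'$ supplies it automatically.

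The clean fix is the paper's: measure everything from $\log_\rho x$ and take no limits. For $b$ so large that $\log_\rho(1+mb_0/b)<\eps$, one has $\log_\rho y<\log_\rho x+k<\log_\rho z$ and $0<\log_\rho z-\log_\rho y<\eps+\eps'<1$. Suppose $y$ and $z$ had the same colour. The difference of their floors lies in $\{0,1\}$ and is even, so $\lfloor\log_\rho y\rfloor=\lfloor\log_\rho z\rfloor=:N$. Then $N<\log_\rho x+k<N+1$, so $\lfloor\log_\rho x\rfloor=N-k$. Since $k$ is odd, $x$ and $y$ have different colours.
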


\begin{corollary}\label{IA:cor:320}
For every $r\ge2$ there is an $r$-colouring of $\N$ such that for every $q\in\N$ some prime $m\ge2$
with $\gcd(m,q)=1$ is bad. In particular \cref{IA:q:320} has a negative answer for every number
$r\ge2$ of colours, also when the condition is imposed only for $m\ge2$ or only for prime $m$.
\end{corollary}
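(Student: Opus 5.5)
We derive \cref{IA:cor:320} from \cref{IA:thm:320A,IA:thm:320B}, splitting into $r\ge3$ and $r=2$. Given $q\in\N$, in each case it suffices to find a single bad prime $m\ge2$ with $\gcd(m,q)=1$. Then the condition of \cref{IA:q:320} fails for that $q$, even when it is imposed only for $m\ge2$ or only for prime $m$.

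\emph{Case $r\ge3$.} We take the colouring $\chi_r$ of \cref{IA:thm:320A}. That theorem provides infinitely many primes $m$ that are strongly bad, hence bad. Only finitely many primes divide $q$, so some such prime $m$ satisfies $m\nmid q$, and therefore $\gcd(m,q)=1$.

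\emph{Case $r=2$.} We take $\chi^{(\rho)}$ with $\rho$ as in \cref{IA:thm:320B}\ref{IA:it:primes}. That part gives infinitely many bad primes, and the same argument as before produces one coprime to $q$. Part~\ref{IA:it:lucas} gives a further witness, though not a prime one. If $L_n\equiv1\pmod q$, then $\gcd(L_n,q)=1$, and $L_n$ is bad for $\chi^{(\varphi)}$. This already yields the negative answer for $m\ge2$ with two colours. For the version restricted to primes, we rely on part~\ref{IA:it:primes}.

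\emph{Reduction to the question.} Fix a colouring $\chi$ as above and suppose some $q$ satisfies the requirement of \cref{IA:q:320}. We have shown that some prime $m\ge2$ coprime to $q$ is bad for $\chi$. Badness means that no infinite $B$ makes $B+mB$ monochromatic, which contradicts the requirement. Hence no $q$ works, and the answer is negative for every $r\ge2$, including the restricted versions.

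The only step that needs attention is coprimality. Within this corollary this is trivial, because the theorems supply infinitely many primes. All the genuine difficulty lies in the two theorems themselves, chiefly in the existence of infinitely many bad primes in \cref{IA:thm:320B}\ref{IA:it:primes}.
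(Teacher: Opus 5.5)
Your proposal is correct and follows the paper's own argument. For $r\ge3$ you use the infinitely many strongly bad (hence bad) primes of \cref{IA:thm:320A}, and for $r=2$ the infinitely many bad primes of \cref{IA:thm:320B}\ref{IA:it:primes}, with the Lucas numbers of part~\ref{IA:it:lucas} as a non-prime alternative. Coprimality to $q$ follows because $q$ has only finitely many prime factors.
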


\begin{proof}
For $r\ge3$ use \cref{IA:thm:320A}: infinitely many primes are bad, and $q$ has finitely many prime
factors. For $r=2$ use \cref{IA:thm:320B}\ref{IA:it:primes} (or, for not necessarily prime $m$,
\cref{IA:thm:320B}\ref{IA:it:lucas}, where $L_n\equiv1\pmod q$ gives $\gcd(L_n,q)=1$).
\end{proof}

By \cref{IA:thm:320A}, the set of bad $m$ need not be small in any multiplicative sense: for $r$
colours it can contain a set of logarithmic density $1-2/r$. Question~3.19 of~\cite{kra2023problems}, whether
for every finite colouring some $m$ is not bad, remains open for $r\ge3$ colours; for $r=2$ it is
answered positively with $m=1$ by~\cite[Theorem~1.1]{huang2026affirmative}.

\begin{remark}[The diagonal and two colours]\label{IA:rem:320diag}
For $m\ge2$, $B+mB$ is the union of the diagonal $\{(m+1)b:b\in B\}$ and the two ordered pieces
$\{mx+y:x,y\in B,\,x<y\}$ and $\{mx+y:x,y\in B,\,x>y\}$. Huang et
al.~\cite[Theorem~1.6]{huang2026affirmative} proved that for every $2$-colouring of $\N$ and all
$m,\ell\in\N$ there is an infinite $B$ with $(m+\ell)B\cup\{mx+\ell y:x,y\in B,\,x<y\}$ monochromatic.
Applied with $(m,1)$ and with $(1,m)$, this shows that for two colours the diagonal together with
either ordered piece can always be made monochromatic. So a $2$-colouring for which some $m$ is bad
must use both ordered pieces; the proof of \cref{IA:thm:320B} uses one element from each of the three
parts. Hindman's theorem~\cite[Theorem~2.11]{hindman1979partitions} shows that each single $m\ge2$
is strongly bad for some $2$-colouring, namely $\lfloor\log_mn\rfloor\bmod2$ (this is the colouring
$\chi^{(\rho)}$ of \cref{IA:thm:320B} with $\rho=m$). Whether a single $2$-colouring can make
infinitely many pairwise coprime $m$ strongly bad, that is, whether Hindman's theorem has a version
uniform in $m$ (equivalently, the version of \cref{IA:q:320} with $B\oplus mB$ and two colours),
remains open. For related $2$- and $3$-colour obstructions to weighted ordered sums see
also~\cite[Propositions~5.1 and~5.2]{huang2026affirmative}.
\end{remark}

\begin{remark}
Di Nasso, Luperi Baglini, Mamino, Mennuni and Ragosta~\cite[\S1]{dinasso2025ramsey} answer negatively
the shift-free versions of several questions of Subsections~3.3 and~3.4 of~\cite{kra2023problems}
(in the numbering of its published version) in a strengthened ``Ramsey partition regular'' form.
These are different statements from \cref{IA:q:320}.
\end{remark}

For $\rho>1$ and $n\in\N$ put $\lambda_\rho(n)=\lfloor\log_\rho n\rfloor$, so that $\lambda_\rho(n)=k$
if and only if $\rho^k\le n<\rho^{k+1}$.

\begin{lemma}\label{IA:lem:floor}
If $x,y\in\R$ and $y-x\in(k,k+1)$ for some $k\in\Z$, then $\lfloor y\rfloor-\lfloor x\rfloor\in\{k,k+1\}$.
\end{lemma}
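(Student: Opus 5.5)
The plan is to compare $\lfloor y\rfloor$ and $\lfloor x\rfloor$ through the fractional parts. Write $x=\lfloor x\rfloor+\{x\}$ and $y=\lfloor y\rfloor+\{y\}$ with $\{x\},\{y\}\in[0,1)$. Subtracting gives
\[
\lfloor y\rfloor-\lfloor x\rfloor=(y-x)-(\{y\}-\{x\}),
\]
and $\{y\}-\{x\}\in(-1,1)$. Hence $\lfloor y\rfloor-\lfloor x\rfloor$ is an integer lying in the open interval $(y-x-1,\,y-x+1)$.

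Next, use the hypothesis $k<y-x<k+1$. Then $y-x-1>k-1$ and $y-x+1<k+2$, so the integer $\lfloor y\rfloor-\lfloor x\rfloor$ lies strictly between $k-1$ and $k+2$. The only integers in that range are $k$ and $k+1$, which is the claim.

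There is no real obstacle here. The one point needing care is that the hypothesis uses open endpoints. This strictness is what excludes the boundary values $k-1$ and $k+2$; with closed endpoints, say $y-x=k$ exactly and $\{y\}<\{x\}$, one could get $k-1$. So I will keep all the inequalities strict.
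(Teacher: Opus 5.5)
Your proof is correct and is the paper's argument: $\lfloor y\rfloor-\lfloor x\rfloor$ is an integer in $(y-x-1,\,y-x+1)\subset(k-1,k+2)$. One correction to your closing remark: strictness of the hypothesis is not needed, because $(y-x-1,\,y-x+1)$ is already open, so $y-x\in[k,k+1]$ still forces the difference into $\{k,k+1\}$; moreover your boundary scenario cannot occur, since $y-x=k\in\Z$ implies $\{y\}=\{x\}$.
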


\begin{proof}
$\lfloor y\rfloor-\lfloor x\rfloor$ is an integer in $(y-x-1,y-x+1)\subset(k-1,k+2)$.
\end{proof}

\begin{lemma}\label{IA:lem:asym}
Let $\rho>1$, $m\ge2$ and $b,b'\in\N$. Put $\delta=\log_\rho(1+mb/b')$ and
$\delta'=\log_\rho(1+b/(mb'))$. Then $\log_\rho(b'+mb)=\log_\rho b'+\delta$,
$\log_\rho(mb'+b)=\log_\rho b'+\log_\rho m+\delta'$, and $0<\delta'<\delta$; moreover $\delta\to0$
as $b'\to\infty$ with $b$ fixed.
\end{lemma}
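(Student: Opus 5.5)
This is a direct computation with logarithms, and I would carry it out in three short steps.

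\emph{Step 1: the two identities.} Factor out $b'$ in the first sum and $mb'$ in the second:
\[
b'+mb=b'\Big(1+\frac{mb}{b'}\Big),\qquad mb'+b=mb'\Big(1+\frac{b}{mb'}\Big).
\]
Taking $\log_\rho$ of both equations gives $\log_\rho(b'+mb)=\log_\rho b'+\delta$ and $\log_\rho(mb'+b)=\log_\rho b'+\log_\rho m+\delta'$. Since $\log_\rho$ is a homomorphism from $(0,\infty)$ under multiplication to $\R$ under addition, this needs no further justification.

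\emph{Step 2: the inequalities $0<\delta'<\delta$.} Because $\rho>1$, the function $\log_\rho$ is strictly increasing and vanishes at $1$. So it suffices to compare the arguments:
\[
1<1+\frac{b}{mb'}<1+\frac{mb}{b'}.
\]
The left inequality holds because $b/(mb')>0$. The right inequality is equivalent to $1/m<m$, which holds because $m\ge2$; this is the only place where the hypothesis $m\ge2$ enters.

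\emph{Step 3: the limit.} Fix $b$. As $b'\to\infty$ we have $mb/b'\to0$. Continuity of $\log_\rho$ at $1$ then gives $\delta\to0$.

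There is no real obstacle here. The only point needing care is Step 2, where one must remember that the strict inequality $\delta'<\delta$ uses $m\ge2$ (equivalently $m^2>1$); for $m=1$ the two quantities coincide.
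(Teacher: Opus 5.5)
Your proof is correct and follows the paper's argument: the two identities come from factoring out $b'$ and $mb'$, and $0<\delta'<\delta$ reduces, by strict monotonicity of $\log_\rho$, to $b/(mb')<mb/b'$, which holds because $m\ge2$. The limit statement is continuity of $\log_\rho$ at $1$; the paper leaves it implicit and you spell it out.
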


\begin{proof}
The identities are immediate, and $b/(mb')<mb/b'$ because $m\ge2$.
\end{proof}

\begin{proposition}\label{IA:prop:320A}
Let $\rho>1$, $r\ge3$ and $\chi(n)=\lambda_\rho(n)\bmod r$. Let $m\ge2$ be such that
$L=\log_\rho m\notin\Z$ and $k=\lfloor L\rfloor$ satisfies $k\not\equiv0$ and $k+1\not\equiv0
\pmod r$. Then $m$ is strongly bad for $\chi$.
\end{proposition}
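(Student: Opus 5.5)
The proof compares two elements of $B\oplus mB$ built from the same pair of elements of $B$, and uses \cref{IA:lem:asym} and \cref{IA:lem:floor} to show that their colours must differ.

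Suppose, for contradiction, that $B\subset\N$ is infinite and that $B\oplus mB$ is monochromatic for $\chi$, with colour $c$. Fix $b=\min B$ and let $b'\in B$ be large, so in particular $b'\neq b$. Both $b'+mb$ (that is, $mx+y$ with $x=b$, $y=b'$) and $mb'+b$ (with $x=b'$, $y=b$) lie in $B\oplus mB$. Hence
\[
\lambda_\rho(b'+mb)\equiv\lambda_\rho(mb'+b)\pmod r .
\]

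Now apply \cref{IA:lem:asym} with this $b$ and $b'$. Put $x=\log_\rho(b'+mb)=\log_\rho b'+\delta$ and $y=\log_\rho(mb'+b)=\log_\rho b'+L+\delta'$. Then
\[
y-x=L-(\delta-\delta'),\qquad 0<\delta-\delta'<\delta .
\]
Since $L\notin\Z$, we have $k<L<k+1$. Also $\delta\to0$ as $b'\to\infty$ with $b$ fixed, and $B$ is infinite. So we may choose $b'\in B$ so large that $\delta<L-k$, which gives
\[
y-x\in(L-\delta,L)\subset(k,k+1).
\]

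By \cref{IA:lem:floor}, $\lambda_\rho(mb'+b)-\lambda_\rho(b'+mb)=\lfloor y\rfloor-\lfloor x\rfloor\in\{k,k+1\}$. By hypothesis neither $k$ nor $k+1$ is divisible by $r$. So the two colours differ, which is the required contradiction.

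The only delicate point is the order of choices. The element $b$ must be fixed first, so that $\delta\to0$ applies, and the margin $L-k>0$ depends only on $m$ and $\rho$. The argument needs neither the diagonal elements $(m+1)b$ nor any third element of $B$, which is why it yields strong badness.
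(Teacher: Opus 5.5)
Your proposal is correct and is essentially the paper's own proof. You fix $b$, choose $b'\in B$ with $\delta<L-k$ via \cref{IA:lem:asym}, compare the two colours of $b'+mb$ and $b+mb'$ using \cref{IA:lem:floor}, and conclude because neither $k$ nor $k+1$ is divisible by $r$.
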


\begin{proof}
Let $B$ be infinite and fix $b\in B$. By \cref{IA:lem:asym} there is $b'\in B$, $b'\ne b$, with
$\delta<L-k$. Put $X=b'+mb$ and $Y=b+mb'$, both in $B\oplus mB$. By \cref{IA:lem:asym},
$\log_\rho Y-\log_\rho X=L+\delta'-\delta\in(L-\delta,L)\subset(k,k+1)$, so by \cref{IA:lem:floor}
$\lambda_\rho(Y)-\lambda_\rho(X)\in\{k,k+1\}$. Neither number is divisible by $r$, so
$\chi(X)\ne\chi(Y)$.
\end{proof}

\begin{proof}[Proof of \cref{IA:thm:320A}]
For $2^k<m<2^{k+1}$ we have $\log_2m\notin\Z$ and $\lfloor\log_2m\rfloor=k$, so
\cref{IA:prop:320A} with $\rho=2$ applies. Let $S$ be the set of these $m$. For each $k$,
$\sum_{2^k<m<2^{k+1}}1/m=\log2+O(2^{-k})$, and among any $r$ consecutive values of $k$ exactly $r-2$
are admissible. Hence $\sum_{m\in S,\,m<2^{K+1}}1/m=\frac{r-2}r K\log2+O(1)$, and for $2^K\le
N<2^{K+1}$ the partial sum up to $N$ differs from this by $O(1)$. So $S$ has logarithmic density
$1-2/r$. By Bertrand's postulate each interval $(2^k,2^{k+1})$, $k\ge1$, contains a prime.
\end{proof}

\begin{proposition}\label{IA:prop:320B}
Let $m\ge2$ and $\rho>1+\frac1m$, and suppose there is an odd integer $j$ with $m<\rho^j<m+1$. Then
$m$ is bad for $\chi(n)=\lambda_\rho(n)\bmod2$.
\end{proposition}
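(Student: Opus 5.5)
The plan is to take three elements of $B+mB$, one from each of the three parts named in \cref{IA:rem:320diag}, and compare their positions on the $\log_\rho$ scale. Suppose $B$ is infinite and $B+mB$ is monochromatic for $\chi(n)=\lambda_\rho(n)\bmod 2$. Fix $b\in B$, and let $b'\in B$ with $b'>b$ be large, to be chosen later. Consider
\[
X=b'+mb,\qquad Y=mb'+b,\qquad Z=(m+1)b' .
\]
All three lie in $B+mB$, and $Z$ is the diagonal element. Put $L=\log_\rho m$ and $M=\log_\rho(m+1)$. The hypotheses give $L<j<M$, and $\rho>1+\frac1m$ gives $M-L=\log_\rho(1+\frac1m)<1$.

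By \cref{IA:lem:asym}, with $x=\log_\rho X$ and the quantities $\delta,\delta'$ defined there, we have
\[
\log_\rho Y=x+L+\delta'-\delta,\qquad \log_\rho Z=x+M-\delta,\qquad 0<\delta'<\delta .
\]
Also $\delta\to0$ as $b'\to\infty$. I choose $b'$ so large that $\delta<\min\bigl(M-j,\ 1-(j-L)\bigr)$; this is a positive quantity because $L<j<M$ and $j-L<M-L<1$. With this choice, $\log_\rho Y<x+L<x+j$ and $\log_\rho Y>x+L-\delta>x+j-1$. On the other side, $\log_\rho Z>x+j$ and $\log_\rho Z<x+M\le x+j+(M-L)<x+j+1$. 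Finally, $\log_\rho Z-\log_\rho Y=M-L-\delta'<1$.

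Now set $n=\lfloor x\rfloor+j=\lfloor x+j\rfloor$. Since $j$ is odd, $n\not\equiv\lambda_\rho(X)\pmod 2$. From the bounds above, $\lambda_\rho(Y)\in\{n-1,n\}$ and $\lambda_\rho(Z)\in\{n,n+1\}$. Monochromaticity forces both to have the parity of $\lambda_\rho(X)$, which is the parity of $n+1$. So $\lambda_\rho(Y)=n-1$ and $\lambda_\rho(Z)=n+1$, which gives $\log_\rho Z-\log_\rho Y>1$. This contradicts $\log_\rho Z-\log_\rho Y<1$. Hence no infinite $B$ has $B+mB$ monochromatic, that is, $m$ is bad.

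The only delicate step is fixing the size of $\delta$ so that all four strict inequalities hold at once. That works because the three constraints $L<j$, $j<M$ and $M-L<1$ leave positive room. Note that the argument genuinely needs the diagonal element $Z$: without it, only two points are available and no contradiction arises, which is consistent with \cref{IA:rem:320diag}.
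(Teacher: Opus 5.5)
Your proof is correct and takes the same route as the paper: the same three elements $X=b'+mb$, $Y=mb'+b$, $Z=(m+1)b'$ of $B+mB$, compared on the $\log_\rho$ scale via \cref{IA:lem:asym}. The only difference is in the final bookkeeping. The paper first uses $0<z-y<1$ and parity to get $\lambda_\rho(Y)=\lambda_\rho(Z)$, then pins this common value to $\lambda_\rho(X)+j$. You instead bracket $\lambda_\rho(Y)\in\{n-1,n\}$ and $\lambda_\rho(Z)\in\{n,n+1\}$ and let parity force a gap larger than $1$.
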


\begin{proof}
Let $L=\log_\rho m$ and $L'=\log_\rho(m+1)$. Then $L<j<L'$ and $L'-L=\log_\rho(1+\frac1m)<1$. Let
$\eta=\min(j-L,L'-j)>0$. Let $B$ be infinite, fix $b\in B$, and by \cref{IA:lem:asym} choose $b'\in
B$, $b'\ne b$, with $\delta<\eta$. Consider the elements
\[
X=b'+mb,\qquad Y=b+mb',\qquad Z=b'+mb'=(m+1)b'
\]
of $B+mB$, and let $u=\log_\rho b'$, $x=\log_\rho X$, $y=\log_\rho Y$, $z=\log_\rho Z$. By
\cref{IA:lem:asym}, $x=u+\delta$, $y=u+L+\delta'$ and $z=u+L'$ with $0<\delta'<\delta<\eta$. Hence
$y<u+L+\eta\le u+j<x+j$ and $z=u+L'\ge u+j+\eta>x+j$, so
\begin{equation}\label{IA:eq:sandwich}
y<x+j<z,\qquad 0<z-y<L'-L<1 .
\end{equation}
Suppose $X,Y,Z$ have the same colour. Since $0<z-y<1$, the integer $\lambda_\rho(Z)-\lambda_\rho(Y)$ is
$0$ or $1$, and it is even; so $\lambda_\rho(Y)=\lambda_\rho(Z)=:N$. Then $N\le y<z<N+1$, and
by~\eqref{IA:eq:sandwich} $N<x+j<N+1$, so $N=\lfloor x\rfloor+j=\lambda_\rho(X)+j$. As $j$ is odd,
$\chi(X)\ne\chi(Y)$, a contradiction.
\end{proof}

\begin{lemma}\label{IA:lem:nested}
There are $\rho\in(\frac32,2)$, odd integers $j_1<j_2<\cdots$ and primes $p_1<p_2<\cdots$ with
$p_k<\rho^{j_k}<p_k+1$ for all $k$.
\end{lemma}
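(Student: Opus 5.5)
This lemma will be proved by a nested-intervals construction in the parameter $\rho$. The condition $p<\rho^{j}<p+1$ is equivalent to
\[
\rho\in I(p,j):=\big(p^{1/j},(p+1)^{1/j}\big).
\]
We build a decreasing sequence of closed intervals $[u_k,v_k]\subset(\frac32,2)$, odd exponents $j_k$ and primes $p_k$ such that $[u_k,v_k]\subset \overline{I'(p_k,j_k)}$ for a slightly shrunk open interval $I'\subset I(p_k,j_k)$. Any $\rho$ in the intersection then satisfies all the conditions, with strict inequalities.

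\textbf{Start.} Take $j_1=1$ and pick a prime $p_1$; this is impossible in $(\frac32,2)$ with exponent $1$. So instead take $j_1=3$ and $p_1=5$. Then $5^{1/3}\approx1.71$ and $6^{1/3}\approx1.817$, so $[u_1,v_1]$ can be chosen inside $(5^{1/3},6^{1/3})\subset(\frac32,2)$, for instance a closed subinterval of positive length.

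\textbf{Induction step.} Given a nondegenerate $[u_k,v_k]$, choose an odd $j>j_k$ and look at the interval
\[
[u_k^{\,j},v_k^{\,j}].
\]
Its length is at least $j\,u_k^{\,j-1}(v_k-u_k)$, which grows exponentially in $j$ because $u_k>\frac32$. Meanwhile its left endpoint $x=u_k^{\,j}$ grows only exponentially, so the length is $\ge c\,x$ for a constant $c>0$ once $j$ is large (indeed the ratio $v_k^{\,j}/u_k^{\,j}\to\infty$).

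By Bertrand's postulate, or by the prime number theorem in the form that $(x,(1+c)x)$ contains a prime for large $x$, there is a prime $p$ with $[p,p+1]\subset(u_k^{\,j},v_k^{\,j})$. Here the interval has length $\ge2$ and a prime lies strictly inside with room to spare. Concretely, since $v_k^{\,j}\ge 2u_k^{\,j}+2$ for large $j$, Bertrand's postulate gives a prime $p$ with $u_k^{\,j}<p<2u_k^{\,j}$, and then $p+1<v_k^{\,j}$.

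Set $j_{k+1}=j$ and $p_{k+1}=p$. Then $I(p,j)\subset(u_k,v_k)$, and we let $[u_{k+1},v_{k+1}]$ be a closed nondegenerate subinterval of $I(p,j)$. Choosing $p>p_k$ is automatic since $p>u_k^{\,j}$ grows with $j$, and $j_{k+1}>j_k$ by construction.

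\textbf{Conclusion.} Let $\rho\in\bigcap_k[u_k,v_k]$, which is nonempty by compactness. Then $\rho\in I(p_k,j_k)$ for every $k$, so $p_k<\rho^{j_k}<p_k+1$, and $\rho\in[u_1,v_1]\subset(\frac32,2)$.

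The only step needing care is the prime-in-interval claim, and it reduces to Bertrand's postulate once $j$ is large. I expect no real obstacle; the point to check is that the odd-exponent restriction costs nothing, since every sufficiently large odd $j$ works.
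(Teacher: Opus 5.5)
Your proposal is correct and follows essentially the same route as the paper: nested closed intervals in $\rho$, each new odd exponent $j$ chosen large enough that $v_k^{\,j}$ exceeds about twice $u_k^{\,j}$, and Bertrand's postulate supplying a prime $p$ with $[p,p+1]\subset(u_k^{\,j},v_k^{\,j})$. The only differences are cosmetic. The paper runs the same step uniformly from $[\frac32,2]$ (with $p_0=1$, $j_0=0$) instead of your hand-picked start $j_1=3$, $p_1=5$, and it uses the explicit subinterval $[(p+\frac13)^{1/j},(p+\frac23)^{1/j}]$. Also, $p_{k+1}>p_k$ follows at once from $p>u_k^{\,j}>u_k^{\,j_k}>p_k$.
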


\begin{proof}
We construct closed intervals $[\frac32,2]=I_0\supset I_1\supset\cdots$, odd $j_k$ and primes $p_k$
with $p_k<s^{j_k}<p_k+1$ for all $s\in I_k$; any $\rho\in\bigcap_kI_k$ then works. Let $p_0=1$,
$j_0=0$, and suppose $I_{k-1}=[\alpha,\beta]$ with $\frac32\le\alpha<\beta$ is constructed. Choose an
odd $j>j_{k-1}$ so large that $x:=\alpha^j\ge p_{k-1}+2$ and $\beta^j\ge4x$, put $n=\lceil x\rceil$,
and let $p$ be a prime with $n<p<2n$ (Bertrand's postulate). Then $\alpha^j\le n<p$, $p>p_{k-1}$,
and $p+1\le2n\le2x+2<4x\le\beta^j$. Let $I_k=[(p+\frac13)^{1/j},(p+\frac23)^{1/j}]\subset(\alpha,\beta)$,
$j_k=j$ and $p_k=p$.
\end{proof}

\begin{proof}[Proof of \cref{IA:thm:320B}]
(i) Let $\psi=\frac{1-\sqrt5}2=-\varphi^{-1}$, so $L_n=\varphi^n+\psi^n$. For odd $n$,
$\psi^n\in(-1,0)$, so $L_n<\varphi^n<L_n+1$. For $n\ge3$, $L_n\ge4$ and $\varphi>1+\frac1{L_n}$, so
\cref{IA:prop:320B} applies with $m=L_n$, $\rho=\varphi$, $j=n$. For the second claim, $(L_n\bmod q)$
satisfies an invertible linear recurrence over $\Z/q\Z$ and is therefore purely periodic, with some
period $P$; then $L_n\equiv L_1=1\pmod q$ for all $n\equiv1\pmod{2P}$, and these $n$ are odd.

(ii) Take $\rho$, $(j_k)$, $(p_k)$ from \cref{IA:lem:nested}. Then $\rho>\frac32\ge1+\frac1{p_k}$ and
$p_k<\rho^{j_k}<p_k+1$ with $j_k$ odd, so \cref{IA:prop:320B} applies to $m=p_k$.
\end{proof}

\formalization{\Cref{IA:thm:320A,IA:thm:320B}, \cref{IA:cor:320} and the propositions and
lemmas they rely on are formalized in \leanfile{Q320} (\leanname{thm320A}, \leanname{thm320B},
\leanname{cor320}). Hindman's $3$-colouring, quoted for $m=1$, is not formalized.}

\subsection{Logarithmic density and Question~3.23}\label{IA:sec:q323}
The survey shows that a density version of Question~3.19 needs care: there is a thick set of positive
upper density containing no $B\oplus mB+t$ with $m\ge2$ and $B$ infinite
\cite[Example~3.21]{kra2023problems}. That set has zero upper logarithmic density, which motivates:

\begin{question}{3.23}\label{IA:q:323}
Does every set $A\subset\N$ with $\udlog(A)>0$ contain a sumset $B+mB+t$ for some $m\ge2$, $t\ge0$
and some infinite set $B\subset\N$?
\end{question}

As written, $B+mB$ is the unrestricted sumset: the survey writes $B\oplus B$ or states $b_1\ne b_2$
explicitly when it means the restricted one, and calls $B+B$ the unrestricted sumset. The answer to
\cref{IA:q:323} as stated is negative.

For each \emph{fixed} pair $(\ell,m)$, unrestricted sums $mB+\ell B$ have been studied by
Kousek~\cite{kousek2025asymmetric}. He showed that $\ldens(A)>1/2$ forces
$\{mb_1+\ell b_2:b_1\le b_2\}+t\subset A$, and that a suitable upper density condition depending on
$m/\ell$ does too~\cite[Theorems~1.6 and~1.7]{kousek2025asymmetric}; these are sharp, and for distinct
$\ell,m$ there are sets of positive lower density containing no shift of
$mB+\ell B=\{mb_1+\ell b_2:b_1,b_2\in B\}$ with $B$ infinite~\cite[Propositions~5.1, 6.5, 7.6
and~7.7]{kousek2025asymmetric}; see also his Questions~7.8 and~7.9. Those examples depend on
$(\ell,m)$, whereas \cref{IA:q:323} allows $m$ to depend on $A$. To our knowledge, what is new below is a single set of
positive logarithmic density that avoids $\ell B+mB+t$ for all $\ell,m\ge1$ and $t\in\Z$ at once. As in
Kousek's results (and as in \cref{IA:rem:320diag} for colourings), the diagonal together with one
ordered piece cannot be avoided by sets of lower density $>1/2$, and our proof uses the diagonal and
both ordered pieces.

Fix $\eta\in(0,\frac16)$ and let $U_\eta=(\frac13+\eta,\frac23-\eta)$. Let $T_k=2^{2^k}$,
$X_k=kT_k$ and $Y_k=T_{k+1}/k=T_k^2/k$ for $k\ge1$; note $T_k\le X_k<Y_k\le T_{k+1}$. Let
\[
R_k=\{s\in\{0,\dots,k-1\}:\ s/k\in U_\eta\},\qquad
A_\eta=\bigcup_{k\ge1}\{n\in\N:\ X_k\le n<Y_k,\ n\bmod k\in R_k\}.
\]
Thus on the block $[X_k,Y_k)$ the set $A_\eta$ is the periodic set $\{n:\{n/k\}\in U_\eta\}$, and
the transition intervals $[Y_{k-1},X_k)$ are empty.

\begin{theorem}\label{IA:thm:323}
\begin{enumerate}[label=\textup{(\alph*)}]
\item\label{IA:it:323dens} The logarithmic density of $A_\eta$ exists and equals $\frac13-2\eta$.
\item\label{IA:it:323avoid} For all integers $\ell,m\ge1$, all $t\in\Z$ and every infinite
$B\subset\N$, we have $\ell B+mB+t\not\subset A_\eta$. More precisely, for every $c\in B$ only
finitely many $b\in B$ satisfy
\[
\ell b+mc+t,\quad \ell c+mb+t,\quad (\ell+m)b+t\ \in A_\eta .
\]
\end{enumerate}
In particular (with $\ell=1$) $A_\eta$ is a negative answer to \cref{IA:q:323}, simultaneously for
all $m$.
\end{theorem}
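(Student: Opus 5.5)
The plan is to prove the density statement~\ref{IA:it:323dens} by a block-by-block count and the avoidance statement~\ref{IA:it:323avoid} by a single linear relation among the three displayed numbers, read modulo the block modulus $k$.

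\emph{Part~\ref{IA:it:323dens}.} Since $X_k\ge kT_k$ is huge compared with the period $k$, on the block $[X_k,Y_k)$ I would cut $A_\eta$ into complete periods of length $k$. A full period $[u,u+k)\subset[X_k,Y_k)$ contains exactly $|R_k|=(\frac13-2\eta)k+O(1)$ elements, and $\sum_{n\in A_\eta\cap[u,u+k)}1/n=|R_k|/u+O(k/u^2)$. Summing these Riemann sums gives, for $X_k\le N\le Y_k$,
\[
\sum_{n\in A_\eta\cap[X_k,N)}\frac1n=\frac{|R_k|}{k}\log\frac N{X_k}+O(1)=\big(\tfrac13-2\eta\big)\log\frac N{X_k}+O\Big(\frac{\log(N/X_k)}k\Big)+O(1).
\]
The remaining pieces are the transition intervals $[Y_{k-1},X_k)$, which contain no elements of $A_\eta$ and have logarithmic length $\log(k(k-1))=O(\log k)$. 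Every $N\ge X_k$ satisfies $\log N\ge2^k\log2$. The accumulated errors up to block $k$ are
\[
\sum_{j\le k}\Big(O(\log j)+O(1)+O(2^j/j)\Big)=O(2^k/k)=o(\log N).
\]
Hence $\frac1{\log N}\sum_{n\in A_\eta\cap[N]}1/n\to\frac13-2\eta$. This part is routine.

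\emph{Part~\ref{IA:it:323avoid}.} Fix $\ell,m,t$ and $c\in B$, and suppose that for infinitely many $b\in B$ the three numbers
\[
n_1=\ell b+mc+t,\qquad n_2=\ell c+mb+t,\qquad n_3=(\ell+m)b+t
\]
all lie in $A_\eta$.

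\begin{enumerate}[label=(\arabic*),leftmargin=*]
\item \emph{All three numbers lie in one block.} As $b\to\infty$, each $n_i\to\infty$, and their pairwise ratios tend to the finite positive limits $(\ell+m)/\ell$, $(\ell+m)/m$ and $m/\ell$ (or their inverses). Two elements of $A_\eta$ in different blocks have ratio at least $X_{k+1}/Y_k=k(k+1)\to\infty$. So for $b$ large the three numbers lie in a common block $[X_k,Y_k)$, and the index $k=k(b)$ tends to infinity with $b$.
\item \emph{A linear relation.} Membership in that block means $\{n_i/k\}\in U_\eta$ for $i=1,2,3$. The key identity is
\[
n_1+n_2-n_3=(\ell+m)c+t=:C,
\]
which does not depend on $b$. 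It uses the diagonal element $n_3$ together with both ordered pieces $n_1,n_2$.
\item \emph{Reduction modulo $k$.} Modulo $1$ this gives $\{n_1/k\}+\{n_2/k\}\equiv\{n_3/k\}+C/k$.
\begin{itemize}
\item The left side is a sum of two numbers in $U_\eta$. It lies in $(\frac23+2\eta,\frac43-2\eta)$, so modulo $1$ it lies in $(\frac23+2\eta,1)\cup[0,\frac13-2\eta)$.
\item The right side lies in $U_\eta+C/k$. Since $C$ is fixed and $k\to\infty$, for large $k$ this is contained in $(\frac13+\frac\eta2,\frac23-\frac\eta2)$.
\end{itemize}
These two sets are disjoint, which is a contradiction. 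So only finitely many $b$ qualify.
\end{enumerate}

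Since $B$ is infinite, for a fixed $c\in B$ some $b\in B$ then fails one of the three memberships, and each of the three numbers belongs to $\ell B+mB+t$. This gives $\ell B+mB+t\not\subset A_\eta$.

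The main point to get right is step~(1): the blocks must be so long in logarithmic scale, and the gaps between them so wide, that bounded ratios force a common block. This is exactly why $Y_k/X_k=T_k/k^2\to\infty$ and $X_{k+1}/Y_k=k(k+1)\to\infty$ are built into the construction. At the same time, the logarithmic length of the gaps must stay negligible for part~\ref{IA:it:323dens}, which the estimate $O(\log k)$ per gap ensures.
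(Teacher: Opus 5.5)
Your proposal is correct and follows the paper's proof essentially step for step. For (a) it uses the same per-block periodic count, with the errors $|R_k|/k-\tfrac13+2\eta=O(1/k)$ contributing $O(2^k/k)$ and the empty transition intervals contributing $O(k\log k)$ against $\log N\ge 2^k\log 2$. For (b) it uses the same block separation, the identity $n_1+n_2-n_3=(\ell+m)c+t$, and the sum-free property of $U_\eta$ modulo $k$ (the paper's \cref{IA:lem:blocksep,IA:lem:sumfree}). The only slip is harmless: in (a) the per-period error is $O(k^2/u^2)$ rather than $O(k/u^2)$, and this still sums to $O(k/X_k)$ over a block.
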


\begin{remark}\label{IA:rem:323}
\begin{enumerate}[label=(\roman*)]
\item The answer hinges on the diagonal: the contradiction comes from the element $(\ell+m)b+t$.
The example motivating \cref{IA:q:323}, \cite[Example~3.21]{kra2023problems}, concerns the
restricted sumset, and for it $A_\eta$ is no counterexample: for every odd $m\ge3$ there is an
infinite $B$ with $B\oplus mB\subset A_\eta$. Indeed, let $\eps=\frac12(\frac16-\eta)$, so that
$[\frac12,\frac12+\eps)\subset U_\eta$. Choose $b_1<b_2<\cdots$ inductively: let $b_1\in\N$ be
arbitrary, and given $b_1,\dots,b_{j-1}$,
choose an even $k$ so large that $(m+1)b_{j-1}<\eps k$ and $Y_k/(m+1)-X_k>k$, and choose
$b_j\in[X_k,Y_k/(m+1))$ with $b_j\equiv k/2\pmod k$. For $i<j$ the numbers $b_j+mb_i$ and
$mb_j+b_i$ lie in $[X_k,Y_k)$ and are congruent modulo $k$ to $\frac k2+mb_i$ and to
$\frac k2+b_i$ respectively (as $m$ is odd, $\frac{mk}2\equiv\frac k2$), both of which lie in
$[\frac k2,\frac k2+\eps k)$. Hence both belong to $A_\eta$.
\item The restricted version of \cref{IA:q:323} (does every $A$ with $\udlog(A)>0$ contain
$B\oplus mB+t$ for some $m\ge2$, $t\ge0$ and infinite $B$?) remains open, to our knowledge.
\end{enumerate}
\end{remark}

\begin{lemma}\label{IA:lem:sumfree}
Let $k\ge1$, $s_1,s_2,s_3\in R_k$ and $s\in\Z$ with $|s|<\eta k$. Then $s_1+s_2-s_3\not\equiv s\pmod k$.
\end{lemma}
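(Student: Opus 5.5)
The plan is to reduce the congruence to an inequality between real numbers and then show that inequality is impossible.

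\emph{Step 1: normalise.} Write $s_i=k u_i$ with $u_i\in U_\eta=(\frac13+\eta,\frac23-\eta)$, which is allowed because $s_i/k\in U_\eta$. Write $s=k v$ with $|v|<\eta$.

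\emph{Step 2: use the congruence.} Suppose $s_1+s_2-s_3\equiv s\pmod k$. Dividing by $k$ gives
\[
u_1+u_2-u_3-v\in\Z .
\]

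\emph{Step 3: bound the left-hand side.} We have $u_1+u_2\in(\frac23+2\eta,\frac43-2\eta)$ and $-u_3\in(-\frac23+\eta,-\frac13-\eta)$. Adding these,
\[
u_1+u_2-u_3\in(3\eta,\,1-3\eta).
\]
Subtracting $v$ with $|v|<\eta$ then places $u_1+u_2-u_3-v$ in $(2\eta,1-2\eta)$. Since $0<\eta<\frac16$, this open interval lies strictly inside $(0,1)$ and so contains no integer. This contradicts Step~2.

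Nothing here is delicate. The only care needed is to use the open endpoints of $U_\eta$ correctly so that all the inequalities are strict, and to recall that $R_k$ is defined exactly by $s/k\in U_\eta$.
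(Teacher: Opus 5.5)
Your proposal is correct and is essentially the paper's argument: the paper bounds $s_1+s_2-s_3-s$ directly in the interval $(2\eta k,\,k(1-2\eta))\subset(0,k)$, which contains no multiple of $k$, and you do the same computation after dividing by $k$. Your interval $(2\eta,1-2\eta)$ and the strictness of the inequalities from the open endpoints of $U_\eta$ are handled correctly.
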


\begin{proof}
From $s_i/k\in U_\eta$ we get $s_1+s_2-s_3-s\in\big(k(\frac23+2\eta)-k(\frac23-\eta)-\eta k,\
k(\frac43-2\eta)-k(\frac13+\eta)+\eta k\big)=(2\eta k,\,k(1-2\eta))\subset(0,k)$, which contains no
multiple of $k$.
\end{proof}

\begin{lemma}\label{IA:lem:blocksep}
If $n,n'\in A_\eta$ lie in the blocks of indices $k<k'$, then $n'/n>k(k+1)$. If $n\in A_\eta$ and
$n\ge T_{K+1}$, then the block of $n$ has index at least $K+1$.
\end{lemma}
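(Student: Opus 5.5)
The plan is to read both claims off the definitions of the blocks, using only $X_k=kT_k$, $Y_k=T_k^2/k$ and $T_{k+1}=T_k^2$.

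For the first claim, take $n$ in block $k$ and $n'$ in block $k'$ with $k<k'$. Then $n<Y_k=T_k^2/k$. On the other side, $n'\ge X_{k'}\ge X_{k+1}$: the sequence $X_j=jT_j$ is increasing, and $k'\ge k+1$. So
\[
\frac{n'}{n}>\frac{X_{k+1}}{Y_k}=\frac{(k+1)T_{k+1}}{T_k^2/k}=\frac{(k+1)T_k^2\,k}{T_k^2}=k(k+1).
\]
The only point to check is the monotonicity $X_{k'}\ge X_{k+1}$, which holds because both factors $j$ and $T_j$ increase with $j$.

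For the second claim, let $n\in A_\eta$ lie in block $k$, so $n<Y_k=T_{k+1}/k\le T_{k+1}$. If $k\le K$, then $T_{k+1}\le T_{K+1}$, because $T_j$ increases with $j$. This gives $n<T_{K+1}$, which contradicts $n\ge T_{K+1}$. Hence $k\ge K+1$.

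There is no real obstacle here. The one thing to be careful about is the strict inequality in the first claim, and it comes directly from the strict bound $n<Y_k$.
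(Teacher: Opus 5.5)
Your proposal is correct and matches the paper's proof: both claims follow from $n<Y_k=T_{k+1}/k$ and $n'\ge X_{k+1}=(k+1)T_{k+1}$, which give the ratio bound $k(k+1)$, and from the fact that block $k$ lies below $T_{k+1}$. Your explicit check that $X_j$ and $T_j$ are increasing is the only detail the paper leaves implicit.
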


\begin{proof}
$n<Y_k=T_{k+1}/k$ and $n'\ge X_{k+1}=(k+1)T_{k+1}$. The block of index $k$ lies in $[T_k,T_{k+1})$.
\end{proof}

\begin{lemma}\label{IA:lem:perblock}
Let $k\in\N$, $R\subset\{0,\dots,k-1\}$, $\rho=|R|/k$, and $1\le P<Q$ real. Let
$g\colon\Z\cap[P,Q)\to\{0,1\}$ be constant on each of $J$ consecutive subintervals partitioning
$[P,Q)$. Then
\[
\Big|\sum_{\substack{P\le n<Q\\ n\bmod k\in R}}\frac{g(n)}n-\rho\sum_{P\le n<Q}\frac{g(n)}n\Big|\le\frac{2Jk}P .
\]
\end{lemma}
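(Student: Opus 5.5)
The natural approach is to reduce to a single subinterval on which $g$ is constant, since both sides are additive over the $J$ pieces of the partition. On a piece where $g\equiv0$ there is nothing to prove. So it suffices to show that for every real interval $[P',Q')\subset[P,Q)$ with $P'\ge P$,
\[
\Big|\sum_{\substack{P'\le n<Q'\\ n\bmod k\in R}}\frac1n-\rho\sum_{P'\le n<Q'}\frac1n\Big|\le\frac{2k}{P} .
\]
Summing over the at most $J$ pieces then gives the stated bound $2Jk/P$.

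For a single interval I would write the difference as $\sum_{P'\le n<Q'}w(n)/n$, where $w(n)=\one_R(n\bmod k)-\rho$. The key properties of $w$ are that it is $k$-periodic, has sum zero over every $k$ consecutive integers, and satisfies $|w|\le1$.

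I would then apply Abel summation. Let $W(x)=\sum_{P'\le n<x}w(n)$. Because $w$ sums to zero over each full period, $W$ is a sum over fewer than $k$ consecutive integers. Since $w$ takes values $1-\rho$ and $-\rho$, it follows that $|W(x)|\le\max(|R|,k-|R|)\le k$ for all $x$. Partial summation gives
\[
\sum_{P'\le n<Q'}\frac{w(n)}n=\frac{W(Q')}{Q'_-}+\sum_{n}W(n+1)\Big(\frac1n-\frac1{n+1}\Big),
\]
where $Q'_-$ denotes the largest integer below $Q'$. The telescoping sum of $1/n-1/(n+1)$ over $n\ge P'$ is at most $1/P'$. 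So the absolute value is at most $k/P'+k/P'\le 2k/P$.

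The only point needing care is the bookkeeping at the endpoints. Since $P,Q$ are real, the integers in a piece form the set $\{\lceil P'\rceil,\dots\}$, and the boundary term must be bounded by $k$ divided by an integer that is at least $P$. This holds because every $n$ in the range satisfies $n\ge P$. Apart from this the argument is routine; I do not expect a genuine obstacle.
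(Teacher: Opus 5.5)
Your proof is correct. It reaches the same per-piece bound $2k/P$ as the paper, but through summation by parts, which the paper does not use.

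\textbf{The paper's route.} After the same reduction to a single subinterval on which $g\equiv1$, the paper cuts the integers of that piece into complete periods $\{n_0+ik,\dots,n_0+(i+1)k-1\}$ (with $n_0=\lceil P_1\rceil$) and a remainder of fewer than $k$ integers. On the $i$-th period, both the restricted sum and $\rho$ times the full sum lie in $[\rho k/(n_0+(i+1)k),\,\rho k/(n_0+ik)]$. These errors therefore telescope to at most $\rho k/n_0\le k/P$. On the remainder both sums lie in $[0,k/n_0]$, which contributes the other $k/P$.

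\textbf{Your route.} Your Abel summation encodes the same cancellation: $w$ has zero mean over each period, and $1/n$ is monotone. It expresses this through the bounded partial sums $W$ and the total variation $1/N_1-1/N_2$ of $1/n$. Your boundary term $W(Q')/Q'_-$ plays the role of the paper's remainder.

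\textbf{Comparison.} The paper's version is a bit more elementary: it needs only a sandwich on each block, with no partial-summation identity or endpoint bookkeeping. Yours is more flexible, since it works verbatim for any weight with uniformly bounded partial sums. It also gives a better constant. Counting slightly more carefully, $W=a(1-\rho)-b\rho$ with $a\le|R|$ and $b\le k-|R|$, so $|W|\le k\rho(1-\rho)\le k/4$, which improves the per-piece error to $k/(2P)$.
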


\begin{proof}
It suffices to show that the error on one subinterval $[P_1,Q_1)$, $P_1\ge P$, on which $g\equiv1$,
is at most $2k/P$. Let $n_0=\lceil P_1\rceil$ and split the integers of $[P_1,Q_1)$ into complete
periods $\{n_0+ik,\dots,n_0+(i+1)k-1\}$, $0\le i<I$, and a remainder of fewer than $k$ integers. On
the $i$-th period both sums lie in $[\rho k/(n_0+(i+1)k),\,\rho k/(n_0+ik)]$, so the errors telescope
to at most $\rho k/n_0\le k/P$. On the remainder both sums lie in $[0,k/n_0]$.
\end{proof}

\begin{proof}[Proof of \cref{IA:thm:323}\ref{IA:it:323avoid}]
Fix $\ell,m\ge1$, $t\in\Z$ and $c\in B$, and put $s=(\ell+m)c+t$ and $C=\ell+m+1$. For $b\in B$ let
\[
u=\ell b+mc+t,\qquad v=\ell c+mb+t,\qquad w=(\ell+m)b+t,
\]
all three elements of $\ell B+mB+t$, and note the identity $u+v-w=s$. If
$b\ge2(|t|+(\ell+m)c)$, then $u,v,w\in[b/2,Cb]$, so their pairwise ratios are at most $2C$. Choose
$K$ with $K(K+1)>2C$ and $K\ge|s|/\eta$, and suppose $b\ge\max\big(2(|t|+(\ell+m)c),\,2T_{K+1}\big)$ and
$u,v,w\in A_\eta$. By \cref{IA:lem:blocksep} the blocks of $u,v,w$ have indices $\ge K+1$, and they
coincide, since otherwise two of the numbers would have ratio $>(K+1)(K+2)>2C$. Let $k>K$ be the common
index. The residues of $u,v,w$ modulo $k$ lie in $R_k$ and satisfy $(u\bmod k)+(v\bmod k)-(w\bmod k)
\equiv s\pmod k$ with $|s|<\eta k$, contradicting \cref{IA:lem:sumfree}. So only finitely many $b\in B$
have $u,v,w\in A_\eta$, and as $B$ is infinite, $\ell B+mB+t\not\subset A_\eta$.
\end{proof}

\begin{proof}[Proof of \cref{IA:thm:323}\ref{IA:it:323dens}]
All logarithms are natural. Let $H(P,Q)=\sum_{P\le n<Q}1/n$, so that
$H(P,Q)=\log(Q/P)+O(1/P)$ for $1\le P<Q$. Let $\rho_k=|R_k|/k$ and $\mu=|U_\eta|=\frac13-2\eta$. The open interval
$(k(\frac13+\eta),k(\frac23-\eta))$ has length $k\mu$, so $|\rho_k-\mu|\le1/k$.

Let $T_K\le N<T_{K+1}$ and $S(N)=\sum_{n\le N,\,n\in A_\eta}1/n$. By \cref{IA:lem:perblock} with
$J=1$,
\[
S(N)=\sum_{k<K}\rho_kH(X_k,Y_k)+\rho_K H\big(X_K,\min(N+1,Y_K)\big)+O\Big(\sum_k\frac{2k}{X_k}\Big),
\]
where the middle term is read as $0$ if $N<X_K$, and $\sum_k2k/X_k=\sum_k2/T_k<\infty$. We have
$H(X_k,Y_k)=2^k\log2-2\log k+O(1/T_k)$, and
$\log N=2\log 2+\sum_{1\le k<K}2^k\log2+\log(N/T_K)$. Moreover the middle term differs from
$\mu\log(N/T_K)$ by at most $|\rho_K-\mu|2^K\log 2+O(\log K)$: if $N<X_K$ then $\log(N/T_K)<\log K$;
if $X_K\le N<Y_K$ then $H(X_K,N+1)=\log(N/T_K)-\log K+O(1)$; and if $N\ge Y_K$ then both
$H(X_K,Y_K)$ and $\log(N/T_K)$ lie within $2\log K+O(1)$ of $2^K\log 2$. Altogether
\[
\big|S(N)-\mu\log N\big|\le\sum_{k\le K}\frac{2^k\log2}{k}+O(K\log K)=O(2^K/K),
\]
while $\log N\ge2^K\log2$. Hence $S(N)/\log N\to\mu$.
\end{proof}

\formalization{Both parts of \cref{IA:thm:323} are formalized in \leanfile{Q323}
(\leanname{thm323}); \cref{IA:rem:323} is not.}

\subsection{Logarithmic equidistribution and Question~3.24}\label{IA:sec:q324}
All examples of the survey of sets of positive density avoiding some unrestricted sumset $B+B+t$
are built at a logarithmic scale; to test whether this is the only obstruction, the survey asks:

\begin{question}{3.24}\label{IA:q:324}
If $A\subset\N$ has $\dlog(A)>0$ and for every interval $(a,b)\subset[0,1)$ and every $\theta>0$
\begin{equation}\label{IA:eq:324}
\dlog\big(\{n\in A:\ \{\theta\log n\}\in(a,b)\}\big)=\dlog(A)\cdot(b-a),
\end{equation}
does $A$ contain a sumset $B+B+t$ for some $t\ge0$ and some infinite set $B\subset\N$?
\end{question}

The answer is negative, and a counterexample is already implicit in the work of Kousek and
Radi\'c~\cite{kousek2024infinite}: the set from the first part of~\cite[Proposition~4.2]{kousek2024infinite},
which has natural density $\frac12$ and contains no $B+B+t$, also satisfies~\eqref{IA:eq:324}, as we
check in \cref{IA:prop:KR} (this observation is not made in~\cite{kousek2024infinite}). By
\cite[Theorem~1.3]{kousek2024infinite}, every set of lower density $>\frac12$ contains some $B+B+t$,
so the value $\frac12$ is optimal among sets having a natural density; we do not know whether
$\dlog(A)>\frac12$ together with~\eqref{IA:eq:324} forces $B+B+t\subset A$. That example couples a
logarithmic phase with residue classes modulo $2$. In \cref{IA:thm:324} we give an alternative
example with no residue-class coupling at all, in which a single logarithmic phase drifts slowly;
it shows that the test~\eqref{IA:eq:324}, which involves only linear phases $\theta\log n$, does
not detect logarithmic-scale obstructions. Recall that by Kra, Moreira, Richter
and Robertson~\cite[Theorem~1.2]{kra2022proof} every set of positive upper Banach density contains
$B\oplus B+t$ for some infinite $B$ and $t\in\N$; so all these examples necessarily exploit the
diagonal elements $2b$.

\begin{lemma}\label{IA:lem:logsum}
Let $F\subset[0,\infty)$ be a union of pairwise disjoint intervals with left endpoints $\alpha_i$
such that $\#\{i:\alpha_i\le L\}\le C(L+1)$ for all $L\ge0$. Let $k\in\N$ and $j\in\Z$. Then, as
$N\to\infty$,
\[
\sum_{\substack{n\le N\\ \log_2n\in F}}\frac1n=\log2\cdot\big|F\cap[0,\log_2N]\big|+O(1)
\qquad\text{and}\qquad
\sum_{\substack{n\le N,\ n\equiv j\ (k)\\ \log_2n\in F}}\frac1n
=\frac1k\sum_{\substack{n\le N\\ \log_2n\in F}}\frac1n+O(1),
\]
where $|\cdot|$ is Lebesgue measure.
\end{lemma}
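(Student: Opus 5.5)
We treat each interval of $F$ separately and sum the errors, using the counting hypothesis $\#\{i:\alpha_i\le L\}\le C(L+1)$ to show that the accumulated error is $O(1)$. Write $F=\bigsqcup_i I_i$ with $I_i$ having endpoints $\alpha_i\le\beta_i$. An interval $I_i$ contributes the integers $n$ with $2^{\alpha_i}\le n\le 2^{\beta_i}$, up to endpoint conventions, intersected with $[1,N]$. Put $\beta_i'=\min(\beta_i,\log_2N)$ and consider only the $i$ with $\alpha_i\le\log_2N$. Then
\[
\sum_{2^{\alpha_i}\le n\le 2^{\beta_i'}}\frac1n=\log\big(2^{\beta_i'}/2^{\alpha_i}\big)+O\big(2^{-\alpha_i}\big)
=\log2\cdot|I_i\cap[0,\log_2N]|+O(2^{-\alpha_i}).
\]
This uses the standard estimate $\sum_{P\le n\le Q}1/n=\log(Q/P)+O(1/P)$ for $1\le P\le Q$. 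The endpoint conventions, open versus closed and whether $2^{\alpha_i}$ is an integer, change the sum by at most one term $1/n$ with $n\ge 2^{\alpha_i}$. Intervals with $\beta_i'<\alpha_i$ contribute nothing.

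It remains to bound $\sum_i 2^{-\alpha_i}$ uniformly in $N$. Group the $i$ according to $\lfloor\alpha_i\rfloor=L$. The hypothesis gives at most $C(L+1)$ indices with $\alpha_i\le L+1$, and each of them has $2^{-\alpha_i}\le 2^{-L}$. So the total error is at most
\[
\sum_{L\ge0}C(L+2)2^{-L}<\infty,
\]
which proves the first formula. This summation is the key point, and the only place where the counting hypothesis enters.

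For the second formula we run the same decomposition, now comparing $\sum_{n\equiv j\,(k)}1/n$ with $\frac1k\sum 1/n$ on each interval $[P,Q]=[2^{\alpha_i},2^{\beta_i'}]$. We apply \cref{IA:lem:perblock} with $R=\{j\bmod k\}$, so $\rho=1/k$, with $J=1$ and $g\equiv1$. Its proof gives an error of at most $2k/P=2k\,2^{-\alpha_i}$ on each interval. Summing over $i$ as before gives $O(k)$, which is $O(1)$ because $k$ is fixed. The case $2^{\alpha_i}<1$ is harmless: only $n\ge1$ occur, so we replace $P$ by $\max(P,1)$ and the error becomes $O(k)$ for the boundedly many intervals with $\alpha_i<1$.

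The main obstacle is therefore only bookkeeping. Each interval's error must decay like $2^{-\alpha_i}$ rather than being $O(1)$, because there may be unboundedly many intervals below $\log_2N$. This forces us to use the sharp bound $O(1/P)$ in the harmonic-sum estimate and in \cref{IA:lem:perblock}, rather than a crude $O(1)$.
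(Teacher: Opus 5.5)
Your proposal is correct and is essentially the paper's proof: compare each component of $F\cap[0,\log_2N]$ with $\int dt/t$ to get an error $O(2^{-\alpha_i})$ and sum these errors via the counting hypothesis. For the congruence version, apply \cref{IA:lem:perblock} with $J=1$ to each component, for a total error $\sum_i 2k\,2^{-\alpha_i}<\infty$. (Two cosmetic points: the number of $i$ with $\alpha_i\le L+1$ is at most $C(L+2)$, as in your display, not $C(L+1)$ as in your text; and the case $2^{\alpha_i}<1$ never arises, since $F\subset[0,\infty)$ forces $\alpha_i\ge0$.)
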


\begin{proof}
If $1\le x\le y$ and $I$ is an interval with endpoints $x,y$, then $|\sum_{n\in I}1/n-\log(y/x)|\le
1/x$, by comparison with $\int dt/t$. Each component of $F\cap[0,\log_2N]$ with left endpoint
$\alpha_i$ corresponds to such an interval $I$ with $x=2^{\alpha_i}$, and $\log(y/x)=\log2$ times
the length of the component. The total error is at most $\sum_i2^{-\alpha_i}\le\sum_{L\ge0}
C(L+1)2^{1-L}<\infty$. The second claim follows in the same way from \cref{IA:lem:perblock} ($J=1$),
applied to each component, with total error at most $\sum_i 2k\,2^{-\alpha_i}<\infty$.
\end{proof}

\begin{proposition}[set of Kousek--Radi\'c]\label{IA:prop:KR}
Let $A^{(1)}=\bigcup_{n\ge1}[4^n,(2-\frac1n)4^n)$, $A^{(2)}=\bigcup_{n\ge1}[(2+\frac1n)4^n,4^{n+1})$
and $A_{\mathrm{KR}}=(A^{(1)}\cap2\N)\cup(A^{(2)}\cap(2\N+1))$ (intersected with $\N$). Then
$A_{\mathrm{KR}}$ has natural density $\frac12$, contains no $B+B+t$ with $t\ge0$ and $B$ infinite,
and satisfies~\eqref{IA:eq:324} with $\dlog(A_{\mathrm{KR}})=\frac12$.
\end{proposition}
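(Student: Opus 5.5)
The argument has three independent parts: the natural density, the absence of $B+B+t$, and the equidistribution property \eqref{IA:eq:324}.

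\emph{Density.} The set $A^{(1)}\cup A^{(2)}$ is all of $[4^n,4^{n+1})$ except a gap of length $\frac2n4^n$ around $2\cdot4^n$. On $A^{(1)}$ we keep the even integers and on $A^{(2)}$ the odd ones. Hence
\[
|A_{\mathrm{KR}}\cap[N]|=\tfrac12\bigl(N-\textstyle\sum_{n:\,4^n\le N}O(4^n/n)\bigr)+O(\log N)=\tfrac N2+o(N),
\]
because $\sum_{n\le K}4^n/n=O(4^K/K)$. This gives natural density $\frac12$, and by partial summation $\dlog(A_{\mathrm{KR}})=\frac12$.

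\emph{No $B+B+t$.} Suppose $B$ is infinite and $B+B+t\subset A_{\mathrm{KR}}$. Every element $2b+t$ has parity $t$, so all of $B+B+t$ lies in $A^{(1)}$ if $t$ is even and in $A^{(2)}$ if $t$ is odd. Fix $b'\in B$ and let $b\to\infty$ in $B$, with $2b+t$ in a block $[4^m,4^{m+1})$.

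If $t$ is even, then $2b+t\in[4^m,(2-\frac1m)4^m)$. So $b+b'+t$ lies in roughly $[\frac12 4^m,(1-\frac1{2m})4^m)+O(b')$. This interval sits in $[2\cdot4^{m-1},4^m)$, the region of $A^{(2)}$ of index $m-1$ together with the gap. Its distance from $A^{(1)}$ is at least about $\frac{4^m}{2m}-O(b'+|t|)>0$ for large $m$, which contradicts $b+b'+t\in A^{(1)}$.

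If $t$ is odd, then $2b+t\in[(2+\frac1m)4^m,4^{m+1})$. So $b+b'+t\in[(1+\frac1{2m})4^m,2\cdot4^m)+O(b')$, which lies in the index-$m$ region of $A^{(1)}$ and the gap, and misses $A^{(2)}$ for large $m$. This is the argument of \cite{kousek2024infinite}; I would redo it carefully at the endpoints, which is routine.

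\emph{Equidistribution \eqref{IA:eq:324}.} Let $F_1=\bigcup_n[2n,2n+\log_2(2-\frac1n))$ and $F_2=\bigcup_n[2n+\log_2(2+\frac1n),2n+2)$. Then $A^{(i)}=\{x:\log_2x\in F_i\}$, and both sets are unions of intervals with $O(1)$ left endpoints per unit length. Apply \cref{IA:lem:logsum} with $k=2$ to each parity piece, and also with $F$ replaced by $F_i\cap G$, where
\[
G=\{x:\{\theta x\log 2\}\in(a,b)\}.
\]
This is valid because $\{\theta\log n\}=\{\theta\log2\cdot\log_2n\}$, and for fixed $\theta$ the set $G$ is a union of intervals of bounded density per unit length. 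The result is
\[
\sum_{n\le N,\,n\in A_{\mathrm{KR}},\,\{\theta\log n\}\in(a,b)}\frac1n=\frac{\log2}2\,\bigl|(F_1\cup F_2)\cap G\cap[0,\log_2N]\bigr|+O(1).
\]
The complement of $F_1\cup F_2$ in $[0,L]$ has measure $\sum_{n\le L/2}O(1/n)=O(\log L)=o(L)$. Therefore
\[
\bigl|(F_1\cup F_2)\cap G\cap[0,L]\bigr|=|G\cap[0,L]|+o(L)=(b-a)L+o(L),
\]
the last step because $G$ is periodic with period $1/(\theta\log2)$. Dividing by $\log N=L\log2$ gives $\frac12(b-a)$, which is \eqref{IA:eq:324}.

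The main obstacle is bookkeeping rather than an idea. One has to justify applying \cref{IA:lem:logsum} to the intersected sets $F_i\cap G$ and the residue classes simultaneously, which requires checking the endpoint-count hypothesis, and to verify that the parity-block argument for $B+B+t$ survives the $O(b'+|t|)$ perturbations near block boundaries.
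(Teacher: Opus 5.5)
\textbf{The error.} Your proof that $A_{\mathrm{KR}}$ contains no $B+B+t$ has a step that fails. From the fact that every diagonal element $2b+t$ has the parity of $t$, you conclude that all of $B+B+t$ lies in $A^{(1)}$ (for $t$ even) or in $A^{(2)}$ (for $t$ odd). This is false. An off-diagonal element $b+b'+t$ has the parity of $b+b'+t$, which is opposite to that of $t$ whenever $b$ and $b'$ have opposite parities. Take such a pair with $t$ even. Then $b+b'+t$ is odd, so it only has to lie in $A^{(2)}$. Your interval lies in $(2\cdot4^{m-1},4^m)$, and this is mostly covered by the block $[(2+\frac1{m-1})4^{m-1},4^m)$ of $A^{(2)}$. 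So no contradiction arises, and the case $t$ odd breaks down in the same way.

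\textbf{The repair.} Use the pigeonhole step that appears elsewhere in the paper (e.g.\ in \cref{IA:lem:parity}). Pass to an infinite $B'\subset B$ of constant parity, so that every element of $B'+B'+t\subset A_{\mathrm{KR}}$ has the parity of $t$, and take $b,b'\in B'$. Your block estimates then do give a contradiction. For $t$ even and $m$ large, $b+b'+t\in(2\cdot4^{m-1},4^m)$ misses $A^{(1)}$. For $t$ odd and $m$ large, $b+b'+t\in(4^m,(2+\frac1m)4^m)$ misses $A^{(2)}$. The paper does not reprove this part. It quotes \cite[Proposition~4.2]{kousek2024infinite}, which is stated for $t\in\N$, and gets $t=0$ by replacing $B$ with $\{b-1:b\in B,\ b\ge2\}$. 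Once repaired, your direct argument covers $t=0$ as well.

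\textbf{The other two parts.} Your density count and your verification of \eqref{IA:eq:324} are correct. For \eqref{IA:eq:324} your route differs slightly from the paper's. The paper first replaces $A_{\mathrm{KR}}$ by an auxiliary set $A^*$, defined exactly by parity and by whether $\{\frac12\log_2n\}$ lies in $[0,\frac12)$ or $[\frac12,1)$. It shows that the symmetric difference (essentially the gaps around $2\cdot4^n$) has logarithmic mass $O(\log\log N)$, and then applies \cref{IA:lem:logsum} to $A^*$. You apply \cref{IA:lem:logsum} directly to $F_i\cap G$ with the residue classes modulo $2$, and absorb the gaps into the $O(\log L)$ measure of $[0,L]\setminus(F_1\cup F_2)$. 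The two arguments are equivalent, and yours avoids the auxiliary set. The endpoint hypothesis you flagged holds, because the endpoint count for $F_i\cap G$ is at most the sum of the counts for $F_i$ and for $G$.
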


\begin{proof}
The first two assertions are~\cite[Proposition~4.2]{kousek2024infinite}, stated there for $t\in\N$;
the case $t=0$ follows, because if $B+B\subset A_{\mathrm{KR}}$ then $B'=\{b-1:b\in B,\ b\ge2\}$
satisfies $B'+B'+2\subset A_{\mathrm{KR}}$.

For~\eqref{IA:eq:324}, let $V_0=[0,\frac12)$, $V_1=[\frac12,1)$ and
$A^*=\{n\in2\N:\{\frac12\log_2n\}\in V_0\}\cup\{n\in2\N+1:\{\frac12\log_2n\}\in V_1\}$. Since
$\{\frac12\log_2n\}\in V_0$ if and only if $n\in[4^j,2\cdot4^j)$ for some $j\ge0$, the symmetric
difference of $A_{\mathrm{KR}}$ and $A^*$ is contained in $[1,4)\cup W$, where
$W=\bigcup_{n\ge1}[(2-\frac1n)4^n,(2+\frac1n)4^n]$. For $4^K\le N<4^{K+1}$ we have
$\sum_{n\le N,\,n\in W}\frac1n\le\sum_{n\le K}\big(\log\frac{2+1/n}{2-1/n}+4^{-n}\big)=O(\log K)
=o(\log N)$, so it suffices to verify~\eqref{IA:eq:324} for $A^*$. Fix $\theta>0$ and
$(a,b)\subset[0,1)$, put $\theta'=\theta\log2$, so that $\{\theta\log n\}=\{\theta'\log_2n\}$, and
let $F_i=\{x\ge0:\{x/2\}\in V_i,\ \{\theta'x\}\in(a,b)\}$ for $i\in\{0,1\}$. These sets satisfy the
hypothesis of \cref{IA:lem:logsum}, and $F_0\sqcup F_1=F:=\{x\ge0:\{\theta'x\}\in(a,b)\}$. By
\cref{IA:lem:logsum},
\[
\sum_{\substack{n\le N,\ n\in A^*\\ \{\theta\log n\}\in(a,b)}}\frac1n
=\sum_{i=0}^1\ \sum_{\substack{n\le N,\ n\equiv i\ (2)\\ \log_2n\in F_i}}\frac1n
=\frac12\sum_{i=0}^1\ \sum_{\substack{n\le N\\ \log_2n\in F_i}}\frac1n+O(1)
=\frac12\log 2\,|F\cap[0,\log_2N]|+O(1),
\]
and $|F\cap[0,L]|=(b-a)L+O(1)$. Dividing by $\log N=\log2\cdot\log_2N$ gives the limit
$\frac12(b-a)$. The same computation without the condition on $\{\theta\log n\}$ (that is, with
$F=[0,\infty)$) gives $\dlog(A^*)=\frac12$.
\end{proof}

The colour class $(A_0\cap3\N)\cup(A_1\cap(3\N+1))\cup(A_2\cap(3\N+2))$, with
$A_i=\{n:\{\frac12\log_2n\}\in[\frac i3,\frac{i+1}3)\}$, from the remark after
Proposition~5.1 of~\cite{kousek2024infinite} is handled in the same way (with residues modulo $3$)
and gives a counterexample of logarithmic density $\frac13$.

\begin{theorem}\label{IA:thm:324}
Let $0<\eta<\frac12$, $f(x)=\frac x2+\sqrt x$ for $x\ge0$, and
\[
A=\big\{n\in\N:\ \{f(\log_2n)\}\in[0,\tfrac12-\eta)\big\}.
\]
Then $\dlog(A)=\frac12-\eta$, $A$ satisfies~\eqref{IA:eq:324}, and $A$ contains no $B+B+t$ with
$t\in\Z$ and $B\subset\N$ infinite.
\end{theorem}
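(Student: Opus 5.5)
The plan is to prove the three assertions of \cref{IA:thm:324} separately, and to reduce both density statements to measures of subsets of $[0,\infty)$ through \cref{IA:lem:logsum}. Throughout, put $c=\frac12-\eta$.

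\emph{Avoidance.} I would copy the mechanism in the proof of \cref{KR:thm:main}\ref{KR:it:E}, but without the rotation. Suppose $B+B+t\subset A$ with $B$ infinite. Fix $b'\in B$ and let $b\to\infty$ in $B$. Both $m_1=2b+t$ and $m_2=b+b'+t$ lie in $A$; this uses the diagonal element $b+b$. Put $x_i=\log_2m_i$. Then $x_1-x_2=\log_2\frac{2b+t}{b+b'+t}\to1$. By the mean value theorem,
\[
0\le\sqrt{x_1}-\sqrt{x_2}\le\frac{|x_1-x_2|}{2\sqrt{x_2}}\to0 .
\]
Hence $f(x_1)-f(x_2)\to\frac12$. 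On the other hand, $\{f(x_1)\},\{f(x_2)\}\in[0,c)$, so their difference lies in $(-c,c)$. It therefore has distance at least $\frac12-c=\eta$ from $\frac12+\Z$. This is a contradiction once $b$ is large. No subsequence argument is needed here.

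\emph{Density and \eqref{IA:eq:324}.} Fix $\theta>0$ and $(a,b)\subset[0,1)$, and put $\theta'=\theta\log2$. Let
\[
F=\{x\ge0:\ \{f(x)\}\in[0,c),\ \{\theta'x\}\in(a,b)\}.
\]
The case without the second condition gives $\dlog(A)$. Since $f'\ge\frac12$, $f$ is increasing. So the set $\{x:\{f(x)\}\in[0,c)\}$ is a union of intervals, one for each integer $j\le f(L)+1=O(L+1)$ up to height $L$. Intersecting with the $O(L+1)$ intervals where $\{\theta'x\}\in(a,b)$, the components of $F$ number $O(L+1)$ up to $L$. So \cref{IA:lem:logsum} applies and gives
\[
\sum_{n\le N,\ \log_2 n\in F}\frac1n=\log2\,\big|F\cap[0,\log_2N]\big|+O(1).
\]
Dividing by $\log N$, it remains to show
\[
|F\cap[0,L]|=c(b-a)L+o(L).
\]
By the continuous-parameter Weyl criterion (approximating indicator functions of boxes in $\T^2$ by trigonometric polynomials), this reduces to showing that
\[
\frac1L\int_0^L e\big(hf(x)+k\theta'x\big)\,dx\to0\qquad\text{for all }(h,k)\in\Z^2\setminus\{0\}.
\]

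\emph{The resonant case.} This is the step I expect to be the main obstacle. The phase has derivative $\frac h2+k\theta'+\frac{h}{2\sqrt x}$.

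If $\frac h2+k\theta'\neq0$, the derivative is bounded away from $0$ for large $x$ and is monotone there. The first-derivative test (integration by parts) then bounds the integral by $O(1)$ plus the contribution of a bounded initial segment, which is $o(L)$.

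If $\frac h2+k\theta'=0$, then $h\ne0$. This case can genuinely occur when $\theta'$ is a half-integer multiple of a rational, and it is exactly where the term $\sqrt x$ in $f$ matters. The integral becomes
\[
\int_0^Le(h\sqrt x)\,dx=\int_0^{\sqrt L}2u\,e(hu)\,du=O(\sqrt L)
\]
after substituting $x=u^2$ and integrating by parts once. This is $o(L)$.

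Thus $(f(x),\theta'x)$ is equidistributed in $\T^2$ in the continuous sense, which gives both $\dlog(A)=c$ and \eqref{IA:eq:324}. The slow drift $\sqrt x$ is what destroys the exact resonance with the phases $\theta\log n$. At the same time it is too slow to affect the local relation $f(x+1)-f(x)\approx\frac12$ used in the avoidance argument.
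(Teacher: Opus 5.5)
Your proposal is correct and follows the paper's proof essentially step by step. It uses the same estimate for $\frac1L\int_0^L e(hf(x)+k\theta'x)\,dx$, split according to whether $\frac h2+k\theta'$ vanishes, with the resonant case (which occurs exactly when $\theta'=-h/(2k)$ is rational) handled by substituting $x=u^2$; the same transfer to logarithmic densities via \cref{IA:lem:logsum}; and the same diagonal comparison of $2b+t$ with $b+b'+t$ for the avoidance, where fixing an arbitrary $b'\in B$ instead of $b'=\min B$ changes nothing.
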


\begin{lemma}\label{IA:lem:oscint}
Let $\theta'>0$ and $(h,k)\in\Z^2\setminus\{(0,0)\}$. Then
$\frac1L\int_0^L\e\big(hf(x)+k\theta'x\big)\,dx\to0$ as $L\to\infty$, where $\e(y)=e^{2\pi iy}$.
\end{lemma}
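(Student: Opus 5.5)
We need to show that the average of $\e(\phi(x))$ over $[0,L]$ tends to $0$, where $\phi(x)=hf(x)+k\theta'x=(\tfrac h2+k\theta')x+h\sqrt x$. We split according to whether the linear coefficient $c:=\tfrac h2+k\theta'$ vanishes.

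\emph{Case $c\neq0$.} Then $\phi'(x)=c+\tfrac{h}{2\sqrt x}$, and this tends to $c\ne0$. Choose $x_0$ with $|\phi'(x)|\ge|c|/2$ for $x\ge x_0$. On $[x_0,L]$ the function $\phi'$ is monotone, since $\phi''(x)=-\tfrac h4x^{-3/2}$ has constant sign. The first-derivative test (van der Corput's lemma: monotone $\phi'$ with $|\phi'|\ge\lambda$ gives $|\int\e(\phi)|\le C/\lambda$), or simply integration by parts writing $\e(\phi)=\frac{(\e(\phi))'}{2\pi i\phi'}$, bounds $\int_{x_0}^L\e(\phi)\,dx$ by $O(1/|c|)$ uniformly in $L$. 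The piece over $[0,x_0]$ is $O(x_0)$. Dividing by $L$ gives the limit $0$.

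\emph{Case $c=0$.} Then $h\neq0$: indeed, $h=0$ would force $k\theta'=0$, hence $k=0$, which is excluded. So $\phi(x)=h\sqrt x$. Substitute $u=\sqrt x$:
\[
\int_0^L\e(h\sqrt x)\,dx=\int_0^{\sqrt L}2u\,\e(hu)\,du .
\]
Integrating by parts once bounds this by $O(\sqrt L/|h|)+O(1/h^2)$. This is $o(L)$, so the average again tends to $0$. Equivalently, $\phi'=\tfrac{h}{2\sqrt x}$ is monotone with $|\phi'|\ge|h|/(2\sqrt L)$, so the van der Corput bound gives $O(\sqrt L)$.

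The only delicate point is that in the case $c\ne0$ the integration by parts must be done on the region where $\phi'$ stays away from $0$. Near $x=0$, and possibly near a single point where $\phi'$ vanishes (which happens when $c$ and $h$ have opposite signs), the integrand is merely bounded. The argument handles this by cutting off $[0,x_0]$, which contributes $O(1)$ after division by $L$ tends to $0$. Everything else is a routine estimate.
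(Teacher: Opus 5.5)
Your proof is correct and follows essentially the same route as the paper. It splits according to whether $c=\tfrac h2+k\theta'$ vanishes, integrates by parts on $[x_0,L]$ where $\phi'$ is monotone with $|\phi'|\ge|c|/2$ (the paper takes $x_0=(h/c)^2$ explicitly), and in the case $c=0$ substitutes $x=u^2$ and integrates by parts once to get $O(\sqrt L/|h|)$; the only cosmetic difference is that the paper treats $h=0$ as a separate trivial case, whereas you absorb it into the case $c\neq0$.
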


\begin{proof}
The phase is $\psi(x)=\gamma x+h\sqrt x$ with $\gamma=\frac h2+k\theta'$. If $h=0$, then $k\ne0$, so
$\gamma\ne0$ and $\int_0^L\e(\gamma x)\,dx=O(1/|\gamma|)$. If $h\ne0$ and $\gamma\ne0$, let
$x_0=(h/\gamma)^2$. On $[x_0,L]$ the derivative $\psi'(x)=\gamma+\frac h{2\sqrt x}$ is monotone and
$|\psi'|\ge|\gamma|/2$. Integrating by parts,
\[
\int_{x_0}^L\e(\psi)\,dx=\Big[\frac{\e(\psi)}{2\pi i\psi'}\Big]_{x_0}^L
-\int_{x_0}^L\e(\psi)\,d\Big(\frac1{2\pi i\psi'}\Big),
\]
and since $1/\psi'$ is monotone with $|1/\psi'|\le2/|\gamma|$, each of the two terms is at most
$\frac{1}{2\pi}\cdot\frac{4}{|\gamma|}$.
The integral over $[0,x_0]$ is at most $x_0$. If $\gamma=0$, then $h\ne0$ (otherwise $k\theta'=0$, so
$k=0$), and $\int_0^L\e(h\sqrt x)\,dx=\int_0^{\sqrt L}2y\,\e(hy)\,dy=O(\sqrt L/|h|)$ by one
integration by parts. In both cases the integral is $o(L)$.
\end{proof}

\begin{proof}[Proof of \cref{IA:thm:324}]
Let $c=\frac12-\eta$, fix $\theta>0$, put $\theta'=\theta\log2$, and let
$\Psi(x)=(f(x)\bmod1,\ \theta'x\bmod1)\in\T^2$. By \cref{IA:lem:oscint} and linearity,
$\frac1L\int_0^LP(\Psi(x))\,dx\to\int_{\T^2}P$ for every trigonometric polynomial $P$, hence, by
uniform density of trigonometric polynomials in $C(\T^2)$, for every continuous $P$. If
$\Omega\subset\T^2$ is a box, then for each $\delta>0$ there are continuous $P_-\le\one_\Omega\le P_+$
with $\int(P_+-P_-)<\delta$, and therefore $\frac1L|\{x\in[0,L]:\Psi(x)\in\Omega\}|\to|\Omega|$. We
apply this to $\Omega=[0,c)\times(a,b)$ and to $\Omega=[0,c)\times\T$.

Since $f$ is continuous and strictly increasing with $f(0)=0$, the sets
$F=\{x\ge0:\{f(x)\}\in[0,c)\}=\bigcup_{j\ge0}[f^{-1}(j),f^{-1}(j+c))$ and
$F'=F\cap\{x:\{\theta'x\}\in(a,b)\}$ are unions of disjoint intervals; the endpoints of their
components in $[0,L]$ are among the at most $2(f(L)+1)+2(\theta'L+1)$ points where $f(x)$ or
$\theta'x$ hits $\Z\cup(\Z+c)$ or $(\Z+a)\cup(\Z+b)$, so \cref{IA:lem:logsum} applies. Since
$A=\{n:\log_2n\in F\}$ and $\{n\in A:\{\theta\log n\}\in(a,b)\}=\{n:\log_2n\in F'\}$, we obtain
$\dlog(A)=\lim|F\cap[0,L]|/L=c$ and the logarithmic density of $\{n\in A:\{\theta\log n\}\in(a,b)\}$
equals $\lim|F'\cap[0,L]|/L=c(b-a)$. This proves~\eqref{IA:eq:324}.

Now suppose $B$ is infinite, $t\in\Z$ and $B+B+t\subset A$. Let $b'=\min B$ and, for $b\in B$ large,
put $n_1=b+b'+t$, $n_2=2b+t$ (both in $B+B+t$) and $x_i=\log_2n_i$. As $b\to\infty$,
$x_2-x_1=\log_2\frac{2b+t}{b+b'+t}\to1$ and $|\sqrt{x_2}-\sqrt{x_1}|\le\frac{|x_2-x_1|}{2\sqrt{\min(x_1,x_2)}}
\to0$, so $\Delta_b:=f(x_2)-f(x_1)-\frac12\to0$. Take $b$ with $|\Delta_b|<\eta/2$. Since $n_1\in A$,
$\{f(x_1)\}\in[0,c)$, so $f(x_2)\equiv\{f(x_1)\}+\frac12+\Delta_b$ with
$\{f(x_1)\}+\frac12+\Delta_b\in[\frac12-\frac\eta2,1-\frac\eta2)$. Hence
$\{f(x_2)\}\notin[0,c)$, that is, $n_2\notin A$, a contradiction.
\end{proof}

\formalization{\Cref{IA:prop:KR}, including the two assertions quoted
from~\cite{kousek2024infinite}, and \cref{IA:thm:324} are formalized in \leanfile{Q324}
(\leanname{prop_KR}, \leanname{thm324}).}

\begin{remark}\label{IA:rem:324}
\begin{enumerate}[label=(\roman*)]
\item The obstruction in \cref{IA:thm:324} is still of logarithmic scale: the phase $f(\log_2n)$
increases by $\frac12+o(1)$ when $n$ is doubled. Equidistribution against all linear phases
$\theta\log n$ does not rule out such phases, because the term $\sqrt{\log_2n}$ makes $f$
asymptotically independent of every linear phase, including the resonant frequency
$\theta=1/(2\log2)$.
\item The set $A_\eta$ of \cref{IA:sec:q323} is a further counterexample to \cref{IA:q:324}: it
contains no $B+B+t$ by \cref{IA:thm:323}\ref{IA:it:323avoid} with $\ell=m=1$, and it
satisfies~\eqref{IA:eq:324}. To see the latter, let $g(n)=\one[\{\theta\log n\}\in(a,b)]$. On the
block $[X_k,Y_k)$, $g$ is constant on $J_k\le2\theta\log(Y_k/X_k)+3=O(2^k)$ subintervals, so by
\cref{IA:lem:perblock} the sum of $g(n)/n$ over $A_\eta\cap[X_k,Y_k)$ equals $\rho_k$ times the sum
over $[X_k,Y_k)$ up to $O(2^k/T_k)$, which is summable. The rest of the proof of
\cref{IA:thm:323}\ref{IA:it:323dens} applies verbatim with weights $g(n)$ (the transition intervals
contribute $O(K\log K)$ and the errors $|\rho_k-\mu|$ contribute $O(2^K/K)$), giving
$\sum_{n\le N,\,n\in A_\eta}g(n)/n=\mu\sum_{n\le N}g(n)/n+o(\log N)$; and
$\sum_{n\le N}g(n)/n=(b-a)\log N+O(1)$ by \cref{IA:lem:logsum}. Here the obstruction comes from scale-separated blocks with periodic structure inside each block,
rather than from a phase of $\log n$; the empty transition intervals $[Y_{k-1},X_k)$ have
logarithmic density zero and are invisible to~\eqref{IA:eq:324}.
\end{enumerate}
\end{remark}


\section{Sums of blocks from a density Ramsey family}
\label{IB:sec:q331}

In this section we answer Question~3.31 of~\cite{kra2023problems} and its
$k$-fold analogue, Question~\S3.6(a), positively, with ordered (hence disjoint) blocks and
simultaneously for all sum lengths $1\le m\le k$. The proof is a short modification of the proof of the Density Finite
Sums Theorem of Kra, Moreira, Richter and Robertson~\cite{kra2025density}: what
is needed is the quantitative (positive lower average) form of the recurrence
property that is actually established in~\cite{kra2025density}, combined with a
version of Bergelson's intersectivity lemma along \Fol{} sequences.

Throughout, $\N=\{1,2,\dots\}$, and for $A\subset\N$ and $t\in\Z$ we write
$A-t=\{n\in\N: n+t\in A\}$. The upper Banach density is
\[
\ubd(A)=\limsup_{N\to\infty}\ \sup_{M\ge 0}\frac{|A\cap\{M+1,\dots,M+N\}|}{N}.
\]

\begin{definition}[{\cite[Definition~3.29]{kra2023problems}}]
A collection $\mathcal D\subset\{F\subset\N: 0<|F|<\infty\}$ has the
\emph{density Ramsey property} if every $A\subset\N$ with $\ubd(A)>0$ contains
some $X\in\mathcal D$.
\end{definition}

By Szemer\'edi's theorem, the family of $\ell$-term arithmetic progressions has
the density Ramsey property for every $\ell$; trivially so does the family of
singletons. Theorem~3.30 of~\cite{kra2023problems} states that for such a family
$\mathcal D$ and $\ubd(A)>0$ there are pairwise disjoint $B_1,B_2,\ldots\in\mathcal D$
and pairwise disjoint $C_1,C_2,\ldots\in\mathcal D$ with $B_i+C_j\subset A$ for all
$i,j$. The survey observes that this does not contain the theorem that $A-t\supset B\oplus B$
for some infinite $B$ and some $t$, where $B\oplus B=\{b+b':b,b'\in B,\ b\ne b'\}$
(\cite[Theorem~1.2]{kra2022proof}, Theorem~2.2 of~\cite{kra2023problems}), and asks:

\begin{question}{3.31}
\label{IB:q:331}
If $\mathcal D\subset\{F\subset\N: 0<|F|<\infty\}$ has the density Ramsey property,
is it true that for any $A\subset\N$ with $\ubd(A)>0$ there exist $t\in\N$ and
disjoint $B_1,B_2,\ldots\in\mathcal D$ such that $B_i+B_j\subset A-t$ for all
$i\neq j\in\N$?
\end{question}

Immediately afterwards the survey adds: ``Similarly, one can formulate the analog
for the $k$-fold sum of the sets.'' We read this as follows.

\begin{question}{\S3.6(a)}
\label{IB:q:36a}
Let $\mathcal D$ have the density Ramsey property, $k\in\N$ and $\ubd(A)>0$. Do there
exist $t\in\N$ and disjoint $B_1,B_2,\ldots\in\mathcal D$ such that
$B_{i_1}+\dots+B_{i_k}\subset A-t$ for all pairwise distinct
$i_1,\dots,i_k\in\N$?
\end{question}

The indices must be distinct: with $\mathcal D$ the singletons and repeated indices
allowed, already $k=2$ would require $B+B+t\subset A$, which fails in general by
Example~2.3 of~\cite{kra2023problems}.

\begin{theorem}
\label{IB:thm:dR}
Let $\mathcal D\subset\{F\subset\N: 0<|F|<\infty\}$ have the density Ramsey property,
let $k\in\N$ and let $A\subset\N$ with $\ubd(A)>0$. Then there exist $t\in\N$ and
$B_1,B_2,\ldots\in\mathcal D$ with $\max B_i<\min B_{i+1}$ for all $i$, such that
\[
B_{i_1}+B_{i_2}+\dots+B_{i_m}\subset A-t
\qquad\text{for all } 1\le m\le k \text{ and all } i_1<i_2<\dots<i_m .
\]
\end{theorem}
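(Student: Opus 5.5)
The plan is to run the greedy construction behind the Density Finite Sums Theorem of Kra, Moreira, Richter and Robertson~\cite{kra2025density}, inserting a whole block $B_j\in\mathcal D$ at each step instead of a single element $b_j$. The combinatorial bookkeeping is the same as in \cref{K:lem:cube-sumset}. Sums only combine elements from \emph{distinct} blocks, so the elements of one block never interact with each other. All elements of $B_j$ merely have to satisfy, simultaneously, the finitely many open conditions that a single new element would have to satisfy. The density Ramsey property then finds such a block, provided the set of admissible elements has positive upper Banach density.

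First I will state the dynamical input, in the quantitative form that is actually proved in~\cite{kra2025density} (this is the proposition cited as \cref{IB:prop:dyn} in \cref{I:tab:lean}). From $\ubd(A)>0$, Furstenberg's correspondence gives a system $(X,\mu,T)$, a clopen set $E$, a point $a$ with $A=\{n:T^na\in E\}$, and a shift $t\in\N$. It also gives points $x_0=T^ta,x_1,\dots,x_k$ with $x_1,\dots,x_k\in E$ (an ``Erd\H{o}s progression'' of length $k$). The required property is that for all open neighbourhoods $V_i\ni x_{i+1}$, the return set
\[
R(V_0,\dots,V_{k-1})=\{n\in\N:\ T^nx_i\in V_i\ \text{for }0\le i<k\}
\]
has positive upper Banach density. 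The point is that \cite{kra2025density} yields positive lower averages along some \Fol{} sequence, not merely infinitely many returns. If the statement there is phrased through measures rather than points, I would use a version of Bergelson's intersectivity lemma along \Fol{} sequences to convert positive measure of the relevant intersections into positive upper Banach density of a return set. This conversion is where I expect the main obstacle: checking that the positive-density form survives after the neighbourhoods have been shrunk by all previously chosen blocks.

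Next comes the induction. Call a finite tuple of blocks $(B_1,\dots,B_j)$ with $\max B_i<\min B_{i+1}$ \emph{acceptable} if the following holds for every $0\le s\le k$, every $m$ with $s+m\le k$ and all $i_1<\dots<i_m\le j$: we have $T^{b_{i_1}+\dots+b_{i_m}}x_s\in E$ whenever $b_{i_r}\in B_{i_r}$ (for $m=0$ this reads $x_s\in E$, which holds for $s\ge1$). Given an acceptable tuple, for each $s<k$ let $H_s$ be the intersection, over all earlier choices, of the sets $T^{-(b_{i_1}+\dots+b_{i_m})}E$ with $m\le k-s-1$. Each $H_s$ is a finite intersection of open sets, and acceptability at level $s+1$ says exactly that $x_{s+1}\in H_s$. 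The set $R=R(H_0,\dots,H_{k-1})\cap(\max B_j,\infty)$ therefore has positive upper Banach density. By the density Ramsey property, $R$ contains some $F\in\mathcal D$, and we set $B_{j+1}=F$. Every $b\in F$ satisfies $T^bx_s\in H_s$, which is precisely what is needed to extend acceptability to tuples using $B_{j+1}$ at most once. This is the same case check as in \cref{K:lem:cube-sumset}, and it uses commutativity of the shifts.

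Finally, the unions satisfy every finite instance of the condition, so taking $s=0$ gives $T^{t+b_{i_1}+\dots+b_{i_m}}a\in E$, that is, $B_{i_1}+\dots+B_{i_m}\subset A-t$ for all $1\le m\le k$. The blocks are ordered because at each step we choose $F$ inside $(\max B_j,\infty)$. Since this constraint removes only a finite set, $\ubd(R)$ stays positive.
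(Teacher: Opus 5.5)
Your block-by-block bookkeeping is sound: if you had fixed points $x_0=T^ta,x_1,\dots,x_k$ with $x_1,\dots,x_k\in E$ such that every return set $R(V_0,\dots,V_{k-1})$ to neighbourhoods of $(x_1,\dots,x_k)$ had positive upper Banach density, your acceptability induction would go through.

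The gap is that this pointwise input is not what \cite{kra2025density} proves, and you do not derive it. What is available (\cref{IB:prop:dyn}) is a statement about a measure $\sigma$ on $\{a\}\times Y^k$. For every open box $V$ with $\sigma(Y\times V)>0$, the measures $\sigma(O_n)$ of $O_n=(Y\times V)\cap T_\Delta^{-n}(V\times Y)$ have positive lower average along $\Phi$. An individual Erd\H{o}s progression is extracted from $\sigma$ by shrinking boxes, and at the limit point this gives only infinitely many return times. Each shrinking step keeps only finitely many of them.

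Bergelson's intersectivity lemma does not close this either. For one fixed box $V$ it gives a point $(a,w)$ whose returns to $V$ have positive upper Banach density. It does not give a point whose returns to \emph{every} neighbourhood of itself have positive density. This is exactly the obstacle you flag, that positive density must survive the shrinking of the neighbourhoods. As it stands it is unresolved, so the proof is incomplete at its central step.

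The paper's proof removes the need for any such point by never fixing $x_1,\dots,x_k$. It carries boxes $V_i\subset Y^k$ with $\sigma(Y\times V_i)>0$, and each step runs as follows.
\begin{enumerate}
\item Property~\ref{IB:dyn4} makes $R=\{n:\sigma(O_n)\ge c/2\}$ have positive density along $\Phi$.
\item \cref{IB:lem:intersect} gives $K\subset R$ with $\ubd(K)>0$ such that every finite intersection $\bigcap_{n\in F}O_n$, $F\subset K$, has positive $\sigma$-measure.
\item The density Ramsey property puts the next block $B_{i+1}$ inside $K$.
\item Since $B_{i+1}$ is finite, $\bigcap_{n\in B_{i+1}}O_n$ is open with positive measure. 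It therefore contains $\{a\}\times V_{i+1}$ for a box $V_{i+1}$ around a support point, with $\sigma(Y\times V_{i+1})>0$.
\end{enumerate}
The constraints imposed by a whole block are absorbed into the next box. Positive density is needed only for the measure, and \ref{IB:dyn4} supplies it afresh at every stage. The final chain argument then runs through the coordinates of the nested boxes, using that \ref{IB:I2} holds for every $v\in V_i$, so no limit point is ever needed.
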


\begin{corollary}
\label{IB:cor:331}
Question~\ref{IB:q:331} and Question~\ref{IB:q:36a} have positive answers. In
Question~\ref{IB:q:331} one may moreover require $B_i\subset A-t$ for all $i$ and
$\max B_i<\min B_{i+1}$.
\end{corollary}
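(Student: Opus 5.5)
The corollary should follow directly from \cref{IB:thm:dR}. The only work is to match the ordered, multi-length conclusion of the theorem to the unordered, distinct-index formulations of the two questions.

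For \cref{IB:q:36a}, fix $\mathcal D$ with the density Ramsey property, $k\in\N$ and $A\subset\N$ with $\ubd(A)>0$. Apply \cref{IB:thm:dR} with this $k$ to get $t\in\N$ and $B_1,B_2,\ldots\in\mathcal D$.
\begin{itemize}
\item \emph{Disjointness.} We have $\max B_i<\min B_{i+1}$ for all $i$, so by induction $\max B_i<\min B_j$ whenever $i<j$. In particular the $B_i$ are pairwise disjoint.
\item \emph{Reordering.} Let $i_1,\dots,i_k$ be pairwise distinct. Let $j_1<\dots<j_k$ be the same indices in increasing order. Addition in $\Z$ is commutative, so $B_{i_1}+\dots+B_{i_k}=B_{j_1}+\dots+B_{j_k}$.
\item \emph{Conclusion.} The case $m=k$ of \cref{IB:thm:dR} gives $B_{j_1}+\dots+B_{j_k}\subset A-t$.
\end{itemize}
This answers \cref{IB:q:36a} positively, with $t\in\N$ as the question requires.

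For \cref{IB:q:331}, specialise the same argument to $k=2$. For $i\ne j$, the set $B_i+B_j$ equals $B_{\min(i,j)}+B_{\max(i,j)}$, which lies in $A-t$ by the case $m=2$. The extra assertions come from the same application of the theorem:
\begin{itemize}
\item $B_i\subset A-t$ for all $i$ is the case $m=1$;
\item $\max B_i<\min B_{i+1}$ is part of the conclusion of \cref{IB:thm:dR}.
\end{itemize}

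There is no genuine obstacle in this deduction. All the substance lies in \cref{IB:thm:dR} itself: the quantitative recurrence from~\cite{kra2025density} combined with an intersectivity lemma along \Fol{} sequences. The only points to check here are that distinct indices can always be sorted, that "disjoint" in the questions is implied by the ordering of the blocks, and that the translate $t$ is a positive integer, all of which the theorem provides.
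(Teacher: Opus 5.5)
Your proposal is correct and follows the paper's proof, which applies \cref{IB:thm:dR} with $k=2$ (respectively the given $k$) and writes sums over distinct indices with the indices in increasing order. Your explicit checks of disjointness via $\max B_i<\min B_{i+1}$, of $B_i\subset A-t$ via the case $m=1$, and of $t\in\N$ spell out details the paper leaves implicit.
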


\begin{proof}
Take $k=2$, respectively the given $k$, in \cref{IB:thm:dR}; sums over distinct
indices may be written with increasing indices.
\end{proof}

\begin{remark}
\label{IB:rem:dR}
Taking $\mathcal D$ to be the singletons, \cref{IB:thm:dR} is the Density Finite
Sums Theorem~\cite[Theorem~1.1]{kra2025density} (with the shift $t\ge 1$); for
$k=2$ it contains the theorem $B\oplus B\subset A-t$ of~\cite{kra2022proof} (only distinct
summands occur, as they must). Taking $\mathcal D$ to be the $\ell$-term
progressions gives a common extension of these results and Szemer\'edi's theorem,
which is the unification sought in~\cite[\S3.6]{kra2023problems}. Concerning
novelty: the proof below combines the construction of nested boxes from support
points in the proof of~\cite[Proposition~3.2]{kra2025density} with the chain argument
of~\cite[Lemma~2.2]{kra2025density}, with single return times replaced by blocks
from $\mathcal D$; all
dynamical input is taken from~\cite{kra2025density}. Our contribution is the
observation that the quantitative recurrence established there, together with
\cref{IB:lem:intersect}, suffices.
\end{remark}

\subsection{Two lemmas on \Fol{} sequences}

A \emph{\Fol{} sequence} in $\N$ is a sequence $\Phi=(\Phi_N)$ of finite nonempty
subsets of $\N$ with $|(\Phi_N+1)\cap\Phi_N|/|\Phi_N|\to 1$. For $K\subset\N$ put
$\udens_\Phi(K)=\limsup_N|K\cap\Phi_N|/|\Phi_N|$.

\begin{lemma}
\label{IB:lem:folner}
Let $\Phi$ be a \Fol{} sequence in $\N$. Then $|\Phi_N|\to\infty$, and
$\ubd(K)\ge\udens_\Phi(K)$ for every $K\subset\N$.
\end{lemma}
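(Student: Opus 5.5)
The plan has two parts, one for each claim of \cref{IB:lem:folner}. Here the \Fol{} condition is the one-step condition $|(\Phi_N+1)\cap\Phi_N|/|\Phi_N|\to1$.

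\emph{Step 1: $|\Phi_N|\to\infty$.} First upgrade the one-step condition to shifts by any fixed $h\in\N$. From
\[
|(\Phi_N+h)\setminus\Phi_N|\le\sum_{j=0}^{h-1}|(\Phi_N+j+1)\setminus(\Phi_N+j)|=h\,|(\Phi_N+1)\setminus\Phi_N|
\]
we get $|(\Phi_N+h)\cap\Phi_N|/|\Phi_N|\to1$ for each fixed $h$. Now suppose $|\Phi_N|\le K$ along a subsequence. Since $|(\Phi_N+1)\setminus\Phi_N|$ is a nonnegative integer and is $o(|\Phi_N|)=o(1)$ there, it vanishes for large $N$ in the subsequence. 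That means $\Phi_N+1\subset\Phi_N$, which is impossible for a finite nonempty subset of $\N$: the maximum $x$ of $\Phi_N$ would have $x+1\in\Phi_N$. So $|\Phi_N|\to\infty$. This is simpler than the group version \cref{K:lem:folner-grow} because $\N$ is ordered.

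\emph{Step 2: $\ubd(K)\ge\udens_\Phi(K)$.} Fix $L\in\N$. Double count the pairs $(y,x)$ with $y\in\Phi_N$, $0\le x<L$ and $y+x\in K$. There are
\[
\sum_{y\in\Phi_N}|K\cap\{y,\dots,y+L-1\}|\le|\Phi_N|\cdot\sup_{M\ge0}|K\cap\{M+1,\dots,M+L\}|
\]
of them. There are also $\sum_{x<L}|K\cap(\Phi_N+x)|$ of them, and each term is at least $|K\cap\Phi_N|-|\Phi_N\setminus(\Phi_N+x)|=|K\cap\Phi_N|-o(|\Phi_N|)$ by Step 1's upgrade. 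This is the same argument as in the proof of \cref{G:prop:strict}(ii). Dividing by $L|\Phi_N|$ and taking $\limsup_N$ gives
\[
\udens_\Phi(K)\le\sup_M|K\cap\{M+1,\dots,M+L\}|/L.
\]
Letting $L\to\infty$ along the $\limsup$ in the definition of $\ubd$ gives the claim.

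The only mildly delicate point is the passage from the one-step \Fol{} condition to fixed shifts $h$, which the telescoping inequality handles. Everything else is routine.
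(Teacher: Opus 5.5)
Your proof is correct. Step~1 is the paper's argument in slightly longer form. The paper notes that $\max F+1\notin F$ gives $|(F+1)\cap F|\le|F|-1$ for every finite nonempty $F\subset\N$, so $|(\Phi_N+1)\cap\Phi_N|/|\Phi_N|\le1-1/|\Phi_N|$, which forces $|\Phi_N|\to\infty$. Your telescoping upgrade is not used for this claim.

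Step~2 takes a different route. The paper never passes to shifts other than $1$. It splits $\Phi_N$ into its $r_N$ maximal runs of consecutive integers and observes that $|(\Phi_N+1)\cap\Phi_N|=|\Phi_N|-r_N$. So the \Fol{} condition is exactly $r_N=o(|\Phi_N|)$. It then bounds $|K\cap\Phi_N|\le Lr_N+\delta_L|\Phi_N|$, where $\delta_L$ is the supremum of $|K\cap I|/|I|$ over intervals $I$ of length at least $L$. Since $\delta_L=\sup_{L'\ge L}a_{L'}$ with $a_{L'}=\sup_{M\ge0}|K\cap\{M+1,\dots,M+L'\}|/L'$, this gives $\udens_\Phi(K)\le\delta_L\to\ubd(K)$.

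That argument uses the order structure of $\N$ and the one-step definition directly. It also shows that \Fol{} sets in $\N$ are essentially unions of long intervals.

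Yours is the averaging-over-translates argument from the proof of \cref{G:prop:strict}(ii), which works in any group where the \Fol{} condition holds for all shifts. The price here is the telescoping step, needed because this section's definition only involves the shift by $1$. In return you get $\udens_\Phi(K)\le a_L$ for each fixed $L$, rather than $\udens_\Phi(K)\le\delta_L$. Both bounds tend to $\ubd(K)$.
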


\begin{proof}
For a finite nonempty $F\subset\N$ we have $\max F+1\notin F$, so
$|(F+1)\cap F|\le|F|-1$; hence the \Fol{} property forces $|\Phi_N|\to\infty$.
Decompose $\Phi_N$ into its $r_N$ maximal blocks of consecutive integers. If
$x,x+1\in\Phi_N$ then $x$ and $x+1$ lie in the same block, so
$|(\Phi_N+1)\cap\Phi_N|=|\Phi_N|-r_N$ and therefore $r_N=o(|\Phi_N|)$. Fix $L\in\N$ and
let $\delta_L=\sup\{|K\cap I|/|I|\}$, the supremum over all intervals $I\subset\N$
of length at least $L$. Blocks of length $<L$ contain at most $Lr_N$ points in
total, and a block $I$ of length $\ge L$ contains at most $\delta_L|I|$ points of
$K$. Hence $|K\cap\Phi_N|\le Lr_N+\delta_L|\Phi_N|$ and
$\udens_\Phi(K)\le\delta_L$. Finally $\delta_L=\sup_{L'\ge L}a_{L'}$ where
$a_{L'}=\sup_M|K\cap\{M+1,\dots,M+L'\}|/L'$, so $\delta_L\to\limsup_{L'}a_{L'}=\ubd(K)$.
\end{proof}

The next lemma is a version of Bergelson's intersectivity
lemma~\cite{bergelson1985sets} in which density is measured along a \Fol{}
sequence; working directly along $\Phi$ avoids the passage through an
enumeration of the index set that appears in the proof of Theorem~3.30
of~\cite{kra2023problems}.

\begin{lemma}
\label{IB:lem:intersect}
Let $(\Omega,\nu)$ be a probability space, $\Phi$ a \Fol{} sequence in $\N$, and
$R\subset\N$ with $\udens_\Phi(R)\ge\delta>0$. For $n\in R$ let $O_n\subset \Omega$ be
measurable with $\nu(O_n)\ge\kappa>0$. Then there is $K\subset R$ with
$\ubd(K)\ge\delta\kappa$ such that $\nu\bigl(\bigcap_{n\in F}O_n\bigr)>0$ for every
finite nonempty $F\subset K$.
\end{lemma}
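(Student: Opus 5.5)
Following Bergelson's original argument, we transfer the problem to a weak$^*$ limit of averages along $\Phi$, where a positive-measure "good set" of sequences appears. Pass to a subsequence of $\Phi$ along which $|R\cap\Phi_N|/|\Phi_N|\to\delta'\ge\delta$. Set $O_n=\emptyset$ for $n\notin R$, and consider the functions $f_N=|\Phi_N|^{-1}\sum_{n\in\Phi_N}\one_{O_n}$ on $\Omega$. Then $\int f_N\,d\nu\ge\kappa|R\cap\Phi_N|/|\Phi_N|$. Since $0\le f_N\le1$, the reverse Fatou lemma gives
\[
\int\limsup_N f_N\,d\nu\ \ge\ \limsup_N\int f_N\,d\nu\ \ge\ \kappa\delta .
\]
Hence for every $\epsilon>0$ the set $\Omega_1=\{\omega:\limsup_Nf_N(\omega)\ge\kappa\delta-\epsilon\}$ has positive measure. (To get exactly $\ge\delta\kappa$ and not $\delta\kappa-\epsilon$, we note $\limsup f_N\le1$ and $\int\limsup f_N\ge\kappa\delta$, so $\{\limsup f_N\ge\kappa\delta\}$ has positive measure; otherwise $\limsup f_N<\kappa\delta$ a.e., forcing $\int<\kappa\delta$.) So let $\Omega_1=\{\limsup_Nf_N\ge\kappa\delta\}$, with $\nu(\Omega_1)>0$.

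Next we remove the bad intersections. Let $\mathcal F$ be the countable family of finite nonempty $F\subset R$ with $\nu(\bigcap_{n\in F}O_n)=0$. Then $Z=\bigcup_{F\in\mathcal F}\bigcap_{n\in F}O_n$ is null, so we may pick $\omega\in\Omega_1\setminus Z$. Put
\[
K=\{n\in R:\omega\in O_n\}.
\]
For finite nonempty $F\subset K$, $\omega\in\bigcap_{n\in F}O_n$, and $\omega\notin Z$, so $F\notin\mathcal F$, i.e.\ $\nu(\bigcap_{n\in F}O_n)>0$.

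Finally, the density bound: $f_N(\omega)=|K\cap\Phi_N|/|\Phi_N|$, so $\udens_\Phi(K)\ge\limsup_Nf_N(\omega)\ge\kappa\delta$ along the chosen subsequence, hence along $\Phi$. By \cref{IB:lem:folner}, $\ubd(K)\ge\udens_\Phi(K)\ge\delta\kappa$.

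The only delicate point is obtaining the non-strict bound $\delta\kappa$ without losing an $\epsilon$, which is handled by the argument in the first paragraph. Everything else is a routine countable union of null sets.
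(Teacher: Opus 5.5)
Your proof is correct and follows the same route as the paper: reverse Fatou along a subsequence realising $\udens_\Phi(R)$, a null set $Z$ collecting the countably many null intersections, and $K=\{n\in R:\omega\in O_n\}$ for a suitably chosen point $\omega$. The only difference is where $Z$ is removed. The paper puts it into the averages (using $\one_{O_n\setminus Z}$), so any $\omega$ with $\limsup_i f_{N_i}(\omega)\ge\delta\kappa$ automatically lies outside $Z$; you instead show that the level set $\{\limsup f_N\ge\kappa\delta\}$ has positive measure and then discard $Z$, which works equally well.
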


\begin{proof}
Let $\mathcal F_0$ be the (countable) set of finite nonempty $F\subset R$ with
$\nu(\bigcap_{n\in F}O_n)=0$, and let $Z=\bigcup_{F\in\mathcal F_0}\bigcap_{n\in F}O_n$; then
$\nu(Z)=0$. Put
$f_N=|\Phi_N|^{-1}\sum_{n\in R\cap\Phi_N}\one_{O_n\setminus Z}$. Then $0\le f_N\le 1$
and $\int f_N\,d\nu\ge\kappa\,|R\cap\Phi_N|/|\Phi_N|$. Choose $N_1<N_2<\cdots$ with
$|R\cap\Phi_{N_i}|/|\Phi_{N_i}|\to\udens_\Phi(R)$. By the reverse Fatou lemma,
$\int\limsup_i f_{N_i}\,d\nu\ge\limsup_i\int f_{N_i}\,d\nu\ge\delta\kappa$, so there is
$\omega\in\Omega$ with $\limsup_i f_{N_i}(\omega)\ge\delta\kappa$. Let $K=\{n\in R: \omega\in O_n\setminus Z\}$.
Then $f_N(\omega)=|K\cap\Phi_N|/|\Phi_N|$, so $\udens_\Phi(K)\ge\delta\kappa$ and
$\ubd(K)\ge\delta\kappa$ by \cref{IB:lem:folner}. If $F\subset K$ is finite and
nonempty, then $\omega\in\bigcap_{n\in F}O_n$ and $\omega\notin Z$, so
$\nu(\bigcap_{n\in F}O_n)>0$.
\end{proof}

\subsection{The dynamical input}

For a topological system $(Y,T)$ (a compact metric space with a homeomorphism) we
write $T_\Delta=T\times\dots\times T$ ($k+1$ factors) on $Y^{k+1}$ and $M(Y^{k+1})$ for
the Borel probability measures on $Y^{k+1}$.

\begin{proposition}[Kra--Moreira--Richter--Robertson]
\label{IB:prop:dyn}
Let $A\subset\N$ with $\ubd(A)>0$ and $k\in\N$. There exist a topological system
$(Y,T)$, a point $a\in Y$, an open set $\tilde E\subset Y$, a \Fol{} sequence $\Phi$
in $\N$ such that $a$ is generic along $\Phi$ for an ergodic $T$-invariant Borel probability
measure $\tilde\mu$ on $Y$, a measure $\sigma\in M(Y^{k+1})$ and an integer $t\ge 0$ such that
\begin{enumerate}[label=\textup{(\roman*)}]
\item\label{IB:dyn1} $A=\{n\in\N: T^na\in\tilde E\}$;
\item\label{IB:dyn2} $\sigma(\{a\}\times Y^k)=1$;
\item\label{IB:dyn3} $\sigma\bigl(Y\times(T^{-t}\tilde E)^k\bigr)>0$;
\item\label{IB:dyn4} for all open $U_1,\dots,U_k\subset Y$ such that
$V=U_1\times\dots\times U_k$ satisfies $\sigma(Y\times V)>0$,
\[
\liminf_{N\to\infty}\frac1{|\Phi_N|}\sum_{n\in\Phi_N}
\sigma\bigl((Y\times V)\cap T_\Delta^{-n}(V\times Y)\bigr)>0 .
\]
\end{enumerate}
\end{proposition}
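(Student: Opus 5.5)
The plan is to extract \cref{IB:prop:dyn} from the proof of the Density Finite Sums Theorem in~\cite{kra2025density}, rather than prove it from scratch. The paper attributes the proposition to Kra--Moreira--Richter--Robertson and says all dynamical input comes from there, so the work is to identify which intermediate statements of~\cite{kra2025density} give \ref{IB:dyn1}--\ref{IB:dyn4}. Only the quantitative form \ref{IB:dyn4} needs a closer check.

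First, I apply Furstenberg's correspondence principle in the form used in~\cite{kra2025density}. This gives a topological system $(X,T)$, a point $a$ generic along a \Fol{} sequence $\Phi$ for an ergodic $\mu$, and a clopen $E$ with $A=\{n:T^na\in E\}$ and $\mu(E)>0$. As in~\cite{kra2025density} (and as in \cref{K:lem:rebase} here), I would then pass to an extension $(Y,T)$. There the pronilfactor or Kronecker-type factor map is continuous, and $a$ lifts to a point that is still generic along a subsequence for an ergodic lift $\tilde\mu$. I take $\tilde E$ to be the preimage of $E$, which is open, so \ref{IB:dyn1} continues to hold.

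Second, I define $\sigma$ as the measure on $Y^{k+1}$ built in~\cite{kra2025density}. It is the disintegration at the base point $a$ of a suitable $(k+1)$-fold self-joining of $\tilde\mu$, that is, the ``Erdős progression'' measure $\delta_a\times\sigma_a$. Property \ref{IB:dyn2} then holds by construction. For \ref{IB:dyn3}, the $k$-fold marginal of $\sigma$ on the last coordinates should average, over a suitable parameter, to something bounded below in terms of $\tilde\mu(\tilde E)^k$. Choosing the shift $t$ as in~\cite{kra2025density}, where a translate of the base point lands in the support, gives $\sigma(Y\times(T^{-t}\tilde E)^k)>0$.

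Third, I verify \ref{IB:dyn4}. This is the main obstacle: \cite{kra2025density} states its recurrence property qualitatively as ``infinitely many $n$'', while here I need a positive $\liminf$ of Cesàro averages along $\Phi$. I would reopen their argument. The quantity $\sigma((Y\times V)\cap T_\Delta^{-n}(V\times Y))$ is a correlation $\int \one_{Y\times V}\cdot\one_{V\times Y}\circ T_\Delta^n\,d\sigma$. Averaging over $n\in\Phi_N$ and using genericity of the base point together with the mean ergodic theorem for the joining, it converges to an integral of conditional expectations onto the invariant factor. Their positivity argument, the analogue of Claim~B of \cref{K:prop:criterion} (Kronecker part continuous under translation, weak mixing part averaging to zero along every \Fol{} sequence), then shows this limit is bounded below by a positive constant depending on $\sigma(Y\times V)$. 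This yields a positive $\liminf$, not just infinitely many good $n$. If that limit computation cannot be done directly, the fallback is to prove the lower bound for the averages of the Kronecker part via phase returns exactly as in \cref{K:lem:phase}, and to discard the weak mixing part by \cref{K:lem:wiener}\ref{K:wd}.
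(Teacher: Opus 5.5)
\textbf{The gap is in Step~3.} Steps~1 and~2 essentially cite \cite{kra2025density}, as the paper does, with one correction. The extension you need is the one with topological pronilfactors from \cite[Proposition~5.7 and Lemma~5.8]{kra2022infinite}, with a factor map that intertwines everywhere so that $\tilde E=\pi^{-1}(E)$ gives (i). A Kronecker re-basing as in \cref{K:lem:rebase} does not put you in the setting of Definition~4.5 and Theorems~4.8 and~6.2 of \cite{kra2025density}.

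Step~3 is the only part that needs an argument, and it fails. The measure $\sigma$ is concentrated on $\{a\}\times Y^k$, and $T_\Delta$ moves this set to $\{Ta\}\times Y^k$, so $\sigma$ is not $T_\Delta$-invariant. Its invariance is under $\mathrm{Id}\times T\times T^2\times\dots\times T^k$ \cite[Lemma~4.7]{kra2025density}. Hence $n\mapsto\sigma\bigl((Y\times V)\cap T_\Delta^{-n}(V\times Y)\bigr)$ is not a correlation sequence of a measure preserving system. There is no unitary Koopman operator for $T_\Delta$ on $L^2(\sigma)$, no mean ergodic theorem, and no invariant factor to project onto. There is also no spectral measure to which \cref{K:lem:wiener}\ref{K:wd} or the phase returns of \cref{K:lem:phase} could apply.

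Claim~B of \cref{K:prop:criterion} is not an analogue. There the correlations are taken against the cube measure $\lambda$, which is invariant and ergodic for the diagonal action. Moreover, for $k\ge3$ a measure with this progression-type invariance involves higher-step nilfactors, not only the Kronecker factor. That is why the whole argument runs in a system with topological pronilfactors. So neither your main plan nor your fallback yields the positive $\liminf$.

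\textbf{The missing idea: no new positivity argument is needed.} The $\liminf$ form is exactly what \cite{kra2025density} proves when deducing Theorem~4.9 from Theorems~6.1 and~6.2, and the paper reads it off from there.
\begin{enumerate}
\item Choose a continuous $G\colon Y^k\to[0,1]$ with $G\le\one_V$ and $\int 1\otimes G\,d\sigma>0$, using regularity and Urysohn's lemma.
\item Use the pointwise bound $\one_{(Y\times V)\cap T_\Delta^{-n}(V\times Y)}\ge(1\otimes G)\cdot\bigl((G\otimes1)\circ T_\Delta^n\bigr)$.
\item By \cite[Theorem~6.2]{kra2025density}, the averages along $\Phi$ of $(G\otimes1)\circ T_\Delta^n-(1\otimes G)\circ T_\Delta^n$ tend to $0$ in $L^2(\sigma)$. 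By Cauchy--Schwarz, the $\liminf$ in (iv) is therefore at least $\liminf_N|\Phi_N|^{-1}\sum_{n\in\Phi_N}\int G\cdot G\circ(T\times\dots\times T)^n\,d\nu$, where $\nu$ is the marginal of $\sigma$ on the last $k$ coordinates.
\item This $\nu$ is invariant under $T\times T^2\times\dots\times T^k$, not under the diagonal, so even here the mean ergodic theorem does not suffice. Positivity is the content of \cite[Theorem~6.1]{kra2025density}.
\end{enumerate}
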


All of this is contained in~\cite{kra2025density}; we indicate where. Property
\ref{IB:dyn4} is stronger than the statement that $\sigma$ is ``progressive''
(\cite[Definition~3.1]{kra2025density}, which only asks for infinitely many such
$n$), but it is exactly what is proved in~\cite{kra2025density}, in the proof that
Theorems~6.1 and~6.2 imply Theorem~4.9; we repeat that argument.

\begin{proof}
By the correspondence principle~\cite[Theorem~2.3]{kra2025density} there are a
measure preserving system $(X,\mu,S)$, a point $x_0\in X$ whose forward orbit
closure has full measure, and an open $E\subset X$ with $\mu(E)>0$ and
$A=\{n: S^nx_0\in E\}$. As in the proof that Theorem~2.4 implies Theorem~1.2
in~\cite{kra2025density}, there are an ergodic $S$-invariant $\mu'$ with $\mu'(E)\ge\mu(E)$
and a \Fol{} sequence along which $x_0$ is generic for $\mu'$ (there $\mu'$ comes from the ergodic
decomposition, with $\mu'$ giving full measure to the orbit closure of $x_0$, and the \Fol{}
sequence from a result on generic points cited in~\cite{kra2025density}). As in the proof that Theorems~4.8 and~4.9 imply Theorem~3.3
in~\cite{kra2025density}, which relies on~\cite[Proposition~5.7 and
Lemma~5.8]{kra2022infinite}, there are an ergodic system $(Y,\tilde\mu,T)$ with
topological pronilfactors, a continuous factor map $\pi\colon Y\to X$, a \Fol{}
sequence $\Phi$ in $\N$ and a point $a\in Y$ generic for $\tilde\mu$ along $\Phi$ with
$\pi(a)=x_0$. (Lemma~5.8 of~\cite{kra2022infinite} is stated for a transitive
point; as in the proof that Theorem~5.9 implies Theorem~4.4 there, one first replaces $X$ by the orbit
closure of $x_0$, which has full $\mu'$-measure, and $E$ by its intersection with
it.) A continuous factor map satisfies $\pi\circ T=S\circ\pi$ \emph{everywhere}
\cite[Definition~2.2]{kra2022infinite}; in~\cite[Lemma~5.8]{kra2022infinite} it is
the projection to the first coordinate of an orbit closure of a product map.
Hence with $\tilde E=\pi^{-1}(E)$, which is open, $T^na\in\tilde E$ if and only if
$S^nx_0\in E$, which is \ref{IB:dyn1}; moreover $\tilde\mu(\tilde E)=\mu'(E)>0$.

Let $\sigma$ be the measure of~\cite[Definition~4.5]{kra2025density} for
$(Y,\tilde\mu,T)$, $k$, $\Phi$ and $a$. Formula~(4.3) there shows
$\sigma=\delta_a\otimes\sigma'$ for some $\sigma'$, which gives \ref{IB:dyn2}, and
\cite[Theorem~4.8]{kra2025density} gives $t\ge0$ with \ref{IB:dyn3}. For
\ref{IB:dyn4}, let $V$ be as there. Since $V$ is open and $\sigma(Y\times V)>0$,
regularity and Urysohn's lemma give a continuous $G\colon Y^k\to[0,1]$ with
$G\le\one_V$ and $\int 1\otimes G\,d\sigma>0$. Pointwise,
$\one_{(Y\times V)\cap T_\Delta^{-n}(V\times Y)}\ge(1\otimes G)\cdot\bigl((G\otimes1)\circ T_\Delta^n\bigr)$.
By~\cite[Theorem~6.2]{kra2025density}, which applies to this $\sigma$, $\Phi$ and $a$ and to every
continuous $G\colon Y^k\to\mathbb C$,
\[
\lim_{N\to\infty}\Big\|\frac1{|\Phi_N|}\sum_{n\in\Phi_N}\bigl((G\otimes1)\circ T_\Delta^n-(1\otimes G)\circ
T_\Delta^n\bigr)\Big\|_{L^2(\sigma)}=0;
\] as
$|1\otimes G|\le 1$, the Cauchy--Schwarz inequality gives
\[
\liminf_{N}\frac1{|\Phi_N|}\sum_{n\in\Phi_N}\sigma\bigl((Y\times V)\cap T_\Delta^{-n}(V\times Y)\bigr)
\ \ge\ \liminf_N\frac1{|\Phi_N|}\sum_{n\in\Phi_N}\int_{Y^k} G\cdot\bigl(G\circ(T\times\dots\times T)^n\bigr)\,d\nu ,
\]
where $\nu$ is the projection of $\sigma$ to the last $k$ coordinates. By
\cite[Lemma~4.7]{kra2025density}, $\sigma$ is invariant under
$\mathrm{Id}\times T\times T^2\times\dots\times T^k$, so $\nu$ is invariant under
$T\times T^2\times\dots\times T^k$. Since $\int G\,d\nu>0$, the right-hand side is positive by
\cite[Theorem~6.1]{kra2025density}, which states: if $(X,T)$ is a topological system, $\Phi$ a
\Fol{} sequence, $\nu$ a $T\times T^2\times\dots\times T^k$-invariant measure on $X^k$ and
$G\colon X^k\to[0,1]$ continuous with $\int G\,d\nu>0$, then
$\liminf_N|\Phi_N|^{-1}\sum_{n\in\Phi_N}\int G\cdot T_\Delta^nG\,d\nu>0$.
\end{proof}

\subsection{Proof of \texorpdfstring{\cref{IB:thm:dR}}{the theorem}}

We first prove the statement with $t\ge0$ instead of $t\in\N$. Let $(Y,T)$, $a$,
$\tilde E$, $\Phi$, $\sigma$, $t$ be as in \cref{IB:prop:dyn} and put
$U=T^{-t}\tilde E$, an open set. By \ref{IB:dyn1}, for $n\in\N$,
\begin{equation}\label{IB:eq:U}
T^na\in U\iff n\in A-t .
\end{equation}
Since $\{a\}\times Y^k$ is a closed set of full measure, $\supp\sigma\subset\{a\}\times Y^k$.

By a \emph{box} we mean a set $V=V^{(1)}\times\dots\times V^{(k)}\subset Y^k$ with all
$V^{(j)}$ open and nonempty. We construct boxes $V_0,V_1,\ldots$ and sets
$B_1,B_2,\ldots\in\mathcal D$ such that for all $i\ge1$:
\begin{enumerate}[label=\textup{(I\arabic*)}]
\item\label{IB:I1} $\sigma(Y\times V_i)>0$ and $V_i^{(j)}\subset V_{i-1}^{(j)}$ for all $j$;
\item\label{IB:I2} for every $n\in B_i$ and every $v=(v_1,\dots,v_k)\in V_i$ we have
$T^na\in V_{i-1}^{(1)}$ and $T^nv_j\in V_{i-1}^{(j+1)}$ for $1\le j\le k-1$;
\item\label{IB:I3} $\min B_i>\max B_{i-1}$ (with $\max B_0:=0$).
\end{enumerate}
Let $V_0=U^k$; by \ref{IB:dyn3}, $\sigma(Y\times V_0)>0$ (in particular $U\neq\emptyset$).

Suppose $V_i$ has been constructed with $\sigma(Y\times V_i)>0$. For $n\in\N$ let
$O_n=(Y\times V_i)\cap T_\Delta^{-n}(V_i\times Y)$, an open subset of $Y^{k+1}$. By
\ref{IB:dyn4}, $c:=\liminf_N|\Phi_N|^{-1}\sum_{n\in\Phi_N}\sigma(O_n)>0$. Let
$R=\{n>\max B_i:\sigma(O_n)\ge c/2\}$. As $\sigma(O_n)\le1$,
\[
\frac1{|\Phi_N|}\sum_{n\in\Phi_N}\sigma(O_n)\le\frac c2+\frac{|R\cap\Phi_N|}{|\Phi_N|}
+\frac{\max B_i}{|\Phi_N|},
\]
and the last term tends to $0$ by \cref{IB:lem:folner}; hence $\udens_\Phi(R)\ge c/2$.
\Cref{IB:lem:intersect} (with $\nu=\sigma$, $\delta=\kappa=c/2$) gives $K\subset R$ with
$\ubd(K)>0$ such that $\sigma(\bigcap_{n\in F}O_n)>0$ for every finite nonempty
$F\subset K$. By the density Ramsey property there is $B_{i+1}\in\mathcal D$ with
$B_{i+1}\subset K$; then \ref{IB:I3} holds for $i+1$.

The set $O=\bigcap_{n\in B_{i+1}}O_n$ is open with $\sigma(O)>0$, so it meets
$\supp\sigma$, say in $(a,w)$. By definition of the product topology there is a
box $W\ni w$ with $\{a\}\times W\subset O$. Put $V_{i+1}=W$. Since $Y\times W$ is an
open neighbourhood of a point of $\supp\sigma$, $\sigma(Y\times W)>0$. Since
$\{a\}\times W\subset O_n\subset Y\times V_i$ and both $W$ and $V_i$ are boxes with nonempty
factors, $W^{(j)}\subset V_i^{(j)}$ for all $j$; this is \ref{IB:I1}. If $n\in B_{i+1}$
and $v\in W$, then $(a,v)\in O_n\subset T_\Delta^{-n}(V_i\times Y)$, that is,
$(T^na,T^nv_1,\dots,T^nv_{k-1})\in V_i$; this is \ref{IB:I2}.

Now fix $1\le m\le k$, indices $i_1<\dots<i_m$ and $b_l\in B_{i_l}$. We claim that for
$h=0,1,\dots,m-1$,
\begin{equation}\label{IB:eq:chain}
y_h:=T^{\,b_{m-h}+\dots+b_m}a\in V^{(h+1)}_{i_{m-h}-1}.
\end{equation}
For $h=0$ this is \ref{IB:I2} for the block $B_{i_m}$. Suppose \eqref{IB:eq:chain}
holds for some $h\le m-2$. Since $i_{m-h}-1\ge i_{m-h-1}$, \ref{IB:I1} gives
$y_h\in V^{(h+1)}_{i_{m-h-1}}$. As $V_{i_{m-h-1}}$ is a box with nonempty factors, there
is $v\in V_{i_{m-h-1}}$ with $v_{h+1}=y_h$. Applying \ref{IB:I2} to the block
$B_{i_{m-h-1}}$, the element $n=b_{m-h-1}$ and $j=h+1\le m-1\le k-1$, we get
$y_{h+1}=T^{b_{m-h-1}}y_h\in V^{(h+2)}_{i_{m-h-1}-1}$, which is \eqref{IB:eq:chain} for $h+1$.
For $h=m-1$ we obtain $T^{b_1+\dots+b_m}a\in V^{(m)}_{i_1-1}\subset V^{(m)}_0=U$, so
$b_1+\dots+b_m\in A-t$ by \eqref{IB:eq:U}. Together with \ref{IB:I3} this proves the
theorem with $t\ge0$.

Finally, to get $t\in\N$ apply the above to $A'=A-1=\{n\in\N:n+1\in A\}$, which has
$\ubd(A')=\ubd(A)>0$. If all the required sums lie in $A'-t'$ with $t'\ge0$, then
they lie in $A-(t'+1)$. \qed

\formalization{\Cref{IB:thm:dR,IB:cor:331}, \cref{IB:lem:folner,IB:lem:intersect} and the
construction above are formalized in \leanfile{Q331} (\leanname{thm_dR}, \leanname{cor331}),
with the conclusion of \cref{IB:prop:dyn} stated as an explicit hypothesis; the dynamical
input from~\cite{kra2025density} is not formalized.}

\section{Sumsets in the sets \texorpdfstring{$A_x=\{n:\{n^\gamma x\}\le 1/\log n\}$}{A\_x}}
\label{IB:sec:q413}

In this section we answer Question~4.13 of~\cite{kra2023problems} completely: the
answer is ``exactly the non-integers $\gamma$''. For $t\in\R$ let $\{t\}$ be its
fractional part and $\|t\|$ its distance to $\Z$.

\begin{question}{4.13}
\label{IB:q:413}
Fix $\gamma>0$ and, for each $x\in\R$, let
$A_x:=\{n\in\N:\{n^\gamma x\}\le1/\log n\}$. For which values of $\gamma$ is it true
that for almost every $x\in\R$, the set $A_x$ contains a sumset $B+C$ for
infinite sets $B,C\subset\N$?
\end{question}

(For $n=1$ the expression $1/\log n$ is undefined; we put $1\in A_x$, say. Since every
element of $B+C$ is at least $2$, this choice is irrelevant.) The survey notes that $A_x$ is thick for almost
every $x$ when $\gamma<1$, and that for integer $\gamma$ and every irrational $x$ the
set $A_x$ contains no infinite sumset.

\begin{theorem}
\label{IB:thm:413}
Let $\gamma>0$.
\begin{enumerate}[label=\textup{(\alph*)}]
\item\label{IB:413a} If $\gamma\notin\Z$, then for almost every $x\in\R$ the set $A_x$ is
thick, and hence contains $B+C$ for some infinite $B,C\subset\N$.
\item\label{IB:413b} If $\gamma=d\in\N$ and $x\notin\Q$, then $A_x$ contains no set $B+F$
with $B\subset\N$ infinite and $F\subset\Z$, $|F|=d+1$; in particular $A_x$ contains no
$B+C$ with $B,C$ infinite.
\end{enumerate}
Consequently, $A_x$ contains an infinite sumset $B+C$ for almost every $x$ if and
only if $\gamma\notin\N$.
\end{theorem}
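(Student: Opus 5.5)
For part~\ref{IB:413b} I would use a finite-difference identity. Let $F=\{f_0,\dots,f_d\}\subset\Z$ with distinct elements. By Lagrange interpolation, $\sum_i w_i\,p(f_i)$ is the leading coefficient of $p$ for every polynomial $p$ of degree $\le d$, where $w_i=\prod_{j\ne i}(f_i-f_j)^{-1}$. Applying this to $p(y)=(b+y)^d$ gives
\[
\sum_{i=0}^d w_i\,(b+f_i)^d=1\qquad\text{for every }b .
\]
Clearing denominators with an integer $D$, we get integers $W_i=Dw_i$ with $\sum_iW_i(b+f_i)^dx=Dx$ for every $b$.

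Now suppose $B+F\subset A_x$ with $B$ infinite. Then $\|(b+f_i)^dx\|\le\{(b+f_i)^dx\}\le1/\log(b+f_i)$, so $\|Dx\|\le\sum_i|W_i|/\log(b+\min F)\to0$ as $b\to\infty$ in $B$. Hence $Dx\in\Z$, contradicting $x\notin\Q$. If $B+C\subset A_x$ with $C$ infinite, then $B+F\subset A_x$ for any $(d+1)$-element $F\subset C$, so $A_x$ contains no such $B+C$. Together with~\ref{IB:413a}, this gives the final equivalence.

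For part~\ref{IB:413a}, the deduction ``thick $\Rightarrow$ $B+C$'' is a greedy construction. Choose $b_1<b_2<\cdots$ and $c_1<c_2<\cdots$ alternately. At each stage, pick the new element so that its translates by all previously chosen elements of the other set land in a long interval contained in $A_x$. Thickness makes this possible: to place $b_n$, take an interval $[u,u+L)\subset A_x$ with $L>c_{n-1}$ and $u$ large, and set $b_n=u$, so that $b_n+c_j\in[u,u+L)$ for all $j<n$; then place $c_n$ the same way using $b_1,\dots,b_n$.

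The real content is thickness for almost every $x$. Fix $H$ and set $d=\lfloor\gamma\rfloor$. For $n=M+h$ with $0\le h<H$, expand
\[
(M+h)^\gamma x=\sum_{j\ge0}\binom{\gamma}{j}M^{\gamma-j}h^jx .
\]
Since $\gamma\notin\Z$, the tail $j\ge d+1$ has $\gamma-j<0$, so it is $O_H(M^{\gamma-d-1})\to0$ uniformly in $h<H$. This is exactly where non-integrality is used: for $\gamma=d$ the $j=d$ term is the fixed quantity $xh^d$. It therefore suffices to find infinitely many $M$ in the event
\[
E_M=\Big\{x:\ \big\{\tbinom{\gamma}{0}M^{\gamma}x\big\}\le\tfrac{\delta_M}{2},\ \ \big\|\tbinom{\gamma}{j}M^{\gamma-j}x\big\|\le\tfrac{\delta_M}{2dH^j}\ (1\le j\le d)\Big\},\qquad\delta_M=\frac{1}{2\log(M+H)},
\]
since then every $n\in[M,M+H)$ lies in $A_x$. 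This gives thickness after intersecting over $H\in\N$, which is a countable intersection.

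I would restrict $x$ to a bounded interval $I$ and $M$ to a sparse sequence, say $M_k=\lfloor e^{k^{1/2}}\rfloor$ or $M_k=2^k$; any sequence works as long as $\sum_k\delta_{M_k}^{d+1}=\infty$. For large $M$ the frequencies $\binom{\gamma}{j}M^{\gamma-j}$ with $j\le d$ are large, or at least nonzero and fixed-size when $j=d$ and $\gamma-d<1$. Hence $|E_M\cap I|\asymp_H|I|\,\delta_M^{d+1}$. The main obstacle is a quasi-independence bound $|E_{M_k}\cap E_{M_l}\cap I|\le C\,|E_{M_k}\cap I|\,|E_{M_l}\cap I|$ for $k\ne l$ far apart. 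I would prove it by Fourier-expanding smooth majorants of the indicator conditions. A joint resonance needs an integer relation $\sum_j a_j\binom{\gamma}{j}M_k^{\gamma-j}=\sum_j a'_j\binom{\gamma}{j}M_l^{\gamma-j}$ with small $a_j,a'_j$, and the rapid growth of $M_l^\gamma/M_k^\gamma$ rules this out except for few terms. With that bound, the Erdős--Rényi (Kochen--Stone) form of Borel--Cantelli gives that $E_{M_k}$ occurs infinitely often on a set of positive measure in $I$. A zero-one argument then upgrades this to full measure; alternatively one can apply the same estimate on every subinterval and use Lebesgue density. I expect this decorrelation step, and in particular handling the slowly growing frequency $M^{\gamma-d}$ when $\gamma-d$ is small, to be the hardest part.
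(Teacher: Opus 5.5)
Your part~(b), the divided-difference argument, and your greedy deduction ``thick $\Rightarrow$ $B+C$'' are the same as the paper's. The gap is in the decorrelation step of part~(a). The mechanism you propose for it cannot work when $\gamma>1$.

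With $\delta_M\asymp1/\log M$, your requirement $\sum_k\delta_{M_k}^{d+1}=\infty$ fails for every lacunary sequence once $d=\lfloor\gamma\rfloor\ge1$. Your example $M_k=2^k$ gives $\sum_k k^{-(d+1)}<\infty$, and $M_k=e^{\sqrt k}$ fails for $d\ge2$. So your sequence must contain unboundedly many terms in the dyadic ranges $[N,2N)$, typically about $(\log N)^{d}$ of them. For two terms $M_k<M_l$ in the same range, the ratio $M_l^\gamma/M_k^\gamma$ is close to $1$ and separates nothing.

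To see the resonance, write $\Xi_s(\ell)=\sum_{j\le d}\ell_j\binom{\gamma}{j}s^{\gamma-j}$ and $\varsigma=\gamma-d$, and let $j_0$ be the first nonzero index of $\ell$. Then $\Xi_{M_l}(\ell)-\Xi_{M_k}(\ell)\approx\ell_{j_0}\binom{\gamma}{j_0}(\gamma-j_0)N^{\gamma-j_0-1}(M_l-M_k)$. For $j_0=d$ this is only of size $N^{\varsigma-1}(M_l-M_k)$. So the pair $(\ell,-\ell)$ resonates, and $E_{M_k}$, $E_{M_l}$ are genuinely correlated unless $M_l-M_k\gg(\log N)^{O(1)}N^{1-\varsigma}$. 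Near-cancellations $\Xi_{M_k}(\ell)\approx-\Xi_{M_l}(\ell')$ can also occur across scales, whenever $M_k^{\gamma-j_0}\asymp M_l^{\gamma-j_1}$ with $j_1>j_0$.

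The missing ingredient is the derivative bound behind \cref{IB:lem:bad}. For fixed $n,\ell,\ell'$ one has $|\partial_s\Xi_s(\ell')|\gg N^{\varsigma-1}$ on $[N,2N]$, so the $n'$ that resonate with $n$ lie in an interval of length $O(TN^{1-\varsigma})$. Without this, or a spacing condition on $(M_k)$ justified by it, the quasi-independence bound you need for Kochen--Stone is unsupported. Note also that $c_d(M)=\binom{\gamma}{d}M^{\varsigma}$ is not fixed-size: it tends to infinity, and its slow growth is exactly what limits the separation.

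The paper arranges the argument so that neither cross-scale independence nor a zero-one upgrade is needed. It takes every $n\in[N,2N)$ as a candidate and sets $Z_N=\sum_{N\le n<2N}F_n$, with smooth minorants $F_n\le\one_{E_n}$. The derivative bound shows there are only $O((\log N)^{O(1)}N^{2-\varsigma})$ bad pairs. Since $N\mu\to\infty$, this gives $\operatorname{Var}_\psi(Z_N)=o((\mathbf E_\psi Z_N)^2)$. Chebyshev then gives $\mathbf P_\psi(Z_N=0)\to0$, and $\mathbf P_\psi(\limsup_rG_r)\ge\limsup_r\mathbf P_\psi(G_r)=1$ is already full measure. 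By contrast, your ``zero-one argument'' is unspecified, since there is no evident invariance for the non-integer frequencies $M^\gamma$. The Lebesgue-density alternative would need the Kochen--Stone constant to be uniform over subintervals.

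Finally, your event $E_M$ does not imply $[M,M+H)\subset A_x$. The condition $\{c_0(M)x\}\le\delta_M/2$ allows values near $0$, while $\sum_{j\ge1}\theta_jh^j$ can be as negative as $-\delta_M/2$, and the Taylor tail can be negative too. So $(M+h)^\gamma x$ can wrap around to a fractional part close to $1$. The main window must be kept away from $0$, as in the paper's choice $\{c_0(n)x\}\in[\eta/4,\eta/2]$ with perturbations of size at most $\eta/16$.
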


For integer $\gamma=d$ and rational $x=p/q$ one has $q\N\subset A_x$, so $A_x\supset B+C$;
this does not affect the almost-everywhere statement. Part~\ref{IB:413a} is an
application of the standard second-moment method of metric Diophantine
approximation; part~\ref{IB:413b} is a short argument with divided differences
supplying the proof omitted in~\cite{kra2023problems}.

\begin{lemma}
\label{IB:lem:thick}
If $A\subset\N$ is thick, then $B+C\subset A$ for some infinite $B,C\subset\N$.
\end{lemma}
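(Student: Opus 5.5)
We will build $B=\{b_1<b_2<\cdots\}$ and $C=\{c_1<c_2<\cdots\}$ by a direct greedy construction in which each new element is taken from a long interval inside $A$. The invariant after stage $n$ is $b_i+c_j\in A$ for all $i,j\le n$. For the base case, pick any $b_1\ge1$ and use thickness to find an interval of length $b_1+1$ in $A$, say $[M,M+b_1]$ with $M>b_1$. Put $c_1=M$, so that $b_1+c_1\in A$.

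For the inductive step, suppose $b_1<\dots<b_n$ and $c_1<\dots<c_n$ satisfy the invariant. First we add $b_{n+1}$. Since $A$ is thick, it contains an interval $I=[M,M+L)$ with $L$ as large as we like and $M>c_n+b_n$. Put $b_{n+1}=M-c_1$; then $b_{n+1}+c_j=M+(c_j-c_1)\in I$ for all $j\le n$, provided $L>c_n-c_1$. Also $b_{n+1}>b_n$, because $M>c_n+b_n\ge c_1+b_n$.

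Next we add $c_{n+1}$ in the same way. Take an interval $[M',M'+L')\subset A$ with $L'>b_{n+1}-b_1$ and $M'>b_{n+1}+c_n$, and put $c_{n+1}=M'-b_1$. Then $b_i+c_{n+1}\in[M',M'+L')$ for all $i\le n+1$, which covers the new diagonal pair $(b_{n+1},c_{n+1})$ as well. Moreover $c_{n+1}>c_n$.

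By induction $B+C\subset A$, and both sets are infinite because the sequences are strictly increasing. The only point needing care is choosing the new element so that the finitely many sums it creates all land in a single interval of $A$. This is just the bookkeeping of offsets $c_j-c_1$ and $b_i-b_1$, so there is no real obstacle.
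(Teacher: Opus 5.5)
Your proof is correct and follows essentially the paper's argument. Both are alternating greedy constructions in which each new element is chosen so that all of its sums with the previously chosen elements of the other set land in a single long interval of $A$; the paper takes the new element to be the left endpoint of that interval, while you take it to be the endpoint minus $c_1$ (resp.\ $b_1$), which is the same bookkeeping. The one step you assert without comment, that thickness gives arbitrarily long intervals with arbitrarily large left endpoint, follows by discarding an initial segment of a longer interval, as the paper notes.
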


\begin{proof}
Since $A$ contains arbitrarily long intervals, for all $R,L$ it contains an interval of length
$>L$ whose left endpoint exceeds $R$ (remove the first $R$ points of an interval of length
$>R+L$). Choose $b_1<c_1<b_2<c_2<\cdots$ inductively (with $c_0=0$): $b_j>c_{j-1}$ is the left
endpoint of an interval of $A$ of length $>c_{j-1}$, and $c_j>b_j$ is the left
endpoint of an interval of $A$ of length $>b_j$. For $i<j$, $b_j+c_i\in[b_j,b_j+c_{j-1}]$
lies in the interval starting at $b_j$; for $i\le j$, $c_j+b_i\in[c_j,c_j+b_j]$ lies in
the interval starting at $c_j$. Thus $\{b_j:j\ge1\}+\{c_j:j\ge1\}\subset A$.
\end{proof}

\subsection{Proof of \texorpdfstring{\cref{IB:thm:413}\ref{IB:413b}}{part (b)}}

Let $F=\{f_0,\dots,f_d\}$ be distinct integers and suppose $B+F\subset A_x$ with $B$
infinite. For fixed $b$, the polynomial $P_b(f)=(b+f)^dx$ has degree $d$ and leading
coefficient $x$, so its $d$-th divided difference at the nodes $f_0,\dots,f_d$ equals
$x$:
\[
\sum_{i=0}^d\lambda_i(b+f_i)^dx=x,\qquad \lambda_i=\prod_{j\ne i}(f_i-f_j)^{-1}\in\Q .
\]
Let $M\in\N$ be a common denominator of the $\lambda_i$ and $e_i=M\lambda_i\in\Z$; then
$\sum_ie_i(b+f_i)^dx=Mx$, so $Mx\equiv\sum_ie_i\{(b+f_i)^dx\}\pmod 1$ and, for all
$b\in B$ so large that $b+\min F\ge2$,
\[
\|Mx\|\le\sum_i|e_i|\,\{(b+f_i)^dx\}\le\frac{\sum_i|e_i|}{\log(b+\min F)} .
\]
Letting $b\to\infty$ in $B$ gives $\|Mx\|=0$, i.e.\ $x\in\Q$. If $B+C\subset A_x$ with $B,C$
infinite, apply this to a subset $F\subset C$ with $|F|=d+1$. \qed

\subsection{Proof of \texorpdfstring{\cref{IB:thm:413}\ref{IB:413a}}{part (a)}}

Let $k=\lfloor\gamma\rfloor\ge0$, $\varsigma=\gamma-k\in(0,1)$ and
$c_j(n)=\binom{\gamma}{j}n^{\gamma-j}$ for $0\le j\le k$. Since $\gamma\notin\Z$, every
$\binom\gamma j$ ($j\ge0$) and every $\gamma-j$ is nonzero. Constants below may depend on
$\gamma$, $L$ and the cut-off $\psi$ introduced later, but never on $N$, $n$ or $x$.

\emph{Taylor expansion.} For $n\ge1$ and $0\le h<L$, Taylor's formula with
Lagrange remainder gives
\begin{equation}\label{IB:eq:taylor}
(n+h)^\gamma=\sum_{j=0}^kc_j(n)h^j+r_n(h),\qquad
|r_n(h)|=\Bigl|\tbinom{\gamma}{k+1}(n+\xi)^{\varsigma-1}h^{k+1}\Bigr|\le C_1n^{\varsigma-1},
\end{equation}
for some $\xi\in[0,h]$, because $\varsigma-1<0$.

Fix $L\in\N$ and $X_0>0$. For $N\in\N$ put
\[
\eta=\eta_N=\frac1{\log 4N},\qquad \rho=\frac{\eta}{16(k+1)L^k},
\]
where $N$ ranges over powers of $2$; for $N\le n<2N$ let $E_n$ be the set of $x\in\R$ with
$\{c_0(n)x\}\in[\eta/4,\eta/2]$ and $\|c_j(n)x\|\le\rho$ for $1\le j\le k$.

\begin{lemma}
\label{IB:lem:En}
There is $N_0$ such that for $N\ge N_0$, $N\le n<2N$ and $x\in E_n$ with $|x|\le X_0$ we
have $\{n,n+1,\dots,n+L-1\}\subset A_x$.
\end{lemma}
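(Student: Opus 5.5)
We expand with \eqref{IB:eq:taylor} and then control the pieces modulo $1$. Fix $x\in E_n$ with $|x|\le X_0$ and $0\le h<L$. Then
\[
(n+h)^\gamma x=c_0(n)x+\sum_{j=1}^k c_j(n)x\,h^j+r_n(h)x .
\]
For $1\le j\le k$ write $c_j(n)x=m_j+\delta_j$ with $m_j\in\Z$ and $|\delta_j|\le\rho$. Since $h^j\in\Z$, the integer parts $m_jh^j$ drop out modulo $1$. What remains is
\[
(n+h)^\gamma x\equiv \{c_0(n)x\}+\sum_{j=1}^k\delta_jh^j+r_n(h)x \pmod 1 .
\]

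Now bound the error terms. The middle sum has absolute value at most $k\rho L^k\le\eta/16$. The Taylor remainder is at most $C_1X_0n^{\varsigma-1}\le C_1X_0N^{\varsigma-1}$. Because $\varsigma<1$, this is $\le\eta_N/16=1/(16\log 4N)$ for all $N\ge N_0$. The power saving beats the logarithm, so such an $N_0$ exists, and it depends only on $\gamma,L,X_0$.

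With both bounds in hand, the real number $\{c_0(n)x\}+\sum_j\delta_jh^j+r_n(h)x$ lies in $[\eta/4-\eta/8,\ \eta/2+\eta/8]\subset(0,1)$, so it equals its own fractional part. Hence
\[
\{(n+h)^\gamma x\}\le \tfrac58\eta<\eta=\frac1{\log 4N}.
\]
Since $n+h<2N+L\le 4N$ once $N\ge L$, we get $1/\log 4N<1/\log(n+h)$. Therefore $\{(n+h)^\gamma x\}\le 1/\log(n+h)$, which says $n+h\in A_x$. Note that $n+h\ge N\ge2$, so the convention for $n=1$ never matters.

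The only delicate point is keeping the constants uniform. The choice $\rho=\eta/(16(k+1)L^k)$ makes the polynomial error a small fraction of $\eta$. The lower cut-off $\eta/4$ keeps the shifted value positive, so no wrap-around modulo $1$ can occur. The restriction $|x|\le X_0$ is what allows $N_0$ to be chosen independently of $x$. Everything else is routine bookkeeping.
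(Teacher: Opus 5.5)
Your proof is correct and follows essentially the same route as the paper: you expand by Taylor's formula, discard the integer parts $m_jh^j$, bound the polynomial error by $kL^k\rho\le\eta/16$ and the remainder by $X_0C_1N^{\varsigma-1}\le\eta/16$, conclude $\{(n+h)^\gamma x\}\in[\eta/8,5\eta/8]$, and finish with $2\le n+h<4N$. Your explicit check that no wrap-around modulo $1$ occurs is the step the paper leaves implicit.
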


\begin{proof}
Write $c_j(n)x=m_j+\theta_j$ with $m_j\in\Z$, $\theta_0\in[\eta/4,\eta/2]$ and $|\theta_j|\le\rho$
for $j\ge1$. By \eqref{IB:eq:taylor},
$(n+h)^\gamma x=\sum_jm_jh^j+\theta_0+\sum_{j\ge1}\theta_jh^j+xr_n(h)$. Here
$|\sum_{j\ge1}\theta_jh^j|\le kL^k\rho\le\eta/16$, and $|xr_n(h)|\le X_0C_1N^{\varsigma-1}\le\eta/16$
once $N$ is large, since $\eta\asymp1/\log N$. Hence
$\{(n+h)^\gamma x\}\in[\eta/8,5\eta/8]$, and $\eta\le1/\log(n+h)$ because $2\le n+h<4N$ (for $N\ge L$).
\end{proof}

\emph{Smooth windows.} Fix $\Phi_0\in C_c^\infty(-\frac12,\frac12)$ with
$0\le\Phi_0\le1$ and $\Phi_0=1$ on $[-\frac14,\frac14]$, and fix an exponent
$P\ge 4k+10$. For $\alpha\in\R$ and $0<w<\frac12$ let
$\varphi_{\alpha,w}(t)=\sum_{m\in\Z}\Phi_0\bigl((t-\alpha-m)/(2w)\bigr)$. Then
$0\le\varphi_{\alpha,w}\le\one_{[\alpha-w,\alpha+w]+\Z}$, $\int_0^1\varphi_{\alpha,w}\ge w$, and
$\hat\varphi_{\alpha,w}(\ell)=2w\,\e(-\ell\alpha)\hat\Phi_0(2w\ell)$, where
$\e(t)=e^{2\pi it}$ and $|\hat\Phi_0(u)|\le C_P(1+|u|)^{-P}$. Comparing sums with
integrals one gets, uniformly in $\alpha$ and $w$,
\begin{equation}\label{IB:eq:window}
\sum_{\ell\in\Z}|\hat\varphi_{\alpha,w}(\ell)|\le C_2,\qquad
\sum_{|\ell|>Q}|\hat\varphi_{\alpha,w}(\ell)|\le C_2(wQ)^{1-P}\quad(wQ\ge1).
\end{equation}
Let $\varphi_0=\varphi_{3\eta/8,\eta/8}$ and $\varphi_j=\varphi_{0,\rho}$ for $1\le j\le k$, and
\[
F_n(x)=\prod_{j=0}^k\varphi_j(c_j(n)x),\qquad Z_N=\sum_{N\le n<2N}F_n .
\]
Then $0\le F_n\le\one_{E_n}$. Expanding in Fourier series (absolutely convergent by
\eqref{IB:eq:window}),
\[
F_n(x)=\sum_{\ell\in\Z^{k+1}}a(\ell)\,\e\bigl(x\,\Xi_n(\ell)\bigr),\qquad
a(\ell)=\prod_{j}\hat\varphi_j(\ell_j),\qquad \Xi_n(\ell)=\sum_j\ell_jc_j(n),
\]
with $\sum_\ell|a(\ell)|\le C_2^{k+1}$ and
$\mu:=a(0)=\prod_j\int_0^1\varphi_j\ge(\eta/8)\rho^k\ge c_3(\log N)^{-(k+1)}$.
Put $Q=T=(\log N)^3$. Since both window widths are $\gg1/\log N$,
\eqref{IB:eq:window} gives
\begin{equation}\label{IB:eq:tail}
\sum_{\|\ell\|_\infty>Q}|a(\ell)|\le C_4(\log N)^{-2(P-1)}=o(\mu^2).
\end{equation}

Fix $\psi\in C_c^\infty(\R)$ with $0\le\psi\le1$, $\supp\psi\subset[-X_0,X_0]$ and
$m_\psi=\int\psi>0$, and let $\check\psi(\xi)=\int\psi(x)\e(\xi x)\,dx$, so that
$|\check\psi(\xi)|\le C_P(1+|\xi|)^{-P}$ and $|\check\psi|\le m_\psi$. Write
$\mathbf E_\psi[f]=m_\psi^{-1}\int f\psi\,dx$. In what follows $N\to\infty$ and $o(1)$ is
uniform in $n,n'\in[N,2N)$.

\begin{lemma}
\label{IB:lem:freq}
If $N$ is large, $n\in[N,2N)$ and $0\ne\ell\in\Z^{k+1}$ with $\|\ell\|_\infty\le Q$, then
$|\Xi_n(\ell)|\ge c_5N^{\varsigma}$.
\end{lemma}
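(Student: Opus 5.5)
The plan is to view $\Xi_n(\ell)=\sum_{j=0}^k\ell_j\binom{\gamma}{j}n^{\gamma-j}$ as a generalized polynomial in $n$ with exponents $\gamma>\gamma-1>\dots>\gamma-k=\varsigma>0$. We isolate the term with the largest exponent among those with $\ell_j\ne0$. Let $j_0=\min\{j:\ell_j\ne0\}$, which exists since $\ell\ne0$. Then
\[
|\Xi_n(\ell)|\ \ge\ |\ell_{j_0}|\,\Bigl|\tbinom{\gamma}{j_0}\Bigr|\,n^{\gamma-j_0}-\sum_{j>j_0}|\ell_j|\,\Bigl|\tbinom{\gamma}{j}\Bigr|\,n^{\gamma-j}.
\]
Since $\gamma\notin\Z$, every binomial coefficient $\binom\gamma j$ is nonzero, so $|\binom{\gamma}{j_0}|\ge c_\gamma>0$ with $c_\gamma$ depending only on $\gamma$ (the minimum over $0\le j\le k$). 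Also $|\ell_{j_0}|\ge1$ because $\ell$ is an integer vector.

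The remaining terms are bounded using $\|\ell\|_\infty\le Q=(\log N)^3$, $n\in[N,2N)$ and $n^{\gamma-j}\le n^{\gamma-j_0-1}$ for $j>j_0$. Their total is at most $k\,C_\gamma(\log N)^3 (2N)^{\gamma-j_0-1}$. The main term is at least $c_\gamma N^{\gamma-j_0}$, so the ratio of error to main term is $O((\log N)^3/N)\to0$. For large $N$ this gives $|\Xi_n(\ell)|\ge\frac12c_\gamma N^{\gamma-j_0}$. Finally, $\gamma-j_0\ge\gamma-k=\varsigma$ and $N\ge1$, so $N^{\gamma-j_0}\ge N^{\varsigma}$, and the lemma holds with $c_5=c_\gamma/2$.

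The only delicate point is that the lower bound must be uniform in $j_0$ and $\ell$. This is handled because $j_0$ ranges over the finite set $\{0,\dots,k\}$, and the only growth in the coefficients is the polylogarithmic factor $Q$, which is beaten by the power saving $N^{-1}$ between consecutive exponents. When $j_0=k$ there are no error terms at all, and the bound $|\ell_k\binom\gamma k|n^{\varsigma}\ge c_\gamma N^\varsigma$ is immediate. Non-integrality of $\gamma$ is used exactly to ensure that $\binom\gamma{j_0}\ne0$ and that $\varsigma>0$.
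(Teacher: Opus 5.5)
Your proof is correct and is essentially the paper's argument. You take $j_0=\min\{j:\ell_j\ne0\}$, bound the lower-order terms by $Qk\max_j|\binom{\gamma}{j}|\,n^{\gamma-j_0-1}$, absorb them into half the leading term for large $N$ since $Q=(\log N)^3=o(N)$, and finish with $\gamma-j_0\ge\varsigma$. Your remark that the threshold on $N$ is uniform, because $j_0$ ranges over a finite set, is what justifies the paper's phrase ``for $N$ large''.
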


\begin{proof}
Let $j_0=\min\{j:\ell_j\ne0\}$. Then
$|\Xi_n(\ell)|\ge|\binom\gamma{j_0}|n^{\gamma-j_0}-Qk\max_j|\binom\gamma j|\,n^{\gamma-j_0-1}
\ge\frac12|\binom\gamma{j_0}|n^{\gamma-j_0}$ for $N$ large, and $\gamma-j_0\ge\varsigma$.
\end{proof}

\emph{First moment.} $\mathbf E_\psi F_n=m_\psi^{-1}\sum_\ell a(\ell)\check\psi(\Xi_n(\ell))$. The
term $\ell=0$ is $\mu$; the terms with $0<\|\ell\|_\infty\le Q$ contribute
$O(C_2^{k+1}N^{-\varsigma P})$ by \cref{IB:lem:freq}; the others contribute $o(\mu^2)$ by
\eqref{IB:eq:tail}. Hence $\mathbf E_\psi F_n=\mu(1+o(1))$ and
$\mathbf E_\psi Z_N=N\mu(1+o(1))$.

\emph{Second moment.} For $n\ne n'$,
$\mathbf E_\psi F_nF_{n'}=m_\psi^{-1}\sum_{\ell,\ell'}a(\ell)a(\ell')\check\psi\bigl(\Xi_n(\ell)+\Xi_{n'}(\ell')\bigr)$.
The pair $(0,0)$ gives $\mu^2$. Pairs with $\ell$ or $\ell'$ outside the box
$\|\cdot\|_\infty\le Q$ contribute $o(\mu^2)$ by \eqref{IB:eq:tail}. Pairs with exactly one
of $\ell,\ell'$ equal to $0$ and the other in the box contribute
$O(\mu N^{-\varsigma P})=o(\mu^2)$ by \cref{IB:lem:freq}. Call $(n,n')$ \emph{good} if
$|\Xi_n(\ell)+\Xi_{n'}(\ell')|\ge T$ for all nonzero $\ell,\ell'$ in the box, and
\emph{bad} otherwise. For good pairs the remaining terms contribute
$O(T^{-P})=o(\mu^2)$, so $\mathbf E_\psi F_nF_{n'}\le\mu^2(1+o(1))$.

\begin{lemma}
\label{IB:lem:bad}
The number of bad pairs is at most $C_6(\log N)^{6k+9}N^{2-\varsigma}$.
\end{lemma}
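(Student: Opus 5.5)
The plan is a direct count: fix the pair of frequency vectors $(\ell,\ell')$ and the second index $n'$, count the admissible $n$ with a mean value estimate, and then sum over everything. The exponent $6k+9$ should come out as $6k+6$ from the number of pairs $(\ell,\ell')$, plus $3$ from the window length $T=(\log N)^3$.

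\emph{Step 1: union bound over frequency pairs.} A pair $(n,n')$ is bad if and only if there are nonzero $\ell,\ell'\in\Z^{k+1}$ with $\|\ell\|_\infty,\|\ell'\|_\infty\le Q$ and $|\Xi_n(\ell)+\Xi_{n'}(\ell')|<T$. The number of such pairs $(\ell,\ell')$ is at most $(2Q+1)^{2(k+1)}=O((\log N)^{6k+6})$. It therefore suffices to show that for each fixed nonzero $\ell,\ell'$ in the box, the number of $(n,n')\in[N,2N)^2$ with $|\Xi_n(\ell)+\Xi_{n'}(\ell')|<T$ is $O(TN^{2-\varsigma}+N)$. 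Since $N\le N^{2-\varsigma}$ and $T=(\log N)^3$, this gives the claimed bound $C_6(\log N)^{6k+9}N^{2-\varsigma}$.

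\emph{Step 2: monotonicity of $x\mapsto\Xi_x(\ell)$ on $[N,2N]$.} Let
\[
g(x)=\sum_j\ell_j\tbinom{\gamma}{j}x^{\gamma-j},\qquad g'(x)=\sum_j\ell_j\tbinom{\gamma}{j}(\gamma-j)x^{\gamma-j-1}.
\]
Let $j_0=\min\{j:\ell_j\neq0\}$. Because $\gamma\notin\Z$, the coefficient $\binom{\gamma}{j_0}(\gamma-j_0)$ is nonzero. The other terms are smaller by a factor $O(Qx^{-1})=O((\log N)^3/N)$. So, exactly as in \cref{IB:lem:freq}, for $N$ large we get
\[
|g'(x)|\ge\tfrac12\Bigl|\tbinom{\gamma}{j_0}(\gamma-j_0)\Bigr|x^{\gamma-j_0-1}\ge c\,N^{\varsigma-1}\qquad\text{on }[N,2N].
\]
The last inequality uses $\gamma-j_0-1\ge\varsigma-1$. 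This is the delicate case $j_0=k$, where the exponent is negative, but then $x^{\varsigma-1}\ge(2N)^{\varsigma-1}$ still holds on $[N,2N]$. In particular $g'$ has constant sign on $[N,2N]$, so $g$ is strictly monotone there. I expect this to be the only real obstacle: one must check that the dominant term controls the derivative uniformly over all $\ell$ in the box, not just its value, and this works because $Q$ grows only polylogarithmically.

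\emph{Step 3: the count.} Fix $n'$ and set $\beta=-\Xi_{n'}(\ell')$. The $n\in[N,2N)$ with $|g(n)-\beta|<T$ form, by monotonicity, a set of consecutive integers. By the mean value theorem, consecutive such $n$ have $g$-values at least $cN^{\varsigma-1}$ apart, all within an interval of length $2T$. Hence there are at most $2TN^{1-\varsigma}/c+1$ of them. Summing over the $N$ values of $n'$ gives $O(TN^{2-\varsigma}+N)$ for each $(\ell,\ell')$, which is what Step 1 required.
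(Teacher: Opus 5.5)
Your proposal is correct and follows the paper's argument: a union bound over the at most $(2Q+1)^{2k+2}$ frequency pairs, a lower bound $|g'|\ge cN^{\varsigma-1}$ on $[N,2N]$ from the dominant term (with the negative-exponent case handled by evaluating at $2N$), and the mean value theorem to bound the number of integers in a window of width $2T$. The only difference is cosmetic: you fix $n'$ and count $n$, whereas the paper fixes $n$ and counts $n'$.
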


\begin{proof}
Fix $n$ and nonzero $\ell,\ell'$ in the box, let $j_1=\min\{j:\ell'_j\neq0\}$ and
$g(s)=\Xi_s(\ell')$ for real $s\in[N,2N]$. Then
\[
|g'(s)|\ge\bigl|\tbinom\gamma{j_1}(\gamma-j_1)\bigr|s^{\gamma-j_1-1}-C_7Qs^{\gamma-j_1-2}
\ge c_8N^{\varsigma-1}
\]
for $N$ large, using $\gamma-j_1\ne0$ and $\gamma-j_1-1\ge\varsigma-1$ (if the exponent is
negative, evaluate at $s=2N$). So $g$ is strictly monotone on $[N,2N]$, and the set
of $s$ with $|g(s)+\Xi_n(\ell)|<T$ is an interval of length at most $2T/(c_8N^{\varsigma-1})$,
containing at most $2TN^{1-\varsigma}/c_8+1$ integers $n'$. Summing over the $N$ values of $n$
and the at most $(2Q+1)^{2k+2}$ pairs $(\ell,\ell')$ gives the claim.
\end{proof}

For bad pairs we use $F_nF_{n'}\le F_n$, and $F_n^2\le F_n$ on the diagonal. Thus
\[
\mathbf E_\psi Z_N^2\le N\mu(1+o(1))+N^2\mu^2(1+o(1))+C_6(\log N)^{6k+9}N^{2-\varsigma}\mu(1+o(1)).
\]
Since $N\mu\to\infty$ and $(\log N)^{6k+9}N^{-\varsigma}/\mu\to0$, we get
$\mathbf E_\psi Z_N^2\le(\mathbf E_\psi Z_N)^2(1+o(1))$, and Chebyshev's inequality yields
$\mathbf P_\psi(Z_N=0)\le\operatorname{Var}_\psi(Z_N)/(\mathbf E_\psi Z_N)^2\to0$, where $\mathbf P_\psi$ is the
probability measure $m_\psi^{-1}\psi\,dx$.

Let $G_r=\{x:Z_{2^r}(x)>0\}$. Then $\mathbf P_\psi(\limsup_rG_r)\ge\limsup_r\mathbf P_\psi(G_r)=1$. If
$x\in G_r$ then $F_n(x)>0$, hence $x\in E_n$, for some $n\in[2^r,2^{r+1})$. By
\cref{IB:lem:En}, for $\mathbf P_\psi$-almost every $x$ there are infinitely many $n$ with
$\{n,\dots,n+L-1\}\subset A_x$. Taking $\psi=1$ on $[-X_0+1,X_0-1]$, and then intersecting
over integers $X_0\ge2$ and $L\in\N$, we conclude that $A_x$ is thick for almost every $x\in\R$.
\Cref{IB:lem:thick} completes the proof. \qed

\formalization{Part~\ref{IB:413b} of \cref{IB:thm:413} and \cref{IB:lem:thick} are formalized
in \leanfile{Q413} (\leanname{thm413b}, \leanname{thm413b_sumset}, \leanname{lem_thick}).
Part~\ref{IB:413a} and the final equivalence are formalized unconditionally:
\leanfile{Q413a} contains \cref{IB:lem:En} and the deduction of part~\ref{IB:413a} from the
second-moment estimate, and \leanfile{Q413SecondMoment} proves that estimate
(\leanname{secondMomentBound}, \leanname{thm413a}, \leanname{thm413_iff}). The formal proof
follows the second-moment method above, with Fej\'er kernels in place of the smooth windows and
the indicator of $[-X_0,X_0]$ in place of $\psi$.}

\section{Sumsets in random sets}
\label{IB:sec:u44a}

In this section we answer the question raised in \S4.4 of~\cite{kra2023problems},
which we call Question~\S4.4(a). Let $p=(p_n)_{n\in\N}\in[0,1]^\N$ and let $A\subset\N$ be the
random set in which the events $\{n\in A\}$ are independent and $\mathbf P(n\in A)=p_n$.

\begin{question}{\S4.4(a)}
\label{IB:q:44a}
For which sequences $(p_n)_{n\in\N}$ does the set $A$ contain almost surely sumsets?
\end{question}

We take ``sumsets'' to mean $B+C$ with $B,C$ infinite, and we also treat $B\oplus B$, $B+B$ and
the finite configurations $B+F$.

The survey shows: if $p$ is decreasing and $p_nn^\eps\to\infty$ for every $\eps>0$, then
$A$ is almost surely thick and hence contains $B+C$; if $p_nn^\eps\to0$ for some
$\eps>0$, then for some $k$ almost surely $A$ contains no $B+F$ with $B$ infinite and
$|F|=k$. Our answer has two parts.
\begin{itemize}
\item For monotone $p$ (indeed for $p$ comparable to a monotone sequence) the
answer is completely explicit (\cref{IB:thm:monotone}): $A$ almost surely contains
$B+C$ if and only if $A$ is almost surely thick, if and only if $\sum_np_n^k=\infty$
for all $k$, if and only if $p_nn^\eps\not\to0$ for every $\eps>0$. In particular the
gap left between the two conditions above is entirely on the positive side.
\item For general $p$ there is no Borel characterization: the set of $p$ for which $A$
almost surely contains $B+C$ (or $B\oplus B$, or $B+B$) is $\Sigma^1_1$-complete, hence
not Borel, even within $(0,1)^\N$; for $B+C$ this persists within
$\{p\in(0,1)^\N:p_n\to0\}$ (\cref{IB:thm:nonborel}). On the positive side we give the
exact criterion for finite configurations $B+F$ (\cref{IB:thm:finite}), an exact
``strategy'' criterion with a $0$--$1$ law for $B+C$, $B\oplus B$ and $B+B$
(\cref{IB:thm:strategy,IB:thm:strategy2}), and simple sufficient and necessary
conditions, with examples showing that all implications between them are strict
(\cref{IB:thm:hierarchy}).
\end{itemize}

The strategy criteria are close to a reformulation of the event in question;
by \cref{IB:thm:nonborel}, no Borel condition on $p$ can replace them.
\Cref{IB:thm:finite} is standard. The non-Borel part of
\cref{IB:thm:nonborel} for $B\oplus B$ in the deterministic setting is due to
Mazurkiewicz and \.Zeberski~\cite[Theorem~2.6]{mazurkiewicz2023ideal}, who proved
that the ideal of sets containing no $B\oplus B$ (their $\mathcal H_2$) is $\Pi^1_1$-complete (their method,
reducing ill-founded trees to additive families, is the one we use); for $B+C$ and
$B+B$, and for the random versions, we did not find these statements in the
literature.

\subsection{Notation and statements}

We set $p_m=0$ for integers $m\le0$. For finite $U\subset\Z$ and $x\in\Z$ put
$w_U(x)=\prod_{u\in U}p_{x+u}$ (so $w_\emptyset\equiv1$), and for finite nonempty $F\subset\Z$
put $S_F=\sum_{n\in\Z}w_F(n)\in[0,\infty]$; clearly $S_{F+r}=S_F$ for $r\in\Z$, and
$S_{F'}\ge S_F$ if $F'\subset F$. Write $B\oplus B=\{b+b':b,b'\in B,\ b\ne b'\}$. Consider the
events
\begin{itemize}
\item $\mathcal E_{BC}$: there are infinite $B,C\subset\N$ with $B+C\subset A$;
\item $\mathcal E_\oplus$, $\mathcal E_+$: there is an infinite $B\subset\N$ with $B\oplus B\subset A$,
respectively $B+B\subset A$;
\item $\mathcal E_F$ ($F\subset\Z$ finite nonempty): there is an infinite $B\subset\N$ with $B+F\subset A$;
\item $\mathcal E_k$: $\mathcal E_F$ holds for some $F\subset\N$ with $|F|=k$; and
$\mathcal E_{<\omega}=\bigcap_k\mathcal E_k$.
\end{itemize}
We identify subsets of $\N$ with points of $2^\N$. Each of these events is an
analytic subset of $2^\N$ (a projection of a Borel subset of $2^\N\times2^\N\times2^\N$),
hence universally measurable (Lusin's theorem, see~\cite[Theorem~21.10]{kechris1995classical}); $\mathbf P=\mathbf P_p$
denotes the completed product measure.
The events $\mathcal E_{BC},\mathcal E_\oplus,\mathcal E_+,\mathcal E_F$ are invariant under
finite modifications of $A$: adding points is harmless, and if $E$ is finite and
$B+C\subset A$, then $(B\cap(\max E,\infty))+C\subset A\setminus E$; similarly
$(B\cap(\max E-\min F,\infty))+F\subset A\setminus E$ if $B+F\subset A$, and the cases $B\oplus B$ and
$B+B$ are handled by $B\cap(\max E,\infty)$.

\begin{theorem}[Monotone sequences]
\label{IB:thm:monotone}
Suppose there are a non-increasing sequence $(q_n)$ and constants $0<\alpha\le\beta$ with
$\alpha q_n\le p_n\le\beta q_n$ for all $n$. Then the following are equivalent:
\begin{enumerate}[label=\textup{(\arabic*)}]
\item\label{IB:m1} $A$ almost surely contains $B+C$ for some infinite $B,C$;
\item\label{IB:m2} $A$ is almost surely thick;
\item\label{IB:m3} $\sum_np_n^k=\infty$ for every $k\in\N$;
\item\label{IB:m4} for every $\eps>0$, $p_nn^\eps$ does not tend to $0$;
\item\label{IB:m5} almost surely, for every $k$ there are an infinite $B$ and $F\subset\N$ with
$|F|=k$ and $B+F\subset A$.
\end{enumerate}
If they fail, there is $k$ such that almost surely $A$ contains no $B+F$ with $B$
infinite and $|F|=k$. In each case the probability of the event in \ref{IB:m1},
\ref{IB:m2}, \ref{IB:m5} is $0$ or $1$.
\end{theorem}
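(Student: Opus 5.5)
I would prove the cycle \ref{IB:m2}$\Rightarrow$\ref{IB:m1}$\Rightarrow$\ref{IB:m5}-type necessity$\Rightarrow$\ref{IB:m3}$\Leftrightarrow$\ref{IB:m4}$\Rightarrow$\ref{IB:m2}, together with the negative statement when \ref{IB:m3} fails. The implication \ref{IB:m2}$\Rightarrow$\ref{IB:m1} is \cref{IB:lem:thick}. Clearly \ref{IB:m2}$\Rightarrow$\ref{IB:m5}: a thick set contains $B+F$ for $F=\{1,\dots,k\}$ with $B$ the left endpoints of infinitely many intervals of length $k+1$. Also \ref{IB:m1} gives, for every $k$, $B+F\subset A$ with $F$ any $k$ elements of $C$. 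So it suffices to show: if \ref{IB:m3} fails, then for some $k$ almost surely no $B+F$ with $|F|=k$ and $B$ infinite lies in $A$; and \ref{IB:m3}$\Rightarrow$\ref{IB:m2}; plus \ref{IB:m3}$\Leftrightarrow$\ref{IB:m4}.

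\emph{Negative direction (Borel--Cantelli).} Suppose $\sum_np_n^k<\infty$. For a fixed $F\subset\Z$ with $|F|=k$, H\"older's inequality gives $S_F=\sum_n\prod_{f\in F}p_{n+f}\le\prod_f(\sum_np_{n+f}^k)^{1/k}\le\sum_np_n^k<\infty$, and by independence $\mathbf P(n+F\subset A)=w_F(n)$. Borel--Cantelli shows that almost surely only finitely many $n$ have $n+F\subset A$, so $\mathcal E_F$ fails almost surely; since there are countably many $F\subset\N$, $\mathcal E_k$ fails almost surely. This needs no monotonicity and yields the final assertion once \ref{IB:m3}$\Leftrightarrow$\ref{IB:m4} is known.

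\emph{\ref{IB:m3}$\Leftrightarrow$\ref{IB:m4}.} Everything is invariant under replacing $p$ by $q$ up to constants, so assume $p$ non-increasing. If $p_nn^\eps\to0$, then $p_n\le n^{-\eps}$ eventually, and $\sum p_n^k<\infty$ for $k>1/\eps$. Conversely, if $\sum_np_n^k<\infty$, monotonicity gives $np_{n}^k\le 2\sum_{n/2\le m\le n}p_m^k\to0$, hence $p_n=o(n^{-1/k})$, so \ref{IB:m4} fails with $\eps=1/(2k)$.

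\emph{\ref{IB:m3}$\Rightarrow$\ref{IB:m2}, the main step.} Fix $L$. By monotonicity, $\mathbf P([n,n+L)\subset A)\ge\alpha^L q_{n+L}^L\ge c\,p_{n+L}^L$. The events for disjoint windows $n\in L\N$ are independent, and $\sum_j p_{(j+1)L}^L\ge L^{-1}\sum_{m\ge 2L}p_m^L=\infty$, again by monotonicity. The second Borel--Cantelli lemma then shows that almost surely infinitely many windows of length $L$ lie in $A$. Intersecting over $L$ gives thickness almost surely. The delicate points here are the bookkeeping with the comparison constants $\alpha,\beta$ and the shift between $q_n$ and $q_{n+L}$; monotonicity is exactly what makes $p$ on a window comparable to its right endpoint. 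Finally, the $0$--$1$ statement follows from the Kolmogorov $0$--$1$ law, since the events in \ref{IB:m1}, \ref{IB:m2} and \ref{IB:m5} are invariant under finite modifications, as noted above, and are universally measurable. The equivalence shows that each of them has probability $1$ or $0$ according as \ref{IB:m3} holds or fails.
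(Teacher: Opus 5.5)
Your proof is correct. The main step, (3)$\Rightarrow$(2) via the second Borel--Cantelli lemma on the disjoint windows $[jL,(j+1)L)$, is the same as the paper's. There is one slip in it. $p$ itself is not monotone, so the inequality $\sum_j p_{(j+1)L}^L\ge L^{-1}\sum_{m\ge 2L}p_m^L$ is not justified ``by monotonicity''. To fix it, either insert a factor $(\alpha/\beta)^L$, or compare with $q$ as the paper does: $\sum_{j\ge1} q_{(j+1)L}^L\ge L^{-1}\sum_{m\ge2L}q_m^L\ge (L\beta^L)^{-1}\sum_{m\ge 2L}p_m^L=\infty$, together with $\mathbf P([jL,(j+1)L)\subset A)\ge\alpha^Lq_{(j+1)L}^L$.

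Elsewhere your route differs from the paper's and is more self-contained.

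\emph{Necessity.} The paper uses monotonicity to bound $w_F(n)\le\beta^kq_{n+f_0}^k$, then invokes \cref{IB:thm:finite}. Your generalised H\"older inequality gives $S_F\le\sum_np_n^k$ for every $p$ (AM--GM, $\prod_{f\in F}p_{n+f}\le k^{-1}\sum_{f\in F}p_{n+f}^k$, would do the same). So the statement ``if $\sum_np_n^k<\infty$ then almost surely $A$ contains no $B+F$ with $B$ infinite and $|F|=k$'', and with it (5)$\Rightarrow$(3), holds for arbitrary $p$. Monotonicity is needed only for (3)$\Rightarrow$(2) and for the passage between (3) and (4).

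\emph{(1)$\Rightarrow$(5).} The paper obtains this from \cref{IB:thm:hierarchy}, through condition (N). You use the trivial pointwise inclusion $\mathcal E_{BC}\subset\mathcal E_{<\omega}$, which is simpler.

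\emph{The $0$--$1$ statements.} The paper cites \cref{IB:thm:finite,IB:thm:strategy}. In your argument they follow from the dichotomy itself. If (3) holds, $A$ is almost surely thick, hence almost surely in all three events. If (3) fails for some $k$, all three events lie inside $\mathcal E_k$, which has probability $0$. So your appeal to Kolmogorov's $0$--$1$ law is redundant.

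The paper's version is shorter on the page because it reuses the general criteria of that section. Yours needs only \cref{IB:lem:thick} and the two Borel--Cantelli lemmas, and it shows which half of the theorem genuinely uses monotonicity.
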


\begin{example}
\label{IB:ex:gap}
Let $N_1=10$, $N_{j+1}=N_j^{\,j+1}$, $p_n=1$ for $n<N_1$ and $p_n=N_j^{-1/2}$ for
$N_j\le n<N_{j+1}$. Then $p$ is non-increasing. At $n=N_j$, $p_nn^\eps=N_j^{\eps-1/2}\to0$
for $\eps<\frac12$, so the first condition of the survey fails; at $n=N_{j+1}-1$,
$p_nn^\eps\ge N_j^{-1/2}(N_j^{j+1}/2)^\eps\to\infty$, so $p_nn^\eps\not\to0$ for every $\eps$
and the second condition fails as well. By \cref{IB:thm:monotone}, $A$ is almost
surely thick.
\end{example}

\begin{theorem}[Finite configurations]
\label{IB:thm:finite}
For finite nonempty $F\subset\Z$, $\mathbf P(\mathcal E_F)=1$ if $S_F=\infty$ and $\mathbf P(\mathcal E_F)=0$
otherwise. Consequently $\mathbf P(\mathcal E_k)=1$ if $S_F=\infty$ for some $F\subset\N$ with
$|F|=k$, and $\mathbf P(\mathcal E_k)=0$ otherwise; $\mathbf P(\mathcal E_{<\omega})\in\{0,1\}$ with value $1$
if and only if this holds for every $k$; and $A$ is almost surely thick if
$S_{\{0,\dots,L-1\}}=\infty$ for all $L$, and almost surely not thick otherwise.
\end{theorem}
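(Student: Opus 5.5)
The plan is to reduce $\mathcal E_F$ to the event that infinitely many translates $n+F$ lie in $A$, and then apply the two Borel--Cantelli lemmas.

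Step 1 is the reduction. Since $F$ is fixed and finite, $\mathcal E_F$ holds exactly when the set $N_F=\{n\in\Z: n+F\subset A\}$ is infinite. One direction: if $N_F$ is infinite, then infinitely many $n\in N_F$ are positive, because $A\subset\N$ forces $n\ge 1-\min F$; take $B$ to be these $n$. The other direction is immediate from $B\subset N_F$.

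Step 2 is the convergent case. We have $\mathbf P(n+F\subset A)=w_F(n)$, and here the convention $p_m=0$ for $m\le0$ is used. If $S_F<\infty$, the first Borel--Cantelli lemma shows that $N_F$ is almost surely finite, so $\mathbf P(\mathcal E_F)=0$.

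Step 3 is the divergent case, which is the only point needing care, since the events $\{n+F\subset A\}$ are dependent. Let $D=\max F-\min F+1$. Split $\Z$ into the residue classes modulo $D$. For $n,n'$ in the same class with $n\ne n'$, the translates $n+F$ and $n'+F$ lie in disjoint intervals of length $D$, so the corresponding events are independent. Since $S_F=\sum_{r}\sum_{n\equiv r}w_F(n)=\infty$, at least one class has a divergent sum. The second Borel--Cantelli lemma for independent events then shows that $N_F$ is almost surely infinite, so $\mathbf P(\mathcal E_F)=1$.

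Step 4 gives the consequences. The event $\mathcal E_k$ is a countable union of the events $\mathcal E_F$ over $F\subset\N$ with $|F|=k$. If some such $F$ has $S_F=\infty$, then $\mathbf P(\mathcal E_k)=1$; otherwise $\mathcal E_k$ is a countable union of null sets, so $\mathbf P(\mathcal E_k)=0$. The event $\mathcal E_{<\omega}$ is a countable intersection of the $\mathcal E_k$, each of probability $0$ or $1$. Hence $\mathbf P(\mathcal E_{<\omega})$ is $1$ if and only if every $\mathbf P(\mathcal E_k)=1$, and is $0$ otherwise.

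Step 5 handles thickness. $A$ is thick exactly when, for every $L$, it contains infinitely many translates of $\{0,\dots,L-1\}$. Containing one arbitrarily long interval gives infinitely many translates of each shorter interval; containing infinitely many translates of every $\{0,\dots,L-1\}$ trivially gives arbitrarily long intervals. So thickness is $\bigcap_L\mathcal E_{\{0,\dots,L-1\}}$.

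By Steps 2 and 3 this intersection has probability $1$ when $S_{\{0,\dots,L-1\}}=\infty$ for every $L$. If instead $S_{\{0,\dots,L_0-1\}}<\infty$ for some $L_0$, then $\mathcal E_{\{0,\dots,L_0-1\}}$ is null, so $A$ is almost surely not thick. (The sums $S_{\{0,\dots,L-1\}}$ are non-increasing in $L$, though this monotonicity is not needed.)
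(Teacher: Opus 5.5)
Your proof is correct and follows the paper's argument: you reduce $\mathcal E_F$ to the event that $n+F\subset A$ for infinitely many $n$, you use the first Borel--Cantelli lemma when $S_F<\infty$, you split into finitely many subfamilies of pairwise disjoint translates and apply the second Borel--Cantelli lemma when $S_F=\infty$, and you handle $\mathcal E_k$, $\mathcal E_{<\omega}$ and thickness by countable unions and intersections. The only difference is cosmetic: you split by residue classes modulo $\max F-\min F+1$, whereas the paper uses a general greedy-colouring lemma for families in which each set meets at most $M$ others, and it reuses that lemma later for the strategy criteria.
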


\begin{definition}
\label{IB:def:strategy}
A \emph{$BC$-strategy} for $p$ is a set $\mathcal S$ of pairs $(U,V)$ of finite subsets
of $\N$ such that $(\emptyset,\emptyset)\in\mathcal S$ and for every $(U,V)\in\mathcal S$
\[
\text{(a)}\ \sum_{x\in X_{\mathcal S}(U,V)}w_U(x)=\infty,\qquad
\text{(b)}\ \sum_{y\in Y_{\mathcal S}(U,V)}w_V(y)=\infty,
\]
where $X_{\mathcal S}(U,V)=\{x\in\N\setminus V:(U,V\cup\{x\})\in\mathcal S\}$ and
$Y_{\mathcal S}(U,V)=\{y\in\N\setminus U:(U\cup\{y\},V)\in\mathcal S\}$.
An \emph{$\oplus$-strategy} (resp.\ \emph{$+$-strategy}) is a set $\mathcal S$ of finite
subsets of $\N$ with $\emptyset\in\mathcal S$ such that for every $U\in\mathcal S$,
$\sum_{x\in X_{\mathcal S}(U)}w^\ast_U(x)=\infty$, where
$X_{\mathcal S}(U)=\{x\in\N\setminus U:U\cup\{x\}\in\mathcal S\}$ and $w^\ast_U=w_U$ (resp.\
$w^\ast_U(x)=p_{2x}w_U(x)$).
\end{definition}

\begin{theorem}
\label{IB:thm:strategy}
$\mathbf P(\mathcal E_{BC})\in\{0,1\}$, and $\mathbf P(\mathcal E_{BC})=1$ if and only if $p$ has a
$BC$-strategy.
\end{theorem}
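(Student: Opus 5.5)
The plan has three parts: a $0$--$1$ law, a proof that a strategy suffices, and a proof that a strategy is necessary.

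\emph{The $0$--$1$ law.} $\mathcal E_{BC}$ is analytic, hence universally measurable, and it is invariant under finite modifications of $A$, as noted above. Kolmogorov's (or Hewitt--Savage's) $0$--$1$ law for the independent coordinates $\one_A(n)$ then gives $\mathbf P(\mathcal E_{BC})\in\{0,1\}$. More precisely, a set in the completed product $\sigma$-algebra that is invariant under changing finitely many coordinates is a.s.\ equal to a tail event.

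\emph{Sufficiency.} Given a $BC$-strategy $\mathcal S$, we build $B=\{b_1,b_2,\dots\}$ and $C=\{c_1,c_2,\dots\}$ greedily, alternating sides, so that at every stage $(U,V)=(\{b_i\},\{c_i\})$ lies in $\mathcal S$ and $U+V\subset A$.
\begin{itemize}
\item At a $C$-step we must find $x\in X_{\mathcal S}(U,V)$, larger than everything chosen so far, with $x+U\subset A$.
\item Condition (a) says $\sum_{x\in X}w_U(x)=\infty$ over the admissible $x$.
\item The events $\{x+U\subset A\}$ have probabilities $w_U(x)$, but they are not independent for different $x$.
\item To handle this, we thin $X$ to an infinite subset whose translates $x+U$ are pairwise disjoint and still carry divergent weight. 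Since $U$ is finite, $X$ splits into boundedly many classes (for instance, by $x$ modulo a large gap, or by a greedy sparsification with a bounded number of classes), and one class retains infinite sum.
\item Borel--Cantelli II then shows that almost surely infinitely many such $x$ work.
\end{itemize}
$B$-steps are symmetric, using (b). There is a subtlety: the pair $(U,V)$ reached depends on $A$. Since $\mathcal S$ has only countably many finite pairs, we take the intersection over all $(U,V)\in\mathcal S$ of the almost-sure events ``infinitely many good $x$ beyond any bound'', which is still almost sure. On this event the greedy construction never gets stuck, so $\mathcal E_{BC}$ holds almost surely.

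\emph{Necessity.} Assume $\mathbf P(\mathcal E_{BC})>0$. We define $\mathcal S$ as the set of pairs $(U,V)$ with $\mathbf P(\mathcal G_{U,V})>0$, where $\mathcal G_{U,V}$ is the event that there are infinite $B\supset U$ and $C\supset V$ with $B+C\subset A$. We may require $\max U$ and $\max V$ to be below the other new elements, or more simply impose no ordering, since any infinite $B,C$ can be extended.
\begin{itemize}
\item $(\emptyset,\emptyset)\in\mathcal S$ by assumption.
\item Suppose $(U,V)\in\mathcal S$ but (a) fails, i.e.\ $\sum_{x\in X_{\mathcal S}(U,V)}w_U(x)<\infty$.
\item By Borel--Cantelli I, almost surely only finitely many $x\in X_{\mathcal S}$ satisfy $x+U\subset A$.
\item For $x\notin X_{\mathcal S}$ we have $\mathbf P(\mathcal G_{U,V\cup\{x\}})=0$, and a countable union of null events is null.
\item On $\mathcal G_{U,V}$ the witness set $C\setminus V$ is infinite, and every $x\in C\setminus V$ satisfies $x+U\subset A$ and lies in $\mathcal G_{U,V\cup\{x\}}$. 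So, up to a null set, infinitely many such $x$ must lie in $X_{\mathcal S}$, which contradicts the previous step.
\end{itemize}
The case of (b) is symmetric. Hence $\mathcal S$ is a strategy.

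\emph{The main obstacle} is the independence issue in the sufficiency step: the divergent weight must be turned into infinitely many actual successes despite overlapping translates $x+U$. I expect the disjoint-subclass thinning to resolve it. If it does not, I would fall back on a Kochen--Stone second-moment argument, using that the correlations $\mathbf P(x+U,\ x'+U\subset A)$ are at most $w_U(x)w_U(x')/\min$-overlap factors.
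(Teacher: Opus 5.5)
Your proposal is correct and follows the paper's proof. Sufficiency is a greedy construction on the almost-sure event given by a bounded-overlap second Borel--Cantelli argument; your residue classes modulo some $m>\max U-\min U$ do the job of the greedy colouring in \cref{IB:lem:BC}. Necessity takes $\mathcal S$ to be the pairs whose extension event has positive probability and uses the first Borel--Cantelli lemma.

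The differences are minor. The paper gets the $0$--$1$ law for free by combining the two implications, so your separate appeal to Kolmogorov's law is unnecessary. Your extension event requires $B\supset U$ and $C\supset V$, so you can keep $C$ itself as the witness for $(U,V\cup\{x\})$ rather than deleting $x$. You should note that this event is analytic, hence measurable, exactly as for $\mathcal E_{BC}$.
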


\begin{theorem}
\label{IB:thm:strategy2}
$\mathbf P(\mathcal E_\oplus)\in\{0,1\}$, and $\mathbf P(\mathcal E_\oplus)=1$ if and only if $p$ has an
$\oplus$-strategy; likewise for $\mathcal E_+$ and $+$-strategies.
\end{theorem}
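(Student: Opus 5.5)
The plan is to prove \cref{IB:thm:strategy2} by copying the argument I would use for \cref{IB:thm:strategy}; the one-set structure makes it simpler. The $0$--$1$ law comes first. I showed above that $\mathcal E_\oplus$ and $\mathcal E_+$ are invariant under finite modifications of $A$, and they are universally measurable. Kolmogorov's $0$--$1$ law for the independent events $\{n\in A\}$, applied to the completed product measure, then gives $\mathbf P\in\{0,1\}$.

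\emph{Sufficiency.} Let $\mathcal S$ be an $\oplus$-strategy. I build $b_1<b_2<\cdots$ almost surely by a greedy adaptive procedure, keeping $U_j=\{b_1,\dots,b_j\}\in\mathcal S$ and $U_j\oplus U_j\subset A$ at every stage. At stage $j$ the candidates are the $x\in X_{\mathcal S}(U_j)$ with $x>\max U_j$; these still have divergent weight sum, since removing finitely many terms does not change divergence. A candidate $x$ succeeds if $x+u\in A$ for all $u\in U_j$, which has probability $w_{U_j}(x)$. I need independence of distinct candidates. The events for $x\ne x'$ involve the coordinate sets $x+U_j$ and $x'+U_j$, which may overlap. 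To handle this I pass to a subfamily of candidates whose translates $x+U_j$ are pairwise disjoint and that avoids all coordinates already inspected. I would choose candidates spaced by more than $\operatorname{diam}U_j$. This keeps a divergent sum, because $X_{\mathcal S}(U_j)$ splits into finitely many classes with this spacing property, and one class must still diverge. Borel--Cantelli II then gives infinitely many successes, almost surely, among fresh coordinates, and I take one as $b_{j+1}$. One further point needs care: conditioning on the coordinates already revealed. Earlier stages only revealed coordinates below some finite bound $M_j$, so I restrict to candidates with $x+\min U_j>M_j$. A countable intersection of probability-one events finishes the construction. For $+$ the same argument applies, with the extra coordinate $2x$ included through the weight $p_{2x}w_U(x)$. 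The disjointness spacing must then also separate the points $2x$, which is again a finite-class splitting.

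\emph{Necessity.} Suppose $\mathbf P(\mathcal E_\oplus)=1$. I define $\mathcal S$ as the set of finite $U\subset\N$ such that
\[
\mathbf P\bigl(\exists\text{ infinite }B\supset U,\ \min(B\setminus U)>\max U,\ B\oplus B\subset A\bigr)>0,
\]
or more robustly, with ``$>0$'' replaced by ``$=1$ conditionally on $U\oplus U\subset A$''. Then $\emptyset\in\mathcal S$. Now suppose $U\in\mathcal S$ but $\sum_{x\in X_{\mathcal S}(U)}w_U(x)<\infty$. By Borel--Cantelli I, almost surely only finitely many $x\in X_{\mathcal S}(U)$ satisfy $x+U\subset A$. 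Any witness $B$ extending $U$ must use as its next element some $x$ with $x+U\subset A$. If $x\notin X_{\mathcal S}(U)$, the extension $U\cup\{x\}$ has probability zero. Since there are only countably many such $x$, almost surely none of them is used. So only the finitely many good $x\in X_{\mathcal S}(U)$ remain, and each of them lies in $\mathcal S$. This does not give a contradiction yet. The right fix is to define $\mathcal S$ via the tail-type events together with a finite-modification argument, so that membership becomes a $0$--$1$ property independent of finitely many coordinates. Then ``finitely many good $x$'' contradicts the requirement that the extension probability is positive for infinitely many choices, since the event is invariant under deleting those finitely many coordinates.

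I expect this necessity direction to be the main obstacle. The difficulty is defining the strategy so that the positivity notion is stable under conditioning, which is what makes the divergence condition forced. Here I would follow exactly whatever formulation proves \cref{IB:thm:strategy}, replacing the pairs $(U,V)$ by single sets $U$.
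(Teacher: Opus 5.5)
Your sufficiency argument and your $0$--$1$ law are correct in substance, but more elaborate than needed, since no conditioning on revealed coordinates is required. Because $\mathcal S$ is countable, apply \cref{IB:lem:BC} to each \emph{fixed} $U\in\mathcal S$, with $K_x=x+U$ (respectively $K_x=\{2x\}\cup(x+U)$) for $x\in X_{\mathcal S}(U)$. This gives a single almost sure event on which every $U\in\mathcal S$ has infinitely many $x\in X_{\mathcal S}(U)$ with $x+U\subset A$ (and $2x\in A$). On any fixed outcome in that event, the greedy construction is deterministic. The paper also does not use Kolmogorov: it proves that $\mathbf P(\mathcal E_\oplus)>0$ implies a strategy exists, and together with sufficiency this already gives the $0$--$1$ law.

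The genuine gap is in the necessity direction, which you leave unfinished. You look only at the \emph{next} element of a witness and conclude that almost surely it lies among finitely many good $x$. You then replace the missing contradiction with a vague appeal to tail events. The missing observation is that \emph{every} element of $B\setminus U$ is a legitimate extension, and there are infinitely many of them.

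Use your first definition: let $\mathcal E(U)$ be the event that some infinite $B\supset U$ with $\min(B\setminus U)>\max U$ has $B\oplus B\subset A$, and put $\mathcal S=\{U:\mathbf P(\mathcal E(U))>0\}$. On $\mathcal E(U)$, each $x\in B\setminus U$ satisfies $x+U\subset A$, and $\mathcal E(U\cup\{x\})$ holds with witness $U\cup\{x\}\cup\{b\in B:b>x\}$. Now suppose $\sum_{x\in X_{\mathcal S}(U)}w_U(x)<\infty$. Then $\mathcal E(U)\subset N_1\cup N_2$, where:
\begin{itemize}
\item $N_1=\bigcup_{x\notin U\cup X_{\mathcal S}(U)}\mathcal E(U\cup\{x\})$ is a countable union of null events;
\item $N_2$, the event that $x+U\subset A$ for infinitely many $x\in X_{\mathcal S}(U)$, is null by the first Borel--Cantelli lemma.
\end{itemize}
Hence $\mathbf P(\mathcal E(U))=0$, contradicting $U\in\mathcal S$. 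For $+$, add the condition $2x\in A$ and use $\mathbf P(\{2x\}\cup(x+U)\subset A)=p_{2x}w_U(x)$ for $x\notin U$. The paper argues exactly this way, using $\mathcal E^\ast(U)$ (existence of an infinite $B'$ with $U+B'\subset A$ and $B'\oplus B'\subset A$) and the witness $B'\setminus\{x\}$.

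Your proposed repair via invariance under finite modifications does not supply this step. A $0$--$1$ property of the events $\mathcal E(U)$ does not stop all witnesses from passing through finitely many good $x$. What stops it is that a single witness already yields infinitely many distinct $x$ with $x+U\subset A$ and $U\cup\{x\}\in\mathcal S$.
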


For $p\in\{0,1\}^\N$, i.e.\ $A=D$ deterministic, a strategy exists if and only if $D$
contains the configuration in question (see the end of \cref{IB:ss:proofsAB}); the
two criteria in \cref{IB:thm:strategy2} differ, as $D=\{2^i+2^j:i<j\}$ shows
(\cref{IB:ex:oplus}).

To state the non-Borel results, recall that a \emph{tree} is a set
$\mathcal T\subset\N_0^{<\N_0}$ of finite sequences of elements of $\N_0=\{0,1,2,\dots\}$ closed
under initial segments; the trees form a closed subset $\mathrm{Tr}$ of
$2^{\N_0^{<\N_0}}$, and $\mathrm{IF}\subset\mathrm{Tr}$ denotes the set of trees with an infinite
branch. A subset $Z$ of a Polish space $W$ is \emph{$\Sigma^1_1$-complete} if it is
analytic and for every analytic subset $Z'$ of a zero-dimensional Polish space $W'$
there is a continuous $f\colon W'\to W$ with $Z'=f^{-1}(Z)$. The set $\mathrm{IF}$ is
$\Sigma^1_1$-complete (see~\cite[Theorem~27.1]{kechris1995classical}); hence an analytic set
$Z$ is $\Sigma^1_1$-complete as soon as $\mathrm{IF}=g^{-1}(Z)$ for some continuous
$g\colon\mathrm{Tr}\to W$, and $\Sigma^1_1$-complete sets are not Borel.

\begin{theorem}
\label{IB:thm:nonborel}
\begin{enumerate}[label=\textup{(\alph*)}]
\item\label{IB:nb1} The families
$\mathfrak D_{BC}=\{D\subset\N:\exists\text{ infinite }B,C,\ B+C\subset D\}$,
$\mathfrak D_\oplus=\{D:\exists\text{ infinite }B,\ B\oplus B\subset D\}$ and
$\mathfrak D_+=\{D:\exists\text{ infinite }B,\ B+B\subset D\}$ are $\Sigma^1_1$-complete subsets of $2^\N$.
\item\label{IB:nb2} For $\ast\in\{BC,\oplus,+\}$ let
$\mathfrak A_\ast=\{p\in[0,1]^\N:\mathbf P_p(\mathcal E_\ast)=1\}$. Then $\mathfrak A_\ast$ is a
$\Sigma^1_1$-complete subset of $[0,1]^\N$, and $\mathfrak A_\ast\cap(0,1)^\N$ is a
$\Sigma^1_1$-complete subset of $(0,1)^\N$.
\item\label{IB:nb3} Let $\mathfrak C=\{p\in(0,1)^\N:p_n\to0\}$. There is a continuous
$g\colon\mathrm{Tr}\to\mathfrak C$ with $g^{-1}(\mathfrak A_{BC})=\mathrm{IF}$. In particular there is
no Borel set $W\subset[0,1]^\N$ with $W\cap\mathfrak C=\mathfrak A_{BC}\cap\mathfrak C$.
\end{enumerate}
\end{theorem}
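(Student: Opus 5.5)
The plan is to prove part~\ref{IB:nb1} by a continuous reduction from $\mathrm{IF}$, in the spirit of Mazurkiewicz--\.Zeberski, and then to transfer it to the random setting, first by Borel--Cantelli and then by blowing points up into blocks.

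\emph{Analyticity.} Each $\mathfrak D_\ast$ is the projection of a Borel set, namely the set of triples $(D,B,C)$ with $B,C$ infinite and $B+C\subset D$. For $\mathfrak A_\ast$ we use \cref{IB:thm:strategy,IB:thm:strategy2}: $p\in\mathfrak A_\ast$ if and only if there exists a strategy $\mathcal S\in2^{(\text{finite sets})}$ satisfying countably many conditions. Each condition says that a series with terms continuous in $p$ diverges, which is Borel in $(p,\mathcal S)$. Projecting gives $\Sigma^1_1$.

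\emph{Hardness for deterministic sets.} Enumerate $\N_0^{<\N_0}$ as $s_0,s_1,\dots$ and put $x_s=4^{i}$ for $s=s_i$, with an enumeration in which prefixes come first. For a tree $\mathcal T$ let
\[
D_{\mathcal T}=\{x_s+x_{s'}:\ s,s'\in\mathcal T,\ s\subsetneq s'\}\cup\{2x_s:s\in\mathcal T\}.
\]
Membership of any fixed $n$ depends on at most two nodes, so $\mathcal T\mapsto D_{\mathcal T}$ is continuous.

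If $\mathcal T$ has a branch $\xi$, let $B=\{x_{\xi|2i}\}$ and $C=\{x_{\xi|2i+1}\}$. Then $B+C$, $B\oplus B$ and $B+B$ all lie in $D_{\mathcal T}$, because any two nodes on a branch are comparable.

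Conversely, suppose $B+C\subset D_{\mathcal T}$ with $B,C$ infinite. Every element of $D_{\mathcal T}$ has base-$4$ digit sum $2$. Take $b,b'\in B$ and $c,c'\in C$. The identity $(b+c)-(b'+c)=(b+c')-(b'+c')$ equates two differences of such sums, and these uniquely determine the digits. A rectangle argument over many $c$ should then show the following: after discarding finitely many elements, $B=\{x_{s}+r\}$ and $C=\{x_{t}-r\}$ for a fixed integer $r$ and infinitely many distinct nodes. This digit bookkeeping is the fiddly step; I would first try to pin down the top digit of $b+c$ as $c\to\infty$.

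Once that is done, every $x_s+x_t$ with $s$ and $t$ coming from $B$ and $C$ must be a comparable pair, so $\{s\}\cup\{t\}$ is an infinite chain under $\subsetneq$. Its union is an infinite branch of $\mathcal T$, because trees are closed under initial segments and the lengths along the chain are unbounded. The same argument handles $\oplus$ and $+$ by taking $C=B$.

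\emph{Part~\ref{IB:nb2}.} For the map into $[0,1]^\N$, compose with $D\mapsto\one_D$, since a deterministic set satisfies $\mathbf P_{\one_D}(\mathcal E_\ast)=1$ exactly when $D\in\mathfrak D_\ast$. For the map into $(0,1)^\N$, set $p_n=1-2^{-n}$ for $n\in D$ and $p_n=2^{-n}$ otherwise. By Borel--Cantelli, $A\triangle D$ is almost surely finite. The events are invariant under finite modifications, so $\mathbf P_p(\mathcal E_\ast)=1$ if and only if $D\in\mathfrak D_\ast$. The map $D\mapsto p$ is continuous.

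\emph{Part~\ref{IB:nb3}, the main obstacle.} Here $p_n\to0$ is required, so single points must be replaced by probabilistic blocks. I would replace each node $s$ by a long interval $I_s$ of integers placed at a rapidly growing scale $4^{i}$. The sum set then consists of the sums $I_s+I_{s'}$ for comparable $s\subsetneq s'$ in $\mathcal T$, on which $p$ is set to a small value $\varepsilon_{s,s'}\to0$, and $p$ is tiny and summable elsewhere.

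For the branch direction, the block lengths must make the sums in \cref{IB:def:strategy} diverge, so that a $BC$-strategy exists along the branch. The natural choice is to have $|I_s|$ grow fast enough, relative to the depth and to $\varepsilon^{-|U|}$, that every $w_U$-sum over a block is large, and then to place infinitely many blocks per node.

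For the converse, the scale separation should force any $B,C$ with $B+C\subset A$ to have sums landing in blocks indexed by comparable pairs. The argument of part~\ref{IB:nb1} then yields an infinite chain, hence a branch.

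Balancing these two requirements is delicate. Divergence pushes toward long blocks with moderate $\varepsilon$, while separation needs the blocks to be short relative to their scale. I expect this tuning, with $\varepsilon$ decaying only along depth, to be the main difficulty. The non-existence of a Borel $W$ then follows, since $g^{-1}(W)$ would be a Borel set equal to $\mathrm{IF}$, which is impossible.
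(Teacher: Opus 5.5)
Your analyticity argument and parts (a) and (b) are fine.

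In (a) you run the paper's reduction: rigid labels, sums over comparable pairs, a constant shift, and pairwise comparability giving a chain and hence a branch. The step you call fiddly is exactly \cref{IB:lem:R} and \cref{IB:cor:R}. Their two-row proof works for $x_s=4^i$, because it only uses that a sum of three, or of four, powers of $4$ determines its multiset of summands. For $B\oplus B$, split $B$ into two disjoint infinite halves rather than taking $C=B$.

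In (b), the weights $1-2^{-n}$ on $D$ make $A\triangle D$ almost surely finite. This is a valid shortcut compared with the paper's weights $\tfrac12$, which need \cref{IB:thm:strategy,IB:thm:strategy2}.

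The genuine gap is (c), where each of your concrete choices breaks the reduction.
\begin{itemize}
\item If $\varepsilon$ depends only on depth, then $p_n\not\to0$. The tree $\N_0^{<\N_0}$ has infinitely many nodes of depth $1$, so for $\mathcal T=\{\emptyset\}\cup\{(k):k\in\N_0\}$ a single value $\varepsilon$ is taken on every block $I_\emptyset+I_{(k)}$.
\item Suppose a node carries infinitely many blocks, all of which may be added to all blocks of a comparable node. Then any well-founded tree with $s\subsetneq s'$ already gives $B_0+C_0$ inside a level set of $p$, or, with your length rule, a $BC$-strategy. So $g(\mathcal T)\in\mathfrak A_{BC}$ for a well-founded $\mathcal T$.
\item With one interval per node, intervals of consecutive integers destroy the exact rigidity your converse relies on. Once $\varepsilon$ decays along a rank with finite levels, as $p_n\to0$ forces, your requirement that $|I_s|\varepsilon^{|U|}$ be large forces $|I_{(k)}|\to\infty$. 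Being short relative to its own scale does not stop $I_{(k)}$ from eventually being longer than any earlier element. For the same well-founded tree, place the elements of $B$ and $C$ alternately near the left end of $I_\emptyset+I_{(k)}$, with $k$ so large that $|I_{(k)}|$ exceeds everything chosen before. Every new sum then lies in $I_\emptyset+I_{(k)}$, and the resulting series diverge exactly as along a branch.
\end{itemize}

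The missing idea is to keep single rigid points, but only finitely many per node, indexed by a rank with finite levels. The paper does this as follows.
\begin{itemize}
\item Take $\ell(s)=|s|+\sum_is_i$, and replace $s$ by the $2^{\ell(s)}$ pairs $(s,\eta)$ with $\eta\in\{0,1\}^{\ell(s)}$, labelled by elements of $\Lambda$.
\item Set $q_n=1/(\max(\ell(x),\ell(y))+2)$ on $\beta_x+\gamma_y$ for comparable $x,y$ whose first coordinates lie in $\mathcal T$, and use the summable background $2^{-n-2}$ elsewhere.
\end{itemize}
Finite levels give $p_n\to0$. Along a branch there are $2^{\ell_m-\ell_d}$ candidates of weight $(\ell_m+2)^{-|P|}$, so a $BC$-strategy exists. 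Conversely, $A\setminus\supp q$ is almost surely finite, so \cref{IB:cor:R} applies exactly. Since each node has only finitely many labels, an infinite $B+C$ then forces infinitely many distinct, pairwise comparable nodes of $\mathcal T$, that is, a branch.
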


We do not know whether the analogue of \ref{IB:nb3} holds for $\mathcal E_\oplus$ and $\mathcal E_+$.

\begin{theorem}
\label{IB:thm:hierarchy}
Consider the conditions
\begin{itemize}
\item[(F)] there are infinite $B_0,C_0\subset\N$ such that $\sum_{c\in C_0}w_U(c)=\infty$ for every
finite $U\subset B_0$ and $\sum_{b\in B_0}w_V(b)=\infty$ for every finite $V\subset C_0$;
\item[(N)] there are infinite $B_0,C_0\subset\N$ such that $S_G=\infty$ for every finite nonempty
$G\subset B_0$ and for every finite nonempty $G\subset C_0$.
\end{itemize}
Then $\mathrm{(F)}\Rightarrow\mathbf P(\mathcal E_{BC})=1\Rightarrow\mathrm{(N)}\Rightarrow\mathbf P(\mathcal E_{<\omega})=1$,
and none of the three implications can be reversed
(\cref{IB:ex:tree,IB:ex:half,IB:ex:rigid}).
\end{theorem}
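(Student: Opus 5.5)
We prove the three implications in turn and then supply the three examples referred to in the statement. For $\mathrm{(F)}\Rightarrow\mathbf P(\mathcal E_{BC})=1$ we use \cref{IB:thm:strategy}. Given $B_0,C_0$ as in (F), set
\[
\mathcal S=\{(U,V):U\subset B_0,\ V\subset C_0\ \text{finite}\}.
\]
Then $(\emptyset,\emptyset)\in\mathcal S$. Moreover $X_{\mathcal S}(U,V)\supset C_0\setminus(V\cup U)$ and $Y_{\mathcal S}(U,V)\supset B_0\setminus(U\cup V)$. Removing finitely many points does not affect divergence of the sums $\sum_{c\in C_0}w_U(c)$ and $\sum_{b\in B_0}w_V(b)$, so conditions (a) and (b) follow directly from (F). (The roles of $x$ and $y$ must be matched to the definition, which pairs $w_U$ with new elements of $V$; this is exactly the form of the two sums in (F).)

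For $\mathbf P(\mathcal E_{BC})=1\Rightarrow\mathrm{(N)}$, fix one realisation $A$ with infinite $B,C$ and $B+C\subset A$. For a finite $G\subset B$, the event ``$G+c\subset A$ for infinitely many $c$'' requires $S_G=\infty$. Indeed, if $S_G<\infty$, then Borel--Cantelli applied to the events $\{G+n\subset A\}$, which have probabilities $w_G(n)$, shows that almost surely only finitely many $n$ satisfy $G+n\subset A$. There are countably many finite $G\subset\N$, so almost surely every $G$ with $S_G<\infty$ has only finitely many such translates. On this almost-sure event, any witnessing $B,C$ satisfy (N) with $B_0=B$ and $C_0=C$: each finite $G\subset B$ has $G+C\subset A$ with $C$ infinite, and symmetrically for $G\subset C$. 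Hence (N) holds, because (N) is a deterministic condition on $p$ and we have exhibited a realisation on which it is witnessed.

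For $\mathrm{(N)}\Rightarrow\mathbf P(\mathcal E_{<\omega})=1$, take any $k$ and any $G\subset B_0$ with $|G|=k$. Then $S_G=\infty$. Translating $G$ into $\N$ does not change $S_G$, so \cref{IB:thm:finite} gives $\mathbf P(\mathcal E_k)=1$ for every $k$.

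The main obstacle is the three counterexamples showing that no implication reverses.
\begin{itemize}
\item $\mathbf P(\mathcal E_{<\omega})=1\not\Rightarrow\mathrm{(N)}$ (\cref{IB:ex:rigid}). We would choose $p$ supported on sparse blocks where each long window appears only at rigidly determined positions. The aim is that every finite $G$ in a fixed block pattern has $S_G=\infty$, while no infinite $B_0$ has all its finite subsets of infinite $S$; for instance, distinct elements of an infinite $B_0$ eventually lie in different blocks, forcing $S_G<\infty$.
\item $\mathrm{(N)}\not\Rightarrow\mathbf P(\mathcal E_{BC})=1$ (\cref{IB:ex:half}). We would try $p_n=1/2$ on a sparse set shaped like a sumset of two lacunary sequences, so that the $B$-side and the $C$-side separately have divergent $S$ but cannot be coupled.
\item $\mathbf P(\mathcal E_{BC})=1\not\Rightarrow\mathrm{(F)}$ (\cref{IB:ex:tree}). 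We would use a tree-shaped deterministic $p\in\{0,1\}^\N$ in which every witness $B,C$ must branch, in the spirit of the reduction used for \cref{IB:thm:nonborel}.
\end{itemize}
Verifying these, especially the tree example, is where most of the work lies.
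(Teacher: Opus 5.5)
Your three forward implications are correct and follow the paper's arguments. The finite subsets of $B_0$ and $C_0$ form a $BC$-strategy, since dropping finitely many terms in $[0,1]$ keeps each series divergent. The first Borel--Cantelli lemma over the countably many finite $G$ lets you take $B_0=B$, $C_0=C$ from any outcome in $\mathcal E_{BC}$. \Cref{IB:thm:finite}, applied to $k$-subsets of $B_0$, gives $\mathbf P(\mathcal E_k)=1$ for all $k$. The gap is the other half of the statement, that none of the implications reverses. You only sketch the three examples, and two of the sketches cannot work.

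For $\mathbf P(\mathcal E_{BC})=1\not\Rightarrow\mathrm{(F)}$, a deterministic $p=\one_D$ is impossible. If $\mathbf P(\mathcal E_{BC})=1$, then $A=D$ almost surely, so $D\supset B+C$ with $B,C$ infinite. Then $w_U(c)=1$ for every $c\in C$ and every finite $U\subset B$, and symmetrically, so (F) holds with $B_0=B$, $C_0=C$. The same happens whenever some level set $\{p_n\ge\delta\}$ contains an infinite $B+C$.

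The missing idea is weights that decay with depth, as in \cref{IB:ex:tree}. Label a binary tree by the rigid set $\Lambda$ and put $p_{\beta_\sigma+\gamma_\tau}=1/(\max(|\sigma|,|\tau|)+1)$ for comparable $\sigma,\tau$. Branching supplies $2^{j-d}$ candidates at depth $j$, so $\sum_j2^{j-d}(j+1)^{-|U|}=\infty$, and finite chains of nodes give a $BC$-strategy. On the other hand, \cref{IB:lem:R} forces any $C_0$ in (F) to label a single chain, along which $\sum_j(j+1)^{-2}<\infty$.

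Your sketch for $\mathrm{(N)}\not\Rightarrow\mathbf P(\mathcal E_{BC})=1$ fails for the same reason. Putting $p=\frac12$ on a sumset $L_1+L_2$ of infinite sets gives (F), hence $\mathbf P(\mathcal E_{BC})=1$. What is needed is a triangular support such as $D=\{\beta_{k,m}+\gamma_i:i\le k,\ m\in\N\}$ (\cref{IB:ex:half}). Every finite $G\subset\{\gamma_1,\dots,\gamma_k\}$ has the translates $\beta_{k,m}+G\subset D$, but by \cref{IB:cor:R} any $B+C\subset D$ has one side finite.

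For $\mathbf P(\mathcal E_{<\omega})=1\not\Rightarrow\mathrm{(N)}$, note that $S_G$ only counts translates of $G$ inside $\supp p$, so where the elements of $B_0$ lie is irrelevant. Moreover, if your ``long windows'' are intervals, then $\supp p$ is thick and (N) holds with $B_0=C_0=\N$. You need $k$-point patterns that are not nested as $k$ grows, for example $D=\{\beta_{k,m}+\gamma_{k,i}:1\le i\le k,\ m\in\N\}$ (\cref{IB:ex:rigid}). There \cref{IB:lem:R} fixes one translation $t$ and forces $C_0-t\subset\{\gamma_{k,1},\dots,\gamma_{k,k}\}$ for a single $k$.
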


Condition (F) holds, for instance, with $B_0=C_0=\N$ if $S_G=\infty$ for every finite
nonempty $G\subset\N$ (as in the situation of \cref{IB:thm:monotone}), or if some level set
$\{n:p_n\ge\delta\}$, $\delta>0$, contains $B_0+C_0$; in particular, for $p=q\one_D$ with
$0<q\le1$ we have $\mathbf P(\mathcal E_{BC})=1$ if and only if $D\in\mathfrak D_{BC}$. In (N) the
two families of finite sets are separate: we do not require $S_G=\infty$ for finite
$G\subset B_0\cup C_0$.

\subsection{Proofs of \texorpdfstring{\cref{IB:thm:finite,IB:thm:strategy,IB:thm:strategy2}}{the criteria}}
\label{IB:ss:proofsAB}

\begin{lemma}
\label{IB:lem:BC}
Let $(K_x)_{x\in I}$ be a countable family of finite subsets of $\Z$ \textup{(}recall $p_m=0$
for $m\le0$, so $\mathbf P(K_x\subset A)=0$ unless $K_x\subset\N$\textup{)} such that each
$K_x$ meets $K_{x'}$ for at most $M$ indices $x'\ne x$. If
$\sum_x\mathbf P(K_x\subset A)=\infty$, then almost surely $K_x\subset A$ for infinitely many $x$.
\end{lemma}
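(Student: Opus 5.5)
We reduce to the classical independent case by a greedy colouring of the dependency graph, and then apply the second Borel--Cantelli lemma. Form the graph $\mathcal G$ on $I$ in which $x\sim x'$ when $x\ne x'$ and $K_x\cap K_{x'}\neq\emptyset$. By hypothesis every vertex has degree at most $M$. Hence $\mathcal G$ can be properly coloured with $M+1$ colours. Since $I$ is countable, enumerate it and colour greedily: each vertex has at most $M$ neighbours, so among its earlier neighbours at most $M$ colours are used and a free colour always remains. This gives a partition $I=I_0\sqcup\dots\sqcup I_M$ in which, within each class, the sets $K_x$ are pairwise disjoint.

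Since $\sum_x\mathbf P(K_x\subset A)=\infty$ and there are only finitely many classes, some class $I_j$ satisfies $\sum_{x\in I_j}\mathbf P(K_x\subset A)=\infty$. For $x\in I_j$ the events $\{K_x\subset A\}$ are determined by the independent coordinates indexed by the pairwise disjoint sets $K_x$. Therefore these events are mutually independent, since events depending on disjoint families of independent variables are independent. Each such event is a finite intersection, of probability $\prod_{u\in K_x}p_u$, and is $0$ automatically unless $K_x\subset\N$.

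The second Borel--Cantelli lemma then applies to the independent family $(\{K_x\subset A\})_{x\in I_j}$. Enumerate $I_j$ and note that the sum diverges; the lemma gives that almost surely infinitely many of these events occur. In particular, almost surely $K_x\subset A$ for infinitely many $x\in I$.

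There are two details to check. First, $I_j$ is infinite, because a divergent sum of probabilities bounded by $1$ needs infinitely many terms. Second, a set $K_x$ that is empty has probability $1$ and is disjoint from every other set, so it poses no problem. The only real point is the colouring step, and it is elementary. We do not expect any obstacle; the one thing to verify is that disjointness within a colour class genuinely yields independence of the whole (countable) family, and this holds because the coordinates of $A$ are independent.
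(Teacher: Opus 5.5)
Your proposal is correct and is essentially the paper's argument: greedily colour an enumeration of $I$ with $M+1$ colours so that each colour class consists of pairwise disjoint sets $K_x$, pick a class on which the series still diverges, and apply the second Borel--Cantelli lemma to the resulting independent events. Your extra checks (that the chosen class is infinite, and that empty $K_x$ cause no trouble) are harmless details the paper leaves implicit.
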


\begin{proof}
Enumerate $I$ and colour each $x$ with the least colour in $\{1,\dots,M+1\}$ not used by
an earlier $x'$ with $K_{x'}\cap K_x\ne\emptyset$. Some colour class $I_0$ satisfies
$\sum_{x\in I_0}\mathbf P(K_x\subset A)=\infty$. The sets $K_x$, $x\in I_0$, are pairwise disjoint,
so the events $\{K_x\subset A\}$, $x\in I_0$, are independent, and the second
Borel--Cantelli lemma applies.
\end{proof}

We use this with $K_x=x+U$, where $K_x\cap K_{x'}\ne\emptyset$ forces $x'-x\in U-U$ (so
$M\le|U|^2$), and with $K_x=\{2x\}\cup(x+U)$, where it forces $x'\in x+(U-U)$,
$2x'\in x+U$ or $2x\in x'+U$ (so $M\le|U|^2+2|U|$).

\begin{proof}[Proof of \cref{IB:thm:finite}]
$\mathcal E_F$ holds if and only if $n+F\subset A$ for infinitely many $n\in\Z$: if $B+F\subset A$
take $n\in B$; conversely such $n$ satisfy $n\ge1-\min F$, so infinitely many of them
lie in $\N$. Since $\mathbf P(n+F\subset A)=w_F(n)$, the first Borel--Cantelli lemma handles
$S_F<\infty$ and \cref{IB:lem:BC} (with $K_n=n+F$) handles $S_F=\infty$. Next,
$\mathcal E_k$ is the countable union of the $\mathcal E_F$ with $F\subset\N$, $|F|=k$, each of
probability $0$ or $1$, and $\mathcal E_{<\omega}$ is a countable intersection. Finally, $A$ is
thick if and only if for every $L$ there are infinitely many $n$ with
$\{n,\dots,n+L-1\}\subset A$ (an interval of length $L'\ge L$ contains $L'-L+1$ such
blocks), i.e.\ if and only if $\mathcal E_{\{0,\dots,L-1\}}$ holds for all $L$.
\end{proof}

\begin{proof}[Proof of \cref{IB:thm:strategy}]
\emph{If $\mathcal S$ is a $BC$-strategy, then $\mathbf P(\mathcal E_{BC})=1$.} As $\mathcal S$ is countable,
(a), (b) and \cref{IB:lem:BC} give an event $\Omega_0$ of probability $1$ on which, for
every $(U,V)\in\mathcal S$, there are infinitely many $x\in X_{\mathcal S}(U,V)$ with $x+U\subset A$
and infinitely many $y\in Y_{\mathcal S}(U,V)$ with $y+V\subset A$. (For $U=\emptyset$, (a) just
says that $X_{\mathcal S}(\emptyset,V)$ is infinite.) Fix an outcome in $\Omega_0$. Starting
from $(U_0,V_0)=(\emptyset,\emptyset)$, choose $x\in X_{\mathcal S}(U_r,V_r)$ with
$x>\max(U_r\cup V_r)$ (with $\max\emptyset=0$) and $x+U_r\subset A$, put $V_{r+1}=V_r\cup\{x\}$; then choose
$y\in Y_{\mathcal S}(U_r,V_{r+1})$ with $y>\max(U_r\cup V_{r+1})$ and $y+V_{r+1}\subset A$, and put
$U_{r+1}=U_r\cup\{y\}$. Inductively $(U_r,V_r)\in\mathcal S$ and $U_r+V_r\subset A$, since the new
sums are exactly $x+U_r$ and $y+V_{r+1}$. Then $B=\bigcup_rU_r$ and $C=\bigcup_rV_r$ are
infinite and $B+C\subset A$.

\emph{If $\mathbf P(\mathcal E_{BC})>0$, then $p$ has a $BC$-strategy.} For finite $U,V\subset\N$ let
$\mathcal E(U,V)$ be the event that there are infinite $B',C'\subset\N$ with $U+C'\subset A$,
$B'+V\subset A$ and $B'+C'\subset A$. It is analytic, and $\mathcal E(\emptyset,\emptyset)=\mathcal E_{BC}$. Let
$\mathcal S^\ast=\{(U,V):\mathbf P(\mathcal E(U,V))>0\}$, which contains $(\emptyset,\emptyset)$. We show that
$\mathcal S^\ast$ satisfies (a); (b) is symmetric. Let $(U,V)\in\mathcal S^\ast$, write
$X^\ast=X_{\mathcal S^\ast}(U,V)$, and suppose $\sum_{x\in X^\ast}w_U(x)<\infty$. On
$\mathcal E(U,V)$, with witnesses $B',C'$, every $x\in C'\setminus V$ satisfies $x+U\subset A$, and
$\mathcal E(U,V\cup\{x\})$ holds with witnesses $B',C'\setminus\{x\}$ (because $B'+x\subset B'+C'\subset A$).
Hence $\mathcal E(U,V)\subset N_1\cup N_2$, where $N_1$ is the union of the null events
$\mathcal E(U,V\cup\{x\})$ over $x\in\N\setminus(V\cup X^\ast)$, and $N_2$ is the event that $x+U\subset A$ for
infinitely many $x\in X^\ast$, which is null by the first Borel--Cantelli lemma. Thus
$\mathbf P(\mathcal E(U,V))=0$, a contradiction. Combined with the first part this also proves
the $0$--$1$ law.
\end{proof}

\begin{proof}[Proof of \cref{IB:thm:strategy2}]
The proof is the same, with single sets. Given a strategy, choose new elements
$x\in X_{\mathcal S}(U_r)$, $x>\max U_r$, with $x+U_r\subset A$ (and also $2x\in A$ in the
$+$ case); for $x\notin U$ we have $2x\notin x+U$, so the probability of this event is
exactly $w^\ast_U(x)$, and \cref{IB:lem:BC} applies with $K_x=x+U$, resp.\
$K_x=\{2x\}\cup(x+U)$. Conversely, let $\mathcal E^\ast(U)$ be the event that there is an infinite
$B'$ with $U+B'\subset A$ and $B'\oplus B'\subset A$ (and $2b\in A$ for $b\in B'$ in the $+$ case).
On $\mathcal E^\ast(U)$, every $x\in B'\setminus U$ satisfies $x+U\subset A$ (and $2x\in A$), and
$\mathcal E^\ast(U\cup\{x\})$ holds with witness $B'\setminus\{x\}$, since
$x+(B'\setminus\{x\})\subset B'\oplus B'$. The rest is as before.
\end{proof}

\emph{The deterministic case.} If $p=\one_D$, then $\sum_{x\in X}w_U(x)=\infty$ means that
$x+U\subset D$ for infinitely many $x\in X$. If $D\supset B+C$ with $B,C$ infinite, the
set of pairs of finite subsets of $B$ and of $C$ is a $BC$-strategy (in (a),
$X_{\mathcal S}(U,V)\supset C\setminus V$ and all these terms equal $1$); conversely a strategy
yields $B+C\subset D$ by the greedy construction. The same holds for $\oplus$ and $+$
with the finite subsets of $B$.

\begin{example}
\label{IB:ex:oplus}
Let $D=\{2^i+2^j:0\le i<j\}$. Then $B=\{2^i:i\ge0\}$ satisfies $B\oplus B\subset D$. But no
infinite $B$ satisfies $B+B\subset D$: from $2b\in D$, every $b\in B$ has exactly two
binary digits $1$. Fix $b=2^a+2^c\in B$ ($a<c$). If $b'\in B$ has lowest binary digit
above position $c+1$, then $b+b'$ has four binary digits $1$; so infinitely many
$b'\in B$ have lowest digit at most $c+1$, and two of them,
$b_1'=2^\alpha+2^{c_1}$ and $b_2'=2^\alpha+2^{c_2}$, share the lowest digit $\alpha$ and have
$\alpha+1<c_1<c_2$. Then $b_1'+b_2'=2^{\alpha+1}+2^{c_1}+2^{c_2}\notin D$.
\end{example}

\subsection{Rigid sets}

Fix $\Lambda=\{\lambda_1<\lambda_2<\cdots\}\subset\N$ with $\lambda_{j+1}>8(\lambda_1+\dots+\lambda_j)$ for all $j$ (for
instance $\lambda_j=9^j$).

\begin{lemma}
\label{IB:lem:rigid}
If $\sum_je_j\lambda_j=0$ with $e_j\in\Z$ and $\sum_j|e_j|\le8$, then all $e_j=0$. Consequently,
every $n\in\Lambda+\Lambda$ has a unique representation $n=\lambda+\lambda'$ as an unordered pair
(multiset) $P(n)=\{\lambda,\lambda'\}$.
\end{lemma}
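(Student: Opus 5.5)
We prove the first assertion by looking at the largest index that occurs with a nonzero coefficient; the second assertion then follows by specialising the first. Suppose $\sum_j e_j\lambda_j=0$ with $\sum_j|e_j|\le8$ and not all $e_j=0$. Let $J$ be the largest index with $e_J\ne0$, so that $|e_J|\ge1$. Then
\[
\lambda_J\le|e_J|\lambda_J=\Big|\sum_{j<J}e_j\lambda_j\Big|\le\Big(\sum_{j<J}|e_j|\Big)\max_{j<J}\lambda_j\le 8\sum_{j<J}\lambda_j .
\]
If $J=1$ the right-hand side is $0$, which contradicts $\lambda_1\ge1$. If $J\ge2$, the growth hypothesis $\lambda_J>8(\lambda_1+\dots+\lambda_{J-1})$ contradicts the displayed inequality. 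Only the crude bound $\sum_{j<J}|e_j|\lambda_j\le 8\max_{j<J}\lambda_j\le8\sum_{j<J}\lambda_j$ is used, so no finer estimate is needed.

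For uniqueness, suppose $\lambda_a+\lambda_b=\lambda_c+\lambda_d$, where $a\le b$ and $c\le d$. Move everything to one side to obtain a relation $\sum_je_j\lambda_j=0$. Here $e_j$ is the number of times $j$ occurs among $a,b$ minus the number of times it occurs among $c,d$, so $\sum_j|e_j|\le4\le8$. By the first part, every $e_j$ vanishes. This means the multisets $\{a,b\}$ and $\{c,d\}$ coincide, and since $j\mapsto\lambda_j$ is injective, $P(n)=\{\lambda_a,\lambda_b\}$ is well defined.

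There is no real obstacle in this argument. The only point needing care is the bookkeeping of the coefficients $e_j$ when indices repeat, for example $a=b$ or $a=c$. In such cases the coefficients can be as large as $2$ in absolute value, but their total absolute value stays at most $4$, so the first part still applies. The slack up to $8$ is presumably reserved for later applications that compare sums of more terms.
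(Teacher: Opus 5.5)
Your proof is correct and follows essentially the same route as the paper: isolate the largest index $J$ with $e_J\ne0$ and use $\lambda_J>8\sum_{j<J}\lambda_j$ to dominate the remaining terms, then obtain uniqueness by applying this to the relation $\lambda_a+\lambda_b-\lambda_c-\lambda_d=0$, whose coefficients satisfy $\sum_j|e_j|\le4$. Your separate treatment of the case $J=1$ (via $\lambda_1\ge1$) and of repeated indices only makes explicit what the paper leaves implicit.
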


\begin{proof}
If $J$ is the largest index with $e_J\ne0$, then
$|e_J\lambda_J|\ge\lambda_J>8\sum_{j<J}\lambda_j\ge|\sum_{j<J}e_j\lambda_j|$. Unique representation is
the case of a relation with $\sum|e_j|\le4$.
\end{proof}

\begin{lemma}
\label{IB:lem:R}
Let $X,\Gamma\subset\Z$ with $|X|\ge2$ and $|\Gamma|\ge2$, and suppose that $X+\Gamma\subset\Lambda+\Lambda$. Then
there is a unique $t=t(X,\Gamma)\in\Z$ such that $x-t\in\Lambda$ for all $x\in X$, $c+t\in\Lambda$ for all
$c\in\Gamma$, and $P(x+c)=\{x-t,c+t\}$ for all $x\in X$, $c\in\Gamma$. If $X'\subset X$ and
$\Gamma'\subset\Gamma$ with $|X'|,|\Gamma'|\ge2$, then $t(X',\Gamma')=t(X,\Gamma)$.
\end{lemma}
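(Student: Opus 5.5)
The plan is to first settle the case $|X|=|\Gamma|=2$ using only the rigidity \cref{IB:lem:rigid}, and then propagate. Let $x_1\ne x_2$ in $X$ and $c_1\ne c_2$ in $\Gamma$. Write $x_a+c_b=\lambda_{ab}+\lambda'_{ab}$ with $\lambda_{ab},\lambda'_{ab}\in\Lambda$. The identity $(x_1+c_1)+(x_2+c_2)=(x_1+c_2)+(x_2+c_1)$ gives
\[
\lambda_{11}+\lambda'_{11}+\lambda_{22}+\lambda'_{22}-\lambda_{12}-\lambda'_{12}-\lambda_{21}-\lambda'_{21}=0,
\]
which is a relation with $\sum|e_j|\le8$. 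By \cref{IB:lem:rigid} the multisets $P(x_1+c_1)\uplus P(x_2+c_2)$ and $P(x_1+c_2)\uplus P(x_2+c_1)$ therefore coincide.

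Next I would do the combinatorial case analysis. The four elements $x_a+c_b$ are pairwise distinct within rows and within columns, so the four multisets $P(\cdot)$ are distinct in those directions. Write $P(x_1+c_1)=\{\alpha,\beta\}$. Multiset equality forces $P(x_1+c_2)$ to share exactly one element with $P(x_1+c_1)$, say $\alpha$. This gives a "row" element $\alpha$ with $x_1-t:=\alpha$, where $t$ is defined so that $c_1+t=\beta$. The remaining elements then fit as $P(x_2+c_1)=\{x_2-t,\beta\}$ and $P(x_1+c_2)=\{\alpha,c_2+t\}$. The alternative pairing, where $P(x_1+c_1)$ and $P(x_1+c_2)$ share nothing, has to be ruled out. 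In that case the multiset identity forces $P(x_2+c_2)$ to contain elements of both, while $P(x_2+c_1)$ takes the rest. Using the sums, one checks that this forces $x_1=x_2$ or $c_1=c_2$, or it reduces to the same shape with the roles of $\lambda,\lambda'$ swapped. Equal-element degeneracies (e.g.\ $\alpha=\beta$) are harmless.

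So for each $2\times2$ subconfiguration we get some $t$ with the three required properties. Uniqueness holds because if $t\ne t'$ both worked, then for every $x,c$ we would have $\{x-t,c+t\}=\{x-t',c+t'\}$, forcing $x-t=c+t'$ and $c+t=x-t'$. This gives $x-c=t+t'$ for all $x\in X$, $c\in\Gamma$, which is impossible since $|X|\ge2$.

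For general $X,\Gamma$, I would fix $x_1,x_2$ and show that $t$ is the same for every $2\times2$ subgrid. Two subgrids sharing $x_1,x_2,c_1$ have $t$-values $t,t'$ with $x_1-t,x_1-t'$ both in $P(x_1+c_1)$ and $x_2-t,x_2-t'$ both in $P(x_2+c_1)$. If $t\ne t'$, then $P(x_1+c_1)=\{x_1-t,x_1-t'\}$, i.e.\ $c_1+t=x_1-t'$, and similarly $c_1+t=x_2-t'$, so $x_1=x_2$, a contradiction. The symmetric argument varies the $x$'s, and connectivity then gives a single $t$ for the whole grid; this $t$ works for every $(x,c)$. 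Heredity to $X'\subset X$ and $\Gamma'\subset\Gamma$ is immediate from uniqueness.

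The main obstacle is the case analysis for the $2\times2$ multiset identity, making sure every pairing pattern either yields a consistent $t$ or contradicts distinctness.
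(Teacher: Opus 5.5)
Your proof is correct. It uses the same two ingredients as the paper:
\begin{enumerate}
\item the eight-term relation coming from $(x_1+c)+(x_2+c')=(x_1+c')+(x_2+c)$;
\item the one-line uniqueness argument, which needs only one row and two columns (or two rows and one column), for gluing.
\end{enumerate}
The organisation differs. The paper fixes two rows and uses the eight-term relation only to find one column $c_0$ in which $P(x_1+c_0)$ and $P(x_2+c_0)$ share an element $v$. Writing $x_1+c_0=u+v$, $x_2+c_0=u'+v$ and $t=x_1-u$, it then handles every other column $c$ at once through the six-term relation $(x_1+c)+u'=(x_2+c)+u$. This gives $P(x_1+c)\uplus\{u'\}=P(x_2+c)\uplus\{u\}$, hence $u\in P(x_1+c)$, so gluing by uniqueness is needed only in the row direction. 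You instead solve each $2\times2$ block completely and glue in both directions, with a connectivity argument over blocks. The paper's route is shorter; yours is more symmetric and uses only the eight-term relation. Both work for infinite $X,\Gamma$.

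The step you flag as the main obstacle is in fact one line, and your description of the ``alternative pairing'' is not accurate. Suppose $P(x_1+c_1)$ and $P(x_1+c_2)$ were disjoint. Since $P(x_1+c_1)$ is a sub-multiset of $P(x_1+c_2)\uplus P(x_2+c_1)$, it would then lie inside $P(x_2+c_1)$. That gives $P(x_2+c_1)=P(x_1+c_1)$ and hence $x_1=x_2$. This is the paper's first step with rows and columns exchanged.

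Now let $\alpha$ be a common element and $t=x_1-\alpha$. Cancelling $\alpha$ gives $\{\beta\}\uplus P(x_2+c_2)=\{c_2+t\}\uplus P(x_2+c_1)$. Since $\beta=c_1+t\neq c_2+t$, we get $\beta\in P(x_2+c_1)$. After this, your $2\times2$ bookkeeping, the uniqueness argument, the gluing and the heredity statement all go through as you wrote them.
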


\begin{proof}
\emph{Uniqueness.} If $t\neq t'$ both work and $x\in X$, then for $c\in\Gamma$,
$\{x-t,c+t\}=\{x-t',c+t'\}$ forces $x-t=c+t'$; this determines $c$, contradicting
$|\Gamma|\ge2$. The last assertion follows, since $t(X,\Gamma)$ works for $(X',\Gamma')$.

\emph{Two rows.} Let $x_1\ne x_2$ in $X$ and write $P_{ic}=P(x_i+c)$. First, some $c_0\in\Gamma$ has
$P_{1c_0}\cap P_{2c_0}\neq\emptyset$. Otherwise, take $c\ne c'$ in $\Gamma$. The identity
$(x_1+c)+(x_2+c')=(x_1+c')+(x_2+c)$ is a relation among at most eight elements of $\Lambda$, so by
\cref{IB:lem:rigid} the multisets $P_{1c}\uplus P_{2c'}$ and $P_{1c'}\uplus P_{2c}$ coincide. As
$P_{1c}$ is disjoint from $P_{2c}$, it is contained in $P_{1c'}$ as a multiset, so $P_{1c}=P_{1c'}$
and $c=c'$, a contradiction. Write $x_1+c_0=u+v$ and $x_2+c_0=u'+v$ with $u,u',v\in\Lambda$; then
$u\ne u'$ because $x_1\ne x_2$. Put $t=x_1-u$, so that $x_1-t=u$ and $x_2-t=u'$ lie in $\Lambda$.
Let $c\in\Gamma$. Since $x_1-x_2=u-u'$, the identity $(x_1+c)+u'=(x_2+c)+u$ is a relation among
six elements of $\Lambda$, so $P_{1c}\uplus\{u'\}=P_{2c}\uplus\{u\}$. As $u\ne u'$, we get $u\in P_{1c}$,
that is, $x_1+c=u+w$ with $w\in\Lambda$. Then $c+t=w\in\Lambda$, and $P(x_i+c)=\{x_i-t,c+t\}$ for
$i=1,2$ by unique representation.

\emph{General $X$.} Fix $x_1\ne x_2$ in $X$ and let $t=t(\{x_1,x_2\},\Gamma)$. For $x\in X$,
$x\neq x_1$, the two-row case gives $t'=t(\{x_1,x\},\Gamma)$; both $t$ and $t'$ work for
$(\{x_1\},\Gamma)$ in the sense of the uniqueness argument, which only used one row,
so $t'=t$. Hence $x-t\in\Lambda$, and $P(x+c)=\{x-t,c+t\}$ by unique representation.
\end{proof}

\begin{corollary}
\label{IB:cor:R}
If $B,C\subset\Z$ are infinite and $B+C\subset\Lambda+\Lambda$, there is $t\in\Z$ with $B-t\subset\Lambda$,
$C+t\subset\Lambda$ and $P(b+c)=\{b-t,c+t\}$ for all $b\in B$, $c\in C$.
\end{corollary}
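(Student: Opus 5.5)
The corollary follows from \cref{IB:lem:R} applied to finite pieces of $B$ and $C$, using the consistency clause of that lemma to glue the resulting shifts together.

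First fix two distinct elements $b_1\neq b_2$ of $B$ and two distinct elements $c_1\neq c_2$ of $C$. Put $X_0=\{b_1,b_2\}$ and $\Gamma_0=\{c_1,c_2\}$. Since $X_0+\Gamma_0\subset B+C\subset\Lambda+\Lambda$, \cref{IB:lem:R} gives the integer $t=t(X_0,\Gamma_0)$.

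Next let $b\in B$ and $c\in C$ be arbitrary. Consider the finite sets $X=X_0\cup\{b\}$ and $\Gamma=\Gamma_0\cup\{c\}$. Each has at least two elements, and $X+\Gamma\subset\Lambda+\Lambda$. So \cref{IB:lem:R} yields $t(X,\Gamma)$, and its last assertion (with $X_0\subset X$, $\Gamma_0\subset\Gamma$) gives $t(X,\Gamma)=t(X_0,\Gamma_0)=t$. The lemma's conclusion for $(X,\Gamma)$ then says $b-t\in\Lambda$, $c+t\in\Lambda$ and $P(b+c)=\{b-t,c+t\}$. Since $b$ and $c$ were arbitrary, this gives $B-t\subset\Lambda$, $C+t\subset\Lambda$ and the stated representation for all pairs.

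There is no real obstacle here. The only points to check are that the lemma is stated for arbitrary subsets of $\Z$ with at least two elements (so finite sets are allowed), and that $t$ is independent of the chosen $b,c$. The consistency clause of \cref{IB:lem:R} is exactly what guarantees this independence. Infiniteness of $B$ and $C$ is used only to ensure $|B|,|C|\ge2$.
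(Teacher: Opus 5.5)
Your argument is correct: fixing $t=t(X_0,\Gamma_0)$ and applying the consistency clause of \cref{IB:lem:R} to $(X_0\cup\{b\},\Gamma_0\cup\{c\})$ does yield the same $t$ for every pair $(b,c)$. The paper gives no separate proof, because \cref{IB:lem:R} is stated, and proved without any use of finiteness, for arbitrary $X,\Gamma\subset\Z$ with $|X|,|\Gamma|\ge2$, so the corollary is just that lemma with $X=B$ and $\Gamma=C$; your gluing step is harmless but unnecessary, and would only be needed if the lemma were available for finite sets alone.
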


\subsection{Proof of \texorpdfstring{\cref{IB:thm:nonborel}}{the non-Borel theorem}}

Fix injective labellings $s\mapsto\beta_s$, $s\mapsto\gamma_s$ of $\N_0^{<\N_0}$ by elements of $\Lambda$
with disjoint images. Comparability of finite sequences means that one is an
initial segment of the other.

\emph{Upper bounds.} $\mathfrak D_\ast$ is the projection of a Borel subset of
$2^\N\times2^\N\times2^\N$, hence analytic. For $\mathfrak A_{BC}$, by \cref{IB:thm:strategy} it is
the projection to $[0,1]^\N$ of the set of pairs $(\mathcal S,p)\in2^{\mathcal P_{\rm fin}(\N)^2}\times[0,1]^\N$
such that $\mathcal S$ is a $BC$-strategy for $p$; this set is Borel, since each partial
sum of the series in (a), (b) is a continuous function of $(\mathcal S,p)$ and
``$=\infty$'' means ``$\forall M\,\exists N$: partial sum $>M$''. The same applies to $\mathfrak A_\oplus$,
$\mathfrak A_+$, and to the intersections with the Polish space $(0,1)^\N$.

\emph{\ref{IB:nb1} for $B+C$.} For a tree $\mathcal T$ let
$D_{\mathcal T}=\{\beta_s+\gamma_{s'}: s,s'\in\mathcal T\text{ comparable}\}$. By \cref{IB:lem:rigid}, whether
$n\in D_{\mathcal T}$ depends on at most two coordinates of $\mathcal T$, so $\mathcal T\mapsto D_{\mathcal T}$ is
continuous. If $z$ is an infinite branch of $\mathcal T$, then $B=\{\beta_{z|n}\}$,
$C=\{\gamma_{z|n}\}$ give $B+C\subset D_{\mathcal T}$. Conversely let $B+C\subset D_{\mathcal T}$ with $B,C$
infinite. By \cref{IB:cor:R}, $\{b-t,c+t\}=\{\beta_s,\gamma_{s'}\}$ with $s,s'\in\mathcal T$
comparable. Fixing $c$, if $c+t$ is a $\gamma$-label then every $b-t$ is a $\beta$-label, and
then every $c+t$ is a $\gamma$-label; so either $B-t$ consists of $\beta$-labels and $C+t$ of
$\gamma$-labels, or vice versa. In either case we get infinite sets $Q_1,Q_2\subset\mathcal T$ such
that every element of $Q_1$ is comparable with every element of $Q_2$. A node has
finitely many initial segments, so for $q\in Q_1$ all but finitely many elements of
$Q_2$ properly extend $q$, and vice versa. Choosing alternately we obtain
$q_1\subsetneq q_1'\subsetneq q_2\subsetneq q_2'\subsetneq\cdots$ in $\mathcal T$, which defines an infinite branch.
So $\mathcal T\mapsto D_{\mathcal T}$ reduces $\mathrm{IF}$ to $\mathfrak D_{BC}$.

\emph{\ref{IB:nb1} for $B\oplus B$ and $B+B$.} Let
$D^\oplus_{\mathcal T}=\{\gamma_s+\gamma_{s'}:s\ne s'\in\mathcal T\text{ comparable}\}$ and
$D^+_{\mathcal T}=\{\gamma_s+\gamma_{s'}:s,s'\in\mathcal T\text{ comparable}\}$; both maps are continuous, and a
branch $z$ gives $B=\{\gamma_{z|n}\}$. Conversely, let $B$ be infinite with
$B\oplus B\subset D^\oplus_{\mathcal T}$ or $B+B\subset D^+_{\mathcal T}$. Split $B$ into disjoint infinite sets
$B_1,B_2$; then $B_1+B_2\subset B\oplus B\subset\Lambda+\Lambda$, and \cref{IB:cor:R} gives $t$ with
$P(b+b')=\{b-t,b'+t\}$ for $b\in B_1$, $b'\in B_2$. Since $b+b'=\gamma_s+\gamma_{s'}$ with $s,s'\in\mathcal T$
comparable, $b-t$ and $b'+t$ are $\gamma$-labels of comparable nodes of $\mathcal T$. So the nodes
labelling $B_1-t$ and those labelling $B_2+t$ form infinite sets $Q_1,Q_2\subset\mathcal T$ such that every
element of $Q_1$ is comparable with every element of $Q_2$, and the argument for $B+C$ gives an
infinite branch.

\emph{\ref{IB:nb2}.} The embedding $D\mapsto\one_D$ of $2^\N$ into $[0,1]^\N$ is continuous, and
$\mathbf P_{\one_D}(\mathcal E_\ast)=1$ if and only if $D\in\mathfrak D_\ast$; so the first claim follows
from \ref{IB:nb1}. For $(0,1)^\N$, let $p^D_n=\frac12$ for $n\in D$ and $p^D_n=2^{-n-2}$ for $n\notin D$;
$D\mapsto p^D$ is continuous. Under $\mathbf P_{p^D}$, $A\setminus D$ is almost surely finite (first
Borel--Cantelli lemma), so by invariance under finite modifications
$\mathbf P_{p^D}(\mathcal E_\ast)=\mathbf P(A\cap D\in\mathcal E_\ast)$, where $A\cap D$ is a random subset of $D$
containing each element independently with probability $\frac12$. If $D\notin\mathfrak D_\ast$
this probability is $0$. If $D\supset B_0+C_0$, the pairs of finite subsets of $B_0$
and $C_0$ form a $BC$-strategy for $\frac12\one_D$ (all weights are $2^{-|U|}$ or
$2^{-|V|}$, with infinitely many candidates), so the probability is $1$ by
\cref{IB:thm:strategy}. Likewise, if $D\supset B_0\oplus B_0$ (resp.\ $B_0+B_0$), the finite
subsets of $B_0$ form an $\oplus$- (resp.\ $+$-) strategy with weights $\ge2^{-|U|-1}$.
Hence $\mathbf P_{p^D}(\mathcal E_\ast)=1$ if and only if $D\in\mathfrak D_\ast$, and composing with the
reductions in \ref{IB:nb1} proves the second claim.

\emph{\ref{IB:nb3}.} For $s\in\N_0^{<\N_0}$ let $\ell(s)=|s|+\sum_is_i$, and let $\mathcal N$ be the
set of pairs $x=(s,\eta)$ with $s\in\N_0^{<\N_0}$ and $\eta\in\{0,1\}^{\ell(s)}$, partially ordered
by $(s,\eta)\preceq(s',\eta')$ if $s\subset s'$ and $\eta$ is an initial segment of $\eta'$. Write
$\ell(x)=\ell(s)$. Then: if $s\subsetneq s'$ then $\ell(s)<\ell(s')$; every $x\in\mathcal N$ has
finitely many predecessors; for each $L$ there are finitely many $x$ with $\ell(x)=L$;
and two distinct elements with the same first coordinate are incomparable. Fix
injective labellings $x\mapsto\beta_x$, $x\mapsto\gamma_x$ of $\mathcal N$ by elements of $\Lambda$ with
disjoint images, and let $\theta_L=1/(L+2)$. For a tree $\mathcal T$ define $q^{\mathcal T}\in[0,\frac12]^\N$ by
$q^{\mathcal T}_n=\theta_{\max(\ell(x),\ell(y))}$ if $n=\beta_x+\gamma_y$ with $x,y$ comparable and with
first coordinates in $\mathcal T$, and $q^{\mathcal T}_n=0$ otherwise; and let $g(\mathcal T)=p^{\mathcal T}$, where
$p^{\mathcal T}_n=q^{\mathcal T}_n$ if $q^{\mathcal T}_n>0$ and $p^{\mathcal T}_n=2^{-n-2}$ otherwise. Each coordinate of
$p^{\mathcal T}$ depends on at most two coordinates of $\mathcal T$ (\cref{IB:lem:rigid}), so $g$ is
continuous, and $p^{\mathcal T}\in(0,1)^\N$. Moreover $p^{\mathcal T}_n\to0$: $q^{\mathcal T}_n\ge\eps$ forces
$\max(\ell(x),\ell(y))\le1/\eps$, which happens for finitely many pairs $(x,y)$ only.

Suppose $z$ is an infinite branch of $\mathcal T$ and let $\ell_m=\ell(z|m)$, a strictly
increasing sequence. Let $\mathcal N_z=\{(z|m,\eta)\in\mathcal N:m\ge0\}$ and let $\mathcal S$ consist of the
pairs $(\{\beta_x\}_{x\in P},\{\gamma_y\}_{y\in Q})$ with $P,Q\subset\mathcal N_z$ finite and $P\cup Q$ a chain.
We check (a) for $q^{\mathcal T}$; (b) is symmetric. If $P\cup Q=\emptyset$, (a) holds since
$U=\emptyset$ and there are infinitely many candidates. Otherwise let $\delta=(z|d,\eta_\delta)$ be the
largest element of $P\cup Q$. For $m>d$ each of the $2^{\ell_m-\ell_d}$ elements
$y=(z|m,\eta)\succ\delta$ extends the chain, and $w_U(\gamma_y)=\prod_{x\in P}q^{\mathcal T}_{\beta_x+\gamma_y}=\theta_{\ell_m}^{|P|}$.
Hence the sum in (a) is at least $\sum_{m>d}2^{\ell_m-\ell_d}(\ell_m+2)^{-|P|}=\infty$. So $\mathcal S$ is a
$BC$-strategy for $q^{\mathcal T}$, hence for $p^{\mathcal T}\ge q^{\mathcal T}$, and $\mathbf P_{p^{\mathcal T}}(\mathcal E_{BC})=1$.

Conversely suppose $\mathbf P_{p^{\mathcal T}}(\mathcal E_{BC})>0$. Almost surely $A\setminus\supp q^{\mathcal T}$ is finite,
so with positive probability $A\cap\supp q^{\mathcal T}$ contains some $B+C$; in particular
$\supp q^{\mathcal T}\supset B+C$ for some infinite $B,C$. As $\supp q^{\mathcal T}\subset\Lambda+\Lambda$, the argument
in \ref{IB:nb1} (which only used that every element has finitely many predecessors)
yields an infinite chain $x_1\prec x_2\prec\cdots$ in $\mathcal N$ whose first coordinates lie
in $\mathcal T$. Distinct comparable elements have distinct first coordinates, so the
first coordinates form a strictly increasing sequence in $\mathcal T$, i.e.\ an infinite
branch. Thus $g^{-1}(\mathfrak A_{BC})=\mathrm{IF}$. If $W$ were Borel with
$W\cap\mathfrak C=\mathfrak A_{BC}\cap\mathfrak C$, then $\mathrm{IF}=g^{-1}(W)$ would be Borel, which it is not.
\qed

\subsection{Proof of \texorpdfstring{\cref{IB:thm:hierarchy}}{the hierarchy theorem} and the examples}

\emph{(F) implies $\mathbf P(\mathcal E_{BC})=1$:} the pairs of finite subsets of $B_0$ and $C_0$
form a $BC$-strategy, since removing the finitely many terms $x\in V$ from a divergent
series of terms in $[0,1]$ keeps it divergent. For the level-set condition,
$w_U(c)\ge\delta^{|U|}$ for $c\in C_0$; for $p=q\one_D$ necessity is clear since $A\subset D$.

\emph{$\mathbf P(\mathcal E_{BC})=1$ implies (N):} by the first Borel--Cantelli lemma, almost
surely every finite $G$ with $S_G<\infty$ has only finitely many translates in $A$.
Fix an outcome in this event and in $\mathcal E_{BC}$, with $B+C\subset A$. Every finite
$G\subset C$ has the infinitely many translates $b+G\subset A$, $b\in B$, so $S_G=\infty$; similarly
for $G\subset B$. Take $B_0=B$, $C_0=C$.

\emph{(N) implies $\mathbf P(\mathcal E_{<\omega})=1$:} by \cref{IB:thm:finite} applied to $k$-subsets of
$C_0$.

\begin{example}[$\mathbf P(\mathcal E_{BC})=1$ but (F) fails]
\label{IB:ex:tree}
Label the binary tree $\{0,1\}^{<\N_0}$ injectively by $\beta_\sigma,\gamma_\sigma\in\Lambda$ (disjoint images), let
$\theta_j=1/(j+1)$, and let $p_{\beta_\sigma+\gamma_\tau}=\theta_{\max(|\sigma|,|\tau|)}$ for comparable $\sigma,\tau$, and $p_n=0$
otherwise. Exactly as in the proof of \cref{IB:thm:nonborel}\ref{IB:nb3}, the pairs
$(\{\beta_\sigma\}_{\sigma\in P},\{\gamma_\tau\}_{\tau\in Q})$ with $P\cup Q$ a finite chain form a $BC$-strategy:
below the deepest node, at depth $d$, there are $2^{j-d}$ candidates at depth $j$, each
of weight $\theta_j^{|P|}$, and $\sum_j2^{j-d}(j+1)^{-|P|}=\infty$. So $\mathbf P(\mathcal E_{BC})=1$.

Suppose $B_0,C_0$ satisfy (F). Fix $b_1\ne b_2$ in $B_0$ and let
$\Gamma=\{c\in C_0:w_{\{b_1,b_2\}}(c)>0\}$, which is infinite by (F). As $\supp p\subset\Lambda+\Lambda$, let
$t=t(\{b_1,b_2\},\Gamma)$ (\cref{IB:lem:R}). For $b\in B_0$, the set
$\Gamma_b=\{c\in C_0:w_{\{b_1,b_2,b\}}(c)>0\}\subset\Gamma$ is infinite, and
$t(\{b_1,b_2,b\},\Gamma_b)=t(\{b_1,b_2\},\Gamma_b)=t$; hence $B_0-t\subset\Lambda$. For each $c\in\Gamma$,
$P(b_i+c)=\{b_i-t,c+t\}$ contains one $\beta$- and one $\gamma$-label, so $b_1-t$ and $b_2-t$
(and then, by the same argument, all of $B_0-t$) have the same type; as $p$ is
symmetric under exchanging the roles of $\beta$ and $\gamma$, we may assume
$B_0-t=\{\beta_\sigma:\sigma\in\Sigma_B\}$. Let $c\in C_0$. By (F) with $V=\{c\}$ there are distinct
$b=\beta_{\sigma_1}+t$, $b'=\beta_{\sigma_2}+t$ in $B_0$ with $b+c=\beta_{\sigma_1'}+\gamma_{\tau_1}$ and
$b'+c=\beta_{\sigma_2'}+\gamma_{\tau_2}$. Eliminating $c$ gives a relation of six elements of $\Lambda$,
so by \cref{IB:lem:rigid} the multisets $\{\beta_{\sigma_1'},\gamma_{\tau_1},\beta_{\sigma_2}\}$ and
$\{\beta_{\sigma_2'},\gamma_{\tau_2},\beta_{\sigma_1}\}$ coincide; hence $\tau_1=\tau_2$, and since $\sigma_1\ne\sigma_2$ we
get $\beta_{\sigma_1'}=\beta_{\sigma_1}$, i.e.\ $c+t=\gamma_{\tau_1}$. So $C_0+t=\{\gamma_\tau:\tau\in\Sigma_C\}$, and
$b+c\in\supp p$ (for $b=\beta_\sigma+t$, $c=\gamma_\tau-t$) if and only if $\sigma,\tau$ are comparable.
By (F) with $V=\{c,c'\}$, infinitely many $\sigma\in\Sigma_B$ are comparable with both $\tau$ and
$\tau'$; if $\tau,\tau'$ were incomparable, such $\sigma$ would be common initial segments of
$\tau$ and $\tau'$, of which there are finitely many. So $\Sigma_C$ is a chain, with at most
one node at each depth. Then for $U=\{b_1,b_2\}$,
$\sum_{c\in C_0}w_U(c)\le\sum_{\tau\in\Sigma_C}\theta_{|\tau|}^2\le\sum_j(j+1)^{-2}<\infty$, contradicting (F).
\end{example}

\begin{example}[(N) holds but $\mathbf P(\mathcal E_{BC})=0$]
\label{IB:ex:half}
Label $\{\beta_{k,m}:k,m\in\N\}\cup\{\gamma_i:i\in\N\}$ injectively by elements of $\Lambda$ and let
$p=\one_D$ with $D=\{\beta_{k,m}+\gamma_i:i\le k,\ m\in\N\}$. Condition (N) holds with
$B_0=C_0=\{\gamma_i\}$: a finite $G\subset\{\gamma_1,\dots,\gamma_k\}$ has the translates $\beta_{k,m}+G\subset D$,
$m\in\N$. But $D\notin\mathfrak D_{BC}$: if $B+C\subset D$, then by \cref{IB:cor:R}
$\{b-t,c+t\}=\{\beta_{k,m},\gamma_i\}$ with $i\le k$. If the $b-t$ are $\beta$-labels, fix $b_1$ with
$b_1-t=\beta_{k_1,m_1}$; then every $c+t$ lies in $\{\gamma_1,\dots,\gamma_{k_1}\}$, so $C$ is finite.
Otherwise, symmetrically, $B$ is finite. In particular $\mathcal E_{<\omega}$ holds but
$\mathcal E_{BC}$ fails, for a deterministic set.
\end{example}

\begin{example}[$\mathbf P(\mathcal E_{<\omega})=1$ but (N) fails]
\label{IB:ex:rigid}
Label $\{\beta_{k,m},\gamma_{k,i}:k,m\in\N,\ 1\le i\le k\}$ injectively by elements of $\Lambda$ and let
$p=\one_D$ with $D=\{\beta_{k,m}+\gamma_{k,i}:1\le i\le k,\ m\in\N\}$. Then
$\{\beta_{k,m}:m\in\N\}+\{\gamma_{k,1},\dots,\gamma_{k,k}\}\subset D$, so $\mathcal E_k$ holds for every $k$.
Suppose $C_0$ is infinite and $S_G=\infty$ (i.e.\ $G$ has infinitely many translates in
$D$) for all $G\subset C_0$ with $|G|\le3$. Fix $c_1\ne c_2$ in $C_0$, let $\Gamma$ be the
(infinite) set of $n$ with $n+\{c_1,c_2\}\subset D$, and $t=t(\{c_1,c_2\},\Gamma)$. For each
$n\in\Gamma$, $n+t$ has one type and $c_1-t$, $c_2-t$ have the other. If $c_1-t=\beta_{k_1,m_1}$ and
$c_2-t=\beta_{k_2,m_2}$, then $n+t\in\{\gamma_{k_1,i}\}\cap\{\gamma_{k_2,i}\}$, a finite set, for all
$n\in\Gamma$, which is impossible. Hence $c_j-t=\gamma_{k,i_j}$ and $n+t=\beta_{k,m_n}$ with the same
$k$. For any $c\in C_0$, applying the same reasoning to $\{c_1,c_2,c\}$ and its
infinite set $\Gamma_c\subset\Gamma$ of translates, \cref{IB:lem:R} gives the same $t$, and $c-t$ is a
$\gamma$-label paired with $n+t=\beta_{k,m_n}$, so $c-t\in\{\gamma_{k,1},\dots,\gamma_{k,k}\}$. Thus $C_0$
is finite, a contradiction; so (N) fails.
\end{example}

\subsection{Proof of \texorpdfstring{\cref{IB:thm:monotone}}{the monotone theorem}}

\ref{IB:m3}$\Rightarrow$\ref{IB:m2}: Fix $L$. The events $\{[jL,(j+1)L)\subset A\}$, $j\ge1$, are
independent, with probabilities
$\prod_{i=jL}^{(j+1)L-1}p_i\ge\alpha^Lq_{(j+1)L}^L$. Since $q_n\le q_{(j+1)L}$ for
$(j+1)L\le n<(j+2)L$, we have $\sum_{j\ge1}q_{(j+1)L}^L\ge L^{-1}\sum_{n\ge2L}q_n^L=\infty$, as
$p_n^L\le\beta^Lq_n^L$ and $\sum_np_n^L=\infty$. By the second Borel--Cantelli lemma $A$ almost
surely contains infinitely many intervals of length $L$; intersecting over $L$, $A$
is almost surely thick.

\ref{IB:m2}$\Rightarrow$\ref{IB:m1} is \cref{IB:lem:thick}, and
\ref{IB:m1}$\Rightarrow$\ref{IB:m5} is \cref{IB:thm:hierarchy}.

\ref{IB:m5}$\Rightarrow$\ref{IB:m3}: Suppose $\sum_np_n^k<\infty$ for some $k$, so $\sum_nq_n^k<\infty$. For
$F\subset\N$ with $|F|=k$ and $\min F=f_0$, monotonicity gives
$S_F=\sum_nw_F(n)\le\beta^k\sum_{n}q_{n+f_0}^k<\infty$, so $\mathbf P(\mathcal E_F)=0$ by \cref{IB:thm:finite}.
Taking the union over the countably many $F$, almost surely $A$ contains no $B+F$
with $B$ infinite and $|F|=k$. This also proves the statement following the list.

\ref{IB:m3}$\Leftrightarrow$\ref{IB:m4}: Both conditions are unchanged when $p$ is replaced
by $q$. If $\sum_nq_n^k=S<\infty$, then $nq_n^k\le\sum_{m\le n}q_m^k\le S$, so
$q_n\le(S/n)^{1/k}$ and $q_nn^\eps\to0$ for $\eps<1/k$. Conversely, if $q_nn^\eps\to0$, then
$q_n\le n^{-\eps}$ for large $n$ and $\sum_nq_n^k<\infty$ for $k>1/\eps$.

The $0$--$1$ statements follow from \cref{IB:thm:finite,IB:thm:strategy}. \qed

\formalization{\Cref{IB:thm:monotone,IB:thm:finite} and \cref{IB:ex:gap} are formalized in
\leanfile{S44a} (\leanname{thm_monotone}, \leanname{thm_finite_int}, \leanname{ex_gap}), with
the random set modelled by an infinite product of Bernoulli measures.
\Cref{IB:thm:strategy,IB:thm:strategy2,IB:thm:hierarchy}, \cref{IB:lem:BC,IB:lem:rigid,IB:lem:R}
and \cref{IB:ex:oplus,IB:ex:tree,IB:ex:half,IB:ex:rigid} are formalized in \leanfile{S44b}
(\leanname{thm_strategy}, \leanname{thm_strategy2_oplus}, \leanname{thm_strategy2_plus},
\leanname{thm_hierarchy}, \leanname{thm_hierarchy_strict}). For \cref{IB:thm:nonborel} the
same file proves analyticity, the continuity of the reductions and the reduction property on
trees (\leanname{thm_nonborel_a_BC}, \leanname{thm_nonborel_b_BC}, \leanname{thm_nonborel_c}
and their variants); the classical facts that $\mathrm{IF}$ is $\Sigma^1_1$-complete and not
Borel~\cite[Theorems~14.2 and~27.1]{kechris1995classical} are stated as explicit hypotheses.}

\section{Ordered sumsets in the primes with summands in \texorpdfstring{$\P-1$}{P-1}}
\label{IB:sec:c45}

In this section we prove Conjecture~4.5 of~\cite{kra2023problems}. It turns out
to be a short corollary of the theorem of Tao and Ziegler on ascending chains of
prime-producing tuples~\cite{tao2023infinite}.

\begin{surveyconj}{4.5}
\label{IB:q:c45}
There exist infinite sets $B\subset\N$ and $C\subset\P-1$ such that
$\P\supset\{b+c:b\in B,\ c\in C;\ b<c\}$.
\end{surveyconj}

The survey remarks that the analogous statement with $B\subset\P-1$ instead of
$C\subset\P-1$ follows from the results of~\cite{tao2023infinite} together with the
existence of an infinite admissible subset of $\P-1$. Following~\cite[Definition~1.1]{tao2023infinite},
a tuple $(h_1,\dots,h_k)$ of natural numbers is \emph{prime-producing} if there are
infinitely many $n$ such that $n+h_1,\dots,n+h_k$ are simultaneously prime.

\begin{theorem*}[{Tao--Ziegler~\cite[Theorem~1.5]{tao2023infinite}}]
There exists an infinite sequence $h_1<h_2<\dots$ of natural numbers, such that the
$k$-tuple $(h_1,\dots,h_k)$ is prime-producing for every $k$.
\end{theorem*}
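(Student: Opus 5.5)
The statement is the theorem of Tao and Ziegler on ascending chains of prime-producing tuples. I would prove it by combining the Maynard--Tao sieve with an intersectivity argument of the type in \cref{IB:lem:intersect}, in the same spirit as the proof of \cref{IB:thm:dR}. There, Bergelson's lemma turns ``many events of uniformly positive measure'' into ``an infinite family all of whose finite intersections have positive measure''. Here the events would be $\{n: n+h\in\P\}$ and the measure would come from sieve weights.

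\textbf{Step 1 (setup).} Fix an infinite admissible set $\mathcal H=\{H_1<H_2<\cdots\}$, meaning that every finite subset misses some residue class modulo every prime. For instance, take $H_j$ to be a multiple of $\prod_{p\le j}p$, chosen so that every finite subset is admissible. For a large parameter $K$ and a scale $N$, let $w_{K,N}(n)\ge0$ be Maynard's weights attached to the tuple $(H_1,\dots,H_K)$ and supported on $n\in[N,2N)$ in a fixed admissible residue class modulo $W$. Normalise them to probability measures $\nu_{K,N}$.

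\textbf{Step 2 (Maynard's estimates).} Maynard's estimates give
\[
\sum_{i\le K}\nu_{K,N}(n+H_i\in\P)\ \ge\ c\log K
\]
for $N$ large. Passing to a weak$^*$ limit along $N$ (a Furstenberg-type limit on $\{0,1\}^{\mathcal H}$ recording which shifts are prime), I would obtain a probability measure $\nu_K$. Its cylinder events $O_i=\{H_i\text{ prime-shift}\}$ satisfy $\sum_{i\le K}\nu_K(O_i)\ge c\log K$. The key feature of this measure is that $\nu_K(O_{i_1}\cap\dots\cap O_{i_m})>0$ forces $(H_{i_1},\dots,H_{i_m})$ to be prime-producing. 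A positive weighted proportion at arbitrarily large scales $N$ cannot come from finitely many $n$, because the weights are spread out.

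\textbf{Step 3 (extracting the chain).} I would build $h_1<h_2<\cdots$ greedily from $\mathcal H$. Assume $h_1,\dots,h_k$ have been chosen with $\nu(O_{h_1}\cap\dots\cap O_{h_k})>0$. I then want infinitely many candidates $h$ with $\nu(O_{h_1}\cap\dots\cap O_{h_k}\cap O_h)>0$. This should follow from the conditional form of Maynard's bound: if the weights are restricted to the set where $n+h_1,\dots,n+h_k$ are prime, the expected number of further primes among the remaining shifts is still $\gg\log K$. This mirrors the way Tao and Ziegler condition the sieve on a previously found prime pattern. A version of \cref{IB:lem:intersect} would then make the choices compatible for all finite subfamilies simultaneously.

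\textbf{Main obstacle.} The main obstacle is uniformity in $K$. The densities $\nu_K(O_i)$ are only of size about $\log K/K$, so they tend to $0$, and a single limiting measure does not directly have events of uniformly positive measure. I would first try to handle each step $k\to k+1$ at a fresh, larger $K_{k+1}$, carrying along only the qualitative information that $(h_1,\dots,h_k)$ is prime-producing. To do this I would reprove Maynard's lower bound for sieve weights that are conditioned to force primality of $n+h_1,\dots,n+h_k$. Controlling this conditioning uniformly, with the fixed tuple $(h_1,\dots,h_k)$ being a bounded perturbation of the large tuple, is the step I expect to need the most work.
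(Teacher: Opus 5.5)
\textbf{The gap is in Step~3.} Your extraction rests on a ``conditional form of Maynard's bound'': after restricting the sieve weights to those $n$ with $n+h_1,\dots,n+h_k$ all prime, the expected number of further primes should still be $\gg\log K$. No such statement is available, and it is not a technical refinement of Maynard's argument.

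\begin{itemize}
\item Sieve weights cannot isolate the event that prescribed shifts are simultaneously prime; this is the parity obstruction.
\item Already for $k=2$, the total mass of your conditioned weights is a weighted count of $n$ with $n+h_1$ and $n+h_2$ both prime. That is a twin-prime-type quantity for a prescribed pair.
\item Even for $k=1$, your claim needs lower bounds for sums $\sum_n w_n\one_{\P}(n+h_1)\one_{\P}(n+h)$, which sieve weights do not give.
\item Your $h_1,\dots,h_k$ are obtained only qualitatively, as a prime-producing tuple read off from a limit measure. So nothing is known about how the weights at the next level see its set of witnesses; that set could be null for every sieve measure you use.
\end{itemize}

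Tao and Ziegler do not condition the sieve on a previously found prime pattern. As \cref{IB:rem:c45} records, their only sieve input is Maynard's bound for an arbitrary, unconditioned admissible tuple (their Proposition~3.1), applied to finite subtuples of the odd squares. The paper here does not reprove the theorem; it quotes it, and the Lean file takes it as a hypothesis.

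\textbf{Steps~1--2 and \cref{IB:lem:intersect} cannot close the gap either.} You note yourself that the marginals $\nu_K(O_i)\asymp\log K/K$ tend to $0$. In fact, information of this kind alone cannot produce a chain. Let $\mathcal F$ be the family of finite sets $\{f_1<\dots<f_m\}\subset\Z$ with $f_{j+1}-f_j\ge m-j+1$ for $1\le j<m$.

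\begin{itemize}
\item $\mathcal F$ is translation invariant and closed under taking subsets.
\item Every $K$-tuple carries a probability measure on its subsets lying in $\mathcal F$, with every marginal $\gg K^{-1/2}$. To see this, take every $\lfloor\sqrt K\rfloor$-th element of the tuple from a random offset, counted from the bottom or from the top with probability $\tfrac12$ each.
\item No increasing sequence $h_1<h_2<\cdots$ has all its initial segments in $\mathcal F$, since $h_2-h_1$ would have to be at least $m$ for every $m$.
\end{itemize}

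So everything your Step~2 produces (limit measures with marginals $\gg\log K/K$, all of whose positive cylinders are ``prime-producing'') is compatible with the conclusion failing. The whole argument therefore rests on the unproved conditional bound. A correct proof has to obtain the nesting of the chain without ever asking the sieve for joint primality of prescribed shifts.
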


\begin{theorem}
\label{IB:thm:c45}
For all integers $s,t$ there are infinite sets $B,C\subset\N$ such that $C+t\subset\P$ and
$b+c+s\in\P$ for all $b\in B$, $c\in C$ with $b<c$. In particular
\textup{(}$s=0$, $t=1$\textup{)} Conjecture~4.5 of~\cite{kra2023problems} holds, and
\textup{(}$s=t=1$\textup{)} there are infinite $B,C$ with $C\subset\P-1$ and
$\{b+c:b\in B,\ c\in C,\ b<c\}\subset\P-1$.
\end{theorem}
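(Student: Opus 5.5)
We will derive the theorem directly from the Tao--Ziegler theorem by a greedy construction in which the elements of $C$ are chosen as shifted values $n$ making a long initial segment of the chain prime, and the elements of $B$ are read off from the chain itself. Let $h_1<h_2<\cdots$ be the sequence given by the Tao--Ziegler theorem, so that for every $k$ there are infinitely many $n$ with $n+h_1,\dots,n+h_k$ all prime.

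The key observation is that the roles can be arranged so that each $c\in C$ is an $n$ (shifted), and the sums $b+c+s$ with $b<c$ become $n+h_j$. Concretely, set $B=\{h_j+\text{const}\}$ and $C=\{n_i+\text{const}'\}$ where the constants absorb $s$ and $t$. The condition $C+t\subset\P$ needs $c+t$ prime, i.e.\ we want $n_i+(\text{something})$ prime; we will achieve this by including that something as one of the shifts. So first replace the sequence by a cleaner one: note that if $(h_1,\dots,h_k)$ is prime-producing then so is $(h_1-h_1,\dots,h_k-h_1)$ after shifting $n$, so we may assume $h_1$ is arbitrary; put $d=h_1$. Now define $b_j=h_{j}-d-s+t$ for $j\ge2$ (discarding the finitely many $j$ where this is $\le0$), and choose $n_1<n_2<\cdots$ inductively: given $n_1,\dots,n_{i-1}$ and $b$'s, let $k_i$ be the largest index $j$ with $b_j<c$ for the candidate $c$ — to avoid circularity, fix $k_i$ first (e.g.\ $k_i=i+1$) and then pick $n_i$ so large that $n_i+h_1,\dots,n_i+h_{k_i}$ are all prime and $c_i:=n_i+d-t>\max(c_{i-1},b_{k_i})$.

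Then $c_i+t=n_i+h_1\in\P$, and for $b=b_j<c_i$ we need $b_j+c_i+s=n_i+h_j\in\P$. The only issue is that $b_j<c_i$ must imply $j\le k_i$: here we choose instead $k_i$ adaptively, namely take $k_i$ to be the largest $j$ with $b_j<$ some bound, which is circular since $c_i$ depends on $n_i$. This is the main obstacle: $c_i$ grows with $n_i$, so more $b_j$ fall below it. The fix is to thin out $B$: we do not take all $b_j$ but only a subsequence $b_{j_1}<b_{j_2}<\cdots$, and we interleave the choices. Order the construction as $b_{(1)},c_1,b_{(2)},c_2,\dots$ where $b_{(i)}$ is chosen as some $b_j$ with $j$ larger than all previously used indices and $b_j>c_{i-1}$, and then $c_i$ is chosen with $n_i$ making $n_i+h_1$ and $n_i+h_j$ prime for the finitely many indices $j$ of $b_{(1)},\dots,b_{(i)}$ (a finite initial segment of the chain, hence prime-producing), with $c_i>b_{(i)}$. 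Then the pairs $b<c$ with $b\in B$, $c\in C$ are exactly $b_{(l)}<c_i$ with $l\le i$ (since $b_{(i+1)}>c_i$), all covered. Both sets are infinite, giving the theorem; the specializations $s=0,t=1$ and $s=t=1$ follow immediately.
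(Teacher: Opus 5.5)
Your final interleaved construction is correct and is essentially the paper's proof: your $b_j=h_j-h_1-s+t$ and $c=n+h_1-t$ are the paper's $\beta_{j-1}$ and $m=n+h_1-t$. Your ordering $b_{(1)}<c_1<b_{(2)}<c_2<\cdots$, with $c_i$ taken from the infinitely many $n$ for which $n+h_1,\dots,n+h_J$ are all prime ($J$ the largest index used so far), is the paper's choice $c_j\in M_{i_j}$ with $\beta_{i_1}<c_1<\beta_{i_2}<c_2<\cdots$. In a write-up, drop the abandoned first attempt with $k_i=i+1$, say that the needed indices lie in the initial segment $\{1,\dots,J\}$ rather than forming one, and take $n_i$ large enough that $c_i\ge 1$.
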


\begin{proof}
Let $h_1<h_2<\cdots$ be as in the Tao--Ziegler theorem and put
$\beta_i=h_{i+1}-h_1+t-s$; fix $i_0$ with $\beta_i\ge1$ for $i\ge i_0$. For $k\ge i_0$ let
\[
M_k=\{m\in\N:\ m+t\in\P\ \text{and}\ m+s+\beta_i\in\P\ \text{for}\ i_0\le i\le k\}.
\]
Each $M_k$ is infinite: there are infinitely many $n$ with $n+h_1,\dots,n+h_{k+1}$ all
prime; they satisfy $n\ge2-h_1$, so infinitely many of them exceed $t-h_1$, and for
those $m=n+h_1-t\in\N$ satisfies $m+t=n+h_1\in\P$ and $m+s+\beta_i=n+h_{i+1}\in\P$.

Now choose $i_1=i_0$ and inductively $c_j\in M_{i_j}$ with $c_j>\beta_{i_j}$, and then
$i_{j+1}>i_j$ with $\beta_{i_{j+1}}>c_j$ (possible since $\beta_i\to\infty$). Then
$\beta_{i_1}<c_1<\beta_{i_2}<c_2<\cdots$. Let $B=\{\beta_{i_j}:j\in\N\}$ and $C=\{c_j:j\in\N\}$. Then
$C+t\subset\P$. If $b=\beta_{i_l}$ and $c=c_j$ with $b<c$, the interlacing gives $l\le j$, so
$i_0\le i_l\le i_j$, and $c_j\in M_{i_j}$ yields $b+c+s=c_j+s+\beta_{i_l}\in\P$.
\end{proof}

\formalization{\Cref{IB:thm:c45} is formalized in \leanfile{C45} (\leanname{c45_of_TZ},
\leanname{conjecture45_of_TZ}), with the Tao--Ziegler theorem stated as an explicit
hypothesis.}

\begin{remark}
\label{IB:rem:c45}
\Cref{IB:thm:c45} is a short consequence of~\cite[Theorem~1.5]{tao2023infinite}, and
all the substance lies in the Tao--Ziegler theorem. The only observation is that
being prime-producing is invariant under translation, so the chain can be
translated to contain $1$; then every witness $m$ satisfies $m+1\in\P$. The
variant mentioned in the survey, with $B\subset\P-1$ in place of $C\subset\P-1$, appears to
follow in the same way from the argument of~\cite{tao2023infinite}. There,
Proposition~3.1 (the Maynard sieve input) is stated for an arbitrary admissible
tuple, and in the proof of Theorem~1.5 the odd squares serve only to make every
finite subtuple admissible. Running the same proof with the odd squares replaced
by an infinite admissible subset of $\P-1$ gives a chain inside $\P-1$, as
indicated in the introduction of~\cite{tao2023infinite}; Corollary~1.6 there and
the interlacing above then give the variant. We have not checked every step of this adaptation;
Conjecture~4.5 uses only the statement of Theorem~1.5, whereas the variant would re-run its
proof.

Neither argument gives the two conditions at once. To get $B,C\subset\P-1$ with
$b+c\in\P$ for $b<c$ in the same way, one would need a prime-producing chain with
entries in $\P-1$ whose translate by $1-h_1$ also has its entries in $\P-1$, and
the argument of~\cite{tao2023infinite} does not obviously provide this. The
statement that seems intended, the ordered analogue of~\cite[Conjecture~4.3]{kra2023problems}
with $k=2$, also needs this: it asks for $B\cup C\cup\{b+c:b<c\}\subset\P-1$, which is the
case $s=t=1$ of \cref{IB:thm:c45} with the additional requirement $B\subset\P-1$. We do not
know whether these strengthenings hold.
\end{remark}

\part{Sums and products}
\section{Sums and products in \texorpdfstring{$\N$}{N} and \texorpdfstring{$\Q$}{Q}}
\label{SP:sec}

This section is about three questions from~\cite[\S5.1]{kra2023problems} that ask for sums and
products of infinite sets at the same time.
In \cref{SP:sec:u51a} we answer Question~\S5.1(a) positively: the two explicit sets named
in~\cite[\S5.1]{kra2023problems}, namely Beatty sequences and the set of integers with an even
number of prime factors, both contain $B+C$ and $BC$ for some infinite $B,C\subset\N$.
In \cref{SP:sec:q54} we answer Question~5.4 of~\cite{kra2023problems} negatively. The counterexample
is a set built at logarithmic scale; it has logarithmic density $3/10$ and it is aperiodic.
The same set answers Question~5.2 of~\cite{kra2023problems} negatively if ``positive density''
is read as positive upper or lower density (\cref{SP:rem:q52}).
In \cref{SP:sec:q56} we answer Question~5.6 of~\cite{kra2023problems} negatively, as it is literally
stated. For each double \Fol{} sequence $\Phi$ we construct a counterexample, which depends on
$\Phi$: it combines the sign constraint at the real place with the removal of a $\Phi$-null set
that makes every bounded part of the set finite.

\smallskip
\noindent\emph{Notation.}
For $x\in\R$ we write $\{x\}\in[0,1)$ for the fractional part of $x$ and $\|x\|$ for the distance
from $x$ to the nearest integer.
We identify $\T=\R/\Z$ with $[0,1)$, and we write $\|x-y\|$ for the distance in $\T$.
For $X,Y\subset\N$ (or $\Q$) we write $X+Y=\{x+y:x\in X,\,y\in Y\}$, $XY=\{xy:x\in X,\,y\in Y\}$,
$X+t=\{x+t:x\in X\}$ and $dX=\{dx:x\in X\}$.
Recall from~\cite[eq.~(19)]{kra2023problems} that $A\subset\N$ is \emph{aperiodic} if
\begin{equation}\label{SP:eq:aperiodic}
\lim_{N\to\infty}\frac1N\sum_{n=1}^N\one_A(n)\,\e(\alpha n)=0\qquad\text{for all }\alpha\in\Q\setminus\Z,
\end{equation}
where $\e(x)=e^{2\pi i x}$.

\subsection{The explicit examples}
\label{SP:sec:u51a}

Question~5.2 of~\cite{kra2023problems} asks whether every aperiodic $A\subset\N$ of positive
density contains $B+C$ and $BC$ for some infinite sets $B,C\subset\N$. Just before that question
the survey names two typical aperiodic sets: the set of all numbers with an even number of prime
factors, and the Beatty sequences $\{\lfloor n\alpha+\gamma\rfloor:n\in\N\}$, where $\alpha>0$ is
irrational and $\gamma\in\R$. It then remarks: ``Even for the explicit examples we have mentioned,
we consider the answer to be interesting.''

\begin{question}{\S5.1(a)}\label{SP:q:u51a}
Let $A$ be either a Beatty sequence $\{\lfloor n\alpha+\gamma\rfloor:n\in\N\}$, with $\alpha>0$
irrational and $\gamma\in\R$, or the set of all $n\in\N$ with an even number of prime factors. Are there infinite sets
$B,C\subset\N$ with $B+C\subset A$ and $BC\subset A$?
\end{question}

The survey does not say whether prime factors are counted with multiplicity. We answer the
question for both readings. The answer is yes in all cases (\cref{SP:cor:beatty,SP:thm:liouville}).

\subsubsection{Bohr sets and Beatty sequences}

\begin{theorem}\label{SP:thm:bohr}
Let $\vartheta\in\R\setminus\Q$, $\delta\in\R$, let $J\subset\T$ be a nonempty open set and let
$M_0\in\N$. Suppose that
\[
A\supseteq\{m\in\N:\ m\ge M_0,\ m\vartheta+\delta\bmod 1\in J\}.
\]
Then there is an infinite set $Z\subset\N$ such that $z+z'\in A$ and $zz'\in A$ for all distinct
$z,z'\in Z$. In particular, there are disjoint infinite sets $B,C\subset\N$ with $B+C\subset A$
and $BC\subset A$.
\end{theorem}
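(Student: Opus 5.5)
The plan is a greedy construction in which every element of $Z$ has its rotation phase $z\vartheta \bmod 1$ extremely close to one fixed rational point $a/q$, and every element lies in the residue class $2 \bmod q$. With these two constraints both sums and products land near the same phase $2a/q$. The work then splits into choosing $a/q$ well, and showing that the simultaneous Diophantine conditions can be met infinitely often.

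\emph{Step 1 (choice of target).} Put $J' = J - \delta \subset \T$, a nonempty open set. Its preimage under doubling, $\{w\in\T : 2w\in J'\}$, is nonempty and open. Choose a rational $a/q$ in this preimage, together with $\eta>0$, such that the closed $3\eta$-neighbourhood of $2a/q$ is contained in $J'$.

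\emph{Step 2 (abundance of candidates).} Since $\vartheta$ is irrational, so is $q\vartheta$. Writing $m = 2 + q\ell$, the phases $m\vartheta = 2\vartheta + \ell q\vartheta$ are dense (indeed equidistributed) as $\ell$ ranges over $\N$. Hence for every $\varepsilon>0$ there are infinitely many $m\ge M_0$ with $m\equiv 2 \pmod q$ and $\|m\vartheta - a/q\| < \varepsilon$.

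\emph{Step 3 (greedy step).} Suppose $z_1<\dots<z_{n-1}$ have been chosen. Pick $z_n > z_{n-1}$ with $z_n\ge M_0$, $z_n\equiv 2 \pmod q$ and $\|z_n\vartheta - a/q\| < \eta/\max(1, z_1,\dots,z_{n-1})$. Then for $i<n$ the two required facts are as follows.
\begin{itemize}
\item For the sum: $(z_i+z_n)\vartheta \equiv 2a/q + (\text{error} < 2\eta)$, so $(z_i+z_n)\vartheta+\delta \in J$.
\item For the product: write $z_n\vartheta \equiv a/q + \varepsilon_n$ with $|\varepsilon_n| < \eta/z_i$. Then
\[
z_iz_n\vartheta \equiv z_i a/q + z_i\varepsilon_n \equiv 2a/q + z_i\varepsilon_n \pmod 1 .
\]
The second congruence uses $z_i \equiv 2 \pmod q$. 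The error $z_i\varepsilon_n$ has absolute value less than $\eta$, so $z_iz_n\vartheta+\delta\in J$.
\end{itemize}
Both $z_i+z_n$ and $z_iz_n$ are at least $M_0$, so both lie in $A$. Since each condition concerns an unordered pair, the set $Z=\{z_n\}$ satisfies the claim for all distinct $z,z'\in Z$. Splitting $Z$ into two disjoint infinite sets $B,C$ gives the last assertion.

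The only genuinely delicate point is the product. A naive phase condition on $z_n$ is destroyed by multiplication by $z_i$. Two devices handle this. First, the tolerance for $z_n$ is shrunk by a factor of $\max_{i<n} z_i$, which is possible because the earlier elements are fixed before $z_n$ is chosen. Second, the main term $z_i a/q$ is made independent of $i$ by fixing the residue of every $z_i$ modulo $q$. Choosing that residue to be $2$ makes the product phase coincide with the sum phase $2a/q$, so a single condition ($2a/q \in J'$, secured in Step 1) handles both.
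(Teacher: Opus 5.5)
Your proposal is correct and follows essentially the same route as the paper. All $z_n\equiv 2\pmod q$ have $z_n\vartheta$ within a tolerance shrinking like $1/z_{n-1}$ of a fixed rational $r/q$, so both sums and products land near the single phase $2r/q$. The only cosmetic difference is in choosing the target: you take the rational directly from the preimage of $J-\delta$ under doubling, whereas the paper first fixes $q\ge 4/\ell$ and then approximates $\xi_0-\delta$ by a point of the subgroup $\{2r/q\}$.
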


\begin{proof}
Since $J$ is open and nonempty, there are $\xi_0\in\T$ and $\ell\in(0,1)$ such that every
$x\in\T$ with $\|x-\xi_0\|<\ell/2$ lies in $J$. Fix $q\in\N$ with $q\ge 4/\ell$.
The points $2r/q$ ($r\in\Z$) form a subgroup of $\T$ in which consecutive points are at distance
at most $2/q$. Hence we can choose $r\in\Z$ with $\|2r/q+\delta-\xi_0\|\le 1/q\le\ell/4$.
Put $\xi=2r/q+\delta$. Then
\begin{equation}\label{SP:eq:xiJ}
\|x-\xi\|<\ell/4\ \Longrightarrow\ \|x-\xi_0\|<\ell/2\ \Longrightarrow\ x\in J .
\end{equation}

We choose $z_1<z_2<\cdots$ recursively. Put $z_0=1$, and suppose $z_0,\dots,z_{n-1}$ have been
chosen. Let $\eps_n=\ell/(8z_{n-1})$. We look for $z_n$ of the form $z_n=qk+2$ with $k\in\N$.
Then $z_n\vartheta-r/q=k(q\vartheta)+(2\vartheta-r/q)$. Since $q\vartheta$ is irrational, the
sequence $(kq\vartheta\bmod1)_{k>K}$ is dense in $\T$ for every $K$. So we can choose $k$ such that
$z_n=qk+2>\max(z_{n-1},M_0)$ and $\|z_n\vartheta-r/q\|<\eps_n$.
Thus every $n\ge1$ satisfies
\[
z_n\equiv 2\pmod q,\qquad z_n\vartheta=K_n+\frac rq+e_n,\quad K_n\in\Z,\ |e_n|<\eps_n=\frac{\ell}{8z_{n-1}}\le\frac\ell8 .
\]
Let $1\le i<j$. Write $z_i=qk_i+2$ with $k_i\in\Z$. We compute the two phases modulo $1$.
\begin{itemize}
\item Sums: $(z_i+z_j)\vartheta\equiv 2r/q+e_i+e_j$, and $|e_i+e_j|<\ell/4$.
\item Products: $z_iz_j\vartheta=z_iK_j+z_ir/q+z_ie_j$. Here $z_iK_j\in\Z$ and
$z_ir/q=k_ir+2r/q\equiv 2r/q$. Moreover $|z_ie_j|<z_i\ell/(8z_{j-1})\le\ell/8$, because
$z_i\le z_{j-1}$. So $z_iz_j\vartheta\equiv 2r/q+z_ie_j$.
\end{itemize}
Hence both $(z_i+z_j)\vartheta+\delta$ and $z_iz_j\vartheta+\delta$ lie within distance
$\ell/4$ of $\xi$ in $\T$, so they lie in $J$ by \eqref{SP:eq:xiJ}. Since
$z_i+z_j$ and $z_iz_j$ are at least $M_0$, both lie in $A$.
Take $Z=\{z_n:n\ge1\}$, $B=\{z_1,z_3,z_5,\dots\}$ and $C=\{z_2,z_4,z_6,\dots\}$.
\end{proof}

\begin{corollary}\label{SP:cor:beatty}
Let $\alpha>0$ be irrational, let $\gamma\in\R$, and let
$A=\{\lfloor n\alpha+\gamma\rfloor:n\in\N\}$. Then there are infinite sets $B,C\subset\N$ with
$B+C\subset A$ and $BC\subset A$.
\end{corollary}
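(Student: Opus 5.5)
We will reduce \cref{SP:cor:beatty} to \cref{SP:thm:bohr} by exhibiting the Beatty sequence as a set that contains a Bohr-type set of the form required there. The natural choice is $\vartheta=1/\alpha$, which is irrational because $\alpha$ is. The key observation is the standard characterisation of membership in a Beatty sequence. An integer $m$ lies in $A$ if and only if there is $n\in\N$ with $m\le n\alpha+\gamma<m+1$. Equivalently, the interval $[(m-\gamma)/\alpha,(m+1-\gamma)/\alpha)$, which has length $1/\alpha$, contains a positive integer $n$.

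We split into two cases according to the size of $\alpha$.

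\emph{Case $\alpha>1$.} The interval above has length $\vartheta=1/\alpha<1$, so it contains an integer exactly when the fractional part of its left endpoint lies in a suitable arc. Concretely, $m\in A$ (for $m$ large, so that the integer $n$ is automatically positive) if and only if
\[
\{(m-\gamma)\vartheta\}\in(1-\vartheta,1]\cup\{0\},
\]
up to endpoint conventions. We therefore take $\delta=-\gamma\vartheta$ and $J=(1-\vartheta,1)\subset\T$. This $J$ is a nonempty open arc, and every $m\ge M_0$ with $m\vartheta+\delta\bmod1\in J$ belongs to $A$. Here $M_0$ is chosen so that the resulting $n$ is at least $1$, say $M_0>\alpha+|\gamma|+1$. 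We only need the containment, so the endpoints do not matter.

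\emph{Case $\alpha<1$.} The interval has length greater than $1$ and always contains an integer. So $A$ contains every sufficiently large $m$, and we may take $J=\T$ together with any irrational $\vartheta$; the case $J=\T$ is trivial anyway. In either case \cref{SP:thm:bohr} then gives disjoint infinite $B,C\subset\N$ with $B+C\subset A$ and $BC\subset A$.

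The only step needing care is verifying the containment in the case $\alpha>1$. Suppose $\{(m-\gamma)\vartheta\}\in(1-\vartheta,1)$. Then $n:=\lceil(m-\gamma)\vartheta\rceil$ satisfies $(m-\gamma)\vartheta\le n<(m-\gamma)\vartheta+\vartheta$. Multiplying by $\alpha$ gives $m\le n\alpha+\gamma<m+1$, so $\lfloor n\alpha+\gamma\rfloor=m$. Moreover $n\ge1$ once $m$ is large. This is routine, and no real obstacle is expected.
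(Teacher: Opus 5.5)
Your proposal is correct and follows essentially the same route as the paper: for $\alpha<1$ the set $A$ contains all large integers, and for $\alpha>1$ you write membership in $A$ as a Bohr condition with frequency $1/\alpha$ and apply \cref{SP:thm:bohr}. Your arc $(1-\vartheta,1)$ with $\vartheta=1/\alpha$ is just the reflection of the paper's arc $(0,\beta)$ with frequency $-\beta$, and your check via $n=\lceil(m-\gamma)\vartheta\rceil$, which uses $m>\gamma$ to get $n\ge1$, is exactly right.
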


\begin{proof}
Suppose first that $\alpha<1$. Then consecutive terms of the nondecreasing sequence
$\lfloor n\alpha+\gamma\rfloor$ differ by $0$ or $1$, and the sequence tends to infinity. So $A$
contains every integer $m\ge\lfloor\alpha+\gamma\rfloor$, and any infinite sets $B,C$ of
sufficiently large integers work. (Here and below $A$ may contain non-positive integers; only
its positive part matters.)

Now suppose $\alpha>1$ and put $\beta=1/\alpha\in(0,1)$. An integer $m$ lies in $A$ if and only
if some $n\in\N$ satisfies $m\le n\alpha+\gamma<m+1$, that is, $y_m\le n<y_m+\beta$, where
$y_m=(m-\gamma)\beta$. The least integer that is at least $y$ is $y+\{-y\}$. So the interval
$[y_m,y_m+\beta)$ contains an integer if and only if $\{-y_m\}<\beta$. When $m>\gamma$, this
integer is at least $y_m>0$, so it lies in $\N$. Therefore, for $m>\gamma$,
\[
m\in A\iff \{m\vartheta+\delta\}\in[0,\beta),\qquad\text{where }\vartheta=-\beta,\ \delta=\gamma\beta .
\]
Here $\vartheta$ is irrational. Apply \cref{SP:thm:bohr} with $J=(0,\beta)$ and any
$M_0>\gamma$.
\end{proof}

\begin{remark}\label{SP:rem:bohr}
\Cref{SP:thm:bohr} does not use any density assumption. It applies to every set that contains an
inhomogeneous Bohr set with an irrational frequency, up to finitely many elements; this includes
all Beatty sequences. The argument is elementary, and we expect the statement to be folklore. We
did not find it in the literature.
\end{remark}

\subsubsection{The Liouville function}

Let $\Omega(n)$ and $\omega(n)$ be the numbers of prime factors of $n$ counted with and
without multiplicity, respectively. Let $\lambda(n)=(-1)^{\Omega(n)}$ be the Liouville function.
It is completely multiplicative: $\lambda(mn)=\lambda(m)\lambda(n)$ for all $m,n\in\N$. Put
\[
L_+=\{n\in\N:\Omega(n)\text{ is even}\}=\{n:\lambda(n)=1\},\qquad
L'_+=\{n\in\N:\omega(n)\text{ is even}\}.
\]

\begin{theorem}\label{SP:thm:liouville}
For each $E\in\{L_+,L'_+\}$ there are infinite sets $B,C\subset\N$ with $B+C\subset E$ and
$BC\subset E$.
\end{theorem}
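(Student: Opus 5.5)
The plan is to get the additive condition from the density theorem for $B+C$ and then repair the multiplicative condition by passing to infinite subsets. For $E=L_+$ this should be almost immediate; the real work is for $L'_+$.

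\emph{The case $E=L_+$.} The set $L_+$ has natural density $\frac12$, by the prime number theorem in the form $\sum_{n\le N}\lambda(n)=o(N)$, so $\ubd(L_+)>0$. By \cref{K:thm:main} with $k=2$, applied to $L_+\subset\N\subset\Z$ and reduced to $\N$ as in \cref{K:rem:N} (or directly by~\cite{moreira2019proof}), there are infinite $B_0,C_0\subset\N$ with $B_0+C_0\subset L_+$. Since $\lambda$ takes only the values $\pm1$, the pigeonhole principle gives infinite $B\subset B_0$, $C\subset C_0$ and signs $\eps_B,\eps_C$ with $\lambda\equiv\eps_B$ on $B$ and $\lambda\equiv\eps_C$ on $C$. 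The sumset condition $B+C\subset L_+$ survives passing to subsets. By complete multiplicativity, $\lambda(bc)=\eps_B\eps_C$, so $BC\subset L_+$ as soon as $\eps_B=\eps_C$.

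If the signs differ, I would not try to force them to agree. Instead I would apply the density theorem to the smaller set $L_+\cap\{n:\lambda(n+1)=\lambda(n)\}$ in shifted form, or more simply to the set
\[
A=\{n:\lambda(n)=1,\ \lambda(2n)=\ldots\}.
\]
The cleanest fix uses $\lambda(4n)=\lambda(n)$. If $\eps_B\ne\eps_C$, replace $C$ by $2\cdot C'$ where $C'$ comes from applying the theorem to the dilated set $\{n:2n\in L_+\}=\{n:\lambda(n)=-1\}$, which also has density $\frac12$. So I would run the argument on $A=\{n:\lambda(n)=1\}$ and on the analogous set, and arrange that all of $B\cup C$ lie in a single class of $\lambda$. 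Concretely, apply the theorem to $A'=\{n: \lambda(2n)=1\}$ and use $B=2B'$, $C=2C'$. Then $2b'+2c'=2(b'+c')$ and $\lambda(2(b'+c'))=1$. After refining, $\lambda(b)=\lambda(c)$ holds automatically up to the single global factor $\lambda(2)^2=1$. I expect this bookkeeping to close the $L_+$ case.

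\emph{The case $E=L'_+$.} Here $\omega$ is not completely multiplicative:
\[
\omega(bc)=\omega(b)+\omega(c)-|\{p: p\mid b,\ p\mid c\}|.
\]
The first step is again to get $B_0+C_0\subset L'_+$ from the density theorem, since $L'_+$ has density $\frac12$ by the Erd\H{o}s--Kac-type equidistribution of $\omega$ modulo $2$. Then I would run an interleaved diagonal refinement $b_1<c_1<b_2<c_2<\cdots$. For each chosen element I pass to an infinite subset of the remaining candidates that has a fixed set of common prime factors with it. This is a finite pigeonhole, since each element has finitely many prime factors. I also fix the parities of $\omega$ along $B$ and $C$. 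On the resulting sets, the parity of $\omega(b_ic_j)$ depends only on bounded data.

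The main obstacle is to ensure that this parity is even rather than odd. My first attempt would be to impose coprimality in advance. Apply the theorem to $A=L'_+\cap(1+P\Z)$ for a suitable primorial $P$; this has positive density. Then arrange, through the diagonal refinement, that each new element avoids the finitely many primes of the earlier elements. This requires a version of the density theorem that leaves room to avoid a prescribed finite set of primes in each new summand. I would get that by intersecting with $1+P_j\Z$ at stage $j$, using the freedom of choice in the greedy construction of \cref{K:lem:cube-sumset}. With $B$ and $C$ pairwise coprime and with parities of $\omega$ equal on $B\cup C$, we get $\omega(bc)=\omega(b)+\omega(c)$, which is even.
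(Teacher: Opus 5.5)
You have a genuine gap, and it is the same in both cases. A density theorem for $B+C\subset E$ says nothing about $\lambda$ (resp.\ $\omega$) on the summands themselves. After pigeonholing you have constant values $\eps_B,\eps_C$ on $B$ and $C$, but no control over whether $\eps_B=\eps_C$, and that relation is exactly what decides whether $BC\subset E$.

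Your repair for $L_+$ does not touch this. Because $\lambda$ is completely multiplicative, the common dilation $B=2B'$, $C=2C'$ multiplies every sum sign by $\lambda(2)=-1$. It leaves every product sign equal to $\lambda(4)\lambda(b')\lambda(c')=\eps_{B'}\eps_{C'}$, which is as uncontrolled as $\eps_B\eps_C$ was. Applying \cref{K:thm:main} to $\{n:\lambda(n)=-1\}$ instead of $L_+$ just produces a new pair of signs about which you again know nothing. So the claim that ``$\lambda(b)=\lambda(c)$ holds automatically'' is unjustified, and the case $\eps_B\ne\eps_C$ is never handled. In fact no common dilation of $B$ and $C$ can ever change the sign of $\lambda(bc)$.

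For $L'_+$ you simply assume that the parities of $\omega$ agree on $B\cup C$, with no mechanism to obtain this. The pairwise coprimality also rests on an unproved strengthening of \cref{K:thm:main}. The greedy step of \cref{K:lem:cube-sumset} takes $b^*$ from a prescribed sequence of return times $\beta_n$, with no freedom to impose congruences. ``Intersecting with $1+P_j\Z$ at stage $j$'' would therefore require redoing the cube construction, for instance in a product with finite rotations. (If you did have all of $B\cup C$ pairwise coprime, then all but at most one element would be odd. Dilating by $2$ would then fix a parity mismatch for $\omega$, since $\omega(2(b+c))=\omega(b+c)$ while $\omega(4bc)=\omega(bc)+1$. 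So the $L'_+$ obstacle is technical, whereas the $L_+$ one is structural.)

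The missing idea is to reverse the order: choose the summands first from a set on which the product condition is automatic, and control the sums by a colouring argument rather than by density. The paper takes $B\cup C$ inside the primes $p\equiv5\pmod6$, so that $\Omega(bc)=\omega(bc)=2$ for distinct $b,c$. It colours each pair $\{s,s'\}$ by whether $s+s'\in E$, and Ramsey's theorem gives an infinite monochromatic set, which it splits into $B_0$ and $C_0$. If the colour is ``$\notin E$'', it dilates by $d=2$ for $L_+$ and by $d=6$ for $L'_+$. This flips the sum condition: $\Omega(2m)=\Omega(m)+1$, and $\omega(6m)=\omega(m)+1$ for $m=b_0+c_0$, which is even and $\equiv1\pmod 3$. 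Meanwhile the products $d^2b_0c_0$ have $\Omega=4$, respectively $\omega=4$. No density theorem is needed at all.
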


\begin{proof}
Let $S$ be the set of primes $p\equiv5\pmod6$. There are infinitely many such primes: if
$p_1,\dots,p_r$ were all of them, then $6p_1\cdots p_r-1\equiv5\pmod6$ would be coprime
to $6p_1\cdots p_r$; so all its prime factors would be $\equiv1\pmod6$, which is impossible. Let $E\in\{L_+,L'_+\}$, and colour each
two-element subset $\{s,s'\}$ of $S$ according to whether $s+s'\in E$. By the infinite Ramsey
theorem for pairs~\cite{ramsey1930problem} there is an infinite set $S'\subset S$ such that either
$s+s'\in E$ for all distinct $s,s'\in S'$, or $s+s'\notin E$ for all distinct $s,s'\in S'$. Split
$S'$ into two disjoint infinite sets $B_0$ and $C_0$, and let $b_0\in B_0$, $c_0\in C_0$ be arbitrary. Then
$b_0\neq c_0$ are primes different from $2$ and $3$, and $b_0+c_0$ is even and $\equiv1\pmod3$.
Also $\Omega(b_0c_0)=\omega(b_0c_0)=2$, so $b_0c_0\in L_+\cap L'_+$.

If $s+s'\in E$ for all distinct $s,s'\in S'$, then $B=B_0$ and $C=C_0$ work. Otherwise we put
$B=dB_0$ and $C=dC_0$, where $d=2$ if $E=L_+$ and $d=6$ if $E=L'_+$.
\begin{itemize}
\item $E=L_+$: $\Omega(2(b_0+c_0))=\Omega(b_0+c_0)+1$ is even, and
$\Omega(4b_0c_0)=4$.
\item $E=L'_+$: since $2\mid b_0+c_0$ and $3\nmid b_0+c_0$, we get
$\omega(6(b_0+c_0))=\omega(b_0+c_0)+1$, which is even. Since $b_0,c_0\notin\{2,3\}$,
$\omega(36b_0c_0)=4$.
\end{itemize}
In both cases $b+c\in E$ and $bc\in E$ for all $b=db_0\in B$ and $c=dc_0\in C$.
\end{proof}

For $L_+$, the same idea with Hindman's theorem in place of Ramsey's theorem gives the following
generalization.

\begin{proposition}\label{SP:prop:finitegroup}
Let $G$ be a finite abelian group, and let $f\colon\N\to G$ be completely multiplicative, i.e.\
$f(mn)=f(m)f(n)$ for all $m,n\in\N$. Then there are infinite sets $B,C\subset\N$ with
$f(b+c)=f(bc)=1_G$ for all $b\in B$ and $c\in C$.
\end{proposition}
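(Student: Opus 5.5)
Following the proof of \cref{SP:thm:liouville}, we take an infinite set on which sums are controlled, then rescale by a multiplier $d$ that corrects the value of $f$ on sums without spoiling products. The additional input is Hindman's finite sums theorem, applied to the colouring $n\mapsto f(n)\in G$ of $\N$.

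The first step is to apply Hindman's theorem to obtain an infinite set $S=\{s_1<s_2<\cdots\}$ and $g\in G$ such that $f(x)=g$ for every finite sum $x\in\FS(S)$ of distinct elements of $S$. We then pass to a subset in which products are also under control. Put $n=|G|$, so that $h^n=1_G$ for all $h\in G$. For $j\ge1$ we form $x_j\in\FS(S)$ as the sum of $n$ distinct elements of $S$, using a new, disjoint block of $S$ for each $j$. A finite sum of the $x_j$ is again in $\FS(S)$, so every $x_j$ and every $x_i+x_j$ with $i\ne j$ has $f$-value $g$. This gives $f(x_i+x_j)=g$ and $f(x_ix_j)=g^2$.

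Next we split the $x_j$ into disjoint infinite sets $B_0$ and $C_0$, and look for $d$ with $B=dB_0$ and $C=dC_0$. Complete multiplicativity gives $f(db+dc)=f(d)g$ and $f(d^2bc)=f(d)^2g^2$. We therefore need $f(d)=g^{-1}$, and then the second value is automatically $g^{-2}g^2=1_G$. Everything comes down to finding $d\in\N$ with $f(d)=g^{-1}$. Since $g^{-1}=g^{n-1}$, it suffices to take $d=s^{\,n-1}$ for any $s\in S$, because $f(s)=g$. (If $n=1$ there is nothing to prove.)

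I expect the main obstacle to be getting the sums right. We need all pairwise sums $x_i+x_j$ to lie in $\FS(S)$, and this is exactly why the $x_j$ are built from disjoint blocks. One could even take $x_j=s_j$ directly, since sums of two distinct elements of $S$ already lie in $\FS(S)$ and $f(s_is_j)=g^2$; in that case the block construction is unnecessary. The checks that remain are routine: $B$ and $C$ are infinite, and $d$ is well defined. So the proof is Hindman followed by the rescaling by $d=s^{|G|-1}$.
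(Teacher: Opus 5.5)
Your argument is correct and is essentially the paper's: Hindman's theorem for the colouring $n\mapsto f(n)$, a split of the monochromatic set into two disjoint infinite halves $B_0,C_0$, and rescaling by some $d$ with $f(d)=g^{-1}$. Your explicit choice $d=s^{|G|-1}$ is just the unpacked form of the paper's remark that $f(\N)$ is a subgroup of $G$. As you note, the block construction is superfluous (take $x_j=s_j$). The paper also spends a line reducing Hindman's sequence to a strictly increasing one, which your ``infinite set'' form of Hindman's theorem takes for granted.
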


\begin{proof}
Hindman's theorem~\cite[Theorem~3.1]{hindman1974finite}, applied to the finite colouring
$n\mapsto f(n)$, gives $h\in G$ and a sequence $(x_i)_{i\in\N}$ in $\N$ such that $f$ takes the
value $h$ on $\FS((x_i))=\{x_{i_1}+\dots+x_{i_k}:k\in\N,\ i_1<\dots<i_k\}$. We may assume that
$x_1<x_2<\cdots$. Indeed, if $(x_i)$ is unbounded we pass to a strictly increasing subsequence,
whose finite sums form a subset of $\FS((x_i))$. Otherwise some value $x$ occurs infinitely
often, so $\FS((x_i))$ contains every multiple of $x$, and we replace $(x_i)$ by $(2^ix)$. Let $B_0=\{x_1,x_3,\dots\}$ and $C_0=\{x_2,x_4,\dots\}$. Then $f$ equals $h$
on $B_0$, on $C_0$ and on $B_0+C_0$. The image $f(\N)$ is a nonempty subset of the finite group
$G$ that is closed under multiplication, hence a subgroup. So there is $d\in\N$ with
$f(d)=h^{-1}$. Put $B=dB_0$ and $C=dC_0$. For $b_0\in B_0$ and $c_0\in C_0$ we get
$f(db_0+dc_0)=f(d)f(b_0+c_0)=h^{-1}h=1_G$ and
$f(db_0\cdot dc_0)=f(d)^2f(b_0)f(c_0)=h^{-2}h^2=1_G$.
\end{proof}

For instance, for every $k\ge2$ the set $\{n:\Omega(n)\equiv0\pmod k\}$ contains $B+C$ and $BC$
for some infinite $B,C\subset\N$.

\formalization{\Cref{SP:thm:bohr,SP:cor:beatty,SP:thm:liouville} are formalized in
\leanfile{S51a} (\leanname{bohr}, \leanname{beatty_sum_prod}, \leanname{liouville}).
\Cref{SP:prop:finitegroup} is not formalized.}

\begin{remark}\label{SP:rem:liouville}
\Cref{SP:cor:beatty,SP:thm:liouville} answer Question~\S5.1(a) for both examples named in the
survey. Question~5.2 of~\cite{kra2023problems} itself remains open, to our knowledge, when ``positive density'' means
that the natural density exists and is positive (see also \cref{SP:rem:q52}).
\end{remark}

\subsection{Sums and shifted products at logarithmic scale}
\label{SP:sec:q54}

Logarithmic density is defined in~\cite[Definition~3.22]{kra2023problems}: for $A\subset\N$,
\[
\udlog(A)=\limsup_{N\to\infty}\frac1{\log N}\sum_{n=1}^N\frac{\one_A(n)}{n},
\]
with the lower version defined by $\liminf$. When the two agree, their common value is the
logarithmic density $\dlog(A)$. Question~5.4 of~\cite{kra2023problems} reads as follows.

\begin{question}{5.4}\label{SP:q:54}
Suppose $A\subset\N$ has positive upper logarithmic density. Are there infinite sets
$B,C\subset\N$ with $B+C\subset A$ and $BC+1\subset A$?
\end{question}

The survey introduces the shift $BC+1$ because the unshifted question fails for congruence
(local) reasons: the odd numbers contain no $B+C$ together with $BC$, since $b+c$ and $bc$ cannot
both be odd. The survey also notes that Question~5.4 generalizes an earlier question of Moreira
about singletons $B=\{x\}$, $C=\{y\}$. Taken literally, the singleton version is trivial ($x=1$,
$y=a-1$ with $a\in A$, $a\ge2$); with $x,y\ge2$ it is Moreira's conjecture, which Richter proved for every
set of positive upper logarithmic density~\cite[Theorem~1.5 with $Q(y)=y-1$]{richter2025sums}.
Indeed, that theorem, applied to $(A-1)\cap\N$, which has the same upper logarithmic density as $A$,
gives $x,y\ge2$ with $x+y-1\in A-1$ and $xy\in A-1$, i.e.\ $x+y\in A$ and $xy+1\in A$. Richter also relates his
results to Questions~5.2 and~5.4 of~\cite{kra2023problems}. We show that the infinite version
fails, because of an obstruction of a different kind: an archimedean one. Our set is aperiodic in
the sense of \eqref{SP:eq:aperiodic}, so it has no congruence obstruction.

Throughout this subsection let $I=[\tfrac{7}{20},\tfrac{13}{20})$, let
$\psi(n)=\log_2 n\bmod 1\in\T$ for $n\in\N$, and let
\[
A=\{n\in\N:\{\log_2n\}\in I\}=\bigcup_{k\ge0}\big[2^{k+7/20},\,2^{k+13/20}\big)\cap\N .
\]
Membership is decided in exact arithmetic: $n\in A$ if and only if
$2^{20k+7}\le n^{20}<2^{20k+13}$, where $k=\lfloor\log_2n\rfloor$.

\begin{proposition}\label{SP:prop:Aprops}
The set $A$ has the following properties.
\begin{enumerate}[label=\textup{(\roman*)}]
\item Its logarithmic density exists and $\dlog(A)=3/10$.
\item It is aperiodic, i.e.\ it satisfies \eqref{SP:eq:aperiodic}.
\item $\udens(A)=2-2^{7/10}\approx0.3755$ and $\ldens(A)=2^{3/10}-1\approx0.2311$. In particular,
the natural density of $A$ does not exist.
\end{enumerate}
\end{proposition}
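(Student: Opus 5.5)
The plan is to treat the three parts separately, using the block structure $A=\bigcup_k[2^{k+7/20},2^{k+13/20})\cap\N$ throughout.

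For (i), I apply \cref{IA:lem:logsum} with $F=\bigcup_{k\ge0}[k+\tfrac7{20},k+\tfrac{13}{20})$. Its intervals have left endpoints $k+7/20$, so the counting hypothesis holds with $C=1$. The lemma then gives
\[
\sum_{n\le N,\,n\in A}\frac1n=\log2\cdot|F\cap[0,\log_2N]|+O(1)=\log 2\cdot\tfrac{3}{10}\log_2N+O(1)=\tfrac3{10}\log N+O(1).
\]
Dividing by $\log N$ gives $\dlog(A)=3/10$.

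For (iii), I compute $|A\cap[N]|$ exactly up to $O(\log N)$. Blocks with $k<K$ contribute $\sum_{k<K}(2^{13/20}-2^{7/20})2^k=(2^{13/20}-2^{7/20})2^K+O(K)$. Write $N=2^{K+x}$ with $x\in[0,1)$ and consider the ratio $|A\cap[N]|/N$ as a function of $x$. On $x\le 7/20$ the ratio decreases, so its minimum is at $x=7/20$. On $[7/20,13/20]$ the ratio is $(2^{13/20}-2^{7/20}+2^x-2^{7/20})/2^x$, which increases, with maximum at $x=13/20$. On $[13/20,1)$ it decreases again. The two extreme values are
\[
\frac{2^{13/20}-2^{7/20}}{2^{7/20}}=2^{3/10}-1,\qquad \frac{2(2^{13/20}-2^{7/20})}{2^{13/20}}=2-2^{7/10}.
\]
The remaining check is that these are the global limsup and liminf; this follows because on the relevant pieces the ratio is monotone in $x$.

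For (ii), fix $\alpha=a/q$ with $q\ge2$. On each block $[u_k,v_k)$, the exponential sum $\sum_{u_k\le n<v_k}\e(\alpha n)$ is a geometric sum and is bounded by $q$ in absolute value. This holds because $\sum_{n=u}^{u+q-1}\e(an/q)=0$, leaving only an incomplete period. Up to $N$ there are $O(\log N)$ blocks plus one partial block, so
\[
\Big|\sum_{n\le N}\one_A(n)\e(\alpha n)\Big|=O(q\log N)=o(N),
\]
which proves \eqref{SP:eq:aperiodic}.

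The main obstacle is the bookkeeping in (iii). There I need $\lfloor\cdot\rfloor$ errors per block to sum to only $O(\log N)$, and I need to confirm that the extremal phases $x=7/20$ and $x=13/20$ give the liminf and limsup. A short derivative or monotonicity check in $x$ on each of the three ranges suffices for the latter.
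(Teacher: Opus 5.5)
Your proposal is correct and follows essentially the paper's proof. Part (i) uses the blockwise integral comparison, which you route through \cref{IA:lem:logsum}, whose proof is exactly that estimate. Part (ii) bounds the exponential sum on each of the $O(\log N)$ blocks; you use the bound $q$ from vanishing full periods where the paper uses $1/|\sin\pi\alpha|$. Part (iii) uses the same piecewise-monotone ratio on $[0,\tfrac7{20}]$, $[\tfrac7{20},\tfrac{13}{20}]$ and $[\tfrac{13}{20},1)$.

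To close (iii), note, as the paper does, that the $O(K)$ error is uniform in $x$. The extremes are realised along $N=\lfloor 2^{K+7/20}\rfloor$ and $N=\lfloor 2^{K+13/20}\rfloor$. The limiting ratio takes the same value at $x=0$ and as $x\to1^-$, which settles the comparison between the three pieces.
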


\begin{proof}
Write $a=7/20$ and $b=13/20$. For real $1\le u<v$, comparing with $\int_u^v dx/x$ gives
$\sum_{u\le n<v}1/n=\log(v/u)+O(1/u)$. So the $k$-th block contributes
$(b-a)\log2+O(2^{-k})$ to $\sum\one_A(n)/n$.

(i) Let $2^K\le N<2^{K+1}$. Summing over the blocks with $k<K$, and bounding the contribution of
the block $k=K$ by $(b-a)\log 2+O(2^{-K})$, we get $\sum_{n\le N}\one_A(n)/n=\tfrac{3}{10}K\log2+O(1)$.
Since $\log N=K\log 2+O(1)$, the claim follows.

(ii) Let $\alpha\in\Q\setminus\Z$. For integers $m\le m'$ we have
$|\sum_{n=m}^{m'}\e(\alpha n)|\le 1/|\sin\pi\alpha|$. The set $A\cap[1,N]$ is a union of at most
$\log_2N+1$ intervals of consecutive integers. Hence
$|\frac1N\sum_{n\le N}\one_A(n)\e(\alpha n)|\le(\log_2N+1)/(N|\sin\pi\alpha|)\to0$.

(iii) Let $N=2^{K+s}$ with $K\in\N$ and $s\in[0,1)$. Counting the blocks with $k<K$ and the part
of block $K$ below $N$ gives
\[
|A\cap[1,N]|=2^K\big(2^b-2^a+g(s)\big)+O(K),\qquad g(s)=\max\big(0,\min(2^s,2^b)-2^a\big).
\]
So $|A\cap[1,N]|/N=\rho(s)+o(1)$, uniformly in $s$, where $\rho(s)=(2^b-2^a+g(s))/2^s$.
On $[0,a]$ the function $\rho(s)=(2^b-2^a)2^{-s}$ decreases. On $[a,b]$ it equals
$1-(2^{a+1}-2^b)2^{-s}$, which increases because $2^{a+1}>2^b$. On $[b,1)$ it equals
$2(2^b-2^a)2^{-s}$, which decreases. Therefore $\min\rho=\rho(a)=2^{b-a}-1=2^{3/10}-1$ and
$\sup\rho=\rho(b)=2-2^{a+1-b}=2-2^{7/10}$. Taking $N=2^{K+a}$ and $N=2^{K+b}$ (rounded down) as
$K\to\infty$ shows that these values are $\ldens(A)$ and $\udens(A)$.
\end{proof}

Let $\eps=\log_2(1+1/58)$. Since $\log(1+x)\le x$ and $\log2>0.693$, we have
$\eps\le1/(58\log2)<0.0249$. The constant $58$ is the least $M\in\N$ with
$2\log_2(1+1/M)<1/20$; this is the margin needed in \cref{SP:thm:q54}(a).

\begin{lemma}\label{SP:lem:shiftphase}
If $x,y\in\N$ satisfy $y\ge58x$ and $x+y\in A$, then
$\{\log_2y\}\in[\tfrac7{20}-\eps,\tfrac{13}{20})$.
\end{lemma}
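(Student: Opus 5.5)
The plan is to compare $\log_2(x+y)$ with $\log_2 y$ directly. We have $\log_2(x+y)=\log_2y+\log_2(1+x/y)$. Since $y\ge58x$, the correction satisfies
\[
0<\log_2(1+x/y)\le\log_2(1+1/58)=\eps .
\]
Write $u=\log_2y$ and $\delta=\log_2(1+x/y)\in(0,\eps]$. The hypothesis $x+y\in A$ says $\{u+\delta\}\in[\tfrac7{20},\tfrac{13}{20})$.

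Next I lift this to the fractional part of $u$ itself. Let $m=\lfloor u+\delta\rfloor$, so that $u+\delta\in[m+\tfrac7{20},m+\tfrac{13}{20})$. Subtracting $\delta\in(0,\eps]$ gives $u\in[m+\tfrac7{20}-\eps,\,m+\tfrac{13}{20})$, where the upper bound is strict because $\delta>0$. Since $\eps<0.0249<\tfrac7{20}$, this interval lies inside $[m,m+1)$. Hence $\lfloor u\rfloor=m$, and $\{u\}\in[\tfrac7{20}-\eps,\tfrac{13}{20})$, which is the claim.

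The only point that needs checking is that the shifted interval does not wrap around an integer. This is exactly the numerical bound $\eps<0.0249$ already recorded before the statement, together with $\tfrac7{20}>\eps$. No other step presents an obstacle; the lemma is a direct monotonicity computation.
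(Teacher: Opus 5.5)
Your proposal is correct and follows essentially the same route as the paper's proof: write $\log_2(x+y)=\log_2y+\log_2(1+x/y)$ with the correction in $(0,\eps]$, then subtract it from $k+\{\log_2(x+y)\}$ and use $\eps<\tfrac7{20}$ to see that no wrap-around past an integer occurs. The paper phrases this as $0\le\theta-\eps'<1$, but it is the same argument.
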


\begin{proof}
We have $\log_2(x+y)=\log_2y+\eps'$ with $0<\eps'=\log_2(1+x/y)\le\eps<\tfrac7{20}$. Write
$\log_2(x+y)=k+\theta$ with $k\in\Z$ and $\theta=\{\log_2(x+y)\}\in I$. Then
$\log_2y=k+(\theta-\eps')$, and $0\le\theta-\eps'<1$ because $\theta\ge\tfrac7{20}>\eps'$. Hence
$\{\log_2y\}=\theta-\eps'\in[\tfrac7{20}-\eps,\tfrac{13}{20})$.
\end{proof}

\begin{theorem}\label{SP:thm:q54}
\leavevmode
\begin{enumerate}[label=\textup{(\alph*)}]
\item There are no $b_0,c_0,b,c\in\N$ and $\tau\in\{0,1\}$ with $b\ge58c_0$ and $c\ge58b_0$
such that
\[
b_0+c\in A,\qquad c_0+b\in A,\qquad bc+\tau\in A .
\]
\item Let $t_1,t_2\in\Z$. There are no infinite sets $B,C\subset\N$ with $b+c+t_1\in A$ and
$bc+t_2\in A$ for all $b\in B$ and $c\in C$.
\end{enumerate}
In particular, \cref{SP:q:54} has a negative answer: $\dlog(A)=3/10$, but there are no infinite
sets $B,C\subset\N$ with $B+C\subset A$ and $BC+1\subset A$.
\end{theorem}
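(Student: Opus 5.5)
The plan is to track only the fractional parts of $\log_2$ and to use the additivity $\log_2(bc)=\log_2b+\log_2c$. For (a), apply \cref{SP:lem:shiftphase} twice. With $(x,y)=(b_0,c)$, the hypotheses $c\ge58b_0$ and $b_0+c\in A$ give $\{\log_2c\}\in[\tfrac7{20}-\eps,\tfrac{13}{20})$. With $(x,y)=(c_0,b)$, they give $\{\log_2b\}\in[\tfrac7{20}-\eps,\tfrac{13}{20})$. Adding the two fractional parts, we get $\log_2(bc)\equiv s\pmod 1$ for some real $s\in[\tfrac7{10}-2\eps,\tfrac{13}{10})$.

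Next, account for the shift $\tau$. Put $\delta=\log_2(1+\tau/(bc))\ge0$. Since $b\ge58c_0\ge58$ and $c\ge58$, we have $bc\ge58^2$, so $\delta\le\log_2(1+58^{-2})<0.001$. Then $\log_2(bc+\tau)\equiv s+\delta$ with $s+\delta\in[\tfrac7{10}-2\eps,\tfrac{13}{10}+0.001)$. Reducing modulo $1$, the fractional part lies in $[\tfrac7{10}-2\eps,1)\cup[0,\tfrac3{10}+0.001)$. Since $2\eps<0.0498<\tfrac1{20}$, the left endpoint $\tfrac7{10}-2\eps$ exceeds $\tfrac{13}{20}$, so this set is disjoint from $I=[\tfrac7{20},\tfrac{13}{20})$. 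Hence $bc+\tau\notin A$, which is a contradiction.

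For (b), run the same argument with general shifts. First choose $b_0\in B$ and $c_0\in C$ so large that $b_0+t_1\ge1$ and $c_0+t_1\ge1$. Then take $b\in B$ and $c\in C$ with $b\ge58(c_0+t_1)$ and $c\ge58(b_0+t_1)$. Applying \cref{SP:lem:shiftphase} to $(x,y)=(b_0+t_1,c)$ and to $(c_0+t_1,b)$ places $\{\log_2c\}$ and $\{\log_2b\}$ in $[\tfrac7{20}-\eps,\tfrac{13}{20})$ exactly as before.

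Now $\delta=\log_2(1+t_2/(bc))$ may be negative, but $|\delta|\to0$ as $b,c\to\infty$. So we may take $b$ and $c$ so large that $|\delta|<\tfrac1{20}-2\eps$ and $|\delta|<\tfrac1{20}$. Then $s+\delta$ lies in $(\tfrac{13}{20},\tfrac{27}{20})$ and stays at distance more than $0$ from $I+\Z$. A slightly negative value of $s+\delta-1$ wraps around to near $1$, which is harmless. Hence $bc+t_2\notin A$. Taking $t_1=0$ and $t_2=1$ gives the final assertion, and $\dlog(A)=3/10$ comes from \cref{SP:prop:Aprops}.

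The one delicate point is the numerical margin. The two ends of $I$ leave a gap of exactly $\tfrac1{20}$ around $\tfrac7{10}$, and this gap must absorb $2\eps$ together with the perturbation $\delta$. That is why the constant $58$ was chosen with $2\log_2(1+1/58)<\tfrac1{20}$. Everything else is bookkeeping with fractional parts.
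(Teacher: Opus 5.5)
Your proof is correct. Part (a) is the paper's argument: two applications of \cref{SP:lem:shiftphase}, additivity of $\log_2$, the bound $\delta<0.001$ from $bc\ge58^2$, and the observation that $[\tfrac7{10}-2\eps,1)\cup[0,\tfrac3{10}+0.001)$ misses $I$.

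For (b) you take a different route. The paper argues qualitatively. It fixes $b_0\in B$ and notes that $\psi(c+b_0+t_1)$ and $\psi(c)$ become close, so every limit point of $\psi(c)$ along $C$ lies in $\bar I$, and likewise for $B$. It then extracts subsequences with $\psi(b_n)\to\beta$ and $\psi(c_n)\to\gamma$. Since $\psi(b_nc_n+t_2)\to\beta+\gamma$, this forces $\beta+\gamma\in\bar I$, which is impossible because $\bar I+\bar I$ reduced mod $1$ is at distance $\tfrac1{20}$ from $\bar I$. Neither the lemma nor the constant $58$ is used there.

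You instead absorb $t_1$ into the small summand. After choosing $b_0,c_0$ with $b_0+t_1,c_0+t_1\ge1$, you apply the lemma to $(b_0+t_1,c)$ and $(c_0+t_1,b)$. You then handle $t_2$ by requiring $|\delta|<\tfrac1{20}-2\eps$, which is a positive bound exactly because of the margin built into $58$. Your phrase ``distance more than $0$'' really means disjointness of $(\tfrac{13}{20},\tfrac{27}{20})$ from $I+\Z$, and that holds because both endpoints are excluded.

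What your route buys: it avoids compactness and subsequences, and it makes (b) an effective form of (a). For any fixed admissible $b_0,c_0$ there are explicit thresholds beyond which every pair $b,c$ violates one of the three memberships, so (a) and (b) merge into one four-point statement with arbitrary shifts.

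What the paper's route buys: it isolates the structural reason for the obstruction. Only the limiting phases matter, the shifts $t_1,t_2$ disappear in the limit, and the argument needs no quantitative margin beyond the gap $\tfrac1{20}$ between $\bar I$ and $\bar I+\bar I$.
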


\begin{proof}
(a) By \cref{SP:lem:shiftphase}, applied to $(x,y)=(b_0,c)$ and to $(x,y)=(c_0,b)$,
\[
\{\log_2c\},\ \{\log_2b\}\in\big[\tfrac7{20}-\eps,\ \tfrac{13}{20}\big).
\]
Write $\log_2(bc+\tau)=\log_2b+\log_2c+\eta$, where $\eta=\log_2(1+\tau/(bc))$. Since
$bc\ge58^2$, we have $0\le\eta\le1/(58^2\log2)<0.001$. Let $u=\{\log_2b\}+\{\log_2c\}+\eta$.
Then $\log_2(bc+\tau)-u\in\Z$, so $\{\log_2(bc+\tau)\}=\{u\}$, and
$u\in[\tfrac{7}{10}-2\eps,\ \tfrac{13}{10}+\eta)$.
If $u<1$, then $\{u\}=u\ge\tfrac7{10}-2\eps>\tfrac{13}{20}$, because $2\eps<0.05$.
If $u\ge1$, then $\{u\}=u-1<\tfrac3{10}+\eta<\tfrac7{20}$.
In both cases $\{\log_2(bc+\tau)\}\notin I$, so $bc+\tau\notin A$.

(b) Suppose such $B$ and $C$ exist. Fix $b_0\in B$. For $c\in C$, $c\to\infty$, the distance in
$\T$ between $\psi(c+b_0+t_1)$ and $\psi(c)$ is at most $|\log_2(1+(b_0+t_1)/c)|$, which tends to $0$.
Since $\psi(c+b_0+t_1)\in I$, every limit point of $\psi(c)$ as $c\to\infty$ in $C$ lies in the
closure $\bar I=[\tfrac7{20},\tfrac{13}{20}]$. By symmetry the same holds for $\psi(b)$ as $b\to\infty$ in $B$.
Choose $b_n\in B$ and $c_n\in C$ with $b_n,c_n\to\infty$. Since $\T$ is compact, after passing
to a subsequence we may assume that $\psi(b_n)\to\beta$ and $\psi(c_n)\to\gamma$, where
$\beta,\gamma\in\bar I$. Now
$\psi(b_nc_n+t_2)=\psi(b_n)+\psi(c_n)+\log_2(1+t_2/(b_nc_n))$ in $\T$, and this tends to $\beta+\gamma$. But
$\beta+\gamma\in[\tfrac7{10},\tfrac{13}{10}]$, and modulo $1$ this set is
$[\tfrac7{10},1)\cup[0,\tfrac3{10}]$, whose distance from $\bar I$ is $\tfrac1{20}$. On the other hand
$\psi(b_nc_n+t_2)\in I$ for all $n$, so $\beta+\gamma\in\bar I$. This is a contradiction.

For the final statement, recall $\dlog(A)=3/10$ from \cref{SP:prop:Aprops}(i). If $B,C\subset\N$
were infinite with $B+C\subset A$ and $BC+1\subset A$, we could fix $b_0\in B$ and $c_0\in C$ and
then pick $b\in B$ with $b\ge58c_0$ and $c\in C$ with $c\ge58b_0$, contradicting (a) with
$\tau=1$. (It also follows from (b) with $t_1=0$, $t_2=1$.)
\end{proof}

\formalization{\Cref{SP:lem:shiftphase}, both parts of \cref{SP:thm:q54} and
\cref{SP:prop:Aprops}(i) are formalized in \leanfile{Q54} (\leanname{no_ratio58_config},
\leanname{no_shifted_config}, \leanname{q54_negative}); parts (ii) and (iii) of
\cref{SP:prop:Aprops} are not.}

\begin{remark}\label{SP:rem:q54}
(i) The obstruction is asymptotic, and it needs both $B$ and $C$ to be infinite. If $B=\{1\}$, then
$B+C=BC+1=C+1$, so $C=(A-1)\cap\N$ works. By Richter's theorem quoted above, every set of positive
upper logarithmic density, in particular $A$, contains $\{b+c,bc+1\}$ with $b,c\ge2$. Larger finite
configurations also occur in $A$: for $B=\{1,2,4,8,16\}$ and $C=\{1306,1307,\dots,1310\}$, all
$25$ sums $b+c$ and all $25$ numbers $bc+1$ lie in $A$. This is a direct check of $50$ memberships
with the exact test above. Part~(a) of \cref{SP:thm:q54} is a quantitative form of the
obstruction: it rules out a four-element configuration in which $b,c$ are large compared with
$c_0,b_0$.

(ii) By \cref{SP:thm:q54}(b), shifting the sums or the products by any fixed integers does not
remove the obstruction. Unlike the congruence obstruction to the unshifted question exhibited by the odd numbers,
it is not local: $A$ is aperiodic (\cref{SP:prop:Aprops}(ii)). It is also thick (the blocks have
unbounded length), so $\ubd(A)=1$. Similar constructions at logarithmic scale appear throughout
\S\S2--3 of the survey~\cite{kra2023problems}, where they serve as counterexamples for sumsets
$B+B+t$; Richter's Example~1.6 in~\cite{richter2025sums} is another block-type construction of
this kind, a set of positive upper density without $\{x+y-1,xy\}$, $x,y\ge2$. Here the same idea is applied
to infinite sums and shifted products. To our knowledge this counterexample is new.

(iii) The set $A$ is not syndetic. The survey also asks Question~5.4 for syndetic $A$, and
Question~5.5 of~\cite{kra2023problems} (whether every syndetic set contains $BC$ with $B,C$ infinite);
both remain open, to our knowledge.
\end{remark}

Recall Question~5.2 of~\cite{kra2023problems}.

\begin{question}{5.2}\label{SP:q:52}
Fix $A\subset\N$ that has positive density and is aperiodic. Are there infinite sets
$B,C\subset\N$ with $B+C\subset A$ and $BC\subset A$?
\end{question}

\begin{remark}\label{SP:rem:q52}
Taking $\tau=0$ in \cref{SP:thm:q54}(a), or $t_1=t_2=0$ in (b), shows that $A$ contains no $B+C$
together with $BC$ for infinite $B,C\subset\N$. By \cref{SP:prop:Aprops}, $A$ is aperiodic and has
positive lower density (hence positive upper, logarithmic and upper Banach density), but its natural
density does not exist. So $A$ gives a negative answer to \cref{SP:q:52} if ``positive density''
there is read as positive lower or upper density (or positive upper Banach or logarithmic
density). This reading is
consistent with the survey's text, which says that (19) is equivalent to equidistribution in
residue classes ``if the density of $A$ exists''. It does not settle \cref{SP:q:52} when the
natural density is required to exist; to our knowledge that case remains open.
\end{remark}

\subsection{Sums and products in \texorpdfstring{$\Q$}{Q}}
\label{SP:sec:q56}

Following~\cite[\S5.1]{kra2023problems}, where the notion is attributed
to~\cite[Definition~1.3]{bergelson2017ergodic}, we use the formulation
of~\cite[Definition~1.7]{kousek2024revisiting}: a \emph{double \Fol{} sequence} in $\Q$ is a
sequence $\Phi=(F_N)_{N\in\N}$ of nonempty finite subsets of $\Q$ such that
\[
\lim_{N\to\infty}\frac{|F_N\cap(x+F_N)|}{|F_N|}=1\quad(x\in\Q)
\qquad\text{and}\qquad
\lim_{N\to\infty}\frac{|F_N\cap xF_N|}{|F_N|}=1\quad(x\in\Q^*).
\]
The survey describes $\Phi$ as simultaneously a \Fol{} sequence in $(\Q,+)$ and in
$(\Q^*,\times)$; strictly speaking the latter requires $F_N\subset\Q^*$. This makes no
difference below: $0$ is a single point and $|F_N|\to\infty$ (\cref{SP:lem:folner}(i)), so all
densities are unchanged if $0$ is removed from every $F_N$.
For $A\subset\Q$ we write $\udens_\Phi(A)=\limsup_N|A\cap F_N|/|F_N|$. When the limit exists we
denote it by $\dens_\Phi(A)$, as in~\cite[\S1]{kra2023problems}.

\begin{question}{5.6}\label{SP:q:56}
Let $A\subset\Q$ have $\dens_\Phi(A)>0$ for some double \Fol{} sequence $\Phi$. Are there infinite
sets $B,C\subset\Q$ such that $B+C\subset A$ and $BC\subset A$?
\end{question}

We show that the answer is negative: for every double \Fol{} sequence $\Phi$ there is a
counterexample $A=A_\Phi$.

\begin{lemma}\label{SP:lem:folner}
Let $\Phi=(F_N)$ be a double \Fol{} sequence in $\Q$.
\begin{enumerate}[label=\textup{(\roman*)}]
\item $|F_N|\to\infty$.
\item For every bounded set $X\subset\Q$ we have $|F_N\cap X|/|F_N|\to0$.
\item $|F_N\cap\Q_{<0}|/|F_N|\to1/2$.
\end{enumerate}
\end{lemma}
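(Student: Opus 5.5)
For (i) I would reuse the argument of \cref{G:lem:folnergrow}, applied to the additive \Fol{} condition. Suppose $|F_N|\le K$ along a subsequence, and take a finite $S\subset\Q$ with $|S|>K$. Each ratio $|F_N\cap(x+F_N)|/|F_N|$ is a fraction with denominator at most $K$ and tends to $1$, so for large $N$ in the subsequence it equals $1$ for every $x\in S$. Then $x+F_N=F_N$ for all $x\in S$. For $y\in F_N$ this gives $S+y\subset F_N$, so $|F_N|>K$, a contradiction.

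For (ii), fix $X\subset[-R,R]$, which we may do after enlarging $X$. The plan is to use multiplicative invariance to push $F_N\cap X$ off itself. Choose an integer $m\ge2$ and consider the dilates $m^jX'$, where $X'=X\setminus\{0\}$. The point $0$ is negligible by (i).

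I would rather use an additive version, since $\Q$ is not discrete and boundedness alone does not give disjoint shifts directly. Instead, use the scaling directly. Let $X_j=F_N\cap m^{j}([-R,R]\setminus[-R/m,R/m])$ for $j\in\Z$. These annuli are disjoint, and for fixed $j$ the multiplicative condition gives $|F_N\cap m^jY|=|F_N\cap Y|+o(|F_N|)$, because $|m^{-j}F_N\triangle F_N|=o(|F_N|)$. Now $[-R,R]\setminus\{0\}$ is the union of the annuli with $j\le0$.

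So I would bound the count in a different way. For any $L$, the sets $m^{j}(X\setminus\{0\})$ with $0\le j<L$ need not be disjoint, but their sizes inside $F_N$ are all equal to $|F_N\cap X|+o(|F_N|)$. They are also contained in $[-m^LR,m^LR]$, so this alone does not help.

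The cleaner route is to take the annulus $Y=\{x:R/m<|x|\le R\}$. Its dilates $m^jY$ for $j=0,\dots,L-1$ are pairwise disjoint, so $L\,|F_N\cap Y|\le|F_N|+o(|F_N|)$ and hence $\limsup_N|F_N\cap Y|/|F_N|\le1/L$ for every $L$. Thus every such annulus is null. It remains to handle the neighbourhood $\{|x|\le R/m\}$ of $0$, and this is the main obstacle, because infinitely many annuli accumulate at $0$.

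For this step I would use additive invariance. The sets $(-\delta,\delta)+k$, for integers $k=0,\dots,L-1$ and $\delta<1/2$, are pairwise disjoint translates. Additive invariance gives $|F_N\cap((-\delta,\delta)+k)|=|F_N\cap(-\delta,\delta)|+o(|F_N|)$, so $(-\delta,\delta)$ is null. Combining this with finitely many null annuli covering $[-R,R]\setminus(-\delta,\delta)$ proves (ii).

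For (iii), multiplicative invariance under $x=-1$ gives $|F_N\cap\Q_{<0}|=|F_N\cap\Q_{>0}|+o(|F_N|)$. Together with (ii) for the point $0$, this shows that each side is $|F_N|/2+o(|F_N|)$.
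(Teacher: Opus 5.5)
Your proof is correct. Part (iii) is the paper's argument: invariance under $x=-1$, with the point $0$ disposed of by (i), or by (ii) as you do. Part (i) is essentially the same idea. The paper uses only the translate by $1$ and the fact that $\max(1+F_N)>\max F_N$, whereas you use the translate-by-$S$ argument of \cref{G:lem:folnergrow}. Both rest on the observation that a \Fol{} ratio with denominator at most $K$ exceeding $1-1/K$ must equal $1$.

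The difference is in (ii). The paper uses only the additive condition. If $X\subset[-R,R]$, the translates $X+j(2R+1)$, $0\le j<k$, are pairwise disjoint, so $k\,|F_N\cap X|\le|F_N|+o(|F_N|)$. This is exactly the argument you apply to $(-\delta,\delta)$, and it works verbatim for $[-R,R]$ itself.

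The worry that led you elsewhere, that $\Q$ is not discrete, is not an obstacle: disjointness of integer translates of a bounded set has nothing to do with discreteness. Your multiplicative step with the disjoint dilates $m^jY$ of the annulus $Y$ is correct, but it only reaches sets bounded away from $0$. You still need the additive argument near $0$, and once you have it the annuli are redundant. So your route uses both \Fol{} conditions where one suffices. Incidentally, it does show that sets bounded away from $0$ and $\infty$ are null under the multiplicative condition alone, but that is not needed here.

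In a write-up, drop the abandoned attempts in (ii) and give the one-line translation argument.
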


\begin{proof}
(i) Suppose $|F_N|\le M$ for infinitely many $N$. For those $N$ that are large enough,
$|F_N\cap(1+F_N)|>(1-\frac1{M+1})|F_N|\ge|F_N|-1$, hence $F_N\subset1+F_N$. Both sets have the same
cardinality, so $F_N=1+F_N$. This is impossible for a finite nonempty $F_N\subset\Q$, since
$\max(1+F_N)=1+\max F_N>\max F_N$.

(ii) Choose $R\in\N$ with $X\subset[-R,R]$, fix $k\in\N$, and let $t_j=j(2R+1)$ for
$0\le j<k$. The sets $X+t_j$ are pairwise disjoint. For each $t\in\Q$,
\[
|F_N\cap(X+t)|=|(F_N-t)\cap X|\ge|F_N\cap X|-|F_N\setminus(F_N-t)|=|F_N\cap X|-o(|F_N|),
\]
since $|F_N\setminus(F_N-t)|=|F_N|-|F_N\cap(-t+F_N)|$. Summing over $j$ gives
$k|F_N\cap X|\le|F_N|+o(|F_N|)$, so $\limsup_N|F_N\cap X|/|F_N|\le1/k$. Now let $k\to\infty$.

(iii) Let $p_N=|F_N\cap\Q_{>0}|$ and $n_N=|F_N\cap\Q_{<0}|$. The map $y\mapsto-y$ gives
$n_N=|(-F_N)\cap\Q_{>0}|$, and this differs from $p_N$ by at most
$|(-F_N)\setminus F_N|=|F_N|-|F_N\cap(-1)F_N|=o(|F_N|)$. Moreover $p_N+n_N\ge|F_N|-1$. By (i),
$n_N/|F_N|\to1/2$.
\end{proof}

\begin{lemma}\label{SP:lem:noconf}
Let $A\subset\Q_{<0}$ be such that $A\cap[-R,R]$ is finite for every $R>0$. Then there are no
infinite sets $B,C\subset\Q$ with $B+C\subset A$ and $BC\subset A$.
\end{lemma}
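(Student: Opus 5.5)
We argue by contradiction: suppose $B,C\subset\Q$ are infinite with $B+C\subset A$ and $BC\subset A$. The idea is to combine the sign constraint coming from $A\subset\Q_{<0}$ with the local finiteness of $A$. Sign considerations will force one of the two sets to be bounded. Local finiteness will then force some product or sum to lie in a bounded region of $A$, and only finitely many values are available there. We compare this with the infinitely many distinct values that the configuration produces.

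\emph{Step 1 (signs).} Every product $bc$ with $b\in B$, $c\in C$ is negative, so no element of $B\cup C$ is zero. Moreover, every $b\in B$ has the opposite sign to every $c\in C$. Swapping $B$ and $C$ if necessary (the hypotheses are symmetric), we may assume $B\subset\Q_{>0}$ and $C\subset\Q_{<0}$.

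\emph{Step 2 ($B$ is bounded).} Fix $c_0\in C$. Every sum $b+c_0$ is negative, so $b<-c_0$ for all $b\in B$. Hence $B\subset(0,-c_0)$ is bounded.

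\emph{Step 3 (finiteness).} Fix $c\in C$. The products $bc$, $b\in B$, are pairwise distinct because $c\ne0$, so they form an infinite set. They all lie in $A\cap[-R,R]$ with $R=|c|\sup B<\infty$. This contradicts the assumption that $A\cap[-R,R]$ is finite.

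The only delicate point is Step 1, the reduction to opposite signs. Step 2 uses only a single fixed $c_0$, and Step 3 uses only that the map $b\mapsto bc$ is injective. Note that the sum condition alone already bounds $B$, and the product condition alone already separates the signs. So both conditions are genuinely used, matching the paper's description of the obstruction as coming from the real place.
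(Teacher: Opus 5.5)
Your proof is correct and is the paper's argument with the roles of $B$ and $C$ interchanged. The paper also deduces opposite constant signs from $BC\subset\Q_{<0}$, uses one fixed element of the negative set together with $B+C\subset\Q_{<0}$ to trap the positive set in a bounded interval, and then multiplies by a fixed nonzero element to get infinitely many points of $A$ in a bounded window. The one step you leave implicit, that $B$ has constant sign, is immediate: every element of $B$ has the sign opposite to any single fixed $c\in C$.
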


\begin{proof}
Suppose such $B,C$ exist. Every product $bc$ is negative, so $0\notin B\cup C$, and every
$b\in B$ and $c\in C$ have opposite signs. If $B$ contained a positive and a negative element,
no $c\in C$ could have the sign opposite to both. So $B$ has constant sign, and $C$ has the
opposite constant sign. By symmetry of the hypotheses in $B$ and $C$ we may assume that
$B\subset\Q_{<0}$ and $C\subset\Q_{>0}$. Fix $b^*\in B$. For $c\in C$ we have $b^*+c<0$, so
$0<c<|b^*|$ and $b^*c\in(-(b^*)^2,0)$. Since $c\mapsto b^*c$ is injective and $C$ is infinite,
$b^*C$ is an infinite subset of $A\cap[-(b^*)^2,(b^*)^2]$, which is a contradiction.
\end{proof}

\begin{theorem}\label{SP:thm:q56}
Let $\Phi=(F_N)$ be any double \Fol{} sequence in $\Q$. There is a set $A\subset\Q_{<0}$ such
that $\dens_\Phi(A)=\lim_{N}|A\cap F_N|/|F_N|=1/2$, and such that there are no infinite sets
$B,C\subset\Q$ with $B+C\subset A$ and $BC\subset A$. In particular, \cref{SP:q:56} has a
negative answer.
\end{theorem}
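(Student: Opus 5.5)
The plan is to take $A=\Q_{<0}\setminus X$, where $X$ is a $\Phi$-null set chosen so that every bounded part of $A$ is finite, and then to apply \cref{SP:lem:noconf}. By \cref{SP:lem:folner}(iii) the negative rationals have $\Phi$-density $1/2$. So it suffices to build $X\subset\Q_{<0}$ with $|X\cap F_N|/|F_N|\to0$ such that, for every $R$, the set $\Q_{<0}\cap[-R,R]\setminus X$ is finite.

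For the construction we use \cref{SP:lem:folner}(ii). For each $R\in\N$, $|F_N\cap[-R,R]|/|F_N|\to0$. We choose $N_1<N_2<\cdots$ inductively so that $|F_N\cap[-R,R]|\le|F_N|/R$ for all $N\ge N_R$. Next we fix an enumeration $q_1,q_2,\dots$ of $\Q_{<0}$ and put
\[
A=\Q_{<0}\setminus\bigcup_{R}\big([-R,R]\setminus\{q_1,\dots,q_{m_R}\}\big),
\]
with the finite exceptional sets $\{q_1,\dots,q_{m_R}\}$ tuned to $\Phi$. A cleaner route is to define $A$ directly as the union over $R$ of the negative elements of $F_N\cap(-R-1,-R]$ for $N<N_{R+1}$. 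It is simpler still to set $A=\Q_{<0}\cap\bigcup_N\big(F_N\setminus[-r_N,r_N]\big)$, where $r_N\to\infty$ slowly, chosen so that $|F_N\cap[-r_N,r_N]|/|F_N|\to0$. Such a sequence exists by a diagonal argument: take $r_N=R$ for $N_R\le N<N_{R+1}$.

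Then $A\cap[-R,R]\subset\bigcup_{N<N_{R+1}}F_N$, which is finite, so \cref{SP:lem:noconf} applies. For the density, $A\cap F_N\supset(F_N\cap\Q_{<0})\setminus[-r_N,r_N]$, which gives $\liminf_N|A\cap F_N|/|F_N|\ge1/2$. The upper bound $\limsup\le1/2$ holds because $A\subset\Q_{<0}$ and \cref{SP:lem:folner}(iii) applies. Hence $\dens_\Phi(A)=1/2$.

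The main obstacle is the diagonal choice of $r_N$. One must make sure that the extra points of $A$ coming from other $F_{N'}$ only help the lower bound and never spoil membership in $\Q_{<0}$; they cannot, since we intersect with $\Q_{<0}$. One must also check that $r_N\to\infty$, which guarantees finiteness in each bounded window. Both are routine once $N_R$ is fixed as above.
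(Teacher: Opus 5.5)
Your final construction, $A=\Q_{<0}\cap\bigcup_N\big(F_N\setminus[-r_N,r_N]\big)$ with $r_N=R$ for $N_R\le N<N_{R+1}$, is correct and is essentially the paper's argument: the same diagonal choice of $N_R$ from \cref{SP:lem:folner}(ii), finiteness of bounded parts fed into \cref{SP:lem:noconf}, and \cref{SP:lem:folner}(iii) for the density. The only difference is that you build $A$ as a union of tails of the $F_N$, while the paper removes from $\Q_{<0}$ the null set $Z=\bigcup_R([-R,R]\setminus K_R)$ with $K_R=\bigcup_{N<N_{R+1}}F_N$; the two earlier variants you sketch can be dropped, and $r_N$ is defined for every $N$ because $N_1=1$ is always admissible for $R=1$.
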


\begin{proof}
For $R\in\N$ write $[-R,R]$ for $\Q\cap[-R,R]$, and let $\eps_R(N)=|F_N\cap[-R,R]|/|F_N|$. By
\cref{SP:lem:folner}(ii), $\eps_R(N)\to0$ as $N\to\infty$ for each $R$. So we can choose
$N_1<N_2<\cdots$ such that $\eps_R(N)<1/R$ for all $N\ge N_R$. Put
\[
K_R=\bigcup_{N<N_{R+1}}F_N\ \ (\text{a finite set}),\qquad
Z=\bigcup_{R\in\N}\big([-R,R]\setminus K_R\big),\qquad A=\Q_{<0}\setminus Z .
\]
\emph{Bounded parts of $A$ are finite.} Let $R\in\N$. If $x\in A$ and $|x|\le R$, then
$x\notin[-R,R]\setminus K_R$, so $x\in K_R$. Hence $A\cap[-R,R]\subset K_R$ is finite. By
\cref{SP:lem:noconf}, $A$ contains no $B+C$ together with $BC$ for infinite $B,C$.

\emph{$Z$ has density zero.} Let $N\ge N_1$, and let $R$ be such that $N_R\le N<N_{R+1}$.
If $R'\ge R$, then $N<N_{R'+1}$, so $F_N\subset K_{R'}$ and $F_N$ does not meet
$[-R',R']\setminus K_{R'}$. If $R'<R$, then $[-R',R']\subset[-R,R]$. Hence
$F_N\cap Z\subset F_N\cap[-R,R]$, and so $|F_N\cap Z|\le\eps_R(N)|F_N|<|F_N|/R$. As
$N\to\infty$, also $R\to\infty$, so $|F_N\cap Z|/|F_N|\to0$.

\emph{Density of $A$.} By \cref{SP:lem:folner}(iii),
$|A\cap F_N|/|F_N|=|F_N\cap\Q_{<0}|/|F_N|-O(|F_N\cap Z|/|F_N|)\to1/2$.
\end{proof}

\formalization{\Cref{SP:lem:folner,SP:lem:noconf} and \cref{SP:thm:q56} are formalized in
\leanfile{Q56} (\leanname{theoremN}), with double \Fol{} sequences as in
\leanfile{DoubleFolnerQ}.}

\begin{remark}\label{SP:rem:q56easy}
We record some easy facts that put \cref{SP:thm:q56} in context. Let $\Phi$ be a double \Fol{}
sequence and let $A\subset\Q$ satisfy $\udens_\Phi(A)>0$.
\begin{enumerate}[label=\textup{(\roman*)}]
\item Each of the two conditions can be met separately. Moreira, Richter and Robertson proved the
following~\cite[Theorem~1.3]{moreira2019proof}: if $G$ is a countable group, $\Psi$ is a two-sided
\Fol{} sequence on $G$, and $E\subset G$ has positive upper density along $\Psi$, then $E\supset BC$
for some infinite $B,C\subset G$. Applied to $G=(\Q,+)$ and $\Psi=\Phi$, this gives infinite $B,C$
with $B+C\subset A$. Next apply it to $G=(\Q^*,\cdot)$ and $\Psi=(F_N\setminus\{0\})_{N\ge N_0}$, where $N_0$ is such that these sets are nonempty; this is a
\Fol{} sequence in $\Q^*$, since removing one point changes $|F_N|$ and $|F_N\cap xF_N|$ by at most $1$
and $|F_N|\to\infty$ by \cref{SP:lem:folner}(i); likewise the upper density of $A\setminus\{0\}$
along it equals $\udens_\Phi(A)$. This gives infinite $B,C\subset\Q^*$ with $BC\subset A$. (Both
groups are abelian, so one-sided and two-sided \Fol{} sequences coincide.)
\item The set $\Q_{<0}$ itself contains the configuration: take $B=\{-n:n\ge2\}$ and
$C=\{1/m:m\in\N\}$. So the deletion of the null set $Z$ in \cref{SP:thm:q56} is essential.
\item The question requires two sets $B,C$. The version with $B=C$ fails even for restricted
products: if $B\subset\Q$ is infinite, then $B\setminus\{0\}$ has two distinct elements of the
same sign. So $\{b_1b_2:b_1,b_2\in B,\ b_1\neq b_2\}$ always contains a nonnegative number, and it
is not contained in $\Q_{<0}$. On the other hand, $\dens_\Phi(\Q_{<0})=1/2$ by
\cref{SP:lem:folner}(iii).
\end{enumerate}
\end{remark}

\begin{remark}\label{SP:rem:q56literal}
\Cref{SP:thm:q56} answers \cref{SP:q:56} as stated. The question assumes only that $A$ has positive
density along some double \Fol{} sequence, and for any prescribed one we construct a set with
density $1/2$ along it. The obstruction is the sign of $x$, i.e.\ the real place of $\Q$. It plays the same role as
congruence obstructions in $\N$, which is why the survey adds the aperiodicity hypothesis to
Question~5.2. The point of \cref{SP:thm:q56} is that, once a set of density zero has been removed,
this obstruction cannot be detected by density along a double \Fol{} sequence. The $\Phi$-dependent deletion of finite pieces is similar to the device used
in~\cite[Example~5.9]{kra2023problems} (compare also the proof
of~\cite[Proposition~3.2]{kra2023problems}). The question that is
presumably intended remains open, to our knowledge: does every $A\subset\Q$ that is large and has no such local
obstruction contain $B+C$ and $BC$? Possible formalizations are to require $\ldens_\Phi(A)>0$
for \emph{every} double \Fol{} sequence $\Phi$, which our construction (it depends on $\Phi$) does
not address, or to impose a hypothesis on $A$ analogous to aperiodicity. We do not know the answer
in either case.
\end{remark}

\section*{Acknowledgements}
This work was carried out with substantial assistance from AI systems (OpenAI's GPT models and
Anthropic's Claude), including the development of the proofs, the checking of the arguments, and
the Lean formalization. The authors take full responsibility for the content.

\bibliographystyle{amsplain}
\bibliography{refs}

@article{kra2023problems,
  author = {Kra, Bryna and Moreira, Joel and Richter, Florian K. and Robertson, Donald},
  title = {Problems on infinite sumset configurations in the integers and beyond},
  journal = {Bull. Amer. Math. Soc. (N.S.)},
  volume = {62},
  number = {4},
  year = {2025},
  pages = {537--574},
  doi = {10.1090/bull/1868},
  note = {arXiv:2311.06197v2; question numbers refer to this version}
}

@article{charamaras2024finding,
  author = {Charamaras, Dimitrios and Mountakis, Andreas},
  title = {Finding product sets in some classes of amenable groups},
  journal = {Forum Math. Sigma},
  volume = {13},
  pages = {Paper No. e10},
  year = {2025},
  note = {arXiv:2402.07779; theorem numbers refer to arXiv:2402.07779v3}
}

@article{ackelsberg2024counterexamples,
  author = {Ackelsberg, Ethan},
  title = {Counterexamples to generalizations of the {E}rd{\H{o}}s {$B+B+t$} problem},
  journal = {Electron. J. Combin.},
  volume = {33},
  number = {3},
  pages = {Paper No. P3.89},
  year = {2026},
  note = {arXiv:2404.17383; theorem numbers refer to arXiv:2404.17383v1}
}

@article{charamaras2025infinite,
  author = {Charamaras, Dimitrios and Kousek, Ioannis and Mountakis, Andreas and Radi{\'c}, Trist{\'a}n},
  title = {Infinite unrestricted sumsets in subsets of abelian groups with large density},
  journal = {Int. Math. Res. Not. IMRN},
  volume = {2026},
  number = {20},
  pages = {Paper No. rnag216},
  year = {2026},
  note = {arXiv:2504.08649; lemma numbers refer to arXiv:2504.08649v1}
}

@article{ackelsberg2025equidistribution,
  author = {Ackelsberg, Ethan and Jamneshan, Asgar},
  title = {Equidistribution in 2-nilpotent {P}olish groups and triple restricted sumsets},
  journal = {J. Lond. Math. Soc. (2)},
  volume = {114},
  number = {3},
  pages = {Paper No. e70693},
  year = {2026},
  note = {arXiv:2504.07865; theorem numbers refer to arXiv:2504.07865v3}
}

@article{austin2016nonconventional,
  author = {Austin, Tim},
  title = {Non-conventional ergodic averages for several commuting actions of an amenable group},
  journal = {J. Anal. Math.},
  volume = {130},
  year = {2016},
  pages = {243--274}
}

@article{kousek2024infinite,
  author = {Kousek, Ioannis and Radi{\'c}, Trist{\'a}n},
  title = {Infinite unrestricted sumsets of the form {$B+B$} in sets with large density},
  journal = {Bull. Lond. Math. Soc.},
  volume = {57},
  number = {1},
  year = {2025},
  pages = {48--68},
  note = {arXiv:2404.12201}
}

@article{kousek2025asymmetric,
  author = {Kousek, Ioannis},
  title = {Asymmetric infinite sumsets in large sets of integers},
  journal = {Forum Math. Sigma},
  volume = {14},
  pages = {Paper No. e7},
  year = {2026},
  note = {arXiv:2502.03112; theorem numbers refer to the arXiv version}
}

@article{hindman1979partitions,
  author = {Hindman, Neil},
  title = {Partitions and sums of integers with repetition},
  journal = {J. Combin. Theory Ser. A},
  volume = {27},
  number = {1},
  year = {1979},
  pages = {19--32}
}

@misc{huang2026affirmative,
  author = {Huang, Wen and Lian, Zhengxing and Shao, Song and Xiao, Rongzhong and Xu, Leiye and Zhang, Shuhao},
  title = {An affirmative answer to {O}wings's sumset question},
  year = {2026},
  eprint = {2607.17333},
  archivePrefix = {arXiv},
  note = {arXiv:2607.17333}
}

@misc{dinasso2025ramsey,
  author = {Di Nasso, Mauro and Luperi Baglini, Lorenzo and Mamino, Marcello and Mennuni, Rosario and Ragosta, Mariaclara},
  title = {Ramsey's witnesses},
  year = {2025},
  eprint = {2503.09246},
  archivePrefix = {arXiv},
  note = {arXiv:2503.09246}
}

@article{owings1974problem,
  author = {Owings, Jr., J. C.},
  title = {Problem {E}2494},
  journal = {Amer. Math. Monthly},
  volume = {81},
  number = {8},
  year = {1974},
  pages = {902}
}

@article{kra2022proof,
  author = {Kra, Bryna and Moreira, Joel and Richter, Florian K. and Robertson, Donald},
  title = {A proof of {E}rd{\H{o}}s's {$B+B+t$} conjecture},
  journal = {Commun. Amer. Math. Soc.},
  volume = {4},
  year = {2024},
  pages = {480--494},
  note = {arXiv:2206.12377}
}

@misc{ackelsberg2025infinite,
  author = {Ackelsberg, Ethan},
  title = {Infinite polynomial patterns in large subsets of the rational numbers},
  year = {2025},
  eprint = {2506.19667},
  archivePrefix = {arXiv},
  note = {arXiv:2506.19667}
}

@article{kra2025density,
  author = {Kra, Bryna and Moreira, Joel and Richter, Florian K. and Robertson, Donald},
  title = {The density finite sums theorem},
  journal = {Invent. Math.},
  volume = {243},
  number = {1},
  pages = {1--31},
  year = {2026},
  note = {arXiv:2504.06424; theorem numbers refer to arXiv:2504.06424v3}
}

@article{kra2022infinite,
  author = {Kra, Bryna and Moreira, Joel and Richter, Florian K. and Robertson, Donald},
  title = {Infinite sumsets in sets with positive density},
  journal = {J. Amer. Math. Soc.},
  volume = {37},
  number = {3},
  year = {2024},
  pages = {637--682},
  note = {arXiv:2206.01786; theorem numbers refer to arXiv:2206.01786v2}
}

@article{bergelson1985sets,
  author = {Bergelson, Vitaly},
  title = {Sets of recurrence of {${\bf Z}^m$}-actions and properties of sets of differences in {${\bf Z}^m$}},
  journal = {J. London Math. Soc. (2)},
  volume = {31},
  number = {2},
  year = {1985},
  pages = {295--304}
}

@article{tao2023infinite,
  author = {Tao, Terence and Ziegler, Tamar},
  title = {Infinite partial sumsets in the primes},
  journal = {J. Anal. Math.},
  volume = {151},
  number = {1},
  pages = {375--389},
  year = {2023},
  note = {arXiv:2301.10303; theorem numbers refer to arXiv:2301.10303v4}
}

@article{mazurkiewicz2023ideal,
  author = {Mazurkiewicz, {\L}ukasz and {\.Z}eberski, Szymon},
  title = {Ideal analytic sets},
  journal = {Math. Logic Quart.},
  volume = {72},
  number = {1},
  pages = {Paper No. e70012},
  year = {2026},
  note = {arXiv:2310.07693}
}

@book{kechris1995classical,
  author = {Kechris, Alexander S.},
  title = {Classical Descriptive Set Theory},
  series = {Graduate Texts in Mathematics},
  volume = {156},
  publisher = {Springer-Verlag, New York},
  year = {1995}
}

@article{moreira2019proof,
  author  = {Moreira, Joel and Richter, Florian K. and Robertson, Donald},
  title   = {A proof of a sumset conjecture of {E}rd{\H{o}}s},
  journal = {Ann. of Math. (2)},
  volume  = {189},
  number  = {2},
  year    = {2019},
  pages   = {605--652},
  note    = {arXiv:1803.00498}
}

@article{hindman1974finite,
  author  = {Hindman, Neil},
  title   = {Finite sums from sequences within cells of a partition of {$N$}},
  journal = {J. Combinatorial Theory Ser. A},
  volume  = {17},
  year    = {1974},
  pages   = {1--11}
}

@article{ramsey1930problem,
  author  = {Ramsey, F. P.},
  title   = {On a problem of formal logic},
  journal = {Proc. London Math. Soc. (2)},
  volume  = {30},
  number  = {4},
  year    = {1930},
  pages   = {264--286}
}

@article{bergelson2017ergodic,
  author  = {Bergelson, Vitaly and Moreira, Joel},
  title   = {Ergodic theorem involving additive and multiplicative groups of a field and {$\{x+y,xy\}$} patterns},
  journal = {Ergodic Theory Dynam. Systems},
  volume  = {37},
  number  = {3},
  year    = {2017},
  pages   = {673--692}
}

@misc{richter2025sums,
  author        = {Richter, Florian K.},
  title         = {Sums and products in sets of positive density},
  year          = {2025},
  eprint        = {2507.00515},
  archivePrefix = {arXiv},
  note          = {arXiv:2507.00515v2}
}

@article{kousek2024revisiting,
  author  = {Kousek, Ioannis},
  title   = {Revisiting sums and products in countable and finite fields},
  journal = {Ergodic Theory Dynam. Systems},
  volume  = {46},
  number  = {2},
  year    = {2026},
  pages   = {543--574},
  note    = {arXiv:2407.03304}
}

@misc{kra2026short,
  author = {Kra, Bryna and Moreira, Joel and Richter, Florian K. and Robertson, Donald},
  title = {A short proof of {E}rd{\H{o}}s's {$B+C$} conjecture},
  year = {2026},
  eprint = {2603.27258},
  archivePrefix = {arXiv},
  note = {arXiv:2603.27258v2}
}

@phdthesis{griesmer2009ergodic,
  author = {Griesmer, John T.},
  title = {Ergodic averages, correlation sequences, and sumsets},
  school = {The Ohio State University},
  year = {2009}
}

@article{furstenberg1977ergodic,
  author = {Furstenberg, Harry},
  title = {Ergodic behavior of diagonal measures and a theorem of {S}zemer{\'e}di on arithmetic progressions},
  journal = {J. Analyse Math.},
  volume = {31},
  year = {1977},
  pages = {204--256}
}

@article{host2009uniformity,
  author = {Host, Bernard and Kra, Bryna},
  title = {Uniformity seminorms on $\ell^\infty$ and applications},
  journal = {J. Anal. Math.},
  volume = {108},
  year = {2009},
  pages = {219--276}
}

\end{document}